\definecolor{darkgreen}{rgb}{0.0, 0.27, 0.13}
\definecolor{ForestGreen}{RGB}{34,139,34}
\numberwithin{equation}{section}
\def\hangbox to #1 #2{\vskip3pt\hangindent #1\noindent \hbox to #1{#2}$\!\!$}
\theoremstyle{plain}
\newtheorem{theorem}{Theorem}[section]
\newtheorem{proposition}[theorem]{Proposition}
\newtheorem{corollary}[theorem]{Corollary}
\newtheorem{lemma}[theorem]{Lemma}
\theoremstyle{definition}
\newtheorem{remark}[theorem]{Remark}
\newtheorem{problem}[theorem]{Problem}
\newtheorem{definition}[theorem]{Definition}
\DeclareSymbolFont{bbold}{U}{bbold}{m}{n}
\DeclareSymbolFontAlphabet{\mathbbold}{bbold}
\def\N{{\mathbb N}}
\def\R{{\mathbb R}}
\def\bH{{\mathbf H}}
\def\sfrac#1#2{\kern.1em\raise.5ex\hbox{$#1$}
	\kern-.1em/\kern-.05em\lower.25ex\hbox{$#2$}}
\newcommand{\fw}{\text{\fw}}
\newcommand{\dist}{{\rm dist}}
\newcommand{\tg}{\tilde{g}}
\DeclareMathOperator{\curl}{curl}
\def\bH{{\mathbf H}}
\begin{document}

\title[Well-posedness for the incompressible free boundary Euler equations]{Sharp Hadamard local well-posedness, enhanced uniqueness and pointwise continuation criterion for the incompressible free boundary Euler equations
}

\author{ Mihaela Ifrim}
\address{Department of Mathematics\\
University of Wisconsin - Madison
} \email{ifrim@wisc.edu}
\author{Ben Pineau}
\address{Courant Institute for Mathematical Sciences\\
New York University
} \email{brp305@nyu.edu}
\author{Daniel Tataru}
\address{Department of Mathematics\\
University of California at Berkeley
} \email{tataru@math.berkeley.edu}
\author{Mitchell~A.\ Taylor}
\address{Department of Mathematics\\
ETH Z\"urich, Ramistrasse 101, 8092 Z\"urich, Switzerland.
} \email{mitchell.taylor@math.ethz.ch}

 \begin{abstract}
 We provide a complete local well-posedness theory in $H^s$ based Sobolev spaces for the free boundary incompressible Euler equations with zero surface tension on a connected fluid domain. Our well-posedness theory includes: (i) Local well-posedness in the Hadamard sense, i.e., local existence, uniqueness, and the first proof of continuous dependence on the data, all in low regularity Sobolev spaces; (ii) Enhanced uniqueness: Our uniqueness result holds at the level of the Lipschitz norm of the velocity and the $C^{1,\frac{1}{2}}$ regularity of the free surface; (iii) Stability bounds:  We construct a nonlinear functional which measures, in a suitable sense, the distance between two solutions (even when defined on different domains) and we show that this distance is propagated by the flow; (iv) Energy estimates: We prove refined, essentially scale invariant energy estimates for solutions,  relying on   a newly constructed family of elliptic estimates; (v) Continuation criterion: We give the first proof of a sharp continuation criterion in the physically relevant pointwise norms, at the level of scaling. In essence, we show that solutions can be continued as long as the velocity is in $L_T^1W^{1,\infty}$ and the free surface is in $L_T^1C^{1,\frac{1}{2}}$, which is at the same level as the Beale-Kato-Majda criterion for the boundaryless case; (vi) A  novel proof of the construction of regular solutions. 
 
 Our entire approach is in the Eulerian framework and can be adapted to work in more general fluid domains.  

\end{abstract} 

 \subjclass{76B03; 76B15; 35Q31}
 \keywords{Euler equations, free boundary, water waves, local well-posedness, continuation criterion.}

\dedicatory{To my dad$^0$: Thank you for patiently waiting  for the answer to the question you casually asked me \\ years ago as I poured myself a glass of water.   
}

\footnotetext[0]{This dedication is to the first author’s father, Manolache Ifrim.} 
 
 \maketitle

\tableofcontents

\section{Introduction}
In this article, we study  the dynamics of an inviscid fluid droplet in the absence of surface tension. At time $t$, our fluid occupies a compact, connected, but not necessarily simply connected region $\overline{\Omega}_t\subseteq \mathbb{R}^d$, and its motion  is governed by the incompressible Euler equations
\begin{equation}\label{Euler}
  \begin{cases}
    &\partial_tv+v\cdot \nabla v=-\nabla p-ge_d,
    \\  
    &\nabla\cdot v=0.
    \end{cases}
\end{equation}
Here, $v$ is the fluid velocity, $p$ is the pressure, $g\geq 0$ is the gravitational constant, and $e_d$ is the standard vertical basis vector. In the local theory of the droplet problem, the gravity can be freely neglected.  However, it becomes important in the 
case of an unbounded fluid domain and in the case of a domain with a rigid bottom, so we  retain it in \eqref{Euler} for completeness.

\begin{figure}[h]
\begin{tikzpicture}
\path[draw,use Hobby shortcut,closed=true, fill=cyan]
(0,1) .. (.4,1) .. (1,3) .. (.3,4) .. (-3,2) .. (-3,.1);
\node at (-1, 2.2) {$\Omega_t$}; 
\node at (.3, .7) {$\Gamma_t$} ;
\node at (-6, 3) {$-ge_d$} ;
\draw[->, thick] (-5,4)--(-5,2) ;
\end{tikzpicture}
\caption{Inviscid fluid droplet in a gravitational field, where $\Omega_t$ is the fluid domain and $\Gamma_t$ is its moving boundary.}
\end{figure}

An essential role in the analysis  of the droplet problem is played by the vector field 
\begin{equation*}
D_t : = \partial_t+v\cdot \nabla,
\end{equation*}
which is called the \emph{material derivative} and describes the particle trajectories. On the free boundary,  we require  the kinematic boundary condition
\begin{equation}\label{bb}
    D_t  \ \ \text{is tangent to} \ \bigcup_t \{t\}\times\partial\Omega_t\subseteq\mathbb{R}^{d+1},
\end{equation} 
which says that the domain $\Omega_t$ is transported along the 
material derivative (or equivalently, the particle trajectories), 
and that the normal velocity of $\Gamma_t:=\partial\Omega_t$ is given by $v\cdot n_{\Gamma_t}$. Additionally, we require the dynamic  boundary condition
\begin{equation}\label{BC1}
p|_{\Gamma_t}=0,
\end{equation}
which represents the balance of forces at the fluid interface in the absence of  surface tension. Using the above boundary conditions, it is easy to see that   the energy
\begin{equation*}\label{energy}
    E:=\int_{\Omega_t}\left(\frac{|v|^2}{2}+gx\cdot e_d\right)\, dx
\end{equation*}
is formally conserved. 
Throughout the article, we will refer to the system \eqref{Euler}-\eqref{BC1} as the \textit{free boundary (incompressible) Euler equations}. 
\\

As is the case with  all Euler flows, an important  role in the above evolution is played by   the \emph{vorticity}, $\omega$, defined by 
\begin{equation*} 
\omega_{ij} = \partial_i v_j - \partial_j v_i.
\end{equation*}
By taking the curl of \eqref{Euler}, the vorticity is easily seen to solve the following transport equation along the flow:
\begin{equation}\label{transport vort}
D_t\omega=-(\nabla v)^*\omega-\omega\nabla v.
\end{equation}
If initially $\omega = 0$, then \eqref{transport vort} guarantees that this condition is propagated dynamically.
Such velocity fields are called \emph {irrotational}, and the corresponding solutions to the free boundary incompressible Euler equations are called \emph{water waves}.
\\

By taking the divergence of \eqref{Euler},  we obtain the following Laplace equation for the pressure:
\begin{equation}\label{Euler-pressure}
  \begin{cases}
    &\Delta p=-\text{tr}(\nabla v)^2 \ \ \text{in} \ \Omega_t,
    \\  
    &p=0 \ \ \text{on} \ \Gamma_t.
    \end{cases}
\end{equation}
For regular enough $v$ on  sufficiently regular   $\Omega_t$, the equation \eqref{Euler-pressure} uniquely determines the pressure from the velocity and domain.
A key role in the study of the free boundary Euler equations is played by the \emph{Taylor coefficient}, $a$, which is defined on the boundary $\Gamma_t$ by  
\begin{equation}\label{def-a}
a :=  -\nabla p \cdot n_{\Gamma_t}.
\end{equation}
Indeed, a classical result of Ebin \cite{MR886344} provides a specific ill-posedness scenario with $a \leq 0$
 and
gives strong evidence that the free boundary Euler equations are ill-posed
unless $a \geq 0$; this is widely believed to be true, though we are not aware of any work proving it in full generality.
For this reason, we will always assume that the initial data for the free boundary Euler equations verifies the following:
\medskip

{\bf Taylor sign condition.} There is a $c_0>0$ such that $a_0:=-\nabla p_0\cdot n_{\Gamma_0}>c_0$ on $\Gamma_0$.  
\medskip

 For irrotational data on compact simply connected domains, the Taylor sign condition is automatic by the strong maximum principle \cite{MR2178961}. See also \cite{MR3535894, MR1471885} for similar results on unbounded domains when $g>0$ and \cite{agrawal2023uniform,MR4656809,MR3987173} for results when $a$ is also allowed to vanish at certain points. Geometrically, enforcing $a_0>0$ ensures that the initial pressure $p_0$ is a non-degenerate defining function for the initial boundary hypersurface $\Gamma_0$, and thus can be used to describe the regularity of the boundary.
As part of our well-posedness theorem below, we prove that the Taylor sign condition is propagated by the flow on some non-trivial time interval.
\\

Another important role in this paper is played by the \emph{material derivative  of the Taylor coefficient}, $D_t a$,  which turns out to be closely related to (a derivative of) the normal component of the velocity $v \cdot n_{\Gamma_t}$. We will elaborate further on this relation  shortly when we discuss our choice of control parameters and good variables.
 
\subsection{The Cauchy problem: scaling, Sobolev spaces
and control parameters}

A state for the free boundary Euler equations consists 
of a domain $\Omega$ and a velocity field $v$ on $\Omega$.
A  bounded connected domain $\Omega$ can be equally described by its
boundary $\Gamma$. Hence, in the sequel, by a state we mean a pair $(v,\Gamma)$. 
\\

Describing the time evolution of $(v,\Gamma)$ along the 
free boundary incompressible Euler flow is most naturally done in 
a functional setting described via appropriate Sobolev norms.
To understand the proper setting, it is very helpful to consider the scaling properties of our problem. 
The boundaryless incompressible Euler flow admits a two parameter scaling group. However, when considering the 
free boundary flow there is an additional constraint;
namely, that the pointwise property $a \approx 1$ rests unchanged. At a technical level, this is reflected in the fact that the Taylor coefficient appears as a weight in the Sobolev norms which are used on $\Gamma$. Imposing this constraint leaves us with a one parameter family of
scaling laws, which have the form
\begin{equation*}
    \begin{split}
        v_\lambda(t,x)&=\lambda^{-\frac{1}{2}}v\left(\lambda^\frac{1}{2}t,\lambda x\right),
        \\
         p_\lambda(t,x)&=\lambda^{-1}p\left(\lambda^\frac{1}{2}t,\lambda x\right),
         \\
          \left(\Gamma_{\lambda}\right)_t&=\{\lambda^{-1}x : x\in \Gamma_{\lambda^\frac{1}{2}t}\}.
    \end{split}
\end{equation*}
As noted earlier, the above transformations  have the property that 
the Taylor coefficient has the dimensionless scaling,
\begin{equation*}
  a_\lambda(t,x)= a\left(\lambda^\frac{1}{2}t,\lambda x\right).  
\end{equation*}
A first benefit we derive from the scaling law is to understand what are the matched Sobolev regularities for $v$  
and $\Gamma$. This leads us to the following definition.
\begin{definition}[State space] The
\emph{state space} $\mathbf{H}^s$ is the set of all pairs $(v,\Gamma)$ such that $\Gamma$ is the boundary of a bounded, connected domain $\Omega$ and such that the following properties are satisfied:
\begin{enumerate}
    \item (Regularity). $v\in H_{div}^s(\Omega)$ and $\Gamma\in H^s$, where $H_{div}^s(\Omega)$ denotes the space of divergence free vector fields in $H^s(\Omega)$.
    \item (Taylor sign condition). $a:=-\nabla p\cdot n_{\Gamma}>c_0>0$, where $c_0$ may depend on the choice of $(v,\Gamma)$, and the pressure $p$ is obtained from $(v,\Gamma)$ by solving the  elliptic equation \eqref{Euler-pressure} associated to \eqref{Euler} and \eqref{BC1}.
\end{enumerate}
\end{definition}
For states $(v,\Gamma)$ as above, we define their size by
\[
\|(v,\Gamma)\|_{\mathbf{H}^s}^2:=\|\Gamma\|_{H^s}^2+\|v\|^2_{H^s(\Omega)}.
\]
Note, however, that  $\mathbf{H}^s$ is not  a linear space, so $\|\cdot\|_{\mathbf{H}^s}$ does not induce a norm topology in the usual sense. Heuristically, the state space $\bH^s$ may be thought of as an infinite dimensional manifold, though a precise interpretation of this is beyond the scope of this paper. For our purposes, it suffices to define a consistent notion of topology on $\bH^s$. 
Although we will not describe the precise topology in the introduction, this topology will allow us to define the space $C([0,T];\mathbf{H}^s)$  of continuous functions with values in $\mathbf{H}^s$, as well as an appropriate notion of $\mathbf{H}^s$ continuity of the data-to-solution map $(v_0,\Gamma_0)\mapsto (v(t),\Gamma_t)$. Armed with these notions, it makes sense to talk about the Cauchy problem.
\begin{problem}[Cauchy problem for the free boundary  Euler equations] Given an initial state $(v_0,\Gamma_0) \in \mathbf H^s$, find the unique solution $(v,\Gamma) \in 
C([0,T];\mathbf{H}^s)$ in some time interval $[0,T]$.
\end{problem}

A natural question to ask is what are the exponents $s$ 
for which the Cauchy problem is well-posed in $\mathbf H^s$.
Our first clue in this direction comes from scaling, which 
leads us to the critical exponent 
\[
s_c = \frac{d+1}2,
\]
and implicitly the lower bound $s \geq s_c$.
However, this does not tell the entire story, as even in the boundaryless case a result of Bourgain-Li~\cite{MR3359050}
shows that well-posedness holds only in the more restricted range
\[
s > \frac{d}2+1,
\]
which is heuristically connected to another scaling law 
of the boundaryless problem; namely,
\[
v(t,x) \mapsto \lambda^{-1}v(t,\lambda x).
\]
This latter exponent range $s > \frac{d}2+1$ is   exactly what we consider in our work. Specifically, in this article we  solve the Cauchy problem for the free boundary incompressible Euler equations at the same  regularity level as the  incompressible Euler equations on a fixed domain.
\\

The reader who is more familiar with the boundaryless case
may ask at this point why we confine ourselves to $L^2$ based Sobolev spaces, instead of using the full range of indices $L^p$ 
as in the boundaryless case. The reason for this is
precisely the boundary, where a portion of the dynamics is concentrated.  In particular, as a subset of our problem we have the irrotational case $\omega = 0$, when the flow may be fully interpreted as the flow of the free boundary. This case, 
commonly identified as water waves, yields a dispersive flow,
where $L^p$ based Sobolev spaces are disallowed if $p \neq 2$;  see also the discussion in \cite{MR3465379}. This is not to say that exponents $p \neq 2$ do not play a central role in our analysis. Instead, we use them, particularly 
the case $p = \infty$, in the definition of our \emph{control parameters}, which control the size and growth of our energy functionals. Precisely, our analysis involves two such control parameters, which ideally should be appropriately scale 
invariant, as follows:

\begin{enumerate}
    \item An ``elliptic" control parameter $A^\sharp$, 
used to control implicit constants in fixed time elliptic estimates, given by
\begin{equation}\label{Asharp}
A^\sharp = \| v\|_{\dot C^\frac12(\Omega)} + \| \Gamma\|_{Lip},    
\end{equation}
which is exactly invariant under scaling.

\item A ``dynamical" control parameter $B^\sharp$, used to control the growth of energy in time, given by 
\begin{equation}\label{Bsharp}
B^\sharp = \| v\|_{Lip(\Omega)} + \| \Gamma\|_{\dot C^{1,\frac12}}.    
\end{equation}
This latter control parameter is $1/2$ derivatives above scaling, and instead the scale invariant quantity is $\|B^\sharp\|_{L^1_t}$, which is  what will actually appear in our continuation criterion later on.
\end{enumerate}

With these control parameters in hand, we would like to have energy estimates in the scale invariant form
\begin{equation}\label{id EE}
    \frac{d}{dt} E^k(v,\Gamma) \lesssim_{A^\sharp} B^\sharp
 E^k(v,\Gamma),
\end{equation}
where $E^k$ denotes a suitable energy at the $\mathbf{H}^k$ regularity. As noted earlier, these are our ideal choices, but 
for our results we need to make some small adjustments
and relax them a bit, as follows:
\begin{enumerate}[label=\alph*)]
\item\label{a} Working with $A^\sharp$ would require edge case elliptic estimates in Lipschitz domains, bringing forth a broad host of issues 
which are less central to our problem, if even possible to overcome.  So, instead,
we will simply add $\epsilon$ derivatives to the norms in $A^\sharp$.
\item\label{b} In the case of $B^\sharp$, we do not want to lose the sharp scaling, which is  exactly as in the Beale-Kato-Majda criteria
in the boundaryless case. Therefore, we do not want to add extra derivatives as we did with $A^\sharp$. However, as we shall soon see,   the quantity $\|D_ta\|_{L^\infty(\Gamma)}$  appears as a control parameter  in  the $L^2$ estimate for the linearized equation.  As it turns out, in order to propagate our low regularity difference bounds,  control of $\|D_ta\|_{L^\infty(\Gamma)}$ will be needed. However, for the energy estimates, a careful analysis will show that  the control parameter $B^\sharp$ is sufficient, if we slightly modify the form of the estimate \eqref{id EE}. In both cases, maintaining the sharp top order control parameter is non-trivial. In the difference estimates, it requires  a careful analysis on intersections of  domains (and hence, in particular, performing elliptic theory on Lipschitz domains) and in the energy estimates  it requires (amongst several other things) finding a way to appropriately absorb the  logarithmic divergences occurring in the endpoint elliptic estimates when attempting to control $\|D_ta\|_{L^\infty(\Gamma)}$ by $B^\sharp$. To deal with this latter issue, we will take some inspiration from the proof of Beale-Kato-Majda \cite{MR763762}. 
\end{enumerate}
   The issues mentioned above have well-known counterparts in the boundaryless Euler flow. In fact,   \emph{strong ill-posedness} of the boundaryless Euler equations  has been recently proven  in the ``ideal" pointwise spaces $C^1$ and $Lip$ \cite{MR3320889,MR4065655}. 


\subsection{Historical comments}  The local well-posedness problem for the free boundary Euler equations has a long history. For irrotational flows, the first rigorous local existence result in Sobolev spaces was obtained by Wu \cite{MR1471885,MR1641609}, in the late 1990s. Since then, various  methods have been introduced to shorten the proofs, lower the regularity threshold and allow for more complicated geometries. For a small sample of such results we cite Beyer and G\"unther in \cite{MR1637554}, Lannes in \cite{MR2138139},  Alazard, Burq and Zuily in \cite{MR2805065,MR3260858}, Hunter, Ifrim and Tataru in \cite{MR3535894},  Ai in \cite{Ai1, Ai2} and Ai, Ifrim and Tataru in \cite{ai2019two}.  Although physically restrictive, the irrotationality assumption allows one to reduce the dynamics to a system of equations on the free boundary. Depending on the choices made, this  typically culminates in either the Zakharov-Craig-Sulem formulation of the water waves problem used in \cite{Ai1, Ai2,MR2805065,MR3260858,MR2138139}, or the holomorphic coordinates formulation used in \cite{ai2019two,MR3535894}. In either case, the reduction to a system of equations on $\mathbb{R}^{d-1}$ greatly simplifies the analysis. 
\\

For the free boundary Euler equations with non-trivial vorticity, certain generalized systems based on the above irrotational reductions have been proposed \cite{MR3385789,zz}. However, historically, the most successful approach has been to use Lagrangian coordinates to fix the domain.  For an execution of this  approach to proving local existence, the reader may consult the papers of Christodoulou and Lindblad \cite{MR1780703}, Coutand and Shkoller \cite{MR2291920} and Lindblad \cite{MR2178961}. One may also compare with the article \cite{MR3762281} of  Kukavica and Tuffaha, which uses the so-called \emph{arbitrary Lagrangian-Eulerian  change of variables}, as well as the more recent advances in the Lagrangian analysis presented in \cite{aydin2023construction,MR4019189}.
\\

In contrast to the above articles, we will utilize a \emph{fully Eulerian} strategy to prove the local well-posedness of the free boundary Euler equations. In other words, we will work directly with the physical equations  \eqref{Euler}-\eqref{BC1}, and avoid the use of any non-trivial coordinates changes. On time-independent domains, both the Lagrangian and Eulerian approaches have been widely successful in analyzing fluid equations. However, for  free boundary problems, the Eulerian approach has seen relatively little attention, due to the obvious difficulty in having the domain of the fluid itself serve as a time-dependent unknown. Our aim in this article is to directly confront this issue. Corollaries of our newly obtained insights include:
\begin{enumerate}
    \item \label{i} The first proof of the continuity of the data-to-solution map for this problem. 
    \item \label{ii} An enhanced uniqueness result, requiring only pointwise norms of very limited regularity.
    \item \label{iii} Refined low regularity energy estimates with geometrically natural pointwise control parameters.
    \item \label{iiii} A new, direct proof of existence for regular solutions.
    \item \label{iiiii} A method to obtain rough solutions as unique limits of regular solutions at a Sobolev regularity that matches the optimal result for the Euler equations on $\mathbb{R}^d$.
    \item \label{vi} An essentially scale invariant continuation criterion akin to that of Beale-Kato-Majda  for the incompressible Euler equations on the whole space.
\end{enumerate}
We will elaborate further on the ideas for obtaining the above results in \Cref{OOTP}. For now, it is important to note that we are not  the first to utilize an Eulerian approach to analyze the well-posedness  of fluid equations in the  free boundary setting. The pioneering work in this regard is the remarkable series of papers by Shatah and Zeng \cite{sz,MR2400608,MR2763036}. However, Shatah and Zeng primarily consider the free boundary Euler equations with surface tension. While they are able to produce a solution to the pure gravity problem in the zero surface tension limit, it seems that their construction at least requires bounded curvature, which corresponds to greater regularity assumptions on the data than we need here. For this reason,  the overlap between their analysis and ours tends to be on a more philosophical level, which we will elaborate on  further in \Cref{OOTP}. 
A more direct comparison is with the memoir \cite{MR4263411} of Wang, Zhang, Zhao and Zheng. In \cite{MR4263411}, the authors construct solutions to the free boundary Euler equations in an unbounded graph domain at the same \emph{Sobolev} regularity that we achieve here. That is, they prove existence and uniqueness of solutions in $H^s$ for $s>\frac{d}{2}+1.$ The approach in  \cite{MR4263411}  is  in the style of Alazard, Burq and Zuily \cite{MR2805065,MR3260858}, though the addition of vorticity makes the execution much more technical. Our approach is completely different to the one that they follow and works well in more complicated fluid domains. Additionally, we  prove properties  \eqref{i}-\eqref{vi} above. We also remark that all other fully Eulerian approaches (see,~e.g., \cite{MR4157561,ming2021local,MR4194622}) follow  Shatah and Zeng, and hence require the regularizing effect of surface tension and higher regularity.  The one step towards  a fully Eulerian proof without surface tension is the work \cite{MR3925531} of de Poyferr\'{e}, who proves energy estimates for the pure gravity shoreline problem. However, the energy estimates in \cite{MR3925531} have $H^s$ based control norms and no well-posedness proof is presented.
\\

The goal of our paper is twofold. First, we intend to present a comprehensive, Hadamard style well-posedness theory, with an aim towards proving sharp results. At the same time, we provide a novel, geometric analysis, which we argue is more direct and streamlined than previous works.  For instance, our proofs do not require  paralinearization or Chemin-Lerner spaces as in \cite{MR4263411}. Moreover, our existence scheme is new and direct - it does not use Nash-Moser, the approach in \cite{MR4263411}, or go through the zero surface tension limit as in \cite{sz,MR2400608,MR2763036}. For this reason, we believe that the techniques introduced in this paper will have a wide range of applicability.  In particular, in the work \cite{ifrim2024sharp} we are able to extend the results in this paper to the significantly more challenging free boundary MHD equations,  achieving the first sharp well-posedness result for this problem. 
\\

Finally, we mention that the analysis we present here is for the case of a compact fluid domain. In the study of the free boundary Euler equations, it is also common to consider the case of an infinite ocean of either finite or infinite depth. The choice of compact fluid domain emphasizes the geometric nature of our problem, and removes the temptation to flatten the domain into a strip or a half-space. Although some changes need to be made, as with the analysis of the capillary problem \cite{sz,MR2400608,MR2763036} by Shatah and Zeng, the general strategy we use here  can be adapted to  all three geometries. That being said, to streamline the exposition, we do allow  some of our estimates to depend on the domain volume, which is a conserved quantity for the droplet problem. 

\subsection{An overview of the main results}\label{OOTP}  
In a nutshell, our main result asserts that the free boundary incompressible Euler equations are well-posed in $\mathbf H^s$ 
for $s > \frac{d}2+1$. However, simply stating this fails 
to convey the full strength of both the result and of its various aspects and consequences. Instead, it is more 
revealing to divide the result in a modular way into four independently interesting parts;
namely, (a) uniqueness and stability, (b) well-posedness,
(c) energy estimates and (d) the continuation criteria.
\\

To set the stage for our results, let $\Omega_*$ be a bounded, connected domain with smooth boundary $\Gamma_*$. Given $\epsilon,\delta>0$, consider the collar neighborhood $\Lambda_*:=\Lambda(\Gamma_*,\epsilon,\delta)$ consisting of all hypersurfaces $\Gamma$ which are  $\delta$-close to $\Gamma_*$ in the $C^{1,\epsilon}$ topology. As long as $\delta>0$ is small enough,  hypersurfaces in $\Lambda_*$ can be written as graphs over $\Gamma_*$. This permits us to define Sobolev and H\"older norms on these hypersurfaces in a consistent fashion. To state our results, we will  assume  that a collar neighborhood $\Lambda_*$ has been fixed, and consider solutions with initial data $(v_0,\Gamma_0)$ having $\Gamma_0 \in \Lambda_*$.  A more precise description of the functional setting will be given later, in \Cref{AOMD}. For now, we remark that, while the collar neighborhood is very useful in order to uniformly define the $\mathbf H^s$ norms, it is not needed at all for the definition of our control parameters.

\subsubsection{Uniqueness and stability} 

We start by stating our uniqueness result, which requires the least in terms of notations and preliminaries.  Here, of crucial importance are the  control parameters 
\begin{equation}\label{ACONT}
A:=A_\epsilon:=\|v\|_{C^{\frac{1}{2}+\epsilon}_x(\Omega_t)}+\|\Gamma_t\|_{C_x^{1,\epsilon}}, \ \epsilon>0,
\end{equation}
and
\begin{equation}\label{BCONT}
B_{\text{diff}}:=\|v\|_{W^{1,\infty}_x(\Omega_t)}+\|D_tp\|_{W^{1,\infty}_x(\Omega_t)}+\|\Gamma_t\|_{C_x^{1,\frac{1}{2}}},
\end{equation}
which represent slight adjustments of the ideal control parameters $A^\sharp$ and $B^\sharp$, as discussed earlier. 
Using these control parameters, our main uniqueness result is as follows:

\begin{theorem}[Uniqueness]\label{Uniqueness intro} Let $\epsilon, T>0$ and let $\Omega_0$ be a domain with boundary $\Gamma_0$ of $C^{1,\frac{1}{2}}$ regularity. Then for every divergence free initial data $v_0\in W^{1,\infty}(\Omega_0)$, the free boundary Euler equations with the Taylor sign condition admit at most one solution $(v,\Gamma_t)$  with $\Gamma_t\in \Lambda_*$  and
\begin{equation*}
\sup_{0\leq t\leq T}A_\epsilon(t)+\int_{0}^{T}B_{\emph{diff}}(t) \, dt<\infty. 
\end{equation*}
\end{theorem}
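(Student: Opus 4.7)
The plan is to construct a nonlinear distance functional $d(t)$ between two candidate solutions $(v^1,\Gamma_t^1)$ and $(v^2,\Gamma_t^2)$ issuing from the same data and show that it satisfies a Grönwall-type inequality
\[
\frac{d}{dt}\, d(t)^2 \lesssim_{A(t)} B_{\mathrm{diff}}(t)\, d(t)^2,
\]
so that $d(0)=0$ forces $d\equiv 0$ on $[0,T]$. Since $\Gamma_t^1,\Gamma_t^2\in\Lambda_*$ for a fixed reference hypersurface $\Gamma_*$, I would first parametrize each free surface as a graph $\Gamma_t^i=\{x+\eta^i(t,x)n_*(x):x\in\Gamma_*\}$ for a small height function $\eta^i$, and set $\eta:=\eta^1-\eta^2$. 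A natural candidate for the distance is
\[
d(t)^2 := \int_{\Omega_t^1\cap\Omega_t^2}|v^1-v^2|^2\,dx \;+\; \int_{\Gamma_*} a_*(t,x)\,\eta(t,x)^2\,dS(x),
\]
where $a_*$ is a positive representative of the Taylor coefficient transported back to $\Gamma_*$. The Taylor sign condition guarantees coercivity of the surface term, while the volume term is the unique notion of $L^2$ closeness that makes sense when the two solutions live on different domains.

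Once this is set up, I would differentiate $d(t)^2$ in time and use the equations. The $\eta$-equation comes from the kinematic condition: $\partial_t\eta^i$ is, up to harmless factors, the normal component of $v^i$ on $\Gamma^i_t$, so the difference $\partial_t\eta$ is governed by a transport equation with velocity $v^1$ and forcing controlled by $v^1-v^2$ on $\Gamma_*$. Critically, expanding $p^1-p^2$ at $\Gamma^i$ gives, at leading order, $p^1-p^2\approx a\,\eta$, which is what links the surface and volume pieces via the pressure. For the volume term, $D_t^{1}(v^1-v^2)=(v^2-v^1)\cdot\nabla v^2-\nabla(p^1-p^2)$, and upon integrating by parts on $\Omega_t^1\cap\Omega_t^2$ the pressure difference produces boundary contributions on the two pieces of $\partial(\Omega_t^1\cap\Omega_t^2)$ lying on $\Gamma_t^1$ and $\Gamma_t^2$ respectively; on each of these, $p^i=0$ while $p^{3-i}\approx a\,\eta$, so the boundary terms are precisely the ones needed to cancel against the $\frac{d}{dt}\int a_*\eta^2$ contribution, provided one tracks the time derivative of the weight $a_*$. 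This is where $\|D_tp\|_{W^{1,\infty}}$ in $B_{\mathrm{diff}}$ earns its place: it controls $\dot a_*$ at the sharp pointwise level.

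The main obstacle is the elliptic theory for $p^1-p^2$ on the Lipschitz intersection domain $\Omega_t^1\cap\Omega_t^2$, whose regularity is only as good as the control parameter $A_\epsilon$ allows. Concretely, I need an $H^1$-type estimate
\[
\|\nabla(p^1-p^2)\|_{L^2(\Omega_t^1\cap\Omega_t^2)}\lesssim_{A(t)} B_{\mathrm{diff}}(t)\,d(t),
\]
obtained by solving a mixed boundary-value problem on a domain whose boundary is only $C^{1,\epsilon}$, with Dirichlet data of size $\|a\eta\|_{L^2}$ coming from the trace mismatch and interior forcing of the form $\mathrm{div}((v^2-v^1)\cdot\nabla v^2)$. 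Carrying this out with the sharp low-regularity control of $A_\epsilon$ (avoiding spurious logarithmic losses and keeping the estimate linear in $B_{\mathrm{diff}}$) is the delicate technical heart of the argument, and is the reason $A_\epsilon$ rather than the scale-invariant $A^\sharp$ appears in the hypotheses. Everything else is a bookkeeping exercise in integration by parts and commutator estimates, closed by standard Grönwall.
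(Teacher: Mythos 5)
Your high-level strategy --- a volume $L^2$ term plus a weighted surface term measuring boundary mismatch, closed by Gr\"onwall --- is the same as the paper's (Section 4). However, your specific choices diverge in a way the paper explicitly anticipates and rejects, and your sketch contains a genuine gap.

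First, the surface term. You propose $\int_{\Gamma_*}a_*\eta^2$, i.e.\ the squared height difference transported back to the fixed reference hypersurface. The paper introduces exactly this displacement variable $s_h^* = \eta_{\Gamma_h}-\eta_{\Gamma}$ but then discards it, using instead $\int_{\tilde\Gamma}b|p-p_h|^2$ with $\tilde\Gamma=\partial(\Omega\cap\Omega_h)$ and $b\approx a^{-1}$. The two are comparable pointwise (by the Taylor sign and the fundamental theorem of calculus, $|p-p_h|\approx a|s_h|$), but the \emph{dynamics} differ. With the pressure difference, the condition $D_tp=0$ on $\Gamma$ kills an entire family of terms automatically, and the Leibniz formula on moving surfaces applies cleanly. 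With $\eta$ on $\Gamma_*$, the kinematic boundary condition in graph coordinates gives transport equations for $\eta^1,\eta^2$ with \emph{different} tangential transport velocities, and tracking the weight $a_*(t)$ introduces additional commutator terms. This can perhaps be pushed through, but it is strictly more awkward, and you have not checked it.

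Second, and more seriously, your claim that ``the boundary terms are precisely the ones needed to cancel against $\frac{d}{dt}\int a_*\eta^2$'' is not correct. After combining the IBP boundary contributions with the Leibniz derivative of the surface term (and using $D_tp=0$ on $\Gamma$ to kill the leading part), a genuinely cubic residual survives of the form
\begin{equation*}
\int_{\mathcal{A}} a^{-1}(p-p_h)(v-v_h)\cdot\nabla(p-p_h)\,dS,
\end{equation*}
and controlling this by $(B+B_h)D$ with only the pointwise control parameters $A,B_{\mathrm{diff}}$ is the real crux of the argument. Your proposed resolution --- an interior $L^2$ bound $\|\nabla(p^1-p^2)\|_{L^2(\tilde\Omega)}\lesssim_A B_{\mathrm{diff}}\,d$ --- is neither sufficient (one needs surface control of $\nabla(p-p_h)$ on a Lipschitz corner set, not a bulk $L^2$ bound) nor true as stated for the harmonic piece with Dirichlet data of size $D^{1/2}$. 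The paper's actual argument splits $p-p_h$ into inhomogeneous and homogeneous parts, proves a $W^{1,6}$ interpolation bound for the Dirichlet trace of the homogeneous part via a Vitali covering of the non-degeneracy set (Lemma~\ref{l:dg-6}), invokes Verchota's nontangential maximal function estimate for the Dirichlet problem on Lipschitz domains, and then sums over a second, separate Vitali covering of $\mathcal{A}$ adapted to the pressure difference. None of this is visible in your sketch, so the pivotal estimate is not established.

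In short: the distance functional framework and the Gr\"onwall closure are right, but the surface term you picked makes the bookkeeping harder than necessary, the claimed cancellation is incomplete, and the elliptic estimate you invoke is not the one that does the job.
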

To the best of our knowledge, \Cref{Uniqueness intro} is the first uniqueness result for the free boundary Euler equations which involves only low regularity pointwise norms. Indeed, as far as we are aware, all other papers on this subject are content to prove uniqueness in the same class of $H^s$ spaces for which they prove existence.
\\


While uniqueness is a fundamental property in its own right, in our work 
it can be seen as a corollary of a  far more useful  stability result, which we now explain. Let $(v,\Gamma_t)$ and $(v_h,\Gamma_{t,h})$ be two solutions to the free boundary Euler equations  with corresponding domains $\Omega_t$ and $\Omega_{t,h}$. An obvious objective is to show that if $(v,\Gamma_t)$ and $(v_h,\Gamma_{t,h})$ are ``close" at time zero, then they remain close on a suitable timescale. However, since the domains $\Omega_t$ and $\Omega_{t,h}$ are evolving in time, we cannot  compare the solutions $(v,\Gamma_t)$ and $(v_h,\Gamma_{t,h})$ in a linear way. To resolve this issue, we construct a nonlinear functional  which quantifies the distance between solutions and is  propagated by the flow. 
\\

To avoid comparing solutions whose corresponding domains 
are very different, we harmlessly restrict ourselves 
to solutions $(v,\Gamma_t)$ and $(v_h,\Gamma_{t,h})$ evolving in the same collar neighborhood $\Lambda_*$. For such solutions we define the nonlinear distance functional
\begin{equation}\label{diff functional candidate1}
\begin{split}
   D((v,\Gamma),(v_h,\Gamma_h)):= \frac{1}{2}\int_{\tilde{\Omega}_t}|v-v_h|^2dx+\frac{1}{2}\int_{\tilde{\Gamma}_t}b|p-p_h|^2\, dS.
\end{split}
\end{equation}
Here, $p$ and $p_h$ are the  pressures, $\tilde{\Gamma}_t$ is the boundary of $\tilde{\Omega}_t:=\Omega_t\cap\Omega_{t,h}$ and $b$ is a suitable weight function. Morally speaking, the first term on the right-hand side of \eqref{diff functional candidate1} measures the $L^2$ distance between $v$ and $v_h$. On the other hand, by the Taylor sign condition, $p$ and $p_h$ are non-degenerate defining functions for $\Gamma_t$ and $\Gamma_{t,h}$, so the second term on the right-hand side of \eqref{diff functional candidate1} gives a measure of the distance between $\Gamma_t$ and $\Gamma_{t,h}$. In \Cref{DEAU}, we prove that \eqref{diff functional candidate1} does indeed act as a proper  measure of distance between solutions. More crucially, we prove that this distance is propagated by the flow, in the sense that 
\begin{equation}\label{Stab est1}
\frac{d}{dt} D((v,\Gamma),(v_h,\Gamma_h)) \lesssim_{A,A_h} (B_{\text{diff}}+B_{\text{diff},h}) D((v,\Gamma),(v_h,\Gamma_h)).
\end{equation}
Here, $A_h$ and $B_{\text{diff},h}$ are  the control parameters \eqref{ACONT} and \eqref{BCONT} corresponding to the solution $(v_h, \Gamma_{t,h})$. An immediate corollary of the stability estimate  \eqref{Stab est1} is the aforementioned \Cref{Uniqueness intro}. However, \eqref{Stab est1}  will also prove to be useful  in various other scenarios. For example, we will use it in our proof of the continuity of the data-to-solution map, as well as in the construction of rough solutions as unique limits of regular solutions. 

\subsubsection{Well-posedness}

 Our second main result is concerned with the well-posedness 
 problem. To fix the notations, we start with a collar neighborhood $\Lambda_*$ and $s>\frac{d}{2}+1$.  We then consider initial data $(v_0,\Gamma_0) \in \mathbf H^s$ with  $\Gamma_0 \in \Lambda_*$.  Viewing $\Gamma_0$ as a graph over $\Gamma_*$, we may unambiguously define its $H^s$ 
 norm.  
 With this setup, we may state our well-posedness theorem as follows:

\begin{theorem}[Hadamard local well-posedness]\label{MT}
    Fix $s>\frac{d}{2}+1$ and a collar $\Lambda_*$. For any $(v_0,\Gamma_0)$ in $\mathbf{H}^s$ 
    with $\Gamma_0 \in \Lambda_*$ there exists a time $T>0$,  depending only on $\|(v_0,\Gamma_0)\|_{\mathbf{H}^s}$ and the lower bound in the Taylor sign condition, for which there exists a unique  solution $(v(t),\Gamma_t)\in C([0,T];\mathbf{H}^s)$ to the free boundary Euler equations satisfying a proportional  uniform lower bound in the Taylor sign condition. Moreover, the data-to-solution map is continuous with respect to the $\mathbf{H}^s$ topology.
\end{theorem}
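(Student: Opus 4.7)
My plan is to follow the classical regularize-and-pass-to-the-limit scheme, but organized around the nonlinear distance functional $D$ and the scale-invariant energy estimates $\frac{d}{dt} E^k \lesssim_{A^\sharp} B^\sharp E^k$ that the introduction promises. Concretely, given initial data $(v_0,\Gamma_0)\in \mathbf{H}^s$ with $s>\frac{d}{2}+1$, I would (i) produce a family of regular approximate data $(v_0^n,\Gamma_0^n)\in \mathbf{H}^N$ with $N$ large, converging to $(v_0,\Gamma_0)$ in $\mathbf{H}^s$ and preserving (up to a harmless adjustment) the Taylor sign lower bound; (ii) invoke the construction of regular solutions to obtain $(v^n,\Gamma^n_t)\in C([0,T_n];\mathbf{H}^N)$; (iii) use the energy estimates at level $s$ together with Sobolev embedding $\mathbf{H}^s\hookrightarrow A^\sharp,B^\sharp$ to show that the lifespan $T_n$ is uniformly bounded below by some $T=T(\|(v_0,\Gamma_0)\|_{\mathbf{H}^s},c_0)>0$, with uniform $\mathbf{H}^s$ bounds on $[0,T]$; (iv) use the stability bound \eqref{Stab est1} to show $\{(v^n,\Gamma^n_t)\}$ is Cauchy in the weaker $D$-topology, producing a limit $(v,\Gamma_t)$; and (v) upgrade this convergence to $C([0,T];\mathbf{H}^s)$ by a frequency envelope argument. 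Uniqueness at this regularity is immediate from \Cref{Uniqueness intro}, since $H^s\hookrightarrow C^{1,\epsilon}\times W^{1,\infty}$ for $s>\frac{d}{2}+1$.

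\textbf{From regular to rough solutions.} The delicate point in step (iv) is that the approximants $(v^n,\Gamma^n_t)$ live on different domains $\Omega^n_t$, so $L^2$ differences only make sense on the intersections $\Omega^n_t\cap\Omega^m_t$; this is precisely the reason the functional $D$ in \eqref{diff functional candidate1} was designed. From the uniform $\mathbf{H}^s$ bound, both $A_\epsilon^n$ and $\int_0^T B_{\text{diff}}^n\,dt$ are uniformly controlled on $[0,T]$, so \eqref{Stab est1} gives
\[
D((v^n,\Gamma^n),(v^m,\Gamma^m))(t)\lesssim e^{Ct} D((v^n,\Gamma^n),(v^m,\Gamma^m))(0),
\]
and the right-hand side goes to zero because $(v_0^n,\Gamma_0^n)\to(v_0,\Gamma_0)$ in $\mathbf{H}^s$ (and hence in any weaker topology controlling $D$). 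Combined with the uniform $\mathbf{H}^s$ bound, a routine interpolation argument (plus weak-$*$ compactness of domains inside the fixed collar $\Lambda_*$) produces a limit $(v,\Gamma_t)\in L^\infty([0,T];\mathbf{H}^s)$ that solves \eqref{Euler}--\eqref{BC1} in the strong sense, preserves the Taylor sign condition, and satisfies weak continuity in time.

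\textbf{Continuous dependence and time continuity.} Promoting weak-$*$ to strong $C([0,T];\mathbf{H}^s)$ continuity, and simultaneously proving continuous dependence on the data, is the main obstacle. The issue is the classical one: the distance functional $D$ controls differences only at $L^2$ level, while energy estimates bound the $\mathbf{H}^s$ norm only on each solution individually. My plan is to run a frequency envelope argument of Bona-Smith type. Associate to $(v_0,\Gamma_0)$ an admissible frequency envelope $\{c_k\}_{k\ge 0}\in \ell^2$ with $\|P_{\ge k}(v_0,\Gamma_0)\|_{\mathbf{H}^s}\lesssim c_k$, and regularize at dyadic scale $2^k$ to obtain $(v^{(k)},\Gamma^{(k)})$. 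The energy estimates, applied at regularity $s+j$, should give $\|(v^{(k)},\Gamma^{(k)})\|_{L^\infty_T\mathbf{H}^{s+j}}\lesssim 2^{jk} c_k$, while the stability bound \eqref{Stab est1} applied between consecutive truncations gives $D((v^{(k)},\Gamma^{(k)}),(v^{(k+1)},\Gamma^{(k+1)}))\lesssim c_k^2 2^{-2sk}$; interpolating yields
\[
\|(v^{(k)},\Gamma^{(k)})-(v^{(k+1)},\Gamma^{(k+1)})\|_{L^\infty_T\mathbf{H}^s}\lesssim c_k,
\]
which produces a convergent telescoping decomposition of the rough solution, gives $\mathbf{H}^s$ continuity in time, and upgrades Cauchy-in-data to Cauchy-in-solution in the $C([0,T];\mathbf{H}^s)$ topology by reusing the same envelopes.

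\textbf{Main obstacles.} I expect the two hardest points are: (a) executing the frequency envelope step rigorously in the geometric setting, since standard Littlewood-Paley truncation does not respect the constraint $\Gamma\in\Lambda_*$ nor the Taylor sign condition, so the regularization has to be defined intrinsically (e.g.\ harmonic extension / normal-graph truncation) and one must verify that truncation errors in the energy estimate still obey the envelope bounds; and (b) closing the energy estimate with the sharp $B^\sharp$ control parameter without losing a half derivative, which the introduction flags as requiring a Beale-Kato-Majda-style treatment of the logarithmic divergence in the endpoint elliptic bound for $\|D_t a\|_{L^\infty}$. Once these are in place, the existence time, Taylor sign propagation, and $\mathbf{H}^s$ continuity of the flow all follow from the same scheme.
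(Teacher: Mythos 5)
Your proposal follows essentially the same architecture as the paper: dyadic regularization of the data, invocation of the regular existence theorem, a bootstrap giving uniform $\mathbf{H}^s$ bounds and lifespan, $L^2$-level Cauchy estimates via the distance functional $D$, and a Bona--Smith/frequency-envelope telescoping argument to upgrade to strong $C([0,T];\mathbf{H}^s)$ convergence and continuous dependence, with uniqueness from Theorem~\ref{Uniqueness intro}. One imprecision worth flagging: in your step~(iii) you write ``use the energy estimates at level~$s$'' to get the uniform lifespan, but the paper's Theorem~\ref{Energy est. thm} only furnishes energy estimates at \emph{integer} exponents~$k$, and the text is explicit that ``a priori, the $\mathbf{H}^s$ bounds for $(v_j,\Gamma_j)$ do not necessarily propagate for noninteger~$s$.'' The uniform $\mathbf{H}^s$ bound (and hence the uniform lifespan) itself has to be extracted from the same frequency-envelope combination of integer-level energy bounds and $L^2$ difference bounds — together with a simultaneous bootstrap propagating the control parameter $A$, the Taylor lower bound, and membership in the collar — so the frequency-envelope machinery is not merely an upgrade step at the end but is already needed to close the a priori bound. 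Your later paragraphs implicitly account for this, so it reads as an ordering/phrasing issue rather than a missing idea, but as stated step~(iii) would beg the question.
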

    The regularity of the velocity in \Cref{MT} matches the optimal Sobolev regularity for the Euler equations on $\mathbb{R}^d$. Indeed, as shown by Bourgain and Li  \cite{MR3359050}, the Euler equations are ill-posed in $H^s(\mathbb{R}^d)$ when $s=\frac{d}{2}+1$.
    \\
    
    We note crucially that our article is not the first to reach the $s>\frac{d}{2}+1$ Sobolev threshold for the free boundary Euler equations. Indeed, this threshold was achieved  for the first time  in the recent memoir  \cite{MR4263411}, in the case of an unbounded fluid domain with graph geometry. However, it is important to note that the approach in \cite{MR4263411} is very different from ours, as it passes through a paralinearization and utilizes properties of strip-like domains and Chemin-Lerner spaces. In particular, the approach in \cite{MR4263411} cannot be easily modified to the droplet problem, whereas our approach applies equally well in unbounded domains. Moreover, there is no mention of the continuity of the data-to-solution map  in \cite{MR4263411}. To the best of our knowledge, \Cref{MT} gives the \emph{first} proof of this important property for the free boundary Euler equations. In addition, our  approach  significantly refines the well-posedness theory by adding properties \eqref{ii}-\eqref{vi} above as well as introduces  an  entirely new set of techniques that we believe will have broad applications. 
    \\

  
  When it comes to free boundary problems, the continuity of the data-to-solution map -- if justified -- is usually proven by reformulating the problem on  a fixed domain and then working with the standard notion of continuous dependence on  fixed domains \cite{MR3535894,MR3593707}. As far as we are aware, the only exception to this appears in the work \cite{sz,MR2400608,MR2763036} of Shatah and Zeng, where continuous dependence is proven for the free boundary Euler equations with surface tension directly in the Eulerian setting. The drawback of Shatah and Zeng's proof, however, is that it relies crucially on the regularizing effect of surface tension,  which imposes a higher Sobolev regularity of the boundary relative
  to the interior velocity. This in turn allows for a bounded decoupling of the 
  velocity into rotational and irotational components, an idea which
  is not applicable to the pure gravity problem. In particular, Shatah and Zeng do not construct a distance functional, as we do here. For this reason, our robust proof which simultaneously avoids domain flattenings  and works on a  quasilinear problem without regularizing effects   can be seen as one of the main novelties of our paper.
\subsubsection{Energy estimates}\label{Subsection 1.2.2}
Controlling the  growth of solutions to our boundary value problem is essential for both local well-posedness and  understanding  potential blowup. This control is achieved via energy estimates. Due to the complex geometry of our problem, the first challenge  is to construct good 
energy functionals.
\\

Fix an integer $k\geq 0$. In light of \Cref{Uniqueness intro} and the stability estimate \eqref{Stab est1}, it is natural to try to construct an energy functional $E^k=E^k(v,\Gamma)$ satisfying $E^{k}(v,\Gamma)\approx_A \|(v,\Gamma)\|_{\mathbf{H}^k}^2$ and the estimate
\begin{equation*}
    \frac{d}{dt}E^k(v,\Gamma)\lesssim_A B_{\text{diff}}E^k(v,\Gamma).
\end{equation*}
Indeed, by Gr\"onwall's inequality, this would yield the bound
\begin{equation*}
    \|(v,\Gamma)(t)\|_{\mathbf{H}^k}^2\lesssim \exp\left({\int_{0}^{t}C_AB_{\text{diff}}(s)\,ds}\right) \|(v,\Gamma)(0)\|_{\mathbf{H}^k}^2,
\end{equation*}
for some constant $C_A$ depending only on $A$, the collar, and the verification of the Taylor sign condition. Morally speaking, such an estimate would  then allow one to conclude that solutions to the free boundary Euler equations with the Taylor sign condition can be continued as long $A$ remains bounded and $B_{\text{diff}}\in L^1_t$.
\\

However, there is one issue with the above estimates. Note that the control parameter $A$ in \eqref{ACONT} depends only on the H\"older norms of our main variables (the surface and the velocity) at (nearly) the correct scale. However,  the control parameter $B_{\text{diff}}$ in \eqref{BCONT} depends also on the auxiliary variable $D_tp.$ From the point of view of the analysis of the free boundary Euler equations, this is completely natural. Indeed, even at the level of the linearized equation, one sees that the uniform norm of $\nabla D_tp$ (or more specifically  the uniform norm of $D_ta$, but these are essentially equivalent) appears as a control parameter  for the $L^2$ energy estimates in \Cref{EE}.  On the other hand, for the purpose of providing a clear and physical description of how solutions to the free boundary Euler equations break down,  we would ultimately like to use the control parameter $B:=B^\sharp$ defined in \eqref{Bsharp}, which  depends only on the H\"older norms of $\Gamma$ and $v$. To achieve this, our key observation is that, as long as $k>\frac{d}{2}+1$, we can use a log of the  energy to absorb endpoint losses, and hence prove an estimate of the form
\begin{equation}\label{Remove loss}
\|D_tp\|_{W_x^{1,\infty}(\Omega_t)}\lesssim_A \log(1+E^k)B.
\end{equation}
 An estimate akin to \eqref{Remove loss} is not to be expected in the difference estimates, as the distance functional is too low of regularity to absorb the logarithmic divergences inevitably arising from $C^1$ and $W^{1,\infty}$ elliptic estimates. With the above discussion in mind, the actual energy estimates we prove can be essentially stated as follows.
\begin{theorem}[Energy estimates]\label{Energy est. thm intro}
Fix a collar neighborhood $\Lambda_*$,  let $s\in\mathbb{R}$ with  $s>\frac{d}{2}+1$ and let $k> \frac{d}{2}+1$ be an integer. Then for $\Gamma$ restricted to $\Lambda_*$ there exists an energy functional $\mathbf H^k \ni (v,\Gamma)\mapsto E^k(v,\Gamma)$ such that
\begin{enumerate}
\item (Energy coercivity).
\begin{equation}\label{Coercivity bound on integers1}
E^k(v,\Gamma)\approx_A \|(v,\Gamma)\|_{\mathbf{H}^k}^2.
\end{equation}
\item  (Energy propagation). If, in addition to the above, $(v,\Gamma)=(v(t),\Gamma_t)$ is a solution to the free boundary incompressible Euler equations, then $E^k(t):=E^k(v(t),\Gamma_t)$ satisfies 
\begin{equation}\label{EPFS}
\frac{d}{dt}E^k\lesssim_A B\log(1+\|(v,\Gamma)\|_{\mathbf{H}^s})E^k.
\end{equation}
\end{enumerate}
Here, $A$ is as in \eqref{ACONT} and $B=B^\sharp$.
\end{theorem}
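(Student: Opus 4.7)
The plan is to construct $E^k$ at each integer $k>\frac{d}{2}+1$ from a geometrically natural interior/boundary pairing suggested by the Hamiltonian structure of the problem, to extract the coercivity \eqref{Coercivity bound on integers1} from elliptic regularity, and to derive the propagation bound \eqref{EPFS} by combining a commutator-based energy identity with a Beale--Kato--Majda style logarithmic trick.

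For the construction, I would iterate the material derivative $D_t$ together with spatial derivatives adapted to the moving domain, obtaining an interior piece controlling $\|v\|_{H^k(\Omega_t)}^2$ (equivalently, a sum $\sum_{j}\|\nabla D_t^j v\|_{L^2(\Omega_t)}^2$ via \eqref{Euler}, the divergence-free constraint, and the vorticity transport equation \eqref{transport vort}), together with a boundary piece measuring tangential Sobolev derivatives of the $D_t^j p$'s weighted by the Taylor coefficient $a$. The weight $a$ is dictated by the symplectic structure at the boundary and is non-degenerate under the Taylor sign condition; lower-order correction terms are added to arrange exact cancellation of the top-order commutators in the time derivative. Coercivity \eqref{Coercivity bound on integers1} then follows from elliptic estimates for the Dirichlet problem \eqref{Euler-pressure} on the $C^{1,\epsilon}$-regular domain $\Omega_t$: since $p$ is a non-degenerate defining function for $\Gamma_t$, tangential Sobolev control of $p$ transfers to $\|\Gamma_t\|_{H^k}$, while the interior piece combined with a Hodge-type decomposition into divergence, curl and boundary normal component controls $\|v\|_{H^k(\Omega_t)}$. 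The $A$-dependence of all constants is tracked through these elliptic estimates on $C^{1,\epsilon}$ domains.

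For propagation, I would differentiate $E^k$ in time using the Reynolds transport formula and the commutation identity $[D_t,\nabla]=-(\nabla v)^{\top}\nabla$. By design, the principal interior/boundary terms cancel, exactly mirroring the linearized energy identity in \Cref{EE}. What remains are commutator terms bilinear in $\nabla v$ and $\nabla^{2}p$ times lower-order derivatives of the state, together with boundary contributions proportional to $D_t a$. Combining these and bounding the sub-principal quantities by $E^k$ through the elliptic theory gives an intermediate estimate of the form
\[
\frac{d}{dt}E^k \lesssim_A \bigl(B + \|D_t p\|_{W^{1,\infty}(\Omega_t)}\bigr)\, E^k,
\]
which is essentially the integer-$k$ analogue of a $B_{\text{diff}}$-type difference estimate.

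The main obstacle, and the central new ingredient, is upgrading this to \eqref{EPFS} by proving the key bound \eqref{Remove loss}, i.e.\ replacing $\|D_t p\|_{W^{1,\infty}(\Omega_t)}$ by $B\log(1+\|(v,\Gamma)\|_{\mathbf{H}^s})$. For this I would use that $D_t p$ solves an elliptic Dirichlet problem with (essentially) zero boundary data and right-hand side quadratic in $\nabla v$ and $\nabla^2 p$ (up to controllable corrections from the moving domain). A Littlewood--Paley decomposition of the resulting Poisson-type representation, with the frequency sum split at a scale $\sim\log(1+\|(v,\Gamma)\|_{\mathbf{H}^s})$ in the spirit of Beale--Kato--Majda \cite{MR763762}, converts the endpoint $L^\infty$ loss into a logarithmic factor while leaving only $B$-type norms on the outside; the fact that $s>\frac{d}{2}+1$ ensures the high-frequency tail is summable. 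Substituting back into the intermediate estimate yields \eqref{EPFS}. The delicate points I anticipate are: (i) keeping the commutator analysis uniform in $k$ and compatible with the limited $C^{1,\epsilon}$ regularity of $\Omega_t$; and (ii) executing the logarithmic elliptic estimate on a non-smooth domain, which forces a careful near-boundary harmonic analysis in place of the usual smooth Littlewood--Paley calculus.
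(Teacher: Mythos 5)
Your high-level strategy — a coercive weighted energy, a commutator cancellation argument mirroring the linearized estimate, and a Beale--Kato--Majda frequency split to replace $\|D_tp\|_{W^{1,\infty}}$ by $B\log(1+\|(v,\Gamma)\|_{\mathbf{H}^s})$ — is indeed the paper's, and that last logarithmic bound is exactly \Cref{Linfest2}. But your energy construction is different from the paper's and, as written, has a concrete defect. The paper does not iterate $D_t$: its energy is built from \emph{fixed-time} Alinhac good variables $\omega\in H^{k-1}(\Omega)$, $a\in H^{k-1}(\Gamma)$, $D_ta\in H^{k-\frac32}(\Gamma)$, lifted to $\mathbf{H}^k$ regularity by the integer elliptic operators $\mathcal{N}^{k-1}$ and $\nabla\mathcal{H}\mathcal{N}^{k-2}$, so that $E^k=1+\|v\|_{L^2}^2+\|\omega\|_{H^{k-1}}^2+E_{lin}(w_k,s_k)$ with $w_k=\nabla\mathcal{H}\mathcal{N}^{k-2}D_ta$ and $s_k=\mathcal{N}^{k-1}a$. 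These are then shown to satisfy the generalized linearized system \eqref{DM lin} up to perturbative source terms, so Proposition~\ref{EE} drives propagation. Your route of iterating $D_t^j$ on $v$ and $p$ is essentially the wave-equation-for-$a$ approach that Section~\ref{HEB} explicitly considers and rejects: iterating $D_t^2a\approx -a\mathcal{N}a$ forces fractional powers of $\mathcal{N}$, which do not mesh with the balanced elliptic estimates of Section~\ref{BEE}. Keeping to integer powers of $\mathcal{N}$ applied to $(a,D_ta)$ is a deliberate design choice there, not an equivalent repackaging.

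There is also a concrete flaw in your boundary term. "Tangential Sobolev derivatives of the $D_t^jp$'s" vanish identically: since $p|_{\Gamma_t}=0$ and $D_t$ is tangent to $\bigcup_t\{t\}\times\Gamma_t$, one has $D_t^jp|_{\Gamma_t}=0$ for all $j$, hence $\nabla^\top D_t^jp|_{\Gamma_t}=0$. All the nontrivial boundary data sits in the normal derivative — $a=-n_{\Gamma_t}\cdot\nabla p$ and $D_ta=-n_{\Gamma_t}\cdot D_t\nabla p$ — and this is precisely why the paper's boundary good variables are these two quantities. The same slip appears in your coercivity argument ("tangential Sobolev control of $p$ transfers to $\|\Gamma_t\|_{H^k}$"): what carries the surface regularity is the \emph{normal} derivative, via the relation $\mathcal{N}a\approx a\kappa$ and \Cref{curvaturebound}, which transfers $\|a\|_{H^{k-1}(\Gamma)}$ to $\|\Gamma_t\|_{H^k}$. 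Once you replace "tangential derivatives of $D_t^jp$" by $(a,D_ta)$ and express the boundary Sobolev scale through powers of $\mathcal{N}$ rather than through $D_t$, you land back on the paper's construction; without that correction the proposed boundary energy is degenerate and coercivity fails.
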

By Gr\"onwall's inequality, \eqref{Coercivity bound on integers1} and \eqref{EPFS}  yield the following single and double exponential bounds of the type
\begin{equation}
\begin{split}
\label{doubleexpbound1}
\|(v(t),\Gamma_t)\|^2_{\mathbf{H}^k}&\lesssim_A\exp\left(\int_{0}^{t}C_AB\log(1+\|(v,\Gamma)\|_{\mathbf{H}^s})ds\right)\|(v_0,\Gamma_0)\|_{\mathbf{H}^k}^2,
\\
\|(v(t),\Gamma_t)\|^2_{\mathbf{H}^k}&\lesssim_{A} 
 \exp\left(\log(1+C_A\|(v_0,\Gamma_0)\|_{\mathbf{H}^k}^2)\exp{\int_{0}^{t}C_AB\,ds}\right),
 \end{split}
\end{equation}
for all integers $k>\frac{d}{2}+1$.  We do not directly prove the analogue of Theorem~\ref{Energy est. thm intro} for noninteger exponents $k$. Nevertheless, as a consequence of our analysis in the last section of the paper, we do obtain
the bounds \eqref{doubleexpbound1} also for noninteger $k$.
This is achieved by using frequency envelopes in order to combine the distance functional and the energy estimates akin to a nonlinear Littlewood-Paley type theory. It is also worth noting that a similar double exponential growth rate for the $L^1_TL^\infty_x$ norm of the vorticity appears in the classical  Beale-Kato-Majda \cite{MR763762} criteria as a consequence of trying to weaken the natural control parameters of the problem.
\\

In order to understand the form of the energy functionals used in \Cref{Energy est. thm intro},  a key step is to identify Alinhac style 
\emph{good variables} for the problem, which are as follows:

\begin{enumerate}[label=(\roman*)]
    \item The vorticity $\omega$, which is measured in $H^{k-1}(\Omega)$.
    \item The Taylor coefficient $a$, which is measured
    in $H^{k-1}(\Gamma)$.
    \item The material derivative $D_t a$ of the Taylor coefficient, which is measured
    in $H^{k-\frac32}(\Gamma)$.
    \end{enumerate}
 Our energy functionals are  constructed as certain combinations 
of well-chosen norms of the above good variables. The general strategy for constructing these norms is to  apply appropriate vector fields and elliptic operators to $\omega$, $a$ and $D_ta$ at the $\mathbf{H}^k$ regularity in such a way that the resulting variables solve the linearized equation to leading order. After this, the nonlinear energy $E^k$  may be essentially defined as the linear energy evaluated at these good variables. As it turns out, after completing  this process, we arrived at  essentially the same energy as \cite{MR3925531}, which was derived by different means. However, as can be immediately inferred from our control norms, the way we treat the energy is very different from \cite{MR3925531}. Indeed, without going into details, we mention that the proof  of \Cref{Energy est. thm intro} requires not only a delicate analysis of the fine structure and cancellations present in the free boundary Euler equations, but also the use of a new family of refined elliptic estimates. Although we refrain from stating them here in the introduction, these elliptic estimates  serve as an important part of the paper. Moreover, since they are quite general, we believe that they will prove to be useful in other problems as well. 

\bigskip

\subsubsection{Low regularity continuation criterion}\label{lrcc}

A very natural objective in the study of the  Euler equations is to find a geometric characterization of how solutions break down. For the Euler equations without free boundary, this direction traces back to the famous paper of Beale, Kato and Majda \cite{MR763762}. In recent years, interest in sharp blow up criterion for the free boundary Euler equations has risen, and progress has been made by de Poyferr\'e \cite{de2014blow}, Ginsberg \cite{MR4272912}, Wang and Zhang \cite{MR3585049} and  Wang, Zhang, Zhao and Zheng \cite{MR4263411}.  Here, we explain our  rather definitive answer to this question, which is essentially a consequence of our local well-posedness result 
in Theorem~\ref{MT} and   the energy estimates in Theorem~\ref{Energy est. thm intro}. However,  to avoid topological issues, we must first introduce a notion of thickness for the fluid domain.
\begin{definition}\label{d:thickness}
The fluid domain $\Omega$ has thickness at least   $R > 0$ if  for each $x \in \Gamma$,  $B(x,R) \cap \Gamma$ is the graph of a $C^{1,\epsilon}$ function which separates $B(x,R)$ into two connected components.
\end{definition}

With this notion in hand, our continuation criterion reads as follows:

\begin{theorem}[Continuation criterion]\label{cont crit intro}
A solution $(v,\Gamma) \in C(\mathbf H^s)$, $s>\frac{d}{2}+1$, of the free boundary incompressible Euler equations with the Taylor sign condition can be continued
for as long as the following properties hold:
\begin{enumerate}[label=\alph*)]
\item\label{a2} (Uniform bound from below for the Taylor coefficient). There is a $c>0$ such that 
\[
a \geq c > 0.
\]
\item\label{b2} (Uniform thickness). There is an $R>0$ such that $\Omega_t$ has thickness at least $R$.
\item (Control parameter bounds). The control parameters satisfy
\[
A \in L^\infty_t, \qquad B \in L^1_t.
\]
\end{enumerate}

\end{theorem}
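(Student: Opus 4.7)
I argue by contradiction. Suppose the solution has maximal time of existence $T_* < \infty$ in $C([0,T_*);\mathbf{H}^s)$ and that \ref{a2}, \ref{b2} and the control parameter bounds (c) persist up to $T_*$. To extend past $T_*$ via \Cref{MT} I need three ingredients at the time $T_*$: a quantitative lower bound on the Taylor coefficient, a common collar $\Lambda_*$ containing every $\Gamma_t$ for $t \in [0,T_*)$, and a uniform bound on $\|(v(t),\Gamma_t)\|_{\mathbf{H}^s}$. The first is immediate from \ref{a2}. For the second, the thickness hypothesis \ref{b2} is genuinely quantitative -- at each $x \in \Gamma_t$ the boundary is a $C^{1,\epsilon}$ graph over a ball of fixed radius $R$ -- and $A \in L^\infty_t$ supplies uniform control on the $C^{1,\epsilon}$ norm of such graphs. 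Together with the conservation of the domain volume, this precludes pinching, shrinking and topological change, and a standard compactness argument produces a smooth reference hypersurface $\Gamma_*$ and a collar $\Lambda_*$ containing every $\Gamma_t$, $t < T_*$.

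\textbf{Uniform Sobolev control.} Fix an integer $k > \tfrac{d}{2}+1$ with $k \ge s$. Because $C_A$ stays uniformly bounded on $[0,T_*)$ by \ref{a2}--(c), and because $B \in L^1_t$, the double exponential bound
\begin{equation*}
\|(v(t),\Gamma_t)\|_{\mathbf{H}^k}^2 \lesssim_A \exp\!\left(\log\big(1 + C_A\|(v_0,\Gamma_0)\|_{\mathbf{H}^k}^2\big)\exp \int_0^t C_A B\,ds\right)
\end{equation*}
of \Cref{Energy est. thm intro} remains finite as $t \uparrow T_*$. For non-integer $s$ the analogous $\mathbf{H}^s$ bound is obtained by the frequency envelope argument announced at the end of \Cref{Subsection 1.2.2}: the integer-level energy estimates combine with the difference bound \eqref{Stab est1} to propagate a fixed $\mathbf{H}^s$ envelope for $(v(t),\Gamma_t)$, delivering $M := \sup_{t < T_*}\|(v(t),\Gamma_t)\|_{\mathbf{H}^s} < \infty$.

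\textbf{Passing to the limit and extending.} With the uniform bounds $M$ and $c$ and the collar $\Lambda_*$ in hand, the solution admits a limit $(v_*,\Gamma_*) \in \mathbf{H}^s$ as $t \uparrow T_*$. Weak-$*$ compactness -- applied to $\Gamma_t$ viewed as a graph over $\Gamma_*$ and to appropriate extensions of $v(t)$ to a fixed ambient domain -- extracts a candidate limit along subsequences, while the stability estimate \eqref{Stab est1} (applied to the solution against its time translates, on the overlap of their fluid domains) identifies this limit uniquely and promotes convergence in $D$ to convergence in $\mathbf{H}^s$. Since $\Gamma_* \in \Lambda_*$ and the Taylor sign condition passes to the limit with constant $c$, the datum $(v_*,\Gamma_*)$ lies in the hypothesis of \Cref{MT}, which produces a solution on $[T_*, T_* + \tau]$ with $\tau > 0$ depending only on $M$ and $c$. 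Gluing this continuation to the original solution and invoking the uniqueness statement of \Cref{Uniqueness intro} contradicts the maximality of $T_*$.

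\textbf{Main obstacle.} The genuine difficulty lies in the limit passage and in the non-integer case. Because the domains themselves are unknown and moving, there is no fixed ambient Banach space in which to take a weak limit at $T_*$: one must simultaneously pass to the limit in the boundaries and in the velocities, which live on $t$-dependent domains, and verify that the limit retains the full $\mathbf{H}^s$ regularity rather than only the regularity in which convergence is strong. The combined use of the distance functional $D$ (to obtain quantitative Cauchy behavior on overlapping domains) and the frequency envelope bookkeeping (to transfer integer-level energy bounds of \Cref{Energy est. thm intro} to the non-integer Sobolev scale and to read off the correct limit regularity) is what makes this step rigorous.
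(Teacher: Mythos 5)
Your overall architecture — get a collar from thickness and $A\in L^\infty_t$, propagate integer $\mathbf H^k$ bounds via the double exponential estimate, lift to non-integer $\mathbf H^s$ via frequency envelopes plus the stability bound, then restart via local well-posedness — matches the paper's plan. However, there is a concrete gap you do not address, and it is precisely the obstruction the paper spends most of its effort on.

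The stability estimate \eqref{Stab est1} propagates the distance functional at the rate of the control parameter $B_{\text{diff}} = \|v\|_{W^{1,\infty}} + \|D_tp\|_{W^{1,\infty}} + \|\Gamma\|_{C^{1,1/2}}$, which is strictly stronger than the $B$ appearing in the continuation criterion. The criterion only assumes $B\in L^1_t$, so you have no a priori control on $\int \|D_tp\|_{W^{1,\infty}}\,dt$. The paper bridges this via \Cref{Linfest2}, which gives $\|D_tp\|_{W^{1,\infty}} \lesssim_A B\log(1+\|(v,\Gamma)\|_{\mathbf H^s})$ — but this introduces a logarithm of the very quantity you are trying to bound, creating a circularity. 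The paper breaks it by reinitializing at a time $\tilde T$ close to $T$ chosen so that $\int_{\tilde T}^T B\,dt < \delta_0$ for a $\delta_0$ depending only on the control parameters and the Taylor lower bound, and then running a bootstrap on the regularized solutions $(v_j,\Gamma_j)$ in which the $\mathbf H^k$ bound from the double exponential estimate is allowed to degrade at the rate $2^{j\delta}$ (the $\exp(K\delta_0)$ factor is only a small polynomial loss because $\delta_0$ is small). Your proposal applies \eqref{Stab est1} directly to the solution against its time translates without noticing this mismatch, so the envelope propagation you invoke is not justified by the hypotheses; as written, the argument for $M := \sup_{t<T_*}\|(v(t),\Gamma_t)\|_{\mathbf H^s} < \infty$ does not close.

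A secondary point: once the uniform bounds on the regularized solutions are secured, the paper does not take a weak limit and identify it via stability; it instead bootstraps the control parameters $A_j,B_j,a_j$ and the collar membership of the regularized solutions, then invokes continuous dependence. Your weak-* compactness route could in principle work given the uniform $\mathbf H^s$ bound, but the genuine content is establishing that bound, and that is where the $B$ versus $B_{\text{diff}}$ subtlety must be handled.
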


One may compare our continuation criteria for the free boundary problem with the classical Beale-Kato-Majda criteria 
for the boundaryless problem and note that they are essentially at the same level, with the natural addition of 
the $C^{1,\frac12}$ boundary regularity bound. Another minor difference is that we use the Lipschitz bound on the velocity $v$ rather than the uniform bound on the vorticity $\omega$.
One may ask whether it is possible to further relax our 
criterion in order to use only the vorticity bound. The major obstruction is that while in fixed domains the vorticity uniquely determines 
the velocity, in our case an appropriate boundary condition is also needed, which is best described via the $D_t a$ good variable. So, a potential conjecture might be that in order 
to use only the vorticity bound in the interior, one might have to compensate by adding a uniform bound on $D_t a$, as seen in the linear control parameter $B_{lin}$ and in the difference estimates. That being said, in this paper we have opted for a continuation criteria   involving only the natural variables $v$ and $\Gamma$ and no auxiliary pressure related terms.
\\


As mentioned above, several recent articles \cite{de2014blow, MR4272912, MR3585049, MR4263411} have focused on  obtaining improved continuation criterion for the free boundary Euler equations. The most significant of these contributions is the  memoir \cite{MR4263411}, which proves that $H^k$ solutions to the free boundary Euler equations with the Taylor sign condition can be continued after $t=T$ as long as properties \ref{a2} and \ref{b2} in \Cref{cont crit intro} hold and
\begin{equation}\label{mem conti}
    \sup_{t\in [0,T]}\left(\|\kappa(t)\|_{(L^p\cap L^2)(\Gamma_t)}+\|v(t)\|_{W^{1,\infty}(\Omega_t)}\right)<\infty \ \text{for some} \  p>2d-2.
\end{equation} Here, $\kappa$ denotes the mean curvature of the surface. To motivate their result, \cite{MR4263411}  recalls a question of Craig and Wayne \cite{MR2355420}, which asks  one to find (in the context of the irrotational water waves problem) the lowest H\"older regularity of the surface and velocity potential whose boundedness on $[0,T]$ implies that one can continue the solution past $t=T$.  Although \eqref{mem conti} makes significant progress on this question, it fails to achieve purely pointwise norms and is far from scale invariant. Moreover, the criterion \eqref{mem conti} only applies to solutions which a priori live in integer based Sobolev spaces $H^k$. This limits the applicability of \eqref{mem conti} to solutions with at least a half derivative of excess  regularity. In contrast, \Cref{cont crit intro} replaces the criterion  $v\in L^\infty_TW^{1,\infty}_x$ by the sharp and scale invariant criterion $v\in L^1_TW^{1,\infty}_x$, and  only requires control of H\"older norms of the free surface at the correct scale.  In particular, \Cref{cont crit intro}  gives a rather definitive answer to Craig and Wayne's question for the full free boundary Euler equations. For the state-of-the-art result for the two-dimensional irrotational water waves problem, see \cite{ai2019two}. Also, note that \Cref{cont crit intro}  applies to solutions in all Sobolev spaces $\mathbf{H}^s$ with $s>\frac{d}{2}+1$, not just to those in integer spaces. This improvement is by no means trivial; rather, it follows from a careful usage of our distance functional.
\\

\subsection{Outline of the paper} The article has a modular structure, where, for the essential part, only the main results of each section are used later.
\subsubsection{The linearized equations}
The starting point for our analysis, in \Cref{linearized section}, is to derive the linearization of our problem in Eulerian coordinates. The linearized system will serve as a guide to several of the choices made in our nonlinear analysis. In particular, it will suggest the correct variables to use, as well as the form of our distance functional. Moreover, when  proving energy estimates, the Alinhac style good variables we construct will be shown to  solve the linearized equations to leading order. This is also where 
the control parameters $A$ and $B_{lin}$ (an enhanced version of $B$) make their first appearance.  

\subsubsection{Function spaces and the geometry of moving domains} \Cref{AOMD}  describes the appropriate functional setting for our analysis. We begin by setting up a basic framework for our problem, including introducing low regularity control neighborhoods which will allow us to establish uniform control over constants in  Sobolev and elliptic estimates in certain topologies for an appropriate family of domains. After  defining the function spaces and norms that we will be using, we define the state space $\mathbf{H}^s$ where we will seek solutions to the free boundary Euler equations. Unlike in problems on fixed domains, the state space $\mathbf{H}^s$ will not be linear. However, it will be equipped with an appropriate notion of convergence, allowing us to define continuity of functions with values in $\mathbf{H}^s$ as well  continuity of the data-to-solution map.

\subsubsection{Stability estimates and uniqueness} The aim of \Cref{DEAU} is to construct a nonlinear distance functional which will allow us to track the distance between two solutions at very low regularity. The general scheme  is akin to the difference bounds in a weaker topology which are common in the study of quasilinear problems on fixed domains. However, here there are fundamental difficulties to overcome, as we are seeking to not only compare functions on different domains, but also  track the evolution in time of this distance. These difficulties are embedded into the nonlinear character of our distance functional; both careful choices and delicate estimates are required to propagate this distance forward in time. To the best of our knowledge, this is only the second time difference estimates have been successfully proven  in the free boundary setting. The other successful execution, which conceptually inspired the present approach, was  in the case of a compressible gas \cite{disconzi2020relativistic,ifrim2020compressible}, which is very different from the incompressible liquid we consider here. In particular, unlike in the gas case, the boundary of our fluid contains non-trivial energy,   requiring  interesting geometric insights to understand. 
\\

As a consequence of our stability estimates, we  deduce uniqueness of solutions at very low regularity. Also, as we shall see in later sections, the low regularity distance bounds we prove will serve both as  an essential building block in our construction of rough solutions as unique limits of regular solutions as well as in the proof of the continuity of the data-to-solution map.

\subsubsection{Elliptic theory} The main goal of \Cref{BEE} is to introduce a new family of refined elliptic estimates which will be crucial for obtaining the sharp pointwise control norms in the higher energy bounds. The secondary objective of \Cref{BEE} is to define a relevant Littlewood-Paley theory, collect various ``balanced" product, Moser and Sobolev type estimates, and note several identities for operators and functions defined on moving domains. For the most part, the material  in \Cref{BEE}  does not rely on any   specific  structure of the Euler equations, so should be applicable to other free boundary problems as well. In \Cref{SSRO}, we construct the regularization operators which we will need for our existence scheme and the frequency envelopes for states $(v,\Gamma)\in \mathbf{H}^s$ that we will use to establish the refined properties of the data-to-solution map.

\subsubsection{Energy estimates}  In \Cref{HEB} we establish energy estimates within the $\mathbf{H}^k$ scale of spaces.  As a first step, we  construct a coercive energy functional $(v,\Gamma)\mapsto E^k(v,\Gamma)$  associated to each integer $k >\frac{d}{2} +1$. The scheme here is to identify  Alinhac style ``good variables” $(w_k,s_k)$ which solve the linearized equation modulo perturbative source terms. We then define our energy as the sum of the rotational energy and the linearized energy evaluated at these good variables. To prove the energy estimates, we  split the argument in a modular fashion into two parts. First, we prove the coercivity of our energy functional; that is, we show that $E^k(v,\Gamma)\approx \|(v,\Gamma)\|_{\mathbf{H}^k}^2$. After this, we track the time evolution of the energy, establishing control of $E^k(v,\Gamma)$ in terms of the initial data, with growth  dictated by the pointwise control parameters $A$ and $B$. Both steps of this argument are delicate. In particular, the former makes extensive use  of the refined elliptic estimates from \Cref{BEE}, and the latter requires us to identify and exploit various structural properties and fine cancellations present in the Euler equations.

\subsubsection{Construction of regular solutions} \Cref{Existence section} is devoted to the construction of regular solutions to the free boundary Euler equations. The  overarching scheme we utilize is similar to \cite{ifrim2020compressible}, which analyzed the case of a compressible gas. However, we stress that the main difficulties in the incompressible liquid case are quite different than for the gas, especially near the free boundary, as the surface of a liquid carries a non-trivial energy. As a general overview, the scheme we utilize  is constructive, employing a time discretization via an Euler type method together with a separate transport step to produce good approximate solutions. However,
a na\"ive implementation of Euler’s method loses derivatives.
To overcome this, we  ameliorate the derivative loss by an initial regularization of each iterate in our discretization. To ensure that the uniform energy bounds survive, such a regularization needs to be chosen carefully. For this, we employ  a modular approach and try to decouple this process into two steps, where we regularize individually the domain and  the velocity. 
We believe that this modular approach will serve as a recipe for a new and relatively simple method for constructing solutions to various free boundary problems. That being said, the execution of this scheme is still quite subtle, requiring several novel ideas in addition to those coming from \cite{ifrim2020compressible}.

\subsubsection{Rough solutions and continuous dependence}
 The last section of the paper aims to construct rough solutions as strong limits of smooth solutions. This is achieved by considering a family of dyadic regularizations of the initial data, which generate corresponding smooth solutions. For these smooth solutions we control on one hand higher Sobolev norms $\mathbf{H}^k$, using our energy estimates, and on the other hand the $L^2$ type distance between consecutive ones, from our difference estimates. Combining the high and the low regularity bounds directly yields rapid convergence in all $\mathbf{H}^l$ spaces for  $l<k$. To gain strong convergence in $\mathbf{H}^k$, we use frequency envelopes to more accurately control both the low and the high Sobolev norms above. This allows us to bound differences in the strong $\mathbf{H}^k$ topology. Interpolation and a similar argument yields local existence in fractional Sobolev spaces as well as continuous dependence of the solutions in terms of the initial data  in the strong topology. Finally,  our main continuation result in \Cref{cont crit intro} follows along similar lines, given the careful treatment of our control norms in the energy and difference estimates. 


 \medskip
 
 For problems on $\mathbb{R}^d$, the scheme outlined above for obtaining rough solutions from smooth solutions, good energy estimates and difference estimates is more classical; see the expository article \cite{IT-primer}, the article \cite{alazard2024nonlinear} for a more abstract formulation of the method and \cite{MR4331023,MR4830552} for some recent applications. However, as we shall see, the fact that  solutions are all defined on different domains leads to some new subtleties in our free boundary setting.



\subsection{Acknowledgments}
The first author was supported  by the  NSF CAREER grant DMS-1845037, by the Sloan Foundation, by the Miller Foundation and by a Simons Fellowship. The other three authors were supported by the NSF grant DMS-2054975 as well as by a Simons Investigator grant from the Simons Foundation. 
Some of this work was carried out while all four authors were in residence at the Simons Laufer Mathematical Sciences Institute (formerly MSRI) in Berkeley, California, during the summer of 2023, 
participating in the program ``Mathematical problems in fluid dynamics, Part II", which
 was supported by the National Science Foundation under Grant No.~DMS-1928930. This work forms part of the second authors' PhD thesis \cite{MR4820290}.

\section{The linearized equation}\label{linearized section}
The first goal of this section is to formally derive the linearization of our problem, working entirely in Eulerian coordinates;
this is the system of equations \eqref{DM lin1}.
Then, we prove Theorem~\ref{t:wp-lin}, which asserts that the linearized system is well-posed in $L^2$, with energy bounds 
determined by our sharp control parameters.
The key elements here are the linearized energy \eqref{linearized en} and the basic energy estimate \eqref{EE for DMlin}. 
\\

 Conceptually, the linearized system is 
 an essential piece of the puzzle. On a practical level, however, it is not immediately useful in proving well-posedness,
 as it is not clear that $C^1$ one parameter families of solutions exist in the first place. It is only a posteriori, after well-posedness
 is established, that the linearized energy estimates may be used to derive bounds for differences of solutions.
Instead, we will use
our understanding of the linearized system 
to guide us in our choice of distance functional
in  \Cref{DEAU} and later in our choice of  energy functionals  in \Cref{HEB}.
\\

To derive the linearized system, we take a one parameter family of solutions $(v_h,p_h)$ defined on domains $\Omega_{t,h}$, with $(v_0,p_0):=(v,p)$ and $\Omega_{t,0}:=\Omega_t$. We define $w=\partial_hv_h |_{h=0}$ and $q=\partial_hp_h|_{h=0}$.
\\

In $\Omega_t$, the linearized equation is rather standard: 
\begin{equation*}\label{Euler linearized}
  \begin{cases}
    &\partial_tw+w\cdot \nabla v+v\cdot\nabla w =-\nabla q,
    \\  
    &\nabla\cdot w=0.
    \end{cases}
\end{equation*}
However, we also need to linearize the kinematic and dynamic boundary conditions on the surface $\Gamma_t$. For this, let us denote by $\Gamma_{t,h}$ the free surface at time $t$ for the solution $(v_h,p_h)$, so $\Gamma_{t,0}:=\Gamma_{t}$. Fix a one parameter family of diffeomorphisms $\phi_h(t):\Gamma_t\to \Gamma_{t,h}$, with $\phi_0(t)=Id_{\Gamma_t}$. The dynamic boundary condition \eqref{BC1} asserts that for every point $x\in \Gamma_t$,
$$p_h(t,\phi_h(t)(x))=0.$$
Differentiating in $h$ and evaluating at $h=0$ gives
$$q|_{\Gamma_t}=-\nabla p|_{\Gamma_t}\cdot \psi(t),$$
where $\psi(t):=\frac{\partial}{\partial h}\phi_h(t)|_{h=0}$.
Using that $\nabla p|_{\Gamma_t}$ is normal to $\Gamma_{t}$  we deduce that
\begin{equation}\label{Def of s}
q|_{\Gamma_t}=-\nabla p|_{\Gamma_t}\cdot n_{\Gamma_t} \psi(t)\cdot n_{\Gamma_t}=:as.\end{equation}
Here, we define $s:=\psi(t)\cdot n_{\Gamma_t}$ which we loosely interpret as the normal velocity in the parameter $h$ of the family $\Gamma_{t,h}$ at $h=0$. We will use this as one of our linearized variables. Note that since $a>0$, $s$ does not depend on the choice of diffeomorphisms $\phi_h(t)$.
\\

Next, we linearize the kinematic boundary condition. Analogously to $v\cdot n_{\Gamma_t}$ describing the normal velocity of the free surface, we expect $w\cdot n_{\Gamma_t}$ to describe the ``normal velocity" of our linearized variable $s$. Therefore, up to a perturbative error, $D_ts$ should agree with $w\cdot n_{\Gamma_t}$. In fact, we obtain the  relation
\begin{equation}\label{Transport equation 1}
D_ts-w\cdot n_{\Gamma_t}=s(n_{\Gamma_t}\cdot\nabla v)\cdot n_{\Gamma_t}.
\end{equation}
To derive \eqref{Transport equation 1}, we note that \eqref{bb} and \eqref{BC1} imply that
\begin{equation}\label{Mat pressure}
    D_tp=0 \ \ \ \text{on} \ \Gamma_t.
\end{equation}
This is the equation that we will linearize to obtain \eqref{Transport equation 1}. As before, let $\phi_h(t):\Gamma_t\to\Gamma_{t,h}$ be a diffeomorphism. We then have for $x\in \Gamma_t,$
\begin{equation*}
    [(\partial_t+v_h\cdot \nabla)p_h](t,\phi_h(t)(x))=0.
\end{equation*}
Taking $h$ derivative and evaluating at $h=0$ yields,
\begin{equation}\label{Deriving 2.3}
    w\cdot\nabla p+D_tq+\nabla D_tp\cdot \psi=0 \ \ \text{on} \ \Gamma_t.
\end{equation}
Using \eqref{Def of s}, and that $\nabla D_tp$ is normal to $\Gamma_t$ by \eqref{Mat pressure}, we deduce \eqref{Transport equation 1} from \eqref{Deriving 2.3} after some simple algebraic manipulation. Indeed, we have $\nabla p|_{\Gamma_t}=-an_{\Gamma_t}$. Then using the relation $q_{|\Gamma_t}=as$, we compute $D_tq=aD_ts+sD_ta$. This reduces \eqref{Deriving 2.3} to
\begin{equation}\label{Deriving 2.3 2}
    -aw\cdot n_{\Gamma_t}+aD_ts+sD_ta+s\nabla D_tp\cdot n_{\Gamma_t}=0.
\end{equation}
After division by $a$, the first two terms in \eqref{Deriving 2.3 2} evidently align with the left-hand side of \eqref{Transport equation 1}. The right-hand side of \eqref{Transport equation 1} appears by  commuting the gradient with the material derivative in the last term of \eqref{Deriving 2.3 2}, and by using the fact that  $\nabla p\cdot D_t n_{\Gamma_t}=0$ to rewrite $sD_ta=-sD_t(\nabla p\cdot n_{\Gamma_t})=-sD_t\nabla p\cdot n_{\Gamma_t}$. 
\\

Putting everything together, the linearized system takes the form:
\begin{equation}\label{DM lin1}
\left\{
\begin{aligned}
 &D_t w+ \nabla q = - w\cdot \nabla v \  \ \text{in} \ \Omega_t,
    \\  
    &\nabla\cdot w=0 \ \  \text{in} \  \Omega_t,  
\\
&     D_t s -     w \cdot n_{\Gamma_t} = s (n_{\Gamma_t}\cdot \nabla v)\cdot n_{\Gamma_t}\ \ \text{on} \ \Gamma_t,
\\
&    q = as \ \ \text{on} \ \Gamma_t,
\end{aligned}
\right. 
\end{equation}
where the terms on the right-hand side can be viewed as perturbative source terms. 
\\

In order to study the well-posedness of the linearized system \eqref{DM lin1}, we introduce an enhanced version $B_{lin}$ of the control parameter $B^\sharp$:
\begin{equation}\label{Lin control par}
\begin{split}
B_{lin}(t):=\|a^{-1}D_ta\|_{L^{\infty}(\Gamma_t)}+\|\nabla v\|_{L^{\infty}(\Omega_t)}.
\end{split}
\end{equation}
Using this, we may state our main linearized well-posedness result as follows.
\begin{theorem} \label{t:wp-lin}
Let $(v,\Gamma)$ be a solution to the 
free boundary incompressible Euler equations
in a time interval $[0,T]$
so that $a>0$, $A^\sharp$ stays uniformly bounded 
and $B_{lin} \in L^1_T$. Then the linearized 
system \eqref{DM lin1} for $(w,s)$ is 
well-posed in $L^2(\Omega) \times L^2(\Gamma)$
in $[0,T]$.
\end{theorem}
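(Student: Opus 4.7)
The plan is to prove $L^2$ well-posedness via an energy estimate adapted to the geometry of the moving boundary, and then obtain existence by a standard approximation procedure. The structure of the boundary conditions --- namely $q|_{\Gamma_t} = as$ together with $D_ts - w\cdot n_{\Gamma_t}$ being of order $s$ --- strongly suggests the weighted energy
\begin{equation*}
E_{lin}(w,s) := \tfrac12\int_{\Omega_t}|w|^2\,dx + \tfrac12\int_{\Gamma_t} a\,s^2\,dS,
\end{equation*}
which is coercive on $L^2(\Omega_t)\times L^2(\Gamma_t)$ by the Taylor sign condition $a \ge c_0 > 0$.

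The central step is the a priori bound $\tfrac{d}{dt}E_{lin} \lesssim B_{lin}\,E_{lin}$. I would differentiate each piece of $E_{lin}$ using Reynolds' transport theorem. Since $\Omega_t$ is transported by the divergence-free field $v$, the interior piece reduces to $\int_{\Omega_t} w\cdot D_tw\,dx$; substituting the momentum equation and integrating by parts in $\nabla q$ (using $\nabla\cdot w = 0$ and $q|_{\Gamma_t} = as$) produces a boundary contribution $-\int_{\Gamma_t} as\,(w\cdot n_{\Gamma_t})\,dS$. For the surface piece, the moving-surface transport identity applies with surface divergence $\text{div}_{\Gamma_t} v = -(n_{\Gamma_t}\cdot\nabla v)\cdot n_{\Gamma_t}$, after using $\nabla\cdot v = 0$; this is a crucial rewrite that keeps the coefficient term purely pointwise in $\nabla v$ rather than involving derivatives of the boundary. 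Combining this with the linearized kinematic condition $D_ts = w\cdot n_{\Gamma_t} + s(n_{\Gamma_t}\cdot\nabla v)\cdot n_{\Gamma_t}$ and expanding $D_t(as^2/2)$ produces the opposite sign boundary integral $+\int_{\Gamma_t} as\,(w\cdot n_{\Gamma_t})\,dS$. This is the key cancellation: the non-perturbative boundary terms disappear, and the remaining contributions take the form
\begin{equation*}
\frac{d}{dt}E_{lin} = -\int_{\Omega_t} w\cdot(w\cdot\nabla v)\,dx + \int_{\Gamma_t}\left[\tfrac{D_ta}{2}s^2 + \tfrac{as^2}{2}(n_{\Gamma_t}\cdot\nabla v)\cdot n_{\Gamma_t}\right] dS.
\end{equation*}
Each of these is bounded by $B_{lin}\,E_{lin}$: the interior term by $\|\nabla v\|_{L^\infty}$, the first surface term by $\|a^{-1}D_ta\|_{L^\infty}$ (after the rewrite $\tfrac12 D_ta\,s^2 = \tfrac12(a^{-1}D_ta)\,as^2$), and the second by $\|\nabla v\|_{L^\infty}$. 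Gronwall's inequality with $B_{lin}\in L^1_T$ then yields uniform control of $E_{lin}$ on $[0,T]$, and hence uniqueness and $L^2$ continuous dependence on data.

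For existence, I would construct solutions by approximation: regularize the coefficients $v$ and $a$ and the moving domain $\Omega_t$ (while retaining $\nabla\cdot v = 0$ and the Taylor sign condition), solve the smoothed linear system classically --- recovering $q$ from $(w,s)$ by a Hodge/Laplace problem with boundary data $as$, and evolving $(w,s)$ by the resulting coupled transport system --- and pass to the weak $L^2$ limit using the uniform energy bound, which is stable under the regularization. Alternatively, since the formal adjoint of \eqref{DM lin1} has essentially the same structure and admits an analogous energy estimate, a Hahn--Banach duality argument constructs weak $L^2$ solutions directly.

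The main obstacle is engineering the approximation so that the delicate cancellation of the $\pm\int_{\Gamma_t} as\,(w\cdot n_{\Gamma_t})\,dS$ terms survives. This requires the approximate problem to continue to satisfy the algebraic relation $q|_{\Gamma_t} = as$ and to respect $\nabla\cdot v = 0$ (so that the surface-divergence identity above remains available). If the approximation is chosen carefully --- for instance, by smoothing only the background quantities $v, a, \Omega_t$ and never the unknowns $(w,s,q)$ directly --- the a priori estimate passes to the limit and yields a unique $L^2$ solution in $[0,T]$, completing the well-posedness proof.
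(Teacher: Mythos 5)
Your proposal is correct and follows essentially the same approach as the paper: the same weighted energy $E_{lin}$, the same Reynolds-type transport identities, the identical cancellation of $\pm\int_{\Gamma_t} as\,(w\cdot n_{\Gamma_t})\,dS$ between the integration-by-parts boundary term and the linearized kinematic condition, and well-posedness via duality (which the paper invokes directly; you offer a regularization route as well, but both are standard once the energy estimate and time-reversibility of the adjoint system are in hand). The only cosmetic difference is that the paper states the energy inequality for the generalized system \eqref{DM lin} with abstract source terms $f,g$ and then specializes, whereas you substitute the specific sources from \eqref{DM lin1} directly.
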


Here we recall that $\Omega$ and $\Gamma$ are
time dependent. The rest of this section is devoted to the proof of this very simple theorem. The  basic strategy  is to construct a suitable energy functional and  prove corresponding energy estimates. Once this is done,  well-posedness follows via a standard duality argument, which is left for the reader. To execute this argument, one simply notes that the adjoint system is essentially identical to the direct
system \eqref{DM lin1}, modulo perturbative terms, and that the energy estimates are time reversible.
\\

Below, we will work with a slightly more general system, since this is what will appear in the higher order energy bounds later on. We define the \emph{generalized linearized system} as follows:
\begin{equation}\label{DM lin}
\left\{
\begin{aligned}
 &D_t w+ \nabla q = f \  \ \text{in} \ \Omega_t,
    \\  
    &\nabla\cdot w=0 \ \  \text{in} \  \Omega_t,  
\\
&     D_t s -     w \cdot n_{\Gamma_t} = g\ \ \text{on} \ \Gamma_t,
\\
&    q = as \ \ \text{on} \ \Gamma_t,
\end{aligned}
\right. 
\end{equation}
where we allow for arbitrary source terms $f$ and $g$ on the right-hand side of the first and third equation.
\\
\\
It remains to prove a suitable energy estimate for the system \eqref{DM lin}. The natural energy associated to this system is
\begin{equation}\label{linearized en}
E_{lin}(w,s)(t)=\frac{1}{2}\int_{\Omega_t}|w|^2\, dx+\frac{1}{2}\int_{\Gamma_t}as^2\, dS.
\end{equation}
Using \eqref{linearized en}, the main energy estimate 
for the generalized linear system is as follows:

\begin{proposition}\label{EE}
Suppose $a>0$. Then the system \eqref{DM lin} satisfies the energy estimate
\begin{equation}\label{EE for DMlin}
\frac{d}{dt}E_{lin}(w,s)(t)\leq B_{lin}E_{lin}(w,s)(t)+\langle as,g\rangle_{L^2(\Gamma_t)}+\langle w,f\rangle_{L^2(\Omega_t)}.
\end{equation}
\end{proposition}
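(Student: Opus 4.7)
The plan is to differentiate the energy \eqref{linearized en} term by term, substitute the equations of the generalized linearized system \eqref{DM lin}, and then identify the crucial cancellation of the boundary integral involving $as(w\cdot n_{\Gamma_t})$.

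First, for the interior part, since $v$ is divergence free the volume element is transported without distortion, so by the Reynolds transport theorem
\[
\frac{d}{dt}\frac{1}{2}\int_{\Omega_t}|w|^2\,dx=\int_{\Omega_t} w\cdot D_tw\,dx=\int_{\Omega_t}w\cdot f\,dx-\int_{\Omega_t} w\cdot\nabla q\,dx.
\]
The pressure term is integrated by parts using $\nabla\cdot w=0$ and $q=as$ on $\Gamma_t$:
\[
-\int_{\Omega_t} w\cdot\nabla q\,dx=-\int_{\Gamma_t} as\,(w\cdot n_{\Gamma_t})\,dS.
\]

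Second, for the boundary part, I will use the fact that since $\nabla\cdot v=0$, the material derivative of the area element on a surface transported by $v$ satisfies
\[
D_t(dS)=-\bigl((n_{\Gamma_t}\cdot\nabla v)\cdot n_{\Gamma_t}\bigr)\,dS.
\]
Combining this with $D_t(as^2)=(D_ta)s^2+2as\,D_ts$ and the kinematic equation $D_ts=w\cdot n_{\Gamma_t}+g$ yields
\[
\frac{d}{dt}\frac{1}{2}\int_{\Gamma_t}as^2\,dS=\int_{\Gamma_t}as(w\cdot n_{\Gamma_t})\,dS+\int_{\Gamma_t}as\,g\,dS+\frac{1}{2}\int_{\Gamma_t}\Bigl(D_ta-a(n_{\Gamma_t}\cdot\nabla v)\cdot n_{\Gamma_t}\Bigr)s^2\,dS.
\]

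The crucial observation, and really the only structural fact needed, is that the boundary integral $\int_{\Gamma_t}as(w\cdot n_{\Gamma_t})\,dS$ appears with opposite signs in the interior and boundary calculations, and therefore cancels exactly. This cancellation is what encodes the symmetry between the dynamic and kinematic boundary conditions ($q=as$ paired with $D_ts\sim w\cdot n_{\Gamma_t}$), and is the mechanism that makes $L^2(\Omega)\times L^2(\Gamma)$ the natural phase space for the linearization. After this cancellation, the identity simplifies to
\[
\frac{d}{dt}E_{lin}(w,s)=\la w,f\ra_{L^2(\Omega_t)}+\la as,g\ra_{L^2(\Gamma_t)}+\frac{1}{2}\int_{\Gamma_t}\Bigl(D_ta-a(n_{\Gamma_t}\cdot\nabla v)\cdot n_{\Gamma_t}\Bigr)s^2\,dS.
\]

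Finally, the remaining boundary term is estimated using the definition \eqref{Lin control par} of $B_{lin}$: factoring out $a$ gives
\[
\Bigl|D_ta-a(n_{\Gamma_t}\cdot\nabla v)\cdot n_{\Gamma_t}\Bigr|\le\Bigl(\|a^{-1}D_ta\|_{L^\infty(\Gamma_t)}+\|\nabla v\|_{L^\infty(\Omega_t)}\Bigr)a=B_{lin}\,a,
\]
so that this term is bounded by $B_{lin}\cdot\frac{1}{2}\int_{\Gamma_t}as^2\,dS\le B_{lin}\,E_{lin}(w,s)$, which is precisely \eqref{EE for DMlin}. The main technical point to verify with care is the surface transport formula for the area element under a divergence free flow, since it is this identity that makes the tangential piece of $\nabla v$ (rather than a curvature quantity) appear as the sharp control norm.
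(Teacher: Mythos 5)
Your proof is correct and takes essentially the same approach as the paper: both compute the time derivative of the two pieces of $E_{lin}$ using the Reynolds/Leibniz transport formulas, integrate the pressure term by parts to produce the boundary integral $\pm\int_{\Gamma_t} as\,(w\cdot n_{\Gamma_t})\,dS$, observe that this exactly cancels, and bound the residual boundary terms by $B_{lin}E_{lin}$. The only difference is cosmetic: you substitute the kinematic equation $D_t s = w\cdot n_{\Gamma_t} + g$ before assembling the pieces and make the cancellation explicit, whereas the paper groups $\int D_tw\cdot w + \int asD_ts$ and integrates by parts afterward, reaching the same identity.
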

We note that the energy functional \eqref{linearized en} is also the energy functional for the linearized system \eqref{DM lin1}, and that this proposition yields energy 
estimates for \eqref{DM lin1}, thereby concluding the proof of Theorem~\ref{t:wp-lin}.
\begin{proof}

We will make use of the following standard Leibniz type formulas  (see; for example, \cite[Appendix A]{MR2317005}).
\begin{proposition}\label{Leibniz}
\begin{enumerate}
\item Assume that the time-dependent domain $\Omega_t$ flows with Lipschitz velocity $v$. Then the time derivative of the time-dependent volume integral is given by
\begin{equation*}\label{Dtuint}
    \frac{d}{dt}\int_{\Omega_t}f(t,x)\, dx=\int_{\Omega_t}D_tf+f\nabla\cdot v\, dx.
\end{equation*}
\item Assume that the time-dependent hypersurface $\Gamma_t$ flows with divergence free velocity $v$. Then the time derivative of the time-dependent surface integral is given by
\begin{equation*}\label{Leibniz derivative under int general}
    \frac{d}{dt}\int_{\Gamma_t}f(t,x)\, dS=\int_{\Gamma_t}D_tf-f(n_{\Gamma_t}\cdot\nabla v)\cdot n_{\Gamma_t}\, dS.
\end{equation*}

\end{enumerate}

\end{proposition}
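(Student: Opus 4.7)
The plan is to reduce both identities to statements on a fixed domain by pulling back along the flow of $v$. Let $X(t,y)$ denote the flow map defined by $\partial_t X(t,y) = v(t, X(t,y))$ with $X(0,y) = y$, so that $\Omega_t = X(t, \Omega_0)$ and $\Gamma_t = X(t, \Gamma_0)$. Under a Lipschitz velocity, $X(t,\cdot)$ is bi-Lipschitz and the change of variables formula applies. By mollifying $v$ and passing to a limit, it suffices to prove both identities under the assumption that $v$ is smooth, since both sides will be continuous in the relevant norms.

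For part (1), I would perform the change of variables $x = X(t,y)$ to rewrite
\[
\int_{\Omega_t} f(t,x)\, dx = \int_{\Omega_0} f(t, X(t,y))\, J(t,y)\, dy,
\]
where $J(t,y) = \det \nabla_y X(t,y)$ is the Jacobian determinant. Applying Jacobi's formula to $\nabla_y X$, whose time derivative is $(\nabla v)(t, X)\, \nabla_y X$, yields the standard identity $\partial_t J = (\nabla \cdot v)(t, X(t,y))\, J(t,y)$. Combined with the chain rule $\partial_t [f(t, X(t,y))] = (D_t f)(t, X(t,y))$, this allows me to differentiate under the integral sign on the fixed domain $\Omega_0$ and then change variables back to $\Omega_t$, producing the stated formula.

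For part (2), the same flow map approach applies, but now I need to track the evolution of the surface measure under $X(t,\cdot)|_{\Gamma_0}$. Writing $dS(t) = J_\Gamma(t,y)\, dS_0$, the standard geometric calculation -- obtained by differentiating the Gram determinant of $\nabla_y X$ restricted to $T_y \Gamma_0$ -- gives
\[
\frac{\partial_t J_\Gamma}{J_\Gamma} = \mathrm{div}_{\Gamma_t}\,v,
\]
the intrinsic tangential divergence of $v$ along $\Gamma_t$. Splitting $\nabla v$ in a local orthonormal frame $\{e_1,\dots,e_{d-1}, n_{\Gamma_t}\}$ gives the pointwise identity $\mathrm{div}_{\Gamma_t} v = \nabla \cdot v - (n_{\Gamma_t}\cdot \nabla v)\cdot n_{\Gamma_t}$, and the hypothesis $\nabla \cdot v = 0$ collapses this to $-(n_{\Gamma_t}\cdot \nabla v)\cdot n_{\Gamma_t}$. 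Combined with the chain rule as in part (1), the formula for part (2) follows.

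The main obstacle is the geometric identity for $\partial_t J_\Gamma$ in part (2), which requires a careful orthonormal frame computation; the Lipschitz regularity assumption on $v$ and corresponding regularity of $\Gamma_t$ in the applications present no genuine difficulty, since mollification and passage to the limit is routine in our setting.
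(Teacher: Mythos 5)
Your proof is correct: the flow-map change of variables with Jacobi's formula for the volume case, and the surface-Jacobian identity $\partial_t J_\Gamma = (\mathrm{div}_{\Gamma_t} v)\,J_\Gamma$ combined with $\mathrm{div}_{\Gamma_t} v = \nabla\cdot v - (n_{\Gamma_t}\cdot\nabla v)\cdot n_{\Gamma_t}$, is exactly the standard transport-theorem argument. The paper does not prove this proposition at all but cites it as standard (referring to an appendix of the literature), and your argument is the same one found there, so there is nothing to reconcile.
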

Now, to prove the energy estimate \eqref{EE for DMlin}, we apply \Cref{Leibniz} to  obtain
\begin{equation}\label{E1}
\begin{split}
\frac{d}{dt}E_{lin}(w,s)(t)&=\int_{\Omega_t}D_t w\cdot w\,dx+\int_{\Gamma_t}asD_ts\, dS+\frac{1}{2}\int_{\Gamma_t}D_tas^2\, dS-\frac{1}{2}\int_{\Gamma_t}[n_{\Gamma_t}\cdot\nabla v\cdot n_{\Gamma_t}]as^2\, dS
\\
&\leq \int_{\Omega_t}D_t w\cdot w\, dx+\int_{\Gamma_t}asD_ts\, dS+B_{lin}E_{lin}(w,s)(t).
\end{split}    
\end{equation}
Integrating by parts, we obtain
\begin{equation*}
\begin{split}
\int_{\Omega_t}D_t w\cdot w\, dx+\int_{\Gamma_t}asD_ts\, dS&=\int_{\Omega_t}w\cdot f\,dx+\int_{\Gamma_t}asD_ts\, dS-\int_{\Gamma_t}qw\cdot n_{\Gamma_t}\,dS    
\\
&=\langle as,g\rangle_{L^2(\Gamma_t)}+ \langle w,f\rangle_{L^2(\Omega_t)}.
\end{split}
\end{equation*}
Combining this with (\ref{E1}) completes the proof.
\end{proof}

\section{Analysis on moving domains}\label{AOMD} One difficulty when working directly on moving domains is that many of the standard Sobolev and elliptic  estimates have domain dependent constants. It is therefore necessary to work in a framework which allows for uniform control of these constants in certain topologies. This section is devoted to dealing with this issue. Our approach in this regard is somewhat analogous to that of Shatah and Zeng \cite{sz,MR2400608,MR2763036} and   de Poyferr\'e \cite[Section 3]{MR3925531}, but with the key difference being that our control neighborhoods will only be uniform in the pointwise $C^1$ or $C^{1,\epsilon}$ topologies as opposed to the stronger $L^2$ based topologies considered in those papers. This will be essential for establishing the pointwise continuation criterion for solutions.

\subsection{Function spaces}
To begin, we precisely define the function spaces and norms that  we will be using. Throughout, $\Omega\subseteq \mathbb{R}^d$ will denote a bounded, connected domain. We define $H^s(\Omega)$, $s\geq 0$, as the set of all $f\in L^2(\Omega)$ such that
\begin{equation}\label{def of Hs norm}
    \|f\|_{H^s(\Omega)}:=\inf \left\{\|F\|_{H^s(\mathbb{R}^d)} :F\in H^s(\mathbb{R}^d), \ F|_{\Omega}=f \right\}
\end{equation}
is finite. Here, $\|\cdot\|_{H^s(\mathbb{R}^d)}$ is defined in the standard way, via the Fourier transform. We let $H^s_0(\Omega)$ denote the closure of $C_0^\infty(\Omega)$ in $H^s(\Omega)$ and identify $H^{-s}(\Omega)$ isometrically with the dual space $(H_0^s(\Omega))^*$. Importantly, with this definition of the $H^s$ norm, the constants in Sobolev embedding theorems (either $H^s\to L^p$ or $H^s\to C^\alpha$) are independent of $\Omega$. For regular enough domains and integer $s$, the norm defined in \eqref{def of Hs norm} is equivalent to the standard one. We will precisely quantify this equivalence later.
\\

We next define the regularity of the boundary of a connected domain $\Omega$, which is characterized in terms of the regularity of local coordinate parameterizations of $\partial\Omega$. Indeed, in general, an $m$-dimensional manifold $\mathcal{M}\subseteq \mathbb{R}^d$ is said to be of class $C^{k,\alpha}$ or $H^s$, $s>\frac{d}{2}$, if,  locally in linear frames, $\mathcal{M}$ can be represented by graphs with the same regularity. 
\\
\\
If $s>\frac{d+1}{2}$, then given  $\Omega$ as above with boundary of class $H^{s}$, we can define what it means to be an $H^r$ function on $\partial \Omega$ for $s\geq r\geq -s$. Indeed, these are simply the functions whose coordinate representatives are locally in $H^r(\mathbb{R}^{d-1})$. It is easy to see that the space of $H^r$ functions on $\partial\Omega$, $s\geq r\geq -s$, can be made into a Banach space. Indeed, a norm can be chosen by taking a covering of $\partial\Omega$ by a finite number of coordinate patches and an adapted partition of unity. However, there is one problem with this approach. Although such a norm is well-defined up to equivalence, the precise value of the norm is dependent on the choice of local coordinates. Since we will be dealing with a family of domains, we need to make sure that we define norms on their boundaries in a consistent and uniform way. 
\subsection{Collar coordinates}
As a first step towards resolving the above issue, we  fix a bounded, connected reference domain $\Omega_*$ with smooth boundary $\Gamma_*:=\partial\Omega_*$. We define $H^s$ and $C^{k,\alpha}$ based norms on $\Gamma_*$ by making an appropriate choice of local parameterizations of $\Gamma_*$. Letting $\delta>0$ be a small positive constant, we define $N(\Gamma_*,\delta)$ to be the collection of all $C^1$ hypersurfaces $\Gamma$ such that there exists a $C^{1}$ diffeomorphism $\Phi_{\Gamma}:\Gamma_*\to \Gamma$ with
\begin{equation*}
\|\Phi_{\Gamma}-id_{\Gamma_*}\|_{C^1(\Gamma_*)}<\delta.
\end{equation*} 
 If $\delta>0$ is small enough, we can  represent hypersurfaces $\Gamma\in N(\Gamma_*,\delta)$ as graphs over $\Gamma_*$. Indeed, we denote the outward unit normal to $\Gamma_*$ by $n_{\Gamma_*}$.  Following \cite[Section 2.1]{MR2763036}, if we have a smooth unit vector field $\nu:\Gamma_*\to\mathbb{S}^{d-1}$ which is suitably transversal to $\Gamma_*$ (that is, $\nu\cdot n_{\Gamma_*}>1-c$ for some small $c>0$),  it follows from the implicit function theorem that there  exists a $\delta>0$, determined by $\Gamma_*$ and $\nu$, such that the map 
 \begin{equation*}\label{diff}
     \varphi:\Gamma_*\times[-\delta,\delta]\to\mathbb{R}^d, \ \ \varphi(x,\mu)=x+\mu\nu(x)
 \end{equation*}
 is a $C^1$ diffeomorphism from its domain to a collar neighborhood of $\Gamma_*$. If $\delta>0$ is small enough, the above coordinate system associates each hypersurface $\Gamma\in N(\Gamma_*,\delta)$ with a unique function $\eta_\Gamma:\Gamma_*\to\mathbb{R}$ such that 
 \begin{equation}\label{PhiGamma}
    \Phi_\Gamma(x):=\varphi(x,\eta_\Gamma(x))=x+\eta_\Gamma(x)\nu(x)
 \end{equation}
 is a diffeomorphism in $C^{1}(\Gamma_*,\Gamma\subseteq \mathbb{R}^d)$. We can think of the map $\Phi_\Gamma$ as a way to represent  $\Gamma$ as a (global) graph over $\Gamma_*$.  With this notation in hand, we can now define what it means to be a $H^s$ hypersurface which is close to $\Gamma_*$. 
\begin{definition}\label{Def2}
For $\delta>0$ small enough and $\alpha\in [0,1)$, define the control neighborhood $\Lambda(\Gamma_*,\alpha,\delta)$ as the collection of all hypersurfaces $\Gamma\in N(\Gamma_*,\delta)$ such that the associated map $\eta_\Gamma:\Gamma_*\to\mathbb{R}$ satisfies \[
\|\eta_\Gamma\|_{C^{1,\alpha}(\Gamma_*)}<\delta.
\]
\end{definition}
\begin{definition}
 Suppose $s\geq 0$, $\Gamma\in N(\Gamma_*,\delta)$ for  $\delta>0$  small enough, and the associated map $\eta_\Gamma:\Gamma_*\to\mathbb{R}$ satisfies   $\eta_{\Gamma}\in H^s(\Gamma_*)$. We then define  the $H^s$ norm of $\Gamma$ by
\begin{equation*}
\|\Gamma\|_{H^s}:=\|\eta_{\Gamma}\|_{H^s(\Gamma_*)}.
\end{equation*}
\end{definition}
In the above definitions, $\|\eta_\Gamma\|_{C^{1,\alpha}(\Gamma_*)}$ and $\|\eta_{\Gamma}\|_{H^s(\Gamma_*)}$ are computed with respect to fixed, independent of $\Gamma$, local coordinates on $\Gamma_*$.  In an analogous way, we define for $\gamma\in [0,1)$ and integers $k\geq 0$, the $C^{k,\gamma}$ norm, $\|\Gamma\|_{C^{k,\gamma}}$.  As was essentially noted in \cite[Section 2.1]{MR2763036}, when $0<\delta\ll 1$, each $\Gamma\in \Lambda(\Gamma_*,\alpha,\delta)$ is associated to a well-defined domain $\Omega$.

\begin{remark}\label{r:epsilon}
One key point in \Cref{Def2} is that we only require $\Gamma$ be close to $\Gamma_*$ in the $C^{1,\alpha}$ topology, as opposed to  the stronger $L^2$ based topologies used in \cite{MR3925531,sz,MR2400608,MR2763036}. In practice, we will want the control topology to be as weak as possible. For our purposes, we will typically take $\alpha=\epsilon>0$ for some arbitrarily small (but fixed) constant $\epsilon>0$. 
\end{remark}

\begin{remark}\label{r:delta}
A second key point in \Cref{Def2} concerns the choice of the small parameter $\delta$. This will not be arbitrarily small,
but instead its size may also be chosen to depend on weaker 
topologies; namely, (i) the $C^{1,\epsilon}$ norm of $\Gamma_*$ and (ii)
the thickness (see Definition~\ref{d:thickness}) of the domain $\Omega$. This will serve two purposes:
\begin{itemize}
    \item To allow us to place any rough $H^s$ boundary $\Gamma$
within a suitable control neighborhood $\Lambda(\Gamma_*,\epsilon,\delta)$.
 \item To allow us to obtain the robust continuation result in Theorem~\ref{cont crit intro}, which does not require any reference to control neighborhoods.
\end{itemize}
\end{remark}
Following the discussion in the above two remarks, throughout  the article we will often abbreviate $\Lambda(\Gamma_*,\epsilon,\delta)$ by $\Lambda_*,$ where the suppressed parameters $\varepsilon>0$ and $\delta>0$ are understood to be small but fixed universal parameters, which depend only on $s$ and on the thickness of $\Omega$.

\subsection{State space}
Fix a collar neighborhood $\Lambda_*$ and $s>\frac{d}{2}+1$. We define $\mathbf{H}^s$ as the set of all pairs $(v,\Gamma)$ such that $\Gamma\in\Lambda_*$ is the boundary of a bounded, connected domain $\Omega$ and such that the following properties are satisfied:
\begin{enumerate}
    \item (Regularity). $v\in H_{div}^s(\Omega)$ and $\Gamma\in H^s$, where $H_{div}^s(\Omega)$ denotes the space of divergence free vector fields in $H^s(\Omega)$.
    \item (Taylor sign condition). $a:=-\nabla p\cdot n_{\Gamma}>c_0>0$, where $c_0$ may depend on the choice of $(v,\Gamma)$, and the pressure $p$ is obtained from $(v,\Gamma)$ by solving the standard elliptic equation \eqref{Euler-pressure} associated to \eqref{Euler} and \eqref{BC1}.
\end{enumerate}
Given initial data $(v_0,\Gamma_0)$ in the state space $\mathbf{H}^s$, our eventual goal will be to construct local solutions $(v(t),\Gamma_t)$ that evolve continuously in $\mathbf{H}^s$. To accomplish this, we must define a suitable notion of topology on our state space. This  will enable us to establish two key properties of our flow; namely,
\begin{enumerate}
    \item Continuity of solutions with values in $\mathbf{H}^s$.
    \item Continuous dependence of solutions $(v(t),\Gamma_t)$ as functions of the initial data $(v_0,\Gamma_0)$.
\end{enumerate}
Note that since $\mathbf{H}^s$ is not a linear space, the above two continuity properties require some explanation. To measure the size of individual states $(v,\Gamma)\in \mathbf{H}^s$, we define $\|(v,\Gamma)\|_{\mathbf{H}^s}^2:=\|\Gamma\|_{H^s}^2+\|v\|^2_{H^s(\Omega)}$. However, since  $\mathbf{H}^s$ is not  a linear space, $\|\cdot\|_{\mathbf{H}^s}$ does not induce a norm topology in the usual sense. Hence, we  still  need  an appropriate way of comparing different states.  Motivated by \cite{disconzi2020relativistic,ifrim2020compressible}, we define convergence in $\mathbf{H}^s$  as follows.
\begin{definition}\label{Def of convergence} We say that a sequence $(v_n,\Gamma_n)\in \mathbf{H}^s$ converges to $(v,\Gamma)\in \mathbf{H}^s$ if 
\begin{enumerate}
\item (Uniform Taylor sign condition). For some $c_0>0$ independent of $n$, we have
\begin{equation*}
a_n, a>c_0>0.     
\end{equation*}
\item (Domain convergence). $\Gamma_n\to \Gamma$ 
 in $H^s$. That is, $\eta_{\Gamma_n}\to \eta_{\Gamma}$ in $H^s(\Gamma_*)$ where $\eta_{\Gamma_n}$ and $\eta_\Gamma$ correspond to the collar coordinate representations of $\Gamma_n$ and $\Gamma$, respectively. 
 \item (Norm convergence). For every $\epsilon>0$ there exists a smooth divergence free function $\tilde{v}$ defined on a neighborhood $\tilde{\Omega}$ of $\overline{\Omega}$ with   $\|\tilde{v}\|_{H^s(\tilde\Omega)}<\infty$ and satisfying 
 \begin{equation*}
\|v-\tilde{v}\|_{H^s(\Omega)}\leq \epsilon
 \end{equation*}
 and
 \begin{equation*}
 \limsup_{n\to \infty}\|v_n-\tilde{v}\|_{H^s(\Omega_n)}\leq\epsilon.    
 \end{equation*}
 \end{enumerate}
\end{definition}
With the above notion of convergence, it makes sense to define $C([0,T];\mathbf{H}^s)$. We remark, however, that in \cite{MR3925531,sz,MR2400608,MR2763036}, $C([0,T];\mathbf{H}^s)$ is defined in a slightly different way, via the existence of an extension to a continuous function with values in $H^s(\mathbb{R}^d)$. In \Cref{cont of domain ext}, we  construct a family of extension operators which depend continuously in a suitable sense on the domain, making the above two notions of continuity essentially interchangeable.



\section{Difference estimates and uniqueness}\label{DEAU}

Comparing different solutions is key to any well-posedness result.
Since our problem is quasilinear, such a comparison cannot be achieved uniformly in 
the leading $\mathbf H^s$ topology, but instead  only in weaker topologies. The main result of  this section provides a Lipschitz bound for the 
distance between two solutions in the $L^2$ topology, akin to our bounds for the linearized equation. Notably, our distance bounds propagate at the level  of our control parameters, which require for instance a Lipschitz bound on the velocity but no higher regularity. This is what will allow us to establish uniqueness of solutions under very weak regularity assumptions. Moreover, as we shall see shortly, these low regularity distance bounds  also serve as an essential building block in our construction of rough solutions as unique limits of smooth solutions, as well as in our proof of the continuity of the data-to-solution map; this broadly follows a scheme first introduced in \cite{ifrim2020compressible} but is quite different on the technical level.
\\


The fundamental difficulty in achieving our  distance bounds is the need to compare states which live on different domains.
To overcome this difficulty, we  construct a ``distance functional" which \emph{simultaneously} captures the 
distance between (functions on) different domains and admits a time evolution that we are able to track.
To the best of our knowledge, no such low regularity difference bounds or even uniqueness results were previously known  for any incompressible free boundary Euler model. Instead, we take our cue from the work \cite{ifrim2020compressible} of the first and the third authors, which considers a similar free boundary problem but for a compressible Euler model. We note, however, that the similarity between the uniqueness argument here and its counterpart in \cite{ifrim2020compressible} is only at the conceptual level, as the two flows have very different behaviors both inside the domain and near the free boundary.

\subsection{The distance functional}\label{DF}
 Our first objective is to use the linearized energy as a guide to construct a distance functional which will be suitable for comparing nearby solutions. We begin by fixing a collar neighborhood $\Lambda(\Gamma_*,\epsilon,\delta)$, where  $\epsilon>0$ and $\delta>0$ are small. We then suppose that we have two states $(v,\Gamma)$, $(v_h,\Gamma_h)$ with respective domains $\Omega$, $\Omega_h$. We let $\eta_{\Gamma}$ and $\eta_{\Gamma_{h}}$ be the corresponding representations of $\Gamma$ and $\Gamma_{h}$ as graphs over $\Gamma_*$. Following the linearized energy estimate, we aim to define analogues of the linearized variables $w$ and $s$, which heuristically should measure the  $L^2$ distance between $v$ and $v_h$ and the distance between $\Gamma$ and $\Gamma_{h}$, respectively. One technical caveat is that $v$ and $v_h$ are not defined on the same domain. For this reason, we define $\widetilde{\Omega}=\Omega\cap\Omega_{h}$. We can represent the free boundary $\widetilde{\Gamma}$ for $\widetilde{\Omega}$ as a graph over $\Gamma_*$ via the function $\eta_{\widetilde{\Gamma}}=\eta_{\Gamma}\wedge \eta_{\Gamma_h}$. Note that although the graph representation $\eta_{\widetilde{\Gamma}}$ is well-defined, $\widetilde{\Gamma}$ is only Lipschitz in general, so will not be in $\Lambda(\Gamma_*,\epsilon,\delta)$. 
\\
\\
To measure the (signed) distance between $\Gamma$ and $\Gamma_h$, we define $s_h^*:\Gamma_*\to \mathbb{R}$ by
\begin{equation}\label{distancefunction}
    s_h^*(x)=\eta_{\Gamma_h}(x)-\eta_{\Gamma}(x).
\end{equation}
As will become evident below, although $s_h^*$ correctly measures the distance between the free hypersurfaces, it has the ``wrong" domain. To fix this, we define the variable $s_h:\widetilde{\Gamma}\to \mathbb{R}$ by pushing  $s_h^*$ forward to the hypersurface $\widetilde{\Gamma}$. In other words, for $x\in \widetilde{\Gamma},$ we define $s_h(x)=s_h^*(\pi(x))$, where $\pi$ denotes the canonical projection, mapping the image of $\Gamma_*\times [-\delta,\delta]$ under $\varphi$ back to $\Gamma_*$. For convenience, we also extend $\nu$ to a vector field $X$ defined  on the image of $\varphi$ via $X(x)=\nu(\pi(x)).$ We will not actually use the displacement function $s_h$ directly in the difference estimates below. In particular, it will not act as our desired analogue of the linearized variable $s$. This is because  its dynamics are somewhat awkward to work with. Instead of using $s_h$, it is far more convenient (and geometrically natural) to use the the pressure difference $p-p_h$ (along with a suitable weight to be defined below) to measure the distance between $\Gamma$ and $\Gamma_{h}$. To motivate this, recall that for solutions to the free boundary Euler equations,  the Taylor sign condition implies that  $p$ and $p_h$ are non-degenerate defining functions for $\Gamma_t$ and $\Gamma_{t,h}$ within a suitable collar neighborhood. Therefore, on the boundary of $\tilde{\Omega}_t=\Omega_t\cap\Omega_{t,h}$, $p-p_h$ should be proportional to the displacement function $s_h$. The dynamics of $p-p_h$ turn out to be much easier to work with than those of $s_h$, as terms involving $p-p_h$ will appear naturally when we use the free boundary Euler equations to compare solutions.
\\
\\
With the above motivation in mind and using the linearized equation as a guide, we define our distance functional as follows:
\begin{equation}\label{diff functional candidate}
\begin{split}
D((v,\Gamma), (v_h,\Gamma_h)):=    D(v,v_h):=\frac{1}{2}\int_{\widetilde{\Omega}}|v-v_h|^2\,dx+\frac{1}{2}\int_{\tilde{\Gamma}}b|p-p_h|^2\, dS,
\end{split}
\end{equation}
where the weight function $b$ is defined by
\begin{equation*}
b:=a^{-1}1_{\tilde{\Gamma}\cap\Gamma}+a_h^{-1}1_{\tilde{\Gamma}\cap\Gamma_h}.
\end{equation*}
As $p-p_h$ vanishes on $\Gamma\cap\Gamma_{h}$, we may rewrite the distance functional in the slightly more convenient form
\begin{equation*}
D(v,v_h)=\frac{1}{2}\int_{\tilde{\Omega}}|v-v_h|^2\,dx+\frac{1}{2}\int_{\mathcal{A}}a^{-1}|p-p_h|^2\, dS+\frac{1}{2}\int_{\mathcal{A}_{h}}a_h^{-1}|p-p_h|^2\, dS,
\end{equation*}
where $\mathcal{A}:=\tilde{\Gamma}\cap\Gamma-\Gamma\cap\Gamma_{h}$ and $\mathcal{A}_{h}:=\tilde{\Gamma}\cap\Gamma_{h}-\Gamma\cap\Gamma_{h}$. 
\\

Letting $\overline{F}$ denote the average of $F$ along the flow $\varphi$ between the free surfaces, the  fundamental theorem of calculus implies that for $x\in \widetilde{\Gamma}$,
\begin{equation}\label{FTC pressure average}
p_h(x)-p(x)=
\begin{cases}
&-\overline{\nabla p_h\cdot X}s_h(x) \hspace{10mm}\text{if $x\in \mathcal{A}$},
\\
&-\overline{\nabla p\cdot X}s_h(x)\hspace{12mm}\text{if $x\in \mathcal{A}_{h}$}.
\end{cases}
\end{equation}
Therefore, thanks to the Taylor sign condition and assuming the regularity $p,p_h\in C^{1,\epsilon}$, we should have $|p-p_h|\approx |s_h|$ on $\widetilde{\Gamma}$ within a tight enough collar neighborhood. The precise manner in which we have this proportionality will be made clear shortly. Finally, note that, for solutions to the free boundary Euler equations, a simple computation yields the following equation for $v-v_h$ in $\tilde{\Omega}_t$:
\begin{equation}\label{veqn}
\begin{cases}
&D_t(v-v_h)+\nabla (p-p_h)=(v_h-v)\cdot\nabla v_h,
\\
&\nabla\cdot (v-v_h)=0.
\end{cases}
\end{equation}
\begin{remark}
    Although it is not particularly important for the difference estimates, we note that the distance functional \eqref{diff functional candidate} makes sense for general (not necessarily dynamical) states $(v,\Gamma)$ and $ (v_h,\Gamma_h)$. Indeed, given suitable states $(v,\Gamma)$ and $(v_h,\Gamma_h)$,  we can always  associate pressures $p$ and $p_h$ by solving the standard elliptic equation associated to \eqref{Euler} and \eqref{BC1}. As we will see in \Cref{HEB}, it is very important that our energy functional for the $\mathbf{H}^k$ energy bounds be defined for general states $(v,\Gamma)\in \mathbf{H}^k$.
\end{remark}
\subsection{Difference estimates}
We are now ready to propagate difference bounds for two solutions to the free boundary Euler equations.  
\begin{theorem}[Difference Bounds]\label{Difference} 
 Let $0<\epsilon,\delta\ll 1$ and let $\Lambda_*=\Lambda(\Gamma_*,\epsilon,\delta)$ be a collar neighborhood. Suppose that $(v,\Gamma_t)$ and $(v_h,\Gamma_{t,h})$ are solutions to the free boundary Euler equations that evolve in the collar in a time interval $[0,T]$ and satisfy  $a$,$a_h>c_0>0$.  Then we have the estimate
\begin{equation*}
\frac{d}{dt}D(v,v_h)\lesssim_{A,A_h} (B+B_h)D(v,v_h)
\end{equation*}
where
\begin{equation*}
B:=\|v\|_{W^{1,\infty}(\Omega_t)}+\|\Gamma_t\|_{C^{1,\frac{1}{2}}}+\|D_tp\|_{W^{1,\infty}(\Omega_t)},\hspace{5mm}A:=\|v\|_{C^{\frac{1}{2}+\epsilon}(\Omega_t)}+\|\Gamma_t\|_{C^{1,\epsilon}},
\end{equation*}
 $B_h$ and $A_h$ are the analogous quantities corresponding to $v_h$, $p_h$, $D_t^hp_h$ and $\Gamma_{t,h}$ and we  have implicitly assumed that our solutions have regularity $B,B_h\in L^1_T$ and $A,A_h\in L^\infty_T$.
\end{theorem}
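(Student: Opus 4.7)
The plan is to differentiate $D(v,v_h)$ in time and show that the resulting expression is controlled by $C_{A,A_h}(B+B_h)D(v,v_h)$, in direct analogy with the linearized energy estimate \eqref{EE for DMlin}. The main difficulty is that the domain $\tilde{\Omega}_t = \Omega_t\cap\Omega_{t,h}$ is only Lipschitz, with boundary $\tilde{\Gamma}_t = \mathcal{A}\cup \mathcal{A}_h \cup (\Gamma\cap\Gamma_h)$ whose two principal pieces move with \emph{different} velocities. I would first run the computation under the simplifying assumption that $\Gamma_t$ and $\Gamma_{t,h}$ meet transversely, so that the edge $\partial\mathcal{A} = \partial\mathcal{A}_h$ is a smooth codimension-two submanifold, and remove this restriction at the end by a regularization argument based on the fact that the edge has zero $(d-1)$-dimensional measure.

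For the interior term $\frac{1}{2}\int_{\tilde{\Omega}_t}|v-v_h|^2$, I would apply the moving-domain Leibniz formula, where $\mathcal{A}$ contributes with normal velocity $v\cdot n_{\Gamma}$ and $\mathcal{A}_h$ with $v_h\cdot n_{\Gamma_h}$. Substituting \eqref{veqn} and integrating $\nabla(p-p_h)$ by parts (using $\nabla\cdot(v-v_h)=0$) produces a quadratic interior term bounded by $\|\nabla v_h\|_{L^\infty}\int|v-v_h|^2 \lesssim B_h\,D(v,v_h)$, together with boundary terms that simplify dramatically: since $p|_\mathcal{A}=0$ and $p_h|_{\mathcal{A}_h}=0$ (and both vanish on $\Gamma\cap\Gamma_h$), the pressure contribution reduces to
\[
-2\int_\mathcal{A} p_h(v-v_h)\cdot n_\Gamma\,dS + 2\int_{\mathcal{A}_h} p(v-v_h)\cdot n_{\Gamma_h}\,dS,
\]
modulo kinetic-type boundary terms $\int_{\mathcal{A}_h}|v-v_h|^2(v_h-v)\cdot n_{\Gamma_h}$ that are controlled by $B\cdot D(v,v_h)$ after using \eqref{FTC pressure average} to turn $|s_h|$ into $|p-p_h|/a_h$.

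For the surface term I would split $\int_{\tilde\Gamma}b|p-p_h|^2\,dS = \int_\mathcal{A} a^{-1}p_h^2\,dS + \int_{\mathcal{A}_h}a_h^{-1}p^2\,dS$ and differentiate each piece using item (2) of \Cref{Leibniz} applied to a moving subset of $\Gamma_t$ (respectively $\Gamma_{t,h}$) whose own boundary $\partial\mathcal{A}\subset\Gamma\cap\Gamma_h$ evolves in time. The edge contributions vanish because $p=p_h=0$ on the edge. The crux of the bulk computation is expressing $D_t p_h$ on $\mathcal{A}$ via
\[
D_t p_h = (\partial_t + v_h\cdot\nabla)p_h + (v-v_h)\cdot \nabla p_h,
\]
and using that the first summand vanishes on $\Gamma_{t,h}$ together with $\nabla p_h|_{\Gamma_{t,h}}=-a_h n_{\Gamma_h}$ to conclude $D_t p_h|_\mathcal{A} = -a_h(v-v_h)\cdot n_{\Gamma_h} + O_A(B_h|s_h|)$, where the error is controlled by combining the $C^{1,\epsilon}$ regularity encoded in $A_h$ with the $W^{1,\infty}$ bound on $D_t p_h$ from $B_h$. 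Multiplying by $2a^{-1}p_h$, the leading part produces exactly the negative of $-2\int_\mathcal{A} p_h(v-v_h)\cdot n_\Gamma$ from the interior computation (using $n_\Gamma \approx n_{\Gamma_h}$ on $\mathcal{A}$ within a tight collar, together with $p_h \approx -a_h s_h$), up to errors of order $B_h\int_\mathcal{A} a^{-1}|p_h|^2$. The symmetric analysis on $\mathcal{A}_h$ cancels the remaining boundary term, and the residual contributions from the $(n_{\Gamma}\cdot\nabla v)\cdot n_\Gamma$ and $a^{-1}D_t a$ factors in the surface Leibniz formula are directly bounded by $B\cdot D(v,v_h)$.

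The main obstacle is controlling the error terms generated in the cancellation, especially those involving $n_\Gamma - n_{\Gamma_h}$ on $\mathcal{A}$ and commutators between $D_t$ and the sharp integration over a moving Lipschitz piece $\mathcal{A}$. These errors are ultimately of the form $\int_{\tilde\Gamma}(B+B_h)|s_h|^2\,dS$, which by \eqref{FTC pressure average} and the Taylor sign condition is bounded by $(B+B_h)\int_{\tilde\Gamma}b|p-p_h|^2\,dS \le 2(B+B_h)D(v,v_h)$, closing the estimate. Justifying the Leibniz calculus on Lipschitz $\tilde\Gamma_t$ with moving edge $\partial\mathcal{A}$ rigorously requires approximation by smooth hypersurfaces in $C^{1,\epsilon}$ and passage to the limit, where the key observation is that the edge has codimension two and hence contributes nothing to the surface integrals. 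The integrability assumptions $A,A_h\in L^\infty_T$ and $B,B_h\in L^1_T$, together with the uniform Taylor bound $a,a_h\ge c_0$, ensure that all implicit constants and integrals involved remain finite throughout the argument.
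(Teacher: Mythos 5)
Your overall architecture is sound and follows the paper's skeleton: use a moving--domain Leibniz formula on $\tilde\Omega_t$ with different normal velocities on $\mathcal{A}$ and $\mathcal{A}_h$ (the paper packages this as Proposition~\ref{Reynolds} rather than an approximation--by--smooth--boundaries argument, but these are equivalent in spirit), integrate by parts to surface the pressure boundary term, pair it against the surface--energy derivative, and chase cancellations.

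However, there is a genuine gap in how you dispatch the errors, and it is precisely where the bulk of the paper's work is concentrated. When you transfer $\nabla p_h$ (equivalently $D_t p_h$, or $n_{\Gamma_h}$, or $a_h$) from $\Gamma_h$ to $\mathcal{A}\subset\Gamma$, the error is not of size $B_h|s_h|$ but of size $B_h|s_h|^{1/2}$, because $\nabla p_h$ only has the $C^{1/2}$ modulus encoded by $B_h$. Multiplying by the remaining factor $a^{-1}|p_h||v-v_h| \sim |s_h||v-v_h|$ you arrive at a residual cubic term of the schematic form
\begin{equation*}
B_h \int_{\mathcal{A}} |s_h|^{3/2}\,|v-v_h|\, dS,
\end{equation*}
which is exactly the term \eqref{last-cubic} in the paper. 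This is \emph{not} of order $(B+B_h)\int_{\tilde\Gamma}b|p-p_h|^2\,dS$, nor can it be reduced to that form by Cauchy--Schwarz: the obvious attempts yield the boundary $L^2$ norm $\|v-v_h\|_{L^2(\tilde\Gamma)}$, which is \emph{not} controlled by the distance functional $D(v,v_h)$. The trace trick that works for $\int_{\tilde\Gamma}|v-v_h|^3$ (cf.~\eqref{trace1}) relies on the cubic homogeneity so that the gradient factor $\nabla(v-v_h)$ produced by the divergence theorem can be absorbed into $B+B_h$ while leaving behind a volume $L^2$ integral; the same manipulation applied to $\int_{\tilde\Gamma}|v-v_h|^2$ produces $\|\nabla(v-v_h)\|_{L^2(\tilde\Omega)}$, which $D$ does not control.

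Closing this cubic term in \eqref{last-cubic} is the real content of the paper's proof of Theorem~\ref{Difference}: it requires a Vitali covering of $\mathcal{A}$ by balls $B_j$ of radius $r_j \sim |p-p_h|(x_j)$, a pointwise comparison \eqref{vj} relating the surface integral of $|v-v_h|^2$ on $\mathcal{A}\cap B_j$ to the volume integral over $\tilde\Omega\cap B_j$ using the $C^{1/2}$ bound encoded in $A$, a splitting of $p-p_h$ into inhomogeneous and harmonic parts of the Dirichlet problem on the Lipschitz domain $\tilde\Omega$, the interpolation Lemma~\ref{l:dg-6} placing the boundary data $\tilde g$ in $W^{1,6}(\tilde\Gamma)$, and Verchota's $L^6$ nontangential maximal function estimate on Lipschitz domains. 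None of this machinery appears in your proposal, and without it the claimed bound does not close. I would encourage you to explicitly write out the residual cubic boundary term after the cancellations you describe and check whether you can bound it by $(B+B_h)D$ using only pointwise transfers and Cauchy--Schwarz; you will find that the $L^2(\tilde\Gamma)$ norm of $v-v_h$ obstructs the argument.
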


\begin{remark}
It is worth remarking that all of the results in this section hold equally well if the control parameter $B$ is replaced by 
\begin{equation*}
B_{\epsilon}=\|v\|_{C^{1,\epsilon}(\Omega_t)}+\|\Gamma_t\|_{C^{1,\frac{1}{2}}},   
\end{equation*}
which depends solely on the regularity of $v$ and $\Gamma_t$. This is because we will later prove  an elliptic estimate of the form 
\begin{equation*}
\|D_tp\|_{W^{1,\infty}(\Omega_t)}\lesssim_A B_{\epsilon}.    
\end{equation*}
See \Cref{Linfest2} and \Cref{logremovablremark} for details. We prefer, however, to work with the control parameter $B$ defined above as its $L_T^1$ norm is scale invariant.
\end{remark}
\begin{proof}
For simplicity of notation, we drop the $t$ subscript for the domains below. We also use $\lesssim_A$ as a shorthand for $\lesssim_{A,A_h}$. To ensure that we can estimate expressions involving the pressure in terms of the control parameters $A$ and $B$ above, we need the  bounds
\begin{equation}\label{pressurebounddiff}
\|p\|_{C^{1,\epsilon}(\Omega)}\lesssim_A 1,\hspace{5mm}\|p\|_{C^{1,\frac{1}{2}}(\Omega)}\lesssim_A B   ,
\end{equation}
as well as the analogous bounds for $p_h$. The proof that these bounds hold will be postponed until later when the requisite elliptic estimates are developed. See \Cref{Linfest} and \Cref{Linfest2} for details.
Now, to proceed with the difference estimate, we recall the identity
\begin{equation}\label{dt-dist} 
\begin{split}
\frac{d}{dt}D(v,v_h)&=\frac{1}{2}\frac{d}{dt}\int_{\tilde{\Omega}}|v-v_h|^2\, dx+\frac{1}{2}\frac{d}{dt}\int_{\mathcal{A}}a^{-1}|p-p_h|^2\,dS+\frac{1}{2}\frac{d}{dt}\int_{\mathcal{A}_h}a_h^{-1}|p-p_h|^2\,dS.
\end{split}
\end{equation}
To compute the first term, we would like to use Reynolds'
transport theorem, as in Proposition~\ref{Leibniz}. However, here we do not have a good velocity field $\tilde{v}$ so that $\tilde{\Omega}$ flows with velocity $\tilde{v}$. Constructing such a field seems to be at the very least impractical, so we will instead allow for a correction term which is a boundary integral.  For this purpose,  suppose that  $D(t)$ is a time-dependent domain for which  we may define at almost every point of the boundary  a normal velocity $v_b$ for the boundary. Note that if $D(t)$ were  flowing with velocity $v$, then $v_b = v \cdot n_{\partial D(t)}$, where $n_{\partial D(t)}$ is the outward unit normal. For more general velocity fields  $v$ on  $D(t)$, we have the following proposition.
\begin{proposition}\label{Reynolds}
Given a  velocity field $v$
defined on a time-dependent domain $D(t)$ with Lipschitz boundary flowing with normal velocity $v_b$, we have
\begin{equation*}
\frac{d}{dt}\int_{D(t)}f\,dx=\int_{D(t)}D_t f + \nabla \cdot v f\, dx+  \int_{\partial D(t)}f  ( v_b - v \cdot n_{\partial D(t)}) \,dS.
\end{equation*}
\end{proposition}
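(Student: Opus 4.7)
The plan is to reduce this to the classical kinematic Reynolds transport theorem, and then convert $v \cdot \nabla f$ into a flux through the boundary via the divergence theorem. The main point is that the velocity field $v$ appearing in the definition of $D_t$ plays no role in the geometric motion of $D(t)$; it is simply a vector field on the domain that we are free to introduce and subtract.

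Step one: invoke the purely geometric Reynolds identity
\begin{equation*}
\frac{d}{dt}\int_{D(t)}f\,dx = \int_{D(t)} \partial_t f\,dx + \int_{\partial D(t)} f\, v_b \,dS.
\end{equation*}
This is kinematic and makes no reference to $v$ at all. For a Lipschitz domain moving with an a.e.\ well-defined normal velocity $v_b$, this can be established by flattening $\partial D(t)$ locally with a Lipschitz diffeomorphism adapted to the motion, differentiating under the integral in the flattened picture, and patching via a partition of unity; alternatively by a standard approximation argument where $\mathbf{1}_{D(t)}$ is replaced by a smoothed-out characteristic function and one passes to the limit. Either way, the validity of this identity at Lipschitz regularity of $\partial D(t)$ is classical and I would quote it.

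Step two: insert the material derivative. Writing $\partial_t f = D_t f - v\cdot \nabla f$ and using $v\cdot \nabla f = \nabla \cdot (fv) - f\, \nabla \cdot v$, I would apply the divergence theorem on the Lipschitz domain $D(t)$, which produces
\begin{equation*}
\int_{D(t)} v\cdot \nabla f\,dx = \int_{\partial D(t)} f\,(v\cdot n_{\partial D(t)})\,dS - \int_{D(t)} f\,\nabla \cdot v\,dx.
\end{equation*}
Substituting back into step one and combining the two surface integrals gives exactly the stated formula.

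The only real technical points are justifying (i) the classical Reynolds identity above at Lipschitz boundary regularity, which requires that $v_b$ is defined $\mathcal{H}^{d-1}$-a.e.\ on $\partial D(t)$, and (ii) the divergence theorem at the regularity of $v$ and $f$ used in the applications of this section (where $v$ will be in $W^{1,\infty}$ and $f$ in $C^1$ or better on each side of $\widetilde{\Gamma}$). Both are standard once one notes that Rademacher's theorem supplies the outward unit normal $n_{\partial D(t)}$ a.e.\ on $\partial D(t)$. I do not expect either of these to be a substantive obstacle; the content of the proposition really is the algebraic rearrangement in step two, whose whole point is to trade the inconvenient $\partial_t f$ for the geometrically meaningful $D_t f$ at the price of a boundary flux measuring the discrepancy between the true motion of $\partial D(t)$ and the transport by $v$.
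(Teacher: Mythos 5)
Your argument is correct and is essentially the proof the paper has in mind: the paper merely remarks that the proposition follows from a straightforward application of the divergence theorem, which is exactly your Step two after peeling off the kinematic Reynolds identity in Step one. Your version spells out the intermediate steps but does not diverge from the intended route.
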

The proof is a straightforward application of the divergence theorem.
\\

In our setting, we need to make a vector field choice on $\tilde{\Omega}_t$; this will simply 
be the velocity $v$, though we could have equally chosen $v_h$. We remark that in the corresponding argument in \cite{ifrim2020compressible}
the average of the two was used, in order to better symmetrize the problem. However, the argument here is slightly more robust, and such a choice is not needed.
\\

For this choice of $v$, we examine the boundary weight 
$v \cdot n_{\partial D(t)} - v_b$ appearing in the above formula.
For this we use the disjoint boundary decomposition
\[
\tilde{\Gamma} = \mathcal{A} \cup \mathcal{A}_h \cup (\Gamma \cap \Gamma_h) ,
\]
where the normal $n_{\tilde{\Gamma}}$ is given a.e.~by 
\[
n_{\tilde{\Gamma}} = \left\{
\begin{aligned} 
&n_\Gamma \ \ \text{in } \mathcal{A} \cup (\Gamma \cap \Gamma_h), \\
&n_{\Gamma_h} \ \text{in } \mathcal{A}_h \cup (\Gamma \cap \Gamma_h),
\end{aligned} 
\right.
\]
with the two normals agreeing a.e.~on $\Gamma \cap \Gamma_h$.
Correspondingly, for almost every point on $\tilde{\Gamma}$ we have $|v_b-v\cdot n_{\tilde{\Gamma}}|\leq |v-v_h|$, as can be seen by working with the collar parameterization $\eta_\Gamma\wedge\eta_{\Gamma_h}$ for $\tilde{\Gamma}$ and the kinematic boundary conditions for $\Gamma$ and $\Gamma_h$.
\\

We now use Proposition~\ref{Reynolds} and the incompressibility of $v$
for each of the three terms in \eqref{dt-dist}. We begin by studying the first term, where we obtain
\begin{equation}\label{first term}
\begin{split}
\frac{1}{2}\frac{d}{dt}\int_{\tilde{\Omega}}|v-v_h|^2\, dx\leq \frac{1}{2}\int_{\tilde{\Omega}}D_t|v-v_h|^2\, dx +\frac{1}{2}\int_{\tilde{\Gamma}}|v-v_h|^3\,dS.
\end{split}    
\end{equation}
We note that, unlike in the case of the linearized equation, here we obtain a nonzero boundary term.  However, this term has the redeeming feature that it is cubic in the difference $v-v_h$. To estimate it, we use a simple variant of the trace theorem. Indeed, as $\Gamma,\Gamma_{h}\in\Lambda_*$, we may find a smooth vector field $X$ defined on $\mathbb{R}^d$ with $C^k$ bounds uniform in $\Lambda_*$ which is also uniformly transverse to $\tilde{\Gamma}$. By the divergence theorem, we then have
\begin{equation}\label{trace1}
\begin{split}
\frac{1}{2}\int_{\tilde{\Gamma}}|v-v_h|^3\,dS&\lesssim \int_{\tilde{\Gamma}}X\cdot n_{\tilde{\Gamma}}|v-v_h|^3\,dS\lesssim (B+B_h)\|v-v_h\|_{L^2(\tilde{\Omega})}^2
\\
&\lesssim (B+B_h)D(v,v_h).
\end{split}
\end{equation}
 Now, for the remaining term in (\ref{first term}), we use (\ref{veqn}) and integrate by parts to obtain
\begin{equation}\label{interior1}
\begin{split}
\frac{1}{2}\int_{\tilde{\Omega}}D_t|v-v_h|^2\,dx&=\int_{\tilde{\Omega}}(v-v_h)D_t(v-v_h)\,dx
\\
&=-\int_{\tilde{\Gamma}}(p-p_h)(v-v_h)\cdot n_{\tilde{\Gamma}}\,dS+\int_{\tilde{\Omega}}(v-v_h)\cdot [(v_h-v)\cdot\nabla v_h]\,dx   
\\
&\leq -\int_{\tilde{\Gamma}}(p-p_h)(v-v_h)\cdot n_{\tilde{\Gamma}}\,dS+(B+B_h)D(v,v_h).
\end{split}
\end{equation}
Using the decomposition $\tilde{\Gamma}=\mathcal{A}\cup \mathcal{A}_h\cup (\Gamma\cap \Gamma_{h})$ and using that $p-p_h=0$ on $\Gamma\cap \Gamma_{h}$ by the dynamic boundary condition \eqref{BC1}, we can write
\begin{equation*}
\begin{split}
-\int_{\tilde{\Gamma}}(p-p_h)(v-v_h)\cdot n_{\tilde{\Gamma}}\,dS&=-\int_{\mathcal{A}}(p-p_h)(v-v_h)\cdot n_{\Gamma}\,dS-\int_{\mathcal{A}_h}(p-p_h)(v-v_h)\cdot n_{\Gamma_{h}}\,dS
\\
&=\int_{\mathcal{A}}a^{-1}(p-p_h)(v-v_h)\cdot\nabla p\,dS+\int_{\mathcal{A}_h}a_h^{-1}(p-p_h)(v-v_h)\cdot\nabla p_h \,dS.
\end{split}
\end{equation*}
Now, define
\begin{equation*}
J:=\int_{\mathcal{A}}a^{-1}(p-p_h)(v-v_h)\cdot\nabla p\,dS+\frac{1}{2}\frac{d}{dt}\int_{\mathcal{A}}a^{-1}|p-p_h|^2\,dS,    
\end{equation*}
and
\begin{equation*}
J_{h}:=\int_{\mathcal{A}_h}a_h^{-1}(p-p_h)(v-v_h)\cdot\nabla p_h \,dS+\frac{1}{2}\frac{d}{dt}\int_{\mathcal{A}_h}a_h^{-1}|p-p_h|^2\,dS.    
\end{equation*}
Combining (\ref{trace1}) and (\ref{interior1}), we  obtain
\begin{equation*}
\frac{d}{dt}D(v,v_h)\lesssim (B+B_h)D(v,v_h)+J+J_{h}.    
\end{equation*}
It remains  to show that
\begin{equation*}
J+J_{h}\lesssim_A (B+B_h)D(v,v_h). 
\end{equation*}
We show the details for $J$. The treatment of $J_{h}$ will be virtually identical. We begin by using \Cref{Leibniz}  to expand
\begin{equation}\label{Int on s_h>0}
\begin{split}
\frac{1}{2}\frac{d}{dt}\int_{\mathcal{A}}a^{-1}|p-p_h|^2\,dS&=-\frac{1}{2}\int_{\mathcal{A}}a^{-2}D_ta|p-p_h|^2\,dS-\frac{1}{2}\int_{\mathcal{A}}a^{-1}|p-p_h|^2 [n_{\Gamma}\cdot\nabla v\cdot n_{\Gamma}]\,dS
\\
&+\int_{\mathcal{A}}a^{-1}(p-p_h)D_t(p-p_h)\,dS.
\end{split}    
\end{equation}

The validity of the identity \eqref{Int on s_h>0} is justified by noting that $|p-p_h|^2$ vanishes to second order on $\Gamma\cap \Gamma_h$, so one can extend by zero to  write the integral on the left-hand side as an integral over $\Gamma$, apply standard identities there, and then return to an integral over $\mathcal{A}$. From \eqref{Int on s_h>0} and adding the first term in the definition of $J$, we obtain (noting that by the kinematic and dynamic boundary conditions, we have $D_tp=0$ on $\mathcal{A}$),
\begin{equation*}
J\lesssim_A -\int_{\mathcal{A}}a^{-1}(p-p_h)D_t^hp_hdS+\int_{\mathcal{A}}a^{-1}(p-p_h)(v-v_h)\cdot\nabla (p-p_h)dS+BD(v,v_h).
\end{equation*}
In the above, we used the standard identity \eqref{Moving normal} to control $D_ta$. For the first term on the right-hand side we use that $D_t^hp_h$ vanishes on $\Gamma_{h}$, \eqref{FTC pressure average}, the fundamental theorem of calculus, the Taylor sign condition and (\ref{pressurebounddiff}), to estimate
\begin{equation*}
\begin{split}
    |D_t^hp_h|&\lesssim_A \|\nabla D_t^hp_h\|_{L^{\infty}}|s_h|\approx_A \|\nabla D_t^hp_h\|_{L^{\infty}}|p-p_h|\lesssim_A (B+B_h)|p-p_h|.
\end{split}
\end{equation*}
Hence,
\begin{equation*}
\int_{\mathcal{A}}a^{-1}(p-p_h)D_t^hp_hdS\lesssim_A (B+B_h)D(v,v_h).
\end{equation*}
It remains to estimate the cubic term,  and show that
\begin{equation}\label{last-cubic}
\begin{split}
\left| \int_{\mathcal{A}}a^{-1}(p-p_h)(v-v_h)\cdot\nabla (p-p_h)\, dS \right|
\lesssim_A (B+B_h)D(v,v_h).
\end{split}
\end{equation}
We will need to perform a more careful analysis here, so that only the pointwise control terms appear in the estimate. Note that if we had instead settled for $L^2$ based control parameters, this cubic term could be handled relatively easily.
\\

We recall that $\mathcal A \subseteq \Gamma$. Given a point 
$x \in \mathcal A$, its distance 
to $\Gamma_h$ is proportional 
to $|(p-p_h)(x)|$. We consider
 a locally finite Vitali type covering of the set $\mathcal{A}$ with countably many balls $B_j = B(x_j,r_j)$ of radius $r_j$ proportional to $|(p-p_h)(x_j)|$, so that in particular we have $B_j \subseteq \Omega_h$. We denote by $D_j$ the energy of the difference in the region $B_j$, i.e., the integral in \eqref{diff functional candidate} restricted to $B_j$. Then 
 \[
\sum_j D_j \lesssim  D((v,\Gamma),(v_h,\Gamma_h)).
\]
 Hence, by the uniform bound on $a^{-1}$, it would suffice 
to show that
\begin{equation}\label{cubic-err}
\int_{\mathcal{A} \cap B_j}\left|(p-p_h)(v-v_h)\cdot\nabla (p-p_h)\right|\,dS \lesssim_A (B+B_h) D_j.
\end{equation}
We will indeed show that this bound holds for the bulk of the 
expression on the left. However, for the remaining part we will return to a global argument. For $\mathcal{A}$ we just use the uniform Lipschitz bound
in this analysis. We first note that in $\tilde \Omega \cap B_j$ we have 
\[
|p-p_h|\approx_A r_j,
\]
which after integration yields a good bound for $r_j$ within $B_j$:
\begin{equation}\label{rj}
\int_{\mathcal{A} \cap B_j}|p-p_h|^2 \, dS \approx_A r_j^{d+1} \lesssim_A D_j.
\end{equation}
Next we consider $v - v_h$, for which we use the $C^\frac12$
norm, which is part of our control norm $A$, in order 
to estimate the surface integral by the ball integral. This yields
\begin{equation}\label{vj}
\int_{\mathcal{A} \cap B_j}|v-v_h|^2 \, dS \lesssim_A 
r_j^{-1} \int_{\tilde \Omega \cap B_j}|v-v_h|^2\, dx + r_j^d A^2  
\lesssim_A r_j^{-1} D_j + r_j^d A^2 \lesssim_A r_j^{-1}D_j.
\end{equation}

It remains to consider $\nabla (p-p_h)$. Our starting point is the 
global bound 
\begin{equation}\label{nablap}
\| \nabla p \|_{C^\frac12(\Omega)} +
\| \nabla p_h \|_{C^\frac12(\Omega_h)} 
\lesssim_A B+B_h,
\end{equation}
which is noted in (\ref{pressurebounddiff}). This allows us to replace $\nabla(p-p_h)$ with its average $\overline{\nabla (p-p_h)}_{j}$ in 
any smaller ball $\tilde B_j \subseteq \tilde \Omega \cap B_j$ of comparable size, because 
\[
\| \nabla(p-p_h) - \overline{\nabla (p-p_h)}_{j}\|_{L^\infty(\tilde \Omega \cap B_j)} \lesssim_A r_j^\frac12 (B+B_h).
\]
Putting everything together we arrive at 
\[
\int_{\mathcal{A} \cap B_j}|(p-p_h)(v-v_h)\cdot(\nabla(p-p_h) - \overline{\nabla (p-p_h)}_j)|  \, dS \lesssim_A
(B+B_h) D_j,
\]
which represents the bulk of \eqref{cubic-err}.
\\

It remains to estimate the contribution of the local average of 
$\nabla (p-p_h)$. Here we view $p-p_h$ as a solution to the following Laplace equation in $\tilde \Omega$:
\[
\left\{
\begin{aligned}
&\Delta (p-p_h) = -\text{tr}(\nabla v)^2 +\text{tr}(\nabla v_h)^2,
\\
& {p-p_h}_{| \tilde{\Gamma}} = \tg:= p 1_{\mathcal{A}_h} - p_h 1_{\mathcal{A}}  .    
\end{aligned}
\right.
\]
We  split the problem for $p-p_h$ into an  inhomogeneous one  with homogeneous boundary condition, and a homogeneous one  with inhomogeneous boundary condition,
\[
p-p_h = (p-p_h)_{inh} + (p-p_h)_{hom}.
\]

For the   inhomogeneous problem we can write the source term in divergence form to estimate
\[
\| \text{tr}(\nabla v)^2 -\text{tr}(\nabla v_h)^2\|_{H^{-1}(\tilde \Omega)} 
\lesssim (B+B_h)D^\frac12,
\]
which by a simple energy estimate gives a global $L^2$ bound
\[
\| \nabla (p-p_h)_{inh}\|_{L^2(\tilde{\Omega})} \lesssim_A (B+B_h)D^\frac12.
\]
This in turn yields a  bound for the corresponding averages by H\"older's inequality,
\[
\sum_j r_j^{d} |\overline{\nabla(p-p_h)_{inh,j}}|^2 \lesssim_A (B+B_h)^2 D.
\]
The contribution of this into \eqref{last-cubic} is then estimated using  \eqref{rj} and \eqref{vj} as follows:
\[
\begin{aligned}
J_{inh}:= & \ \sum_j \int_{\mathcal{A} \cap B_j} |p-p_h||v-v_h||\overline{\nabla (p-p_h)_{inh,j}}| \,dS  
\\
\lesssim_A & \  \sum_j r_j^{\frac{d+1}2} \|v-v_h\|_{L^2(\mathcal{A} \cap B_j)} |\overline{\nabla (p-p_h)_{inh,j}}|
\\
\lesssim_A & \  \sum_j   D_j^\frac12  r_j^{\frac{d}2}  |\overline{\nabla (p-p_h)_{inh,j}}|
\\
\lesssim_A & \ (B+B_h)D,
\end{aligned}
\]
where in the last step we have used Cauchy-Schwarz with respect to $j$.
\\

For the homogeneous term, on the other hand, we need to carefully examine the regularity of the Dirichlet data $\tg$. On one hand, by the definition 
of the distance $D$ we have the $L^2$ bound
\begin{equation}\label{g-2}
 \|\tg\|_{L^2(\tilde \Gamma)}^2 \lesssim_A D.    
\end{equation}
On the other hand, by \eqref{nablap}, on each of the two  regions $\mathcal A_h$ respectively $\mathcal A$, we have formally 
\begin{equation}\label{g-holder}
 \|\tg\|_{C^{1,\frac12}(\mathcal A_h)}
 + \|\tg\|_{C^{1,\frac12}(\mathcal A)}
 \lesssim_A B+B_h.    
\end{equation}
This bound has to be carefully interpreted,
which we do within the proof of Lemma~\ref{l:dg-6}
below. 
\\

A formal interpolation between \eqref{g-2}
and \eqref{g-holder} would yield a $W^{1,6}(\tilde{\Gamma})$ bound 
for $\tg$. We make this bound rigorous in the following.

\begin{lemma}\label{l:dg-6}
The function $\tg$ above satisfies the bound
\begin{equation}\label{dtg-6}
\|\tg\|_{W^{1,6}(\tilde \Gamma)}  \lesssim (B+B_h)^\frac23 D^\frac16.
\end{equation}
\end{lemma}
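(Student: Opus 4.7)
The plan is to establish \eqref{dtg-6} by a Gagliardo--Nirenberg interpolation that balances the global $L^2$ bound \eqref{g-2} against the piecewise $C^{1,1/2}$ regularity \eqref{g-holder}. Dimensional analysis on the $(d-1)$-dimensional surface $\tilde\Gamma$ forces the exponents $\theta = 2/3$ on the H\"older side and $1-\theta = 1/3$ on the $L^2$ side, giving exactly the target $\|\nabla u\|_{L^6} \lesssim \|u\|_{L^2}^{1/3}\|u\|_{C^{1,1/2}}^{2/3}$, which after substituting $\|\tilde g\|_{L^2}^2 \lesssim D$ and $\|\tilde g\|_{C^{1,1/2}(\mathcal{A})} + \|\tilde g\|_{C^{1,1/2}(\mathcal{A}_h)} \lesssim B+B_h$ yields \eqref{dtg-6}.

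The main obstacle is that $\tilde g$ is only piecewise $C^{1,1/2}$: the gradient $\nabla\tilde g$ may jump across the interface $\Gamma\cap\Gamma_h$, so the global Besov norm $B^{3/2}_{\infty,\infty}$ is infinite and a direct application of the standard Gagliardo--Nirenberg fails. The key structural feature enabling the proof is that $\tilde g$ vanishes continuously on this interface. My plan is a pointwise covering argument. Writing $C=B+B_h$, $G(x)=|\nabla\tilde g(x)|$, and letting $d(x)$ denote the distance from $x$ to the interface within $\tilde\Gamma$, I split $\tilde\Gamma$ into the far set $F = \{x : d(x) \geq (G(x)/C)^2\}$ and its complement. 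On $F$, the $C^{1/2}$ regularity of $\nabla\tilde g$ on the piece containing $x$ implies $|\nabla\tilde g|\geq G(x)/2$ on the ball $B(x,r(x))$ with $r(x)\sim (G(x)/C)^2$, and this ball sits entirely inside that piece. Integrating along the gradient direction yields the local $L^2$ lower bound
\[
\|\tilde g\|_{L^2(B(x,r(x)))}^2 \gtrsim r(x)^{d+1} G(x)^2.
\]

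Extracting a Vitali subcollection $\{B_i\}$ of disjoint such balls whose $5$-dilates cover $F$, and noting $G(x)\sim G(x_i)$ on $5B_i$ by the $C^{1/2}$ bound, the arithmetic identity $G^6\cdot r^{d-1} \sim C^4 \cdot G^{2d+4}/C^{2d+2}$ matches the local $L^2$ bound exactly, so
\[
\int_F G^6\, dS \;\lesssim\; \sum_i G(x_i)^6\, r_i^{d-1} \;\sim\; C^4 \sum_i \|\tilde g\|_{L^2(B_i)}^2 \;\lesssim\; C^4 D.
\]
For the close set, I would use the pointwise Lipschitz bound $|\tilde g(x)|\leq C\, d(x)$, combined with a Whitney decomposition into dyadic tubular annuli $\{2^{-j-1}\leq d(x) < 2^{-j}\}$ around the interface (each of surface measure $\lesssim 2^{-j}$), and a local $L^2$ argument on balls of radius $\sim d(x)/2$ (which remain inside a single piece), to absorb this contribution into $C^4 D$ as well. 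Combined with the trivial estimate $\|\tilde g\|_{L^6}^6 \leq \|\tilde g\|_{L^\infty}^4 \|\tilde g\|_{L^2}^2 \lesssim C^4 D$ (using the global bound $\|\tilde g\|_{L^\infty}\lesssim C$) this gives \eqref{dtg-6}.

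The hardest part will be the close region, where the piecewise H\"older regularity breaks down across the interface; getting the exact $C^4 D$ bound requires a delicate trade-off between the Lipschitz control of $\tilde g$ near the interface, the measure of its tubular neighborhoods, and the local $L^2$ mass of $\tilde g$. The argument succeeds precisely because the vanishing of $\tilde g$ on $\Gamma\cap\Gamma_h$ enforces that points with large $|\nabla\tilde g|$ near the interface must be supported by commensurate $L^2$ mass in their vicinity, preventing pathological accumulation of $G^6$ across the fault in the regularity.
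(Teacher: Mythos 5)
Your far-set argument is essentially the paper's Vitali covering: the same radii $r(x)\sim (G(x)/C)^2$, the same near-constancy of $\nabla\tg$ on the chosen balls, and the same local $L^2$ lower bound $\|\tg\|_{L^2(B)}^2\gtrsim r^{d+1}G^2$ from linear growth, followed by summation using disjointness. So the core of the proof is correct on that piece.

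The close set is where there is a genuine gap, and it is not the kind that can be patched by routine Whitney machinery. Two separate problems occur. First, your Whitney decomposition of the tubular region $\{d(x)<(G/C)^2\}$ into annuli $\{2^{-j-1}\le d<2^{-j}\}$ with ``surface measure $\lesssim 2^{-j}$'' implicitly assumes control on the $(d-2)$-dimensional measure of the interface $\partial\mathcal{PA}_h$. But the paper explicitly points out that this set is not a priori controlled; it could be a fractal-like set of large lower-dimensional content, in which case the annulus measure is not $\lesssim 2^{-j}$. Second, even granting the measure bound, the arithmetic with balls of radius $d(x)/2$ does not close: the local $L^2$ lower bound on $B(x,d(x)/2)$ is $\gtrsim G(x)^2 d(x)^{d+1}$, so the ratio $G(x)^6 d(x)^{d-1}/\bigl(G(x)^2 d(x)^{d+1}\bigr) = G(x)^4 d(x)^{-2}$ would need to be $\lesssim C^4$, i.e.\ $G(x)\lesssim C\sqrt{d(x)}$ --- precisely the \emph{opposite} of what defines the close set. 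The Lipschitz bound $|\tg(x)|\le C\,d(x)$ does not rescue this, since a nearly linear function with small center value and slope $G$ still only yields $L^2$ mass $\gtrsim G^2 r^{d+1}$ on a ball of radius $r$.

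The missing idea is the paper's sector observation, which makes the near/far split unnecessary: after reducing WLOG to $g>0$ on $\mathcal{PA}_h$ (a step you do not address, but which matters, since $g$ has a sign in each connected component of $\{\nabla g\neq 0\}$), one takes the \emph{full} ball $B_x$ of radius $r_x\sim(G(x)/C)^2$ for every $x\in S$, even when it sticks out of $\mathcal{PA}_h$. Since $\nabla g$ is nearly constant on $B_x$ and $g(x)>0$, there is a sector $C_x\subseteq B_x$ of measure $\approx|B_x|$ on which $g>0$; and because $g=0$ on $\partial\mathcal{PA}_h$, the sector must lie entirely inside $\mathcal{PA}_h$. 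The linear growth of $g$ on $C_x$ then yields $\|g\|_{L^2(C_x)}^2\gtrsim r_x^{d+1}G(x)^2$ with the full radius $r_x$, and the arithmetic closes exactly. This is the structural use of the vanishing boundary data: it is not merely a Lipschitz bound near the interface, but a topological constraint that the positivity set of $g$ cannot cross $\partial\mathcal{PA}_h$.
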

\begin{proof}
We begin by noting that the two components  $g := p 1_{\mathcal A_h}$ and $g_h := -p_h 1_{\mathcal A}$
of $\tg$ are nonzero on  disjoint sets $\mathcal A_h$ respectively $\mathcal A$, and vanish 
on the corresponding boundaries $\partial \mathcal A_h$, respectively $\partial \mathcal A$. Hence,
we can prove the bound \eqref{dtg-6} separately for the two components. We consider $g$, which 
lives on $\mathcal A_h \subseteq \Gamma_h$. 
Here not only is $\Gamma_h$ a Lipschitz surface,
but it also has a $C^{1,\frac12}$ bound of $B_h$
(which is not the case for $\tilde \Gamma$).
\\

Using a standard partition of unity we can reduce the problem to the case when $\Gamma_h$ is a graph,
\[
\Gamma_h = \{ x_d = \phi(x')\},
\]
where 
\begin{equation}\label{phi-reg}
\|\phi\|_{Lip} \lesssim_A 1, \qquad \| \phi\|_{C^{1,\frac12}}
\lesssim B_h.
\end{equation}
We denote the Lipschitz projection of $\mathcal A_h$ by 
$\mathcal {PA}_h \subseteq \R^{d-1}$. We can equivalently 
consider $g$ as a function on $\mathcal {PA}_h$, in which case
the bound \eqref{dtg-6} becomes
\begin{equation}\label{dg-6}
\| \nabla g\|_{L^6(\mathcal {PA}_h)}  \lesssim_A (B+B_h)^\frac23 D^\frac16.
\end{equation}
We now summarize the information that we have on $g$ as a function on $\mathcal {PA}_h$:
\begin{enumerate}[label=(\roman*)]
    \item ($L^2$ control).
\begin{equation*}\label{pg-2}
 \|g\|_{L^2(\mathcal {PA}_h)}^2 \lesssim_A D,   
\end{equation*}
 which comes from \eqref{g-2}.
 \item (H\"older control).
 \begin{equation*}
 \|\nabla g\|_{C^{\frac12}(\mathcal {PA}_h)} \lesssim_A B+B_h, 
 \end{equation*}
which is a consequence of \eqref{nablap}, \eqref{phi-reg} 
and chain rule.

\item (Zero boundary data).
\begin{equation*}
 g = 0 \ \  \text{on} \  \ \partial  \mathcal {PA}_h.    
\end{equation*}
 \end{enumerate}
We will prove that these three properties imply the desired bound \eqref{dg-6}. The difficulty here is that we do not know that $\nabla g = 0$ on $\partial  \mathcal {PA}_h$; 
else we could simply extend $g$ by $0$ outside $\mathcal {PA}_h$ and this becomes a standard interpolation bound.
Further, we do not a priori control the regularity 
of the boundary $\partial  \mathcal {PA}_h$. 
\\

Without any loss of generality we assume that $g > 0$
on $\mathcal 
{PA}_h$; else we split this set into connected components
where $g$ has constant sign, modulo a set where $\nabla g = 0$ a.e.~To prove the desired bound we will use a well-chosen Vitali covering of the set $S = \mathcal {PA}_h \setminus \{ \nabla g = 0\}$ with balls. This choice is as follows: For each $x \in S$ we consider 
a ball $B_x= B(x,r_x)$ with radius $r_x = c^2 (B+B_h)^{-2}|\nabla g(x)|^2$ where $c>0$ is a small universal constant,  chosen so that $|\nabla g|$ is nearly constant on $B_x$, i.e.,
\[
|\nabla g(y)-\nabla g(x)| \lesssim c | \nabla g(x)|
\ll |\nabla g(x)|, \qquad y \in B_x.
\]
The union of the balls $B_x$ with $x \in S$ clearly covers $S$, so Vitali's lemma allows us to extract 
a countable disjoint subfamily of such balls $B_j= B_{x_j}$ so that 
\[
S \subseteq \bigcup 5 B_j.
\]

Since $\nabla g$ is almost constant on $B_x$ and $g(x) > 0$,
a key observation is that there must exist a nontrivial sector
$C_x \subseteq B_x$ where 
\[
g > 0 \quad \text{in}\  C_x, \qquad |C_x|\approx |B_x|.
\]
Since $g=0$ on $\partial \mathcal{PA}_h$,
it follows that we must have $C_x \subseteq S$; this is what allows us to bypass the lack of geometric information on the set
$\mathcal {PA}_h$.
\\

On $C_x$, the function $g$ is almost linear with slope approximately $|\nabla g(x)|$.
Therefore, we must have
\begin{equation*}
    \| g\|_{L^2(C_x)}^2 \gtrsim r_x^{d+1} |\nabla g(x)|^2.
\end{equation*}
We will use this bound to estimate from above the $L^6$ norm of $\nabla g$ in each $5B_j$ as follows:
\[
\begin{aligned}
\| \nabla g\|_{L^6(5B_j)}^6 \lesssim & \ r_{x_j}^{d-1} |\nabla g(x_j)|^6
\\
\lesssim & \ \| g\|_{L^2(C_j)}^2 r_{x_j}^{-2}|\nabla g(x_j)|^4 
\\
\approx & \ \| g\|_{L^2(C_j)}^2 (B+B_h)^4.
\end{aligned}
\]
Now, we sum over $j$, using the disjointness of the balls $B_j$ and thus of $C_j$. This gives
\[
\sum_{j}\| \nabla g\|_{L^6(5B_j)}^6 \lesssim 
\| g\|_{L^2(S)}^2 (B+B_h)^4 \lesssim_A D (B+B_h)^4,
\]
which concludes the proof of the lemma.

\end{proof}

Now we use the bound in \Cref{l:dg-6} to solve the homogeneous Dirichlet problem
in $\tilde \Omega$ and to obtain the estimate
\[
\| \nabla (p-p_h)_{hom}^*\|_{L^6(\tilde \Gamma)} \lesssim (B+B_h)^\frac23 D^\frac16 ,
\]
where $*$ stands for the nontangential maximal function. This bound is due to Verchota~\cite{MR0769382}, but see also the further 
discussion by Jerison-Kenig \cite[Theorem 5.6]{MR1331981} as well as the case of $C^1$ boundaries 
considered earlier by  Fabes-Jodeit-Rivi\`ere~\cite{FJR}. 
\\

The exponent $6$ is allowed above provided that the 
Lipschitz norm of the boundary is sufficiently small. Precisely, the upper limit of the allowed exponents goes to infinity as the corner size decreases to $0$.
The smallness of the intersection angle between $\Gamma$ and $\Gamma_h$ is a consequence of the $C^{1,\epsilon}$ common regularity bound together 
with the use of a sufficiently refined collar region.
\\

To use the nontangential maximal function bound, 
within the ball $B_j = B(x_j,r_j)$ we consider a smaller ball 
\[
\tilde B_j = B(x_j - \frac12 r_j n_j, \frac14 r_j).
\]
For $y \in \tilde B_j$ we have 
\[
|\nabla (p-p_h)_{hom}(y)| \lesssim 
|\nabla (p-p_h)_{hom}^* (z)|, \qquad z \in \tilde \Gamma \cap \frac14 B_j.
\]
Taking averages on the left and integrating on the right, we arrive at 
\[
r_j^{d-1} |\overline{\nabla (p-p_h)_{hom,j}}|^6 \lesssim_A 
\|\nabla (p-p_h)_{hom}^*\|_{L^6(\tilde  \Gamma \cap \frac14 B_j)}^6.
\]
Since the balls $B_j$ are disjoint, summation in $j$ yields 
\begin{equation}\label{phom-last}
\sum_j r_j^{d-1} |\overline{\nabla (p-p_h)_{hom,j}}|^6 \lesssim 
(B+B_h)^4 D .
\end{equation}

On the other hand, for $v-v_h$ we use the interpolation bound \eqref{trace1}, which gives
\begin{equation}\label{v-last}
\| v-v_h\|_{L^3(\tilde \Gamma)} \lesssim (B+B_h)^\frac13 D^{\frac13}.
\end{equation}

We are now ready to estimate the corresponding contribution to
\eqref{last-cubic} using also \eqref{rj} and \eqref{vj} as follows: 

\[
\begin{aligned}
J_{hom}:= & \ \sum_j \int_{\mathcal{A} \cap B_j} |p-p_h||v-v_h||\overline{\nabla (p-p_h)_{hom,j}}|\, dS  
\\
\lesssim_A & \  \sum_j r_j  (r_j^{\frac{2(d-1)}{3}} \|v-v_h\|_{L^3(\mathcal{A} \cap B_j)})   |\overline{\nabla (p-p_h)_{hom,j}}|
\\
\lesssim_A & \  \sum_j   r_j^{\frac{d+1}{2}} \|v-v_h\|_{L^3(\mathcal{A} \cap B_j)}  (r_j^{\frac{d-1}6}  |\overline{\nabla (p-p_h)_{hom,j}}|)
\\
\lesssim_A & \ (B+B_h)D.
\end{aligned}
\]

At the last step we have applied H\"older's inequality in $j$ 
with exponents $2$, $3$ and $6$,
using \eqref{rj}, \eqref{v-last} and \eqref{phom-last}.
 This completes the proof of \eqref{cubic-err} and therefore the proof of \Cref{Difference}.

\end{proof}
One consequence of the difference bounds is the following uniqueness result.
\begin{theorem}[Uniqueness] \label{t:unique} Let $\epsilon>0$ and let $\Omega_0$ be a bounded domain with boundary $\Gamma_0\in\Lambda(\Gamma_*,\epsilon,\delta)$. Then for $\Gamma_0\in C^{1,\frac{1}{2}}$ and divergence free $v_0\in W^{1,\infty}(\Omega_0)$ satisfying the Taylor sign condition, the free boundary Euler equations admit at most one solution $(v,\Gamma_t)$ on a time interval $[0,T]$ with $\Gamma_t\in \Lambda(\Gamma_*,\epsilon,\delta)$ and
\begin{equation*}
\sup_{0\leq t\leq T}\|v\|_{C^{\frac{1}{2}+\epsilon}_x(\Omega_t)}+\int_{0}^{T}\|v\|_{W^{1,\infty}_x(\Omega_t)}+\|D_tp\|_{W^{1,\infty}_x(\Omega_t)}+\|\Gamma_t\|_{C_x^{1,\frac{1}{2}}}\,dt<\infty.    
\end{equation*}
\end{theorem}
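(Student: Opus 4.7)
\textbf{Proof proposal for Theorem \ref{t:unique}.}

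The plan is to derive uniqueness directly from the difference estimate of \Cref{Difference} together with Gr\"onwall's inequality. Suppose $(v^{(1)},\Gamma_t^{(1)})$ and $(v^{(2)},\Gamma_t^{(2)})$ are two solutions on $[0,T]$ with the same initial data $(v_0,\Gamma_0)$, both evolving in $\Lambda(\Gamma_*,\epsilon,\delta)$ and both satisfying the regularity hypotheses stated in the theorem. Consider the nonlinear distance functional
\[
\mathcal{D}(t) := D((v^{(1)},\Gamma_t^{(1)}),(v^{(2)},\Gamma_t^{(2)}))
\]
defined in \eqref{diff functional candidate}. Since $\Omega_0^{(1)} = \Omega_0^{(2)} = \Omega_0$ and $v^{(1)}(0) = v^{(2)}(0) = v_0$ and therefore $p^{(1)}(0) = p^{(2)}(0) = p_0$ (the pressure being recovered from $(v_0,\Gamma_0)$ via the elliptic equation \eqref{Euler-pressure}), we have $\mathcal{D}(0) = 0$.

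Before applying \Cref{Difference}, I need to justify that the Taylor sign condition $a^{(i)} > c_0 > 0$ holds uniformly on $[0,T]$ for both solutions. This follows from a short-time continuation argument: since $a_0 > c_0 > 0$ by hypothesis, and since $a^{(i)}$ is continuous in $t$ (as a consequence of the $C^{1,\frac{1}{2}}$ regularity of $\Gamma$ and the Lipschitz regularity of $v$, together with the elliptic bounds \eqref{pressurebounddiff}), a standard bootstrap shows that $a^{(i)}$ remains positive on the full time interval where the pointwise control parameters $A_\epsilon, B_{\text{diff}}$ remain bounded in $L^\infty_t$ resp. $L^1_t$. Together with the assumptions on $v^{(i)}$ and $\Gamma_t^{(i)}$, this verifies all hypotheses of \Cref{Difference}. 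Applying that theorem yields
\[
\frac{d}{dt}\mathcal{D}(t) \lesssim_{A,A_h} (B^{(1)} + B^{(2)})(t)\,\mathcal{D}(t),
\]
with $(B^{(1)} + B^{(2)}) \in L^1(0,T)$. Gr\"onwall's inequality combined with $\mathcal{D}(0) = 0$ forces $\mathcal{D}(t) \equiv 0$ on $[0,T]$.

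It remains to unpack $\mathcal{D} \equiv 0$ into equality of the two solutions. The vanishing of the first term in \eqref{diff functional candidate} gives $v^{(1)} = v^{(2)}$ a.e.\ on $\tilde{\Omega}_t = \Omega_t^{(1)} \cap \Omega_t^{(2)}$. The vanishing of the second term, together with the pointwise lower bound $b \ge c_0 > 0$ coming from the Taylor sign condition, gives $p^{(1)} = p^{(2)}$ on $\tilde{\Gamma}_t$. The main (though very short) obstacle is to convert this pressure equality into equality of the two hypersurfaces $\Gamma_t^{(1)}$ and $\Gamma_t^{(2)}$. For this, I invoke the fundamental theorem of calculus identity \eqref{FTC pressure average}, which expresses $p^{(1)} - p^{(2)}$ on $\tilde{\Gamma}_t$ as the product of the displacement $s_h$ and the average of $\nabla p^{(i)} \cdot X$ along the collar flow. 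The Taylor sign condition combined with a sufficiently tight collar guarantees that this averaged derivative is bounded below in absolute value, so $p^{(1)} - p^{(2)} = 0$ on $\tilde{\Gamma}_t$ forces $s_h = 0$, i.e.\ $\eta_{\Gamma_t^{(1)}} = \eta_{\Gamma_t^{(2)}}$ on $\Gamma_*$, whence $\Gamma_t^{(1)} = \Gamma_t^{(2)}$ and hence $\Omega_t^{(1)} = \Omega_t^{(2)} = \tilde{\Omega}_t$. Combined with the $L^2$ equality of velocities already established on $\tilde{\Omega}_t$, this completes the proof that the two solutions coincide for all $t \in [0,T]$.
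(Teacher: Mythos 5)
Your proposal is correct and follows the same overall structure as the paper: reduce to $\mathcal{D}(0)=0$, apply the difference bound from \Cref{Difference}, invoke Gr\"onwall to obtain $\mathcal{D}\equiv 0$, and then translate this into equality of the two solutions. The only place where you diverge meaningfully is the very last step, the conversion of $\mathcal{D}\equiv 0$ into equality of domains. The paper argues by contradiction: if $\Omega_t\not\subseteq\Omega_{t,h}$, it produces a point $x\in\Gamma_{t,h}\cap\Omega_t$, notes that this $x$ lies on $\tilde\Gamma_t$ so $p(x)=p_h(x)=0$, and then uses the Taylor sign condition (which makes $p$ a non-degenerate defining function in the collar, so $\{p=0\}=\Gamma_t$ there) to contradict $x\in\Omega_t$. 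You argue directly via the fundamental theorem of calculus identity \eqref{FTC pressure average}, using the Taylor sign condition to bound the averaged transversal derivative away from zero, hence forcing the displacement $s_h\equiv 0$ on $\tilde\Gamma_t$ (on $\mathcal{A}\cup\mathcal{A}_h$ from the identity, and trivially on $\Gamma\cap\Gamma_h$). Both arguments exploit the same non-degeneracy; yours is slightly more quantitative, the paper's slightly more topological. Either is acceptable.

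One minor imprecision worth flagging: your claim that ``a standard bootstrap shows that $a^{(i)}$ remains positive on the full time interval where the pointwise control parameters remain bounded'' is not actually implied by the stated hypotheses. Since $|D_t a|\lesssim B_{\text{diff}}$ along particle trajectories on the boundary, $a$ could in principle decrease to zero if $\int_0^T B_{\text{diff}}\,dt$ is large, even though $B_{\text{diff}}\in L^1_T$. The persistence of a uniform lower bound on the Taylor coefficient on $[0,T]$ should really be treated as part of the (implicit) hypotheses on the solutions rather than something that follows from boundedness of $A_\epsilon$ and $\int B_{\text{diff}}$ alone; indeed, \Cref{Difference} explicitly assumes $a,a_h>c_0>0$ on the whole time interval. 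The paper's proof also glosses over this point, so it is not a gap in your argument relative to the source, but the word ``bootstrap'' here promises more than the hypotheses deliver, and you should instead simply restrict to (or assume) the time interval on which the Taylor coefficient stays uniformly positive for both solutions.
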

\begin{proof}
Suppose $(v,\Omega_t)$ and $(v_h,\Omega_{t,h})$ are a pair of solutions satisfying the conditions of the theorem with the same initial data. From the differences estimates, we immediately obtain $v=v_h$ on $\Omega_t\cap\Omega_{t,h}$. Next, we argue that the domain $\Omega_t$ coincides with $\Omega_{t,h}$. First, we note that the intersection is non-empty if $\delta>0$ is small enough. We now show $\Omega_t\subseteq\Omega_{t,h}$. It suffices to show $\Omega_t\subseteq\overline{\Omega}_{t,h}$. If this is not true, then there is $x\in\Gamma_{t,h}$ such that $x\in \Omega_t$. Such a point must lie on $\partial(\Omega_t\cap\Omega_{t,h})$. Therefore, from the estimate for the distance functional, we have $p(x)=0$. However, within a small enough collar neighborhood, the Taylor sign condition tells us that the level set $\{p=0\}$ corresponds exactly to the free surface $\Gamma_t$. This is a contradiction to $x$ being an interior point of $\Omega_t$. Therefore $\Omega_t\subseteq\Omega_{t,h}$. The reverse inclusion follows by an identical argument.
\end{proof}


\section{Balanced elliptic estimates}\label{BEE}
In this section, we prove a collection of  refined elliptic estimates which will be crucial for obtaining the sharp pointwise control norms in the higher energy bounds. These estimates will turn out to be quite general and should be applicable to other free boundary problems.  In a sense, they can be seen as significant refinements of the so-called \emph{tame estimates} which have been fundamental in the analysis of many water waves problems (see the discussion in \cite{MR3460636,MR2138139}), but are not nearly sufficient for our purposes. Indeed, as we will soon see, our proofs of the higher energy bounds   require estimates for various elliptic operators which more precisely balance the contributions of the input function and the domain regularity, simultaneously, in both pointwise and $L^2$ based norms. This simultaneous balance cannot be achieved with the known tame estimates, which often only seem to balance the contributions in $L^2$ based norms or involve domain dependent constants in pointwise norms which are significantly off scale. The  technical utility of our balanced estimates  will become readily apparent in \Cref{HEB}, where they will be used to efficiently dispatch with expressions involving relatively complicated iterated applications of the Dirichlet-to-Neumann operator and various other elliptic operators.
\\

In the following, we will always assume that $\Omega$ is a bounded domain with boundary $\Gamma\in\Lambda_*:=\Lambda(\Gamma_*,\epsilon_0,\delta)$ for suitably small (but fixed) constants $\epsilon_0,\delta>0$. Most of the bounds in this section do not make reference to a particular velocity function, and so, the implicit constants in many of the estimates will only depend on the surface component of the control parameter $A$; namely, $A_{\Gamma}:=\|\Gamma\|_{C^{1,\epsilon_0}}$. Hence, for this section, by the relation $X\lesssim_A Y$, we mean $X\leq C(A_{\Gamma})Y$ for some constant $C$ depending exclusively on $A_{\Gamma}$. The only exception to this rule (which we will make note of explicitly) will be  in \Cref{Movingsurfid}, where we will use the full control parameter $A$ to establish estimates for commutators of various elliptic operators with $D_t$.  We will also harmlessly let $A$ depend on the domain volume throughout, as the volume of the domain will be conserved in the dynamic problem.
\\

Throughout the section, by a slight abuse of notation, we will follow the convention that a parameter  $\epsilon$ may vary from line to line by a fixed scalar factor. Generally speaking, we will take $\epsilon>0$ to be any positive constant with $\epsilon\ll\epsilon_0$.

\subsection{Extension operators in \texorpdfstring{$\Lambda_*$}{} and product type estimates on \texorpdfstring{$\Omega$}{}}
To establish the desired elliptic estimates, it will be convenient to have an extension operator which is bounded from $H^s(\Omega)\to H^s(\mathbb{R}^d)$ for $s\geq 0$, and $C^{k,\alpha}(\Omega)\to C^{k,\alpha}(\mathbb{R}^d)$ for a suitable range of $k$ and $\alpha$ with bounds depending only on the implicit constant $A$. Among other things, this will enable us to recover many of the standard product type estimates which are well-known on $\mathbb{R}^d$. To this end, let $\varphi:\mathbb{R}^{d-1}\to \mathbb{R}$ be a Lipschitz function with Lipschitz constant $M$. Let $\Omega=\{(x,y)\in \mathbb{R}^d: y>\varphi(x)\}$. Moreover, for $1\leq p\leq\infty$ and an integer $k\geq 0$, let $W^{k,p}(\Omega)$ denote the usual Sobolev space consisting of distributions whose derivatives up to order $k$ belong to $L^p(\Omega)$. It is a classical result of Stein \cite[Theorem 5', p.~181]{MR0290095} that there exists a linear operator $\mathcal{E}$ mapping functions on $\Omega$ to functions on $\mathbb{R}^d$ with the property that $\mathcal{E}: W^{k,p}(\Omega)\to W^{k,p}(\mathbb{R}^d)$ is well-defined and continuous for all $1\leq p\leq \infty$ and integers $k$. Moreover, the norm of $\mathcal{E}: W^{k,p}(\Omega)\to W^{k,p}(\mathbb{R}^d)$ depends only on the dimension $d$, the order of differentiability $k$ and the Lipschitz constant $M$. The operator $\mathcal{E}$ is called \emph{Stein's extension operator}.  As one can see directly from its definition \cite[Equation (24), p.~182]{MR0290095}, $\mathcal{E}$ also maps $C^1(\Omega)\to C^1(\mathbb{R}^d)$.
\\

As explained in Section 3.3 of \cite{MR0290095}, a partition of unity argument allows one to construct an extension operator $\mathcal{E}=\mathcal{E}_\Omega$ on all Lipschitz domains $\Omega$, with constant depending only on $d,k,p$, the number and size of the balls needed to cover the boundary, and the Lipschitz constant of the defining function on each ball. Since for a tight enough collar $ \Lambda_*$ one can use the same balls to cover all elements of $ \Lambda_*$ with control of the Lipschitz constant on each ball, this shows that Stein's extension operator has norm bounds that are uniform for domains with boundary in $\Lambda_*$.
\\

In the above discussion, the definition of the $W^{k,p}$ norm was the usual one, defined by requiring the first $k$ weak-derivatives to be in $L^p$. However, as noted earlier, we also define the $H^s$ norm of a function $f$ as the infimum of the $H^s$ norms of all possible extensions of $f$ to $\mathbb{R}^d$. Clearly, $\|\cdot\|_{W^{k,2}}\lesssim\|\cdot\|_{H^{k}}$ with constant independent of the domain. However, by the above, for domains with boundary in $\Lambda_*$, the reverse inequality also holds, with implicit constant depending on $A_{\Gamma}$.
\\

From \cite[Theorem B.8]{MR1742312} we know that for any non-empty open subset $\Omega$ of $\mathbb{R}^d$ and any $s_0,s_1\in \mathbb{R}$ we have the identification
\begin{equation*}
    \left(H^{s_0}(\Omega),H^{s_1}(\Omega)\right)_{\theta,2}=H^s(\Omega), \ \text{where} \ s=(1-\theta)s_0+\theta s_1 \ \text{and} \ 0<\theta<1,
\end{equation*}
with equivalent norms uniform in the collar. Thus, by interpolation, we have the following result.
\begin{proposition}\label{Stein} 
 Let $\Omega$ be a bounded domain with boundary $\Gamma\in \Lambda_*$. Then for every $s\geq 0$ and $0\leq\alpha\leq 1+\epsilon_0$, Stein's extension operator $\mathcal{E}$ satisfies
\begin{equation*}
\|\mathcal{E}\|_{C^{\alpha}(\Omega)\to C^{\alpha}(\mathbb{R}^d)},\hspace{5mm}\|\mathcal{E}\|_{H^s(\Omega)\to H^s(\mathbb{R}^d)}\lesssim_A 1
\end{equation*}
uniformly in $ \Lambda_*$. 
\end{proposition}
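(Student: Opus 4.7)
The plan is to assemble Proposition 5.1 from three ingredients already available in the literature and hinted at in the excerpt: (i) Stein's original construction on graph domains, (ii) a partition of unity that glues the local graph pieces together uniformly in $\Lambda_*$, and (iii) real interpolation to pass from integer-order $W^{k,p}$ bounds to the fractional Sobolev and Hölder scales.

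First I would invoke Stein's theorem on a single special Lipschitz graph $\{y > \varphi(x)\}$, which gives a single operator $\mathcal{E}_\varphi$ bounded $W^{k,p} \to W^{k,p}$ for every $k \in \N$ and every $1 \leq p \leq \infty$, with operator norm depending only on $d$, $k$, $p$, and the Lipschitz constant of $\varphi$. Next, because $\Gamma \in \Lambda_*=\Lambda(\Gamma_*,\epsilon_0,\delta)$, the boundary can be covered by a fixed, $\Gamma$-independent finite collection of balls $\{B_j\}$ (chosen once for $\Gamma_*$) inside each of which $\Gamma$ is the graph, in fixed coordinates, of a function $\varphi_j$ whose Lipschitz (indeed $C^{1,\epsilon_0}$) norm is controlled by $A_\Gamma$; this is precisely the content of the collar construction. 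A subordinate partition of unity $\{\chi_j\} \cup \{\chi_0\}$ independent of $\Gamma$ (with $\chi_0$ supported in the interior of every $\Omega \in \Lambda_*$) then allows one to define $\mathcal{E}f := \chi_0 f + \sum_j \mathcal{E}_{\varphi_j}(\chi_j f)$, which is bounded on $W^{k,p}$ with a constant depending only on $A_\Gamma$, $k$, $p$, $d$; this is the argument from \cite[\S 3.3]{MR0290095} as already cited.

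For the $H^s$ conclusion I would then apply the Triebel identification $(H^{s_0}(\Omega),H^{s_1}(\Omega))_{\theta,2}=H^{s}(\Omega)$ quoted in the excerpt, noting that the equivalence of norms is uniform over $\Lambda_*$ (which can itself be verified via $\mathcal{E}$ comparing the infimum-definition of $H^s(\Omega)$ with the restriction of the standard $H^s(\mathbb{R}^d)$ norm). Real interpolation of the operator $\mathcal{E}$ between $H^{\lfloor s\rfloor}(\Omega)\to H^{\lfloor s\rfloor}(\mathbb{R}^d)$ and $H^{\lceil s\rceil}(\Omega)\to H^{\lceil s\rceil}(\mathbb{R}^d)$ then yields the claimed bound for all $s \geq 0$.

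For the Hölder scale one can proceed analogously: Stein's operator is bounded $W^{k,\infty}\to W^{k,\infty}$ for $k=0,1,2$ (the range of $k$ relevant to $\alpha \leq 1+\epsilon_0$), and on these graph patches one has in addition the pointwise Hölder characterization $C^{k,\alpha}= (W^{k,\infty},W^{k+1,\infty})_{\alpha,\infty}$ up to equivalent norms (again uniform over $\Lambda_*$). Real interpolation of $\mathcal{E}$ then gives the bound $C^\alpha(\Omega)\to C^\alpha(\mathbb{R}^d)$ for $0\leq\alpha\leq 1$ and $C^{1,\alpha}(\Omega)\to C^{1,\alpha}(\mathbb{R}^d)$ for $0\leq\alpha\leq\epsilon_0$; the upper cut-off $1+\epsilon_0$ is dictated precisely by the $C^{1,\epsilon_0}$ regularity of the boundary, since Stein's extension on a $C^{1,\epsilon_0}$ graph cannot preserve more than $C^{1,\epsilon_0}$ regularity without extra smoothness of $\varphi_j$. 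The only technical issue to verify carefully is that the $W^{2,\infty}$ bound on Stein's operator, restricted to patches where $\varphi_j \in C^{1,\epsilon_0}$, still produces an extension in $C^{1,\epsilon_0}(\mathbb{R}^d)$ with the advertised constant; this is the main obstacle, and I would handle it by appealing to the explicit kernel formula \cite[Eq.~(24), p.~182]{MR0290095} (which is smooth in the transverse direction and merely involves the defining function in a boundary-straightening diffeomorphism of $C^{1,\epsilon_0}$ regularity) to see directly that second difference quotients of $\mathcal{E}_{\varphi_j}f$ in $\mathbb{R}^d$ are controlled by the $C^{1,\epsilon_0}$ seminorm of $f$ times $\|\varphi_j\|_{C^{1,\epsilon_0}}$, which is uniformly bounded in $\Lambda_*$.
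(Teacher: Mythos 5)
Your treatment of the $H^{s}$ half is essentially the paper's: interpolate the $W^{k,2}$ bounds between integer powers, using the uniform-in-$\Lambda_*$ constants coming from the partition-of-unity gluing of the graph pieces. No issue there.

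The Hölder half is where the proposal has a real gap. You assert the interpolation identity $C^{k,\alpha}(\Omega) = \bigl(W^{k,\infty}(\Omega),W^{k+1,\infty}(\Omega)\bigr)_{\alpha,\infty}$ ``up to equivalent norms (again uniform over $\Lambda_*$),'' and then interpolate $\mathcal{E}$ between integer levels. But establishing that identity with constants uniform over $\Lambda_*$ is precisely the hard point, and the natural retraction argument for it runs through an extension operator that is already known to be bounded on $C^{k,\alpha}(\Omega)$ — which is exactly the conclusion you are trying to prove for $\mathcal{E}$, so the step as written is circular. The paper breaks this circle by a different move: it quotes the existence of \emph{other} extension operators, of Whitney/Hestenes type, that are bounded $C^{\alpha}(\Omega)\to C^{\alpha}(\mathbb{R}^d)$ with constants depending only on the $C^{1,\epsilon_0}$ character of $\Gamma$ (McLean, Thm.~A.1, and Gilbarg--Trudinger, Lemma~6.37). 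With those auxiliary operators in hand, the Lunardi retraction argument (pp.~11--12 of that reference) transfers the $\mathbb{R}^d$ interpolation identity to $\Omega$ uniformly in $\Lambda_*$, and then Stein's boundedness at the integer endpoints ($C^0$, $C^1$, $W^{2,\infty}$) yields the $C^{\alpha}$ bound by interpolation. That existence citation is the one ingredient your argument is missing.

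Your fallback in the last paragraph — estimate second difference quotients directly from the explicit kernel — is not carried out and, as sketched, would not go through easily. Stein's operator in Eq.~(24) of his book does not involve a boundary-straightening diffeomorphism: it averages $f$ over a ray $y\mapsto y+\lambda\delta^*(x,y)$ where $\delta^*$ is the regularized distance built from a Whitney decomposition of the complement. That function satisfies $|D^\beta\delta^*|\lesssim(\delta^*)^{1-|\beta|}$ and is only Lipschitz up to the boundary; it does not inherit $C^{1,\epsilon_0}$ regularity from $\varphi\in C^{1,\epsilon_0}$. So the term $\partial_x\delta^*\cdot(\partial_y f)(x,y+\lambda\delta^*)$ in $\nabla\mathcal{E}f$ has a factor whose $C^{\epsilon_0}$ seminorm blows up near $\Gamma$, and the estimate can only be closed by exploiting the vanishing moments of the averaging weight $\psi$, exactly as Stein does for the integer Sobolev case. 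That cancellation analysis is a genuine piece of work, not a routine verification, which is why the paper opts for the abstract retraction route instead.
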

\begin{proof}
    The $H^s$ case follows from interpolation between integer powers. For $C^{\alpha}$, we first note from \cite[Theorem A.1]{MR1742312} (and higher order variants, c.f.~\cite[Lemma 6.37]{MR1814364}) that there are extension operators with the above $C^{\alpha}\to C^{\alpha}$ bound. That Stein's operator has this property then follows by making use of such extensions and interpolating, similar to \cite[p.~11-12]{MR3753604}. 
\end{proof}
\begin{remark}
    As mentioned in \cite[Proposition 2.17]{MR1331981}, by an interpolation argument, one can also prove that Stein's extension operator maps  the Besov space $B_\alpha^{p,q}(\Omega)$ to $B_\alpha^{p,q}(\mathbb{R}^d)$ for all $\alpha>0$, $1\leq p,q\leq \infty$ and  Lipschitz domains $\Omega$. However, we will not require anything this precise.
\end{remark}
\subsection{Littlewood-Paley decomposition and paraproducts on \texorpdfstring{$\Omega$}{}} Using the Stein extension operator, many of the standard paraproduct estimates on $\mathbb{R}^d$ pass over to $\Omega$. 
\subsubsection{Littlewood-Paley decomposition} For a distribution $u$ on $\mathbb{R}^d$, we will make use of the standard Littlewood-Paley decomposition
\begin{equation*}
u=\sum_{k\geq 0}P_ku,  
\end{equation*}
where for $k>0$, $P_k$ corresponds to a Fourier multiplier with smooth symbol supported in the dyadic frequency region $|\xi|\approx 2^k$ and $P_0$ corresponds to a multiplier localized to the unit ball. The notation $P_{<k}$, $P_{\leq k}$, $P_{\geq k}$ and $P_{>k}$ will have the usual meaning. Using the Stein extension operator, we may  also consider Littlewood-Paley projections when $u$ is defined only on $\Omega$. In this case, we abuse notation, and  write $P_ku$ instead of $P_k\mathcal{E}u$, with corresponding definitions for $P_{<k}$, $P_{\leq k}$, etc. We will also often write $u_k$, $u_{<k}$, etc.~as shorthand for the above operators applied to $u$.
\subsubsection{Paraproducts on $\Omega$} The above decomposition allows us to make use of some of the standard tools of paradifferential calculus (see e.g. \cite{bony1981calcul} and  \cite{metivier2008differential}) on $\mathbb{R}^d$ and apply them to functions defined on $\Omega$. For bilinear expressions, we will make heavy use of the Littlewood-Paley trichotomy (now defined for functions on $\Omega$ with suitable regularity),
\begin{equation*}
f\cdot g=T_fg+T_gf+\Pi(f,g)  ,  
\end{equation*}
where the above three terms correspond to the respective ``low-high", ``high-low" and ``high-high" frequency interactions between $f$ and $g$. More specifically, $T_fg$ is defined as 
\begin{equation*}
T_fg:=\sum_{k}f_{<k-k_0}g_k    ,
\end{equation*}
where $k_0$ is some universal parameter independent of $k$. We will be able to take, e.g., $k_0=4$ for most purposes.
\subsubsection{Bilinear estimates on $\Omega$}
One important consequence of the bounds for $\mathcal{E}$ and the corresponding inequality on $\mathbb{R}^d$ is the following algebra property for $H^s(\Omega)$, $s\geq 0$,
\begin{equation}\label{algebraprop}
\|fg\|_{H^s(\Omega)}\lesssim_A \|f\|_{H^s(\Omega)}\|g\|_{L^{\infty}(\Omega)}+\|g\|_{H^s(\Omega)}\|f\|_{L^{\infty}(\Omega)}.    
\end{equation}
In our estimates for the elliptic problems below, the bilinear terms above will frequently appear in the form $\partial_if\partial_jg$ where $f$ is some function defined on $\mathbb{R}^d$ encoding the regularity of the domain and the desired uniform bound for $g$ is below $C^1$. For this reason, in order to avoid negative H\"older norms inside a domain, we will need the following paraproduct type estimate, which we will use in the sequel.
\begin{proposition}[Bilinear paraproduct type estimate on $\Omega$]\label{productestref}
Let either i) $s> 0$ and $\alpha_1,\alpha_2,\beta\in [0,1]$ or ii) $s=0$, $\alpha_1=\alpha_2=1$ and $\beta\in[0,1]$. Then we have for any $r\geq 0$,
\begin{equation*}
\begin{split}
\|\partial_if\partial_jg\|_{H^s(\Omega)}&\lesssim_A\|g\|_{H^{s+2-\alpha_1}(\Omega)}\|f\|_{C^{\alpha_1}(\Omega)}+\| f\|_{H^{s+r+1}(\Omega)}\sup_{k>0}2^{-k(r+\alpha_2-1)}\|g_k^1\|_{C^{\alpha_2}(\Omega)}
\\
&\qquad +\|f\|_{C^{1,2\epsilon}(\Omega)}\sup_{k>0}2^{k(s+\beta-\epsilon)}\|g_k^2\|_{H^{1-\beta}(\Omega)},
\end{split}
\end{equation*}
where $g=g_k^1+g_k^2$ is any sequence of partitions of $g$ in $C^{\alpha_2}(\Omega)+H^{1-\beta}(\Omega)$.
\end{proposition}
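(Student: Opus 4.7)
The plan is to first extend $f$ and $g$ from $\Omega$ to $\R^d$ using Stein's extension operator from Proposition 5.1, which preserves both the $H^s$ and $C^{\alpha}$ norms up to constants depending only on $A$. This reduces the inequality to an analogous statement on $\R^d$, where I would apply the Littlewood--Paley trichotomy
\[
\partial_i f \cdot \partial_j g = T_{\partial_i f}(\partial_j g) + T_{\partial_j g}(\partial_i f) + \Pi(\partial_i f, \partial_j g)
\]
and control each paraproduct separately in $H^s(\R^d)$ by square-summing the dyadic blocks, inserting the fixed partition $g = g_k^1 + g_k^2$ scale-by-scale whenever the high-frequency slot is occupied by $g$.

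The low--high term $T_{\partial_i f}(\partial_j g)$ is the easy one: its block at scale $k$ is $(\partial_i f)_{<k}(\partial_j g)_k$, and the Bernstein bounds $\|(\partial_i f)_{<k}\|_{L^\infty} \lesssim 2^{k(1-\alpha_1)}\|f\|_{C^{\alpha_1}}$ together with $\|(\partial_j g)_k\|_{L^2} \lesssim 2^k\|g_k\|_{L^2}$ yield, after weighting by $2^{ks}$ and square-summing, the first term $\|f\|_{C^{\alpha_1}}\|g\|_{H^{s+2-\alpha_1}}$. The high--low term $T_{\partial_j g}(\partial_i f)$ is where the splitting $g = g_k^1 + g_k^2$ enters. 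In the $g_k^1$-slice I would place $(\partial_j g_k^1)_{<k}$ in $L^\infty$ via $\|(\partial_j g_k^1)_{<k}\|_{L^\infty} \lesssim 2^{k(1-\alpha_2)}\|g_k^1\|_{C^{\alpha_2}}$ and $(\partial_i f)_k$ in $L^2$; inserting the weight $2^{-k(r+\alpha_2-1)}$ and applying Cauchy--Schwarz in $k$ pulls out the $\ell^\infty$ supremum, leaving $\sum_k 2^{2k(s+r+1)}\|f_k\|_{L^2}^2 \approx \|f\|_{H^{s+r+1}}^2$ and producing the second term. In the $g_k^2$-slice I instead put $(\partial_i f)_k$ in $L^\infty$, using the $C^{2\epsilon}$ regularity of $\partial_i f$ to gain a factor of $2^{-2k\epsilon}\|f\|_{C^{1,2\epsilon}}$, and $(\partial_j g_k^2)_{<k}$ in $L^2$ via the $H^{1-\beta}$ Bernstein bound, which costs $2^{k\beta}\|g_k^2\|_{H^{1-\beta}}$; the leftover $2^{-2k\epsilon}$ weight is summable in $k$, turning the $\ell^\infty$ supremum into the third term.

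The main obstacle is the high--high piece $\Pi(\partial_i f, \partial_j g) = \sum_l (\partial_i f)_l \widetilde{(\partial_j g)}_l$, whose output at scale $k$ is fed by \emph{all} input scales $l \geq k - k_0$. I would again split $g = g_l^1 + g_l^2$ at the input scale $l$ and apply the same pointwise bounds as in the high--low step, but square-summing the resulting tail series $\sum_{l \geq k}(\cdot)_l$ over $k$ requires a discrete Hardy-type inequality of the form
\[
\sum_k 2^{2k\sigma}\Bigl(\sum_{l \geq k} F_l\Bigr)^2 \lesssim \sum_l 2^{2l\sigma} F_l^2,
\]
which holds precisely when $\sigma > 0$. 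Tracking exponents shows $\sigma = s$ in the $g_l^1$-case, explaining the hypothesis $s > 0$. The $g_l^2$-case is less sensitive because the $\epsilon$-gain from $C^{2\epsilon}$ already provides a summable factor, so only $s + \epsilon > 0$ is needed there. Finally, the borderline case $s = 0$ with $\alpha_1 = \alpha_2 = 1$ is handled separately: here $\partial_i f \in L^\infty$, so the whole product is estimated by the crude bound $\|\partial_i f \, \partial_j g\|_{L^2} \leq \|f\|_{C^1}\|g\|_{H^1}$, which is exactly the first term, bypassing the Hardy summation entirely.
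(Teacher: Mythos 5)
Your proof is essentially correct and follows the same roadmap as the paper's: Stein extension to reduce to $\mathbb{R}^d$, the Littlewood--Paley trichotomy, Bernstein bounds for each paraproduct, and square-summing in the output scale. The one place where you diverge from the paper is the high--high piece $\Pi(\partial_i f, \partial_j g)$. You propose to carry the $g = g_l^1 + g_l^2$ splitting into that piece as well (mirroring the high--low argument), which works and requires the discrete Hardy summation with exponent $s > 0$ as you note. The paper handles $\Pi$ more economically: since both factors sit at a comparable high scale $l$ and the output is at scale $k \lesssim l$, one can put $(\partial_i f)_l$ in $L^\infty$ via $\|f\|_{C^{\alpha_1}}$ and $(\partial_j g)_l$ in $L^2$, exactly as for the low--high term; the tail $\sum_{l \ge k} 2^{(k-l)s}$ converges because $s>0$, so the high--high contribution is absorbed into the \emph{first} term $\|g\|_{H^{s+2-\alpha_1}}\|f\|_{C^{\alpha_1}}$ with no splitting of $g$ needed. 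Both routes give a valid bound; the paper's saves one splitting, while yours makes the role of the partition uniform across the high--low and high--high regimes. Your one-line disposal of the $s=0$, $\alpha_1=\alpha_2=1$ case by $\|\partial_i f\, \partial_j g\|_{L^2}\le \|f\|_{C^1}\|g\|_{H^1}$ is also exactly what the paper has in mind when it says the remaining cases are easier.
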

\begin{proof}
 By \Cref{Stein}, it suffices to prove these  estimates for $f,g$ defined on $\mathbb{R}^d$. We prove the estimate for $0<\alpha_1,\alpha_2<1$ and $s>0$ as the other cases are more easily dealt with. We recall that for $0<\alpha<1$, the $C^{\alpha}$ norm on $\mathbb{R}^d$ can be characterized by the equivalent Besov norm,
\begin{equation}\label{Besov}
\|u\|_{C^{\alpha}(\mathbb{R}^d)}\approx \|P_{\leq 0} u\|_{L^{\infty}(\mathbb{R}^d)}+\sup_{j>0}2^{\alpha j}\|P_ju\|_{L^{\infty}(\mathbb{R}^d)}.
\end{equation}
We now decompose $\partial_if\partial_jg$ into paraproducts,
\begin{equation}\label{parad}
\partial_if\partial_jg=T_{\partial_if}\partial_jg+T_{\partial_jg}\partial_i f+\Pi(\partial_if,\partial_jg).
\end{equation}
We then have the standard estimate
\begin{equation*}
\begin{split}
\|T_{\partial_if}\partial_jg\|_{H^s(\mathbb{R}^d)}&\lesssim \|f\|_{C^{\alpha_1}(\mathbb{R}^d)}\|\partial_j g\|_{H^{s+1-\alpha_1}(\mathbb{R}^d)},
\end{split}
\end{equation*}
which follows by shifting $1-\alpha_1$ derivatives off of the low frequency factor and onto the high frequency factor in each term. Using the hypothesis $s>0$, the high-high paraproduct may be estimated by the same term. For the remaining low-high interaction, we write 
\begin{equation*}
T_{\partial_j g}\partial_i f=\sum_{k}P_{<k-4}\partial_j gP_k\partial_i f=\sum_{k}P_{<k-4}\partial_j (g_k^1)P_k\partial_i f+\sum_{k}P_{<k-4}\partial_j (g_k^2)P_k\partial_i f.    
\end{equation*}
Using standard Bernstein type inequalities and square summing, the first term on the right can be easily controlled by
\begin{equation*}
\|\partial_i f\|_{H^{s+r}(\mathbb{R}^d)}\sup_{k>0}2^{-k(r+\alpha_2-1)}\|g_k^1\|_{C^{\alpha_2}(\mathbb{R}^d)},
\end{equation*}
while the latter can be controlled by
\begin{equation*}
\|f\|_{C^{1,2\epsilon}(\mathbb{R}^d)}\sup_{k>0}2^{k(s+\beta-\epsilon)}\|g_k^2\|_{H^{1-\beta}(\mathbb{R}^d)}.    
\end{equation*}
\end{proof}
The following corollary of the above proposition will be used heavily in the higher energy bounds to control product terms on $\Omega$ with suitable pointwise control norms.
\begin{corollary}\label{productest} Let $s$ and $\alpha_1,\alpha_2$ be as in  \Cref{productestref}. Assume that $f\in H^{s+2-\alpha_2}(\Omega)\cap C^{\alpha_1}(\Omega)$ and $g\in H^{s+2-\alpha_1}(\Omega)\cap C^{\alpha_2}(\Omega)$. Then we have
\begin{equation*}
 \|\partial_if\partial_jg\|_{H^s(\Omega)}\lesssim_A \|g\|_{H^{s+2-\alpha_1}(\Omega)}\|f\|_{C^{\alpha_1}(\Omega)}+\|f\|_{H^{s+2-\alpha_2}(\Omega)}\|g\|_{C^{\alpha_2}(\Omega)}.   
\end{equation*}    
\end{corollary}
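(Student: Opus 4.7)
The plan is to deduce Corollary \ref{productest} directly from Proposition \ref{productestref} by making a judicious choice of the free parameter $r$ and the free partition $g = g_k^1 + g_k^2$. Specifically, I would use the trivial partition $g_k^1 := g$ and $g_k^2 := 0$ for every $k > 0$, and set $r := 1 - \alpha_2$. Under the hypotheses of Proposition \ref{productestref} we have $\alpha_2 \in [0,1]$, so $r \geq 0$ is an admissible choice, and of course $g_k^2 \equiv 0$ trivially lies in $H^{1-\beta}(\Omega)$ for any $\beta \in [0,1]$, so the partition is legitimate.

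With this choice, the third term on the right-hand side of the estimate in Proposition \ref{productestref} vanishes identically, since $\|g_k^2\|_{H^{1-\beta}(\Omega)} = 0$ for every $k$. In the second term, the exponent $-k(r + \alpha_2 - 1)$ collapses to $0$, so the $k$-supremum reduces to $\|g\|_{C^{\alpha_2}(\Omega)}$. Moreover $s + r + 1 = s + 2 - \alpha_2$, so the Sobolev norm of $f$ in the second term becomes exactly $\|f\|_{H^{s+2-\alpha_2}(\Omega)}$. The first term is unchanged and already matches the first term of the corollary. Summing these contributions produces the bound
\[
\|\partial_i f \partial_j g\|_{H^s(\Omega)} \lesssim_A \|g\|_{H^{s+2-\alpha_1}(\Omega)} \|f\|_{C^{\alpha_1}(\Omega)} + \|f\|_{H^{s+2-\alpha_2}(\Omega)} \|g\|_{C^{\alpha_2}(\Omega)},
\]
which is precisely the claim.

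There is no real obstacle here: the corollary is essentially a bookkeeping statement whose function is to package the most frequently needed consequence of Proposition \ref{productestref} in a clean, symmetric form that will be applied repeatedly in the higher-order energy bounds of Section \ref{HEB}. The substantive content lies entirely in Proposition \ref{productestref} itself, whose proof reduces to $\mathbb{R}^d$ via the Stein extension operator (Proposition \ref{Stein}), performs the Bony paraproduct decomposition \eqref{parad}, and handles the low-high interaction $T_{\partial_j g}\partial_i f$ by splitting $g$ into the two regularity pieces $g_k^1 \in C^{\alpha_2}$ and $g_k^2 \in H^{1-\beta}$ — it is precisely this flexibility which, in the trivial specialization above, yields the symmetric form of the corollary.
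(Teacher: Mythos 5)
Your proof is correct and is precisely the paper's proof: both take $g_k^2 = 0$ (so $g_k^1 = g$) and $r = 1-\alpha_2$ in Proposition~\ref{productestref}, which kills the third term, collapses the dyadic supremum in the second, and gives the stated symmetric bound.
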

\begin{proof}
 This follows immediately from \Cref{productestref} by taking $g_j^2=0$ and $r=1-\alpha_2$.   
\end{proof}
\subsubsection{Generalized Moser type estimate} Next, we prove a Moser type estimate with the same flavor as the above bilinear estimate. The main purpose of this estimate will be to suitably control (extensions of) compositions of functions on $\Omega$ with diffeomorphisms of $\mathbb{R}^d$. This will be important for obtaining more refined elliptic estimates where we need to use such diffeomorphisms to flatten the boundary.
\begin{proposition}[Balanced Moser estimate]\label{Moservariant} Let $d\geq 1$ be an integer and let $G:\mathbb{R}^d\to\mathbb{R}^d$ be a diffeomorphism with $\|DG\|_{C^{\epsilon}}, \|DG^{-1}\|_{C^{\epsilon}}\lesssim_A 1$. Let $s\geq 0,r\geq 0$ and $\alpha,\beta\in [0,1]$. Then for every $F\in H^s(\mathbb{R}^d)$ and partition $F=F_j^1+F_j^2\in C^{\alpha}(\mathbb{R}^d)+H^{1-\beta}(\mathbb{R}^d)$, we have
\begin{equation*}
\|F(G)\|_{H^s(\mathbb{R}^d)}\lesssim_A \|F\|_{H^{s}(\mathbb{R}^d)}+\|G-Id\|_{H^{s+r}}\sup_{j>0}2^{-j(\alpha+r-1)}\|F_j^1\|_{C^\alpha(\mathbb{R}^d)}+\sup_{j>0}2^{j(s+\beta-1-\epsilon)}\|F_j^2\|_{H^{1-\beta}(\mathbb{R}^d)}.
\end{equation*}
\end{proposition}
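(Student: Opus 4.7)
Set $h := G - Id$ and $H_t := Id + th$ for $t \in [0,1]$. The hypothesis gives $\|h\|_{C^{1,2\epsilon}} \lesssim_A 1$ and that each $H_t$ is bi-Lipschitz with $\|DH_t\|_{C^\epsilon}, \|DH_t^{-1}\|_{C^\epsilon} \lesssim_A 1$, uniformly in $t \in [0,1]$. As a consequence (by change of variables plus interpolation), composition $u \mapsto u \circ H_t$ is bounded on $C^\alpha(\mathbb{R}^d)$ for $\alpha \in [0,1+\epsilon]$ and on $H^\sigma(\mathbb{R}^d)$ for $\sigma \in [0,1+\epsilon]$, with operator norm $\lesssim_A 1$ uniformly in $t$.

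My first step is to reduce to a product estimate via the fundamental theorem of calculus:
\begin{equation*}
F \circ G = F + \int_0^1 h \cdot (\nabla F \circ H_t) \, dt.
\end{equation*}
The first term contributes the $\|F\|_{H^s}$ piece directly, so it suffices to bound $\|h \cdot (\nabla F \circ H_t)\|_{H^s}$ uniformly in $t \in [0,1]$. Using the chain rule $\nabla F \circ H_t = DH_t^{-T} \nabla(F \circ H_t)$ together with the uniform $C^\epsilon$-bound on $DH_t^{-T}$, I can absorb the matrix factor and equivalently study $\|h \cdot \nabla u\|_{H^s}$ with $u := F \circ H_t$.

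I would then apply the Bony paraproduct trichotomy $h \cdot \nabla u = T_h \nabla u + T_{\nabla u} h + \Pi(h,\nabla u)$. The diagonal piece $T_h \nabla u$ is controlled (modulo an induction on the integer part of $s$ starting from the base case $s \le 1+\epsilon$) by $\|h\|_{C^1}\|u\|_{H^s} \lesssim_A \|F\|_{H^s}$. For the two remaining paraproducts I would Littlewood--Paley decompose and, at each dyadic scale $j$, use the partition $F = F_j^1 + F_j^2$, which induces the corresponding partition $u = (F_j^1 \circ H_t) + (F_j^2 \circ H_t) =: u_j^1 + u_j^2$ with $\|u_j^1\|_{C^\alpha} \lesssim_A \|F_j^1\|_{C^\alpha}$ and $\|u_j^2\|_{H^{1-\beta}} \lesssim_A \|F_j^2\|_{H^{1-\beta}}$. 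For the $C^\alpha$-controlled piece, Bernstein's inequality yields $\|\nabla P_{<j-4} u_j^1\|_{L^\infty} \lesssim 2^{j(1-\alpha)} \|F_j^1\|_{C^\alpha}$, and pairing with $\|P_j h\|_{L^2}$ in the $H^s$ norm produces, after extracting $\sup_k 2^{-k(\alpha+r-1)}\|F_k^1\|_{C^\alpha}$, a residual weight $2^{j(s+r)}$ that square-sums to $\|h\|_{H^{s+r}}^2$, thereby producing the middle term of the estimate. For the $H^{1-\beta}$-controlled piece, combine $\|P_j h\|_{L^\infty} \lesssim 2^{-j(1+2\epsilon)} \|h\|_{C^{1,2\epsilon}} \lesssim_A 2^{-j(1+2\epsilon)}$ with the Bernstein-type bound $\|\nabla P_{<j-4} u_j^2\|_{L^2} \lesssim 2^{j\beta} \|F_j^2\|_{H^{1-\beta}}$; this gives an $H^s$-contribution at scale $j$ of size $2^{j(s+\beta-1-2\epsilon)}\|F_j^2\|_{H^{1-\beta}}$, and after extracting $\sup_k 2^{k(s+\beta-1-\epsilon)}\|F_k^2\|_{H^{1-\beta}}$ the residual $2^{-j\epsilon}$ is square-summable in $j$.

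The main obstacle is the interplay between the Littlewood--Paley decomposition and composition with the rough map $H_t$: Littlewood--Paley pieces of $F \circ H_t$ are not compositions of Littlewood--Paley pieces of $F$. Because $H_t$ is only $C^{1,\epsilon}$, composition spreads frequency content by a factor of roughly $2^{\epsilon j}$ beyond the nominal dyadic scale, but this spread is precisely what is accommodated by the $\epsilon$-gap in the exponent $s + \beta - 1 - \epsilon$; the extra $2\epsilon$-regularity of $h$ likewise supplies the margin needed to sum the high-high paraproduct. A secondary technical point is the circular-looking dependence in the diagonal paraproduct, which must be resolved inductively on $\lfloor s \rfloor$, using the base case $s \leq 1+\epsilon$ where composition with $H_t$ preserves $H^s$ directly.
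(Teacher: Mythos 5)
Your reduction via the fundamental theorem of calculus and the Bony trichotomy has a genuine derivative loss in the diagonal paraproduct which the paper's proof is specifically designed to avoid.

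Concretely, you claim $\|T_h \nabla u\|_{H^s} \lesssim \|h\|_{C^1}\|u\|_{H^s}$. This estimate is false. The standard low-high paraproduct bound gives only $\|T_h \nabla u\|_{H^s} \lesssim \|h\|_{L^\infty}\|\nabla u\|_{H^s} \approx \|h\|_{L^\infty}\|u\|_{H^{s+1}}$: in $T_h \nabla u = \sum_k P_{<k-4}(h)\, P_k(\nabla u)$ the factor $h$ sits at low frequency so its $C^1$ regularity is of no use, while the gradient falls on the high-frequency factor $P_k u$ and costs a full power of $2^k$. No integration by parts or commutator swap transfers this derivative onto $h$: writing $T_h\nabla u = \nabla T_h u - T_{\nabla h} u + (\text{commutators})$, the first term $\nabla T_h u$ has exactly the same $\|u\|_{H^{s+1}}$ cost. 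Passing through $\nabla F\circ H_t = DH_t^{-T}\nabla(F\circ H_t)$ does not help either, since $(\nabla F)\circ H_t$ intrinsically lives in $H^{s-1}$, one derivative below $F$, and that derivative has to be paid by the other factor. The "induction on $\lfloor s\rfloor$" you gesture at cannot repair this: the bad term is $T_h\nabla u$ itself, not $u$, and an inductive hypothesis at regularity $s-1$ applied to $u = F\circ H_t$ would only give $\|u\|_{H^{s-1}} \lesssim \|F\|_{H^{s-1}} + \cdots$, which is two derivatives short of what $\|T_h\nabla u\|_{H^s}$ demands. (As written, $\|u\|_{H^s} \lesssim_A \|F\|_{H^s}$ is also exactly the statement you are trying to prove, so invoking it for the diagonal piece is circular.)

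The paper avoids this loss by \emph{localizing $F$ in frequency before} applying the fundamental theorem of calculus. The decomposition is $F = F_{<j'} + F_{\ge j'}$ with $j' = j - j_1$, and the FTC is used only to compare $F_{<j'}(G)$ with $F_{<j'}(P_{<j'}G)$, producing $\int_0^1 DF_{<j'}(G_\tau)\cdot P_{\ge j'}G\,d\tau$. Because $F_{<j'}$ is band-limited to frequencies $\lesssim 2^{j'}$, its gradient is controlled by $2^{j'(1-\alpha)}\|F^1_{j'}\|_{C^\alpha}$ rather than by a full derivative of $F$; and this pairs against the high-frequency factor $P_{\ge j'}G$ whose smallness compensates exactly. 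The remaining "smooth" piece $F_{<j'}(P_{<j'}G)$ is a composition of two band-limited objects and is reduced to the base case $s\le 1$ (McLean) by taking $\lfloor s\rfloor$ derivatives at a single dyadic scale, which is quite different from running a Bony trichotomy on the global product $h\cdot\nabla(F\circ H_t)$. Your treatment of the remaining paraproduct pieces ($T_{\nabla u}h$ with the partition $F = F^1_j + F^2_j$, and the high-high interaction absorbed via the $C^{1,2\epsilon}$ margin on $h$) is broadly consistent in spirit with the paper's handling of those terms, but the argument as a whole cannot close because of the derivative loss above.
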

\begin{remark}
The same estimate holds for $F\in H^s(\Omega)$ by replacing $F$ with its Stein extension.
\end{remark}
\begin{proof} The case $0\leq s\leq 1$ is a consequence of the following standard fact.
\begin{proposition}[Theorem 3.23 of \cite{MR1742312}]\label{McLean}
Let $0\leq s\leq 1$ and let $G: \mathbb{R}^d\to\mathbb{R}^d$ be a diffeomorphism with $\|DG\|_{L^{\infty}}\lesssim_A 1$ and $\|DG^{-1}\|_{L^{\infty}}\lesssim_A 1$. Then for every $F\in H^s(\mathbb{R}^d)$, we have
\begin{equation*}
\|F(G)\|_{H^s(\mathbb{R}^d)}\approx_A \|F\|_{H^s(\mathbb{R}^d)}.
\end{equation*}
\end{proposition}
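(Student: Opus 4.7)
The proof will proceed by first handling the endpoint cases $s=0$ and $s=1$ directly, and then treating fractional $s \in (0,1)$ via the Gagliardo--Slobodeckij characterization of $H^s(\R^d)$, which combines particularly well with the bi-Lipschitz hypothesis on $G$. The appeal to interpolation would also be possible, but the direct Gagliardo approach is more transparent and gives both inequalities symmetrically.

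\textbf{Step 1 ($s=0$).} A plain change of variables $y = G(x)$ gives
\[
\|F(G)\|_{L^2(\R^d)}^2 = \int_{\R^d} |F(G(x))|^2 \, dx = \int_{\R^d} |F(y)|^2 \, |\det DG^{-1}(y)| \, dy.
\]
Since $DG^{-1} \in L^\infty$ with norm $\lesssim_A 1$ and similarly for $DG$, the Jacobian $|\det DG^{-1}|$ is bounded above and below by positive constants depending only on $A$, yielding $\|F(G)\|_{L^2} \approx_A \|F\|_{L^2}$.

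\textbf{Step 2 ($s=1$).} By the chain rule (valid for Lipschitz $G$ and $F \in H^1$ via approximation), one has $\nabla(F \circ G) = (DG)^T \, (\nabla F)(G)$ a.e. The pointwise bound on $DG$ and the $s=0$ case applied to $\nabla F$ give
\[
\|\nabla(F \circ G)\|_{L^2} \lesssim_A \|(\nabla F)(G)\|_{L^2} \approx_A \|\nabla F\|_{L^2}.
\]
The reverse inequality follows by applying the same argument to $F \circ G$ with diffeomorphism $G^{-1}$ (whose derivative is also controlled by $A$), which recovers $F = (F\circ G)\circ G^{-1}$. Combining with Step~1 gives the $H^1$ equivalence.

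\textbf{Step 3 ($0 < s < 1$).} I would use the Gagliardo seminorm representation
\[
\|F\|_{H^s(\R^d)}^2 \approx \|F\|_{L^2(\R^d)}^2 + \iint_{\R^d \times \R^d} \frac{|F(x)-F(y)|^2}{|x-y|^{d+2s}} \, dx \, dy.
\]
Performing the change of variables $x = G(u)$, $y = G(v)$ in the double integral and using the uniform bounds on the Jacobians yields
\[
\iint \frac{|F(x)-F(y)|^2}{|x-y|^{d+2s}} \, dx \, dy \approx_A \iint \frac{|F(G(u))-F(G(v))|^2}{|G(u)-G(v)|^{d+2s}} \, du \, dv.
\]
The key remaining ingredient is the bi-Lipschitz comparison $|G(u)-G(v)| \approx_A |u-v|$, which is exactly what the hypotheses $\|DG\|_{L^\infty}, \|DG^{-1}\|_{L^\infty} \lesssim_A 1$ provide (via the mean value theorem and the same applied to $G^{-1}$). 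Substituting this equivalence into the denominator and combining with the $L^2$ equivalence from Step~1 yields $\|F(G)\|_{H^s} \approx_A \|F\|_{H^s}$.

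\textbf{Main obstacle.} There is no genuinely hard step here; the only delicate point is justifying the chain rule at $s=1$ and the density/approximation issues needed to extend from smooth $F$ to general $F \in H^s$. Both can be handled by mollifying $F$, applying the bounds to smooth approximants, and passing to the limit using the uniform $L^\infty$ bounds on $DG$ and $DG^{-1}$. For this reason, I would ultimately simply quote this statement from \cite{MR1742312} rather than redo it in the paper, and reserve the detailed argument for the genuinely new balanced composition estimate in \Cref{Moservariant} above.
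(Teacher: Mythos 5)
Your proposal is correct, and your concluding decision to simply quote the result from McLean is exactly what the paper does: \Cref{McLean} is stated as Theorem 3.23 of \cite{MR1742312} and no proof is given in the paper. Your sketched direct argument (change of variables at $s=0$, the Lipschitz chain rule at $s=1$, and the Gagliardo--Slobodeckij seminorm combined with the bi-Lipschitz comparison $|G(u)-G(v)|\approx_A|u-v|$ for $0<s<1$, with the Jacobian bounds following from the $L^\infty$ control of $DG$ and $DG^{-1}$) is the standard proof and is sound.
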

Now, assume $s>1$. We begin by performing a Littlewood-Paley decomposition,
\begin{equation*}
\|F(G)\|_{H^s(\mathbb{R}^d)}^2\lesssim_{j_0} \|F(G)\|_{L^2(\mathbb{R}^d)}^2+\sum_{j>j_0}2^{2js}\|P_j(F(G))\|_{L^2(\mathbb{R}^d)}^2,
\end{equation*}
where $j_0>0$ is some fixed constant depending only on $A$, to be chosen later. We have
\begin{equation*}
2^{js}\|P_j(F(G))\|_{L^2(\mathbb{R}^d)}\lesssim 2^{js}\|P_j(F_{<j'}(G))\|_{L^2(\mathbb{R}^d)}+ 2^{js}\|P_j(F_{\geq j'}(G))\|_{L^2(\mathbb{R}^d)},
\end{equation*}
where $F_{<j'}:=P_{<j'}F$, $F_{\geq j'}:=F-F_{<j'}$ and $j':=j-j_1$ with $j_1$ being some parameter depending only on $s$ which will also be chosen later. For the latter term, by a change of variables and since $s>0$, we have
\begin{equation*}
    \sum_{j>j_0} 2^{2js}\|P_j(F_{\geq j'}(G))\|_{L^2(\mathbb{R}^d)}^2\lesssim_A \sum_{j>j_0}\sum_{k\geq j'} 2^{2(j-k)s}2^{2ks}\|P_kF\|_{L^2(\mathbb{R}^d)}^2\lesssim_A \|F\|_{H^s(\mathbb{R}^d)}^2.
\end{equation*}
On the other hand, using the fundamental theorem of calculus, we obtain
\begin{equation}\label{paratwoterms}
    \begin{split}
        2^{js}\|P_j(F_{<j'}(G))\|_{L^2(\mathbb{R}^d)}\lesssim 2^{js} \sup_{\tau\in [0,1]}\|P_j\left(DF_{<j'}(G_\tau)P_{\geq j'}G\right)\|_{L^2(\mathbb{R}^d)}+2^{js}\|P_j\left(F_{<j'}(P_{<j'}G)\right)\|_{L^2(\mathbb{R}^d)},
    \end{split}
\end{equation}
where
\begin{equation*}
 G_\tau=\tau P_{<j'}G+(1-\tau)G.   
\end{equation*}
Now, as $\|DG\|_{\dot{C}^{\epsilon}}, \|DG^{-1}\|_{\dot{C}^{\epsilon}}\lesssim_A 1$, it follows that $P_{<j'}G$ and $G_{\tau}$ (for $\tau\in [0,1])$ are invertible with $\|P_{<j'}DG\|_{L^{\infty}},\|DG_{\tau}\|_{L^{\infty}}\lesssim_A 1$ as long as $j_0$ is large enough (depending only on $A$ and the collar). Now, to control the first term on the right-hand side of (\ref{paratwoterms}), we split $F_{<j'}=(F_j^1)_{<j'}+(F_j^2)_{<j'}$ and estimate (using the estimate for $G_{\tau}^{-1}$), 
\begin{equation}\label{moserstep}
\begin{split}
 2^{js} \sup_{\tau\in [0,1]}\|P_j\left(DF_{<j'}(G_\tau)P_{\geq j'}G\right)\|_{L^2(\mathbb{R}^d)}&\lesssim_A 2^{-j(r+\alpha-1)}\|F_j^1\|_{C^{\alpha}(\mathbb{R}^d)}2^{j(s+r)}\|P_{\geq j'}G\|_{L^2(\mathbb{R}^d)}
 \\
 &\qquad +2^{j(s-1+\beta-\epsilon)}\|F_j^2\|_{H^{1-\beta}(\mathbb{R}^d)}.   
\end{split}
\end{equation}
Square summing (and possibly relabelling $\epsilon$) gives
\begin{equation*}
\begin{split}
\left(\sum_{j>j_0}2^{2js} \sup_{\tau\in [0,1]}\|P_j\left(DF_{<j'}(G_\tau)P_{\geq j'}G\right)\|_{L^2(\mathbb{R}^d)}^2\right)^{\frac{1}{2}}&\lesssim_A \sup_{j>0}2^{-j(r+\alpha-1)}\|F_j^1\|_{C^{\alpha}(\mathbb{R}^d)}\|G-Id\|_{H^{s+r}}
\\
&\qquad +\sup_{j>0}2^{j(s-1+\beta-\epsilon)}\|F_j^2\|_{H^{1-\beta}(\mathbb{R}^d)}.     
\end{split}
\end{equation*}
Next, we control the second term on the right-hand side of (\ref{paratwoterms}), which is a bit easier. Let $k$ be the largest integer strictly less than $s$ so that $0<s-k\leq 1$. If $j_1:=j-j'$ is large enough (depending only on $k$), we have by the chain rule and straightforward paraproduct analysis,
\begin{equation*}
2^{js}\|P_jF_{<j'}(P_{<j'}G)\|_{L^2(\mathbb{R}^d)}\lesssim_A 2^{j(s-k)}\|\tilde{P}_j(D^kF_{<j'}(P_{<j'}G))\|_{L^2(\mathbb{R}^d)},
\end{equation*}
where $\tilde{P}_j$ is a slightly fattened Littlewood-Paley projection. We then use the fundamental theorem of calculus to obtain
\begin{equation*}\label{para1}
\begin{split}
2^{j(s-k)}\|\tilde{P}_j(D^kF_{<j'}(P_{<j'}G))\|_{L^2(\mathbb{R}^d)}&\lesssim_A 2^{j(s-k)}\sup_{\tau\in [0,1]}\|\tilde{P}_j(D^{k+1}F_{<j'}(G_{\tau})P_{\geq j'}G)\|_{L^2(\mathbb{R}^d)}
\\
&\qquad +2^{j(s-k)}\|\tilde{P}_j(D^kF_{<j'}(G))\|_{L^2(\mathbb{R}^d)}.   
\end{split}
\end{equation*}
For the first term, we have simply
\begin{equation*}
2^{j(s-k)}\sup_{\tau\in [0,1]}\|\tilde{P}_j(D^{k+1}F_{<j'}(G_{\tau})P_{\geq j'}G)\|_{L^2(\mathbb{R}^d)}\lesssim_A 2^{j(s-k-1-\epsilon)}\|D^{k+1}F_{<j'}\|_{L^2(\mathbb{R}^d)}\lesssim 2^{-j\epsilon}\|F\|_{H^s(\mathbb{R}^d)}.    
\end{equation*}
For the second term, we have
\begin{equation*}
2^{j(s-k)}\|\tilde{P}_j(D^kF_{<j'}(G))\|_{L^2(\mathbb{R}^d)}\lesssim_A 2^{j(s-k)}\|D^kF_{\geq j'}\|_{L^2(\mathbb{R}^d)}+\|\tilde{P}_j((D^kF)(G))\|_{H^{s-k}(\mathbb{R}^d)}   .
\end{equation*}
Since $0<s-k\leq 1$, we obtain from \Cref{McLean},
\begin{equation*}
\left(\sum_{j>j_0}2^{2j(s-k)}\|\tilde{P}_j(D^kF_{<j'}(P_{<j'}G))\|_{L^2(\mathbb{R}^d)}^2\right)^{\frac{1}{2}}\lesssim_A \|F\|_{H^s(\mathbb{R}^d)}    ,
\end{equation*}
where we used that $s-k\leq 1$ to control $\|(D^kF)(G)\|_{H^{s-k}(\mathbb{R}^d)}$ and that $s-k>0$ to control the $l^2$ sum of $2^{j(s-k)}\|D^kF_{\geq j'}\|_{L^2(\mathbb{R}^d)}$. Combining everything together completes the proof.
\end{proof}
We also note a much cruder variant of the above proposition where we measure $G$ only in pointwise norms and $F$ in Sobolev based norms. This will only be needed in our construction of regularization operators later on. 
\begin{proposition}[Crude Moser estimate]\label{crudermoser}
Under the assumptions of \Cref{Moservariant}, the following bound holds for every $F\in H^s(\mathbb{R}^d)$,
\begin{equation*}
\|F(G)\|_{H^s(\mathbb{R}^d)}\lesssim_A \|F\|_{H^s(\mathbb{R}^d)}+\|G-Id\|_{C^{s+r+\epsilon}(\mathbb{R}^d)}\|F\|_{H^{1-r}(\mathbb{R}^d)}.   
\end{equation*}
\end{proposition}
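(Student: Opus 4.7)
The plan is to mirror the structure of the proof of \Cref{Moservariant}, replacing the balanced Sobolev-times-H\"older bound on $G-Id$ and the partition of $F$ by the much cruder pointwise bound $\|G-Id\|_{C^{s+r+\epsilon}}$ coupled with a Sobolev norm of $F$. As in the balanced case, the statement for $0\le s\le 1$ is immediate from \Cref{McLean}, which already yields $\|F(G)\|_{H^s}\approx_A \|F\|_{H^s}$; the correction term on the right is non-negative, so the bound follows. The substance of the argument is thus in the range $s>1$.

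For $s>1$, I would perform a Littlewood-Paley decomposition
\[
\|F(G)\|_{H^s}^2\lesssim_{j_0}\|F(G)\|_{L^2}^2+\sum_{j>j_0}2^{2js}\|P_j(F(G))\|_{L^2}^2,
\]
with $j_0$ chosen (depending only on $A$) so that $\|DG_\tau\|_{L^\infty},\|DG_\tau^{-1}\|_{L^\infty}\lesssim_A 1$ for all Littlewood-Paley truncations in play. Writing $F=F_{<j'}+F_{\ge j'}$ with $j':=j-j_1$ (where $j_1$ is a fixed parameter depending only on $s$), the high-frequency contribution $2^{js}\|P_j(F_{\ge j'}(G))\|_{L^2}$ is controlled by $\|F\|_{H^s}$ after a change of variables, exactly as in the balanced proof. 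For the low-frequency part, I would split via the fundamental theorem of calculus,
\[
F_{<j'}(G)=F_{<j'}(P_{<j'}G)+\int_0^1 DF_{<j'}(G_\tau)\,P_{\ge j'}G\,d\tau,\qquad G_\tau=\tau P_{<j'}G+(1-\tau)G.
\]

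The first (FTC) term is where the cruder bound enters. Using $L^2\times L^\infty$ H\"older and the Bernstein-type estimates
\[
\|DF_{<j'}\|_{L^2}\lesssim 2^{j'r}\|F\|_{H^{1-r}},\qquad \|P_{\ge j'}(G-Id)\|_{L^\infty}\lesssim 2^{-j'(s+r+\epsilon)}\|G-Id\|_{C^{s+r+\epsilon}},
\]
one obtains
\[
2^{js}\|P_j(DF_{<j'}(G_\tau)P_{\ge j'}G)\|_{L^2}\lesssim_A 2^{-j\epsilon}\|F\|_{H^{1-r}}\|G-Id\|_{C^{s+r+\epsilon}},
\]
which is clearly square summable in $j$. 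For the remaining term $F_{<j'}(P_{<j'}G)$, I would iterate the paraproduct/chain-rule argument from \Cref{Moservariant}: letting $k$ be the largest integer strictly less than $s$, peel off $k$ derivatives so that $P_j$ effectively acts on $D^k F_{<j'}(P_{<j'}G)$, and repeatedly apply the FTC identity to either (a) transfer a derivative to the argument, producing a factor of $P_{\ge j'}G$ which is handled exactly as above, or (b) replace $P_{<j'}G$ by $G$, after which \Cref{McLean} applies in the remaining fractional range $0<s-k\le 1$.

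The main obstacle is the last step: controlling the iteration so that every factor of $P_{\ge j'}G$ generated is absorbed by a sufficiently strong decay in $j$, and so that the terminal application of \Cref{McLean} introduces only $\|F\|_{H^s}$. The gain $2^{-j\epsilon}$ at each FTC step comes from the overshoot $s+r+\epsilon$ in the H\"older exponent of $G-Id$, and it is precisely this cushion that allows a finite iteration to close without any balanced splitting of $F$. Assembling the geometric sums over $j$ and the $l^2$ sum over frequencies then yields the claimed estimate.
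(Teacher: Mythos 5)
Your proof is correct and follows essentially the same approach as the paper: both mirror the structure of the proof of \Cref{Moservariant}, drop the partition of $F$ at the step corresponding to \eqref{moserstep}, replace it with the crude $L^2\times L^\infty$ bound $\|DF_{<j'}(G_\tau)\|_{L^2}\lesssim 2^{jr}\|F\|_{H^{1-r}}$, and invoke Bernstein to obtain $2^{j(s+r)}\|P_{\geq j}G\|_{L^\infty}\lesssim 2^{-j\epsilon}\|G-Id\|_{C^{s+r+\epsilon}}$, with the remaining $F_{<j'}(P_{<j'}G)$ term handled exactly as in the balanced case.
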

\begin{proof}
The proof follows almost identical reasoning to \Cref{Moservariant}. The only difference is that we do not partition $F$ in (\ref{moserstep}) and instead estimate 
\begin{equation*}
\|(D F_{<j'})(G_{\tau})\|_{L^2(\mathbb{R}^d)}\lesssim_A \|D F_{<j'}\|_{L^2(\mathbb{R}^d)}\lesssim 2^{jr}\|F\|_{H^{1-r}(\mathbb{R}^d)}.  
\end{equation*}
We then invoke Bernstein's inequality to obtain
\begin{equation*}
2^{j(r+s)}\|P_{\geq j}G\|_{L^{\infty}(\mathbb{R}^d)}\lesssim 2^{-j\epsilon}\|G-Id\|_{C^{s+r+\epsilon}}  ,  
\end{equation*}
and  conclude by summing in $j$.  
\end{proof}
\subsection{Local coordinate parameterizations and Sobolev norms in \texorpdfstring{$\Lambda_*$}{}} With the above estimates in hand, we can begin the process of proving refined versions of the various elliptic, trace and product type estimates on $\Gamma$ that will be important for establishing our higher energy estimates. Our goal in this subsection is to construct a family of coordinate neighborhoods  for $\Gamma_*$ which will act as a ``universal" set of coordinate neighborhoods which we can use to flatten the boundary of nearby hypersurfaces $\Gamma\in\Lambda_*$. We will also use these local coordinates to define Sobolev type norms on $\Gamma$ which are suitable for proving uniform estimates later in this section. To achieve this, we  slightly modify the construction from \cite[Appendix A]{sz} (but note the difference in our definitions of $\Lambda_*$). 
\subsubsection{Local coordinates and partition of unity}
As in \cite[Appendix A]{sz}, since $\Gamma_*$ is compact, for any $\sigma>0$ we can choose $x_i\in \mathbb{R}^d$ and $r,r_i\in (0,\frac{1}{2}]$, $i=1,\dots,m$, such that we have the following two properties:
 \begin{enumerate}
     \item\label{prop1} $B(\Gamma_*,r)\subseteq\cup_{i=1}^m R_i(r_i)$, where $B(S,\epsilon)$ denotes the $\epsilon$ neighborhood of $S$ and $R_i(\cdot):=\tilde{R}_i(\cdot)\times I_i(\cdot)\subseteq\mathbb{R}^d$ is a rotated cylinder with perpendicular vertical segment centered at $x_i$ with the given equal radius and length.
     \item\label{prop2} For each $i$, $z=(\tilde{z},z_d)$ being the natural Euclidean coordinates on $R_i$, there exists a function $f_{*i}:\tilde{R}_i(2r_i)\to I_i$ such that
     \begin{equation}\label{smallness}
         \|f_{*i}\|_{C^0}<\sigma r_i,\ \ \|Df_{*i}\|_{C^0}<\sigma \ \ \text{and} \ \Omega_*\cap R_i(2r_i)=\{z_d>f_{*i}(\tilde{z})\}.
     \end{equation}
 \end{enumerate}
When $\delta>0$ is small enough, for every $\Gamma\in\Lambda_*$ with corresponding bounded domain $\Omega$,  (\ref{prop1}) holds with $\Gamma_*$ replaced by $\Gamma$. Moreover, there exist functions $f_i:\tilde{R}_i(2r_i)\to I_i$ satisfying (\ref{prop2}) with $\Omega_*$ replaced by $\Omega$ such that we can control the Sobolev and H\"older type norms of $f_i$ by the corresponding norms of $\Gamma$. Specifically, we have
\begin{equation*}
\|f_i\|_{H^{s}}\lesssim_A 1+\|\Gamma\|_{H^s},\hspace{10mm}\|f_i\|_{C^{k,\alpha}}\lesssim_A 1+\|\Gamma\|_{C^{k,\alpha}}    
\end{equation*}
for $s\geq 0$, integer $k\geq 0$ and $\alpha\in [0,1)$.
Indeed, by performing a computation in local coordinates, the above Sobolev bound follows from the Moser estimate in \Cref{Moservariant} and the pointwise bound can be verified directly from the chain rule and interpolation. Using these coordinate representations, we intend to construct local coordinate maps on each $\tilde{R}_i(2r_i)$ for $\Omega$ which flatten $\Gamma$ and have uniform estimates in $\Lambda_*$. In some of the estimates in this section, by a slight abuse of notation, we write $\|\Gamma\|$ when we really mean $1+\|\Gamma\|$ in order to declutter the notation. This will not affect any of the analysis for the dynamic problem.
\\
\\
On each $\tilde{R}_i(2r_i)$, let $\phi_i=\gamma_if_i$, where $\gamma_i(\tilde{z})=\overline{\gamma}\left(\frac{|\tilde{z}|}{r_i}\right)$ and $\overline{\gamma} : [0,\infty)\to [0,1]$ is a smooth cutoff supported on $[0,\frac{3}{2}]$ and equal to $1$ on $[0,\frac{5}{4}]$. We can extend $\phi_i$ to a function on $\mathbb{R}^d$ which gains half a degree of regularity in $H^s$ norms and is bounded in suitable pointwise norms. Indeed, let $\tilde{z}\in \mathbb{R}^{d-1}$ and $s\geq\frac{1}{2}$. We define an extension $\Phi_i$ of $\phi_i$ by
    \begin{equation*}\label{fullspaceext}
       \Phi_i(z)=\int_{\mathbb{R}^{d-1}}\widehat{\phi_i}(\xi')e^{-(1+|\xi'|^2)z_d^2}e^{2\pi i\xi'\cdot \tilde{z}} d\xi' \ \text{for} \ z=(\tilde{z},z_d)\in \mathbb{R}^d.
    \end{equation*}
We first observe that for each integer $k\geq 0$ and $\alpha\in [0,1)$, $\|\Phi_i\|_{C^{k,\alpha}(\mathbb{R}^d)}\lesssim_{k,\alpha} \|\phi_i\|_{C^{k,\alpha}(\mathbb{R}^{d-1})}$. One also has the same bounds for $W^{k,\infty}$ for each $k\geq 0$. To see this, we observe that $\Phi_i$ can be rewritten as the convolution
\begin{equation*}
\Phi_i(z)=c_de^{-z_d^2}\int_{\mathbb{R}^{d-1}}\phi_i(\tilde{z}+z_dy)e^{-|y|^2}dy   ,
\end{equation*}
where $c_d$ is a dimensional constant. In this form, the above bounds are easily checked. We also have $\|\Phi_i\|_{H^{s+\frac{1}{2}}(\mathbb{R}^d)}\approx_s\|\phi_i\|_{H^s(\mathbb{R}^{d-1})}$ for every $s\geq 0$,  which follows from inspecting the Fourier transform of $\Phi_i$, in a similar fashion as  \cite[Lemma 3.36]{MR1742312}. 
\\
\\
From the above, we see that if $\sigma>0$ from (\ref{smallness}) is small enough, then the map 
\begin{equation*}
    H_i(\tilde{z},z_d):=(\Tilde{z},z_d+\Phi_i(\tilde{z},z_d))
\end{equation*}
is a diffeomorphism from $\mathbb{R}^d\to\mathbb{R}^d$ with $\|H_i-Id\|_{C^{k,\alpha}}\lesssim_A \|\Gamma\|_{C^{k,\alpha}}$ and $\|H_i-Id\|_{H^{s+\frac{1}{2}}}\lesssim_A \|\Gamma\|_{H^s}$ for $s\geq 0$, integer $k\geq 0$ and $\alpha\in [0,1)$.  Moreover, for the  inverse function $G_i:=H_i^{-1}$, the same bounds hold for $G_i-Id$ and its $d'$th component $g_i$ satisfies the bounds $|\partial_{z_d}g_i|+|(\partial_{z_d}g_i)^{-1}|\lesssim_A 1$. Finally, if $\sigma>0$ is small enough and $\Lambda_*$ is a tight enough collar neighborhood we have, in the $C^1$ topology,
\begin{equation*}
\|H_i-Id\|_{C^1}+\|G_i-Id\|_{C^1}\lesssim_A \rho,
\end{equation*}
where $\rho>0$ is some positive constant which can be made as small as we like (depending on $\sigma$ and $\Lambda_*$). We then have for some uniform $\delta_*>0$,
\begin{equation*}
    \left(\tilde{R}_i\left(\frac{5}{4}r_i\right)\times I_i\left(\frac{5}{4}\delta_*r_i\right)\right)\cap \Omega=\left(\tilde{R}_i\left(\frac{5}{4}r_i\right)\times I_i\left(\frac{5}{4}\delta_*r_i\right)\right)\cap\{g_i>0\}.
\end{equation*}
\textbf{Partition of unity.} Here, we construct a partition of unity for $\Omega$ with bounds uniform in $\Lambda_*$. We follow essentially the procedure from \cite[Appendix A]{sz}. Let $\gamma$ be a smooth cutoff defined on $[0,\infty)$ satisfying $0\leq \gamma\leq 1$ with $\gamma$ supported in $[0,\frac{5}{4})$ and equal to $1$ on $[0,\frac{9}{8}]$. Moreover, let $\zeta$ be a smooth function defined on $[0,\infty)$ taking values in $[\frac{1}{3},\infty)$ with $\zeta=\frac{1}{3}$ on $[0,\frac{1}{3}]$ and $\zeta(x)=x$ for $x\geq \frac{2}{3}$. Define
\begin{equation*}
 \tilde{\gamma}_{*i}(z):=\gamma(\frac{|\tilde{z}|}{r_i})\gamma(\frac{|z_d|}{\delta_*r_i}),\hspace{10mm}\eta=\zeta\circ\sum_i(\tilde{\gamma}_{*i}\circ G_i).
\end{equation*}
We then define a partition of unity via
\begin{equation}\label{Part of unity gamma*}
\gamma_{*i}:=\frac{\tilde{\gamma}_{*i}(G_i)}{\eta},\hspace{10mm}  \gamma_{*0}:=(1-\sum_i\gamma_{*i})\mathbbm{1}_{\Omega}.  
\end{equation}
We see that $\sum_{i\geq 0}\gamma_{*i}=1$  on $\Omega$ and $0\leq \gamma_{*i}\leq 1$ for each $i\geq 0$. Moreover, by the Moser and Sobolev product estimates, we have
\begin{equation*}
\|\gamma_{*i}\|_{H^{s+\frac{1}{2}}}\lesssim_A \|\Gamma\|_{H^s}    
\end{equation*}
for $s\geq 0$.
\subsubsection{Sobolev spaces on hypersurfaces in $\Lambda_*$} We can use the above partition of unity to define $C^{k,\alpha}$ and $H^s$ spaces on hypersurfaces $\Gamma\in\Lambda_*$. Indeed, if $\Gamma$ is $C^1$ and in $H^s$, we may define what it means to be in $H^r(\Gamma)$ for $0\leq r\leq s$ through the inner product,
\begin{equation*}
\langle f,g\rangle_{H^r(\Gamma)}:=\sum_{i\geq 1}\langle \phi_if_i,\phi_ig_i\rangle_{H^r(\mathbb{R}^{d-1})}   ,
\end{equation*}
where $\phi_i:=\gamma_{*i}\circ H_i(\tilde{z},0)$ (note that this is not the same $\phi_i$ as in the previous subsection), $f_i:=f\circ H_i(\tilde{z},0)$ and $g_i:=g\circ H_i(\tilde{z},0)$. If $\Gamma$ is $C^{k,\alpha}$ we may also define
\begin{equation*}
\|f\|_{C^{k,\alpha}(\Gamma)}:=\sup_{i\geq 1}\|\phi_if_i\|_{C^{k,\alpha}(\mathbb{R}^{d-1})}.    
\end{equation*}
Finally, for a function $v$ defined on $\Omega$, we write $v_i=\gamma_{*i}v$ and $u_i=v_i(H_i).$ 
\\

Using  the above and the full generality afforded by \Cref{Moservariant}, we prove a refined product type estimate on the boundary $\Gamma$. Precisely, we have the following.
\begin{proposition}[Product estimates on the boundary]\label{boundaryest}
Let $\Omega$ be a bounded domain with boundary $\Gamma\in\Lambda_*$. If $f,g$ are functions on $\Gamma$ and $g=g_j^1+g_j^2$ is any sequence of partitions, then for $s\geq 0$ and $r\geq 1$ we have
\begin{equation*}
\begin{split}
\|fg\|_{H^s(\Gamma)}\lesssim_A \|f\|_{L^{\infty}(\Gamma)}\|g\|_{H^s(\Gamma)}&+(\|f\|_{H^{s+r-1}(\Gamma)}+\|f\|_{L^{\infty}(\Gamma)}\|\Gamma\|_{H^{s+r}})\sup_{j>0}2^{-j(r-1)}\|g_j^1\|_{L^{\infty}(\Gamma)}
\\
&+(1+\|f\|_{C^{2\epsilon}(\Gamma)})\sup_{j>0}2^{j(s-\epsilon)}\|g_j^2\|_{L^2(\Gamma)}.
\end{split}
\end{equation*}
\end{proposition}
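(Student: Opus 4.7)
The plan is to localize the estimate to $\mathbb{R}^{d-1}$ via the partition of unity $\{\gamma_{*i}\}$ and the boundary-flattening coordinate maps $H_i$ constructed above, and then combine a paraproduct decomposition with the balanced Moser bound of Proposition \ref{Moservariant} and the balanced bilinear paraproduct bound of Proposition \ref{productestref}. By definition of the $H^s(\Gamma)$ norm,
\[
\|fg\|_{H^s(\Gamma)}^2 = \sum_{i\geq 1}\|\phi_i f_i g_i\|_{H^s(\mathbb{R}^{d-1})}^2,
\]
where $\phi_i = \gamma_{*i}\circ H_i(\tilde z,0)$, $f_i = f\circ H_i(\tilde z,0)$, $g_i = g\circ H_i(\tilde z,0)$, and the covering is finite with bounds uniform in $\Lambda_*$. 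After inserting a slightly fattened cutoff $\tilde\phi_i$ with $\tilde\phi_i\equiv 1$ on $\mathrm{supp}\,\phi_i$ so that $\phi_i f_i g_i = \phi_i f_i (\tilde\phi_i g_i)$, it suffices to prove a chart-by-chart bound with the three asserted pieces on the right-hand side and then square-sum over $i$.

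In each chart I would apply the Littlewood-Paley trichotomy on $\mathbb{R}^{d-1}$ and write
\[
\phi_i f_i g_i = T_{f_i}(\phi_i g_i) + \bigl(\phi_i f_i g_i - T_{f_i}(\phi_i g_i)\bigr).
\]
The low-frequency-$f_i$ piece is immediately bounded by $\|f_i\|_{L^\infty}\|\phi_i g_i\|_{H^s}\lesssim_A \|f\|_{L^\infty(\Gamma)}\|g\|_{H^s(\Gamma)}$, producing the first term. For the remaining piece, in which $f_i$ sits at the highest dyadic scale $2^j$, I split $g$ at each scale according to $g = g_j^1 + g_j^2$. The $g_j^1$ contribution is estimated by placing $g_j^1$ in $L^\infty$ and using Bernstein together with the frequency support of $f_i$ to absorb the factor $2^{-j(r-1)}$; the remaining $s+r-1$ derivatives are then distributed by Leibniz across $\phi_i$ and $f_i$, yielding either the $\|f\|_{H^{s+r-1}(\Gamma)}$ contribution (when derivatives fall on $f_i$) or the $\|f\|_{L^\infty(\Gamma)}\|\Gamma\|_{H^{s+r}}$ contribution (when derivatives fall on $\phi_i$, via the Moser-type bound $\|\phi_i\|_{H^{s+r}}\lesssim_A \|\Gamma\|_{H^{s+r}}$ available from the previous subsection). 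The $g_j^2$ piece is handled exactly as in the third branch of Proposition \ref{productestref}: the $C^{2\epsilon}(\Gamma)$ control on $f$ is paired with the $L^2$ control of $g_j^2$, absorbing the frequency penalty $2^{j(s-\epsilon)}$.

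The principal obstacle is the careful bookkeeping of how the available derivatives are distributed among the three factors $\phi_i$, $f_i$, $g_j^\ell$ so that the exponents on the right-hand side are sharp. In particular, one must keep track of the gain in the Moser bound $\|\phi_i\|_{H^{\sigma}}\lesssim_A \|\Gamma\|_{H^{\sigma}}$ interacting with the Leibniz rule, and verify that the coordinate pullback by $H_i$ does not introduce additional derivative losses beyond what is already packaged into $\|\Gamma\|_{H^{s+r}}$; this is precisely what Proposition \ref{Moservariant} is designed to furnish when applied with the partition $g = g_j^1 + g_j^2$ viewed in local coordinates. Once these technicalities are handled on each chart, a square-summation over the finite covering yields the stated inequality with implicit constants uniform in $\Lambda_*$.
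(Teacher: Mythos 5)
The overall strategy you describe --- localizing via the partition of unity, Littlewood--Paley trichotomy, estimating the low-$f$ piece in $L^\infty\times H^s$, and splitting the high-$f$ piece according to the partition $g=g_j^1+g_j^2$ --- matches the paper's proof. However, there is a concrete error in the step where you account for the factor $\|\Gamma\|_{H^{s+r}}$. You attribute it to a Moser-type bound $\|\phi_i\|_{H^{s+r}}\lesssim_A\|\Gamma\|_{H^{s+r}}$, but this is false: the paper explicitly observes that $\phi_i=\gamma_{*i}\circ H_i(\tilde z,0)$ is \emph{smooth with bounds uniform in $\Lambda_*$}, precisely because $G_i$ and $H_i$ are inverse (so the composition is effectively $\tilde\gamma_{*i}$ divided by $\eta$). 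Thus distributing Leibniz derivatives onto $\phi_i$ produces only $\mathcal{O}_A(1)$ factors, not $\|\Gamma\|_{H^{s+r}}$.

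The place where the $\|\Gamma\|_{H^{s+r}}$ term actually arises is in bounding $\|f_i\tilde\phi_i\|_{H^{s+r-1}(\mathbb{R}^{d-1})}$ by $\|f\|_{H^{s+r-1}(\Gamma)}+\|f\|_{L^\infty(\Gamma)}\|\Gamma\|_{H^{s+r}}$. The subtlety is that the chart-$i$ Sobolev norm of $f$ uses the cutoff $\tilde\phi_i$, whereas the defined norm $\|f\|_{H^{s+r-1}(\Gamma)}$ uses the fixed cutoffs $\phi_j$. To relate them, one must expand $f_i\tilde\phi_i=\sum_j\tilde\phi_i\,\gamma_{*j}(H_i(\tilde z,0))f_i$ and recognize each summand as $(\phi_j f_j)(\varphi_{ij}(\tilde z,0))\tilde\phi_i$, where $\varphi_{ij}=G_j\circ H_i$ is the chart transition. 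One then extends $\phi_j f_j$ to $\mathbb{R}^d$ using the operator $\Phi$ (which gains half a derivative), applies the balanced Moser estimate of Proposition~\ref{Moservariant} to the composition with the diffeomorphism $\varphi_{ij}$ (whose $H^{s+r}$ size is controlled by $\|\Gamma\|_{H^{s+r-\frac12}}$ in $\mathbb R^d$), and then takes a trace. It is this Moser bound on the chart-transition maps --- not the regularity of $\phi_i$ --- that produces the $\|f\|_{L^\infty(\Gamma)}\|\Gamma\|_{H^{s+r}}$ term. Your closing sentence gestures at ``the coordinate pullback by $H_i$,'' which is the right idea, but the preceding misattribution to $\phi_i$ would lead you to drop the crucial chart-transition step if you tried to write the argument in full.
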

\begin{remark}
If we take $r=1$ and $g_j^1=g$, we recover something resembling the standard algebra property,
\begin{equation}\label{prodremark}
\|fg\|_{H^s(\Gamma)}\lesssim_A \|f\|_{L^{\infty}(\Gamma)}\|g\|_{L^{\infty}(\Gamma)}\|\Gamma\|_{H^{s+1}}+\|f\|_{H^s(\Gamma)}\|g\|_{L^{\infty}(\Gamma)}+\|g\|_{H^s(\Gamma)}\|f\|_{L^{\infty}(\Gamma)},
\end{equation}
but with the twist being the additional explicit presence of the $H^{s+1}$ norm of the surface on the right-hand side. We also remark that the proof below will allow for the first term on the right of (\ref{prodremark}) to be replaced by $(\|f\|_{W^{1,\infty}(\Gamma)}\|g\|_{L^{\infty}(\Gamma)}+\|f\|_{L^{\infty}(\Gamma)}\|g\|_{W^{1,\infty}(\Gamma)})\|\Gamma\|_{H^{s}}$, which is perhaps more natural, but we will never actually need this.
\end{remark}
\begin{proof}
Let $(\gamma_{*i})_{i}$ be the partition of unity for $\Omega$ defined in \eqref{Part of unity gamma*}. As before, we write  $\phi_i(\tilde{z}):=\gamma_{*i}(H_i(\tilde{z},0))$, which is smooth with domain independent bounds since $G_i$ and $H_i$ are inverse. Similarly, we write $f_i=f(H_i(\tilde{z},0))$ and $g_i=g(H_i(\tilde{z},0))$, which are functions defined on the support of $\phi_i$. By definition, it suffices to control $\|\phi_if_ig_i\|_{H^s(\mathbb{R}^{d-1})}$ for each $i\geq 1$. To begin with, let $j'=j-4$ and let $P_j$ and $P_{<j'}$ denote Littlewood-Paley projections on $\mathbb{R}^{d-1}$. Moreover, define $\tilde{\phi}_i$ to be a smooth compactly supported function equal to $1$ on the support of $\gamma_{*i}$ with support properties chosen so that $\tilde{\phi}_i$ is supported in the region where $f_i$ is well-defined. Then a simple paraproduct estimate using the Littlewood-Paley trichotomy gives 
\begin{equation*}
\begin{split}
\|\phi_if_ig_i\|_{H^s(\mathbb{R}^{d-1})}&\lesssim_A \|f\|_{L^{\infty}(\Gamma)}\|\phi_ig_i\|_{H^s(\mathbb{R}^{d-1})}+\left(\sum_{j>0}2^{2js}\|P_{<j'}(\phi_ig_i)P_j(f_i\tilde{\phi}_i)\|_{L^2(\mathbb{R}^{d-1})}^2\right)^{\frac{1}{2}}.
\end{split}
\end{equation*}
For the latter term in the above, we estimate
\begin{equation*}
\begin{split}
\left(\sum_{j>0}2^{2js}\|P_{<j'}(\phi_ig_i)P_j(f_i\tilde{\phi}_i)\|_{L^2(\mathbb{R}^{d-1})}^2\right)^{\frac{1}{2}}&\lesssim_A \|f_i\tilde{\phi}_{i}\|_{H^{s+r-1}(\mathbb{R}^{d-1})}\sup_{j>0}2^{-j(r-1)}\|g_j^1\|_{L^{\infty}(\Gamma)}
\\
&\qquad +(1+\|f\|_{C^{2\epsilon}(\Gamma)})\sup_{j>0}2^{j(s-\epsilon)}\|g_j^2\|_{L^2(\Gamma)}.
\end{split}
\end{equation*}
We are then reduced to showing
\begin{equation*}
\|f_i\tilde{\phi}_{i}\|_{H^{s+r-1}(\mathbb{R}^{d-1})}\lesssim_A \|f\|_{H^{s+r-1}(\Gamma)}+\|f\|_{L^{\infty}(\Gamma)}\|\Gamma\|_{H^{s+r}}.
\end{equation*}
For this, we note that
\begin{equation*}
\|f_i\tilde{\phi}_{i}\|_{H^{s+r-1}(\mathbb{R}^{d-1})}\leq\sum_{j\geq 1} \|\tilde{\phi}_i\gamma_{*j}(H_i(\tilde{z},0))f_i\|_{H^{s+r-1}(\mathbb{R}^{d-1})}.
\end{equation*}
Let us write $\varphi_{ij}:=G_j\circ H_i$. Then we have 
\begin{equation*}
\begin{split}
\|\tilde{\phi}_i\gamma_{*j}(H_i(\tilde{z},0))f_i\|_{H^{s+r-1}(\mathbb{R}^{d-1})}&=\|(\phi_jf_j)(\varphi_{ij}(\tilde{z},0))\tilde{\phi}_i\|_{H^{s+r-1}(\mathbb{R}^{d-1})}.
\end{split}
\end{equation*}
We note that $\varphi_{ij}$ is a diffeomorphism having the same bounds as $G_j$ and $H_i$. By using the extension $\Phi$ from earlier, we may assume  that $\phi_jf_j$ is defined on $\mathbb{R}^d$ with $\|\phi_jf_j\|_{H^{s+r-\frac{1}{2}}(\mathbb{R}^{d})}\lesssim \|\phi_jf_j\|_{H^{s+r-1}(\mathbb{R}^{d-1})}$ and $\|\phi_jf_j\|_{L^{\infty}(\mathbb{R}^d)}\lesssim \|\phi_jf_j\|_{L^{\infty}(\mathbb{R}^{d-1})}$. Therefore, by the trace estimate on $\mathbb{R}^{d-1}$, the fact that $\varphi_{ij}$ is a diffeomorphism and the balanced Moser estimate, we have 
\begin{equation*}
\|\tilde{\phi}_i(\phi_jf_j)(\varphi_{ij}(\tilde{z},0))\|_{H^{s+r-1}(\mathbb{R}^{d-1})}\lesssim_A \|(\phi_jf_j)\circ\varphi_{ij}\|_{H^{s+r-\frac{1}{2}}(\mathbb{R}^{d})}\lesssim_A\|\phi_jf_j\|_{H^{s+r-1}(\mathbb{R}^{d-1})}+\|\Gamma\|_{H^{s+r}}\|f\|_{L^{\infty}(\Gamma)}.
\end{equation*}
Since, by definition, we have
\begin{equation*}
\|\phi_jf_j\|_{H^{s+r-1}(\mathbb{R}^{d-1})}\leq \|f\|_{H^{s+r-1}(\Gamma)},
\end{equation*}
the proof is complete.
\end{proof}
\subsubsection{Trace estimates}
Now, we prove a refined version of the trace theorem for $\Gamma$. 
\begin{proposition}[Balanced trace estimate]\label{baltrace} Let $\Omega$ be a bounded domain with boundary $\Gamma\in\Lambda_*$. For every $s>\frac{1}{2}$, $r\geq 0$, $\alpha,\beta\in [0,1]$ and every sequence of partitions $v=v_j^1+v_j^2$, we have
\begin{equation*}
    \|v_{|\Gamma}\|_{H^{s-\frac{1}{2}}(\Gamma)}\lesssim_A\|v\|_{H^s(\Omega)}+\|\Gamma\|_{H^{s+r-\frac{1}{2}}}\sup_{j>0}2^{-j(r+\alpha-1)}\|v_j^1\|_{C^{\alpha}(\Omega)}+\sup_{j>0}2^{j(s-1+\beta-\epsilon)}\|v_j^2\|_{H^{1-\beta}(\Omega)}.
\end{equation*}
\end{proposition}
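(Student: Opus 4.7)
The plan is to reduce the bound to the standard trace theorem on the half-space via the local parameterizations $H_i$ constructed earlier in the section, and then apply the balanced Moser estimate (Proposition 5.5) to absorb the composition with $H_i$ in a way that tracks the regularity of $\Gamma$ at the sharp level.

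First, by the definition of $\|\cdot\|_{H^{s-\frac{1}{2}}(\Gamma)}$ through the partition of unity $\{\gamma_{*i}\}$, it suffices to bound $\|\phi_i (v_{|\Gamma})\circ H_i(\tilde z,0)\|_{H^{s-\frac12}(\mathbb{R}^{d-1})}$ for each $i\geq 1$, where $\phi_i(\tilde z)=\gamma_{*i}(H_i(\tilde z,0))$. Choose a smooth bump $\tilde\phi_i$ on $\mathbb{R}^d$ equal to $1$ on the support of $\gamma_{*i}\circ H_i$ and compactly supported in the region where $H_i$ behaves well. Writing $V:=\mathcal{E}v$ for the Stein extension (which splits into $V_j^1+V_j^2$ inheriting the partition of $v$), the trace in the flattened frame is $(\tilde\phi_i\,V\circ H_i)_{|z_d=0}$, so by the standard half-space trace theorem,
\[
\|\phi_i (v_{|\Gamma})\circ H_i(\tilde z,0)\|_{H^{s-\frac12}(\mathbb{R}^{d-1})}\lesssim \|\tilde\phi_i\,(V\circ H_i)\|_{H^s(\mathbb{R}^d)}.
\]

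Next, since $\tilde\phi_i$ is a smooth compactly supported bump with uniform bounds in $\Lambda_*$, the $H^s$ norm on the right is controlled by $\|V\circ H_i\|_{H^s(\mathbb{R}^d)}$ (on the support of $\tilde\phi_i$), with implicit constant depending only on $A$. At this point I invoke the balanced Moser estimate (Proposition 5.5) with $F=V$, $G=H_i$ and partition $F_j^1=\mathcal{E}v_j^1$, $F_j^2=\mathcal{E}v_j^2$. This yields
\[
\|V\circ H_i\|_{H^s(\mathbb{R}^d)}\lesssim_A \|V\|_{H^s(\mathbb{R}^d)}+\|H_i-\mathrm{Id}\|_{H^{s+r}}\sup_{j>0}2^{-j(\alpha+r-1)}\|V_j^1\|_{C^\alpha}+\sup_{j>0}2^{j(s-1+\beta-\epsilon)}\|V_j^2\|_{H^{1-\beta}}.
\]
By the uniform boundedness of $\mathcal{E}$ on $H^s$ and $C^\alpha$ (Proposition 5.2), the right-hand side is controlled in terms of $\|v\|_{H^s(\Omega)}$, $\|v_j^1\|_{C^\alpha(\Omega)}$ and $\|v_j^2\|_{H^{1-\beta}(\Omega)}$. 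The key input is the half-derivative gain in the construction of the flattening diffeomorphism: $\|H_i-\mathrm{Id}\|_{H^{s+r}}\lesssim_A \|\Gamma\|_{H^{s+r-\frac12}}$, which produces exactly the factor of $\|\Gamma\|_{H^{s+r-\frac12}}$ appearing in the claimed inequality. Summing over $i$ and absorbing the finite overlap of the partition completes the proof.

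The main technical point to be careful with is step that extracts $\tilde\phi_i$ and applies Moser on the support of a bump, since the partition $v=v_j^1+v_j^2$ on $\Omega$ must be correctly transferred to a partition of the extension $V$ on $\mathbb{R}^d$. Linearity of $\mathcal{E}$ together with its uniform $C^\alpha\to C^\alpha$ and $H^{1-\beta}\to H^{1-\beta}$ bounds (Proposition 5.2) resolves this. The remaining subtlety is the precise match between the exponent of $\|\Gamma\|$ on the right-hand side of the claim and the $H^{s+r}$ norm required by Moser; this is forced by the half-derivative gain in the Fourier-Gaussian extension $\Phi_i$ used to build $H_i$, and is the reason the trace regularity index is exactly $s+r-\frac12$ rather than $s+r$.
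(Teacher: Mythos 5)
Your proof is correct and follows essentially the same route as the paper: localize via the coordinate charts and partition of unity, flatten with $H_i$, apply the half-space trace theorem, and invoke the balanced Moser estimate (\Cref{Moservariant}) together with the Stein extension bounds (\Cref{Stein}), with the half-derivative gain $\|H_i-\mathrm{Id}\|_{H^{s+r}}\lesssim_A\|\Gamma\|_{H^{s+r-\frac12}}$ supplying the sharp $\Gamma$-dependence. The only cosmetic difference is that you explicitly insert a domain-independent bump $\tilde\phi_i$ before composing, which is a slightly more careful rendering of the paper's terse step defining $\tilde v_i=\gamma_{*i}\mathcal E v$ and applying Moser to $\tilde v_i\circ H_i$.
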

\begin{proof} For $i\geq 1$, define $\tilde{v}_i=\gamma_{*i}\mathcal{E}v$ where $\mathcal{E}$ is the Stein extension operator for $\Omega$. It suffices  to prove the estimate with the left-hand side replaced by $\|\tilde{v}_i(H_i(\tilde{z},0))\|_{H^{s-\frac{1}{2}}(\mathbb{R}^{d-1})}$. Using the trace theorem on $\mathbb{R}^{d-1}$, we have
\begin{equation*}
   \|\tilde{v}_i(H_i(\tilde{z},0))\|_{H^{s-\frac{1}{2}}(\mathbb{R}^{d-1})}\lesssim \|\tilde{u}_i\|_{H^s(\mathbb{R}^d)},
\end{equation*}
where $\tilde{u}_i:=\tilde{v}_i\circ H_i$. We then use  \Cref{Moservariant} and the operator bounds for $\mathcal{E}$ in \Cref{Stein} to conclude.
\end{proof}
\subsubsection{An extension operator depending continuously  on the domain}\label{cont of domain ext} Another use of the above local coordinates is to construct a family of extension operators which depend continuously in a suitable sense on the domain.  This will be important for establishing our continuous dependence result later on. Potentially, something akin to the Stein extension operator could work here, but we opt for the following simpler construction where the dependence on the domain is more transparent.
\begin{proposition}\label{continuosext}
Fix a collar neighborhood $\Lambda_*$ and let $s>\frac{d}{2}+1$. For each bounded domain $\Omega$ with $H^s$ boundary $\Gamma\in\Lambda_*$ there exists an extension operator $E_{\Omega}:H^s(\Omega)\to H^s(\mathbb{R}^d)$ such that for all $v\in H^s(\Omega)$,
\begin{equation}\label{otherextensionbounds}
\|E_{\Omega}v\|_{H^s(\mathbb{R}^d)}+\|\Gamma\|_{H^s}\approx_{A,\|v\|_{C^{\frac{1}{2}}(\Omega)}}\|(v,\Gamma)\|_{\mathbf{H}^s},\hspace{5mm}\|E_{\Omega}v\|_{H^s(\mathbb{R}^d)}\lesssim_{A} \|\Gamma\|_{H^{s-\frac{1}{2}}}\|v\|_{H^s(\Omega)},  
\end{equation}
where the dependence on $\|v\|_{C^{\frac{1}{2}}(\Omega)}$ is polynomial. Moreover, if $\Omega_n$ is a sequence of domains with $\Gamma_n\to \Gamma$ in $H^s$, then for every $v\in H^s(\mathbb{R}^d)$, there holds
\begin{equation}\label{SoT}
\|E_{\Omega_n}v_{|\Omega_n}-E_{\Omega}v_{|\Omega}\|_{H^s(\mathbb{R}^d)}\to 0.    
\end{equation}
\end{proposition}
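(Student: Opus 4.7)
My plan is to construct $E_\Omega$ explicitly using the local coordinate data already attached to the collar $\Lambda_*$: the flattening diffeomorphisms $H_i$ (with inverses $G_i$) and the partition of unity $\{\gamma_{*i}\}_{i \geq 0}$ from \eqref{Part of unity gamma*}. Given $v \in H^s(\Omega)$, decompose $v = \gamma_{*0}v + \sum_{i \geq 1} \gamma_{*i}v$, extend the interior piece $\gamma_{*0}v$ by zero to $\mathbb{R}^d$, pull each boundary piece $v_i := \gamma_{*i}v$ back via $H_i$ to a half-space function $u_i := v_i \circ H_i$, apply a fixed half-space extension operator $\mathcal{R}$ (for instance Stein's extension on the half-space, which is bounded on $H^s$ and on $C^{k,\alpha}$ for all relevant exponents), and push forward by $G_i$:
$$
E_\Omega v \;:=\; \gamma_{*0}v \cdot \mathbf{1}_\Omega \;+\; \sum_{i \geq 1} (\mathcal{R}u_i) \circ G_i.
$$
Since $\mathcal{R}u_i = u_i$ on the flattened half-space where $\Omega$ sits and $G_i \circ H_i = \mathrm{Id}$ on the chart, this is indeed an extension of $v$.

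For the norm bounds in \eqref{otherextensionbounds}, the main tool is the balanced Moser estimate (Proposition~\ref{Moservariant}), applied once to the pullback $u_i = v_i \circ H_i$ and once to the pushforward $(\mathcal{R}u_i) \circ G_i$, together with the already established regularity $\|H_i - \mathrm{Id}\|_{H^{s+1/2}} + \|G_i - \mathrm{Id}\|_{H^{s+1/2}} \lesssim_A \|\Gamma\|_{H^s}$ and the corresponding uniform $C^{1,\epsilon}$ bounds. Choosing the Moser partition so that the high-frequency part of the input is measured in $L^2$ and the low-frequency part in $C^{1/2}$ (noting that $s > d/2 + 1$ guarantees $v \in C^{1/2}(\Omega)$), the first bound in \eqref{otherextensionbounds} follows with a polynomial (at worst quadratic) dependence on $\|v\|_{C^{1/2}(\Omega)}$. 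The lower bound $\|(v,\Gamma)\|_{\mathbf{H}^s} \lesssim \|E_\Omega v\|_{H^s(\mathbb{R}^d)} + \|\Gamma\|_{H^s}$ is immediate from the definitions. The refined bilinear bound $\|E_\Omega v\|_{H^s} \lesssim_A \|\Gamma\|_{H^{s-1/2}}\|v\|_{H^s(\Omega)}$ will come by selecting the Moser partition to exploit the fact that only $\|H_i - \mathrm{Id}\|_{H^s} \lesssim_A \|\Gamma\|_{H^{s-1/2}}$ actually enters when the input already has full $H^s$ regularity, the half-derivative headroom in the $H^{s+1/2}$ bound for $H_i - \mathrm{Id}$ providing the necessary slack.

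For the continuity property \eqref{SoT}, the structural inputs are that $\Gamma_n \to \Gamma$ in $H^s$ implies $f_i^n \to f_i$ in $H^s(\mathbb{R}^{d-1})$, hence $H_i^n \to H_i$ and $G_i^n \to G_i$ both in $H^{s+1/2}(\mathbb{R}^d)$ and uniformly in $C^{1,\epsilon}$, and correspondingly $\gamma_{*i}^n \to \gamma_{*i}$ in $H^{s+1/2}$. My plan is first to use the uniform bound from the previous step, whose constants are stable under the convergence $\Gamma_n \to \Gamma$, to reduce by density to the case $v \in C_c^\infty(\mathbb{R}^d)$. For such smooth $v$, I will write the difference
$$
E_{\Omega_n}(v|_{\Omega_n}) - E_\Omega(v|_\Omega) \;=\; \sum_{i \geq 0}\bigl[(\mathcal{R} u_i^n) \circ G_i^n - (\mathcal{R} u_i) \circ G_i\bigr],
$$
and control each chart contribution in $H^s(\mathbb{R}^d)$ by quantifying the Lipschitz-in-diffeomorphism dependence of the composition via Proposition~\ref{Moservariant}.

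The main obstacle will be handling the near-boundary convergence: although $v$ is globally defined, $v|_{\Omega_n}$ and $v|_\Omega$ live on different sets, so on the symmetric difference $\Omega_n \triangle \Omega$ the two extensions are determined entirely by traces on different surfaces. The plan here is to use the balanced trace estimate (Proposition~\ref{baltrace}) together with Proposition~\ref{Moservariant} applied to $u_i^n - u_i = v \circ H_i^n - v \circ H_i$, which by the smoothness of $v$ and the fundamental theorem of calculus yields a bound by $\|v\|_{C^1}\|H_i^n - H_i\|_{H^{s+1/2}}$, hence by $\|v\|_{C^1}\|\Gamma_n - \Gamma\|_{H^s}$. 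Since $\mathcal{R}$ is a fixed bounded linear operator and there are only finitely many charts, summing yields the desired $H^s(\mathbb{R}^d)$ convergence, and the density argument then extends this to arbitrary $v \in H^s(\mathbb{R}^d)$.
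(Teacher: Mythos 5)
Your construction and argument follow essentially the same route as the paper's: flatten each boundary chart via $H_i$, apply a fixed half-space extension operator (you propose Stein's, the paper uses a finite reflection with Vandermonde coefficients --- equivalent for this purpose since both are fixed, domain-independent operators on the half-space), push forward by $G_i$, and reassemble, with the norm bounds and the continuity both coming from the balanced Moser estimate together with a density argument. One small slip in your final paragraph: $u_i^n - u_i = (\gamma_{*i}^n v)\circ H_i^n - (\gamma_{*i}v)\circ H_i$, not $v\circ H_i^n - v\circ H_i$, so the $H^{s+\frac{1}{2}}$ convergence $\gamma_{*i}^n \to \gamma_{*i}$ that you correctly noted earlier must also enter the estimate; this is easily repaired and does not change the argument.
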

\begin{remark}
One can loosely think of \eqref{SoT} as a  strong operator topology convergence for this family of extensions. 
\end{remark}
\begin{proof}
 Given a family of domains $\Omega_n$ and $\Omega$ with boundaries $\Gamma_n,\Gamma\in \Lambda_*$, denote by $\gamma_{*i}^n$ and $\gamma_{*i}$ the corresponding partitions of unity, so that
\begin{equation*}
    v=\sum_i \gamma_{*i}^nv \ \ \text{on} \ \Omega_n \ \text{and}  \  v=\sum_i \gamma_{*i}v \ \ \text{on} \ \Omega.
\end{equation*}
Define $u_i^n=(\gamma_{*i}^nv)\circ H_i^n$ on  $\mathbb{R}^d_+$. Let $k$ be the largest integer less than or equal to $s$, and define the half-space extension 
\begin{equation*}
\begin{cases}
&\tilde{u}_i^n(\tilde{z},z_d)=\sum_{j=1}^{k+1}c_ju_i^n(\tilde{z},-\frac{z_d}{j})\  \ \text{if} \ z_d<0,
\\
&\tilde{u}_i^n(\tilde{z},z_d)= u_i^n(\tilde{z},z_d) \hspace{18mm} \text{if} \ z_d\geq 0,
\end{cases}
\end{equation*}
where $c_1,\dots,c_{k+1}$ are gotten as in  \cite[Lemma 6.37]{MR1814364} by solving an appropriate Vandermonde system. It is standard to verify that we have $\tilde{u}_i^n\in H^s(\mathbb{R}^d)$. 
\\

We define the $\Omega_n$ extension of $v$ by
\begin{equation*}
    \tilde{v}_n=\sum_i \tilde{u}_i^n\circ G_i^n,
\end{equation*}
and similarly let $\tilde{v}$ by the $\Omega$ extension of $v$.  To verify the continuous dependence property, we want to verify that if $\Gamma_n\to \Gamma$ in $H^s$, then $\tilde{v}_n\to\tilde{v}$ in $H^s(\mathbb{R}^d).$ For this, it suffices to prove that $\tilde{u}_i^n\circ G_i^n\to \tilde{u}_i\circ G_i$ in $H^s(\mathbb{R}^d)$ for each $i$. We note that
\begin{equation}\label{Delta ineq}
    \|\tilde{u}_i^n\circ G_i^n- \tilde{u}_i\circ G_i\|_{H^s(\mathbb{R}^d)}\leq \|(\tilde{u}_i^n-\tilde{u}_i)\circ G_i^n\|_{H^s(\mathbb{R}^d)}+\|\tilde{u}_i\circ G_i^n-\tilde{u}_i\circ G_i\|_{H^s(\mathbb{R}^d)}.
\end{equation}
The first term on the right-hand side of \eqref{Delta ineq} can be shown to go to zero by using standard Moser estimates. The latter term goes to zero by arguing similarly to the proof that translation is continuous in $L^p$ spaces (using a simple density argument to replace $\tilde{u}_i$ by a smooth function).
\\

Finally, the bounds (\ref{otherextensionbounds}) follow from the definition of the extension and \Cref{Moservariant}.

\end{proof}
\subsection{Pointwise elliptic estimates}
Here we establish variants of the $C^{2,\alpha}$ and $C^{1,\alpha}$ estimates for the Dirichlet problem which adequately track the dependence on the domain regularity. In our analysis later, we will mostly use the $C^{1,\alpha}$ estimates with  $\alpha=\frac{1}{2}$ or $\alpha=\epsilon$. However, the $C^{2,\alpha}$ estimates will be relevant for proving bounds for our regularization operators, which are defined in \Cref{SSRO}.
\\

As will become apparent later, to obtain the desired pointwise elliptic estimates, it is crucial to use a domain flattening map whose Jacobian has determinant $1$. This will be  especially necessary for the $C^{1,\alpha}$ estimate, as we must  preserve the divergence form of the equation. For this reason, instead of the map $H_i$, we will   use the more familiar domain flattening map
\begin{equation}\label{classicalflattening}
F_i(z)=(\tilde{z},z_d+\phi_i(\tilde{z})),
\end{equation}
whose Jacobian has determinant 1.  The tradeoff when using the flattening $F_i$ is that  it does not exhibit a $\frac{1}{2}$ gain in regularity for the $H^s$ norm on the interior compared to the boundary, but this will not matter for this section because all domain dependent coefficients will be placed in $L^{\infty}$ based norms. We let $\Psi_i:=F_i^{-1}$, and begin with the $C^{2,\alpha}$ estimates.
\begin{proposition}[$C^{2,\alpha}$ estimates for the inhomogeneous Dirichlet problem]\label{C2est} Let $0<\alpha<1$ and let $\Omega$ be a bounded domain with boundary $\Gamma\in \Lambda_*$  having $C^{2,\alpha}$ regularity. Consider the boundary value problem
\begin{equation*}
\begin{cases}
&\Delta v=g\hspace{5mm}\text{in }\Omega,
\\
&\hspace{3mm}v=\psi\hspace{5mm}\text{on }\Gamma.
\end{cases}
\end{equation*}
Then $v$ satisfies the estimate
\begin{equation*}\label{C2est2}
\|v\|_{C^{2,\alpha}(\Omega)}\lesssim_A \|\Gamma\|_{C^{2,\alpha}}\|v\|_{W^{1,\infty}(\Omega)}+\|g\|_{C^{\alpha}(\Omega)}+\|\psi\|_{C^{2,\alpha}(\Gamma)}.
\end{equation*}
\end{proposition}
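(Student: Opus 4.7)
My strategy is the classical one: localize via the partition of unity $(\gamma_{*i})_i$ and flatten the boundary using the maps $F_i$ of \eqref{classicalflattening}, whose unit Jacobian keeps the perturbation of $\Delta$ at leading order linear in $\nabla\phi_i$. The interior piece $\gamma_{*0}v$ is controlled by classical interior Schauder. For each $i\geq 1$, I set $u_i:=(\gamma_{*i}v)\circ F_i$, which is supported in the closed half-space $\mathbb{R}^d_+$. A direct change of variables gives
\begin{equation*}
\Delta u_i = \tilde g_i - P_i u_i \ \text{on}\ \mathbb{R}^d_+, \qquad u_i|_{\partial\mathbb{R}^d_+} = \tilde\psi_i,
\end{equation*}
where
\begin{equation*}
P_i u := -2\nabla_{\tilde z}\phi_i\cdot\nabla_{\tilde z}\partial_{z_d} u + |\nabla_{\tilde z}\phi_i|^2\partial_{z_d}^2 u - (\Delta_{\tilde z}\phi_i)\partial_{z_d} u,
\end{equation*}
and $\tilde g_i$, $\tilde\psi_i$ absorb the compositions of $\gamma_{*i}g$ and $\gamma_{*i}\psi$ with $F_i$ together with the localization commutators $[\Delta,\gamma_{*i}]v$. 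Applying the flat half-space $C^{2,\alpha}$ Schauder bound
\begin{equation*}
\|u_i\|_{C^{2,\alpha}(\mathbb{R}^d_+)}\lesssim \|\Delta u_i\|_{C^\alpha(\mathbb{R}^d_+)}+\|\tilde\psi_i\|_{C^{2,\alpha}(\partial\mathbb{R}^d_+)}+\|u_i\|_{L^\infty(\mathbb{R}^d_+)}
\end{equation*}
reduces the problem to estimating $P_i u_i$, $\tilde g_i$, and $\tilde\psi_i$ in the claimed balanced form.

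The main analytical task is bounding $\|P_i u_i\|_{C^\alpha}$ without producing a forbidden product $\|\Gamma\|_{C^{2,\alpha}}\|v\|_{C^{2,\alpha}}$. The two second-order terms in $P_i$ have coefficients involving only $\nabla\phi_i$: by the collar smallness \eqref{smallness}, $\|\nabla\phi_i\|_{L^\infty}$ is bounded by a universal small constant $\sigma$, so the H\"older product rule gives $\|\nabla\phi_i\cdot\nabla\partial_{z_d}u_i\|_{C^\alpha}\lesssim \sigma\|u_i\|_{C^{2,\alpha}}+\|\phi_i\|_{C^{1,\alpha}}\|u_i\|_{C^2}$; the first term is absorbed on the left for $\sigma$ small, while the second is dealt with by interpolation between $C^{2,\alpha}$ and $W^{1,\infty}$ together with Young's inequality. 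The first-order term $(\Delta_{\tilde z}\phi_i)\partial_{z_d}u_i$ has coefficient in $C^\alpha$ of size $\lesssim_A\|\Gamma\|_{C^{2,\alpha}}$ multiplying a first derivative of $u_i$, producing exactly the balanced contribution $\|\Gamma\|_{C^{2,\alpha}}\|v\|_{W^{1,\infty}}$ after interpolating $\|\partial u_i\|_{C^\alpha}$ between $\|u_i\|_{C^{2,\alpha}}$ and $\|u_i\|_{W^{1,\infty}}$. The commutator terms in $\tilde g_i$, of the form $v\Delta\gamma_{*i}+2\nabla\gamma_{*i}\cdot\nabla v$, obey the same balance since $\|\gamma_{*i}\|_{C^{k,\alpha}}\lesssim_A\|\Gamma\|_{C^{k,\alpha}}$; for $\tilde\psi_i$ a straightforward adaptation of \Cref{Moservariant} to the $C^{k,\alpha}$ scale yields $\|\tilde\psi_i\|_{C^{2,\alpha}}\lesssim_A \|\psi\|_{C^{2,\alpha}(\Gamma)}+\|\Gamma\|_{C^{2,\alpha}}\|\psi\|_{L^\infty(\Gamma)}$, and the last factor is controlled by $\|v\|_{W^{1,\infty}(\Omega)}$ via the trace.

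\textbf{Main obstacle.} The delicate point throughout is the bookkeeping that keeps every occurrence of $\|\Gamma\|_{C^{2,\alpha}}$ paired with at most one derivative of $v$ (the $W^{1,\infty}$ norm). The unit-determinant choice of $F_i$ is crucial: it prevents the emergence of an additional genuine second-order perturbation with coefficient in $\|\phi_i\|_{C^{2,\alpha}}$, which would destroy the linear dependence on $\|\Gamma\|_{C^{2,\alpha}}$. The smallness of $\|\nabla\phi_i\|_{L^\infty}$, afforded by the tight collar $\Lambda_*$, is what makes the residual top-order perturbative terms absorbable rather than catastrophic. Finally, $\|u_i\|_{L^\infty}$ in the Schauder bound is trivially subsumed by $\|v\|_{W^{1,\infty}(\Omega)}$, and summation over the finitely many $i$ concludes the proof.
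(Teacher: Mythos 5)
Your strategy is the same as the paper's: localize with $(\gamma_{*i})_i$, flatten with the unit--determinant map $F_i$ from \eqref{classicalflattening}, apply the flat half-space Schauder estimate, and then absorb the small top-order perturbation while balancing every occurrence of $\|\Gamma\|_{C^{2,\alpha}}$ against $\|v\|_{W^{1,\infty}}$. The only cosmetic difference is that you expand the transformed Laplacian in non-divergence form $P_i u$, while the paper keeps it in divergence form $\partial_k((\delta^{jk}-a^{jk})\partial_j u)$; correspondingly you use the H\"older product rule together with interpolation and Young, where the paper runs a Littlewood--Paley/paradifferential expansion. Both routes produce the required estimate $\|\text{pert.}\|_{C^\alpha}\lesssim_A \sigma\|u_i\|_{C^{2,\alpha}}+\|\Gamma\|_{C^{2,\alpha}}\|v\|_{W^{1,\infty}}$, though to get the second term with the exact linear power of $\|\Gamma\|_{C^{2,\alpha}}$ you must also interpolate on the coefficient side, e.g.\ $\|\phi_i\|_{C^{1,\alpha}}\lesssim_A\|\Gamma\|_{C^{2,\alpha}}^{\alpha/(1+\alpha)}$, so that the two exponents close under Young. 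Your sketch glosses over this point, but the idea is sound.

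However, there is a genuine step missing: the Schauder argument gives you control of $\|u_i\|_{C^{2,\alpha}(\mathbb{R}^d_+)}$, not of $\|v_i\|_{C^{2,\alpha}(\Omega)}$, and you stop there. Since $v_i = u_i\circ\Psi_i$ with $\Psi_i=F_i^{-1}$, passing from $u_i$ to $v_i$ produces via the chain rule the terms $(D\Psi_i)^{*}(D^2u_i)(\Psi_i)D\Psi_i$ and $(Du_i)(\Psi_i)D^2\Psi_i$, whose $C^\alpha$ norms again threaten to create the forbidden product $\|\Gamma\|_{C^{2,\alpha}}\|v\|_{C^{2,\alpha}}$ through the $C^\alpha$ seminorm of $D\Psi_i$ hitting $\|D^2u_i\|_{L^\infty}$. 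The paper dispatches this with the same paradifferential/Bernstein splitting used for the perturbation (see the display labelled \eqref{chainruleterms} and the lines below it), and your H\"older plus interpolation machinery would also work here, but the step is not dispensable and needs to be carried out. Relatedly, the commutator term $\nabla\gamma_{*i}\cdot\nabla v$ contributes a $\|v\|_{C^{1,\alpha}(\Omega)}$ piece that you do not mention; the paper closes this by the interpolation $\|v\|_{C^{1,\alpha}}\lesssim \delta_0\|v\|_{C^{2,\alpha}}+C(\delta_0)\|v\|_{C^0}$ and absorption into the left-hand side. You should add both the $u_i\mapsto v_i$ conversion and this final absorption step to complete the argument.
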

\begin{proof}
We write $v_i=\gamma_{*i}v$, $h_i=\Delta v_i$, $f_i=h_i\circ F_i$ and $v_i=u_i\circ\Psi_i$. Omitting some of the subscripts for notational convenience, we see that $u:=u_i$ satisfies the equation
\begin{equation}\label{frozeneqn}
\begin{cases}
&\Delta u=\partial_k((\delta^{jk}-a^{jk})\partial_ju)+f,
\\
&u_{|_{z_d=0}}=(\gamma_{*i}\psi)(H_i(\tilde{z},0)),
\end{cases}
\end{equation}
where $a^{jk}=(\Psi^j_{x_l}\Psi^k_{x_l})(F_i)$ with repeated indices summed over. Note that to compute the boundary term in \eqref{frozeneqn}  we used that $F_i(\tilde{z},0)=H_i(\tilde{z},0)$. By the well-known Schauder estimates for the half-space, we obtain
\begin{equation}\label{C2alpha e1}
\|u\|_{C^{2,\alpha}}\lesssim_A \|(\delta^{jk}-a^{jk})\partial_ju\|_{C^{1,\alpha}}+\|f\|_{C^{\alpha}}+\|(\gamma_{*i}\psi)(H_i(\tilde{z},0))\|_{C^{2,\alpha}}.
\end{equation}
Using the Besov characterization (\ref{Besov}) and the  paradifferential expansion (\ref{parad}), it is straightforward to estimate
\begin{equation}\label{RHS}
\begin{split}
\|(\delta^{jk}-a^{jk})\partial_ju\|_{C^{1,\alpha}}\lesssim \|\delta^{jk}-a^{jk}\|_{C^{\epsilon}}\|u\|_{C^{2,\alpha}}+\|\Gamma\|_{C^{2,\alpha}}\|v\|_{W^{1,\infty}(\Omega)}.
\end{split}
\end{equation}
As $a^{ij}$ is close to the identity in $C^{\epsilon}$, this simplifies the estimate \eqref{C2alpha e1} to
\begin{equation}\label{Chained rule}
\|u\|_{C^{2,\alpha}}\lesssim_A \|\Gamma\|_{C^{2,\alpha}}\|v\|_{W^{1,\infty}(\Omega)}+\|f\|_{C^{\alpha}}+\|(\gamma_{*i}\psi)(H_i(\tilde{z},0))\|_{C^{2,\alpha}}.
\end{equation}
Clearly, we have $\|f\|_{C^{\alpha}}\lesssim_A \|h\|_{C^{\alpha}(\Omega)}$. On the other hand, we have
\begin{equation}\label{chainruleterms}
\|u(\Psi_i)\|_{\dot{C}^{2,\alpha}}\lesssim_A \|(D\Psi_i)^{*}(D^2u)(\Psi_i)D\Psi_i\|_{\dot{C}^{\alpha}}+\|(Du)(\Psi_i)D^2\Psi_i\|_{\dot{C}^{\alpha}}.
\end{equation}
We can estimate both terms above by the right-hand side of (\ref{Chained rule}). We show how to do this for the first term, as the second term is similar. For this, we may assume that $u$ is defined on all of $\mathbb{R}^d$ by using a suitable extension operator from the half-space to $\mathbb{R}^d$. Then we write as usual $u_{<j}$ to mean $P_{<j}u$ and $u_{\geq j}:=u-u_{<j}$. By the Besov characterization of $C^{\alpha}$, we need to estimate 
\begin{equation*}
\sup_{j>0}2^{j\alpha}\|P_j((D\Psi_i)^{*}(D^2u)(\Psi_i)D\Psi_i)\|_{L^{\infty}}.
\end{equation*}
By the standard Littlewood-Paley trichotomy, we first obtain,
\begin{equation*}
2^{j\alpha}\|P_j((D\Psi_i)^{*}(D^2u)(\Psi_i)D\Psi_i)\|_{L^{\infty}}\lesssim_A \|u\|_{C^{2,\alpha}}+2^{j\alpha}\|D^2u\|_{L^{\infty}}\|\tilde{P}_jB(D\Psi_i,D\Psi_i)\|_{L^{\infty}} ,
\end{equation*}
where $B$ is a suitable bilinear form.
For the latter term, we split $u=u_{<j}+u_{\geq j}$ and estimate using Bernstein's inequality,
\begin{equation*}
\begin{split}
2^{j\alpha}\|D^2u\|_{L^{\infty}}\|\tilde{P}_jB(D\Psi_i,D\Psi_i)\|_{L^{\infty}} &\lesssim_A \|v\|_{W^{1,\infty}(\Omega)}2^{j(1+\alpha)}\|\tilde{P}_jB(D\Psi_i,D\Psi_i)\|_{L^{\infty}}+\|u\|_{C^{2,\alpha}}
\\
&\lesssim_A \|\Gamma\|_{C^{2,\alpha}}\|v\|_{W^{1,\infty}(\Omega)}+\|u\|_{C^{2,\alpha}}.
\end{split}
\end{equation*}
The other term in (\ref{chainruleterms}) is similarly handled. Combining the above, we obtain
\begin{equation*}
\|v_i\|_{C^{2,\alpha}(\Omega)}\lesssim_A \|\Gamma\|_{C^{2,\alpha}}\|v\|_{W^{1,\infty}(\Omega)}+\|h\|_{C^{\alpha}(\Omega)}+\|(\gamma_{*i}\psi)(H_i(\tilde{z},0))\|_{C^{2,\alpha}}.
\end{equation*}
Expanding
\begin{equation*}
h=\Delta(\gamma_{*i}v)=\Delta\gamma_{*i}v+2\nabla\gamma_{*i}\cdot\nabla v+\gamma_{*i}\Delta v
\end{equation*}
we obtain
\begin{equation*}
\|h\|_{C^{\alpha}(\Omega)}\lesssim_A \|\Gamma\|_{C^{2,\alpha}}\|v\|_{W^{1,\infty}(\Omega)}+\|\nabla\gamma_{*i}\cdot\nabla v\|_{C^{\alpha}(\Omega)}+\|g\|_{C^{\alpha}(\Omega)}.
\end{equation*}
The second term on the right-hand side can be estimated crudely by
\begin{equation*}
\|\nabla\gamma_{*i}\cdot\nabla v\|_{C^{\alpha}(\Omega)}\lesssim_A \|v\|_{C^{1,\alpha}(\Omega)}+\|\Gamma\|_{C^{2,\alpha}}\|v\|_{W^{1,\infty}(\Omega)}.
\end{equation*}
Finally, by estimating the term $\|v\|_{C^{1,\alpha}(\Omega)}\lesssim \delta_0\|v\|_{C^{2,\alpha}(\Omega)}+C(\delta_0)\|v\|_{C^0(\Omega)}$ for some $\delta_0$ sufficiently small and absorbing the first term into the left-hand side of the estimate, we conclude the proof.
\end{proof}
By very similar reasoning and the corresponding estimate in the half-space (see Theorem 8.33 in \cite{MR1814364}) we also have a $C^{1,\alpha}$ variant if the source term $g$ is replaced by $\nabla\cdot g$. More precisely, we have the following.
\begin{proposition}[$C^{1,\alpha}$ estimates for the Dirichlet problem]\label{Gilbarg} Let $\Omega$ be a bounded $C^{1,\alpha}$ domain with $0<\alpha<1$ and with boundary $\Gamma\in\Lambda_*$. Consider the boundary value problem
\begin{equation*}\label{dirprob}
\begin{cases}
&\Delta v=\nabla\cdot g_1+g_2\hspace{5mm}\text{in }\Omega,
\\
&\hspace{3mm}v=\psi\hspace{10mm}\text{on }\partial\Omega.
\end{cases}
\end{equation*}
Then $v$ satisfies the estimate 
\begin{equation*}\label{C1a}
\|v\|_{C^{1,\alpha}(\Omega)}\lesssim_A \|\Gamma\|_{C^{1,\alpha}}(\|v\|_{W^{1,\infty}(\Omega)}+\|g_1\|_{L^{\infty}(\Omega)})+\|g_1\|_{C^{\alpha}(\Omega)}+\|g_2\|_{L^{\infty}(\Omega)}+\|\psi\|_{C^{1,\alpha}(\Gamma)}.
\end{equation*}
Interpolating and using the straightforward estimate 
\begin{equation*}
\|v\|_{L^{\infty}(\Omega)}\lesssim_A \|g_1\|_{L^{\infty}(\Omega)}+\|g_2\|_{L^{\infty}(\Omega)}+\|\psi\|_{L^{\infty}(\Gamma)},
\end{equation*}
 we deduce also
\begin{equation}\label{C1eps}
\|v\|_{C^{1,\epsilon}(\Omega)}\lesssim_A \|g_1\|_{C^{\epsilon}(\Omega)}+\|g_2\|_{L^{\infty}(\Omega)}+\|\psi\|_{C^{1,\epsilon}(\Gamma)}
\end{equation}
and
\begin{equation*}\label{C1est2}
\|v\|_{C^{1,\alpha}(\Omega)}\lesssim_A \|\Gamma\|_{C^{1,\alpha}}(\|g_1\|_{C^{\epsilon}(\Omega)}+\|g_2\|_{L^{\infty}(\Omega)}+\|\psi\|_{C^{1,\epsilon}(\Gamma)})+\|g_1\|_{C^{\alpha}(\Omega)}+\|g_2\|_{L^{\infty}(\Omega)}+\|\psi\|_{C^{1,\alpha}(\Gamma)}   . 
\end{equation*}
\end{proposition}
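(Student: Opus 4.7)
The plan is to follow the localization-and-flattening scheme used in the proof of Proposition~\ref{C2est}, with one crucial modification: since the source is in divergence form and we have only $C^{1,\alpha}$ regularity of $\Gamma$, I must preserve divergence structure under the change of variables. For this I would localize via the partition of unity $\gamma_{*i}$ and flatten the boundary using the area-preserving map $F_i$ from \eqref{classicalflattening}, whose Jacobian has determinant one (rather than $H_i$, which would be wasteful here since there is no excess regularity to spend). Setting $v_i = \gamma_{*i} v$ and $v_i = u_i \circ \Psi_i$ with $\Psi_i = F_i^{-1}$, the localized problem on $\mathbb{R}^d_+$ becomes
\begin{equation*}
\Delta u = \partial_k\bigl((\delta^{jk} - a^{jk})\partial_j u\bigr) + \partial_k \tilde g_1^k + \tilde g_2, \qquad u|_{z_d = 0} = (\gamma_{*i}\psi)(H_i(\tilde z, 0)),
\end{equation*}
where $a^{jk} = (\Psi^j_{x_l}\Psi^k_{x_l})(F_i)$ and $\tilde g_1, \tilde g_2$ incorporate $g_1, g_2$ together with the commutator terms coming from $[\Delta, \gamma_{*i}]$.

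Next, I would invoke the half-space $C^{1,\alpha}$ Dirichlet estimate (Theorem~8.33 of \cite{MR1814364}) to obtain
\begin{equation*}
\|u\|_{C^{1,\alpha}} \lesssim \|(\delta^{jk} - a^{jk})\partial_j u\|_{C^\alpha} + \|\tilde g_1\|_{C^\alpha} + \|\tilde g_2\|_{L^\infty} + \|(\gamma_{*i}\psi)(H_i(\cdot, 0))\|_{C^{1,\alpha}}.
\end{equation*}
The principal step is to dispatch the perturbation term via a paraproduct decomposition analogous to \eqref{RHS}. Splitting $(\delta^{jk} - a^{jk})\partial_j u = T_{\delta^{jk} - a^{jk}}\partial_j u + T_{\partial_j u}(\delta^{jk} - a^{jk}) + \Pi(\delta^{jk} - a^{jk}, \partial_j u)$ and using the Besov characterization \eqref{Besov} gives
\begin{equation*}
\|(\delta^{jk} - a^{jk})\partial_j u\|_{C^\alpha} \lesssim \|a^{jk} - \delta^{jk}\|_{C^\epsilon}\|u\|_{C^{1,\alpha}} + \|\Gamma\|_{C^{1,\alpha}}(\|v\|_{W^{1,\infty}(\Omega)} + \|g_1\|_{L^\infty(\Omega)}).
\end{equation*}
Since $a^{jk}$ is close to the identity in $C^\epsilon$ within a sufficiently tight collar $\Lambda_*$, the first contribution can be absorbed into the left-hand side. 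Transferring back from $u$ to $v$ via the same Moser-type chain rule argument used in Proposition~\ref{C2est}, summing over $i$, and handling $\nabla\gamma_{*i}\cdot\nabla v$ type commutators by standard interpolation (absorbing a small multiple of $\|v\|_{C^{1,\alpha}(\Omega)}$) produces the main estimate.

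The two remaining bounds in the statement then follow by straightforward interpolation. A standard $L^\infty$ bound $\|v\|_{L^\infty(\Omega)} \lesssim_A \|g_1\|_{L^\infty(\Omega)} + \|g_2\|_{L^\infty(\Omega)} + \|\psi\|_{L^\infty(\Gamma)}$, obtained by a Moser iteration or barrier argument for divergence-form elliptic equations, combined with the interpolation inequality $\|v\|_{C^{1,\epsilon}(\Omega)} \lesssim \|v\|_{C^{1,\alpha}(\Omega)}^\theta \|v\|_{L^\infty(\Omega)}^{1-\theta}$ for $\theta = (1+\epsilon)/(1+\alpha)$, yields \eqref{C1eps}. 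The concluding inequality is then a combination of the main estimate and \eqref{C1eps}.

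The main obstacle is that, unlike in the $C^{2,\alpha}$ setting, there is essentially no regularity excess on the coefficient $a^{jk}$: it has at best the same regularity as what we control on $u$. The resolution is the \emph{balanced} paraproduct decomposition above, where the low-frequency part of $a^{jk}$ is measured in the weaker $C^\epsilon$ topology (enabling absorption) while the high-frequency part is paired against the Lipschitz-type norm of $v$ rather than its $C^{1,\alpha}$ norm. This is precisely the refinement that makes the estimate tame rather than losing a derivative, and is what forces the use of the volume-preserving flattening $F_i$ throughout.
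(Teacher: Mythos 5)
Your proposal is correct and follows essentially the same route as the paper: localize with $\gamma_{*i}$ preserving divergence form, flatten with the volume-preserving $F_i$, apply the half-space $C^{1,\alpha}$ estimate from Theorem~8.33 of Gilbarg--Trudinger, absorb the coefficient perturbation via a balanced paraproduct decomposition using the $C^\epsilon$-smallness of $a^{jk}-\delta^{jk}$ in a tight collar, and transfer back with the chain rule. The only minor imprecision is that the $\|g_1\|_{L^\infty(\Omega)}$ contribution should arise from estimating $\|\tilde g_1\|_{C^\alpha}$ (the Hölder norm of the pushed-forward source, which sees $D\Psi_i \in C^\alpha$) rather than from the $(\delta^{jk}-a^{jk})\partial_j u$ paraproduct bound, which involves only $u$; this does not affect the validity of the argument.
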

\begin{proof}
 Much of the proof is similar to the $C^{2,\alpha}$ estimate. We only outline the slight changes. First, we note that
 \begin{equation*}
 \begin{split}
 \Delta v_i&=\partial_j(\partial_j\gamma_{*i}v)+\partial_j\gamma_{*i}\partial_jv+\gamma_{*i}\nabla\cdot g_1+\gamma_{*i}g_2  
 \\
 &=\partial_j(\partial_j\gamma_{*i}v)+\nabla\cdot (\gamma_{*i}g_1)+\partial_j\gamma_{*i}\partial_jv-\nabla\gamma_{*i}\cdot g_1+\gamma_{*i}g_2=:\nabla\cdot h_1+h_2.
 \end{split}
 \end{equation*}
 Hence, localizing with $\gamma_{*i}$ preserves the divergence source term to leading order. More precisely, $h_2$ will be suitable for estimating in $L^{\infty}$ in the sense that $\|h_2\|_{L^{\infty}}\lesssim_A \|v\|_{W^{1,\infty}(\Omega)}+\|g_1\|_{L^{\infty}(\Omega)}+\|g_2\|_{L^{\infty}(\Omega)}$. The next step is to perform the domain flattening procedure. The most important point here is that since the Jacobian determinant of $F_i$ is 1, the corresponding equation for $u$ (using the notation from the proof of \Cref{C2est}) becomes
 \begin{equation*}
 \begin{cases}
&\partial_k(a^{jk}\partial_ju)=\nabla\cdot \tilde{h}_1+\tilde{h}_2\hspace{7mm}\text{in }\Omega,
\\
&\hspace{3mm}u_{|z_d=0}=(\gamma_{*i}\psi)(H_i(\tilde{z},0))\hspace{5mm}\text{on }\partial\Omega,
\end{cases}   
 \end{equation*}
 where
 \begin{equation*}
 \tilde{h}_1:=(h_1\cdot D\Psi_i)(F_i),\hspace{10mm}\tilde{h}_2:=h_2(F_i).
 \end{equation*}
 In other words, the divergence structure of the equation is preserved. From this point, the proof follows the same line of reasoning as the $C^{2,\alpha}$ estimates by writing an equation for $\Delta u$. The difference is that we use the $C^{1,\alpha}$ norm and the corresponding estimate for the Laplace equation in the half-space when the equation has the above divergence form. 
\end{proof}
When $g_1$ and $g_2$ are zero in the above proposition, we can interpolate using the maximum principle for $\mathcal{H}$ and  the $C^{1,\epsilon}$ bound above to obtain $C^{\alpha}$ bounds for the harmonic extension with constant depending only on $A_{\Gamma}$.
\begin{corollary}\label{Hboundlow}
Let $0\leq \alpha<1$. The following low regularity bound for $\mathcal{H}$ holds uniformly for domains $\Omega$ with boundary $\Gamma\in\Lambda_*$,
\begin{equation*}
\|\mathcal{H}g\|_{C^{\alpha}(\Omega)}\lesssim_A \|g\|_{C^{\alpha}(\Gamma)}.
\end{equation*}
\end{corollary}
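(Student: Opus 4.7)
Following the hint in the text, we interpolate between two endpoint estimates. First, the maximum principle for the Dirichlet problem for the Laplacian immediately gives
\[
\|\mathcal{H}g\|_{L^\infty(\Omega)}\le\|g\|_{L^\infty(\Gamma)}.
\]
Second, applying the bound \eqref{C1eps} from \Cref{Gilbarg} with source terms $g_1=g_2=0$ and Dirichlet data $\psi=g$, we obtain the endpoint estimate
\[
\|\mathcal{H}g\|_{C^{1,\epsilon}(\Omega)}\lesssim_A\|g\|_{C^{1,\epsilon}(\Gamma)}
\]
for any fixed $\epsilon\in(0,\epsilon_0)$, with constant depending only on $A_\Gamma$ and uniform in $\Lambda_*$.

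Given $0<\alpha<1$, fix $\epsilon\in(0,\epsilon_0)$ so that $\alpha<1+\epsilon$. Working in the local coordinate parameterizations $H_i$ and flattening maps $F_i$ together with the partition of unity $(\gamma_{*i})$ constructed at the start of this section, the H\"older norms on $\Gamma$ and $\Omega$ are, with constants uniform in $\Lambda_*$, equivalent to the standard H\"older (equivalently, Besov--Zygmund $B^\beta_{\infty,\infty}$) norms on $\mathbb{R}^{d-1}$ and a Euclidean half-space. The classical real interpolation theorem for the H\"older--Zygmund scale then yields that boundedness of a linear operator at the $L^\infty$ and $C^{1,\epsilon}$ endpoints upgrades to boundedness at every intermediate level $C^\alpha$ with $\alpha<1+\epsilon$, with quantitative control of the constant. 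Applying this to $\mathcal{H}$ gives the stated bound $\mathcal H:C^\alpha(\Gamma)\to C^\alpha(\Omega)$, and the case $\alpha=0$ is just Step~1.

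The main technical point is to ensure that the classical interpolation on Euclidean space transfers uniformly to the spaces $C^\alpha(\Gamma)$ and $C^\alpha(\Omega)$ defined via the partition of unity in the collar $\Lambda_*$; this is exactly what the earlier construction of $H_i$, $F_i$, and $(\gamma_{*i})$ with bounds depending only on $A_\Gamma$ is designed to provide, so the only real work is the reduction to the flat picture. An equivalent direct route avoiding interpolation is a barrier argument: the uniform $C^{1,\epsilon_0}$ bound on $\Gamma$ gives a uniform exterior ball condition at every boundary point, allowing the construction of explicit barriers for $\mathcal{H}g-g(x_0)$ at each $x_0\in\Gamma$, which produce a boundary oscillation bound $\mathrm{osc}_{B(x_0,r)\cap\Omega}\mathcal{H}g\lesssim_A r^\alpha\|g\|_{C^\alpha(\Gamma)}$; this combines with the standard interior gradient estimates for harmonic functions to yield the global $C^\alpha(\Omega)$ bound.
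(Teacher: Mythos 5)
Your main argument—maximum principle at the low endpoint, the uniform $C^{1,\epsilon}$ Schauder bound from \Cref{Gilbarg} at the high endpoint, then real interpolation transferred from the flat model via the collar coordinates—is exactly the route the paper takes; the only cosmetic difference is that the paper works with $C^0(\Gamma)\to C^0(\Omega)$ rather than $L^\infty$, which sidesteps the minor issues of solvability of the Dirichlet problem with merely bounded (discontinuous) data and makes the interpolation couple $(C^0,C^{1,\epsilon})_{\theta,\infty}=C^\alpha$ cite cleanly from Lunardi. Your closing sketch of a barrier argument using the uniform exterior ball condition afforded by $C^{1,\epsilon_0}$ regularity is a genuinely different, more elementary alternative that the paper does not pursue; it avoids importing interpolation machinery at the cost of a somewhat more hands-on oscillation estimate, and is worth keeping in mind as a backup when the interpolation scale becomes awkward.
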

\begin{proof}
By the above and the maximum principle, we have $C^{1,\epsilon}(\Gamma)\to C^{1,\epsilon}(\Omega)$ and $C^0(\Gamma)\to C^0(\Omega)$ bounds for $\mathcal{H}$ that are uniform in $\Lambda_*$. By \cite[Example 5.15]{MR3753604} we also know that  $(C^0(\mathbb{R}^n),C^{1,\epsilon}(\mathbb{R}^n))_{\theta,\infty}=C^\alpha(\mathbb{R}^n)$ for an appropriate choice of $\theta.$ Therefore, we just have to transfer the interpolation properties on $\mathbb{R}^n$ for $n=d$ and $n=d-1$ to $\Omega$ and $\Gamma$, respectively, with constants uniform in the collar. For $\Omega,$ we argue as in \Cref{Stein}, and on $\Gamma$ we simply unravel the definition of our function spaces via the partition of unity.
\end{proof}
\begin{remark}
    Of course, we note that \Cref{Hboundlow} avoids  $C^1$ and Lipschitz regularity, as these do not fall into the interpolation scale.
\end{remark}
\subsection{\texorpdfstring{$L^2$}{} based balanced elliptic estimates}
In this subsection, we will prove $H^s$ type estimates for various elliptic problems. In the following analysis, we will always be using the coordinate maps $H_i$ and $G_i$ (as opposed to $F_i$ and $\Psi_i$ from the pointwise estimates) to flatten the boundary since we will now need the $\frac{1}{2}$ gain of regularity on $\Omega$ in $H^s$ based norms given by this flattening.
\subsubsection{The Dirichlet problem}
We begin our analysis by proving estimates for the inhomogeneous Dirichlet problem
\begin{equation*}\label{direst1}
\begin{cases}
&\Delta v=g\hspace{5.5mm}\text{in }\Omega,
\\
&\hspace{3mm}v=\psi\hspace{5mm}\text{on }\Gamma.
\end{cases}
\end{equation*}
We first recall  two baseline estimates which will be used heavily in the derivation of the higher regularity bounds below. The first  is when $\psi=0$, in which case  $v$ satisfies the $H^1$ estimate 
\begin{equation}\label{H1base}
\|v\|_{H^1(\Omega)}\lesssim_A \|g\|_{H^{-1}(\Omega)}.    
\end{equation}
On the other hand, for $\frac{1}{2}<s\leq 1$ and $g=0$, we have
\begin{equation}\label{harmonicbase}
\|v\|_{H^s(\Omega)}\lesssim_A \|\psi\|_{H^{s-\frac{1}{2}}(\Gamma)}.    
\end{equation}
The bound \eqref{H1base} is completely standard. The bound \eqref{harmonicbase} was established by Jerison and Kenig in \cite{MR1331981}, and even holds, in an appropriate sense, at the endpoint $s=\frac{1}{2}$. For our purposes, we will only need the range $\frac{1}{2}<s\leq 1$, but we do need to quantify the dependence of the implicit constant in \cite{MR1331981} on the domain. As noted in \cite{MR2274539}, the implicit domain dependent constant is, as expected, solely dependent on the Lipschitz character of $\Omega$, so is controlled uniformly in the collar. Formally, \cite{MR2274539} only  quantifies the domain dependence for the inhomogeneous problem $g\neq 0$, $\psi=0$, but the analogous homogeneous estimate follows immediately from this and the existence of an extension operator $E: H^{s-\frac{1}{2}}(\Gamma)\to H^s(\Omega)$ for $\frac{1}{2}<s\leq 1$ with norm uniform in $\Lambda_*$. In this low regularity range of $s$, such an operator can be constructed by using the partition of unity for $\Omega$ and the construction in \cite{sz}. We omit the details. 
\\

In a small number of places in the higher energy bounds, the following elliptic estimates which hold on $C^{1,\epsilon_0}$ (but not quite Lipschitz) domains will be convenient for simplifying the analysis.
\begin{proposition}\label{desiredelliptic}
For every $0<s<\frac{1}{2}+\epsilon_0$, there holds
\begin{equation*}
\|\Delta^{-1}g\|_{H^{s+1}(\Omega)}\lesssim_A \|g\|_{H^{s-1}(\Omega)},\hspace{5mm}\|\mathcal{H}\psi\|_{H^{s+1}(\Omega)}\lesssim_A \|\psi\|_{H^{s+\frac{1}{2}}(\Gamma)}.
\end{equation*}
\end{proposition}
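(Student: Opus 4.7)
The plan is to prove both estimates simultaneously by localizing and flattening via the maps $H_i$ from the partition of unity construction, which gain a half-derivative on $H^s$-based norms relative to the boundary. First, for the harmonic extension bound, I would construct an extension operator $E:H^{s+\frac12}(\Gamma)\to H^{s+1}(\Omega)$ with constant depending only on $A$ (for $0<s<\frac12+\epsilon_0$, the partition of unity together with the half-space extension $\Phi_i$ already constructed in the paper provides such an operator, since the half-derivative gain matches our regularity demands). Writing $\mathcal H\psi = E\psi - \Delta^{-1}\Delta E\psi$, the harmonic extension bound reduces to the first estimate with $g = \Delta E\psi \in H^{s-1}(\Omega)$. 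Hence it suffices to prove $\|\Delta^{-1}g\|_{H^{s+1}(\Omega)}\lesssim_A \|g\|_{H^{s-1}(\Omega)}$.

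Next, to prove the inhomogeneous estimate, let $v = \Delta^{-1}g$ and $v_i = \gamma_{*i}v$, and consider the flattened functions $u_i := v_i\circ H_i$ on $\R^d_+$ with $u_i|_{z_d=0}=0$. A direct computation shows that $u_i$ satisfies a divergence form equation $\partial_k(a^{jk}\partial_j u_i) = F_i$ on $\R^d_+$, where the coefficients $a^{jk}$ are $C^{\epsilon_0}$-close to $\delta^{jk}$ (made arbitrarily small, say of size $\rho$, by tightening the collar as in the paper's construction of $H_i$), and where $F_i$ arises from localizing $g$ and from the Jacobian change. The key step is then the half-space estimate
\begin{equation*}
\|u_i\|_{H^{s+1}(\R^d_+)} \lesssim \|\Delta u_i\|_{H^{s-1}(\R^d_+)},
\end{equation*}
which holds for all $s\in\R$ by Fourier analysis on the half-space with homogeneous Dirichlet data. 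Rewriting $\Delta u_i = F_i - \partial_k((a^{jk}-\delta^{jk})\partial_j u_i)$, the perturbation is estimated via the balanced paraproduct bound of \Cref{productestref} applied with H\"older exponent $\epsilon_0$:
\begin{equation*}
\|(a^{jk}-\delta^{jk})\partial_j u_i\|_{H^{s}(\R^d_+)} \lesssim_A \rho\, \|u_i\|_{H^{s+1}(\R^d_+)} + \text{lower order},
\end{equation*}
which is valid precisely because $s<\tfrac12+\epsilon_0$ keeps the product within the algebra range for $C^{\epsilon_0}\cdot H^{s}$. After taking $\rho$ sufficiently small depending on $A$, this perturbation can be absorbed into the left-hand side.

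Finally, I would reassemble: summing over $i$, the partition of unity commutator terms produce lower-order source contributions in $H^{s}(\Omega)\subseteq H^{s-1}(\Omega)$ times factors of $v$ in $H^1(\Omega)$, which are controlled by the baseline estimates \eqref{H1base} and \eqref{harmonicbase} via $\|v\|_{H^1(\Omega)}\lesssim_A \|g\|_{H^{-1}(\Omega)}\lesssim \|g\|_{H^{s-1}(\Omega)}$ (since $s>0$). Combining everything and reversing the flattening using the Moser bound of \Cref{Moservariant} for $H_i,G_i$ yields the stated inequality. The main obstacle is the sharp exponent bound $s<\tfrac12+\epsilon_0$: this is exactly the threshold at which $\partial_j u$ can be multiplied by a $C^{\epsilon_0}$ coefficient without losing regularity, which in turn forces one to use the refined, balanced paraproduct estimate rather than a naive Moser inequality, and forces the use of the divergence-form structure (preserved under $H_i$ because its Jacobian is sufficiently regular) rather than working with $\nabla^2 v$ directly.
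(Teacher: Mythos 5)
The paper does not actually prove Proposition~\ref{desiredelliptic}: it cites it as ``well-known to specialists'' (with a pointer to Mitrea's book), and remarks that the Lipschitz range $s<\tfrac12$ is the Jerison--Kenig layer-potential theory while the $C^{1,\epsilon_0}$ regularity of $\Gamma$ only buys the additional $\epsilon_0$. Your attempt at a self-contained, perturbative proof is therefore a different route, and it contains a genuine gap precisely at the step you flagged as ``key.''

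After flattening, the coefficients $a^{jk}-\delta^{jk}$ inherit only $C^{\epsilon_0}$ regularity from $\Gamma\in C^{1,\epsilon_0}$, since they are built from first derivatives of a $C^{1,\epsilon_0}$ diffeomorphism, and the proposition makes no Sobolev assumption on $\Gamma$. In the divergence-form error $\partial_k\bigl((a^{jk}-\delta^{jk})\partial_j u\bigr)$, the low-high paraproduct term $T_{\partial_j u}(a^{jk}-\delta^{jk})$, with $\partial_j u$ at low frequency and the $C^{\epsilon_0}$ coefficient at high frequency, lies in $H^s$ only for $s<\epsilon_0$: one has $\|P_{<m}(\partial_j u)\,P_m(a^{jk}-\delta^{jk})\|_{L^2}\lesssim_A \|u\|_{H^1}\,2^{-m\epsilon_0}$, and the weighted sum $\sum_m 2^{2m(s-\epsilon_0)}$ diverges once $s\ge\epsilon_0$. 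Your assertion that ``$s<\tfrac12+\epsilon_0$ keeps the product within the algebra range for $C^{\epsilon_0}\cdot H^s$'' uses the wrong threshold; the algebra range is $|s|<\epsilon_0$. Nor can Proposition~\ref{productestref} close the gap here, because its remaining terms price the high-frequency-coefficient interaction against a Sobolev norm $\|f\|_{H^{s+r+1}}$ of the coefficient potential --- in effect $\|\Gamma\|_{H^{s+r+1/2}}$ --- and the hypotheses of the proposition you are proving supply no such quantity.

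Thus your argument establishes the estimate only for $0<s<\epsilon_0$. The Lipschitz part of the range, $\epsilon_0\le s<\tfrac12$, already requires the non-perturbative Rellich-estimate / layer-potential machinery of Jerison--Kenig and Verchota, in which smallness of the flattened coefficients plays no role; the final sliver $\tfrac12\le s<\tfrac12+\epsilon_0$ is then a $C^{1,\epsilon}$-refinement of that theory as in the cited reference. Your opening reduction of the $\mathcal H$ bound to the $\Delta^{-1}$ bound via the half-space extension $\Phi_i$ is fine; the gap is in the core elliptic estimate.
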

\Cref{desiredelliptic} is well-known to specialists; see, e.g.,~\cite{mitrea2022geometric}. We remark that bounds of this type hold in the range $s<\frac{1}{2}$ when the domain is Lipschitz; the excess regularity given by a $C^{1,\epsilon_0}$ domain is required to extend the range to $s<\frac{1}{2}+\epsilon_0$.
\\

Next, we move to the higher regularity estimates for the Dirichlet problem.
\begin{proposition}[Higher regularity bounds for the inhomogeneous Dirichlet problem]\label{direst}  Let $\Omega$ be a bounded domain with boundary $\Gamma\in\Lambda_*$. Suppose that $v$ solves the Dirichlet problem
\begin{equation*}
\begin{cases}
&\Delta v=g\hspace{5.5mm}\text{in }\Omega,
\\
&\hspace{3mm}v=\psi\hspace{5mm}\text{on }\partial\Omega,
\end{cases}
\end{equation*}
and let $s\geq 2$. Then for $r\geq 0$, $\alpha\in [0,1],\beta\in [0,1]$ and any sequence of partitions $v:=v_j^1+v_j^2$, we have
\begin{equation*}
    \begin{split}
        \|v\|_{H^s(\Omega)}\lesssim_A\|g\|_{H^{s-2}(\Omega)}+\|\psi\|_{H^{s-\frac{1}{2}}(\Gamma)}+\|\Gamma\|_{H^{s+r-\frac{1}{2}}}\sup_{j>0}2^{-j\left(\alpha-1+r\right)}\|v_j^1\|_{C^\alpha(\Omega)}+\sup_{j>0}2^{j(s-1+\beta-\epsilon)}\|v_j^2\|_{H^{1-\beta}(\Omega)}.
    \end{split}
\end{equation*}
\end{proposition}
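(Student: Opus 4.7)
The plan is to localize via the partition of unity $\{\gamma_{*i}\}$, flatten the boundary with the half-derivative-gaining diffeomorphisms $H_i$ (rather than $F_i$), reduce to the half-space Dirichlet problem, and then use the balanced product estimate \Cref{productestref}, the balanced Moser estimate \Cref{Moservariant}, and the balanced trace estimate \Cref{baltrace} to control the error terms arising from the flattening.

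Write $v_i = \gamma_{*i} v$ and $u_i = v_i \circ H_i$ on $\mathbb{R}^d_+$. Since the Jacobian of $H_i$ is not unimodular, the equation for $u := u_i$ takes the form
\begin{equation*}
\begin{cases}
\Delta u = \partial_k\bigl((\delta^{jk} - a^{jk})\partial_j u\bigr) + f & \text{in } \mathbb{R}^d_+,\\
u\bigr|_{z_d=0} = (\gamma_{*i}\psi)(H_i(\tilde z,0)),
\end{cases}
\end{equation*}
where $a^{jk}$ encodes the geometry of $\Gamma$ (and is a smooth function of $DG_i$, close to $\delta^{jk}$ in $C^\epsilon$ by our collar choice), and where $f$ involves $\gamma_{*i} g$ together with commutator terms of the form $\nabla \gamma_{*i}\cdot\nabla v$ and $(\Delta \gamma_{*i}) v$. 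The standard $H^s$ estimate for the inhomogeneous half-space Dirichlet problem (built from the baseline bounds \eqref{H1base}, \eqref{harmonicbase} via iterated tangential differentiation) yields
\begin{equation*}
\|u\|_{H^s} \lesssim_A \|(\delta^{jk}-a^{jk})\partial_j u\|_{H^{s-1}} + \|f\|_{H^{s-2}} + \|(\gamma_{*i}\psi)\circ H_i(\cdot,0)\|_{H^{s-\frac{1}{2}}(\mathbb{R}^{d-1})}.
\end{equation*}

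The main work is to estimate the first term on the right in the desired \emph{balanced} form. Since $a^{jk}-\delta^{jk}$ is a smooth function of $DG_i$, and $\|H_i-\mathrm{Id}\|_{H^{s+r}}\lesssim_A\|\Gamma\|_{H^{s+r-\frac{1}{2}}}$ (here is where the half-derivative gain of $H_i$ enters, producing the $H^{s+r-\frac{1}{2}}$ norm of $\Gamma$ on the right-hand side), we apply \Cref{productestref} with $f\rightsquigarrow a^{jk}-\delta^{jk}$, $g\rightsquigarrow u$, and the partition $u = (v_j^1\circ H_i)\gamma_{*i}^{(1)} + (v_j^2\circ H_i)\gamma_{*i}^{(2)}$ inherited (after a frequency-localized splitting of $\gamma_{*i}$) from the partition $v=v_j^1+v_j^2$. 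This produces exactly the two supremum terms in the claimed bound, after absorbing the small $C^\epsilon$ norm of $a^{jk}-\delta^{jk}$ to hide $\|u\|_{H^s}$ on the left. The source $f$ is handled directly: $\|\gamma_{*i} g\|_{H^{s-2}(\Omega)}\lesssim_A \|g\|_{H^{s-2}(\Omega)}$, while the commutator terms $\nabla\gamma_{*i}\cdot\nabla v$ and $(\Delta\gamma_{*i})v$ are of order at most $s-1$ in $v$ and can be interpolated against the same right-hand side (with a small absorption into $\|v\|_{H^s}$). Finally, the boundary contribution is controlled by \Cref{baltrace} and \Cref{Moservariant} applied to $(\gamma_{*i}\psi)\circ H_i(\cdot,0)$, giving a clean $\|\psi\|_{H^{s-\frac{1}{2}}(\Gamma)}$ term together with contributions already of the type appearing on the right-hand side. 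Summing over $i$ (and using the partition of unity to reassemble $v$) completes the proof.

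The main obstacle is the first step: making sure the product estimate applied to $(\delta^{jk}-a^{jk})\partial_j u$ delivers both (i) the factor $\|\Gamma\|_{H^{s+r-\frac{1}{2}}}$ (rather than $\|\Gamma\|_{H^{s+1/2}}$, which would be the naive tame bound) and (ii) the correct frequency-weighted pointwise/$L^2$ split of $v$. This is precisely what \Cref{productestref} and the balanced Moser estimate were engineered for, and the choice of flattening $H_i$ (with its $H^{s+1/2}$ gain) is what makes the bookkeeping of domain regularity line up with the claimed exponent $s+r-\frac{1}{2}$. Everything else — commutator and cutoff terms, composition with $H_i$, and the boundary contribution — is routine given the machinery already developed earlier in \Cref{BEE}.
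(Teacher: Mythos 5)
There is a genuine gap in your proof that traces to a mismatch between your choice of flattening and the form of the equation you then work with. You correctly identify that $H_i$ (rather than $F_i$) must be used, precisely to secure the half‑derivative gain $\|H_i - \mathrm{Id}\|_{H^{s+r}} \lesssim_A \|\Gamma\|_{H^{s+r-\frac12}}$. But because $H_i$ is \emph{not} volume‑preserving, the equation for $u = v_i\circ H_i$ cannot be put in the clean divergence form
\[
\Delta u = \partial_k\bigl((\delta^{jk}-a^{jk})\partial_j u\bigr) + f
\]
that you write. That form is exactly what arises from the unimodular flattening $F_i$ (and is what the paper uses in the pointwise estimate, \Cref{C2est}). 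For $H_i$ the correct form, obtained directly from the chain rule, is
\[
-\Delta u = (a^{jk}-\delta^{jk})\partial_j\partial_k u + b_j\,\partial_j u - f, \qquad b_j = (\Delta G_i^{\,j})\circ H_i,
\]
with a genuine first‑order coefficient $b_j$ that carries \emph{two} derivatives on the flattening map. This term is not small, does not fold into the divergence form, and is in fact the paper's main technical obstacle in this proposition (it is handled through the paradifferential "change of variables" bound \eqref{changeofvarbounddirichlet}, which occupies the bulk of the proof). Your sketch never sees this term, so the argument does not close: you cannot simultaneously keep the divergence form and the $H_i$‑based $\|\Gamma\|_{H^{s+r-\frac12}}$ gain.

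Two smaller points. First, your handling of $f$ is too quick: $\|\gamma_{*i}g\|_{H^{s-2}}$ cannot simply be bounded by $\|g\|_{H^{s-2}}$ with only an $A$‑dependent constant, because for $s$ large the Sobolev norms of $\gamma_{*i}$ carry $\|\Gamma\|$‑dependence, and the source $f$ must be re‑expanded and balanced in the same way as the principal terms (this is the bound \eqref{fest} in the paper, which again produces the $\|\Gamma\|_{H^{s+r-\frac12}}$ and partition‑weighted contributions). Second, the boundary contribution is essentially trivial — it is the definition of $\|\psi\|_{H^{s-\frac12}(\Gamma)}$ — so invoking \Cref{baltrace} there is unnecessary; the trace machinery is not where the work lies.
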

\begin{proof}
Using the partition of unity, it suffices to estimate $v_i:=\gamma_{*i}v$ for each $i\geq 0$. Since the case $i=0$ is essentially an interior regularity estimate, we focus on the case $i\geq 1$. We define
\begin{equation*}
    h:=\Delta v_i=g\gamma_{*i}+v\Delta\gamma_{*i}+2\nabla v\cdot\nabla \gamma_{*i}.
\end{equation*}
Using the map $H_i=G_i^{-1}$, we can write a variable coefficient equation for $u:=v_i\circ H_i$,
\begin{equation*}
  \begin{cases}
    &-\Delta u=(a^{ij}-\delta^{ij})\partial_i\partial_ju+b_j\partial_ju-f,
    \\  
    &u|_{\{z_d=0\}}=(\gamma_{*i}\psi)(H_i(\tilde{z},0)).
    \end{cases}
\end{equation*}
Here (dropping the $i$ index from the partition and now using it as a dummy index), we wrote $a^{lm}:=(G^l_{x_k}G^m_{x_k})\circ H$ (where $k$ is summed over), $b_j:=(\Delta G^j)\circ H$ and $f=h\circ H.$ As a first step, we prove the following estimate for $u$:
\begin{equation}\label{uest}
    \begin{split}
        \|u\|_{H^s}\lesssim_A \|f\|_{H^{s-2}}+\|\psi\|_{H^{s-\frac{1}{2}}(\Gamma)}+\|\Gamma\|_{H^{s+r-\frac{1}{2}}}\sup_{j>0}2^{-j\left(\alpha-1+r\right)}\|v_j^1\|_{C^\alpha(\Omega)}+\sup_{j>0}2^{j(s-1+\beta-\epsilon)}\|v_j^2\|_{H^{1-\beta}(\Omega)}.
    \end{split}
\end{equation}
For this, we use the standard elliptic regularity for the half-space to obtain
\begin{equation}\label{firstest}
\|u\|_{H^s}\lesssim_A \|u\|_{L^2}+\|f\|_{H^{s-2}}+\|b_i\partial_iu\|_{H^{s-2}}+\|(a^{ij}-\delta^{ij})\partial_i\partial_ju\|_{H^{s-2}}+\|(\gamma_{*i}\psi)(H_i(\tilde{z},0))\|_{H^{s-\frac{1}{2}}(\mathbb{R}^{d-1})}.
\end{equation}
By definition, the last term on the right-hand side is controlled by $\|\psi\|_{H^{s-\frac{1}{2}}(\Gamma)}$. Moreover, by a change of variables and the baseline estimates (\ref{H1base}) and (\ref{harmonicbase}), we can control, crudely,
\begin{equation}\label{crudelowerorder}
\|u\|_{L^2}\lesssim_A \|v_i\|_{L^2(\Omega)}\lesssim_A\|h\|_{L^2(\Omega)}+\|\psi\|_{H^{\frac{1}{2}}(\Gamma)}\lesssim_A \|f\|_{L^2}+\|\psi\|_{H^{\frac{1}{2}}(\Gamma)}\lesssim_A \|f\|_{H^{s-2}}+\|\psi\|_{H^{s-\frac{1}{2}}(\Gamma)}.  
\end{equation} 
For the purpose of estimating the third and fourth terms on the right-hand side, we may assume that $u\in H^s(\mathbb{R}^d)$ with compact support instead of just $u\in H^s(\mathbb{R}_+^d)$ by using any suitable extension for the half-space. We then recall that in a suitably refined collar, we have
\begin{equation*}
\|a^{ij}-\delta^{ij}\|_{L^{\infty}}+\|DG-I\|_{L^{\infty}}\ll_A 1.
\end{equation*}
Next, we define a partition of $u$ as follows: First write $v_i=\gamma_{*i}v_j^1+\gamma_{*i}v_j^2$ and then $u=v_i\circ H_i=(\gamma_{*i}v_j^1)\circ H_i+(\gamma_{*i}v_j^2)\circ H_i=:u_j^1+u_j^2$. To prove (\ref{uest}), it suffices now by interpolation and the above estimates to prove the estimate 
\begin{equation}\label{bestimatefordir}
\|b_i\partial_iu\|_{H^{s-2}}+\|(a^{ij}-\delta^{ij})\partial_i\partial_j u\|_{H^{s-2}}\lesssim_A \|u\|_{H^{s-\epsilon}}+\|DG-I\|_{L^{\infty}}\|u\|_{H^s}+\text{RHS}(\ref{uest}).
\end{equation}
We show the details for $b_i\partial_iu$ since it is the more difficult of the two terms to deal with (as it involves two derivatives applied to the domain flattening map) and because the estimate for $(a^{ij}-\delta^{ij})\partial_j\partial_iu$ follows from a similar analysis. Our first aim is to establish the bound
\begin{equation}\label{changeofvarbounddirichlet}
\|b_i\partial_i u\|_{H^{s-2}}\lesssim_A \|(\nabla u)(G)\cdot\Delta G\|_{H^{s-2}}+\text{RHS}(\ref{bestimatefordir}),  
\end{equation}
which,  to leading order, is essentially like doing an $H^{s-2}$ ``change of variables". This bound follows immediately from \Cref{McLean} for $2\leq s\leq 3$, so we restrict to $s\geq 3$. To simplify notation a bit, we write $w:=b_i\partial_i u$. We begin by applying \Cref{Moservariant} to obtain
\begin{equation}\label{intermedwest}
\|w\|_{H^{s-2}}\lesssim_A \|(\nabla u)(G)\cdot\Delta G\|_{H^{s-2}}+\|\Gamma\|_{H^{s+r-\frac{1}{2}}}\sup_{j>0}2^{-j(1+r)}\|w_j^1\|_{L^{\infty}}+\sup_{j>0}2^{j(s-2-\epsilon)}\|w_j^2\|_{L^2},
\end{equation}
where $w=w_j^1+w_j^2$ is a well-chosen partition which needs to be picked so that we can estimate the latter two terms above by \text{RHS}(\ref{bestimatefordir}). We take
\begin{equation*}
\begin{split}
&w_j^1:=(\Delta P_{<j}G\cdot (\nabla P_{<j}u_j^1)(G))(H),
\\
&w_j^2:=(\Delta P_{<j}G\cdot (\nabla P_{<j}u_j^2)(G)+\Delta P_{<j}G\cdot (\nabla P_{\geq j}u)(G)+\Delta P_{\geq j}G\cdot (\nabla u)(G))(H).    
\end{split}
\end{equation*}
It is then easily verified using the above and (\ref{intermedwest}) that we have
\begin{equation*}
\|w\|_{H^{s-2}}\lesssim_A \|(\nabla u)(G)\cdot\Delta G\|_{H^{s-2}}+\sup_{j>0}2^{j(s-2-\epsilon)}\|\Delta P_{\geq j}G\cdot (\nabla u)(G)\|_{L^2}+\text{RHS}(\ref{bestimatefordir}).   
\end{equation*}
To estimate the latter term on the right, we use that $s-2-\epsilon>0$ to estimate
\begin{equation*}
2^{j(s-2-\epsilon)}\|\Delta P_{\geq j}G\cdot (\nabla u)(G)\|_{L^2}\leq \sup_{l\geq 0}2^{l(s-2-\epsilon)}\|\Delta P_lG\cdot (\nabla u)(G)\|_{L^2}.    
\end{equation*}
Then splitting $u=P_{<l}u_l^1+\left(P_{<l}u_l^2+P_{\geq l}u\right)$, a change of variables and a simple application of the Bernstein inequalities allows us to control the above term by the right-hand side of (\ref{bestimatefordir}). This establishes (\ref{changeofvarbounddirichlet}) for $s\geq 3$. Finally, for each $s\geq 2$, it remains to estimate $\|(\nabla u)(G)\Delta G\|_{H^{s-2}}$ by the right-hand side of (\ref{bestimatefordir}). From a simple paradifferential analysis as in \Cref{productestref}, we have 
\begin{equation*}
\begin{split}
\|(\nabla u)(G)\cdot\Delta G\|_{H^{s-2}}&\lesssim_A \|(\nabla u)(G)\|_{H^{s-1-\epsilon}}+\|T_{(\nabla u)(G)}\Delta G\|_{H^{s-2}} 
\\
&\lesssim_A \|(\nabla u)(G)\|_{H^{s-1-\epsilon}}+\text{RHS}(\ref{bestimatefordir}) ,
\end{split}
\end{equation*}
where, above, to estimate the latter term in the first line, we estimated each summand $P_{<j-4}(\nabla u(G))P_j\Delta G$ in the paradifferential expansion of $T_{(\nabla u)(G)}\Delta G$ using the partition $u=P_{<j}u_j^1+\left(P_{<j}u_j^2+P_{\geq j}u\right)$ and Bernstein's inequality. Then, using \Cref{Moservariant} and this same partition, we have easily
\begin{equation*}
\|(\nabla u)(G)\|_{H^{s-1-\epsilon}}\lesssim_A \text{RHS}(\ref{bestimatefordir}).    
\end{equation*}
This establishes the bound (\ref{bestimatefordir}) for $b_i\partial_i u$. The bound for $(a^{ij}-\delta^{ij})\partial_i\partial_ju$ follows similar reasoning, but is easier because it involves only one derivative applied to the domain flattening map, and therefore the initial change of variables performed above is not needed. This concludes the estimate (\ref{uest}). Our next step is replace $u$ on the left-hand side of (\ref{uest}) with $v_i$ and replace $f$ on the right-hand side with $g$. Recall first that $v_i=u\circ G_i$ and $f=h\circ H_i.$  We may assume that $v_i$ and $u$ are defined on $\mathbb{R}^d$ using Stein's extension or a suitable half-space extension in the case of $u$. Therefore, using the partition $u=u_j^1+u_j^2$ as defined earlier and \Cref{Moservariant} we obtain 
\begin{equation*}
\begin{split}
\|v_i\|_{H^s(\Omega)}&\lesssim_A \|u\|_{H^s}+\|\Gamma\|_{H^{s+r-\frac{1}{2}}}\sup_{j>0}2^{-j(\alpha-1+r)}\|v_j^1\|_{C^{\alpha}(\Omega)}+\sup_{j>0}2^{j(s+\beta-1-\epsilon)}\|v_j^2\|_{H^{1-\beta}(\Omega)},
\end{split}
\end{equation*}
where we used that $\|G-Id\|_{H^{s+r}}\lesssim_A \|\Gamma\|_{H^{s+r-\frac{1}{2}}}$.
\\

To conclude we now need only show that
\begin{equation}\label{fest}
\|f\|_{H^{s-2}}\lesssim_A \|g\|_{H^{s-2}(\Omega)}+\|\Gamma\|_{H^{s+r-\frac{1}{2}}}\sup_{j>0}2^{-j(\alpha-1+r)}\|v_j^1\|_{C^{\alpha}(\Omega)}+\sup_{j>0}2^{j(s+\beta-1-\epsilon)}\|v_j^2\|_{H^{1-\beta}(\Omega)}+\sup_i\|v_i\|_{H^{s-\epsilon}(\Omega)}.
\end{equation}
Expanding out $h=\Delta (v\gamma_{*i})$ and using again a paradifferential expansion similar to \Cref{productestref}, the identity $g:=\Delta v$ and  the splitting $v=v_j^1+v_j^2$ we observe first that
\begin{equation*}
\|h\|_{H^{s-2}(\Omega)}\lesssim_A \|g\|_{H^{s-2}(\Omega)}+\|\Gamma\|_{H^{s+r-\frac{1}{2}}}\sup_{j>0}2^{-j(\alpha-1+r)}\|v_j^1\|_{C^{\alpha}(\Omega)}+\sup_{j>0}2^{j(s+\beta-1-\epsilon)}\|v_j^2\|_{H^{1-\beta}(\Omega)}+\sup_i\|v_i\|_{H^{s-\epsilon}(\Omega)}.
\end{equation*}
 Therefore, we need to only show (\ref{fest}) with $g$ replaced by $h$. For this, we first extend $h$ to a function $\tilde{h}:=\mathcal{E}\Delta (\gamma_{*i}v)$ on $\mathbb{R}^d$ using Stein's extension. Then, using the partition $\tilde{h}=h_j^1+h_j^2$ with $h_j^1=\mathcal{E}\Delta P_{<j}(v_j^1\gamma_{*i})$ and $h_j^2=\mathcal{E}\Delta P_{<j}(v_j^2\gamma_{*i})+\mathcal{E}\Delta P_{\geq j}(v\gamma_{*i})$ together with \Cref{Moservariant}, we obtain (\ref{fest}) and conclude the proof.
\end{proof}
We also note a much cruder variant of the above estimate which will be useful for constructing regularization operators later on. As with the corresponding Moser bound in \Cref{crudermoser}, the proposition below could be optimized considerably, but such optimizations will not be needed in this article.  
\begin{proposition}[Cruder variant of the Dirichlet estimates]\label{direst2}  Let $\Gamma$, $v$, $\psi$, $g$ and $s\geq 2$  be as in \Cref{direst}, and assume  that $\psi=0$. Then for every $\delta>0$, we have the estimate 
\begin{equation*}
\|v\|_{H^s(\Omega)}\lesssim_{A,\delta} \|g\|_{H^{s-2}(\Omega)}+\|\Gamma\|_{C^{s+\delta}}\|v\|_{H^{1}(\Omega)}.    
\end{equation*}
\end{proposition}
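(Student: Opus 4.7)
The plan is to follow the proof of \Cref{direst} almost line-for-line, but systematically replace the balanced Moser estimate \Cref{Moservariant} by the cruder variant \Cref{crudermoser} at every step. This substitution is exactly what allows $\Gamma$ to enter through a pointwise $C^{s+\delta}$ norm at the cost of measuring $v$ only in the low-regularity $H^1$ norm on the right-hand side.

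First I would localize via the partition of unity $(\gamma_{*i})_i$ and flatten the boundary with the map $H_i$, so that $u:=(\gamma_{*i}v)\circ H_i$ solves the half-space problem
\begin{equation*}
\begin{cases}
-\Delta u = (a^{ij}-\delta^{ij})\partial_i\partial_j u + b_j\partial_j u - f,\\
u|_{\{z_d=0\}} = 0,
\end{cases}
\end{equation*}
exactly as in the proof of \Cref{direst}, with $a^{ij}$, $b_j$, $f$ defined there. The standard half-space Dirichlet regularity together with the baseline bound \eqref{H1base} for the zeroth-order piece reduces matters to estimating the two variable-coefficient error terms in $H^{s-2}$ along with $\|f\|_{H^{s-2}}$.

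For $(a^{ij}-\delta^{ij})\partial_i\partial_j u$, working in a sufficiently tight collar so that $\|a-I\|_{L^\infty}\ll_A 1$ permits absorbing the leading $\|u\|_{H^s}$ contribution into the left-hand side, while a paraproduct decomposition controls the remainder by $\|\Gamma\|_{C^{s+\delta}}$ times an intermediate Sobolev norm of $u$. For the more delicate $b_j\partial_j u$ term I would adapt the change-of-variables reduction from the proof of \Cref{direst}, now invoking \Cref{crudermoser} with the parameter $r=1$ so that only $\|\nabla u\|_{L^2}$ appears on the right, multiplied by $\|G-Id\|_{C^{s-1+\epsilon}}\lesssim_A \|\Gamma\|_{C^{s+\delta}}$ (which is where the cushion $\epsilon<\delta$ is used). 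The term $\|f\|_{H^{s-2}}$ is handled by expanding $\Delta(\gamma_{*i}v)$ and applying \Cref{crudermoser} once more, yielding $\|g\|_{H^{s-2}(\Omega)}$ up to lower-order contributions.

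Assembling these estimates produces an inequality of the shape
\begin{equation*}
\|v\|_{H^s(\Omega)}\lesssim_{A,\delta}\|g\|_{H^{s-2}(\Omega)}+\|\Gamma\|_{C^{s+\delta}}\|v\|_{H^{s-1}(\Omega)}+\text{(intermediate-order terms)},
\end{equation*}
after which interpolating $\|v\|_{H^{s-1}(\Omega)}$ between $\|v\|_{H^{s}(\Omega)}$ and $\|v\|_{H^{1}(\Omega)}$ and absorbing the top-order piece into the left completes the argument; for non-integer $s$ this is naturally organized as an induction on $\lfloor s\rfloor$ with the integer case used at each step. The main technical obstacle will be carefully bookkeeping the partitions inside each paraproduct so that every intermediate Sobolev norm of $v$ that arises can ultimately be traded, via interpolation and the induction hypothesis, for the target product $\|\Gamma\|_{C^{s+\delta}}\|v\|_{H^1(\Omega)}$, without ever having to measure $\Gamma$ in a Sobolev-based norm.
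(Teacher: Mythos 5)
Your plan is close in spirit but departs from the paper in one structurally important way, and it glosses over a step that, as written, does not close.

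The paper's proof deliberately uses the \emph{cruder} flattening map $F_i$ from \eqref{classicalflattening} (the one used in the pointwise Schauder estimates), not the half-derivative-gaining map $H_i$ used in \Cref{direst}. You instead flatten with $H_i$ and work with the non-divergence equation $-\Delta u=(a^{ij}-\delta^{ij})\partial_i\partial_ju+b_j\partial_ju-f$, which forces you to handle the first-order term $b_j\partial_j u$. The paper's choice of $F_i$ keeps the equation in divergence form $\Delta u=\partial_k((\delta^{jk}-a^{jk})\partial_j u)+f$, so the preliminary half-space estimate requires only $\|(\delta^{jk}-a^{jk})\partial_ju\|_{H^{s-1}}$ and there is no $b_j$ term at all. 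The paper states this explicitly: the $\frac12$-derivative gain that $H_i$ provides is useless here, because $\Gamma$ is only ever measured in pointwise norms, whereas the simpler source-term structure of the $F_i$ flattening is a genuine gain. Your route can in principle be made to work, but it imports complications that the paper avoided on purpose.

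More seriously, the final interpolation step as you describe it does not give the stated bound. You assert that the assembled inequality has the shape $\|v\|_{H^s}\lesssim_{A,\delta}\|g\|_{H^{s-2}}+\|\Gamma\|_{C^{s+\delta}}\|v\|_{H^{s-1}}+\cdots$ and then propose to interpolate $\|v\|_{H^{s-1}}$ between $\|v\|_{H^s}$ and $\|v\|_{H^1}$ and absorb. But if you write $\|v\|_{H^{s-1}}\leq\eta\|v\|_{H^s}+C_\eta\|v\|_{H^1}$, then to absorb $\eta\|\Gamma\|_{C^{s+\delta}}\|v\|_{H^s}$ into the left you are forced to take $\eta\lesssim\|\Gamma\|_{C^{s+\delta}}^{-1}$, which makes $C_\eta$ (and hence the final constant multiplying $\|v\|_{H^1}$) grow like a higher power of $\|\Gamma\|_{C^{s+\delta}}$. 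That produces an estimate of the form $\|\Gamma\|_{C^{s+\delta}}^{1+\sigma}\|v\|_{H^1}$ for some $\sigma>0$, not $\|\Gamma\|_{C^{s+\delta}}\|v\|_{H^1}$. The way the paper avoids this is precisely through \Cref{crudermoser} with $r=0$ and $r=2$: those two applications produce the factor $\|\Gamma\|_{C^{s+\delta}}$ already paired with the low-regularity norm $\|v\|_{H^1}$, and the only remaining error term is a pure $\|v\|_{H^{s-\epsilon}}$ \emph{without} any $\|\Gamma\|$ factor attached, which can then be absorbed by interpolation at no cost in $\|\Gamma\|$. You would need to be more careful in the paraproduct bookkeeping to make sure every surviving intermediate term either carries no factor of $\|\Gamma\|$ (and is then harmless) or carries $\|\Gamma\|_{C^{s+\delta}}$ already multiplied by $\|v\|_{H^1}$ or lower; simply interpolating at the end does not repair a term of the form $\|\Gamma\|_{C^{s+\delta}}\|v\|_{H^{s-1}}$.
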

\begin{proof}
We only give a sketch of the proof since it is essentially a much simpler version of \Cref{direst}. One starts by using the cruder flattening (\ref{classicalflattening}) as in the pointwise elliptic estimates and writing the corresponding equation for $u$ 
(using the notation in (\ref{frozeneqn})). This flattening is a bit more convenient for this estimate because the source terms in (\ref{frozeneqn}) are simpler. Moreover, we will only need to measure $\Gamma$ in pointwise norms, and therefore will not need the $\frac{1}{2}$ gain of regularity from the flattening in \Cref{direst}. As in the proof of \Cref{direst}, we then obtain the preliminary bound
\begin{equation*}
\|u\|_{H^s}\lesssim_A \|f\|_{H^{s-2}}+\|(\delta^{jk}-a^{jk})\partial_ju\|_{H^{s-1}}.    
\end{equation*}
Using simple paraproduct type estimates and a change of variables, it is straightforward to then estimate
\begin{equation}
\|u\|_{H^s}\lesssim_{A,\delta} \|f\|_{H^{s-2}}+\|\Gamma\|_{C^{s+\delta}}\|v\|_{H^1(\Omega)}.    
\end{equation}
Then, to conclude, one estimates using \Cref{crudermoser} with $r=0$ and $r=2$,
\begin{equation*}
\|v_i\|_{H^s}\lesssim_{A,\delta} \|u\|_{H^s}+\|\Gamma\|_{C^{s+\delta}}\|v\|_{H^1(\Omega)},\hspace{5mm}\|f\|_{H^{s-2}}\lesssim_A \|h\|_{H^{s-2}(\Omega)}+\|\Gamma\|_{C^{s+\delta}}\|v\|_{H^1(\Omega)} ,    
\end{equation*}
and then performs a simple paraproduct analysis to finally estimate
\begin{equation*}
\|h\|_{H^{s-2}(\Omega)}\lesssim_A \|g\|_{H^{s-2}(\Omega)}+\|\Gamma\|_{C^{s+\delta}}\|v\|_{H^1(\Omega)}+\|v\|_{H^{s-\epsilon}(\Omega)}.     
\end{equation*}
Combining the above and interpolating finishes the proof.
\end{proof}

\subsubsection{Harmonic extension bounds}
By taking $g=0$ in \Cref{direst}, we obtain the following corollary for the harmonic extension operator $\mathcal{H}$.
\begin{proposition}[Harmonic extension bounds]\label{Hbounds}Let $\Omega$ be a bounded domain with boundary $\Gamma\in\Lambda_*$. Then the following bound holds for the harmonic extension operator $\mathcal{H}$ when $s\geq 2$, $r\geq 0$, $\beta\in [0,\frac{1}{2})$ and $\alpha\in [0,1)$,
\begin{equation*}
 \|\mathcal{H}\psi\|_{H^s(\Omega)}\lesssim_A \|\psi\|_{H^{s-\frac{1}{2}}(\Gamma)}+\|\Gamma\|_{H^{s+r-\frac{1}{2}}}\sup_{j>0}2^{-j(\alpha-1+r)}\|\psi_j^1\|_{C^\alpha(\Gamma)}+\sup_{j>0}2^{j(s-1+\beta-\epsilon)}\|\psi_j^2\|_{H^{\frac{1}{2}-\beta}(\Gamma)}.
\end{equation*}
Here, $\psi=\psi_j^1+\psi_j^2$ is any sequence of partitions.
\end{proposition}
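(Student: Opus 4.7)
The plan is to deduce this proposition directly from Proposition~\ref{direst} by taking $g=0$ and then carefully transferring a partition of $\psi$ on $\Gamma$ into a partition of its harmonic extension on $\Omega$. More precisely, set $v:=\mathcal{H}\psi$, so that $\Delta v = 0$ in $\Omega$ and $v|_{\Gamma}=\psi$. Applying Proposition~\ref{direst} with $g=0$ yields
\[
\|v\|_{H^s(\Omega)}\lesssim_A \|\psi\|_{H^{s-\frac12}(\Gamma)} + \|\Gamma\|_{H^{s+r-\frac12}}\sup_{j>0}2^{-j(\alpha-1+r)}\|v_j^1\|_{C^\alpha(\Omega)} + \sup_{j>0}2^{j(s-1+\beta-\epsilon)}\|v_j^2\|_{H^{1-\beta}(\Omega)},
\]
for \emph{any} choice of partition $v=v_j^1+v_j^2$. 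The key idea is now to choose the interior partition by harmonically extending the given boundary partition: for each $j>0$, set
\[
v_j^1 := \mathcal{H}(\psi_j^1), \qquad v_j^2 := \mathcal{H}(\psi_j^2),
\]
so that $v_j^1+v_j^2 = \mathcal{H}(\psi_j^1+\psi_j^2) = \mathcal{H}\psi = v$ by linearity of the harmonic extension.

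It remains to control these two pieces by the corresponding boundary partition norms. For the H\"older term, since $\alpha\in[0,1)$, Corollary~\ref{Hboundlow} gives
\[
\|v_j^1\|_{C^\alpha(\Omega)} = \|\mathcal{H}\psi_j^1\|_{C^\alpha(\Omega)} \lesssim_A \|\psi_j^1\|_{C^\alpha(\Gamma)}.
\]
For the $L^2$-based term, since $\beta\in[0,\tfrac12)$, the exponent $1-\beta$ lies in $(\tfrac12,1]$, which is exactly the range in which the baseline harmonic extension bound \eqref{harmonicbase} applies with constant uniform in the collar. Hence
\[
\|v_j^2\|_{H^{1-\beta}(\Omega)} = \|\mathcal{H}\psi_j^2\|_{H^{1-\beta}(\Omega)} \lesssim_A \|\psi_j^2\|_{H^{\frac12-\beta}(\Gamma)}.
\]

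Substituting these two bounds into the output of Proposition~\ref{direst} yields the desired inequality. I do not expect any serious obstacle here: both the elliptic regularity input (Proposition~\ref{direst}) and the two low-regularity transfer estimates (Corollary~\ref{Hboundlow} and the Jerison--Kenig-type bound \eqref{harmonicbase}) are already in place, and the only mild subtlety is noting that the admissible ranges of $\alpha$ and $\beta$ in the statement are precisely those for which these auxiliary estimates hold with constants depending only on $A$ (equivalently, on the collar $\Lambda_*$). Linearity of $\mathcal{H}$ is what makes the partition step work cleanly and avoids any need for an independent construction of an interior partition.
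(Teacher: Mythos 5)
Your proof is correct, and in fact it is cleaner than the one in the paper. Both arguments reduce to Proposition~\ref{direst} and then control the resulting interior partition by Corollary~\ref{Hboundlow} for the $C^\alpha$ piece and the Jerison--Kenig bound \eqref{harmonicbase} for the $L^2$-based piece, so the core ingredients are identical. The difference is in the choice of partition: the paper takes $\phi_j^1=P_{<j}\mathcal{H}\psi_j^1$ and $\phi_j^2=P_{<j}\mathcal{H}\psi_j^2+P_{\geq j}\mathcal{H}\psi$, applies Proposition~\ref{direst} only with $\beta=0$ (measuring $\phi_j^2$ in $H^1(\Omega)$), and then must absorb the extra term $\|\mathcal{H}\psi\|_{H^{s-\epsilon}(\Omega)}$ coming from the $P_{\geq j}\mathcal{H}\psi$ piece by a final interpolation against \eqref{harmonicbase}. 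You instead take $v_j^1=\mathcal{H}\psi_j^1$, $v_j^2=\mathcal{H}\psi_j^2$ (using linearity of $\mathcal{H}$ so that $v_j^1+v_j^2=\mathcal{H}\psi$) and apply Proposition~\ref{direst} directly at the target $\alpha\in[0,1)$ and $\beta\in[0,\tfrac12)$; then $\|v_j^2\|_{H^{1-\beta}(\Omega)}\lesssim_A\|\psi_j^2\|_{H^{\frac12-\beta}(\Gamma)}$ follows immediately from \eqref{harmonicbase} since $1-\beta\in(\tfrac12,1]$, and no interpolation step is needed. This is a genuine simplification that avoids the Littlewood--Paley truncations entirely, and your observation that the ranges $\alpha\in[0,1)$ and $\beta\in[0,\tfrac12)$ in the statement are precisely the ranges where Corollary~\ref{Hboundlow} and \eqref{harmonicbase} are available is exactly the right consistency check.
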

\begin{proof}
First, \Cref{direst} yields the estimate
\begin{equation*}
\|\mathcal{H}\psi\|_{H^s(\Omega)}\lesssim_A\|\psi\|_{H^{s-\frac{1}{2}}(\Gamma)}+\|\Gamma\|_{H^{s+r-\frac{1}{2}}}\sup_{j>0}2^{-j(\alpha-1+r)}\|\phi_j^1\|_{C^\alpha(\Omega)}+\sup_{j>0}2^{j(s-1-\epsilon)}\|\phi_j^2\|_{H^{1}(\Omega)},
\end{equation*}
where $\phi_j^1=P_{<j}\mathcal{H}\psi_j^1$ and $\phi_j^2=P_{<j}\mathcal{H}\psi_{j}^2+P_{\geq j}\mathcal{H}\psi$. From the $C^{\alpha}$ bounds for $\mathcal{H}$ in \Cref{Hboundlow} (which hold only for $\alpha\in [0,1)$), we have $\|\phi_j^1\|_{C^{\alpha}(\Omega)}\lesssim\|\psi_j^1\|_{C^{\alpha}(\Gamma)}$. On the other hand, from \eqref{harmonicbase}, we obtain
\begin{equation*}
\sup_{j>0}2^{j(s-1-\epsilon)}\|\phi_j^2\|_{H^1(\Omega)}\lesssim_A \|\mathcal{H}\psi\|_{H^{s-\epsilon}(\Omega)}+\sup_{j>0}2^{j(s-1+\beta-\epsilon)}\|\psi_j^2\|_{H^{\frac{1}{2}-\beta}(\Gamma)}.
\end{equation*}
The proof then concludes by interpolation and again (\ref{harmonicbase}).
\end{proof}
\subsubsection{Curvature estimate}
With the above local coordinates, we can control  the surface regularity in terms of the mean curvature. The following estimate is a slight refinement of Lemma 4.7 as well as Propositions A.2 and A.3 in \cite{sz}.
\begin{proposition}[Curvature estimate]\label{curvaturebound} Let $s\geq 2$. The following estimates for $\|\Gamma\|_{H^s}$ and the normal $n_\Gamma$ hold:
\begin{equation*}
\|\Gamma\|_{H^s}+\|n_\Gamma\|_{H^{s-1}(\Gamma)}\lesssim_A 1+\|\kappa\|_{H^{s-2}(\Gamma)}.
\end{equation*}
\end{proposition}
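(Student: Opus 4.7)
The plan is to localize via the partition of unity $(\gamma_{*i})$ already constructed for $\Gamma$ and then to exploit the fact that the mean curvature operator, written in local graph coordinates, is a small perturbation of the Laplacian on $\mathbb{R}^{d-1}$. Specifically, in each patch $\tilde R_i(2 r_i)$ the surface is represented as $z_d = f_i(\tilde z)$ with $\|\nabla f_i\|_{L^\infty} \lesssim \delta$, and the mean curvature reads
\begin{equation*}
\sqrt{1+|\nabla f_i|^2}\,\kappa_i \;=\; \Delta f_i - \frac{\partial_j f_i\,\partial_k f_i\,\partial_j\partial_k f_i}{1+|\nabla f_i|^2},
\end{equation*}
which, upon multiplying by the cutoff $\phi_i$ associated to $\gamma_{*i}$ and commuting derivatives past $\phi_i$, yields an equation of the form $\Delta(\phi_i f_i) = N_i(f_i) + \text{lower order}$, where $N_i$ is the quasilinear nonlinearity above localized by $\phi_i$ plus $\sqrt{1+|\nabla f_i|^2}\,\phi_i\kappa_i$.

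The main step is a bootstrap on $s$, starting at $s=2$ using only the $C^{1,\epsilon_0}$ collar bound on $\Gamma$. The key point is that for any $s \geq 2$, applying the standard elliptic estimate for $\Delta$ on $\mathbb{R}^{d-1}$ gives
\begin{equation*}
\|\phi_i f_i\|_{H^s(\mathbb{R}^{d-1})} \lesssim_A 1+ \|N_i(f_i)\|_{H^{s-2}(\mathbb{R}^{d-1})},
\end{equation*}
and the quasilinear term can be rewritten, via the balanced product estimate \Cref{productest} (with $\alpha_1=\alpha_2=\epsilon$ and taking one factor of $\nabla f_i$ in $L^\infty$), so that the top order $\partial^2 f_i$ piece comes multiplied by $|\nabla f_i|\cdot|\nabla f_i|_{L^\infty} \lesssim \delta$. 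Choosing $\delta$ sufficiently small (which is guaranteed by the choice of collar $\Lambda_*$) allows us to absorb the top order contribution of $N_i$ back into the left hand side. The remaining term $\sqrt{1+|\nabla f_i|^2}\,\phi_i\kappa_i$ is controlled by $\|\kappa\|_{H^{s-2}(\Gamma)} + \text{lower order in } f_i$ via the balanced Moser estimate \Cref{Moservariant} applied to the composition $F(y)=\sqrt{1+|y|^2}$ and the algebra/paraproduct bound \Cref{boundaryest}. Summing over $i$ and using the definition of $\|\Gamma\|_{H^s}$ in terms of the local graph norms $\|\phi_i f_i\|_{H^s}$ closes the estimate for $\Gamma$.

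For the normal $n_\Gamma$, we use that in the same local coordinates $n_i = (1+|\nabla f_i|^2)^{-1/2}(-\nabla f_i,1)$, so $n_i$ is a smooth nonlinear function of $\nabla f_i$. The bound $\|n_\Gamma\|_{H^{s-1}(\Gamma)} \lesssim_A 1+\|\Gamma\|_{H^s}$ then follows from the Moser estimate together with the localized algebra estimate \Cref{boundaryest}, using again the pointwise collar control of $\nabla f_i$.

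The main obstacle is the base case $s=2$, where the coefficient $|\nabla f_i|^2/(1+|\nabla f_i|^2)$ in the quasilinear term lies only in $C^{\epsilon_0}$, so no ``free'' regularity is available to absorb it; one genuinely needs the smallness of $\delta$ in the collar $\Lambda_*$ (rather than, say, interpolation) to make the absorption work at this endpoint. Once the $s=2$ bound is obtained, the higher regularity cases proceed by induction on $s$: each step uses the previously obtained lower regularity control of $f_i$ (in $H^{s-1+\epsilon}$) together with the balanced product estimates of \Cref{BEE} to estimate $N_i(f_i)$ in $H^{s-2}$, and the ellipticity of $\Delta$ on $\mathbb{R}^{d-1}$ closes the loop.
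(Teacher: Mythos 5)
Your proposal is correct and follows essentially the same route as the paper: localize to graph coordinates, derive an elliptic equation for the cut-off graph function from the divergence form of the mean curvature, absorb the quasilinear perturbation using the $L^\infty$-smallness of $\nabla f_i$ guaranteed by the collar, and handle the nonlinear coefficients and the normal (as a smooth function of $\nabla f_i$) via Moser and product estimates. The paper packages the key step as a single paradifferential estimate valid for all $s\geq 2$ at once (modeled on the analysis in \Cref{direst}), producing a lower-order error $\|f_i\|_{H^{s-\epsilon}}$ that is then absorbed, rather than as an explicit bootstrap in $s$ from the base case $s=2$; this is a presentational difference only, and the mechanism you identify (smallness of the top-order coefficient in $L^\infty$) is exactly what drives both arguments.
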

\begin{proof} We only sketch the details as the proof is similar to \cite{sz}. As in their proof, let $\{f_i\in H^s(\tilde{R}_i(2r_i))\}$ be the local coordinate functions associated to $\Gamma$ defined earlier. Let $\gamma:[0,\infty)\to [0,1]$ be a smooth cutoff function supported on $[0,\frac{3}{2}]$ with $\gamma=1$ on $[0,\frac{5}{4}]$. On each $\tilde{R}_i(2r_i)$, we let
\begin{equation*}
\gamma_i(\tilde{z})=\gamma(\frac{|\tilde{z}|}{r_i}),\hspace{10mm}\kappa_i(\tilde{z})=\gamma_i(\tilde{z})\kappa(\tilde{z},f_i(\tilde{z})),\hspace{10mm}g_i=\gamma_if_i.    
\end{equation*}
Using the mean curvature formula
\begin{equation*}
\kappa(\tilde{z},f(\tilde{z}))=-\partial_j(\frac{\partial_jf}{\sqrt{1+|\nabla f|^2}})=-\frac{\Delta f}{(1+|\nabla f|^2)^{\frac{1}{2}}}+\frac{\partial_jf\partial_kf\partial_{jk}f}{(1+|\nabla f|^2)^{\frac{3}{2}}} ,  
\end{equation*}
we obtain the following elliptic equation for $g_i$:
\begin{equation*}
\begin{split}
-\Delta g_i&=-\frac{\partial_{j_1}f_i\partial_{j_2}f_i}{(1+|\nabla f_i|^2)}\partial_{j_1j_2}g_i+(1+|\nabla f_i|^2)^{\frac{1}{2}}\kappa_i-\Delta \gamma_if_i-2D\gamma_i\cdot Df_i 
\\
&\quad \, +\frac{\partial_{j_1}f_i\partial_{j_2}f_i}{1+|\nabla f_i|^2}(\partial_{j_1j_2}\gamma_if_i+\partial_{j_1}\gamma_i\partial_{j_2}f_i+\partial_{j_2}\gamma_i\partial_{j_1}f_i).
\end{split}
\end{equation*}
As $\|Df_i\|_{L^{\infty}}\ll 1$ the first term on the right-hand side can be viewed perturbatively. A paradifferential type analysis similar to the estimate for $u$ in \Cref{direst} together with standard Moser and product type estimates then gives
\begin{equation*}
\|g_i\|_{H^s}\lesssim_A \delta\|g_i\|_{H^s}+\|f_i\|_{H^{s-\epsilon}}+\|\kappa\|_{H^{s-2}(\Gamma)}
\end{equation*}
for some $\delta>0$ small enough (depending on $\Lambda_*$). We then obtain
\begin{equation*}
\|g_i\|_{H^s}\lesssim_A \|f_i\|_{H^{s-\epsilon}}+\|\kappa\|_{H^{s-2}(\Gamma)},  
\end{equation*}
and so, we obtain,
\begin{equation*}
\sup_{i}\|f_i\|_{H^s}\lesssim_A 1+\|\kappa\|_{H^{s-2}(\Gamma)},   
\end{equation*}
which completes the proof.
\end{proof}
\subsubsection{Estimates for the Dirichlet-to-Neumann operator} Here, we use the above estimates to prove refined bounds for the Dirichlet-to-Neumann operator which is defined by $\mathcal{N}:=n_\Gamma\cdot (\nabla\mathcal{H})_{|\Gamma}$. We begin with the following baseline ellipticity estimate.
\begin{lemma}\label{baselineDN}
    The Dirichlet-to-Neumann map on $\Gamma$ satisfies 
    \begin{equation*}
        \|\psi\|_{H^1(\Gamma)}\lesssim_A \|\mathcal{N}\psi\|_{L^2(\Gamma)}+\|\psi\|_{L^2(\Gamma)}.
    \end{equation*}
\end{lemma}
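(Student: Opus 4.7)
Let $u = \mathcal{H}\psi$ be the harmonic extension. Since $u|_\Gamma = \psi$, the tangential component of $\nabla u$ on $\Gamma$ equals $\nabla_T \psi$ (the surface gradient of $\psi$), while the normal component is $\mathcal{N}\psi$. Thus on $\Gamma$ we have the pointwise identity
\begin{equation*}
|\nabla u|^2 = |\nabla_T \psi|^2 + |\mathcal{N}\psi|^2.
\end{equation*}
Since $\|\psi\|_{H^1(\Gamma)}^2 \approx \|\nabla_T \psi\|_{L^2(\Gamma)}^2 + \|\psi\|_{L^2(\Gamma)}^2$, it suffices to bound $\|\nabla_T \psi\|_{L^2(\Gamma)}$ by the right-hand side of the claimed inequality.

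\textbf{Rellich identity.} Choose a smooth vector field $X$ on $\mathbb{R}^d$ which is uniformly transverse to every $\Gamma \in \Lambda_*$, i.e.,~$X \cdot n_\Gamma \geq c > 0$ on $\Gamma$, with $C^1$ bounds depending only on $\Gamma_*$ and $\delta$. (Such an $X$ can be built from the transversal vector field $\nu$ used in the collar construction.) For $u$ harmonic, integration by parts gives the classical Rellich identity
\begin{equation*}
\tfrac{1}{2}\int_\Gamma (X \cdot n_\Gamma) |\nabla u|^2 \, dS = \int_\Gamma (X \cdot \nabla u)(n_\Gamma \cdot \nabla u)\, dS - \int_\Omega \partial_j X^k \, \partial_k u \, \partial_j u \, dx + \tfrac{1}{2}\int_\Omega (\nabla \cdot X)|\nabla u|^2 \, dx.
\end{equation*}
Decomposing $X = X_T + (X \cdot n_\Gamma) n_\Gamma$ on $\Gamma$ and using the splitting of $\nabla u$ above yields $X \cdot \nabla u = X_T \cdot \nabla_T \psi + (X\cdot n_\Gamma) \mathcal{N}\psi$, so the boundary term on the right becomes $\int_\Gamma \mathcal{N}\psi \bigl( X_T \cdot \nabla_T \psi + (X\cdot n_\Gamma)\mathcal{N}\psi\bigr)\, dS$. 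Substituting and using transversality $X \cdot n_\Gamma \geq c$, the $|\nabla_T\psi|^2$ terms on the two sides combine to give
\begin{equation*}
c\, \|\nabla_T \psi\|_{L^2(\Gamma)}^2 \lesssim_A \|\mathcal{N}\psi\|_{L^2(\Gamma)} \|\nabla_T \psi\|_{L^2(\Gamma)} + \|\mathcal{N}\psi\|_{L^2(\Gamma)}^2 + \|\nabla u\|_{L^2(\Omega)}^2,
\end{equation*}
and then Young's inequality absorbs the first term to yield $\|\nabla_T \psi\|_{L^2(\Gamma)}^2 \lesssim_A \|\mathcal{N}\psi\|_{L^2(\Gamma)}^2 + \|\nabla u\|_{L^2(\Omega)}^2$.

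\textbf{Bulk estimate and conclusion.} A standard Green's identity application to the harmonic function $u$ gives
\begin{equation*}
\|\nabla u\|_{L^2(\Omega)}^2 = \int_\Gamma \psi \, \mathcal{N}\psi \, dS \leq \|\psi\|_{L^2(\Gamma)}\|\mathcal{N}\psi\|_{L^2(\Gamma)} \leq \tfrac{1}{2}\bigl(\|\psi\|_{L^2(\Gamma)}^2 + \|\mathcal{N}\psi\|_{L^2(\Gamma)}^2\bigr).
\end{equation*}
Combining with the previous bound and adding $\|\psi\|_{L^2(\Gamma)}^2$ gives the claim. The main subtlety, and the only place where the collar comes in, is ensuring the Rellich identity is rigorous at the regularity $\Gamma \in \Lambda_*$ (which is only $C^{1,\epsilon_0}$). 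This follows from the nontangential maximal function $L^2$ theory of Verchota/Jerison--Kenig already invoked in Section~\ref{DEAU}, which guarantees $\nabla u|_\Gamma \in L^2(\Gamma)$ whenever $\psi \in H^1(\Gamma)$, so the boundary integrals make classical sense and the integration by parts is justified by a standard approximation argument (e.g.,~approximating $\Omega$ by smooth subdomains). Uniformity of the constants in $\Lambda_*$ follows since $X$ can be chosen once and for all depending only on $\Gamma_*$ and $\delta$.
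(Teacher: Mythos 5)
Your proof is correct and uses the same core technique as the paper: a Rellich-type integration by parts with a fixed transversal vector field $X$, uniform over the collar. The one genuine difference is how you bound the bulk Dirichlet energy $\|\nabla u\|_{L^2(\Omega)}^2$. You invoke Green's identity, $\|\nabla u\|_{L^2(\Omega)}^2 = \int_\Gamma \psi\, \mathcal{N}\psi\, dS \leq \|\psi\|_{L^2(\Gamma)}\|\mathcal{N}\psi\|_{L^2(\Gamma)}$, which lands directly on the right-hand side of the claimed estimate. The paper instead uses the harmonic extension bound $\|\mathcal{H}\psi\|_{H^1(\Omega)} \lesssim_A \|\psi\|_{H^{1/2}(\Gamma)}$ followed by the interpolation $\|\psi\|_{H^{1/2}(\Gamma)}^2 \lesssim \|\psi\|_{L^2(\Gamma)}\|\psi\|_{H^1(\Gamma)}$, which produces the $H^1$ norm of $\psi$ on the right and therefore requires one more Cauchy--Schwarz absorption at the end. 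Your Green's identity route is a bit more elementary and avoids that extra absorption step, at the cost of writing out the Rellich identity in full and decomposing $X\cdot\nabla u$ into tangential and normal pieces explicitly (the paper keeps $|\nabla u|^2$ together and converts $\|\nabla v\|_{L^2(\Gamma)}$ to $\|\psi\|_{H^1(\Gamma)}$ in a separate step via the partition of unity). Both are valid; the content is the same.
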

\begin{proof}
   Let $v=\mathcal{H}\psi$. We begin by proving the  standard estimate 
\begin{equation}\label{boundarygrad}
\int_{\Gamma}|\nabla v|^2dS\lesssim_A \|\mathcal{N}\psi\|_{L^2(\Gamma)}^2+\|\psi\|_{L^2(\Gamma)}\|\psi\|_{H^1(\Gamma)}. 
\end{equation}
Let $X$ be a smooth vector field on $\mathbb{R}^d$  which is uniformly transversal to all hypersurfaces in $\Lambda_*$. That is, $X\cdot n_\Gamma\gtrsim_A 1$ and $|DX|\lesssim_A 1$. Integration by parts then gives
\begin{equation*}
\begin{split}
\int_{\Gamma}|\nabla v|^2\,dS&\lesssim_A \int_{\Gamma}n_\Gamma\cdot X|\nabla v|^2\, dS
\\
&\lesssim_A \|\nabla v\|_{L^2(\Omega)}^2+2\int_{\Omega}X_j\partial_j\nabla v\cdot\nabla v\, dx
\\
&\lesssim_A\|\nabla v\|_{L^2(\Omega)}^2+2\int_{\Gamma}(X\cdot\nabla v)\mathcal{N}\psi \, dS.
\end{split}
\end{equation*}
For the first term, we have from the $H^{\frac{1}{2}}\to H^1$ harmonic extension bound and straightforward interpolation, 
\begin{equation*}
\|v\|_{H^1(\Omega)}^2\lesssim_A \|\psi\|_{H^{\frac{1}{2}}(\Gamma)}^2\lesssim_A \|\psi\|_{L^2(\Gamma)}\|\psi\|_{H^1(\Gamma)}  .  
\end{equation*}
 Combining this with the Cauchy Schwarz inequality for the second term, we obtain (\ref{boundarygrad}). Using the partition of unity $(\gamma_{*i})_i$, it straightforward to then estimate 
\begin{equation*}
\|\psi\|_{H^1(\Gamma)}\lesssim_A \|\psi\|_{L^2(\Gamma)}+\|\nabla^{\top}v\|_{L^2(\Gamma)}\lesssim_A \|\psi\|_{L^2(\Gamma)}+\|\nabla v\|_{L^2(\Gamma)},
\end{equation*}   
where $\nabla^{\top}$ denotes the projection of $\nabla$ onto the tangent space of $\Gamma$. Combining this with (\ref{boundarygrad}) and Cauchy Schwarz concludes the proof.
\end{proof}
We will also need the reverse inequality.
\begin{lemma}\label{baselineDN2}
The Dirichlet-to-Neumann map on $\Gamma$ satisfies 
    \begin{equation*}
        \|\mathcal{N}\psi\|_{L^2(\Gamma)}\lesssim_A \|\psi\|_{H^1(\Gamma)}.
    \end{equation*}  
\end{lemma}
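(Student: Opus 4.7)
The plan is to use Rellich's identity applied to $v = \mathcal{H}\psi$, coupled with the decomposition of $\nabla v$ on $\Gamma$ into its normal and tangential parts. Since $v|_\Gamma = \psi$, the tangential gradient of $v$ on $\Gamma$ coincides with $\nabla^\top \psi$, and the normal component is $\mathcal{N}\psi$, so we have the pointwise identity
\begin{equation*}
|\nabla v|^2\big|_\Gamma = (\mathcal{N}\psi)^2 + |\nabla^\top \psi|^2.
\end{equation*}
Thus the goal reduces to bounding $\|\nabla v\|_{L^2(\Gamma)}$ in terms of $\|\psi\|_{H^1(\Gamma)}$. The regularity of $v$ needed to make sense of this trace is supplied by Proposition~\ref{desiredelliptic}: for $s$ slightly above $1/2$ we obtain $v \in H^{3/2+\epsilon'}(\Omega)$ once $\psi \in H^{1+\epsilon'}(\Gamma)$, and we will ultimately conclude by a density argument from this subspace.

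Next I would select a smooth vector field $X$ on $\mathbb{R}^d$, uniformly bounded in $C^1$ on $\Lambda_*$ with $X \cdot n_\Gamma \equiv -1$ along $\Gamma$ (such a field exists by a partition-of-unity construction inside the tight collar, since $\Gamma \in \Lambda_*$ has nearly constant normal relative to $\Gamma_*$). For harmonic $v$, Rellich's identity (a straightforward application of the divergence theorem to $\nabla \cdot (X|\nabla v|^2 - 2 (X\cdot\nabla v)\nabla v)$) gives
\begin{equation*}
\int_\Gamma (X\cdot n_\Gamma)|\nabla v|^2\, dS = 2\int_\Gamma (X\cdot \nabla v)\mathcal{N}\psi\, dS + \int_\Omega \bigl[2(DX)(\nabla v,\nabla v) - (\nabla\cdot X)|\nabla v|^2\bigr]\, dx.
\end{equation*}
Decomposing $X\cdot \nabla v = -\mathcal{N}\psi + X^\top \cdot \nabla^\top \psi$ on $\Gamma$ and using $X\cdot n_\Gamma = -1$, the left-hand side is $-\|\nabla v\|_{L^2(\Gamma)}^2$, so combining with the pointwise identity above we arrive at
\begin{equation*}
\|\mathcal{N}\psi\|_{L^2(\Gamma)}^2 + \|\nabla^\top \psi\|_{L^2(\Gamma)}^2 = 2\|\mathcal{N}\psi\|_{L^2(\Gamma)}^2 - 2\int_\Gamma (X^\top \cdot \nabla^\top \psi)\mathcal{N}\psi\, dS + O_A\!\bigl(\|\nabla v\|_{L^2(\Omega)}^2\bigr).
\end{equation*}
Rearranging yields $\|\mathcal{N}\psi\|_{L^2}^2 = \|\nabla^\top \psi\|_{L^2}^2 + 2\int_\Gamma (X^\top\cdot \nabla^\top \psi)\mathcal{N}\psi\, dS + O_A(\|\nabla v\|_{L^2(\Omega)}^2)$, and a Young inequality absorbs the cross term into $\|\mathcal{N}\psi\|_{L^2}^2$, leaving
\begin{equation*}
\|\mathcal{N}\psi\|_{L^2(\Gamma)}^2 \lesssim_A \|\nabla^\top \psi\|_{L^2(\Gamma)}^2 + \|\nabla v\|_{L^2(\Omega)}^2 \lesssim_A \|\psi\|_{H^1(\Gamma)}^2,
\end{equation*}
where the final step uses the basic estimate \eqref{harmonicbase} with $s=1$.

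The main obstacle is justifying the Rellich identity when $v$ has only the regularity supplied by $\psi \in H^1(\Gamma)$. For smooth $\psi$ the computation is immediate, so the real work is an approximation argument: approximate $\psi$ by smooth $\psi_n \to \psi$ in $H^1(\Gamma)$, apply the identity to $v_n = \mathcal{H}\psi_n$, and pass to the limit using the quantitative elliptic bound of Proposition~\ref{desiredelliptic} to control $v_n \to v$ in $H^{3/2+\epsilon'}(\Omega)$, which is sufficient to pass to the limit in the boundary trace of $\nabla v_n$ via the ordinary trace theorem on the flattened half-space patches (after localizing with the partition of unity $(\gamma_{*i})$). The cross term $\int_\Gamma (X^\top\cdot \nabla^\top\psi)\mathcal{N}\psi\, dS$ makes sense as an $L^2$--$L^2$ pairing once the limit $\mathcal{N}\psi \in L^2(\Gamma)$ is established a priori on the smooth approximants with uniform bound, so the standard weak-limit argument closes the estimate.
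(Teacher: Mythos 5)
Your proof is correct and follows essentially the same route as the paper: integration by parts (Rellich's identity) against a transversal vector field $X$, the pointwise boundary decomposition $|\nabla v|^2 = (\mathcal{N}\psi)^2 + |\nabla^\top\psi|^2$ for $v=\mathcal{H}\psi$, and Cauchy--Schwarz to absorb the cross term $\int_\Gamma (X^\top\cdot\nabla^\top\psi)\,\mathcal{N}\psi\,dS$. One trivial slip: the signs on the interior integrand in your stated Rellich identity should be $(\nabla\cdot X)|\nabla v|^2 - 2(DX)(\nabla v,\nabla v)$, not the negative, but since this term is only used in absolute value and bounded by $\|\nabla v\|_{L^2(\Omega)}^2$, the estimate is unaffected.
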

\begin{proof}
Using the same notation as in the above lemma and essentially the same argument, we have the estimate 
\begin{equation*}
\begin{split}
&\int_{\Gamma}(X\cdot n_{\Gamma})|\nabla^{\top} \psi|^2\, dS+\int_{\Gamma}(X\cdot n_{\Gamma})|\mathcal{N}\psi|^2\, dS=\int_{\Gamma}(X\cdot n_{\Gamma})|\nabla v|^2\, dS
\\
&\geq -C \|\psi\|_{H^1(\Gamma)}^2+2\int_{\Gamma} (X\cdot\nabla v)\mathcal{N}\psi \, dS
\end{split}
\end{equation*}
for some constant $C$ depending only on $A$. Writing $X^{\top}:=X-(X\cdot n_{\Gamma})n_{\Gamma}$, we obtain
\begin{equation*}
\begin{split}
\int_{\Gamma}(X\cdot n_{\Gamma})|\mathcal{N}\psi|^2\, dS&\leq C\|\psi\|_{H^1(\Gamma)}^2+\int_{\Gamma}(X\cdot n_{\Gamma})|\nabla^{\top}\psi|^2\, dS-2\int_{\Gamma}X^{\top}\cdot\nabla v\mathcal{N}\psi\, dS  ,
\end{split}
\end{equation*}
which by Cauchy Schwarz completes the proof.
\end{proof}
Next, we prove higher regularity versions of these bounds. The first bound below amounts essentially to elliptic regularity estimates for the Neumann boundary value problem.
\begin{proposition}[Ellipticity for the Dirichlet-to-Neumann operator I]\label{balneum} Let $s\geq \frac{3}{2}$, $\alpha\in [0,1)$ and $\beta\in [0,\frac{1}{2})$. Then we have
\begin{equation}\label{Neumanaestbal}
\|\psi\|_{H^{s}(\Gamma)}\lesssim_A \|\psi\|_{L^2(\Gamma)}+\|\mathcal{N}\psi\|_{H^{s-1}(\Gamma)}+\|\Gamma\|_{H^{s+r}}\sup_{j>0}2^{-j(r+\alpha-1)}\|\psi_j^1\|_{C^{\alpha}(\Gamma)}+\sup_{j>0}2^{j(s+\beta-\frac{1}{2}-\epsilon)}\|\psi_j^2\|_{H^{\frac{1}{2}-\beta}(\Gamma)}.
\end{equation}
\end{proposition}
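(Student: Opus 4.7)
The plan is to localize via the partition of unity, flatten each chart using $H_i$, and then exploit the fact that the Dirichlet-to-Neumann operator for the constant-coefficient half-space Laplacian is explicitly the elliptic first-order operator $(-\Delta_{\tilde z})^{1/2}$. Setting $v := \mathcal{H}\psi$ and $u := (\gamma_{*i} v) \circ H_i$, the function $u$ solves a variable-coefficient elliptic equation on $\mathbb{R}^d_+$ (of exactly the type derived in the proof of Proposition \ref{direst}) with Dirichlet trace $\Psi_i := (\gamma_{*i}\psi)\circ H_i(\tilde z, 0)$. A chain-rule computation at $\{z_d = 0\}$ expresses
\[
\partial_{z_d} u\big|_{z_d=0} = c_i\, (\mathcal{N}\psi)\circ H_i(\tilde z, 0) + Y_i \cdot \nabla_{\tilde z}\Psi_i,
\]
where $c_i, Y_i$ are smooth coefficients coming from the flattening, with $c_i \gtrsim_A 1$ by uniform transversality in the collar.

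Decomposing $u = u_{\mathrm{harm}} + \widetilde u$, where $u_{\mathrm{harm}}$ is the constant-coefficient half-space harmonic extension of $\Psi_i$, the fundamental identity $\partial_{z_d} u_{\mathrm{harm}}|_{z_d=0} = -(-\Delta_{\tilde z})^{1/2}\Psi_i$ combined with the above turns the problem into
\[
(-\Delta_{\tilde z})^{1/2}\Psi_i = -c_i\, (\mathcal{N}\psi)\circ H_i(\tilde z, 0) - Y_i \cdot \nabla_{\tilde z}\Psi_i + \partial_{z_d}\widetilde u\big|_{z_d=0}.
\]
Ellipticity of $(-\Delta_{\tilde z})^{1/2}$ on $\mathbb{R}^{d-1}$ together with the half-space trace theorem applied to $\partial_{z_d}\widetilde u$ then yields
\[
\|\Psi_i\|_{H^s(\mathbb{R}^{d-1})} \lesssim_A \|\mathcal{N}\psi\|_{H^{s-1}(\Gamma)} + \|Y_i\cdot\nabla_{\tilde z}\Psi_i\|_{H^{s-1}(\mathbb{R}^{d-1})} + \|\widetilde u\|_{H^{s+\frac{1}{2}}(\mathbb{R}^d_+)} + \|\psi\|_{L^2(\Gamma)},
\]
where the tangential-derivative term is rewritten via a product expansion and handled by Proposition \ref{boundaryest}; the product estimate produces exactly the pointwise and low-regularity Sobolev error contributions claimed, with $\|\Gamma\|_{H^{s+r}}$ appearing only as a multiplier of the pointwise piece $\sup_j 2^{-j(r+\alpha-1)}\|\psi_j^1\|_{C^\alpha(\Gamma)}$.

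The main obstacle is bounding $\|\widetilde u\|_{H^{s+1/2}(\mathbb{R}^d_+)}$ with the correct balance, since naive estimates pair $\|\Gamma\|_{H^{s+r}}$ with Sobolev rather than pointwise norms of $\psi$. Here $\widetilde u$ solves a half-space Dirichlet problem with zero boundary data and forcing equal to $-\Delta u$, which is the same variable-coefficient perturbation that already appeared in the proof of Proposition \ref{direst}. The key point is that a partition $\psi = \psi_j^1 + \psi_j^2$ lifts through the harmonic extension to a partition of $v$, with the $\psi_j^1$ piece controlled in $C^\alpha(\Omega)$ by Corollary \ref{Hboundlow} and the $\psi_j^2$ piece controlled in $H^1(\Omega)$ by the baseline bound \eqref{harmonicbase}. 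Feeding this partition of $v$ into the half-space analogue of Proposition \ref{direst} applied to $\widetilde u$ produces exactly the right balance in the error terms. For $s \in [\tfrac{3}{2}, 2)$, the argument is either run with $s = 2$ and interpolated against the base case of Lemma \ref{baselineDN}, or adapted directly using the lower-order elliptic bounds of Proposition \ref{desiredelliptic}; for $s \geq 2$, Proposition \ref{direst} applies directly and closes the estimate.
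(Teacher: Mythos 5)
Your proposal is correct and follows essentially the same route as the paper's own sketch: flatten via $H_i$, reduce to elliptic regularity on the half-space, and feed the lifted partition $v_j^1 = \mathcal{H}\psi_j^1$, $v_j^2 = \mathcal{H}\psi_j^2$ (with $C^\alpha$ control from Corollary~\ref{Hboundlow} and $H^1$ control from \eqref{harmonicbase}) into the balanced Dirichlet machinery of Proposition~\ref{direst}. The only cosmetic difference is that you phrase the half-space step via the explicit symbol $(-\Delta_{\tilde z})^{1/2}$ of the flat Dirichlet-to-Neumann operator and invert it directly on the boundary, while the paper reduces via the balanced trace estimate to controlling $\|v\|_{H^{s+1/2}(\Omega)}$ and invokes Neumann regularity for the interior extension; these are two formulations of the same ellipticity and yield the same error structure.
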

\begin{proof}
The proof of this is very similar to the Dirichlet problem, so we only sketch the details. Indeed, write $v:=\mathcal{H}\psi$. By \Cref{baltrace}, \eqref{harmonicbase} and the $C^{\alpha}\to C^{\alpha}$ bound for $\mathcal{H}$, it suffices to control $v$ in $H^{s+\frac{1}{2}}(\Omega)$ by the right-hand side of (\ref{Neumanaestbal}). As with the Dirichlet problem, the procedure  is to write the Laplace equation for $u=v_i\circ H_i$ and  to reduce matters to the standard estimate for the Neumann problem on the half-space (which is available since $s>1$). The only added technicality is that there are extra source terms coming from the Neumann data (in contrast to the source terms which do not appear for the Dirichlet problem with zero boundary data). By using \Cref{baltrace} and an analysis similar to \Cref{direst}, it is straightforward to obtain the preliminary estimate
\begin{equation*}
\|\psi\|_{H^{s}(\Gamma)}\lesssim_A \|v\|_{H^1(\Omega)}+\|\mathcal{N}\psi\|_{H^{s-1}(\Gamma)}+\|\Gamma\|_{H^{s+r}}\sup_{j>0}2^{-j(r+\alpha-1)}\|v_j^1\|_{C^{\alpha}(\Omega)}+\sup_{j>0}2^{j(s+\beta-\frac{1}{2}-\epsilon)}\|v_j^2\|_{H^{1-\beta}(\Omega)},
\end{equation*}
where $v:=v_j^1+v_j^2$ is any partition of $v$. The first term $\|v\|_{H^{1}(\Omega)}$ is harmless and can be controlled by $\|\psi\|_{L^2(\Gamma)}+\|\mathcal{N}\psi\|_{L^2(\Gamma)}$ using the $H^{\frac{1}{2}}\to H^1$ bound for $\mathcal{H}$ and \Cref{baselineDN}. We then take $v_j^1=\mathcal{H}\psi_j^1$ and $v_j^2=\mathcal{H}\psi_j^2$ and use again the $C^{\alpha}\to C^{\alpha}$ bounds for $\mathcal{H}$ and (\ref{harmonicbase}) to conclude.
\end{proof}
We will also need the following iterated version of the ellipticity bound above.
\begin{proposition}[Ellipticity for the Dirichlet-to-Neumann operator II]\label{ellipticity} Let $s\geq \frac{1}{2}$ and let $k\geq 1$ be an integer. Then using the same notation as the previous proposition, we have the bound
\begin{equation*}
\|\psi\|_{H^{s+k}(\Gamma)}\lesssim_A \|\psi\|_{L^2(\Gamma)}+\|\mathcal{N}^k\psi\|_{H^{s}(\Gamma)}+\|\Gamma\|_{H^{s+k+r}}\sup_{j>0}2^{-j(\alpha-1+r)}\|\psi_j^1\|_{C^{\alpha}(\Gamma)}+\sup_{j>0}2^{j(s+k-\frac{1}{2}+\beta-\epsilon)}\|\psi_j^2\|_{H^{\frac{1}{2}-\beta}(\Gamma)}.
\end{equation*}
\end{proposition}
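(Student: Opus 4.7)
The plan is to induct on $k$. The base case $k=1$ is Proposition \ref{balneum} applied with the parameter $s$ replaced by $s+1$; the hypothesis $s+1\ge 3/2$ needed there is automatic from $s\ge 1/2$. For the inductive step, suppose the stated estimate holds for $k-1$ (for all admissible $s$). Applied directly to $\psi$ with parameter $s+k\ge 3/2$, Proposition \ref{balneum} yields
\begin{equation*}
\|\psi\|_{H^{s+k}(\Gamma)}\lesssim_A \|\psi\|_{L^2(\Gamma)}+\|\mathcal{N}\psi\|_{H^{s+k-1}(\Gamma)}+\|\Gamma\|_{H^{s+k+r}}\sup_{j>0}2^{-j(r+\alpha-1)}\|\psi_j^1\|_{C^{\alpha}(\Gamma)}+\sup_{j>0}2^{j(s+k+\beta-\frac12-\epsilon)}\|\psi_j^2\|_{H^{\frac12-\beta}(\Gamma)},
\end{equation*}
so that the two frequency envelope terms already coincide with what appears in the claimed estimate. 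What remains is to control $\|\mathcal{N}\psi\|_{H^{s+k-1}(\Gamma)}$ by $\|\mathcal{N}^k\psi\|_{H^s(\Gamma)}$ plus acceptable errors.

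Next, apply the inductive hypothesis to $\mathcal{N}\psi$ with the same base parameter $s$ and with iteration order $k-1$, using the shifted index $r\rightsquigarrow r+1$ (so that $\|\Gamma\|_{H^{s+(k-1)+(r+1)}}=\|\Gamma\|_{H^{s+k+r}}$) and a partition of $\mathcal{N}\psi$ to be selected below. This gives
\begin{equation*}
\|\mathcal{N}\psi\|_{H^{s+k-1}(\Gamma)}\lesssim_A \|\mathcal{N}\psi\|_{L^2(\Gamma)}+\|\mathcal{N}^k\psi\|_{H^s(\Gamma)}+(\text{envelope terms for }\mathcal{N}\psi).
\end{equation*}
The low-order term $\|\mathcal{N}\psi\|_{L^2(\Gamma)}$ is bounded by $\|\psi\|_{H^1(\Gamma)}$ via Lemma \ref{baselineDN2}; interpolating between $\|\psi\|_{L^2(\Gamma)}$ and $\|\psi\|_{H^{s+k}(\Gamma)}$ and absorbing the small high-regularity piece into the left-hand side yields the acceptable bound by $\|\psi\|_{L^2(\Gamma)}$ alone.

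The substantive point is to choose the partition of $\mathcal{N}\psi$ so that its frequency envelope terms are dominated by those of $\psi$. Setting $(\mathcal{N}\psi)_j^i:=\mathcal{N}\psi_j^i$ and combining with the index shift $r\to r+1$, the target bounds become $\|\mathcal{N}\psi_j^1\|_{C^\alpha(\Gamma)}\lesssim_A 2^j\|\psi_j^1\|_{C^\alpha(\Gamma)}$ and $\|\mathcal{N}\psi_j^2\|_{H^{1/2-\beta}(\Gamma)}\lesssim_A 2^j\|\psi_j^2\|_{H^{1/2-\beta}(\Gamma)}$, modulo lower-order tails. Morally, this is the statement that $\mathcal{N}$ is a first-order operator with principal symbol $|\xi|$, so that on pieces localized at dyadic frequency $\sim 2^j$ it acts as a multiplication by $\sim 2^j$. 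Once these bounds are in place, the factor of $2^j$ precisely compensates for the shift between the two frequency envelope weights appearing in the $k-1$ and $k$ versions of the estimate, closing the induction.

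\textbf{Main obstacle.} The delicate step is justifying the $2^j$ gain on $(\mathcal{N}\psi)_j^i:=\mathcal{N}\psi_j^i$, since the given sequence of partitions of $\psi$ is not assumed to be a frequency decomposition. One circumvents this by passing to a Littlewood-Paley-adapted version of the partition via the coordinate charts and partition of unity from Section \ref{AOMD}: replace $\psi_j^1$ by $P_{<j}\psi_j^1$ and $\psi_j^2$ by the complement (absorbing the cross-terms into the envelope for $\psi_j^2$ using the loss $2^{j(\beta-\epsilon)}$ with $\beta\in[0,1/2)$). On such dyadic pieces, the mapping properties of $\mathcal{N}$ — in particular the paradifferential expansion with principal symbol $|\xi|$ encoded by the flattening procedure and the elliptic bounds of Proposition \ref{balneum} — deliver the required $2^j$ factor with an $O(1)$ remainder absorbed by Lemma \ref{baselineDN2} and the already-present frequency envelope. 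Iterating this scheme $k-1$ times completes the argument.
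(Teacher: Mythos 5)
Your high-level plan --- induct on $k$, obtain the base case $k=1$ from Proposition~\ref{balneum}, apply \Cref{balneum} again to produce the intermediate estimate, then invoke the inductive hypothesis on $\mathcal{N}\psi$ with the index shift $r\rightsquigarrow r+1$ --- is exactly the paper's, and you correctly pinpoint the one delicate step: the choice of partition of $\mathcal{N}\psi$. That choice, however, is where the proof actually lives, and the partition you propose does not close.

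Concretely, you set $(\mathcal{N}\psi)_j^1 = \mathcal{N}(P_{<j}\psi_j^1)$ with $P_{<j}$ acting on the \emph{boundary} function via local charts, and you keep the H\"older exponent $\alpha'=\alpha$ when invoking the inductive hypothesis. This forces the bound $\|\mathcal{N}(P_{<j}\psi_j^1)\|_{C^{\alpha}(\Gamma)}\lesssim_A 2^j\|\psi_j^1\|_{C^{\alpha}(\Gamma)}$, i.e., a dyadic paradifferential H\"older estimate for $\mathcal{N}$ on frequency-localized boundary data; the paper never establishes such a bound, and it does not follow from the toolkit in \Cref{BEE}. Worse, the complementary piece $(\mathcal{N}\psi)_j^2=\mathcal{N}\bigl(\psi-P_{<j}\psi_j^1\bigr)=\mathcal{N}\psi_j^2+\mathcal{N}(P_{\geq j}\psi_j^1)$ contains the summand $\mathcal{N}\psi_j^2$ with \emph{no} frequency truncation at all on $\psi_j^2$, so the needed estimate $\|\mathcal{N}\psi_j^2\|_{H^{\frac12-\beta'}(\Gamma)}\lesssim 2^{j}\|\psi_j^2\|_{H^{\frac12-\beta}(\Gamma)}$ simply fails: $\mathcal{N}$ costs a full derivative with no dyadic gain unless its argument is localized to frequency $\lesssim 2^j$.

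The paper avoids both problems with a different partition and a different choice of parameters in the inductive hypothesis. Taking $v=\mathcal{H}\psi$, $v_j^1=\mathcal{H}\psi_j^1$, $v_j^2=\mathcal{H}\psi_j^2$ as in \Cref{balneum}, it sets $\phi_j^1=\nabla_n P_{<j}v_j^1$ and $\phi_j^2=\nabla_n P_{<j}v_j^2+\nabla_n P_{\geq j}v$, where the Littlewood--Paley projections act in the \emph{bulk} $\Omega$ via the Stein extension (\Cref{Stein}), and applies the inductive hypothesis at H\"older exponent $\alpha'=0$ (so $\phi_j^1$ is measured in $L^\infty(\Gamma)$) and $\beta'=\tfrac12-\epsilon$ (so $\phi_j^2$ is measured in $H^{\epsilon}(\Gamma)$). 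With these choices the envelope comparisons reduce to bulk Bernstein inequalities: $\|\phi_j^1\|_{L^\infty(\Gamma)}\lesssim 2^{j(1-\alpha)}\|\mathcal{H}\psi_j^1\|_{C^\alpha(\Omega)}\lesssim_A 2^{j(1-\alpha)}\|\psi_j^1\|_{C^\alpha(\Gamma)}$ via \Cref{Hboundlow}, and every summand of $\phi_j^2$ carries a bulk $P_{<j}$ or $P_{\geq j}$, so the $H^{\epsilon}$ bound follows from the trace theorem, \eqref{harmonicbase}, and \Cref{Hbounds}, exactly as at the end of the proof of \Cref{balneum}. You should either replace your partition by this bulk one, or actually prove the boundary paradifferential bounds you are implicitly appealing to; as written there is a genuine gap in the envelope comparison step.
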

\begin{proof}
 \Cref{baselineDN} and \Cref{balneum} give us this bound for $k=1$. For $k\geq 2$, we may assume inductively that the corresponding estimate holds for all $1\leq m\leq k-1$. We begin by applying \Cref{balneum} to obtain
\begin{equation}\label{psiintermed}
\|\psi\|_{H^{s+k}(\Gamma)}\lesssim_A \|\psi\|_{L^2(\Gamma)}+\|\mathcal{N}\psi\|_{H^{s+k-1}(\Gamma)}+\|\Gamma\|_{H^{s+k+r}}\sup_{j>0}2^{-j(\alpha-1+r)}\|\psi_j^1\|_{C^{\alpha}(\Gamma)}+\sup_{j>0}2^{j(s+k-\frac{1}{2}+\beta -\epsilon)}\|\psi_j^2\|_{H^{\frac{1}{2}-\beta}(\Gamma)}.
\end{equation}
Using the inductive hypothesis, we have
\begin{equation*}
\|\mathcal{N}\psi\|_{H^{s+k-1}(\Gamma)}\lesssim_A \|\mathcal{N}\psi\|_{L^2(\Gamma)}+\|\mathcal{N}^{k}\psi\|_{H^s(\Gamma)}+\|\Gamma\|_{H^{s+k+r}}\sup_{j>0}2^{-jr}\|\phi_j^1\|_{L^{\infty}(\Gamma)}+\sup_{j>0}2^{j(s+k-1-2\epsilon)}\|\phi_j^2\|_{H^{\epsilon}(\Gamma)},
\end{equation*}
where $\mathcal{N}\psi:=\phi_j^1+\phi_j^2$ is any partition of $\mathcal{N}\psi$. By \Cref{baselineDN2}, the first term on the right can be controlled by $\|\psi\|_{H^1(\Gamma)}$ which can be dispensed with by interpolation (between $L^2$ and $H^{1+\epsilon}$ to ensure the domain dependent contributions in the estimate are harmless). Therefore, to conclude, we need to choose $\phi_j^1$ and $\phi_j^2$ so that the latter two terms on the right-hand side of the above are controlled by the right-hand side of (\ref{psiintermed}). Using $v$, $v_j^1$ and $v_j^2$ from the previous proposition, we can take $\phi_j^1=\nabla_n P_{<j}v_j^1$ and $\phi_j^2=\nabla_n P_{<j}v_j^2+\nabla_n P_{\geq j}v$. The proof then concludes in a similar way to \Cref{balneum}. We omit the details.
\end{proof}
For our energy estimates, we will also need good bounds for the following div-curl system.
\begin{proposition}[div-curl estimate with Neumann type data]\label{Balanced div-curl} Let $v\in H^s(\Omega)$ be a vector field defined on $\Omega$ and let $s>\frac{3}{2}$, $\alpha,\beta\in [0,1]$. Let $v:=v_j^1+v_j^2$ be any partition of $v$. Moreover, let $\mathcal{B}v$ denote either the Neumann trace of $v$, $n_{\Gamma}\cdot \nabla v$ or the boundary value $\nabla^{\top}v\cdot n_\Gamma$. Then if $v$ solves the  div-curl system,
\begin{equation*}
\begin{cases}
&\nabla\cdot v=f,
\\
&\nabla\times v=\omega,
\\
&\mathcal{B}v=g,
\end{cases}
\end{equation*}
then $v$ satisfies the estimate,
\begin{equation*}
\begin{split}
\|v\|_{H^{s}(\Omega)}&\lesssim_A\|f\|_{H^{s-1}(\Omega)}+\|\omega\|_{H^{s-1}(\Omega)}+\|g\|_{H^{s-\frac{3}{2}}(\Gamma)}+\|v\|_{L^2(\Omega)}+\|\Gamma\|_{H^{s+r-\frac{1}{2}}}\sup_{j>0}2^{-j(r+\alpha-1)}\|v_j^1\|_{C^{\alpha}(\Omega)}
\\
&\qquad +\sup_{j>0}2^{j(s-1+\beta-\epsilon)}\|v_j^2\|_{H^{1-\beta}(\Omega)}.
\end{split}
\end{equation*}
\end{proposition}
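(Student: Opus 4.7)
The plan is to derive the bound from the vector Laplace equation $-\Delta v = \nabla \times \omega - \nabla f$ (which follows from $\nabla \cdot v = f$ and $\nabla \times v = \omega$), combined with the balanced elliptic estimates already established in \Cref{direst}, \Cref{balneum}, and \Cref{Hbounds}. The key observation is that each scalar component $v^i$ satisfies a Laplace equation whose source is controlled in $H^{s-2}(\Omega)$ by $\|f\|_{H^{s-1}(\Omega)} + \|\omega\|_{H^{s-1}(\Omega)}$ via a divergence-form paradifferential expansion in the spirit of \Cref{productestref}.

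In the Neumann trace case $\mathcal{B}v = n_\Gamma \cdot \nabla v$, the boundary data is literally Neumann data for each component $v^i$, so I would apply a vector-valued analogue of \Cref{balneum} componentwise: repeat the proof verbatim (partition of unity, local flattening by $H_i$, standard half-space Neumann estimate, paradifferential change of variables) with the scalar Laplace source above. The $\|v\|_{L^2(\Omega)}$ term compensates for the nontrivial kernel (constants) of the scalar Neumann problem, exactly as it does in \Cref{balneum}. The balanced error terms involving $\sup_j 2^{-j(r+\alpha-1)}\|v_j^1\|_{C^\alpha}$ and $\sup_j 2^{j(s-1+\beta-\epsilon)}\|v_j^2\|_{H^{1-\beta}}$ arise from the same commutator-type analysis as in the Dirichlet proof: they come from placing the variable coefficients $a^{ij} - \delta^{ij}$ and $b_j$ against the highest-frequency piece of $v$ during the paradifferential expansion.

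In the tangential case $\mathcal{B}v = \nabla^\top v \cdot n_\Gamma$, the reduction is less direct. Here I would perform a Hodge-type decomposition $v = \nabla \phi + w$, where $\phi$ solves a Dirichlet problem with source $f$ and boundary condition chosen so that $\nabla \phi$ absorbs the irrotational contribution of $\mathcal{B}v$, while $w$ is divergence free with $\nabla \times w = \omega$ and with Neumann-type boundary data on the remaining component. The $\nabla \phi$ piece is then controlled by the balanced Dirichlet estimate in \Cref{direst}, and the divergence-free remainder by the componentwise Neumann argument above together with the iterated Dirichlet-to-Neumann bound in \Cref{ellipticity}. The localization by $\gamma_{*i}$ and the flattenings $H_i$ developed in this section are what allow the Hodge split to be carried out without incurring leading-order losses in $\Lambda_*$.

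The main obstacle will be the tangential boundary case. The difficulty is not conceptual but bookkeeping: after the Hodge split, one must check that the cross-terms between $\nabla \phi$ and $w$ near $\Gamma$ do not generate contributions worse than the balanced error terms on the right-hand side. This requires choosing the partition $v = v_j^1 + v_j^2$ compatibly with the Hodge split (so that the high-frequency $v_j^2$ part contributes to $w$ and the low-frequency $v_j^1$ part to $\nabla \phi$), and then tracking the interaction through the paradifferential expansions. Once this matching is arranged, the balanced bound follows by summing the contributions from the two pieces, as in the concluding interpolation step of \Cref{direst}.
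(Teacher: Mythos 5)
Your observation that each Euclidean component of $v$ satisfies $\Delta v_i = \partial_j\omega_{ji} + \partial_i f$, and your componentwise Neumann treatment of the case $\mathcal{B}v = n_\Gamma\cdot\nabla v$, match the paper's argument. For the tangential case $\mathcal{B}v=\nabla^\top v\cdot n_\Gamma$, however, you diverge from the paper in a way that runs into trouble: you propose a Hodge split $v=\nabla\phi + w$, but this is precisely the type of rotational/irrotational decomposition that Section~\ref{CTEF} explicitly flags as unsuitable here, because $n_\Gamma$ has half a derivative less regularity than the trace of $v$ ($H^{s-1}$ versus $H^{s-\frac12}$). The boundary data you would need in order to set up the split (either $v\cdot n_\Gamma$ or some decomposition of $g$ into ``irrotational'' and ``rotational'' parts) is a product against $n_\Gamma$ and so loses regularity at top order; the cross-terms you call ``bookkeeping'' are in fact the place where this loss cannot be repaired, and no choice of the partition $v=v_j^1+v_j^2$ fixes it, since the loss occurs already in defining $\phi$ and $w$.

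The paper avoids the split entirely. In the flattened half-space picture, $\nabla^\top v\cdot n_\Gamma$ reduces to the tangential gradient of the normal component $v_d$, and the full Neumann data for each component is then recovered algebraically from $g$, $\omega$, and $f$: for $i\neq d$ one uses
\[
\partial_d v_i = \partial_i v_d + \omega_{di},
\]
so the tangential derivatives $\partial_i v_d$ supplied by $g$ together with $\omega_{di}$ give Neumann data for $v_i$, and one then treats $v_i$ by the scalar Neumann estimate of Proposition~\ref{balneum}; for $i=d$ one closes with
\[
\partial_d v_d = f - \sum_{i<d}\partial_i v_i,
\]
using the already-controlled $v_i$, $i<d$. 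All components are thus Neumann problems with sources controlled by $f$ and $\omega$, and the balanced error terms arise from the same flattening/paradifferential analysis as in Proposition~\ref{direst} — no decomposition of $v$ is ever introduced. You should replace the Hodge-split argument by this direct recovery of the Neumann data; otherwise the tangential case does not close.
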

\begin{proof}
The proof is very similar to the Dirichlet and Neumann problems in that one flattens the boundary and reduces to the corresponding estimate on the half-space with source terms depending on essentially $f$, $\omega$,  $g$ and the domain regularity. We omit the details of the domain flattening as it is similar to \Cref{direst}. However, for the sake of clarity, it is instructive to explain the div-curl estimate in the case when $\Omega$ is the half-space $\{z_d<0\}$ (particularly in the case of the latter boundary condition involving $\nabla^{\top}v\cdot n_{\Gamma}$). We show that it is in essence a statement about elliptic regularity for the Neumann problem. In such a setting, $n_{\Gamma}$ takes the form $e_d$. We compute for each (Euclidean) component $v_j$ of a vector field $v$ on $\Omega$,
\begin{equation*}
\Delta v_j=\partial_i\omega_{ij}+\partial_jf.    
\end{equation*}
Therefore, in the case of boundary data given by $\mathcal{B}v=n_{\Gamma}\cdot\nabla v$, the div-curl estimate is simply given by elliptic regularity for the Neumann problem. To understand the case of the other boundary value $\nabla^{\top}v\cdot n_{\Gamma}$, we note that the full Neumann data for $v$ is determined by this boundary value and the curl and divergence of $v$. If $j\neq d$, this is seen from the identity
\begin{equation*}
\partial_dv_j=\partial_jv_d+\omega_{dj}.    
\end{equation*}
So, by the trace theorem and elliptic regularity for the Neumann problem, we have the desired control of $v_j$ for $j\neq d$. If $j=d$, we have
\begin{equation*}
\partial_dv_d=f-\sum_{i=1}^{d-1}\partial_iv_i   , 
\end{equation*}
which by the trace theorem and the estimate for $v_i$ with $i\neq d$ gives us the estimate for $v_d$.
\end{proof}
We importantly do not claim that the above div-curl system is well-posed. In fact, the problem is generally over-determined (as, for instance, the curl and divergence fix $\Delta v$, which forbids certain choices of Neumann data). Fortunately, we will only need the above estimate in our analysis later when we prove energy estimates and to a lesser extent in our construction of regular solutions. We will not need any existence type statement for the above system, however.
\\

Next, to complement the ellipticity estimates for $\mathcal{N}$, we will also need the reverse estimates which control powers of $\mathcal{N}$ applied to a function in terms of the corresponding Sobolev norms of that function. As a preliminary step, we state the following proposition.
\begin{proposition}[Normal derivative trace bound]\label{L1bound} Let $s>0$, $r\geq 0$ and $\alpha,\beta\in [0,1]$. The normal trace operator $\nabla_n:=n_\Gamma\cdot (\nabla)_{|\Gamma}$ satisfies the bound
\begin{equation*}
    \|\nabla_nv\|_{H^{s}(\Gamma)}\lesssim_A\|v\|_{H^{s+\frac{3}{2}}(\Omega)}+\|\Gamma\|_{H^{s+r+1}}\sup_{j>0}2^{-j\left(r-1+\alpha\right)}\|v_j^1\|_{C^\alpha(\Omega)}+\sup_{j>0}2^{j(s+\beta+\frac{1}{2}-\epsilon)}\|v_j^2\|_{H^{1-\beta}(\Omega)}.
\end{equation*}
\end{proposition}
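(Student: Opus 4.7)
The plan is to derive this estimate from the balanced trace estimate (Proposition~\ref{baltrace}) applied to $\nabla v$, combined with the boundary product estimate (Proposition~\ref{boundaryest}) to multiply by the normal $n_\Gamma$. The key technical maneuver is to induce a partition of $\nabla v$ from the given partition $v = v_j^1 + v_j^2$ in such a way that Bernstein's inequality absorbs the extra derivative and the frequency envelope exponents land exactly at the claimed values.

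Concretely, I would write $\nabla_n v = n_\Gamma \cdot (\nabla v)|_\Gamma$ and apply Proposition~\ref{baltrace} componentwise to $\nabla v$ with source exponent $s'=s+\frac12$ (so that the trace lies in $H^s(\Gamma)$) and auxiliary parameter $r'=r+1$. This yields the surface Sobolev factor $\|\Gamma\|_{H^{s'+r'-1/2}} = \|\Gamma\|_{H^{s+r+1}}$, a first-partition exponent $-j(r'+\alpha-1) = -j(r+\alpha)$, and a second-partition exponent $j(s'-1+\beta-\epsilon) = j(s-\frac12+\beta-\epsilon)$. For the induced partition I would take
\[
(\nabla v)_j^1 := \nabla P_{<j} v_j^1, \qquad (\nabla v)_j^2 := \nabla P_{\geq j} v + \nabla P_{<j} v_j^2,
\]
where $P_{<j}$ acts on the Stein extensions. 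Bernstein's inequality gives $\|\nabla P_{<j} v_j^1\|_{C^\alpha(\Omega)} \lesssim 2^j \|v_j^1\|_{C^\alpha(\Omega)}$, converting the $-j(r+\alpha)$ exponent into the required $-j(r-1+\alpha)$; analogously $\|\nabla P_{<j} v_j^2\|_{H^{1-\beta}} \lesssim 2^j \|v_j^2\|_{H^{1-\beta}}$ upgrades $j(s-\frac12+\beta-\epsilon)$ to the required $j(s+\frac12+\beta-\epsilon)$. The remaining tail $\nabla P_{\geq j} v$ is bounded in $H^{1-\beta}$ by $2^{-j(s-1/2+\beta)} \|v\|_{H^{s+3/2}(\Omega)}$, which after multiplication by the trace exponent contributes a harmless $2^{-j\epsilon}\|v\|_{H^{s+3/2}(\Omega)}$ absorbed into the leading term $\|\nabla v\|_{H^{s+1/2}(\Omega)} \leq \|v\|_{H^{s+3/2}(\Omega)}$.

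Finally, multiplying by $n_\Gamma$ via Proposition~\ref{boundaryest}, together with $\|n_\Gamma\|_{L^\infty(\Gamma)} = 1$ and the curvature bound $\|n_\Gamma\|_{H^{s'}(\Gamma)} \lesssim_A \|\Gamma\|_{H^{s'+1}}$ coming from Proposition~\ref{curvaturebound}, preserves the form of the right-hand side at the cost of lower-order correction terms; these are controlled by selecting a compatible partition of $(\nabla v)|_\Gamma$ inherited from the partition of $\nabla v$ above, together with the trace of $v_j^1,v_j^2$ (whose $L^2(\Gamma)$ and $L^\infty(\Gamma)$ norms are dominated by the corresponding interior quantities).

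The main obstacle I foresee is the bookkeeping of multiple simultaneous partitions — in the domain, under the Littlewood-Paley projectors commuting with $\nabla$, after the trace, and finally on the boundary — while making sure that the ``half-derivative'' cost of the trace combines correctly with the ``full-derivative'' cost of $\nabla$ to produce precisely the surface Sobolev exponent $s+r+1$ rather than something off by a half. The specific choice of $(\nabla v)_j^1$ and $(\nabla v)_j^2$ above is designed so that Bernstein supplies exactly one factor of $2^j$ per gradient, after which the exponents in Proposition~\ref{baltrace} match the claimed bounds mechanically.
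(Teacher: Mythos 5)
Your overall strategy is essentially the same as the paper's: feed the induced partition $\nabla v = \nabla P_{<j}v_j^1 + (\nabla P_{<j}v_j^2 + \nabla P_{\geq j}v)$ into Proposition~\ref{boundaryest} with $f = n_\Gamma$ and into Proposition~\ref{baltrace}, and use Bernstein to absorb the extra gradient at the cost of a factor of $2^j$. The ordering (trace-then-multiply versus multiply-then-trace) is immaterial, and the minor attribution of the bound $\|n_\Gamma\|_{H^{s+r}(\Gamma)} \lesssim_A \|\Gamma\|_{H^{s+r+1}}$ to Proposition~\ref{curvaturebound} rather than to the elementary fact that $n_\Gamma$ is a first derivative of the graph parameterization is harmless.

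There is, however, a genuine gap in the tail estimate. You assert $\|\nabla P_{\geq j}v\|_{H^{1-\beta}(\Omega)} \lesssim 2^{-j(s-\frac12+\beta)}\|v\|_{H^{s+\frac32}(\Omega)}$. Since $\nabla P_{\geq j}v$ lives at frequencies $\geq 2^j$, this high-frequency decay is available only when the target index $2-\beta$ does not exceed $s+\frac32$, i.e. when $s + \beta \geq \frac12$; for $s$ small and $\beta < \frac12 - s$, the quantity $\|\nabla P_{\geq j}v\|_{H^{1-\beta}}$ is simply not controlled by $\|v\|_{H^{s+\frac32}}$ (and can be infinite). Since the statement claims the bound for all $s>0$, $\beta\in[0,1]$, that parameter range is not covered by your argument. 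The paper sidesteps this by never measuring the induced partition in $H^{1-\beta}$: it invokes the trace estimate for $w_j^2$ in the \emph{fixed} space $H^{\frac12+\epsilon}(\Omega)$ (i.e.\ with the intermediate exponent $\beta' = \frac12-\epsilon$, independent of $\beta$), so the tail costs only $\|\nabla P_{\geq j}v\|_{H^{\frac12+\epsilon}} \lesssim 2^{-j(s-\epsilon)}\|v\|_{H^{s+\frac32}}$, valid once $\epsilon < s$. The original $\beta$-exponent is then recovered on the other piece via $\|\nabla P_{<j}v_j^2\|_{H^{\frac12+\epsilon}} \lesssim 2^{j(\frac12+\epsilon+\beta)}\|v_j^2\|_{H^{1-\beta}}$, which holds without restriction. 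Replacing the parameter $\beta$ in your application of Proposition~\ref{baltrace} by this fixed intermediate exponent closes the gap and reproduces the paper's argument.
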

\begin{proof} Using the partition $\nabla v=w_j^1+w_j^2$ where $w_j^1:=\nabla P_{<j}v_j^1$ and $w_j^2=\nabla P_{<j}v_j^2+\nabla P_{\geq j}v$ together with the inequalities $\|n_\Gamma\|_{H^{s+r}(\Gamma)}\lesssim_A \|\Gamma\|_{H^{s+r+1}}$ and $\|n_\Gamma\|_{C^{\epsilon}(\Gamma)}\lesssim_A 1$, we obtain from \Cref{boundaryest} and \Cref{baltrace} (after possibly relabelling $\epsilon$),
\begin{equation*}
    \begin{split}
    \|\nabla_nv\|_{H^{s}(\Gamma)}&\lesssim_A \|(\nabla v)_{|\Gamma}\|_{H^{s}(\Gamma)}+\|\Gamma\|_{H^{s+r+1}}\sup_{j>0}2^{-jr}\|w_j^1\|_{L^\infty(\Omega)}+\sup_{j>0}2^{j(s-2\epsilon)}\|w^2_{j|\Gamma}\|_{L^2(\Gamma)}
    \\
    &\lesssim_A \|v\|_{H^{s+\frac{3}{2}}(\Omega)}+\|\Gamma\|_{H^{s+r+1}}\sup_{j>0}2^{-jr}\|w_j^1\|_{L^\infty(\Omega)}+\sup_{j>0}2^{j(s-2\epsilon)}\|w_j^2\|_{H^{\frac{1}{2}+\epsilon}(\Omega)}.
    \end{split}
\end{equation*}
By estimating 
\begin{equation*}
\|w_j^1\|_{L^\infty(\Omega)}\lesssim_A 2^{j(1-\alpha)}\|v_j^1\|_{C^{\alpha}(\Omega)}
\end{equation*}
and 
\begin{equation*}
\begin{split}
2^{j(s-2\epsilon)}\|w_j^2\|_{H^{\frac{1}{2}+\epsilon}(\Omega)}&\lesssim_A \|v\|_{H^{s+\frac{3}{2}}(\Omega)}+2^{j(s+\frac{1}{2}+\beta-\epsilon)}\|v_j^2\|_{H^{1-\beta}(\Omega)},    
\end{split}
\end{equation*}
we complete the proof.
\end{proof}
We can use \Cref{L1bound} and the balanced bounds for $\mathcal{H}$ to prove a refined version of the $H^{s+1}(\Gamma)\to H^s(\Gamma)$ bound for $\mathcal{N}$. 
\begin{proposition}[Dirichlet-to-Neumann operator bound I]\label{DNpower1} Let $s\geq \frac{1}{2}$, $r\geq 0$, $\alpha\in [0,1)$ and $\beta\in [0,\frac{1}{2})$. Then
\begin{equation*}
    \|\mathcal{N}\psi\|_{H^{s}(\Gamma)}\lesssim_A \|\psi\|_{H^{s+1}(\Gamma)}+\|\Gamma\|_{H^{s+1+r}}\sup_{j>0}2^{-j(r-1+\alpha)}\|\psi_j^1\|_{C^\alpha(\Gamma)}+\sup_{j>0}2^{j(s+\frac{1}{2}+\beta-\epsilon)}\|\psi_j^2\|_{H^{\frac{1}{2}-\beta}(\Gamma)}
\end{equation*}
for any sequence of partitions $\psi=\psi_j^1+\psi_j^2$.
\end{proposition}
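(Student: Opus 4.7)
The plan is to apply the normal derivative trace bound in Proposition~\ref{L1bound} to $v = \mathcal{H}\psi$, noting that $\mathcal{N}\psi = \nabla_n \mathcal{H}\psi$ by definition, and then dispatch each of the three resulting terms using the harmonic extension estimates already established.

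First I would set $v = \mathcal{H}\psi$ and, given the partition $\psi = \psi_j^1 + \psi_j^2$ on $\Gamma$, choose the natural partition on $\Omega$ by $v_j^1 := \mathcal{H}\psi_j^1$ and $v_j^2 := \mathcal{H}\psi_j^2$. Proposition~\ref{L1bound} applied with these choices immediately yields
\begin{equation*}
\|\mathcal{N}\psi\|_{H^s(\Gamma)} \lesssim_A \|\mathcal{H}\psi\|_{H^{s+\frac{3}{2}}(\Omega)} + \|\Gamma\|_{H^{s+r+1}}\sup_{j>0}2^{-j(r-1+\alpha)}\|\mathcal{H}\psi_j^1\|_{C^\alpha(\Omega)} + \sup_{j>0}2^{j(s+\beta+\frac{1}{2}-\epsilon)}\|\mathcal{H}\psi_j^2\|_{H^{1-\beta}(\Omega)}.
\end{equation*}

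Next I would control each term on the right. For the bulk term, Proposition~\ref{Hbounds} (with $s+\frac{3}{2}$ in place of $s$, and the same partition $\psi = \psi_j^1 + \psi_j^2$) gives
\begin{equation*}
\|\mathcal{H}\psi\|_{H^{s+\frac{3}{2}}(\Omega)} \lesssim_A \|\psi\|_{H^{s+1}(\Gamma)} + \|\Gamma\|_{H^{s+1+r}}\sup_{j>0}2^{-j(r-1+\alpha)}\|\psi_j^1\|_{C^\alpha(\Gamma)} + \sup_{j>0}2^{j(s+\frac{1}{2}+\beta-\epsilon)}\|\psi_j^2\|_{H^{\frac{1}{2}-\beta}(\Gamma)},
\end{equation*}
which is acceptable; note that the hypothesis $\beta \in [0, \tfrac{1}{2})$ is exactly what is needed to invoke Proposition~\ref{Hbounds}. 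For the second remainder term, Corollary~\ref{Hboundlow} gives the uniform pointwise bound $\|\mathcal{H}\psi_j^1\|_{C^\alpha(\Omega)} \lesssim_A \|\psi_j^1\|_{C^\alpha(\Gamma)}$, since $\alpha \in [0,1)$. For the third remainder term, since $\beta \in [0, \tfrac{1}{2})$ we have $1 - \beta \in (\tfrac{1}{2}, 1]$, so the baseline harmonic extension estimate \eqref{harmonicbase} applies and yields
\begin{equation*}
\|\mathcal{H}\psi_j^2\|_{H^{1-\beta}(\Omega)} \lesssim_A \|\psi_j^2\|_{H^{\frac{1}{2}-\beta}(\Gamma)}.
\end{equation*}

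Combining the three displays proves the desired inequality. There is no real obstacle here since the proposition is essentially an aggregation of the balanced harmonic extension bound in Proposition~\ref{Hbounds} composed with the normal trace bound in Proposition~\ref{L1bound}; the only point requiring care is the bookkeeping on the partitions and the verification that the exponent ranges $\alpha \in [0,1)$ and $\beta \in [0,\tfrac{1}{2})$ in the hypothesis match precisely the ranges in which the auxiliary harmonic extension estimates are valid.
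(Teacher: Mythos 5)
Your proposal is correct and follows essentially the same route as the paper: write $\mathcal{N} = \nabla_n \mathcal{H}$, apply Proposition~\ref{L1bound} with the partition $v_j^i = \mathcal{H}\psi_j^i$, and then close using Proposition~\ref{Hbounds}, Corollary~\ref{Hboundlow}, and the baseline bound \eqref{harmonicbase}. Your additional remarks verifying that the exponent ranges $\alpha \in [0,1)$ and $\beta \in [0,\tfrac12)$ are exactly those required by the auxiliary estimates are accurate and match the implicit bookkeeping in the paper's proof.
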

\begin{proof} The proof begins by writing $\mathcal{N}=\nabla_n\mathcal{H}$ and  applying \Cref{L1bound} to obtain 
\begin{equation*}
 \|\mathcal{N}\psi\|_{H^{s}(\Gamma)}\lesssim_A \|\mathcal{H}\psi\|_{H^{s+\frac{3}{2}}(\Omega)}+\|\Gamma\|_{H^{s+1+r}}\sup_{j>0}2^{-j(r-1+\alpha)}\|\mathcal{H}\psi_j^1\|_{C^\alpha(\Omega)}+\sup_{j>0}2^{j(s+\frac{1}{2}+\beta-\epsilon)}\|\mathcal{H}\psi_j^2\|_{H^{1-\beta}(\Omega)}.    
\end{equation*}
Using the $C^{\alpha}\to C^{\alpha}$ bounds for $\mathcal{H}$, (\ref{harmonicbase}) and \Cref{Hbounds}, we conclude the proof.
\end{proof}
Similarly to the ellipticity estimate for $\mathcal{N}$, we will need a higher order version of the above estimate as well.
\begin{proposition}[Dirichlet-to-Neumann operator bound II]\label{higherpowers} Let $m\geq 1$ be an integer, let $s\geq \frac{1}{2}$ and let $r\geq 0$, $\alpha\in [0,1)$ and $\beta\in [0,\frac{1}{2})$. Then we have the  bound
\begin{equation*}
\label{Iterated N}
    \|\mathcal{N}^m\psi\|_{H^{s}(\Gamma)}\lesssim_A \|\psi\|_{H^{s+m}(\Gamma)}+\|\Gamma\|_{H^{s+r+m}}\sup_{j>0}2^{-j(r+\alpha-1)}\|\psi_j^1\|_{C^\alpha(\Gamma)}+\sup_{j>0}2^{j(s-\frac{1}{2}+m+\beta-\epsilon)}\|\psi_j^2\|_{H^{\frac{1}{2}-\beta}(\Gamma)}
\end{equation*}
and the closely related bound when $s\geq\frac{3}{2}$,
\begin{equation}\label{iteratedN2}
\|\mathcal{H}\mathcal{N}^m\psi\|_{H^{s+\frac{1}{2}}(\Omega)}\lesssim_A \|\psi\|_{H^{s+m}(\Gamma)}+\|\Gamma\|_{H^{s+r+m}}\sup_{j>0}2^{-j(r+\alpha-1)}\|\psi_j^1\|_{C^\alpha(\Gamma)}+\sup_{j>0}2^{j(s-\frac{1}{2}+m+\beta-\epsilon)}\|\psi_j^2\|_{H^{\frac{1}{2}-\beta}(\Gamma)}    
\end{equation}
for any partition $\psi=\psi_j^1+\psi_j^2$.
\end{proposition}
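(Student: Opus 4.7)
The plan is to prove both bounds by induction on $m$, using Proposition \ref{DNpower1} (which is exactly the case $m=1$) as the base case. For the inductive step, we write $\mathcal N^m\psi = \mathcal N(\mathcal N^{m-1}\psi)$ and apply Proposition \ref{DNpower1} to the factor $\mathcal N^{m-1}\psi$ at regularity level $s$, with partition parameter $r$ replaced by $r+m-1$. This gives
\[
\|\mathcal N^m\psi\|_{H^s(\Gamma)}\lesssim_A \|\mathcal N^{m-1}\psi\|_{H^{s+1}(\Gamma)} + \|\Gamma\|_{H^{s+r+m}}\sup_{j>0}2^{-j(r+m-2+\alpha)}\|\phi_j^1\|_{C^\alpha(\Gamma)} + \sup_{j>0}2^{j(s+\frac12+\beta-\epsilon)}\|\phi_j^2\|_{H^{\frac12-\beta}(\Gamma)},
\]
for any partition $\mathcal N^{m-1}\psi = \phi_j^1+\phi_j^2$. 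The leading Sobolev term on the right is then estimated by the inductive hypothesis applied with the \emph{original} partition $\psi=\psi_j^1+\psi_j^2$ at level $s+1$; a quick arithmetic check shows that the resulting exponents collapse exactly to the target right-hand side (the exponent $s+1-\frac12+(m-1)+\beta-\epsilon$ coincides with $s-\frac12+m+\beta-\epsilon$).

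The technical heart of the argument is the choice of the partition of $\mathcal N^{m-1}\psi$ that closes the induction. The natural choice is
\[
\phi_j^1 := \mathcal N^{m-1}P_{<j}\psi_j^1, \qquad \phi_j^2 := \mathcal N^{m-1}\bigl(P_{<j}\psi_j^2 + P_{\geq j}\psi\bigr).
\]
For $\phi_j^1$, since $P_{<j}\psi_j^1$ is a low-frequency function on $\Gamma$, a Bernstein-type bound (obtained via the partition of unity and the local coordinate maps of Section \ref{AOMD}) together with the $H^{\alpha+m-1}\to C^\alpha$ operator bound for $\mathcal N^{m-1}$ yields $\|\phi_j^1\|_{C^\alpha(\Gamma)}\lesssim_A 2^{j(m-1)}\|\psi_j^1\|_{C^\alpha(\Gamma)}$, which absorbs the exponent shift. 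For $\phi_j^2$, the low-frequency piece $\mathcal N^{m-1}P_{<j}\psi_j^2$ is controlled in $H^{\frac12-\beta}$ by $2^{j(m-1)}\|\psi_j^2\|_{H^{\frac12-\beta}}$ via Bernstein, giving exactly the desired exponent $2^{j(s-\frac12+m+\beta-\epsilon)}$. The high-frequency tail $\mathcal N^{m-1}P_{\geq j}\psi$ is treated by the iterated operator bound $\mathcal N^{m-1}\colon H^{m-\frac12-\beta}\to H^{\frac12-\beta}$, and then summed via $\sup_j 2^{j(s+\frac12+\beta-\epsilon)}\|P_{\geq j}\psi\|_{H^{m-\frac12-\beta}}\lesssim \|\psi\|_{H^{s+m-\epsilon}}\lesssim \|\psi\|_{H^{s+m}}$. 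The operator-norm bound $\mathcal N\colon H^{t+1}\to H^t$ needed along the induction (for intermediate values of $t$) is itself the baseline estimate in Proposition \ref{DNpower1} with the partition $\psi=\psi+0$ and standard interpolation.

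For the second bound \eqref{iteratedN2}, I would apply Proposition \ref{Hbounds} directly to the function $\mathcal N^m\psi$, at level $s+\frac12\geq 2$:
\[
\|\mathcal H\mathcal N^m\psi\|_{H^{s+\frac12}(\Omega)}\lesssim_A \|\mathcal N^m\psi\|_{H^s(\Gamma)} + \|\Gamma\|_{H^{s+r+m}}\sup_{j>0}2^{-j(r+m-1+\alpha-1)}\|\xi_j^1\|_{C^\alpha(\Gamma)} + \sup_{j>0}2^{j(s-\frac12+m+\beta-\epsilon)}\|\xi_j^2\|_{H^{\frac12-\beta}(\Gamma)},
\]
with the partition $\mathcal N^m\psi = \xi_j^1+\xi_j^2$ chosen analogously to the $\phi_j^{1,2}$ above (replacing $m-1$ by $m$). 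The first term on the right is controlled by the bound already proved in the first half of the proposition, and the $\xi_j^{1,2}$ terms are handled by the same Bernstein plus operator-bound analysis as before.

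The main obstacle is not conceptual but rather bookkeeping: at each inductive step one must verify that the partition-correction terms introduced by Proposition \ref{DNpower1} can be absorbed into the right-hand side with the same partition $\psi = \psi_j^1+\psi_j^2$, and that all the exponent shifts telescope correctly across $m$ applications. The cleanest way to do this is to keep the parameters $(\alpha,\beta)$ fixed throughout the induction and only let the ``regularity surplus'' $r$ and the Sobolev level shift by $+1$ and $+(m-1)$ respectively at each level. A technical point that must be verified carefully is the availability of Bernstein inequalities and operator bounds for $\mathcal N^k$ in $C^\alpha$ and $H^{\frac12-\beta}$ norms on $\Gamma$; these follow by unwinding the local coordinate definitions of these function spaces from Section \ref{AOMD} together with the already-established bounds for $\mathcal N$.
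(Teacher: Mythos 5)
Your overall plan — induction on $m$, with Proposition \ref{DNpower1} as the base case — is the same as the paper's, but you set up the inductive step in the opposite direction, and this is where the argument breaks. You write $\mathcal N^m\psi=\mathcal N(\mathcal N^{m-1}\psi)$ and apply Proposition \ref{DNpower1} to the outer $\mathcal N$, so that the error terms from that proposition involve a partition of $\mathcal N^{m-1}\psi$; you then propose the partition $\phi_j^1=\mathcal N^{m-1}P_{<j}\psi_j^1$ and claim $\|\phi_j^1\|_{C^\alpha(\Gamma)}\lesssim_A 2^{j(m-1)}\|\psi_j^1\|_{C^\alpha(\Gamma)}$ via an ``$H^{\alpha+m-1}\to C^\alpha$ operator bound'' for $\mathcal N^{m-1}$. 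That bound is not available and, as written, is dimensionally wrong: to land in $C^\alpha(\Gamma)$ from a Sobolev space over the $(d-1)$-dimensional surface $\Gamma$ via Sobolev embedding you would need at least $(d-1)/2$ extra derivatives, and fixing the exponent does not help because what you actually need is a H\"older operator bound for the iterated Dirichlet-to-Neumann operator on a domain with only $C^{1,\epsilon_0}$ boundary, which is not established in the paper and is not available by Schauder theory at this boundary regularity. The only H\"older operator bound the paper uses in this circle of estimates is $\mathcal H\colon C^\alpha(\Gamma)\to C^\alpha(\Omega)$ from Corollary \ref{Hboundlow}, which holds by interpolation and the maximum principle; there is no analogue for $\mathcal N$.

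The paper peels off $\mathcal N$ from the \emph{inside}: it writes $\mathcal N^m\psi=\mathcal N^{m-1}(\mathcal N\psi)$ and applies the inductive hypothesis to the $\mathcal N^{m-1}$ factor with $\mathcal N\psi$ as the input. The resulting error terms then involve a partition of the single object $\mathcal N\psi$, and because $\mathcal N\psi=\nabla_n\mathcal H\psi$, one can take
\[
\phi_j^1=\nabla_n P_{<j}\mathcal H\psi_j^1,\qquad \phi_j^2=\nabla_n P_{<j}\mathcal H\psi_j^2+\nabla_n P_{\geq j}\mathcal H\psi,
\]
the same partition used in the proof of Proposition \ref{ellipticity}. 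The crucial bound $\|\phi_j^1\|_{L^\infty(\Gamma)}\lesssim_A 2^{j(1-\alpha)}\|\psi_j^1\|_{C^\alpha(\Gamma)}$ then follows from an interior Bernstein inequality on $\Omega$ and Corollary \ref{Hboundlow}; no regularity theory for $\mathcal N$ in H\"older spaces is ever invoked. The remaining Sobolev term $\|\mathcal N\psi\|_{H^{s+m-1}}$ is then handled by a second application of Proposition \ref{DNpower1}.

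So the fix is to reverse your peeling order: the object you partition in the inductive step must be $\mathcal N\psi$ (for which the $\nabla_n P_{<j}\mathcal H$ device is available), not $\mathcal N^{m-1}\psi$ (for which it is not). Your treatment of $\phi_j^2$ has the same problem in disguise — the iterated bound $\mathcal N^{m-1}\colon H^{m-\frac12-\beta}\to H^{\frac12-\beta}$ at these very low regularities with an $A$-dependent constant is not something the paper establishes, and again the paper sidesteps it by applying $\nabla_n$ only to the frequency-truncated harmonic extension. Your argument for the second bound \eqref{iteratedN2} (apply Proposition \ref{Hbounds} directly to $\mathcal N^m\psi$) inherits the same gap through the choice of $\xi_j^{1,2}$; the paper instead runs the same inside-peeling induction for that bound.
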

\begin{proof}
We begin with the first bound. The previous proposition handles the case $m=1$. Suppose $m> 1$ and let us suppose inductively that the bound holds for all integers greater than or equal to 1 and strictly less than $m$. Then we have from the inductive hypothesis,
\begin{equation}\label{powers2}
\|\mathcal{N}^m\psi\|_{H^s(\Gamma)}\lesssim_A \|\mathcal{N}\psi\|_{H^{s+m-1}(\Gamma)}+\|\Gamma\|_{H^{s+m+r}}\sup_{j>0}2^{-jr}\|\phi_j^1\|_{L^{\infty}(\Gamma)}+\sup_{j>0}2^{j(s-1+m-\epsilon)}\|\phi_j^2\|_{H^{\epsilon}(\Gamma)},
\end{equation}
where $\mathcal{N}\psi:=\phi_j^1+\phi_j^2$ is the same partition of $\mathcal{N}\psi$ as in the proof of \Cref{ellipticity}. Applying the inductive hypothesis again to the first term on the right and arguing the same way as in \Cref{ellipticity} to control the latter two terms in favour of $\psi$, $\psi_j^1$ and $\psi_j^2$ concludes the proof of the first estimate. To obtain the latter estimate, we proceed in a similar way as above. For the case $m=1$, we can use \Cref{Hbounds} to control $\|\mathcal{H}\mathcal{N}\psi\|_{H^{s+\frac{1}{2}}(\Omega)}$ by the right-hand side of (\ref{powers2}). Then one concludes the bound for all $m\geq 1$ by induction as above.
\end{proof}
Next, we note a bound for the operator $\nabla^{\top}$ which follows from similar reasoning to the above.
\begin{proposition}\label{tangradientbound}
Let $s\geq \frac{1}{2}$, $r\geq 0$, $\alpha\in [0,1)$ and $\beta\in [0,\frac{1}{2})$. Then
\begin{equation}
\|\nabla^{\top}\psi\|_{H^s(\Gamma)}\lesssim_A \|\psi\|_{H^{s+1}(\Gamma)}+\|\Gamma\|_{H^{s+1+r}}\sup_{j>0}2^{-j(r-1+\alpha)}\|\psi_j^1\|_{C^{\alpha}(\Gamma)}+\sup_{j>0}2^{j(s+\frac{1}{2}+\beta-\epsilon)}\|\psi_j^2\|_{H^{\frac{1}{2}-\beta}(\Gamma)}    
\end{equation}
for any sequence of partitions $\psi=\psi_j^1+\psi_j^2$.
\end{proposition}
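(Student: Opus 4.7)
The natural approach is to mirror the argument used for Proposition \ref{DNpower1}, exploiting the identity
\[
\nabla^{\top}\psi = (\nabla \mathcal{H}\psi)_{|\Gamma} - (\mathcal{N}\psi)\, n_{\Gamma},
\]
which expresses the tangential gradient in terms of the harmonic extension and the Dirichlet-to-Neumann operator. This reduces the estimate to two pieces, each of which can be handled by tools already established in the section.

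For the second piece $(\mathcal{N}\psi)\, n_{\Gamma}$, I would apply Proposition \ref{DNpower1} to control $\|\mathcal{N}\psi\|_{H^s(\Gamma)}$ by exactly the right-hand side of the desired bound, and then multiply by $n_{\Gamma}$ using the balanced boundary product estimate in Proposition \ref{boundaryest}, combined with $\|n_{\Gamma}\|_{C^{\epsilon}(\Gamma)} \lesssim_A 1$ and $\|n_{\Gamma}\|_{H^{s+r}(\Gamma)} \lesssim_A \|\Gamma\|_{H^{s+r+1}}$. The partition used for $\mathcal{N}\psi$ on the boundary is inherited from the partition of $\psi$ via $\phi_j^1 = P_{<j}\mathcal{N}\psi_j^1$ and $\phi_j^2 = P_{<j}\mathcal{N}\psi_j^2 + P_{\geq j}\mathcal{N}\psi$, exactly as in the proof of Proposition \ref{ellipticity}.

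For the first piece $(\nabla \mathcal{H}\psi)_{|\Gamma}$, the strategy is identical to that of Proposition \ref{L1bound}: apply the balanced trace estimate in Proposition \ref{baltrace} to the vector field $\nabla \mathcal{H}\psi$, using the interior partition
\[
w_j^1 := \nabla P_{<j}(\mathcal{H}\psi_j^1), \qquad w_j^2 := \nabla P_{<j}(\mathcal{H}\psi_j^2) + \nabla P_{\geq j}(\mathcal{H}\psi).
\]
The trace bound converts $H^s(\Gamma)$ control into $H^{s+\frac{1}{2}}(\Omega)$ control of $\nabla \mathcal{H}\psi$, which is equivalent to $H^{s+\frac{3}{2}}(\Omega)$ control of $\mathcal{H}\psi$. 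This is then dispatched by the harmonic extension bound in Proposition \ref{Hbounds}, yielding $\|\psi\|_{H^{s+1}(\Gamma)}$ plus the domain-dependent terms. To estimate the pointwise contribution $\sup_{j}2^{-j(r-1+\alpha)}\|w_j^1\|_{C^\alpha(\Omega)}$, I would invoke a Bernstein inequality together with the $C^\alpha \to C^\alpha$ bound for $\mathcal{H}$ from Corollary \ref{Hboundlow}, thereby transferring from $\|\mathcal{H}\psi_j^1\|_{C^\alpha(\Omega)}$ back to $\|\psi_j^1\|_{C^\alpha(\Gamma)}$ at the cost of one derivative. For the $L^2$-based contribution $\sup_j 2^{j(s+\beta-\epsilon)}\|w_j^2\|_{H^{1-\beta}(\Omega)}$, I would use the baseline estimate \eqref{harmonicbase} together with standard Littlewood-Paley square summing, paying the missing derivative through the $\epsilon$ loss.

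The only mild obstacle is bookkeeping of the various partitions so that the pointwise and Sobolev contributions line up in the form stated; in particular, care is needed to ensure that the $C^\alpha$ and $H^{\frac{1}{2}-\beta}$ norms of $\psi_j^1$ and $\psi_j^2$ on $\Gamma$ ultimately control their harmonic-extension counterparts on $\Omega$ without incurring a derivative loss beyond what appears in the right-hand side. This is handled by the now standard trichotomy between the low frequency partition and the high frequency tail, exactly as in the proofs of Proposition \ref{DNpower1} and Proposition \ref{L1bound}, so no new difficulty arises.
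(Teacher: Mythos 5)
Your proposal is correct and follows essentially the same route as the paper: the paper's own (terse) proof writes $\nabla^{\top}\psi = \nabla\mathcal{H}\psi - n_\Gamma\mathcal{N}\psi$ and defers to the arguments of Propositions~\ref{L1bound} and~\ref{DNpower1}, which is precisely what you do (applying \ref{baltrace} plus \ref{Hbounds} to $(\nabla\mathcal{H}\psi)|_\Gamma$ and \ref{DNpower1} plus the boundary product estimate \ref{boundaryest} to $n_\Gamma\mathcal{N}\psi$). There are minor notational slips — your exponents on the $w_j^1,w_j^2$ intermediate terms come out shifted by the $r\mapsto r+1$ adjustment one makes when applying \ref{baltrace} and \ref{boundaryest}, and the partition you write as $P_{<j}\mathcal{N}\psi_j^1$ is more properly $\nabla_n P_{<j}\mathcal{H}\psi_j^1$ as in the proof of Proposition~\ref{ellipticity} — but these are bookkeeping adjustments, not gaps in the argument.
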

\begin{proof}
By writing 
\begin{equation*}
\nabla^{\top}\psi=\nabla\mathcal{H}\psi-n_\Gamma\mathcal{N}\psi,    
\end{equation*}
the proof follows essentially the same line of reasoning as the proofs of \Cref{L1bound} and \Cref{DNpower1}. We omit the details.    
\end{proof}
Finally, we note a bound for $\mathcal{N}^m\nabla_n$ which will be needed frequently in the higher energy bounds.
\begin{corollary}\label{EEcorollary} Let $\alpha,\beta\in [0,1]$, $s\geq \frac{1}{2}$ and $r\geq 0$. We have
\begin{equation*}
\|\mathcal{N}^m\nabla_nv\|_{H^{s}(\Gamma)}\lesssim_A \|v\|_{H^{s+m+\frac{3}{2}}(\Omega)}+\|\Gamma\|_{H^{s+1+m+r}}\sup_{j>0}2^{-j(r+\alpha-1)}\|v_j^1\|_{C^{\alpha}(\Omega)}+\sup_{j>0} 2^{j(s+\beta+\frac{1}{2}+m-\epsilon)}\|v_j^2\|_{H^{1-\beta}(\Omega)}
\end{equation*}
where $v=v_j^1+v_j^2$ is any sequence of partitions of $v$.
\end{corollary}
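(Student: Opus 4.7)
My plan is to derive this bound by composing two of the refined estimates already established in this section: the iterated Dirichlet-to-Neumann bound \Cref{higherpowers} and the normal derivative trace bound \Cref{L1bound}. The case $m=0$ is precisely \Cref{L1bound}, so I will focus on $m\geq 1$.

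First, I would apply \Cref{higherpowers} with input $\psi = \nabla_n v$ to obtain
\begin{equation*}
\|\mathcal{N}^m \nabla_n v\|_{H^s(\Gamma)} \lesssim_A \|\nabla_n v\|_{H^{s+m}(\Gamma)} + (\star),
\end{equation*}
where $(\star)$ denotes the two error terms of \Cref{higherpowers} corresponding to a partition $\nabla_n v = \phi_j^1 + \phi_j^2$ and free parameters $r', \alpha', \beta'$ that I am still free to choose. The leading term $\|\nabla_n v\|_{H^{s+m}(\Gamma)}$ is then controlled by \Cref{L1bound}, applied with $s$ replaced by $s+m$ and with the prescribed partition $v = v_j^1 + v_j^2$; this directly produces precisely the right-hand side appearing in the statement of the corollary.

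To handle $(\star)$, I would descend the given partition of $v$ on $\Omega$ to a partition of $\nabla_n v$ on $\Gamma$ by setting
\begin{equation*}
\phi_j^1 := n_\Gamma \cdot (\nabla P_{<j} v_j^1)\big|_\Gamma, \qquad \phi_j^2 := n_\Gamma \cdot (\nabla P_{<j} v_j^2 + \nabla P_{\geq j} v)\big|_\Gamma,
\end{equation*}
which mirrors the partition used inside the proof of \Cref{L1bound}. Taking $\alpha' = 0$, Bernstein's inequality together with the uniform $L^\infty$ bound on $n_\Gamma$ yields $\|\phi_j^1\|_{L^\infty(\Gamma)} \lesssim_A 2^{j(1-\alpha)}\|v_j^1\|_{C^\alpha(\Omega)}$, and choosing $r' = r+1$ reshapes the first error term in \Cref{higherpowers} into exactly $\|\Gamma\|_{H^{s+1+m+r}}\sup_j 2^{-j(r+\alpha-1)}\|v_j^1\|_{C^\alpha(\Omega)}$. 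Taking $\beta' = \beta$, the trace theorem together with Bernstein gives
\begin{equation*}
\|\phi_j^2\|_{H^{\frac{1}{2}-\beta}(\Gamma)} \lesssim_A \|\nabla P_{<j}v_j^2\|_{H^{1-\beta}(\Omega)} + \|\nabla P_{\geq j} v\|_{H^{1-\beta}(\Omega)} \lesssim 2^j\|v_j^2\|_{H^{1-\beta}(\Omega)} + 2^{-j(s+m+\frac{1}{2}+\beta)}\|v\|_{H^{s+m+\frac{3}{2}}(\Omega)},
\end{equation*}
and the second error term in \Cref{higherpowers} is absorbed by the first two terms on the right-hand side of the target estimate (the $2^j$ loss exactly compensates the weight $2^{j(s-\frac{1}{2}+m+\beta-\epsilon)}$ to produce $2^{j(s+\frac{1}{2}+m+\beta-\epsilon)}$, while the $2^{-j(s+m+\frac{1}{2}+\beta)}$ contribution is uniformly bounded in $j$ by $\|v\|_{H^{s+m+\frac{3}{2}}(\Omega)}$).

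The only real obstacle here is bookkeeping: one must select the parameters $r', \alpha', \beta'$ in \Cref{higherpowers} so that the Bernstein losses incurred when passing from the interior partition of $v$ to the boundary partition of $\nabla_n v$ are exactly compensated by the weights appearing in the error terms. The $+1$ shift in the domain regularity index (from $s+r+m$ in \Cref{higherpowers} to $s+1+m+r$ in the corollary) originates precisely from this descent, reflecting the one derivative consumed by the trace operator $\nabla_n$; likewise, the $+\frac{1}{2}$ shift in the Sobolev weight arises from the trace theorem gap between $H^{1-\beta}(\Omega)$ and $H^{\frac{1}{2}-\beta}(\Gamma)$.
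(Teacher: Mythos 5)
Your proposal is essentially the paper's proof: the paper also applies \Cref{higherpowers} to $\psi = \nabla_n v$ using exactly the partition $\phi_j^1 = n_\Gamma\cdot w_{j|\Gamma}^1$, $\phi_j^2 = n_\Gamma\cdot w_{j|\Gamma}^2$ with $w_j^1 = \nabla P_{<j}v_j^1$, $w_j^2 = \nabla P_{<j}v_j^2 + \nabla P_{\geq j}v$, and then controls the leading term $\|\nabla_n v\|_{H^{s+m}(\Gamma)}$ via \Cref{L1bound}. The one place where you deviate is the choice of free parameter $\beta'$ in \Cref{higherpowers}: you set $\beta' = \beta$, but \Cref{higherpowers} requires $\beta' \in [0,\tfrac12)$, and your trace-theorem step $\|\phi_j^2\|_{H^{\frac12-\beta}(\Gamma)} \lesssim \|w_j^2\|_{H^{1-\beta}(\Omega)}$ also needs $1-\beta > \tfrac12$; so your argument only covers the range $\beta \in [0,\tfrac12)$, whereas the corollary allows $\beta \in [0,1]$. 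The paper instead fixes $\alpha'=0$ and $\beta'$ close to $\tfrac12$ once and for all (so the partition is placed in $L^\infty(\Gamma)+H^\epsilon(\Gamma)$), and recovers the $\beta$-dependent weight by a Bernstein estimate of the form $\|\nabla P_{<j}v_j^2\|_{H^{\frac12+\epsilon}(\Omega)} \lesssim 2^{j(\frac12+\beta+\epsilon)}\|v_j^2\|_{H^{1-\beta}(\Omega)}$, which is valid for all $\beta \in [0,1]$ and plugs into the weight $2^{j(s+m-2\epsilon)}$ to yield the target exponent $2^{j(s+m+\frac12+\beta-\epsilon)}$. Finally, a minor slip: in your displayed estimate the high-frequency term should decay like $2^{-j(s+m-\frac12+\beta)}\|v\|_{H^{s+m+\frac32}(\Omega)}$ rather than $2^{-j(s+m+\frac12+\beta)}$ (the Sobolev gap between $H^{2-\beta}$ and $H^{s+m+\frac32}$ is $s+m-\frac12+\beta$), but this does not affect the conclusion since the relevant product in the supremum is still $2^{-j\epsilon}$, uniformly bounded.
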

\begin{proof}
We omit most of the details. The proof proceeds by first using \Cref{higherpowers} with the partition $\nabla_nv=n_\Gamma \cdot w^1_{j|\Gamma}+n_\Gamma\cdot w_{j|\Gamma}^2$ in $L^{\infty}(\Gamma)+H^{\epsilon}(\Gamma)$ where $w_j^1$ and $w_j^2$ are as in the proof of \Cref{L1bound} and then using  \Cref{L1bound} to estimate $\nabla_n v$ in $H^{s+m}$.    
\end{proof}

\subsection{Moving surface identities}\label{Movingsurfid} In this section, we suppose that $\Omega_t$ is a one parameter family of domains with boundaries $\Gamma_t\in\Lambda_*$ which flow with a velocity vector field $v$ that is not necessarily divergence free. Our purpose is to collect various identities and commutator estimates involving the material derivative $D_t:=\partial_t+v\cdot\nabla$ and functions on $\Gamma_t$. We begin by recalling several algebraic identities, many of which were proven in \cite{sz}.
\begin{enumerate}
    \item (Material derivative of the normal). \begin{equation}\label{Moving normal}
    D_tn_{\Gamma_t}=-\left((\nabla v)^*(n_{\Gamma_{t}})\right)^\top.
\end{equation}
\item (Leibniz rule for $\mathcal{N}$).
\begin{equation}\label{DNLeibniz}
    \mathcal{N}(fg)=f\mathcal{N}g+g\mathcal{N}f-2\nabla_n\Delta^{-1}(\nabla \mathcal{H}f\cdot \nabla \mathcal{H}g).
\end{equation}
\item(Commutator with $\nabla$).
\begin{equation}\label{Dt commutator grad}
    [D_t,\nabla]g=-(\nabla v)^*(\nabla g).
\end{equation}
\item(Commutator with $\Delta^{-1}$).
\begin{equation}\label{Dt commutator laplacian-1}
    [D_t,\Delta^{-1}]g=\Delta^{-1}\left(2\nabla v\cdot \nabla^2\Delta^{-1}g+\Delta v\cdot\nabla\Delta^{-1}g\right).
\end{equation}
\item(Commutator with $\mathcal{H}$).
\begin{equation}\label{H-commutator}
\begin{split}
S_0f:=[D_t,\mathcal{H}]f=\Delta^{-1}(2\nabla v\cdot \nabla^2\mathcal{H}f+\nabla\mathcal{H}f\cdot\Delta v).
\end{split}
\end{equation}
\item(Commutator with $\mathcal{N}$).
\begin{equation}\label{N-commutator}
\begin{split}
S_1f:=[D_t,\mathcal{N}]f=D_t n_{\Gamma_t}\cdot \nabla \mathcal{H}f-n_{\Gamma_t}\cdot((\nabla v)^*(\nabla \mathcal{H}f))+n_{\Gamma_t}\cdot\nabla ([D_t,\mathcal{H}]f).
\end{split}    
\end{equation}
\end{enumerate}
We also have the general Leibniz type formula,
\begin{equation}\label{Leibniz on moving hypersurfaces general}
\frac{d}{dt}\int_{\Gamma_{t}}fdS=\int_{\Gamma_{t}}D_{t}f+f(\mathcal{D}\cdot v^{\top}-\kappa v^{\perp})\,dS,
\end{equation} 
where $\mathcal{D}$ is the covariant derivative. 
\subsubsection{Balanced commutator estimates}
Using the above identities, we now establish refined estimates for commutators involving $D_t$ and the Dirichlet-to-Neumann operator. If we assume that $v$ is divergence free, it is a straightforward calculation to verify that $S_0\psi$ can be rewritten in the form
\begin{equation}\label{Sid}
 S_0\psi=\Delta^{-1}\nabla\cdot\mathcal{B}(\nabla v,\nabla \mathcal{H}\psi)   ,
\end{equation}
where $\mathcal{B}$ is an $\mathbb{R}^d$-valued bilinear form.  Using \eqref{N-commutator}, we can write the commutator $[D_t,\mathcal{N}]$ as follows:
\begin{equation*}
S_1\psi:=[D_t,\mathcal{N}]\psi=\nabla_n S_0\psi-\nabla\mathcal{H}\psi\cdot (\nabla_n v)-\nabla^{\top}\psi\cdot\nabla v\cdot n_{\Gamma_t}    .
\end{equation*}
In the higher energy bounds, we will need an estimate for higher order commutators $S_k$, given by
\begin{equation}\label{commutatorexpansion}
S_k\psi:=[D_t,\mathcal{N}^k]\psi=\sum_{l+m=k-1}\mathcal{N}^l[D_t,\mathcal{N}]\mathcal{N}^{m}\psi,  
\end{equation}
where $l,m$ are non-negative integers and $k\in\mathbb{N}$. From now on, let us  write $A=\|v\|_{C^{\frac{1}{2}+\epsilon}(\Omega)}+\|\Gamma\|_{C^{1,\epsilon}}$. For $s\geq \frac{1}{2}$, we have the following refined estimates for $S_k$ when $v$ is divergence free, which will be useful for estimating $S_kD_ta$ and $S_ka$, respectively, in the higher energy bounds.
\begin{proposition}\label{materialcom}
 Suppose that the flow velocity $v$ is divergence free and let $s\geq \frac{1}{2}$, $k\geq 1$. Then we have the following bounds for $S_k$. 
 \begin{enumerate}
 \item (Variant 1). For any sequence of partitions $\psi=\psi_j^1+\psi_j^2$, there holds
 \begin{equation*}\label{RHScom}
 \begin{split}
 \|S_k\psi\|_{H^s(\Gamma)}\lesssim_A &\|v\|_{W^{1,\infty}(\Omega)}\|\psi\|_{H^{s+k}(\Gamma)}+\|v\|_{H^{s+\frac{3}{2}+k}(\Omega)}\|\psi\|_{L^{\infty}(\Gamma)}+\|\Gamma\|_{H^{s+\frac{3}{2}+k}}\|\psi\|_{L^{\infty}(\Gamma)}
 \\
 +&\|v\|_{W^{1,\infty}(\Omega)}\|\Gamma\|_{H^{s+k+\frac{3}{2}}}\sup_{j>0}2^{-\frac{j}{2}}\|\psi_j^1\|_{L^{\infty}(\Gamma)}+\|v\|_{W^{1,\infty}(\Omega)}\sup_{j>0} 2^{j(s+k-\epsilon)}\|\psi_j^2\|_{H^{\epsilon}(\Gamma)}.
 \end{split}
 \end{equation*}
 \item (Variant 2).
 \begin{equation*}
 \begin{split}
 \|S_k\psi\|_{H^s(\Gamma)}\lesssim_A &\|v\|_{W^{1,\infty}(\Omega)}\|\psi\|_{H^{s+k}(\Gamma)}+\|\Gamma\|_{H^{s+k+1}}(\|\psi\|_{C^{\frac{1}{2}}(\Gamma)}+\|v\|_{W^{1,\infty}(\Omega)}\|\psi\|_{L^{\infty}(\Gamma)})
 \\
 +&\|v\|_{H^{s+k+1}(\Omega)}\|\psi\|_{C^{\frac{1}{2}}(\Gamma)}.
 \end{split}
 \end{equation*}
 \end{enumerate}
\end{proposition}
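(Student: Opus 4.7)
My overall strategy is to expand $S_k$ via the iterated commutator formula \eqref{commutatorexpansion} and then handle each summand $\mathcal{N}^l S_1 \mathcal{N}^m\psi$ by peeling off the outer powers of $\mathcal{N}$ using the ellipticity/DN bounds from \Cref{higherpowers} and \Cref{DNpower1}, reducing matters to estimates for $S_1$ applied to a suitable auxiliary function. Concretely, for Variant 1 I would set $\phi_m := \mathcal{N}^m\psi$ and, after a careful choice of partition $\phi_m = \phi_{m,j}^1 + \phi_{m,j}^2$ induced by the partition $\psi = \psi_j^1 + \psi_j^2$ (so that the $C^\alpha$ and $H^{1/2-\beta}$ tails of $\phi_m$ are controlled via \Cref{higherpowers} and the $C^\alpha\to C^\alpha$ bound for $\mathcal{H}$), apply \Cref{ellipticity} or \Cref{higherpowers} to bound
\[
\|\mathcal{N}^l S_1\phi_m\|_{H^s(\Gamma)} \lesssim_A \|S_1\phi_m\|_{H^{s+l}(\Gamma)} + (\text{domain correction terms}).
\]

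The heart of the argument is then the estimate for $S_1\phi$ in $H^{s+l}(\Gamma)$. Using the identity
\[
S_1\phi = \nabla_n S_0\phi - \nabla\mathcal{H}\phi\cdot(\nabla_n v) - \nabla^\top\phi\cdot\nabla v\cdot n_{\Gamma},
\]
I would treat the three pieces separately. For the second and third pieces, I would apply the balanced boundary product estimate \Cref{boundaryest} together with the trace bound \Cref{baltrace} for $\nabla v$ and \Cref{tangradientbound}, \Cref{Hbounds} for $\nabla^\top\phi$ and $\nabla\mathcal{H}\phi$; this distributes the derivatives so that either $v$ sits in $W^{1,\infty}$ with $\phi$ at top Sobolev order, or $v$ is at its top Sobolev order and $\phi$ only in $L^\infty$ (respectively $C^{1/2}$ for Variant 2). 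For the first piece $\nabla_n S_0\phi$, use \Cref{L1bound} to reduce to bounding $S_0\phi$ in $H^{s+l+3/2}(\Omega)$. Since
\[
S_0\phi = \Delta^{-1}\nabla\cdot \mathcal{B}(\nabla v,\nabla\mathcal{H}\phi)
\]
by \eqref{Sid}, the Dirichlet estimate \Cref{direst} reduces this to an $H^{s+l+1/2}(\Omega)$ bound on the bilinear quantity $\mathcal{B}(\nabla v,\nabla\mathcal{H}\phi)$, which is handled by \Cref{productestref} (in Variant 1, with the partition transported to $\nabla\mathcal{H}\phi$ via \Cref{Hbounds}) or \Cref{productest} (in Variant 2, using $\alpha_1 = 1$, $\alpha_2 = 1/2$ to pick up the $C^{1/2}$ norm of $\psi$ through the extension bounds for $\mathcal{H}$).

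Assembling these, each summand in \eqref{commutatorexpansion} contributes a bound of the claimed form, with the exponents $s+l+(m+\text{shift})$ adding up correctly to $s+k$ or $s+k+3/2$ on the right hand side; a final summation over $l+m=k-1$ yields the stated inequalities. In Variant 2, the partition-free setting is easier: one simply uses the unbalanced versions \Cref{productest} and the $C^\alpha\to C^\alpha$ bound for $\mathcal{H}$ throughout, accepting the slightly more generous $\|v\|_{H^{s+k+1}}\|\psi\|_{C^{1/2}}$ term.

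The main obstacle will be bookkeeping the partition $\psi = \psi_j^1 + \psi_j^2$ as it propagates through the composition $\mathcal{N}^m$ and then through $S_1$ and $\mathcal{N}^l$: each application of an operator produces an auxiliary partition at a different regularity level, and one must verify that choosing $\phi_{m,j}^1 := P_{<j}\mathcal{N}^m\psi_j^1$ (or a similar harmonic-extension variant) and $\phi_{m,j}^2$ to absorb the leftovers is compatible with every subsequent invocation of \Cref{higherpowers}, \Cref{Hbounds}, \Cref{L1bound}, and \Cref{boundaryest}. This is essentially the same pattern already carried out in the proofs of \Cref{ellipticity} and \Cref{higherpowers}, and the estimates all fit together provided one is careful about the parameter $\epsilon$ and the fact that each elliptic estimate in this subsection has been stated to accommodate exactly such transported partitions.
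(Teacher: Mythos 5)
Your proposal is correct and follows essentially the same roadmap as the paper: expand $S_k$ via \eqref{commutatorexpansion} into a sum of $\mathcal{N}^l S_1 \mathcal{N}^m\psi$ with $l+m=k-1$, split $S_1$ into the three pieces from \eqref{N-commutator}, isolate $\nabla_n S_0$ as the hardest piece (rewriting it via \eqref{Sid} as a balanced Dirichlet problem for a bilinear form in $\nabla v$ and $\nabla\mathcal{H}\mathcal{N}^m\psi$), and close with the balanced elliptic and product estimates from \Cref{BEE}. The one cosmetic difference is that you propose to peel $\mathcal{N}^l$ off first via \Cref{higherpowers} and then $\nabla_n$ via \Cref{L1bound}, whereas the paper applies the pre-packaged \Cref{EEcorollary} for the composite $\mathcal{N}^l\nabla_n$; since \Cref{EEcorollary} is itself proved by exactly the two-step peeling you describe, the arguments coincide, modulo the detail that your route forces you to carry a partition of the boundary function $S_1\phi_m$ rather than just of the interior bilinear quantity $G$, which is a bit more bookkeeping but not a gap.
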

\begin{proof}
We will focus on the first estimate as the second one is similar. From (\ref{commutatorexpansion}), we need to prove the estimate in (\ref{RHScom}) with the left-hand side replaced with $\mathcal{N}^l[D_t,\mathcal{N}]\mathcal{N}^m\psi$ where $l+m=k-1$. We will focus first on the term $\mathcal{N}^l(\nabla_n S_0\mathcal{N}^m\psi)$ which is the most difficult to deal with. Let us write $G:=\mathcal{B}(\nabla v,\nabla \mathcal{H}\mathcal{N}^m\psi)$ for notational convenience. We begin by applying \Cref{EEcorollary} and then \Cref{direst} to obtain (using the identity (\ref{Sid})),
\begin{equation*}
\begin{split}
\|\mathcal{N}^l(\nabla_n S_0\mathcal{N}^m\psi)\|_{H^s(\Gamma)}&\lesssim_A \|G\|_{H^{s+l+\frac{1}{2}}(\Omega)}+\|\Gamma\|_{H^{s+\frac{3}{2}+k}}\sup_{j>0}2^{-j(m+\frac{3}{2})}\|\Delta^{-1}\nabla\cdot G_j^1\|_{W^{1,\infty}(\Omega)}
\\
&\quad +\sup_{j>0} 2^{j(s+l+\frac{1}{2}-\epsilon)}\|\Delta^{-1}\nabla\cdot G_j^2\|_{H^1(\Omega)}  ,
\end{split}
\end{equation*}
where $G=G_j^1+G_j^2$ is a partition of $G$ defined by taking $G_j^1=\mathcal{B}(\nabla P_{<j}v,\nabla P_{<j}\mathcal{H}\mathcal{N}^m_{<j}\psi)$, where $\mathcal{N}_{<j}:=\nabla_n P_{<j}\mathcal{H}$. Using the $C^{1,\epsilon}$ estimate for $\Delta^{-1}$ and the maximum principle for $\mathcal{H}$, it is straightforward to control
\begin{equation*}
2^{-j(m+\frac{3}{2})}\|\Delta^{-1}\nabla\cdot G_j^1\|_{W^{1,\infty}(\Omega)}\lesssim_A \|v\|_{C^{\frac{1}{2}+\epsilon}(\Omega)}\|\psi\|_{L^{\infty}(\Gamma)}\lesssim_A \|\psi\|_{L^{\infty}(\Gamma)}.   
\end{equation*}
Moreover, using the $H^{-1}\to H^1_0$ estimate for $\Delta^{-1}$, we can control the other term by
\begin{equation*}
\begin{split}
2^{j(s+l+\frac{1}{2}-\epsilon)}\|\Delta^{-1}\nabla\cdot G_j^2\|_{H^1(\Omega)}\lesssim_A 2^{j(s+l+\frac{1}{2}-\epsilon)}&\|v\|_{W^{1,\infty}(\Omega)}\|\nabla P_{<j}\mathcal{H}\mathcal{N}_{<j}^m\psi-\nabla \mathcal{H}\mathcal{N}^m\psi\|_{L^2(\Omega)} 
\\
+&\|v\|_{H^{s+\frac{3}{2}+k}(\Omega)}\|\psi\|_{L^{\infty}(\Gamma)}.
\end{split}
\end{equation*}
Finally, it is straightforward (albeit somewhat technical) to verify that the terms on the right-hand side above can be controlled by the right-hand side of (\ref{RHScom}) using the $H^{\epsilon}\to H^{\frac{1}{2}+\epsilon}$ bound (\ref{harmonicbase}), \Cref{higherpowers}, \Cref{boundaryest} with $g_j^2=g$ (and the fact that $\|n_\Gamma\|_{C^\epsilon(\Gamma)}\lesssim_A1$) as well as the $H^{\frac{1}{2}+\epsilon}\to H^\epsilon$ trace estimates. Now, we turn to estimating $\|G\|_{H^{s+l+\frac{1}{2}}(\Omega)}$. By performing a paradifferential expansion as in \Cref{productestref}, it is easy to see that
\begin{equation*}
\|G\|_{H^{s+1+\frac{1}{2}}(\Omega)}\lesssim_A \|v\|_{W^{1,\infty}(\Omega)}\|\mathcal{H}\mathcal{N}^{m}\psi\|_{H^{s+l+\frac{3}{2}}(\Omega)}+\|T_{\nabla \mathcal{H}\mathcal{N}^{m}\psi}\nabla v\|_{H^{s+l+\frac{1}{2}}(\Omega)}.    
\end{equation*}
Using \Cref{Hbounds} and \Cref{higherpowers}, the first term on the right can be controlled by the right-hand side of (\ref{RHScom}). For the latter term, we need to control the $l^2$ sum of
\begin{equation*}
2^{j(s+l+\frac{1}{2})}\|P_j\nabla vP_{<j-4}\nabla\mathcal{H}\mathcal{N}^m\psi\|_{L^2(\Omega)}    .
\end{equation*}
For this, we estimate
\begin{equation*}
\begin{split}
2^{j(s+l+\frac{1}{2})}\|P_j\nabla vP_{<j-4}\nabla\mathcal{H}\mathcal{N}^m\psi\|_{L^2(\Omega)}&\lesssim_A 2^{j(s+k+\frac{1}{2})}\|P_j\nabla v\|_{L^2(\Omega)}\|\psi\|_{L^{\infty}(\Gamma)}
\\
&\quad +2^{j(s+l+\frac{1}{2})}\|v\|_{W^{1,\infty}(\Omega)}\|P_{<j-4}\nabla\mathcal{H}(\mathcal{N}^m-\mathcal{N}^m_{<j})\psi\|_{L^2(\Omega)}. 
\end{split}
\end{equation*}
The first term on the right  when summed in $l^2$ is controlled by the right-hand side of (\ref{RHScom}). The same is true for the latter term after making use of \eqref{harmonicbase} and \Cref{higherpowers}. This concludes the full estimate for $\mathcal{N}^l(\nabla_n S_0\mathcal{N}^m\psi)$. The other terms in $\mathcal{N}^l[D_t,\mathcal{N}]\mathcal{N}^m\psi$ are dealt with similarly.
\end{proof}
\section{Regularization operators}\label{SSRO}
Let $\Omega_*$ be a smooth, bounded domain with boundary $\Gamma_*$. In the following, we let $\Omega$ be a bounded domain with  boundary $\Gamma\in \Lambda(\Gamma_*,\epsilon,\delta)$ where $\epsilon>0$ and $\delta>0$ are small positive constants. As usual, we will abbreviate the above set of hypersurfaces by $\Lambda_*$ and consider the volume of the associated domains as part of our implicit constants. We recall from \eqref{PhiGamma} that we have the diffeomorphism from $\Gamma_*$ to $\Gamma$ given by
\begin{equation*}
    \Phi_\Gamma(x)=x+\eta_\Gamma(x)\nu(x)
 \end{equation*}
which parameterizes $\Gamma$ as a graph over $\Gamma_*$. When constructing solutions to the free boundary Euler equations (and also when proving refined energy estimates), it will be important to have a good regularization operator at each dyadic scale which preserves divergence free functions. More precisely, beyond the obvious regularization properties (to be outlined below in more detail), our operators will need to have the following properties.
\begin{enumerate}
\item\label{EP} (Extension property). There is a $\delta_0>0$ such that the following holds: If $\Omega_j$ is a domain containing $\Omega$ with boundary $\Gamma_j\in\Lambda_*$ such that $\|\dist(x,\Omega)\|_{L^{\infty}(\Omega_j)}<\delta_0 2^{-j}$ then there is an associated regularization  $\Psi_{\leq j}v$ at the dyadic scale $2^j$, defined on $\Omega_j$.
\item (Regularization is divergence free). Given $\Omega_j$ as above, the regularization $\Psi_{\leq j}v$ satisfies $\nabla\cdot \Psi_{\leq j}v=0$ on $\Omega_j$. Here, $v$ is a divergence free function on $\Omega$.
\end{enumerate}
\begin{remark}
The first point will be convenient later for comparing velocities defined on different domains, which are sufficiently close. The second point is important as our regularization operators will not necessarily commute with derivatives (but will commute with derivatives up to lower order terms). 
\end{remark}
A more precise description of the above regularization operators is given by the following proposition.
\begin{proposition}\label{c reg bounds}
    Fix $\alpha_0$, let $v$, $\Omega$ and $\Omega_j$ be as above and let $A=\|\Gamma\|_{C^{1,\epsilon}}$. Then there exists a regularization operator $\Psi_{\leq j}$ which is bounded from $H^s_{div}(\Omega)\to H^s_{div}(\Omega_{j})$ for every $s\geq 0$ with the following properties.
    \begin{enumerate}
        \item (Regularization bounds).
        \begin{equation*}
        \|\Psi_{\leq j}v\|_{H^{s+\alpha}(\Omega_{j})} \lesssim_{A} 2^{j\alpha }\|v\|_{H^s(\Omega)},\hspace{16mm}0\leq\alpha.
        \end{equation*}
        \item (Difference bounds).
        \begin{equation*}
        \|(\Psi_{\leq j+1}-\Psi_{\leq j})v\|_{H^{s-\alpha}(\Omega_{j+1})}\lesssim_{A} 2^{-j\alpha}\|v\|_{H^s(\Omega)},\hspace{7mm} 0\leq\alpha\leq \min\{s,\alpha_0\}.
        \end{equation*}
        \item (Error bounds).
        \begin{equation*}
        \|(I-\Psi_{\leq j})v\|_{H^{s-\alpha}(\Omega)}\lesssim_{A} 2^{-j\alpha}\|v\|_{H^s(\Omega)},\hspace{15mm} 0\leq\alpha\leq \min\{s,\alpha_0\}.
        \end{equation*}
    \end{enumerate}
\end{proposition}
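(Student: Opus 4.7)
The plan is to build $\Psi_{\leq j}$ in two stages: first construct a divergence-free extension of $v$ to a fixed tubular neighborhood of $\overline{\Omega}$ with uniform bounds in $\Lambda_*$, and then apply a standard dyadic mollification at scale $2^{-j}$.

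Fix an open tubular neighborhood $U$ of $\overline{\Omega}$ containing $\overline{\Omega_j}$ for all $j$ sufficiently large (arranged by taking $\delta_0$ small). To construct the divergence-free extension $\mathcal{E}_{div}: H^s_{div}(\Omega) \to H^s_{div}(U)$, first apply Stein's componentwise extension from \Cref{Stein} to obtain $\tilde{v}_0 := \mathcal{E} v \in H^s(\mathbb{R}^d)$ with $\|\tilde{v}_0\|_{H^s(\mathbb{R}^d)} \lesssim_A \|v\|_{H^s(\Omega)}$. After multiplication by a cutoff supported near $\overline{\Omega}$, we may further assume that $\tilde{v}_0 \cdot n_{\partial U} = 0$. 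Then the divergence $f := \nabla \cdot \tilde{v}_0$ vanishes on $\Omega$, is supported in the shell $U \setminus \overline{\Omega}$, and has mean zero there by the divergence theorem. Solve the equation $\nabla \cdot u = f$ on $U \setminus \overline{\Omega}$ with $u = 0$ on $\partial(U \setminus \overline{\Omega})$ via the Bogovskii operator, and set $\mathcal{E}_{div} v := \tilde{v}_0 - u$. This yields a divergence-free extension of $v$ to $U$ with operator norm $\lesssim_A 1$.

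Next, let $\{\eta_{\leq j}\}$ be a standard family of smooth mollifiers satisfying $\mathrm{supp}(\eta_{\leq j}) \subseteq B(0, c\,2^{-j})$ for a fixed $c > 0$, and define
\begin{equation*}
\Psi_{\leq j} v := \left(\eta_{\leq j} * \mathcal{E}_{div} v\right)\big|_{\Omega_j}.
\end{equation*}
The choice of $\delta_0$ ensures that $\Omega_j + B(0, c\,2^{-j}) \subseteq U$, so $\Psi_{\leq j} v$ is well-defined on $\Omega_j$. Since convolution commutes with $\nabla\cdot$, we have $\nabla \cdot \Psi_{\leq j} v = \eta_{\leq j} * (\nabla \cdot \mathcal{E}_{div} v) = 0$ on $\Omega_j$. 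The three stated estimates then reduce to standard properties of the kernel $\eta_{\leq j}$: Bernstein's inequality yields the regularization bound; the fact that $\eta_{\leq j+1} - \eta_{\leq j}$ is frequency-localized at scale $2^j$ gives the difference bound; and $\delta - \eta_{\leq j}$ acts as a high-frequency cutoff that produces the error bound. All three bounds inherit collar-uniformity from $\mathcal{E}_{div}$.

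The principal obstacle is establishing uniform bounds for the Bogovskii operator on the shell $U \setminus \overline{\Omega}$, whose geometry varies with $\Gamma \in \Lambda_*$. Fortunately, the collar coordinate system from Section~\ref{AOMD} endows the shell with a parametrization whose Lipschitz character is controlled by $A = \|\Gamma\|_{C^{1,\epsilon}}$. Combined with the classical construction of Bogovskii's operator as a sum of integral operators on star-shaped pieces, whose norms depend only on the Lipschitz character and John constant of the domain, this yields the required uniform bounds for $\mathcal{E}_{div}$ and hence for $\Psi_{\leq j}$. A secondary technical point is ensuring the full range of exponents in the dyadic estimates, but since $\eta_{\leq j}$ is smooth to all orders and $\mathcal{E}_{div}$ is bounded on $H^s$ for $0 \le s \le s_0(\alpha_0)$, this follows immediately from standard dyadic calculus.
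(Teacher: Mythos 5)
Your construction is genuinely different from the paper's. The paper builds a local, $j$-dependent kernel $K^j$ whose bumps $\phi_k$ are supported in $B(e_k,\delta_1)$ with $e_k$ pointing \emph{outward} from $\Omega$ near the boundary; for $x$ in a $2^{-j}$ enlargement of $\Omega$ the kernel then only samples $y \in \Omega$, so no extension of $v$ is ever needed. The divergence-free correction is done by solving a Dirichlet problem on $\Omega_j$, exploiting the explicit formula (\ref{divformula}) which shows that $\nabla\cdot\Phi_{\leq j}v$ loses no derivatives on $v$, together with the observation that one may replace $\Omega_j$ by a domain with boundary regularized at scale $2^{-j}$ in order to apply \Cref{direst2} at all orders $s$. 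You instead build a single, scale-independent divergence-free extension to a fixed tubular neighborhood via Stein plus Bogovskii, then mollify and restrict. This is conceptually cleaner and pushes the divergence-free constraint to the very start, where it becomes automatic under convolution.

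However, as written the proposal has a real gap. The proposition asserts that $\Psi_{\leq j}$ is bounded on $H^s_{div}$ for \emph{every} $s\ge 0$, and in particular the difference/error bounds (with $\alpha\le\min\{s,\alpha_0\}$) must hold for $s$ arbitrarily large. Your construction therefore requires $\mathcal E_{div}$, and in particular the Bogov\-skii operator on the shell $U\setminus\overline\Omega$, to be bounded $H^{s-1}_0 \to H^s_0$ for every $s$, with constants controlled solely by the collar parameter $A = \|\Gamma\|_{C^{1,\epsilon}}$. You explicitly concede only boundedness for $0\le s\le s_0(\alpha_0)$, which does not match the statement, and the uniform-in-collar, all-$s$ boundedness of Bogovskii on a shell whose inner boundary is merely $C^{1,\epsilon}$ (and whose geometry varies with $\Gamma$) is nontrivial: the usual decomposition into star-shaped pieces introduces partition-of-unity commutators at every order, and verifying that the resulting constants depend only on $A$ (and on $s$, but remain finite for all $s$) is precisely the kind of delicate uniform elliptic estimate that the paper's construction is engineered to sidestep. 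A second, more minor omission is that the error and difference bounds require vanishing moments of the mollifier $\eta$ up to order $\ge\alpha_0$; a generic bump with integral one only gives $\alpha_0=2$ (or $1$ if not even), so the moment conditions must be imposed explicitly, exactly as the paper does for its kernels $\phi_k$. You should also verify that extending the Bogovskii solution $u$ by zero across $\Gamma$ lands in $H^s(U)$ for all relevant $s$ (this needs $f = \nabla\cdot\tilde v_0$ to lie in $H^{s-1}_{00}$ of the shell, which holds precisely because $f$ vanishes on $\Omega$, but half-integer $s$ and the trace conditions deserve a word). If these points are addressed, the route you propose is a legitimate and arguably more modular alternative; as it stands, the heaviest technical lifting has been deferred to a sentence.
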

\begin{proof}
We begin with a preliminary step of constructing a regularization operator $\Phi_{\leq j}$ with the above three properties which maps $H^s(\Omega)$ to $H^s(\tilde{\Omega}_{j})$ where $\tilde{\Omega}_{j}$ is a neighborhood of $\Omega_{j}$, but does not necessarily preserve divergence free functions. To do this, we aim to construct a suitable kernel $K^{j}$ such that
\begin{equation*}
\Phi_{\leq j}v(x)=\int_\Omega K^{j}(x,y)v(y)\, dy.
\end{equation*}
Here, the kernel $K^j(x,y)$ is of the form
\begin{equation*}\label{Form of kern}
    K^j(x,y)=\sum_{k=0}^n K_k^j(x,y)\chi_k(x),
\end{equation*}
where $(\chi_k)_{k=0}^n$ is a partition of unity of a neighborhood of $\Omega$, obtained by selecting an open cover $\{U_k\}_{k=0}^n$ so that there are  vectors $(e_k)_{k=1}^n$ all of the same length with $e_k$  outward oriented and uniformly transversal to $\Gamma\cap U_k$. The remaining set $U_0$ is then chosen to cover the portion  of $\Omega$ away from the boundary.
Let $e_0=0$ and take $e_k$ with $k\in \{1,\dots, n\}$ as above. Such a smooth partition of unity can be constructed with bounds depending only on the properties of $\Lambda_*$. To construct $K^{j}$ we consider a smooth bump function $\phi_k$ with the following properties:
\begin{enumerate}
    \item The support of $\phi_k$ satisfies supp$\phi_k\subseteq B(e_k,\delta_1)$, $\delta_1\ll 1.$
    \item The average of $\phi_k$ is $1$, i.e., $\int_{\mathbb{R}^d}\phi_k(z)\,dz=1$.
    \item $\phi_k$ has zero moments up to some sufficiently large order $N$, i.e., $\int_{\mathbb{R}^d} z^\alpha \phi_k(z)dz=0$, $1\leq |\alpha|\leq N.$
\end{enumerate}
Then, for each $j>0$, we consider a  regularizing kernel
\begin{equation*}
  K_{0,k}^j(z):=2^{jd}\phi_k(2^j z).
\end{equation*}
We then define $K_k^j(x,y):=K_{0,k}^j(x-y)$ for  $y\in \Omega$. Note that for fixed $x\in U_k$, $K_k^j(x,y)$ is non-zero  only if $2^j(x-y)\in B(e_k,\delta_1)$, i.e.,  $y$ is within distance $2^{-j}\delta_1$ of $x-2^{-j} e_k$. This is what will allow us to view our kernel $K^j$ not only for $x\in \Omega$ but also for $x$ in a $\mathcal{O}(2^{-j})$ enlargement of $\Omega$. With this in mind, one can check that  the family of kernels $K^j$ satisfy the following:
\begin{enumerate}
    \item $K^j:\tilde{\Omega}_{j}\times \Omega\to \mathbb{R}$, where $\tilde{\Omega}_{j}:=\{x\in \mathbb{R}^d : d(x,\Omega)\leq c 2^{-j}\}$ with a small universal constant $c$.
    \item $|\partial_x^\alpha\partial_{y}^\beta K^j(x,y)|\lesssim 2^{j(d+|\alpha|+|\beta|)}$,  for multi-indices $\alpha,\beta$.
    \item $\int_\Omega K^j(x,y)\,dy=1$.
    \item $\int_\Omega K^j(x,y)(x-y)^\alpha \,dy=0$, $1\leq |\alpha|\leq N.$
\end{enumerate}
From the definition of $K^{j}$, we see that $\Phi_{\leq j}v$ is defined on a neighborhood of $\Omega_{j}$ if $\delta_0$ from property (\ref{EP}) above is small enough. It is then a straightforward matter to verify that $\Phi_{\leq j}$ satisfies the regularization, difference and error bounds in \Cref{c reg bounds} when $s$ and $\alpha$ are integers (the latter two bounds requiring the moment conditions, with $N=N(\alpha_0)$). The general bound follows by interpolation. 
\\

It remains to construct the regularization operator $\Psi_{\leq j}$ which preserves divergence free functions.  We first note that without loss of generality  we may assume that $\Gamma_j\in \Lambda_*$ with the regularization bound
    \begin{equation}\label{rreg}
\|\Gamma_{j}\|_{C^{k,\beta}}\lesssim_{A,k,\beta} 2^{j(\beta+k-1-\epsilon)}
    \end{equation}
for each integer $k\geq 1$ and real number $0\leq \beta<1$. Indeed, for large enough $j$, by working in local coordinates and using standard mollification techniques we can use the uniform $C^{1,\epsilon}$ regularity of $\eta_{\Gamma}$ to construct a surface $\tilde{\Gamma}_j\in \Lambda_*$ with the bounds (\ref{rreg}) such that $\tilde{\Gamma}_j$ is within distance  $\lesssim_A 2^{-j(1+\epsilon)}$ of $\Gamma$. For some small $c>0$, we can then define a surface $\Gamma_j$ via the parameterization $\eta_{\Gamma_j}:=\eta_{\tilde{\Gamma}_j}+c2^{-j}$. This defines a domain whose boundary has the required regularization bound and which, if $\delta_0$ is small enough, contains all domains within a $\delta_02^{-j}$ neighborhood of $\Omega$. Therefore, it suffices to construct $\Psi_{\leq j}$ in the case when $\Gamma_j$ satisfies (\ref{rreg}). We make this assumption for the remainder of the construction. 
\\

Next, we correct $\Phi_{\leq j}v$ by a gradient potential. We define for $v\in H_{div}^s(\Omega)$,
\begin{equation*}\label{psidef}
\Psi_{\leq j}v:=\Phi_{\leq j}v-\nabla\Delta_{\Omega_{j}}^{-1}(\nabla\cdot \Phi_{\leq j}v),
\end{equation*}
where $\Delta_{\Omega_{j}}^{-1}$ is the solution operator for the Dirichlet problem with zero boundary data associated to the domain $\Omega_{j}$.
\\
\\
To prove the regularization bounds for $\Psi_{\leq j}$, we note that because $v$ is divergence free, we have
\begin{equation}\label{divformula}
\begin{split}
\nabla\cdot \Phi_{\leq j}v(x)&=\sum_{k=0}^n\int \phi_k(y)\nabla\chi_k(x)\cdot (v(x-2^{-j} y)-v(x))\,dy
. 
\end{split}
\end{equation}
In other words, no derivatives fall on $v$ or the kernel when taking the divergence. From the above formula, one can easily verify  the following bounds for $\nabla\cdot \Phi_{\leq j}v$ for every $s_1,s_2\geq 0$:
\begin{equation*}\label{divbound}
\|\nabla\cdot \Phi_{\leq j}v\|_{H^{s_1}(\Omega_{j})}\lesssim_A 2^{-js_2}\|v\|_{H^{s_1+s_2}(\Omega)}.
\end{equation*}
 To establish the regularization property of $\Psi_{\leq j}$, we use this and (\ref{rreg})  together with the balanced Dirichlet estimate \Cref{direst2} to obtain
\begin{equation*}
\|\nabla\Delta_{\Omega_{j}}^{-1}(\nabla\cdot \Phi_{\leq j}v)\|_{H^{s+\alpha}(\Omega_j)}\lesssim_A 2^{j\alpha}\|v\|_{H^{s}(\Omega)}.
\end{equation*}
Therefore, the regularization bound $\|\Psi_{\leq j}v\|_{H^{s+\alpha}(\Omega_{j})}\lesssim_A 2^{j\alpha}\|v\|_{H^s(\Omega)}$ follows immediately. The bounds for $\Psi_{\leq j+1}v-\Psi_{\leq j}v$ and $I-\Psi_{\leq j}v$ are analogous.
\end{proof}
Finally, we note the pointwise analogues of the above estimates.
\begin{proposition}\label{pointwisereg} Given the assumptions of \Cref{c reg bounds}, the regularization operator $\Psi_{\leq j}$ satisfies the following pointwise bounds for $0\leq\alpha<2$:
\begin{equation*}
\|\Psi_{\leq j}v\|_{C^{\alpha}(\Omega_j)}\lesssim_A 2^{j\beta}\|v\|_{C^{\alpha-\beta}(\Omega)},  
\end{equation*}
for $0\leq\beta\leq\alpha$, and
\begin{equation*}
\|(I-\Psi_{\leq j})v\|_{C^{\alpha}(\Omega)}+\|(\Psi_{\leq j+1}-\Psi_{\leq j})v\|_{C^{\alpha}(\Omega_{j+1})}\lesssim_A 2^{-j\beta}\|v\|_{C^{\alpha+\beta}(\Omega)} ,
\end{equation*}
for $\beta\geq 0$.
\end{proposition}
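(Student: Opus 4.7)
My plan is to follow the same decomposition used in the construction, writing
\[
\Psi_{\leq j}v = \Phi_{\leq j}v - \nabla w_j, \qquad w_j := \Delta_{\Omega_j}^{-1}(\nabla\cdot\Phi_{\leq j}v),
\]
and to estimate the two pieces separately. The convolution-type piece $\Phi_{\leq j}v$ is amenable to classical mollifier theory: from the explicit form of the kernel $K^j$, the regularization bound in $C^\alpha(\tilde\Omega_j)$ is obtained by distributing derivatives across the convolution, while the error and difference bounds are obtained by Taylor expanding $v$ inside the integral around $x - 2^{-j}e_k$ and using that $\phi_k$ has vanishing moments up to order $N = N(\alpha_0)$; the remainder is then controlled by $2^{-j\beta}\|v\|_{C^{\alpha+\beta}}$ in the allowed range $0 \le \beta \le \alpha_0$. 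This part is entirely parallel to (and easier than) the $H^s$ bounds already established for $\Phi_{\leq j}$ in the proof of \Cref{c reg bounds}.

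The substance is in bounding the divergence correction $\nabla w_j$. The essential ingredient is the key cancellation formula \eqref{divformula}: since $v$ is divergence free,
\[
\nabla\cdot\Phi_{\leq j}v(x) = \sum_k \int \phi_k(y)\,\nabla\chi_k(x)\cdot\bigl(v(x - 2^{-j}y) - v(x)\bigr)\,dy,
\]
so that no derivative falls on $v$ and we can again Taylor expand inside the integral. This yields, for any $0 \le \beta \le \alpha_0$ and any $0 \le \alpha < 1$,
\[
\|\nabla\cdot\Phi_{\leq j}v\|_{C^{\alpha}(\Omega_j)} \lesssim_A 2^{-j\beta}\|v\|_{C^{\alpha+\beta}(\Omega)},
\]
and an analogous bound in $L^\infty$ with no regularity gain, i.e.~$\|\nabla\cdot\Phi_{\leq j}v\|_{L^\infty(\Omega_j)} \lesssim_A 2^{j\beta}\|v\|_{C^{-\beta}}$ in the naive direction. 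I would then feed this into the pointwise Dirichlet estimates: \Cref{Gilbarg} (applied with $g_1 = 0$ and $g_2 = \nabla\cdot\Phi_{\leq j}v$) covers $\nabla w_j$ in $C^{\alpha}$ for $\alpha < 1 + \epsilon$, while for $\alpha \in [1,2)$ one invokes the $C^{2,\alpha-1}$ Schauder estimate \Cref{C2est} together with the regularization bound \eqref{rreg} on $\Gamma_j$. Combining with the bound on $\nabla\cdot\Phi_{\leq j}v$ produces the claimed pointwise estimates for $\nabla w_j$, and hence for $\Psi_{\leq j}v$.

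The main technical obstacle I expect is the bookkeeping when $\alpha + \beta$ approaches $2$: in that regime the Schauder estimate costs a factor of $\|\Gamma_j\|_{C^{2,\alpha-1}}\lesssim_A 2^{j(\alpha-\epsilon)}$ by \eqref{rreg}, and I need to verify that this loss is always strictly beaten by the moment-cancellation gain $2^{-j\beta}$ available on $\nabla\cdot\Phi_{\leq j}v$. Since we have the freedom to choose the number of vanishing moments $N$ as large as we like (depending only on $\alpha_0$), this balance can always be closed by taking $N$ large enough; a concrete check amounts to interpolating between the two extreme bounds on $\nabla\cdot\Phi_{\leq j}v$ (one with no gain, one with gain $2^{-jN}$) and matching powers of $2^j$. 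The difference bound for $(\Psi_{\leq j+1}-\Psi_{\leq j})v$ follows the same template, writing the difference as $(\Phi_{\leq j+1}-\Phi_{\leq j})v - \nabla(w_{j+1}-w_j)$ and using that telescoping a mollifier gives one extra factor $2^{-j}$ per spatial derivative, combined with a routine estimate $\|\Delta_{\Omega_{j+1}}^{-1}-\Delta_{\Omega_j}^{-1}\|$ using that $\Omega_{j+1}\supset\Omega_j$ differ by a shell of width $\lesssim 2^{-j}$.
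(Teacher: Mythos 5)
Your decomposition $\Psi_{\leq j}v = \Phi_{\leq j}v - \nabla\Delta_{\Omega_j}^{-1}(\nabla\cdot\Phi_{\leq j}v)$, the reliance on the cancellation identity \eqref{divformula} (no derivative falls on $v$), the reduction of $\Gamma_j$ to the regularized case \eqref{rreg}, and the invocation of \Cref{Gilbarg} and \Cref{C2est} for the gradient correction is exactly the paper's argument, which it states in one sentence. Your bookkeeping check that the Schauder cost $\|\Gamma_j\|_{C^{2,\alpha-1}} \lesssim_A 2^{j(\alpha-\epsilon)}$ from \eqref{rreg} is dominated by the moment-cancellation gain $2^{-j\beta}$ on $\nabla\cdot\Phi_{\leq j}v$ (using $N = N(\alpha_0)$ large) is the right thing to verify, and it does close.

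Two small corrections on the difference bound. First, the nesting you state is reversed: $\Omega_{j+1}$ is constrained to lie within distance $\delta_0 2^{-(j+1)}$ of $\Omega$, so it is the \emph{smaller} enlargement, i.e.\ one should think of $\Omega_{j+1}\subset\Omega_j$ (and the construction implicitly arranges both $\Phi_{\leq j}v$ and $\Phi_{\leq j+1}v$ to be defined on $\Omega_{j+1}$). Second, there is no need to estimate an operator difference $\Delta_{\Omega_{j+1}}^{-1}-\Delta_{\Omega_j}^{-1}$. Write
\[
(\Psi_{\leq j+1}-\Psi_{\leq j})v = (\Phi_{\leq j+1}-\Phi_{\leq j})v - \nabla w_{j+1} + \nabla w_j,
\]
bound $(\Phi_{\leq j+1}-\Phi_{\leq j})v$ by the moment conditions, and bound $\nabla w_{j+1}$ and $\nabla w_j$ \emph{separately}: each is already $\lesssim_A 2^{-j\beta}\|v\|_{C^{\alpha+\beta}}$ because each has a source $\nabla\cdot\Phi_{\leq j'}v$ that is small of that order. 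The triangle inequality then gives the claimed difference bound without any analysis of how the solution operator depends on the domain, which matches how the same step is handled in the $H^s$ case in \Cref{c reg bounds}.
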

\begin{proof}
The corresponding bounds for $\Phi_{\leq j}$ are straightforward to directly verify. To estimate the gradient correction, we again may assume without loss of generality the bound (\ref{rreg}) and then use the pointwise estimates from \Cref{C2est} and \Cref{Gilbarg}.
\end{proof}
\subsection{Frequency envelopes}
Let $\Gamma\in\Lambda_*$ and let $s>\frac{d}{2}+1$. Suppose that $v\in H^s(\Omega)$ and suppose that $\Gamma\in H^s$ is parameterized in collar coordinates by $x\mapsto x+\eta_\Gamma(x)\nu(x)$. At this point, we define $A:=\|\Gamma\|_{C^{1,\epsilon}}+\|v\|_{C^{\frac{1}{2}}(\Omega)}$. Using the extension operator from \Cref{continuosext}, we have the following Littlewood-Paley decomposition for a function $v$ defined on $\Omega$:
\begin{equation*}
v=\sum_{j\geq 0}P_jv    ,
\end{equation*}
where by abuse of notation $P_jv$ is interpreted to mean $P_jE_{\Omega}v$ where $E_{\Omega}$ is as in \Cref{continuosext} and $P_0$ is to be interpreted as $P_{\leq 0}$. We also have a corresponding Littlewood-Paley type decomposition for functions on $\Gamma_*$. Indeed, denote by $\langle D\rangle_*:=(I-\Delta_{\Gamma_*})^{\frac{1}{2}}$. For functions on $\Gamma_*$, we then write for $j>0$, $P_j:=\varphi (2^{-j}\langle D\rangle_*)-\varphi (2^{-j+1}\langle D\rangle_*)$ and $P_0:=\varphi(\langle D\rangle_*)$ where $\varphi:\mathbb{R}\to\mathbb{R}$ with $\varphi =1$ on the unit ball and with support in $B_2(0)$. We then have from \Cref{continuosext} the almost orthogonality
\begin{equation*}\label{LWP}
\|(v,\Gamma)\|_{\textbf{H}^s}^2\approx_A \sum_{j\geq 0}2^{2js}\left(\|P_jv\|_{L^2(\mathbb{R}^d)}^2+\|P_j\eta_{\Gamma}\|_{L^2(\Gamma_*)}^2\right).  
\end{equation*}
The above equivalence will allow us to define $\mathbf{H}^s$ frequency envelopes for  states $(v,\Gamma)\in\mathbf{H}^s$ with the $l^2$ decay required to establish our continuous dependence result as well as the continuity of solutions with values in $\mathbf{H}^s$ later on.
\begin{remark}
To define the Littlewood-Paley decomposition above, we use the extension $E_{\Omega}$ from \Cref{continuosext} (as opposed to, e.g., the Stein extension) because of its transparent continuous dependence on the domain. This will be important for establishing continuous dependence of solutions to the free boundary Euler equations with respect to the data when we have to compare frequency envelopes for different initial data.     
\end{remark}
\begin{definition}[Frequency envelopes] Let $s> \frac{d}{2}+1$, $\Gamma\in\Lambda_*$ and $(v,\Gamma)\in\mathbf{H}^s$. An \emph{$\mathbf{H}^s$ frequency envelope} for the pair $(v,\Gamma)$ is a positive sequence $c_j$ such that for each $j\geq 0$,
\begin{equation*}
\|P_jv\|_{H^s(\mathbb{R}^d)}+\|P_j\eta_{\Gamma}\|_{H^s(\Gamma_*)}\lesssim_A c_j\|(v,\Gamma)\|_{\mathbf{H}^s},\hspace{10mm}\|c_j\|_{l^2}\lesssim_A 1.    
\end{equation*}
We say that the sequence $(c_j)_j$ is admissible if $c_0\approx_A 1$ and it is slowly varying,
\begin{equation*}
c_j\leq 2^{\delta |j-k|}c_k,\hspace{10mm} j,k\geq 0,\hspace{10mm}0<\delta\ll 1.    
\end{equation*}
We can always define an admissible frequency envelope by the formula
\begin{equation}\label{admissable}
c_j=2^{-\delta j}+(1+\|(v,\Gamma)\|_{\mathbf{H}^{s}})^{-1}\max_k 2^{-\delta |j-k|}\left(\|P_kv\|_{H^s(\mathbb{R}^d)}+\|P_k\eta_{\Gamma}\|_{H^s(\Gamma_*)}\right).
\end{equation}
Unless otherwise stated, we will take this as our formula for $c_j$. The following proposition will be useful in our construction of rough solutions later on as well as for proving continuity of the data-to-solution map.
\end{definition}
\begin{proposition}\label{envbounds}
Let $\Gamma\in\Lambda_*$ and let $s>\frac{d}{2}+1$. Suppose that $(v,\Gamma)\in\mathbf{H}^s$ and let $(c_j)_j$ be its associated admissible frequency envelope. Then there exists a family of regularized domains $\Omega_j$ with boundaries $\Gamma_j\in\Lambda_*$ and $\Gamma_j\in H^s$ along with associated divergence free regularizations $v_j:=\Psi_{\leq j}v$ defined on a $2^{-j}$ enlargement of $\Omega_j\cup\Omega$  such that the following holds.
\begin{enumerate}
    \item\label{iGPA} (Good pointwise approximation).
    \begin{equation*}
 (v_j,\Gamma_j)\to (v,\Gamma)\hspace{5mm}in\hspace{2mm} C^{1}\times C^{1,\frac{1}{2}}\hspace{5mm}as\hspace{2mm}j\to\infty.       
    \end{equation*}
\item\label{2UB} (Uniform bound).
\begin{equation*}
\|(v_j,\Gamma_j)\|_{\mathbf{H}^s}\lesssim_A \|(v,\Gamma)\|_{\mathbf{H}^s}.    
\end{equation*}
\item\label{3HR} (Higher regularity).
\begin{equation*}
 \|(v_j,\Gamma_j)\|_{\mathbf{H}^{s+\alpha}}\lesssim_A 2^{j\alpha}c_j\|(v,\Gamma)\|_{\mathbf{H}^s},\hspace{5mm}\alpha>1.  
\end{equation*}
\item\label{4LFDB} (Low frequency difference bounds). On a $2^{-j}$ enlarged neighborhood of $\Omega_j\cup\Omega_{j+1}$, there holds
\begin{equation*}
\|(v_j,\eta_{\Gamma_j})-(v_{j+1},\eta_{\Gamma_{j+1}})\|_{L^2\times L^2}\lesssim_A 2^{-js}c_j\|(v,\Gamma)\|_{\mathbf{H}^s}.    
\end{equation*}
\end{enumerate}
\end{proposition}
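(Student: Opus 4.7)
The plan is to define $\eta_{\Gamma_j} := P_{\leq j}\eta_\Gamma + M_j$, with $P_{\leq j}$ the Littlewood--Paley projection on $\Gamma_*$ and $M_j > 0$ a small outward shift, and to set $v_j := \Psi_{\leq j}v$ where $\Psi_{\leq j}$ is the divergence-free regularization of \Cref{c reg bounds}. Since $s > \tfrac{d}{2}+1$, Sobolev embedding together with the envelope property yields
\begin{equation*}
\|(I-P_{\leq j})\eta_\Gamma\|_{L^\infty(\Gamma_*)} \lesssim 2^{-j(s-(d-1)/2)}c_j\|(v,\Gamma)\|_{\mathbf{H}^s},
\end{equation*}
so choosing $M_j = C_0 2^{-j}$ with $C_0$ large but independent of $j$ ensures $\Omega \subseteq \Omega_j$ with $\sup_{x\in\Omega_j}\dist(x,\Omega)\lesssim 2^{-j}$, meeting the hypothesis of \Cref{c reg bounds}. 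Boundedness of $P_{\leq j}$ on $C^{1,\epsilon}(\Gamma_*)$ together with the fact that a constant shift does not alter the $C^{1,\epsilon}$ seminorm guarantees $\Gamma_j \in \Lambda_*$ uniformly in $j$, and $v_j$ is then well defined on a $2^{-j}$-neighborhood of $\Omega_j\cup\Omega$.

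Properties \eqref{2UB} and \eqref{3HR} are routine consequences of Littlewood--Paley theory combined with the envelope bound $\|P_l v\|_{L^2(\mathbb{R}^d)}+\|P_l\eta_\Gamma\|_{L^2(\Gamma_*)} \lesssim 2^{-ls}c_l\|(v,\Gamma)\|_{\mathbf{H}^s}$: square-summation in $l \leq j$ yields the sharp $2^{j\alpha}c_j$ scaling for the $\mathbf H^{s+\alpha}$ norm, and the uniform $\mathbf H^s$ bound is immediate from the $L^2$-boundedness of $P_{\leq j}$ and \Cref{c reg bounds}. Property \eqref{iGPA} follows from Sobolev embedding ($v \in C^{1,\beta}(\Omega)$ and $\eta_\Gamma \in C^{1,1/2+\beta}(\Gamma_*)$ for some $\beta > 0$) together with the pointwise error estimates of \Cref{pointwisereg} for the velocity and Bernstein-type convergence for the free surface.

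The main obstacle is property \eqref{4LFDB}, where the envelope factor $c_j$ must be recovered on top of the standard rate $2^{-js}$. For the surface component, the decomposition
\begin{equation*}
\eta_{\Gamma_{j+1}} - \eta_{\Gamma_j} = (P_{\leq j+1}-P_{\leq j})\eta_\Gamma + (M_{j+1}-M_j)
\end{equation*}
isolates a frequency-localized term, bounded in $L^2$ by $2^{-js}c_j\|\eta_\Gamma\|_{H^s}$ by the envelope property, together with a constant residue $O(2^{-j})$, absorbed into $2^{-js}c_j$ using $c_j \gtrsim 2^{-\delta j}$ from \eqref{admissable} and $s > 1$. For the velocity, the raw bound of \Cref{c reg bounds} with $\alpha = s$ only gives $2^{-js}\|v\|_{H^s}$; to extract $c_j$, split $v = v_{\text{high}} + v_{\text{low}}$ at Littlewood--Paley scale $j - K$ for a fixed large $K$. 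The high-frequency piece $v_{\text{high}} := P_{> j-K} v$ satisfies $\|v_{\text{high}}\|_{H^s}\lesssim c_j\|(v,\Gamma)\|_{\mathbf{H}^s}$ (using $l^2$-summability of the slowly varying envelope to bundle frequencies near $j$), and \Cref{c reg bounds} then yields the desired bound. For the low-frequency piece, the vanishing-moment property of the mollification kernel used in constructing $\Phi_{\leq j}$ gives
\begin{equation*}
\|(\Phi_{\leq j+1}-\Phi_{\leq j}) v_{\text{low}}\|_{L^2} \lesssim 2^{-KN} \|v\|_{L^2}
\end{equation*}
for any $N$ not exceeding the moment order, which for $K$ chosen large is negligible compared to $2^{-js}c_j$. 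The divergence-correction term $-\nabla\Delta_{\Omega_j}^{-1}(\nabla\cdot\Phi_{\leq j}v_{\text{low}})$ is controlled by the analogous cancellation in $\nabla\cdot\Phi_{\leq j}v_{\text{low}}$ paired with the elliptic estimate of \Cref{direst2}, completing the sharp $L^2$ difference bound.
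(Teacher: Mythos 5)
Your proposal diverges from the paper's proof in two places, and both create genuine problems with property (iv), which is precisely the part you identified as the main obstacle.

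\textbf{The outward shift $M_j$ breaks the $L^2$ difference bound.}
You set $\eta_{\Gamma_j} = P_{\leq j}\eta_\Gamma + M_j$ with $M_j = C_0 2^{-j}$ to force $\Omega \subseteq \Omega_j$. But then $\eta_{\Gamma_j} - \eta_{\Gamma_{j+1}}$ contains the constant $M_j - M_{j+1} = C_0 2^{-j-1}$, whose $L^2(\Gamma_*)$ norm is of order $2^{-j}$. You claim this is absorbed into $2^{-js}c_j$ ``using $c_j \gtrsim 2^{-\delta j}$ and $s>1$,'' but the inequality runs the other way: since $s > 1$ and $c_j$ can be as small as $\sim 2^{-\delta j}$, the target $2^{-js}c_j$ can be as small as $\sim 2^{-j(s+\delta)} \ll 2^{-j}$, so the constant residue dominates and (iv) fails. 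The shift is also unnecessary: the paper takes $\eta_{\Gamma_j} := P_{\leq j}\eta_\Gamma$ with no correction, noting only that $|\eta_{\Gamma_j}-\eta_\Gamma| \lesssim 2^{-\frac{3}{2}j}$ so that \Cref{c reg bounds} applies (the enlarged domain machinery for $\Psi_{\leq j}$ is handled internally in that proposition, where one enlarges and regularizes a domain containing all boundaries within distance $\delta_0 2^{-j}$ of $\Omega$; this auxiliary construction never appears in $\eta_{\Gamma_j}$).

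\textbf{The fixed-$K$ frequency split for the velocity does not close.}
Your split $v = P_{> j-K}v + P_{\leq j-K}v$ with $K$ ``fixed large'' leads to a tension. Redoing your moment estimate carefully: the low-frequency piece gives $\|(\Psi_{\leq j+1}-\Psi_{\leq j})v_{\text{low}}\|_{L^2} \lesssim 2^{-j(N+1)}\|v_{\text{low}}\|_{H^{N+1}} \lesssim 2^{-js}\,2^{-K(N+1-s)}\|v\|_{H^s}$. To make this $\lesssim 2^{-js}c_j\|v\|_{H^s}$ requires $2^{-K(N+1-s)} \lesssim c_j$; since $c_j$ can be $\sim 2^{-\delta j}$ this forces $K \gtrsim j$. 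But then the high-frequency piece picks up the slowly-varying penalty: $\|P_{>j-K}v\|_{H^s} \lesssim (\sum_{l>j-K} c_l^2)^{1/2}\|v\|_{\mathbf H^s} \lesssim 2^{\delta K}c_j\|v\|_{\mathbf H^s}$, which grows when $K\sim j$. There is no fixed $K$ that beats both. (With $K$ literally fixed, as you state, the low-frequency error is $O(1)$ in $j$ and cannot match a quantity that decays like $2^{-js}c_j$.) The paper avoids this by never splitting at a single cutoff; for (iii), which the paper writes out in full, one sums over the whole Littlewood--Paley decomposition $v=\sum_l P_l v$, applying the sharp regularization bound to the $l\le j$ terms and the crude $L^2$ bound to the $l>j$ terms, and geometric summation with the slowly varying property recovers the single factor $c_j$. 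The paper states that (iv) is ``similar,'' i.e., sum over all $l$ using the difference and error bounds from \Cref{c reg bounds}, rather than splitting at one scale.

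The treatment of (i), (ii), and the $l^2$-square-sum structure underlying (iii) is essentially consistent with the paper's argument. The two issues above are confined to (iv) and to your choice of $\eta_{\Gamma_j}$, but both need to be corrected for the proof to stand.
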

\begin{proof}
We define $\Gamma_j$ by the graph parameterization $\eta_{\Gamma_j}=P_{\leq j}\eta_{\Gamma}$ (using the projections defined above). By Sobolev embedding, we have $|\eta_{\Gamma_j}-\eta_{\Gamma}|\lesssim 2^{-\frac{3}{2}j}$, and so the existence of the required divergence free regularization $v_j:=\Psi_{\leq j}v$ comes from \Cref{c reg bounds}.
\\

Next, we turn to verifying the above four properties. We focus on the bounds for $v_j$ as the bounds for $\Gamma_j$ are similar (and simpler). Properties (\ref{iGPA}) and (\ref{2UB}) are clear from Sobolev embedding and \Cref{c reg bounds}. Next, we turn to property (\ref{3HR}). We begin by establishing this property for $\Phi_{\leq j}v$ and then we will upgrade  to the full divergence free regularization $v_j=\Psi_{\leq j}v$. We write $w^l$ as shorthand for $P_lw$ and begin by splitting
\begin{equation*}
\|\Phi_{\leq j}v\|_{H^{s+\alpha}}\leq \sum_{l\leq j}\|\Phi_{\leq j}v^l\|_{H^{s+\alpha}}+\sum_{l>j}\|\Phi_{\leq j}v^l\|_{H^{s+\alpha}}.    
\end{equation*}
For $l\leq j$, we estimate
\begin{equation*}
\|\Phi_{\leq j}v^l\|_{H^{s+\alpha}}\lesssim_A \|v^l\|_{H^{s+\alpha}}\lesssim_A 2^{l\alpha}c_l\|(v,\Gamma)\|_{\mathbf{H}^s}\lesssim_A 2^{j\alpha}c_j2^{(\alpha-\delta)(l-j)}\|(v,\Gamma)\|_{\mathbf{H}^s}. 
\end{equation*}
For $l>j$, we estimate
\begin{equation*}
\|\Phi_{\leq j}v^l\|_{H^{s+\alpha}}\lesssim_A 2^{j(\alpha+s)}\|v^l\|_{L^2}\lesssim_A 2^{j\alpha}c_j 2^{(j-l)(s-\delta)}\|(v,\Gamma)\|_{\mathbf{H}^s}.    
\end{equation*}
Summing up each contribution gives
\begin{equation*}
\|\Phi_{\leq j}v\|_{H^{s+\alpha}}\lesssim_A 2^{j\alpha}c_j\|(v,\Gamma)\|_{\mathbf{H}^s}.    
\end{equation*}
To obtain the corresponding bound for $\Psi_{\leq j}$, we simply note that by \Cref{direst2}, 
\begin{equation*}
\|\nabla\Delta^{-1}\nabla\cdot \Phi_{\leq j}v\|_{H^{s+\alpha}}\lesssim_A \|\Phi_{\leq j}v\|_{H^{s+\alpha}}+2^{j(s+\alpha-\epsilon)}\|\nabla\cdot\Phi_{\leq j}v\|_{L^2}.    
\end{equation*}
By \eqref{divformula}, we have $2^{j(s+\alpha)}\|\nabla\cdot \Phi_{\leq j}v\|_{L^2}\lesssim_A 2^{j\alpha}\|v\|_{H^s}$. Therefore, if we choose $\delta$ in the definition of $c_j$ so that $2^{-j\epsilon}\leq c_j$, we have
\begin{equation*}
\|\Psi_{\leq j}v\|_{H^{s+\alpha}}\lesssim_A 2^{j\alpha}c_j\|(v,\Gamma)\|_{\mathbf{H}^s}.    
\end{equation*}
This establishes property (\ref{3HR}) for $\Psi_{\leq j}v$. The proof of property (\ref{4LFDB}) is similar except now one can use the difference and error bounds in \Cref{c reg bounds}. We omit the details.
\end{proof}
\section{Higher energy bounds}\label{HEB}
Let $k>\frac{d}{2}+1$ be an integer. Our aim in this section is to establish control of the $\mathbf{H}^k$ norm of $(v,\Gamma)$ in terms of the initial data where the growth of these norms is dictated by the pointwise control parameters $A$ and $B$ below. To accomplish this, we will first construct a coercive energy functional $(v,\Gamma)\mapsto E^k(v,\Gamma)$ associated to each integer $k>\frac{d}{2}+1$ and then we will prove energy estimates for $E^k(v,\Gamma)$ to obtain estimates for $\|(v,\Gamma)\|_{\mathbf{H}^k}$ when $(v,\Gamma)$ is a solution to the free boundary Euler equations. More precisely, we prove the following theorem.
\begin{theorem}\label{Energy est. thm}
Let $s\in\mathbb{R}$ with $s>\frac{d}{2}+1$ and let $k> \frac{d}{2}+1$ be an integer. Fix a collar neighborhood $\Lambda(\Gamma_*, \epsilon,\delta)$ with $\delta>0$ sufficiently small. Then for $\Gamma$ restricted to $\Lambda_*$ there exists an energy functional $(v,\Gamma)\mapsto E^k(v,\Gamma)$ such that
\begin{enumerate}
\item (Energy coercivity).
\begin{equation}\label{Coercivity bound on integers}
E^k(v,\Gamma)\approx_A 1+\|(v,\Gamma)\|_{\mathbf{H}^k}^2.
\end{equation}
\item  (Energy propagation). If, in addition to the above, $(v,\Gamma)=(v(t),\Gamma_t)$ is a solution to the free boundary Euler equations, then $E^k(t):=E^k(v(t),\Gamma_t)$ satisfies 
\begin{equation*}
\frac{d}{dt}E^k\lesssim_A B\log(1+\|(v,\Gamma)\|_{\mathbf{H}^s})E^k.
\end{equation*}
\end{enumerate}
Here, $A:=1+|\Omega|+\|v\|_{C^{\frac{1}{2}+\epsilon}(\Omega)}+\|\Gamma\|_{C^{1,\epsilon}}$ and $B:=1+\|v\|_{W^{1,\infty}(\Omega)}+\|\Gamma\|_{C^{1,\frac{1}{2}}}$.
\end{theorem}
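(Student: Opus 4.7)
The strategy is to split the argument into two essentially independent modules: the construction of a coercive energy functional, and the derivation of its evolution bound. Following the Alinhac good variables philosophy, the plan is to identify three natural quantities $(\omega, a, D_ta)$ (the vorticity, the Taylor coefficient, and its material derivative) and to apply appropriately many elliptic operators and vector fields so as to produce a pair $(w_k, s_k)$ which solves the generalized linearized system \eqref{DM lin} modulo perturbative source terms. Concretely, a candidate is
\[
w_k := \text{curl-div corrected lift of } \mathcal{N}^{k-1} a, \qquad s_k := \mathcal{N}^{k-\frac32} D_t a,
\]
together with a rotational piece built out of $\omega$ measured in $H^{k-1}(\Omega)$. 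The energy is then
\[
E^k(v,\Gamma) := 1 + \|\omega\|_{H^{k-1}(\Omega)}^2 + E_{lin}(w_k, s_k) + \text{lower order terms},
\]
with $E_{lin}$ the linearized energy \eqref{linearized en}. The appearance of the weight $a$ in $E_{lin}$ explains the exact choice of $D_t a$ at $H^{k-3/2}$, half a derivative below $a$ itself.

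\textbf{Step 1: Coercivity.} To establish \eqref{Coercivity bound on integers} I would argue in two directions. For the upper bound, the product, Moser and trace estimates of \Cref{BEE} applied to the definitions of $w_k$ and $s_k$ directly give $E^k \lesssim_A 1+\|(v,\Gamma)\|_{\mathbf{H}^k}^2$. For the lower bound, the plan is: first use the curvature estimate of \Cref{curvaturebound} together with the fact that $a = -\nabla p\cdot n_\Gamma$ is a nondegenerate defining function to reduce control of $\|\Gamma\|_{H^k}$ to control of $a$ in $H^{k-1}(\Gamma)$; then invoke the iterated ellipticity estimate \Cref{ellipticity} for $\mathcal{N}^{k-1}$ to recover $\|a\|_{H^{k-1}(\Gamma)}$ from the boundary piece of $E_{lin}(w_k,s_k)$; finally, apply the div-curl-Neumann estimate \Cref{Balanced div-curl} to recover $\|v\|_{H^k(\Omega)}$ from $\|\omega\|_{H^{k-1}(\Omega)}$, $\nabla\cdot v = 0$ and the normal boundary data encoded in $D_ta$ via the relation $D_t a \sim \nabla_n D_t p \sim$ derivatives of $v\cdot n_\Gamma$.

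\textbf{Step 2: Energy propagation.} This is the main technical step. The plan is to differentiate $E^k$ in time using the Reynolds and surface transport formulas of \Cref{Leibniz} and the linearized energy identity \eqref{EE for DMlin}. The key observation is that, by construction, $(w_k, s_k)$ solves \eqref{DM lin} with source terms $(f_k, g_k)$ that are lower order or symmetrizable; the linearized estimate then yields
\[
\frac{d}{dt}E^k \lesssim B_{lin} E^k + \langle as_k, g_k\rangle_{L^2(\Gamma)} + \langle w_k, f_k\rangle_{L^2(\Omega)}.
\]
The perturbative source terms $f_k, g_k$ will involve commutators like $S_k$ from \Cref{materialcom} and commutators of $D_t$ with $\nabla$, $\Delta^{-1}$, $\mathcal{H}$, $\mathcal{N}$ as listed in \Cref{Movingsurfid}. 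The main obstacle is to show these commutators and source terms are bounded in $L^2$ by $B_{lin}\cdot E^k$, which requires exploiting fine algebraic cancellations (in particular, the fact that $\nabla\cdot \mathcal{B}(\nabla v, \nabla\mathcal{H}\psi)$ in \eqref{Sid} only involves $\nabla v$ paired against one derivative of $\mathcal{H}\psi$, which shifts one derivative from $\psi$ to $v$ in a symmetric way), and using the balanced estimates of \Cref{BEE} with the right choice of partitions $\psi = \psi_j^1 + \psi_j^2$.

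\textbf{Step 3: Replacing $B_{lin}$ by $B$.} The final step is to eliminate the pressure contribution $\|a^{-1}D_ta\|_{L^\infty(\Gamma)}$ from $B_{lin}$ and replace it by the geometric parameter $B$. The plan here is to note that $a^{-1}D_t a$ is controlled essentially by $\|D_tp\|_{W^{1,\infty}(\Omega)}$, which by the pointwise elliptic estimate \Cref{Gilbarg} applied to the equation for $D_tp$ (whose source and boundary data can be written in terms of $v$ and $\nabla v$) satisfies a bound of Beale-Kato-Majda type,
\[
\|D_tp\|_{W^{1,\infty}(\Omega)}\lesssim_A B\log\bigl(1+\|(v,\Gamma)\|_{\mathbf{H}^s}\bigr),
\]
after an $\epsilon$-loss is absorbed into a logarithm of the higher Sobolev norm in the endpoint $C^{1,1}$-type estimate. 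This log loss is exactly the origin of the logarithmic factor in \eqref{EPFS} and is the hardest quantitative step, requiring careful paraproduct analysis at the endpoint. Combining the three steps yields the desired bound.
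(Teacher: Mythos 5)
Your overall strategy — Alinhac good variables, linearized energy, commutator estimates via the balanced elliptic theory, and a Beale--Kato--Majda style log absorption in the endpoint — is exactly the right framework, and the three-step modular structure matches the paper's. However, your candidate good variables are assigned backwards. In the linearized system \eqref{DM lin1}, $w$ is the interior (velocity) perturbation and $s$ is the boundary (surface) perturbation; correspondingly, since $a$ describes the surface and $D_t a$ encodes the normal velocity through $D_t a \approx \mathcal N T_n v$, the correct choices are
\[
w_k = \nabla\mathcal{H}\mathcal{N}^{k-2}D_t a,\qquad s_k = \mathcal{N}^{k-1}a,
\]
whereas you propose $w_k$ built from $a$ and $s_k$ built from $D_ta$. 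With your choice the dynamic boundary condition $q_k = a s_k$ does not match the leading order structure of the wave equation $D_t^2 a + a\mathcal{N}a = f$, and the derivation that $(w_k,s_k)$ solves \eqref{DM lin} with perturbative source terms would fail. There is also no need for a ``curl-div correction'': the paper's $w_k = \nabla\mathcal{H}\mathcal{N}^{k-2}D_t a$ is automatically divergence free as the gradient of a harmonic function, whereas a generic lift of $\mathcal N^{k-1}a$ to $\Omega$ has no natural incompressible correction.

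A second, independent issue is your use of the fractional power $\mathcal N^{k-\frac32}$. The paper deliberately structures its good variables so that only integer powers of $\mathcal N$ appear, precisely because the balanced elliptic estimates of \Cref{BEE} are built around iterations of the integer-power identities (e.g.\ \Cref{higherpowers}, \Cref{ellipticity}, and the commutator expansion \eqref{commutatorexpansion}); with $\mathcal N^{k-\frac32}$ these tools are not directly available and one would have to redevelop a fractional functional calculus balanced against the low-regularity control parameters. Finally, in Step~3 the logarithmic loss in $\|D_tp\|_{W^{1,\infty}}$ does not come from applying \Cref{Gilbarg} with an $\epsilon$-loss absorbed; the $W^{1,\infty}$ estimate for the inhomogeneous Dirichlet problem genuinely fails at the endpoint. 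The actual argument (\Cref{Linfest2}) writes $\Delta^{-1}\Delta$ as $I-\mathcal{H}$, isolates the single problematic term $\mathcal{H}(v^l_j\partial_j p^{\leq l})$, and performs a dyadic frequency truncation at scale $m_0\approx\log(1+\|(v,\Gamma)\|_{\mathbf H^s})$, estimating the low-frequency part uniformly in the truncation level and the high-frequency tail by Sobolev embedding. You have the right heuristic but the mechanism you describe would not close.
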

By Gr\"onwall's inequality, this gives the single and double exponential bounds
\begin{equation*}\label{doubleexpbound}
\begin{split}
\|(v(t),\Gamma_t)\|^2_{\mathbf{H}^k}&\lesssim_A\exp\left(\int_{0}^{t}C_AB(s)\log(1+\|(v,\Gamma)\|_{\mathbf{H}^s})ds\right)(1+\|(v_0,\Gamma_0)\|_{\mathbf{H}^k}^2).
\\
\|(v(t),\Gamma_t)\|^2_{\mathbf{H}^k}&\lesssim_{A} 
 \exp\left(\log(C_A(1+\|(v_0,\Gamma_0)\|_{\mathbf{H}^k}^2))\exp{\int_{0}^{t}C_AB(s)\,ds}\right)  
\end{split}
\end{equation*}
for all integers $k>\frac{d}{2}+1$.
\begin{remark}
It is important to note that  the first part of \Cref{Energy est. thm} does not make any reference to the dynamical problem.
\end{remark}
\subsection{Constructing the energy functional}\label{CTEF}
Before establishing the above theorem, we  motivate our choice of energy. At this point, the discussion will be heuristic only. There are two quantities to control; namely, the $H^k$ norms of $v$ and $\Gamma$. However, these are coupled via the nonlinear evolution, so they must be measured in tandem. We achieve this by working instead with well-chosen \emph{good variables}, which are selected as follows:
\begin{enumerate}[label=\roman*)]
    \item The vorticity $\omega$.  If $v$ is a divergence free vector field on $\Omega$, then in Euclidean coordinates, we have the following relation for $\Delta v_i$:
\begin{equation*}
\Delta v_i=-\partial_j\omega_{ij},
\end{equation*}
where $\omega$ denotes the curl of $v$. Therefore, $v$ is controlled by $\omega$ and a suitable boundary value. However, it turns out to be simpler to view  $v$ as the solution to a div-curl  system, again 
with a boundary condition whose choice will be addressed shortly.
 
\item The Taylor coefficient $a$. This variable is used to describe the regularity of the boundary. Precisely, as we will see later, we have the approximate relation
\[
\mathcal N a \approx a \kappa 
\]
where $\kappa$ represents the mean curvature of $\Gamma$. Thus, as long as the Taylor sign condition remains satisfied, the $H^k$ norm of $\Gamma$ should be comparable at leading order to the $H^{k-1}$ norm of $a$.

\item The material derivative of the Taylor coefficient, $D_t a$. At leading order this provides information about $v$ via the approximate
paradifferential relation
\[
D_t a \approx \mathcal N T_n v, 
\]
for a suitable representation of the paraproduct above. This will provide the needed boundary condition for the div-curl system for $v$. 
\end{enumerate}

Thus, at the principal level we have the correspondence 
\[
v \leftrightarrow (\omega, D_t a), \qquad  \Gamma \leftrightarrow a,
\]
which will be the basis for our coercivity property. For the first part, it is better to think of $v$ as solving a div-curl system.
One might try to think of a rotational/irrotational decomposition  $v = v_{rot}+ v_{ir}$, where the two components solve div-curl systems as follows:
\[
\left\{
\begin{aligned}
&\curl v_{rot} = \omega, \\ & \nabla \cdot v_{rot} = 0, \\ & v_{rot} \cdot n_\Gamma = 0 \ \ \text{on } \Gamma ,  
\end{aligned}
\right.
\qquad 
\left\{
\begin{aligned}
& \curl v_{ir} = 0, \\ & \nabla \cdot v_{ir} = 0, \\ & v_{ir} \cdot n_\Gamma = v \cdot n_\Gamma \ \ \text{on } \Gamma . 
\end{aligned}
\right.
\]
Unfortunately, such a decomposition is not well-suited for our present problem, essentially due to the fact that in our setting 
$n_\Gamma$ has less regularity than $v$ on the free boundary; namely, $H^{k-1}$ versus $H^{k-\frac12}$. Hence, we cannot use such a decomposition directly, though  a paradifferential form of it will appear later in our existence proof. Instead, we will bypass this difficulty by associating the $D_t a$ variable with $\nabla^\top v \cdot n_\Gamma$, the normal component of the tangential derivatives on the boundary, which will then play the role of the boundary condition in the div-curl system for $v$. This, in turn, yields the $v$ part of the coercivity bound.
\\

Now we turn our attention to the dynamical side, which ultimately determines the choice of the good variables. There we separate the good variables differently, into the vorticity $\omega\in H^{k-1}(\Omega)$ on one hand, which will provide the interior component of the energy,  and the pair ($a$,$D_ta$) in $H^{k-1}(\Gamma)\times H^{k-\frac{3}{2}}(\Gamma)$, which carries the boundary component of the energy.
For the vorticity, this is immediately clear from the equation 
\begin{equation}\label{vorteq}
D_t\omega_{ij}=-\omega_{ik}\partial_jv_k+\omega_{jk}\partial_iv_k,
\end{equation}
which results from taking curl of \eqref{Euler}. Based on the transport structure of the vorticity, it is natural to include the quantity $\|\omega\|_{H^{k-1}(\Omega)}^2$ as a component of the energy. On the other hand, it turns out that $\|(a,D_ta)\|_{H^{k-1}(\Gamma)\times H^{k-\frac{3}{2}}(\Gamma)}^2$ can be controlled by the linearized energy $E_{lin}(w_k,s_k)$, where $s_k$ and $w_k$ solve the linearized equation to leading order with
\begin{equation*}
\begin{cases}
&w_k=\nabla\mathcal{H}\mathcal{N}^{k-2}D_ta,
\\
&s_k\hspace{1mm}=\mathcal{N}^{k-1}a.
\end{cases}
\end{equation*}
The derivation for this is a bit more involved than for the vorticity and will be handled later.
\\
\\
With the above discussion in mind, we define our energy as follows:
\begin{equation}\label{ENERGYE1}
E^k(v,\Gamma):=1+\|v\|_{L^2(\Omega)}^2+\|\omega\|_{H^{k-1}(\Omega)}^2+E_{lin}(w_k,s_k).
\end{equation}
In the sequel, we will sometimes refer to $\|\omega\|_{H^{k-1}(\Omega)}^2$ as the rotational part of the energy, denoted by $E^k_{r}(v,\Gamma)$, and $E_{lin}(w_k,s_k)$ as the irrotational part of the energy, denoted by $E^k_{i}(v,\Gamma)$.
\begin{remark}
This definition of the energy has to be interpreted in a suitable way when $v$ and $\Gamma$ do not solve the free boundary Euler equations. Indeed, it is important that, a priori, the definition of the energy functional does not depend on the dynamics of the problem. Therefore, for a bounded connected domain $\Omega$ with $(v,\Gamma)\in\mathbf{H}^k$, we define $p$ through the boundary condition $p_{|\Gamma}=0$ and the Laplace equation
\begin{equation*}
\Delta p=-\text{tr}(\nabla v)^2. 
\end{equation*}
The Taylor sign term is then defined via
\begin{equation*}\label{adef}
a:=-n_{\Gamma}\cdot \nabla p_{|\Gamma}.
\end{equation*}
Moreover, we define $D_tp$ through the Dirichlet boundary condition $D_tp_{|\Gamma}=0$ and $\Delta D_tp$ given by
\begin{equation}\label{Dtpdef}
\Delta D_tp=4\text{tr}(\nabla^2p\cdot\nabla v)+2\text{tr}((\nabla v)^3)+\Delta v\cdot\nabla p=:F.
\end{equation}
In other words, $D_tp=\Delta^{-1}F$. This is the definition of $D_tp$ which is compatible with the dynamical problem. We then define $D_t\nabla p$ by
\begin{equation*}\label{DDpdef}
D_t\nabla p:=-\nabla v\cdot\nabla p+\nabla D_tp
\end{equation*}
and then $D_ta$ by
\begin{equation*}\label{Dadef}
D_ta:=-n_{\Gamma}\cdot D_t\nabla p_{|\Gamma}.
\end{equation*}
With these definitions, the energy functional \eqref{ENERGYE1} is well-defined, irrespective of whether the state $(v,\Gamma)$ evolves dynamically.
\end{remark}
\begin{remark}
    We note that the energy functional \eqref{ENERGYE1} is essentially the same as that from \cite{MR3925531}. The main difference, so far, is in the derivation of this energy. Indeed, our approach was to identify Alinhac style good unknowns, whereas \cite{MR3925531} first derives a wave-type equation for $a$ and then applies powers of the Dirichlet-to-Neumann operator to this equation, as if it were a vector field.  However, as can be immediately inferred from the low regularity of our control norms, the way we treat the energy is very different from \cite{MR3925531}. 
\end{remark}
\subsection{Coercivity of the energy functional}
We begin by establishing the coercivity part of \Cref{Energy est. thm}. That is, we want to show that
\begin{equation*}
E^k(v,\Gamma)\approx_A 1+\|(v,\Gamma)\|_{\mathbf{H}^k}^2.
\end{equation*}
We begin by collecting some preliminary estimates for the various quantities that will appear in our analysis. 
\subsection{\texorpdfstring{$L^{\infty}$}{} estimates for coercivity}
Here we will establish some $L^{\infty}$ based estimates for $p$ and $D_tp$ in terms of the control parameter $A$. The $A$ control parameter involves only the physical variables $v$ and $\Gamma$. The variables $p$ and $D_tp$ are related to these variables through solving a suitable Laplace equation. We will therefore need to make use of the Schauder type estimates in \Cref{Gilbarg} to control these terms (in suitable pointwise norms) by $A$. For this, we have the following lemma. 
\begin{lemma} \label{Linfest}
Given the assumptions of \Cref{Energy est. thm}, the following pointwise estimates for  $p$ and $D_tp$ hold.
\begin{enumerate}
\item ($C^{1,\epsilon}$ estimate for $p$).
\begin{equation*} 
\|p\|_{C^{1,\epsilon}(\Omega)}\lesssim_A 1.
\end{equation*}
\item (Partition bound for $D_tp$). There exists a sequence of partitions $D_tp=:F_j^1+F_j^2$ such that 
\begin{equation*}
\|F_j^1\|_{W^{1,\infty}(\Omega)}\lesssim_A 2^{j(\frac{1}{2}-\epsilon)},\hspace{5mm} \|F_j^2\|_{H^1(\Omega)}\lesssim_A 2^{-j(k-1-\epsilon)}(\|v\|_{H^{k-\epsilon}(\Omega)}+\|p\|_{H^{k+\frac{1}{2}-\epsilon}(\Omega)}).
\end{equation*}
\end{enumerate}
\end{lemma}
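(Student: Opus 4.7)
The strategy for Part 1 is to exploit the identity
\[
\text{tr}(\nabla v)^2 = \partial_i v_j\,\partial_j v_i = \partial_i\partial_j(v_i v_j),
\]
which holds because $\nabla\cdot v = 0$. This rewrites the Poisson equation for $p$ in divergence form as $\Delta p = -\nabla\cdot g_1$ with $g_{1,j} = \partial_i(v_i v_j)$. Since $v\otimes v \in C^{1+2\epsilon_0}(\Omega)$ as a product of functions in $C^{1/2+\epsilon_0}(\Omega)$, differentiating once gives $\|g_1\|_{C^{2\epsilon_0}(\Omega)} \lesssim \|v\|^2_{C^{1/2+\epsilon_0}(\Omega)} \lesssim_A 1$. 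Combined with $p|_\Gamma = 0$, applying the Schauder-type estimate \eqref{C1eps} with $\epsilon$ no larger than $2\epsilon_0$ immediately gives $\|p\|_{C^{1,\epsilon}(\Omega)} \lesssim_A 1$.

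For Part 2, the plan is to build the partition directly from a Littlewood-Paley decomposition of an extension of $D_tp$. Setting $F_j^1 := P_{\leq j}(\mathcal{E}D_tp)|_\Omega$ and $F_j^2 := ((I-P_{\leq j})\mathcal{E}D_tp)|_\Omega$ with Stein's extension $\mathcal{E}$, Bernstein's inequality and the standard $L^2$ Littlewood-Paley bound reduce the two required estimates to
\begin{equation*}
\text{(a)} \quad \|D_tp\|_{C^{1/2+\epsilon}(\Omega)} \lesssim_A 1, \qquad \text{(b)} \quad \|D_tp\|_{H^{k-\epsilon}(\Omega)} \lesssim_A \|v\|_{H^{k-\epsilon}(\Omega)}+\|p\|_{H^{k+\frac{1}{2}-\epsilon}(\Omega)}.
\end{equation*}
Bound (b) follows from the balanced higher-regularity Dirichlet estimate \Cref{direst} applied to $\Delta D_tp = F$, combined with the balanced product estimate \Cref{productest} to bound $\|F\|_{H^{k-2-\epsilon}(\Omega)}$ term by term, distributing derivatives between pointwise norms (controlled by $A$ and Part 1) and the Sobolev norms of $v$ and $p$.

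Bound (a) is the main obstacle. Heuristically the result is at the level of scaling: $F$ has order $-3/2+\epsilon_0$ because $\nabla v$ lives in $C^{-1/2+\epsilon_0}$ and $\nabla^2 p$ in $C^{-1+\epsilon_0}$, so inverting $\Delta$ gains two derivatives to produce $C^{1/2+\epsilon_0}$ regularity for $D_tp$. To make this precise, I would first use $\nabla\cdot v = 0$ and antisymmetry of the vorticity to rewrite
\[
F = \nabla\cdot Q + 2\,\text{tr}((\nabla v)^3), \qquad Q = 3(\nabla p\cdot\nabla)v + (\nabla v)\nabla p,
\]
via the identities $\text{tr}(\nabla^2 p\,\nabla v) = \nabla\cdot((\nabla p\cdot\nabla)v)$ and $\Delta v\cdot\nabla p = \nabla\cdot((\nabla v)\nabla p) - \text{tr}(\nabla^2 p\,\nabla v)$. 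The divergence piece is handled by a refined H\"older estimate of the form $\|\Delta^{-1}\nabla\cdot G\|_{C^{1/2+\epsilon}(\Omega)}\lesssim_A\|G\|_{C^{-1/2+\epsilon}(\Omega)}$, extending \eqref{C1eps} to negative-regularity divergence sources, with $Q$ interpreted paradifferentially and $\|Q\|_{C^{-1/2+\epsilon_0}}\lesssim_A 1$. The cubic term is the delicate point; the key renormalization $v\cdot\nabla v = \nabla\cdot(v\otimes v) \in C^{2\epsilon_0}$ turns one factor into a \emph{bounded} expression, and iterated integration by parts combined with the pressure equation $\text{tr}(\nabla v)^2 = -\Delta p$ exposes enough divergence structure to place $\Delta^{-1}\text{tr}((\nabla v)^3)$ in $C^{1/2+\epsilon}$ with an $A$-controlled bound. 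Making the paradifferential analysis rigorous on the bounded domain with Dirichlet data (rather than on $\mathbb{R}^d$ where Riesz transform estimates apply directly) will require combining \Cref{Gilbarg} with a careful harmonic correction near the boundary.
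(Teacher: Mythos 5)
Your Part~1 argument contains a substantive error. The product of two functions in $C^{1/2+\epsilon_0}(\Omega)$ lies only in $C^{1/2+\epsilon_0}(\Omega)$, not in $C^{1+2\epsilon_0}(\Omega)$; H\"older regularities do not add under multiplication. Consequently $g_{1,j} = \partial_i(v_iv_j)$ has regularity $C^{-1/2+\epsilon_0}$, not $C^{2\epsilon_0}$, and the Schauder estimate \eqref{C1eps} cannot be invoked. The missing ingredient is exactly the bilinear frequency decomposition that the paper uses: one writes $\Delta p = -2\,\Delta^{-1}\partial_j(v^l_i\partial_i v^{\leq l}_j)$, where the high-frequency factor is undifferentiated and the derivative always falls on the low-frequency factor, so that $\|v^l_i\partial_i v^{\leq l}_j\|_{C^\epsilon(\Omega)}\lesssim_A \|v\|_{C^{1/2+\epsilon}(\Omega)}^2$. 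Your rewrite $\text{tr}(\nabla v)^2=\partial_i\partial_j(v_iv_j)$, while algebraically valid, destroys exactly this frequency balance, and no frequency balance can be restored without a paraproduct-type splitting.

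For Part~2, your reduction of the lemma to the two bounds $\|D_tp\|_{C^{1/2+\epsilon}(\Omega)}\lesssim_A 1$ and $\|D_tp\|_{H^{k-\epsilon}(\Omega)}\lesssim_A \|v\|_{H^{k-\epsilon}}+\|p\|_{H^{k+1/2-\epsilon}}$ is a reasonable-looking plan, but the key idea of the paper's proof is precisely to \emph{avoid} this reduction. The paper never establishes a $C^{1/2+\epsilon}$ bound on $D_tp$; the obstruction is the one you identify, namely that one would need Schauder-type estimates for $\Delta^{-1}\nabla\cdot g$ with $g$ of \emph{negative} H\"older regularity, which is not among the tools established in Section~\ref{BEE} and would require separate justification (you acknowledge this but treat it as a technical refinement rather than the crux). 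Instead, the paper defines $F_j^1$ not as $P_{\leq j}D_tp$ but by truncating $v$ in frequency \emph{inside} the explicit divergence-form representation of $D_tp$: e.g.\ $T_j^1 = \Delta^{-1}\nabla\cdot(\partial_i p\,\partial_i\Phi_{<j}v)$ for the first term of \eqref{threeterms}. This way $F_j^1$ is always $\Delta^{-1}\nabla\cdot$ of a \emph{positive}-regularity H\"older function, so \Cref{Gilbarg} applies directly, and $F_j^2$ is handled by the baseline $H^{-1}\to H^1$ estimate alone. Building the partition into the paraproduct structure of the source is the device you are missing. As a secondary point, your bound (b) does not follow directly from \Cref{direst} either: a naive application to the Poisson equation for $D_tp$ produces a $\|\Gamma\|_{H^{k-\epsilon}}$ term on the right-hand side that is absent from the statement of the lemma; the paper's $F_j^2$ estimates avoid this because they only ever invoke the $H^{-1}\to H^1$ Dirichlet bound, which carries no surface norm.
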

One can loosely think of the partition of $D_tp$ in the second part of \Cref{Linfest} as a splitting of $D_tp$ into low and high frequency parts at a dyadic scale $2^j$. The high frequency part will typically be best estimated in $L^2$ based norms, and the low frequency part in $L^{\infty}$ based norms. In particular, one can think  of the estimate for $F_j^1$ as an estimate for the ``low frequency part" of $D_tp$ in $C^{\frac{1}{2}+\epsilon}$. This will serve as a substitute for what would be a $C^{\frac{1}{2}+\epsilon}$ estimate for the inhomogenenous Dirichlet problem, which is not available to us (except for harmonic functions). The usefulness of this will be made more transparent later.
\begin{proof}
We begin with some notation. For any integer $l>0$, we write $\Phi_l:=\Phi_{\leq l+1}-\Phi_{\leq l}$ and $\Psi_{l}:=\Psi_{\leq l+1}-\Psi_{\leq l}$. We also write $\Phi_0$ and $\Psi_0$ to mean $\Phi_{\leq 0}$ and $\Psi_{\leq 0}$, respectively. For a vector or scalar valued function $f$ defined on $\Omega$, we write $f^l$ and $f^{\leq l}$ as shorthand for $\Phi_l f$ and $\Phi_{\leq l}f$, respectively. If in addition, $f$ is a divergence free vector field, we instead use $f^l$ and $f^{\leq l}$ to mean $\Psi_lf$ and $\Psi_{\leq l}f$, respectively. This will ensure that the divergence free structure of $f$ is preserved. We abuse notation and write $f^lg^{\leq l}$ to mean 
\begin{equation*}
f^lg^{\leq l}:=\sum_{l\geq 0}\sum_{0\leq m\leq l}f^lg^m-\frac{1}{2}\sum_{l\geq 0}f^lg^l.
\end{equation*}
This definition ensures (with the convention that $\Phi_0=\Phi_{\leq 0}$ and $\Psi_0=\Psi_{\leq 0}$) that we have the decomposition
\begin{equation}\label{bilpara}
fg=f^lg^{\leq l}+f^{\leq l}g^l,
\end{equation}
which can be thought of as a kind of crude bilinear paraproduct decomposition where $f^lg^{\leq l}$ selects the portion of $fg$ where $f$ is at higher or comparable frequency compared to $g$. Likewise, we can define trilinear expressions of the form $f^lg^{\leq l}h^{\leq l}$ in such a way that we have $fgh=f^lg^{\leq l}h^{\leq l}+f^{\leq l}g^{l}h^{\leq l}+f^{\leq l}g^{\leq l}h^{l}$, and similarly for quadrilinear expressions. Now, we begin with the first part of the lemma. Expanding using (\ref{bilpara}) we see that
\begin{equation}\label{decomp}
\begin{split}
p&=-\Delta^{-1}\text{tr}(\nabla v)^2=-2\Delta^{-1}\partial_j(v^l_i\partial_iv^{\leq l}_j).
\end{split}
\end{equation}
Importantly, because $v^l$ is divergence free, we were able to write $\text{tr}(\nabla v)^2$ as the divergence of a bilinear expression in $v$ and $\nabla v$, where the high frequency factor is undifferentiated. This will allow us to make use of the lower regularity $C^{1,\alpha}$ estimates in \Cref{Gilbarg} and simultaneously allow us to rebalance derivatives in the bilinear expression for $v$. This theme of writing multilinear expressions in divergence form with the highest frequency factor undifferentiated will appear several times in the sequel in more complicated forms. In this case, we have from \Cref{Gilbarg},
\begin{equation*}
\begin{split}
\|p\|_{C^{1,\epsilon}(\Omega)}\lesssim_A \|v^l_i\partial_iv^{\leq l}_j\|_{C^{\epsilon}(\Omega)}\lesssim_A \|v\|_{C^{\frac{1}{2}+\epsilon}(\Omega)}^2\lesssim_A 1.
\end{split}
\end{equation*}
Next, we turn to the estimate for $D_tp$, which is the more difficult part. From (\ref{Dtpdef}), we can write in Euclidean coordinates,
\begin{equation}\label{threeterms}
D_tp=4\Delta^{-1}(\partial_i\partial_jp\partial_iv_j)+2\Delta^{-1}(\partial_jv_k\partial_kv_i\partial_iv_j)+\Delta^{-1}(\partial_i\partial_iv_j\partial_jp).
\end{equation}
In order to make full use of \Cref{Gilbarg}, we will again need to write $D_tp$ in the form $\Delta^{-1}\nabla\cdot f$ for some vector field $f$ in a way which allows us to also rebalance derivatives, as we did in the estimate for $p$. We start by estimating the first term in \eqref{threeterms}. We first write $\partial_i\partial_j p\partial_i v_j=\nabla\cdot (\partial_i p\partial_iv)$ and use the partition
\begin{equation*}
\Delta^{-1}\nabla\cdot (\partial_ip\partial_iv)=T_j^1+T_j^2,
\end{equation*}
where $T_j^1=\Delta^{-1}\nabla\cdot (\partial_ip\partial_i\Phi_{<j}v)$. From \Cref{Gilbarg} and the $C^{1,\epsilon}$ estimate for $p$ above, we have
\begin{equation*}
\begin{split}
\|T_j^1\|_{W^{1,\infty}(\Omega)}&\lesssim_A \|\nabla p\|_{C^{\epsilon}(\Omega)}\|\nabla \Phi_{<j}v\|_{L^{\infty}(\Omega)}+\|\nabla p\|_{L^{\infty}(\Omega)}\|\nabla \Phi_{<j}v\|_{C^{\epsilon}(\Omega)}\lesssim_A 2^{j(\frac{1}{2}-\epsilon)}.
\end{split}
\end{equation*}
We also see from (\ref{H1base}),
\begin{equation*}
\begin{split}
\|T_j^2\|_{H^1(\Omega)}&\lesssim_A 2^{-j(k-1-\epsilon)}\|\nabla p\|_{L^{\infty}(\Omega)}\|v\|_{H^{k-\epsilon}(\Omega)}\lesssim_A 2^{-j(k-1-\epsilon)}\|v\|_{H^{k-\epsilon}(\Omega)}.
\end{split}
\end{equation*}
Next, we turn to the second term in (\ref{threeterms}). We start by performing a trilinear frequency decomposition. Using the symmetry of the indices, we have
\begin{equation}\label{cubicdecomp1}
\begin{split}
\partial_jv_k\partial_kv_i\partial_iv_j&=3\partial_jv_k^l\partial_kv_i^{\leq l}\partial_iv_j^{\leq l}.
\end{split}
\end{equation}
To best balance derivatives, we would like to write this in the form $\nabla\cdot \mathcal{T}(v^l,\nabla v^{\leq l},\nabla v^{\leq l})$ where $\mathcal{T}$ is an appropriate trilinear expression. To do this, we can use the symmetry of the expression and the fact that $v$ is divergence free to write
\begin{equation}\label{cubicdecomp2}
\begin{split}
\partial_jv_k^l\partial_kv_i^{\leq l}\partial_iv_j^{\leq l}&=\partial_j(v_k^l\partial_kv_i^{\leq l}\partial_iv_j^{\leq l})-v_k^l\partial_k\partial_jv_i^{\leq l}\partial_i v_j^{\leq l}
\\
&=\partial_j(v_k^l\partial_kv_i^{\leq l}\partial_iv_j^{\leq l})-\frac{1}{2}v_k^l\partial_k(\partial_jv_i^{\leq l}\partial_i v_j^{\leq l})
\\
&=\partial_j(v_k^l\partial_kv_i^{\leq l}\partial_iv_j^{\leq l})-\frac{1}{2}\partial_k(v_k^l\partial_jv_i^{\leq l}\partial_i v_j^{\leq l}).
\end{split}
\end{equation}
We partition the last line above into $Q_j^1+Q_j^2$ where 
\begin{equation*}
Q_j^1:=\partial_m(v_k^l\partial_k\Phi_{<j}v_i^{\leq l}\partial_iv_m^{\leq l})-\frac{1}{2}\partial_k(v_k^l\partial_m\Phi_{<j}v_i^{\leq l}\partial_i v_m^{\leq l}).
\end{equation*}
We then obtain in a straightforward way using \Cref{Gilbarg} and summing in $l$,
\begin{equation*}
\|\Delta^{-1}Q_j^1\|_{W^{1,\infty}(\Omega)}\lesssim_A 2^{j(\frac{1}{2}-\epsilon)}\|v\|_{C^{\frac{1}{2}+\epsilon}(\Omega)}^3\lesssim_A 2^{j(\frac{1}{2}-\epsilon)},
\end{equation*}
and from the $H^{-1}\to H^1$ estimate for the Dirichlet problem and \Cref{c reg bounds},
\begin{equation*}
\|\Delta^{-1}Q_j^2\|_{H^1(\Omega)}\lesssim_A 2^{-j(k-1-\epsilon)}\|v\|_{H^{k-\epsilon}(\Omega)}.
\end{equation*}
Finally, the last term in (\ref{threeterms}) can be handled by writing
\begin{equation*}\label{decomp2}
\partial_i\partial_iv_j\partial_jp=\partial_i(\partial_i v_j\partial_j p)-\partial_i v_j\partial_i \partial_j p
\end{equation*}
and partitioning each term similarly to the first term in (\ref{threeterms}). Collecting all of the above partitions together completes the proof of the lemma.
\end{proof}
The following simple consequence of the above lemma  will be useful for estimating $D_ta$ in pointwise norms.
\begin{corollary}\label{otherdecomp}
Given the assumptions of \Cref{Linfest}, there exists a sequence of partitions $D_t\nabla p=G_j^1+G_j^2$ such that
\begin{equation*}
\|G_j^1\|_{L^{\infty}(\Omega)}\lesssim_A 2^{j(\frac{1}{2}-\epsilon)},\hspace{5mm}\|G_j^2\|_{H^{\frac{1}{2}+\epsilon}(\Omega)}\lesssim_A 2^{-j(k-\frac{3}{2}-2\epsilon)}(\|v\|_{H^{k-\epsilon}(\Omega)}+\|p\|_{H^{k+\frac{1}{2}-\epsilon}(\Omega)}+\|D_t\nabla p\|_{H^{k-1-\epsilon}(\Omega)}).
\end{equation*}
\end{corollary}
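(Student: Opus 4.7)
The plan is to start from the identity $D_t\nabla p = -(\nabla v)^*\nabla p + \nabla D_t p$ from \eqref{Dt commutator grad} and partition each of the two summands in a compatible way. For the second summand, we use the partition $D_tp = F_j^1 + F_j^2$ from \Cref{Linfest}, but we must upgrade the high-frequency bound on $F_j^2$ from $H^1$ to $H^{3/2+\epsilon}$, so that $\nabla F_j^2 \in H^{1/2+\epsilon}$ with the desired decay rate in $j$. To achieve this, I redo the construction of $T_j^2$, $Q_j^2$ and the analogous remainder term from \eqref{threeterms}, now applying the sharper elliptic estimate $\|\Delta^{-1}g\|_{H^{s+1}(\Omega)} \lesssim_A \|g\|_{H^{s-1}(\Omega)}$ of \Cref{desiredelliptic} at the level $s = \tfrac12+\epsilon$ (permitted by the $C^{1,\epsilon_0}$ regularity of $\Gamma$) in place of the baseline $H^{-1}\to H^1$ bound. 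The gain is sufficient because the high-frequency cutoff produces a factor of $2^{-j((k-\epsilon)-(3/2+\epsilon))} = 2^{-j(k-3/2-2\epsilon)}$ when the high-frequency factor is measured in the $H^{3/2+\epsilon}$-compatible norm, and this is exactly the rate required.

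For the first summand, split
\[
-(\nabla v)^*\nabla p = -(\nabla \Psi_{<j}v)^*\nabla p - (\nabla(I-\Psi_{<j})v)^*\nabla p,
\]
where $\Psi_{<j}$ is the divergence-preserving regularization operator of \Cref{c reg bounds}. The first piece contributes to $G_j^1$: by the pointwise regularization bound in \Cref{pointwisereg} applied at the $C^{1/2+\epsilon}$ level we have $\|\nabla\Psi_{<j}v\|_{L^\infty(\Omega)} \lesssim_A 2^{j(1/2-\epsilon)}$, while $\|\nabla p\|_{L^\infty(\Omega)} \lesssim_A 1$ from part (i) of \Cref{Linfest}, which yields the desired $L^\infty$ bound. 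The second piece contributes to $G_j^2$: using \Cref{c reg bounds} we have $\|\nabla(I-\Psi_{<j})v\|_{H^{1/2+\epsilon}(\Omega)} \lesssim_A 2^{-j(k-3/2-2\epsilon)}\|v\|_{H^{k-\epsilon}(\Omega)}$, and one multiplies by $\nabla p$ using the paradifferential estimates of \Cref{productestref} together with the $C^\epsilon$ and $H^{k-1/2-\epsilon}$ control of $\nabla p$ (the latter coming from the term $\|p\|_{H^{k+1/2-\epsilon}(\Omega)}$ on the right-hand side of the corollary).

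Adding the contributions yields the partition
\[
G_j^1 := -(\nabla\Psi_{<j}v)^*\nabla p + \nabla F_j^1, \qquad G_j^2 := -(\nabla(I-\Psi_{<j})v)^*\nabla p + \nabla F_j^2,
\]
which sums to $D_t\nabla p$ and satisfies the claimed bounds, with the $\|D_t\nabla p\|_{H^{k-1-\epsilon}(\Omega)}$ term on the right-hand side of the $G_j^2$ estimate absorbing a high-frequency error produced by trading derivatives between $\nabla v$ and $\nabla p$ in the paraproduct decomposition for $G_j^2$.

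The main obstacle will be the $H^{1/2+\epsilon}$ estimate on the high-frequency part of $(\nabla v)^*\nabla p$, where the balanced paraproduct trichotomy must be carefully tuned: the control parameter $A$ only supplies $C^{1/2+\epsilon}(v)$ and $C^{1,\epsilon}(\Gamma)$, so $\nabla p$ is controlled only in $C^\epsilon$ by $A$, and the higher pointwise regularity of $\nabla p$ is not available. The high-low paraproduct $T_{(\nabla v)_{hi}} \nabla p$ and the high-high interaction $\Pi(\nabla v, \nabla p)$ must therefore be estimated using the Sobolev norm of $\nabla p$ in the factor placed in $H^s$ rather than $C^s$, which is exactly where the $\|p\|_{H^{k+1/2-\epsilon}(\Omega)}$ term on the right-hand side of the corollary is consumed. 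Executing this step requires \Cref{productestref} applied with the partitions $\nabla v = \nabla\Psi_{<j}v + \nabla(I-\Psi_{<j})v$ and a matched decomposition for $\nabla p$.
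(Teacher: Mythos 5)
Your proposal takes a genuinely different and substantially more laborious route than the paper, and it leaves the hardest step unresolved. The paper's proof is a one-liner precisely because it applies an \emph{outer} regularization $\Phi_{<j}$ to the low-frequency part: it takes
\[
G_j^1=\Phi_{<j}\bigl(-\nabla\Phi_{<j}v\cdot\nabla p+\nabla F_j^1\bigr),\qquad G_j^2=\Phi_{<j}\bigl(-\nabla \Phi_{\geq j}v\cdot\nabla p\bigr)+\Phi_{<j}\nabla F_j^2+\Phi_{\geq j}D_t\nabla p.
\]
The outer $\Phi_{<j}$ is the crucial trick: it turns the $H^{\frac12+\epsilon}$ estimate for $G_j^2$ into a trivial $L^2$ estimate at the cost of $2^{j(\frac12+\epsilon)}$, and the $L^2$ product estimate only needs $\|\nabla p\|_{L^\infty}\lesssim_A 1$ — no paraproduct trichotomy, no Sobolev control of $\nabla p$ in the pointwise factor, and a clean $A$-only implicit constant. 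The term $\Phi_{\geq j}D_t\nabla p$ then simply accounts for the fact that $\Phi_{<j}+\Phi_{\geq j}=I$, and it is the sole origin of the $\|D_t\nabla p\|_{H^{k-1-\epsilon}}$ factor on the right-hand side.

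Your decomposition (no outer mollifier; $G_j^2=-(\nabla(I-\Psi_{<j})v)^*\nabla p+\nabla F_j^2$) forces you to estimate a product directly in $H^{\frac12+\epsilon}$ where $\nabla p$ is only $C^\epsilon$ pointwise and the high-frequency velocity error is not frequency-localized. You correctly flag this as "the main obstacle," but you do not resolve it, and the resolution is delicate: the naive low-high paraproduct $T_{\nabla(I-\Psi_{<j})v}\nabla p$ requires a Sobolev norm of $\nabla p$ on the high-frequency factor, and the $j$-decay cannot be extracted from pointwise control of $v$ alone (which stops at $C^{1/2+\epsilon}$). A correct treatment would need a full deployment of \Cref{productestref} with a well-chosen secondary partition of $(I-\Psi_{<j})v$, plus an upgraded version of every term in $F_j^2$ to $H^{3/2+\epsilon}$; this amounts to essentially redoing Lemma~\ref{Linfest} one half-derivative higher, which is far more work than the statement warrants. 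Finally, your explanation of where $\|D_t\nabla p\|_{H^{k-1-\epsilon}}$ enters ("absorbing a high-frequency error produced by trading derivatives... in the paraproduct decomposition") is confused: in your decomposition that term does not naturally appear at all, which indicates you have not identified the mechanism the paper actually uses. The takeaway: pre-compose the low-frequency part with $\Phi_{<j}$ and absorb the residual into $\Phi_{\geq j}D_t\nabla p$; then the $H^{\frac12+\epsilon}$ bound reduces to the $H^1$ bound of Lemma~\ref{Linfest} and simple $L^2\times L^\infty$ products, and the corollary falls out immediately.
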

\begin{proof}
This follows from \Cref{Linfest} by taking
\begin{equation*}
G_j^1=\Phi_{<j}(-\nabla\Phi_{<j}v\cdot\nabla p+\nabla F_j^1),\hspace{10mm}G_j^2=\Phi_{<j}(-\nabla \Phi_{\geq j}v\cdot\nabla p)+\Phi_{<j}\nabla F_j^2+\Phi_{\geq j}D_t\nabla p.        
\end{equation*}   
\end{proof}
\subsection{\texorpdfstring{$L^2$}{} based estimates for \texorpdfstring{$a$}{} and \texorpdfstring{$D_ta$}{}}
Our next step will be to control $(a,D_ta)$ in $H^{k-1}(\Gamma)\times H^{k-\frac{3}{2}}(\Gamma)$ by the energy plus some lower order terms. Let us define for the rest of this section the lower order quantity
\begin{equation*}
\Lambda_{k-\epsilon}:=\|\Gamma\|_{H^{k-\epsilon}}+\|v\|_{H^{k-\epsilon}(\Omega)}+\|p\|_{H^{k+\frac{1}{2}-\epsilon}(\Omega)}+\|D_t\nabla p\|_{H^{k-1-\epsilon}(\Omega)},
\end{equation*}
where $\epsilon>0$ is any small, but fixed, positive constant.
\begin{lemma}\label{aest}
We have
\begin{equation*}
\|a\|_{H^{k-1}(\Gamma)}+\|D_ta\|_{H^{k-\frac{3}{2}}(\Gamma)}\lesssim_A (E^k)^{\frac{1}{2}}+\Lambda_{k-\epsilon}.
\end{equation*}
\end{lemma}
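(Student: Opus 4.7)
The plan is to handle $a$ and $D_ta$ separately via the iterated ellipticity estimates for the Dirichlet--to--Neumann operator (Proposition 6.20), feeding in partitions manufactured from Lemma 8.3 and Corollary 8.4, and controlling the "boundary" pieces of $E^k$ through the Taylor sign condition and a Green's identity.

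First, since $a > c_0 > 0$ by the Taylor sign condition, the boundary integral in $E_{lin}(w_k,s_k)$ yields
\[
\|\mathcal{N}^{k-1}a\|_{L^2(\Gamma)}^2 = \|s_k\|_{L^2(\Gamma)}^2 \lesssim_{c_0} E_{lin}(w_k,s_k)\leq E^k,
\]
while the interior term gives $\|w_k\|_{L^2(\Omega)}^2 \leq 2E^k$ where $w_k=\nabla\mathcal{H}\mathcal{N}^{k-2}D_ta$. Applying Green's identity with $\phi:=\mathcal{N}^{k-2}D_ta$,
\[
\|w_k\|_{L^2(\Omega)}^2 = \int_\Gamma \phi\,\mathcal{N}\phi\,dS,
\]
which, combined with Lemma 6.17 (and a Poincar\'e--trace comparison), yields $\|\phi\|_{H^{1/2}(\Gamma)}^2 \lesssim_A \|w_k\|_{L^2(\Omega)}^2 + \|\phi\|_{L^2(\Gamma)}^2$. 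The $L^2$ remainder is lower order: by Proposition 6.23 (iterated bounds for $\mathcal{N}^{k-2}$) with the trivial partition, $\|\phi\|_{L^2(\Gamma)} \lesssim_A \|D_ta\|_{H^{k-2}(\Gamma)}$, and by the trace theorem together with the product estimate of Proposition 6.10 applied to $D_ta=-n_\Gamma\cdot D_t\nabla p|_\Gamma$, this is $\lesssim_A \Lambda_{k-\epsilon}$.

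Next, we invoke Proposition 6.20 with $\psi=a$, operator power $k-1$, and $s=0$:
\[
\|a\|_{H^{k-1}(\Gamma)}\lesssim_A \|a\|_{L^2(\Gamma)}+\|\mathcal{N}^{k-1}a\|_{L^2(\Gamma)}+\|\Gamma\|_{H^{k-1+r}}\!\sup_{j>0}2^{-j(r+\alpha-1)}\|a_j^1\|_{C^{\alpha}(\Gamma)}+\sup_{j>0}2^{j(k-3/2+\beta-\epsilon)}\|a_j^2\|_{H^{1/2-\beta}(\Gamma)}.
\]
Use the trivial partition $a_j^1=a$, $a_j^2=0$: from Lemma 8.3 we have $\|p\|_{C^{1,\epsilon}(\Omega)}\lesssim_A 1$, hence $\|a\|_{C^{\epsilon}(\Gamma)}\lesssim_A 1$; choosing $\alpha=\epsilon$ and $r=1-\epsilon$ makes the supremum bounded and forces $\|\Gamma\|_{H^{k-\epsilon}}\lesssim \Lambda_{k-\epsilon}$. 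The trivial $L^2$ bound $\|a\|_{L^2(\Gamma)}\lesssim_A 1 \lesssim (E^k)^{1/2}$ and the step-one bound on $\|\mathcal{N}^{k-1}a\|_{L^2}$ close the estimate for $a$.

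The same argument is applied to $D_ta$, now with $\psi=D_ta$, operator power $k-2$, and $s=\tfrac12$, taking the partition inherited from Corollary 8.4: set $(D_ta)_j^1:=-n_\Gamma\cdot G_j^1|_\Gamma$ and $(D_ta)_j^2:=-n_\Gamma\cdot G_j^2|_\Gamma$, so that
\[
\|(D_ta)_j^1\|_{L^\infty(\Gamma)}\lesssim_A 2^{j(1/2-\epsilon)},\qquad \|(D_ta)_j^2\|_{H^{\epsilon}(\Gamma)}\lesssim_A 2^{-j(k-3/2-2\epsilon)}\Lambda_{k-\epsilon},
\]
where the second bound uses the trace estimate Proposition 6.12 and the product estimate Proposition 6.10 with $n_\Gamma \in C^{\epsilon}$. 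Choosing $\alpha=0$ and $r=\tfrac32-\epsilon$ balances the first supremum and yields the factor $\|\Gamma\|_{H^{k-\epsilon}}$; choosing $\beta=\tfrac12-\epsilon$ turns the second supremum into a harmless geometric series. The resulting inequality, combined with the bound $\|\mathcal{N}^{k-2}D_ta\|_{H^{1/2}(\Gamma)}\lesssim_A (E^k)^{1/2}+\Lambda_{k-\epsilon}$ from the previous paragraph and the crude bound $\|D_ta\|_{L^2(\Gamma)}\lesssim_A 1+\Lambda_{k-\epsilon}$ (via the $j=0$ partition), gives the desired estimate for $D_ta$. Combining both bounds yields the lemma.

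The main technical obstacle is step two — namely, the careful choice of exponents $(r,\alpha,\beta)$ in Proposition 6.20 which must simultaneously (i) keep the $\Gamma$-factor at the level $H^{k-\epsilon}\subseteq \Lambda_{k-\epsilon}$ rather than at $H^{k}$ (which would be circular), and (ii) match the partition of $D_t\nabla p$ furnished by Corollary 8.4. Everything else is mechanical application of the balanced elliptic machinery of Section 6.
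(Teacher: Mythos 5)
Your argument follows the same skeleton as the paper's: apply the iterated ellipticity estimate (Proposition~\ref{ellipticity}) with the partition of $D_t\nabla p$ from Corollary~\ref{otherdecomp}, and balance $(r,\alpha,\beta)$ so that the surface factor comes out at the level $\|\Gamma\|_{H^{k-\epsilon}}\subseteq\Lambda_{k-\epsilon}$ rather than $\|\Gamma\|_{H^k}$; your exponent choices ($\alpha=0$, $r=\tfrac32-\epsilon$, $\beta=\tfrac12-\epsilon$ for the $D_ta$ part, trivial partition for $a$) are exactly the ones the paper uses implicitly. The one place you genuinely diverge is the intermediate bound $\|\mathcal{N}^{k-2}D_ta\|_{H^{1/2}(\Gamma)}\lesssim_A(E^k)^{1/2}$. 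You go via Green's identity $\|w_k\|_{L^2(\Omega)}^2=\int_\Gamma\phi\,\mathcal{N}\phi\,dS$ and then the $\dot H^{1/2}$ form-domain characterization of $\mathcal N$, which leaves you an $\|\phi\|_{L^2(\Gamma)}$ remainder that you have to dispatch separately into $\Lambda_{k-\epsilon}$. The paper instead applies the trace theorem, $\|\phi\|_{H^{1/2}(\Gamma)}\lesssim_A\|\mathcal{H}\phi\|_{H^1(\Omega)}$, and then a Poincar\'e inequality made available by the observation that $\int_\Gamma\mathcal{N}^{k-2}D_ta\,dS=\int_\Gamma n_\Gamma\cdot\nabla\mathcal{H}\mathcal{N}^{k-3}D_ta\,dS=0$ (this is why $k\geq3$ appears); the mean-zero trick removes the $L^2$ remainder entirely. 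Your variant is perfectly valid (the remainder lands in $\Lambda_{k-\epsilon}$ anyway), but the paper's route is tighter and sidesteps the separate lower-order estimate. Two smaller points worth tidying: Proposition~\ref{ellipticity} is stated only for $s\geq\tfrac12$, so for $\|a\|_{H^{k-1}}$ you should really take $s=1$, $m=k-2$ and then post-process with Lemma~\ref{baselineDN} to produce the $\|\mathcal{N}^{k-1}a\|_{L^2}$ term, rather than invoking $s=0$, $m=k-1$ directly; and the ``Lemma~\ref{baselineDN} plus Poincar\'e--trace'' you cite to pass from $\int_\Gamma\phi\mathcal N\phi$ to $\|\phi\|_{H^{1/2}}^2+\|\phi\|_{L^2}^2$ is not quite the right reference --- Lemma~\ref{baselineDN} gives the $H^1$ level, and what you actually need is the quadratic-form bound for $\mathcal N$ at the $H^{1/2}$ level (true, but not one of the numbered lemmas).
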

\begin{proof}
To control $a$ in $H^{k-1}(\Gamma)$, we use the ellipticity estimate for the Dirichlet-to-Neumann operator from \Cref{ellipticity} to obtain
\begin{equation*}
\|a\|_{H^{k-1}(\Gamma)}\lesssim_A 
 \|a\|_{L^2(\Gamma)}+\|\mathcal{N}^{k-1}a\|_{L^2(\Gamma)}+\|\Gamma\|_{H^{k-\epsilon}}\|a\|_{C^{\epsilon}(\Gamma)}\lesssim_A (E^k)^{\frac{1}{2}}+\Lambda_{k-\epsilon}.
\end{equation*}
To estimate $D_ta$ in $H^{k-\frac{3}{2}}(\Gamma)$, we consider the partition $D_t\nabla p:=G_j^1+G_j^2$ from \Cref{otherdecomp} and estimate using \Cref{ellipticity},
\begin{equation*}
\|D_ta\|_{H^{k-\frac{3}{2}}(\Gamma)}\lesssim_A \|\mathcal{N}^{k-2}D_ta\|_{H^{\frac{1}{2}}(\Gamma)}+\|\Gamma\|_{H^{k-\epsilon}}\sup_{j>0}2^{-j(\frac{1}{2}-\epsilon)}\|n_\Gamma\cdot G_j^1\|_{L^{\infty}(\Gamma)}+\sup_{j>0}2^{j(k-2\epsilon-\frac{3}{2})}\|n_\Gamma\cdot G_j^2\|_{H^{\epsilon}(\Gamma)}+\Lambda_{k-\epsilon}.
\end{equation*}
From the trace theorem, 
\begin{equation*}
\|\mathcal{N}^{k-2}D_ta\|_{H^{\frac{1}{2}}(\Gamma)}\lesssim_A \|\mathcal{H}\mathcal{N}^{k-2}D_ta\|_{H^1(\Omega)}.
\end{equation*}
Since $k\geq 3$ and
\begin{equation*}
\int_{\Gamma}\mathcal{N}^{k-2}D_ta\, dS=\int_{\Gamma}n_\Gamma\cdot\nabla \mathcal{H}\mathcal{N}^{k-3}D_ta\, dS=0,
\end{equation*}
we conclude by a Poincare type inequality that 
\begin{equation*}
\|\mathcal{H}\mathcal{N}^{k-2}D_ta\|_{H^1(\Omega)}\lesssim_A \|\nabla\mathcal{H}\mathcal{N}^{k-2}D_ta\|_{L^2(\Omega)}\lesssim_A(E^k)^{\frac{1}{2}}.
\end{equation*}
From \Cref{otherdecomp}, we have
\begin{equation*}
\sup_{j>0}2^{-j(\frac{1}{2}-\epsilon)}\|n_\Gamma\cdot G_j^1\|_{L^{\infty}(\Gamma)}\lesssim_A 1.
\end{equation*}
On the other hand, from the trace theorem and \Cref{otherdecomp},
\begin{equation*}
2^{j(k-\frac{3}{2}-2\epsilon)}\|n_\Gamma\cdot G_j^2\|_{H^{\epsilon}(\Gamma)}\lesssim_A 
 \Lambda_{k-\epsilon},
\end{equation*}
which completes the proof.
\end{proof}
With our preliminary estimates in hand, let us proceed with the proof of the first (and harder) half of the coercivity estimate; namely,
\begin{equation*}
\|(v,\Gamma)\|_{\mathbf{H}^k}\lesssim_A (E^k)^{\frac{1}{2}}.
\end{equation*}
Let us begin by proving the estimate 
\begin{equation}\label{surfcontrol}
\|p\|_{H^{k+\frac{1}{2}}(\Omega)}+\|\Gamma\|_{H^k}\lesssim_A (E^k)^{\frac{1}{2}}+\Lambda_{k-\epsilon}.
\end{equation}
We start by recalling from \Cref{curvaturebound} that we have
\begin{equation*}
\|\Gamma\|_{H^k}+\|n_\Gamma\|_{H^{k-1}(\Gamma)}\lesssim_A 1+\|\kappa\|_{H^{k-2}(\Gamma)}, 
\end{equation*}
where $\kappa$ is the mean curvature of $\Gamma$. Therefore, to establish (\ref{surfcontrol}), it suffices to establish the same estimate except with $\|p\|_{H^{k+\frac{1}{2}}(\Omega)}+\|\kappa\|_{H^{k-2}(\Gamma)}$ on the left-hand side. To do this, we begin by relating the curvature to the pressure via the formula
\begin{equation}\label{curvaturerelation}
\kappa=a^{-1}\Delta p-a^{-1}D^2p(n_\Gamma,n_\Gamma).
\end{equation}
Here, we used the fact that $\Delta_{\Gamma}p=0$ on $\Gamma$. We now estimate each term on the right-hand side of \eqref{curvaturerelation}. For the first term, we use the Laplace equation for $p$ and the bilinear frequency decomposition for $\Delta p=-\text{tr}(\nabla v)^2$ as in \Cref{Linfest} together with  \Cref{boundaryest} to obtain
\begin{equation*}
\begin{split}
\|a^{-1}\Delta p\|_{H^{k-2}(\Gamma)}&\lesssim_A \|\text{tr}(\nabla v)^2\|_{H^{k-2}(\Gamma)}+(\|a^{-1}\|_{H^{k-1-\epsilon}(\Gamma)}+\|\Gamma\|_{H^{k-\epsilon}})\sup_{j>0}2^{-j(1-\epsilon)}\|\Phi_{<j}\partial_k(v_i^l\partial_iv_k^{\leq l})\|_{L^{\infty}(\Omega)}
\\
&+\sup_{j>0}2^{j(k-2-\epsilon)}\|\Phi_{\geq j}\text{tr}(\nabla v)^2\|_{L^2(\Gamma)}.
\end{split}
\end{equation*}
Using the trace theorem,  the product estimates \Cref{boundaryest} and \Cref{productest}, the latter two terms can be controlled by $C_A\Lambda_{k-\epsilon}$ where $C_A$ is a constant depending polynomially on $A$ only. On the other hand, $\|\text{tr}(\nabla v)^2\|_{H^{k-2}(\Gamma)}$ can be controlled using the balanced trace estimate \Cref{baltrace} as well as  \Cref{productest} as follows:
\begin{equation*}
\begin{split}
\|\text{tr}(\nabla v)^2\|_{H^{k-2}(\Gamma)}&\lesssim_A \|\text{tr}(\nabla v)^2\|_{H^{k-\frac{3}{2}}(\Omega)}+\|\Gamma\|_{H^{k-\epsilon}}\sup_{j>0}2^{-j(1-\epsilon)}\|\Phi_{<j}\partial_k(v_i^l\partial_iv_k^{\leq l})\|_{L^{\infty}(\Omega)}
\\
&+\sup_{j>0}2^{j(k-\frac{3}{2}-\epsilon)}\|\Phi_{\geq j}\text{tr}(\nabla v)^2\|_{L^2(\Omega)}
\\
&\lesssim_A \Lambda_{k-\epsilon}. 
\end{split}
\end{equation*}
To estimate $a^{-1}D^2p(n_\Gamma,n_\Gamma)$ in $H^{k-2}(\Gamma)$, we proceed similarly by starting with \Cref{boundaryest} and \Cref{Linfest} to obtain
\begin{equation*}
\begin{split}
\|a^{-1}D^2p(n_\Gamma,n_\Gamma)\|_{H^{k-2}(\Gamma)}&\lesssim_A \|D^2p(n_\Gamma,n_\Gamma)\|_{H^{k-2}(\Gamma)}+\sup_{j>0}2^{j(k-2-\epsilon)}\|\Phi_{\geq j}D^2p\|_{L^2(\Gamma)}
\\
&+(\|a^{-1}\|_{H^{k-1-\epsilon}(\Gamma)}+\|\Gamma\|_{H^{k-\epsilon}})\sup_{j>0}2^{-j(1-\epsilon)}\|\Phi_{<j}D^2p\|_{L^{\infty}(\Omega)}.
\end{split}
\end{equation*}
Similarly to the previous estimate, the latter two terms are controlled by $C_A\Lambda_{k-\epsilon}$. For the term involving $D^2p(n_\Gamma,n_\Gamma)$, we use \Cref{boundaryest} again, combined with the estimates $\|n_\Gamma\|_{H^{k-1-\epsilon}(\Gamma)}\lesssim_A \|\Gamma\|_{H^{k-\epsilon}}$ and $\|n_\Gamma\|_{C^{\epsilon}(\Gamma)}\lesssim_A 1$ to obtain (similarly to the above estimate but with $a^{-1}$ replaced by $n_\Gamma$)
\begin{equation*}
\begin{split}
\|D^2p(n_\Gamma,n_\Gamma)\|_{H^{k-2}(\Gamma)}\lesssim_A \|D^2p\|_{H^{k-2}(\Gamma)}+\Lambda_{k-\epsilon}.
\end{split}
\end{equation*}
\Cref{baltrace} and the same partition of $D^2p$ above then yields
\begin{equation*}
\|D^2p\|_{H^{k-2}(\Gamma)}\lesssim_A \|\nabla p\|_{H^{k-\frac{1}{2}}(\Omega)}+\Lambda_{k-\epsilon}.
\end{equation*}
To complete the proof of (\ref{surfcontrol}), we now only need to control $\nabla p$ in $H^{k-\frac{1}{2}}$. For this, we use the div-curl estimate \Cref{Balanced div-curl} for $\nabla p$ as well as \Cref{productest}, \Cref{boundaryest} and \Cref{tangradientbound} to obtain
\begin{equation}\label{pest}
\begin{split}
\|\nabla p\|_{H^{k-\frac{1}{2}}(\Omega)}&\lesssim_A \|\nabla p\|_{L^2(\Omega)}+\|\nabla^{\top}a\|_{H^{k-2}(\Gamma)}+\|\text{tr}(\nabla v)^2\|_{H^{k-\frac{3}{2}}(\Omega)}+\|\Gamma\|_{H^{k-\epsilon}}\|\nabla p\|_{C^{\epsilon}(\Omega)}
\\
&\lesssim_A (E^k)^{\frac{1}{2}}+\|a\|_{H^{k-1}(\Gamma)}+\Lambda_{k-\epsilon}
\\
&\lesssim_A (E^k)^{\frac{1}{2}}+\Lambda_{k-\epsilon},
\end{split}
\end{equation}
where we used \Cref{aest} to go from the second to third line. From this, we finally obtain the estimate (\ref{surfcontrol}). To close the coercivity estimate, it remains to control $v$ in $H^k(\Omega)$ and $D_t\nabla p$ in $H^{k-1}(\Omega)$ by the energy. We first reduce to the estimate
\begin{equation*}\label{vcoerc}
\|v\|_{H^k(\Omega)}\lesssim_A (E^k)^{\frac{1}{2}}+\|D_t\nabla p\|_{H^{k-1}(\Omega)}+\Lambda_{k-\epsilon}.    
\end{equation*}
For this, we start by relating the boundary term $\nabla^{\top}v\cdot n_\Gamma$ to $D_t\nabla p$. Indeed, we have 
\begin{equation*}
D_t\nabla p=\nabla D_t p-\nabla v\cdot\nabla p.
\end{equation*}
Since $\nabla p=-an_\Gamma$ and $D_tp=0$ on $\Gamma$, we obtain
\begin{equation*}
\nabla^{\top}v\cdot n_\Gamma=a^{-1}(D_t\nabla p)^{\top},
\end{equation*}
and so, since $v$ is divergence free, we have from the div-curl estimate in \Cref{Balanced div-curl},
\begin{equation}\label{754}
\begin{split}
\|v\|_{H^k(\Omega)}&\lesssim_A \|v\|_{L^2(\Omega)}+\|\omega\|_{H^{k-1}(\Omega)}+\|a^{-1}(D_t\nabla p)^{\top}\|_{H^{k-\frac{3}{2}}(\Gamma)}+\|\Gamma\|_{H^{k-\epsilon}}\|v\|_{C^{\frac{1}{2}+\epsilon}(\Omega)}
\\
&\lesssim_A \|a^{-1}(D_t\nabla p)^{\top}\|_{H^{k-\frac{3}{2}}(\Gamma)}+(E^k)^{\frac{1}{2}}+\Lambda_{k-\epsilon}.
\end{split}
\end{equation}
To estimate the first term on the right-hand side of \eqref{754}, we use the decomposition $D_t\nabla p=G_j^1+G_j^2$ from \Cref{otherdecomp}. By the balanced product and trace estimates \Cref{boundaryest} and \Cref{baltrace} and a similar analysis to the estimate for $\|\kappa\|_{H^{k-2}(\Gamma)}$, we obtain
\begin{equation*}\label{div-curD_tp1}
\begin{split}
\|a^{-1}(D_t\nabla p)^{\top}\|_{H^{k-\frac{3}{2}}(\Gamma)}&\lesssim_A \|D_t\nabla p\|_{H^{k-1}(\Omega)}+(\|a^{-1}\|_{H^{k-1-\epsilon}(\Gamma)}+\|\Gamma\|_{H^{k-\epsilon}})\sup_{j>0}2^{-j(\frac{1}{2}-\epsilon)}\|G_j^1\|_{L^{\infty}(\Omega)}
\\
&+\sup_{j>0}2^{j(k-\frac{3}{2}-2\epsilon)}\|G_j^2\|_{H^{\frac{1}{2}+\epsilon}(\Omega)}\lesssim_A \|D_t\nabla p\|_{H^{k-1}(\Omega)}+\Lambda_{k-\epsilon}.
\end{split}
\end{equation*}
Finally, we need to show that
\begin{equation*}
\|D_t\nabla p\|_{H^{k-1}(\Omega)}\lesssim_A (E^k)^{\frac{1}{2}}+\Lambda_{k-\epsilon}.    
\end{equation*}
For this, we will use the div-curl decomposition for $D_t\nabla p$. The divergence and curl are given by
\begin{equation*}\label{Dtpdiv}
\begin{cases}
&\nabla\cdot D_t\nabla p=3\text{tr}(\nabla^2p\cdot \nabla v)+2\text{tr}(\nabla v)^3\hspace{2mm}\text{in $\Omega$},
\\
&\nabla\times D_t\nabla p=\nabla ^2p\cdot \nabla v-(\nabla v)^*\cdot \nabla ^2p\hspace{2mm}\text{in $\Omega$}.
\end{cases}    
\end{equation*}
Hence, using the div-curl estimate and the partition $D_t\nabla p=G_j^1+G_j^2$ from \Cref{otherdecomp} in conjunction with \Cref{productest}, we obtain
\begin{equation*}
\begin{split}
\|D_t\nabla p\|_{H^{k-1}(\Omega)}&\lesssim_A \|p\|_{H^{k+\frac{1}{2}}(\Omega)}\|v\|_{C^{\frac{1}{2}}(\Omega)}+\|p\|_{C^{1,\epsilon}(\Omega)}\|v\|_{H^{k-\epsilon}(\Omega)}+\|\text{tr}(\nabla v)^3\|_{H^{k-2}(\Omega)}+\|\nabla^{\top}(D_t\nabla p)\cdot n_\Gamma\|_{H^{k-\frac{5}{2}}(\Gamma)}
\\
&+\|\Gamma\|_{H^{k-\epsilon}}\sup_{j>0}2^{-j(\frac{1}{2}-\epsilon)}\|G_j^1\|_{L^{\infty}(\Omega)}+\sup_{j>0}2^{j(k-\frac{3}{2}-2\epsilon)}\|G_j^2\|_{H^{\frac{1}{2}+\epsilon}(\Omega)}+\Lambda_{k-\epsilon}.
\end{split}
\end{equation*}
Estimating $G_j^1$ and $G_j^2$ as before and then using (\ref{pest}) gives
\begin{equation*}
\|D_t\nabla p\|_{H^{k-1}(\Omega)}\lesssim_A (E^k)^{\frac{1}{2}}+\|\nabla^{\top}(D_t\nabla p)\cdot n_\Gamma\|_{H^{k-\frac{5}{2}}(\Gamma)}+\|v\|_{H^{k-\epsilon}(\Omega)}+\|\Gamma\|_{H^{k-\epsilon}}+\|\text{tr}(\nabla v)^3\|_{H^{k-2}(\Omega)}+\Lambda_{k-\epsilon}.
\end{equation*}
Using a trilinear frequency decomposition as in \Cref{Linfest}, we obtain easily
\begin{equation*}
\|\text{tr}(\nabla v)^3\|_{H^{k-2}(\Omega)}\lesssim_A \|v\|_{C^{\frac{1}{2}+\epsilon}(\Omega)}^2\|v\|_{H^{k-\epsilon}(\Omega)}\lesssim_A \Lambda_{k-\epsilon}.
\end{equation*}
It remains to estimate the boundary term.  We compute
\begin{equation}\label{top1}
\nabla^{\top}(D_t\nabla p)\cdot n_\Gamma=-\nabla^{\top}D_ta-D_t\nabla p\cdot\nabla^{\top}n_\Gamma.
\end{equation}
By \Cref{boundaryest}, \Cref{tangradientbound} and using the decomposition $D_t\nabla p=G_j^1+G_j^2$, the terms in \eqref{top1} are controlled in a similar fashion to  the above terms by
\begin{equation*}
\|\nabla^{\top}(D_t\nabla p)\cdot n_\Gamma\|_{H^{k-\frac{5}{2}}(\Gamma)}\lesssim_A \|D_ta\|_{H^{k-\frac{3}{2}}(\Gamma)}+\Lambda_{k-\epsilon}\lesssim_A (E^k)^{\frac{1}{2}}+\Lambda_{k-\epsilon},  
\end{equation*}
where we used \Cref{aest} in the last inequality. Combining everything together, we have
\begin{equation*}
\|D_t\nabla p\|_{H^{k-1}(\Omega)}+\|\Gamma\|_{H^k}+\|v\|_{H^k(\Omega)}+\|p\|_{H^{k+\frac{1}{2}}(\Omega)}\lesssim_A (E^k)^{\frac{1}{2}}+\Lambda_{k-\epsilon}.
\end{equation*}
Using the definition of $\Lambda_{k-\epsilon}$ and interpolating gives
\begin{equation*}
\|D_t\nabla p\|_{H^{k-1}(\Omega)}+\|\Gamma\|_{H^k}+\|v\|_{H^k(\Omega)}+\|p\|_{H^{k+\frac{1}{2}}(\Omega)}\lesssim_A (E^k)^{\frac{1}{2}}+\|v\|_{L^2(\Omega)}+\|p\|_{H^1(\Omega)}+\|D_t\nabla p\|_{L^2(\Omega)}.
\end{equation*}
We can use the $H^1$ estimate for the Laplace equation for $p$ to estimate
\begin{equation*}
\|p\|_{H^1(\Omega)}\lesssim_A \|v\|_{H^1(\Omega)}.    
\end{equation*}
Moreover, by writing $D_t\nabla p=\nabla D_tp-\nabla v\cdot\nabla p$, writing $D_tp$ in the form $\Delta^{-1}\nabla\cdot f$ as in the proof of \Cref{Linfest} and using the $H^{-1}\to H^1$ estimate for $\Delta^{-1}$, we have 
\begin{equation*}
\|D_t\nabla p\|_{L^2(\Omega)}\lesssim_A \|v\|_{H^1(\Omega)}.    
\end{equation*}
Therefore, by interpolation we have
\begin{equation}\label{strongercoercivitybound}
\|D_t\nabla p\|_{H^{k-1}(\Omega)}+\|\Gamma\|_{H^k}+\|v\|_{H^k(\Omega)}+\|p\|_{H^{k+\frac{1}{2}}(\Omega)}\lesssim_A (E^k)^{\frac{1}{2}}.
\end{equation}
This finally establishes the desired estimate
\begin{equation*}
\|(v,\Gamma)\|_{\mathbf{H}^k}\lesssim_A (E^k)^{\frac{1}{2}}.
\end{equation*}
Next, we show the easier part of the coercivity bound; namely,
\begin{equation*}\label{easiercoercive}
(E^k)^{\frac{1}{2}}\lesssim_A 1+\|(v,\Gamma)\|_{\mathbf{H}^k}.
\end{equation*}
Clearly, the only nontrivial part is to control the irrotational energy. More precisely, we have to show that
\begin{equation}\label{reversecoercive}
\|\nabla\mathcal{H}\mathcal{N}^{k-2}D_ta\|_{L^2(\Omega)}+\|a^{\frac{1}{2}}\mathcal{N}^{k-1}a\|_{L^2(\Gamma)}\lesssim_A 1+\|(v,\Gamma)\|_{\mathbf{H}^k}.
\end{equation}
To establish this, we will need the following $L^2$ based estimates for $p$ and $D_tp$.
\begin{lemma}\label{Hestimates} The following estimate holds:
\begin{equation*}
\|p\|_{H^{k+\frac{1}{2}}(\Omega)}+\|D_tp\|_{H^{k}(\Omega)}\lesssim_A \|(v,\Gamma)\|_{\mathbf{H}^k}.
\end{equation*}
\end{lemma}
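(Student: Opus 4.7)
The plan is to apply the balanced Dirichlet estimate of Proposition~\ref{direst} directly to the elliptic problems \eqref{Euler-pressure} and \eqref{Dtpdef}, and to control the resulting source terms via the balanced product bounds of Corollary~\ref{productest} together with the pointwise control on $p$ and $D_tp$ already provided by Lemma~\ref{Linfest}.

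First, for $p$, apply Proposition~\ref{direst} with $s=k+\tfrac12$, zero boundary data, source term $g=-\mathrm{tr}(\nabla v)^2$, and choose the parameters $r=0$, $\alpha=1$, $\beta=0$ together with the trivial partition $p_j^1=p$, $p_j^2=0$. This yields
\[
\|p\|_{H^{k+\frac12}(\Omega)}\lesssim_A \|\mathrm{tr}(\nabla v)^2\|_{H^{k-\frac32}(\Omega)}+\|\Gamma\|_{H^{k-\frac12}}\|p\|_{W^{1,\infty}(\Omega)}.
\]
The second factor is bounded by $\|\Gamma\|_{H^k}$, and the Lipschitz bound on $p$ is supplied by the $C^{1,\epsilon}$ estimate of Lemma~\ref{Linfest}(i). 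For the source term, we apply Corollary~\ref{productest} with $f=g=v$, $\alpha_1=\alpha_2=\tfrac12+\epsilon$, which gives
\[
\|\mathrm{tr}(\nabla v)^2\|_{H^{k-\frac32}(\Omega)}\lesssim_A \|v\|_{H^{k-\epsilon}(\Omega)}\|v\|_{C^{\frac12+\epsilon}(\Omega)}\lesssim_A \|v\|_{H^k(\Omega)}.
\]
Combining these gives the desired bound for $p$.

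Second, for $D_tp$, apply Proposition~\ref{direst} with $s=k$, zero boundary data, and source $F$ as in \eqref{Dtpdef}. Now we take as partition of $D_tp$ the splitting $F_j^1+F_j^2$ constructed in Lemma~\ref{Linfest}(ii), with parameters $\alpha=1$, $\beta=0$ and $r=\tfrac12-\epsilon$. The pointwise factor becomes
\[
\sup_{j>0}2^{-j(\frac12-\epsilon)}\|F_j^1\|_{C^1(\Omega)}\lesssim_A 1,
\]
the weighted $H^1$ factor becomes
\[
\sup_{j>0}2^{j(k-1-\epsilon)}\|F_j^2\|_{H^1(\Omega)}\lesssim_A \|v\|_{H^{k-\epsilon}(\Omega)}+\|p\|_{H^{k+\frac12-\epsilon}(\Omega)},
\]
and the surface weight is $\|\Gamma\|_{H^{k-\epsilon}}\leq \|\Gamma\|_{H^k}$. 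Hence
\[
\|D_tp\|_{H^k(\Omega)}\lesssim_A \|F\|_{H^{k-2}(\Omega)}+\|(v,\Gamma)\|_{\mathbf{H}^k}+\|p\|_{H^{k+\frac12-\epsilon}(\Omega)}.
\]
The last term is controlled by the already established bound on $p$ (via interpolation), so it remains to estimate $\|F\|_{H^{k-2}(\Omega)}$.

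The expression $F=4\,\mathrm{tr}(\nabla^2 p\cdot\nabla v)+2\,\mathrm{tr}(\nabla v)^3+\Delta v\cdot\nabla p$ will be handled term by term using the balanced bilinear and trilinear product estimates. Using incompressibility $\partial_jv_j=0$, one rewrites
\[
\mathrm{tr}(\nabla^2 p\cdot\nabla v)=\partial_j(\partial_i p\,\partial_iv_j),\qquad \Delta v\cdot\nabla p=\partial_j(\partial_jv_i\,\partial_i p)-\partial_jv_i\,\partial_i\partial_jp,
\]
so that both terms are, up to a telescoped remainder, divergences of balanced bilinear expressions in $\nabla p$ and $\nabla v$. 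Then Corollary~\ref{productest} with $s=k-\tfrac32$, $\alpha_1=1+\epsilon$, $\alpha_2=\tfrac12+\epsilon$, combined with the $C^{1,\epsilon}$ bound on $p$ from Lemma~\ref{Linfest}, yields $\|\partial p\cdot\partial v\|_{H^{k-\frac32}(\Omega)}\lesssim_A \|v\|_{H^{k}(\Omega)}+\|p\|_{H^{k+\frac12}(\Omega)}$, which upon taking one more divergence gives the desired $H^{k-2}$ bound. The cubic term $\mathrm{tr}(\nabla v)^3$ is treated analogously by a trilinear paraproduct decomposition exactly as in the proof of Lemma~\ref{Linfest}. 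Assembling these bounds, using the $p$ estimate obtained in the first step, and interpolating away the $\epsilon$ losses completes the argument.

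The main obstacle is purely bookkeeping: organizing the divergence-form rewrites and choosing exponents in Corollary~\ref{productest} so that only the pointwise norms contained in $A$ and the Sobolev norms in $\|(v,\Gamma)\|_{\mathbf{H}^k}$ appear on the right. No new analytic input beyond Proposition~\ref{direst}, Corollary~\ref{productest}, and Lemma~\ref{Linfest} is required.
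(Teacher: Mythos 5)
Your overall strategy matches the paper's: apply Proposition~\ref{direst} to the Dirichlet problems for $p$ and $D_tp$, estimate the source terms via Corollary~\ref{productest}, and feed in the pointwise control from Lemma~\ref{Linfest}. The one organizational difference is in the $D_tp$ step: the paper first writes $D_tp=\Delta^{-1}\partial_i(\partial_iv_j\partial_jp)+3\Delta^{-1}\partial_i(\partial_jp\partial_jv_i)+2\Delta^{-1}\mathrm{tr}(\nabla v)^3=:F_1+F_2+F_3$ (the identity \eqref{Dtpid}) and applies Proposition~\ref{direst} separately to each $F_i$ with an ad hoc partition $F_i=H_j^1+H_j^2$, whereas you apply it once to all of $D_tp$ using the ready-made partition $F_j^1+F_j^2$ from Lemma~\ref{Linfest}(ii). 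Your version is marginally more economical; both routes close.

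There are, however, two concrete bookkeeping errors in the source-term estimate that need fixing. First, Proposition~\ref{productestref} (and hence Corollary~\ref{productest}) only allows $\alpha_1,\alpha_2\in[0,1]$, so $\alpha_1=1+\epsilon$ is out of range; you should take $\alpha_1=1$, which pairs $\|p\|_{C^1}\lesssim_A 1$ (from Lemma~\ref{Linfest}(i)) with $\|v\|_{H^{s+1}}$. Second, after rewriting $\Delta v\cdot\nabla p$ and $\mathrm{tr}(\nabla^2p\cdot\nabla v)$ as divergences of bilinear expressions, what is needed is $\|\nabla p\cdot\nabla v\|_{H^{k-1}}$, not $\|\nabla p\cdot\nabla v\|_{H^{k-\frac32}}$: a bound in $H^{k-\frac32}$ is strictly weaker, and ``taking one more divergence'' sends $H^{k-\frac32}$ to $H^{k-\frac52}$, not to $H^{k-2}$. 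The correct exponent choice in Corollary~\ref{productest} is $s=k-1$, $\alpha_1=1$, $\alpha_2=\tfrac12+\epsilon$, which produces $\|\nabla p\cdot\nabla v\|_{H^{k-1}}\lesssim_A \|v\|_{H^k}\|p\|_{C^1}+\|p\|_{H^{k+\frac12-\epsilon}}\|v\|_{C^{\frac12+\epsilon}}\lesssim_A\|(v,\Gamma)\|_{\mathbf{H}^k}$, matching the paper's estimate $\|\nabla p\cdot\nabla v\|_{H^{k-1}}\lesssim_A \|v\|_{C^{\frac12+\epsilon}}\|p\|_{H^{k+\frac12}}+\|p\|_{C^{1,\epsilon}}\|v\|_{H^k}$. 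Also a minor slip in the $p$ estimate: with $s=k+\tfrac12$ and $r=0$ in Proposition~\ref{direst}, the surface weight is $\|\Gamma\|_{H^{s+r-\frac12}}=\|\Gamma\|_{H^k}$, not $\|\Gamma\|_{H^{k-\frac12}}$ (harmless here). Once these are corrected the argument closes.
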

\begin{proof}
 First, from the balanced Dirichlet estimate in \Cref{direst}, as well as \Cref{productest} and \Cref{Linfest}, we have
\begin{equation*}\label{pdirest}
\begin{split}
\|p\|_{H^{k+\frac{1}{2}}(\Omega)}&\lesssim_A \|\text{tr}(\nabla v)^2\|_{H^{k-\frac{3}{2}}(\Omega)}+\|\Gamma\|_{H^k}\|p\|_{W^{1,\infty}(\Omega)}\lesssim_A \|(v,\Gamma)\|_{\mathbf{H}^k}.
\end{split}
\end{equation*}
 To estimate $D_tp$, recall that we can write $D_tp$ in the form $\Delta^{-1}\nabla\cdot f$. Indeed, similarly to \Cref{Linfest}, we can start by writing
\begin{equation}\label{Dtpid}
D_tp=\Delta^{-1}\partial_i(\partial_i v_j\partial_jp)+3\Delta^{-1}\partial_i(\partial_jp\partial_jv_i)+2\Delta^{-1}\text{tr}(\nabla v)^3=:F_1+F_2+F_3.
\end{equation}
We now will use \Cref{direst} to estimate each term. We begin with $F_1$. We use the partition $F_1=H_j^1+H_j^2$ where $H_j^1:=\Delta^{-1}\partial_i(\partial_i\Phi_{\leq j}v_k\partial_kp)$ and \Cref{direst} to obtain, 
\begin{equation*}
\begin{split}
\|F_1\|_{H^{k}(\Omega)}\lesssim_A \|\nabla p\cdot \nabla v\|_{H^{k-1}(\Omega)}+\|\Gamma\|_{H^k}\sup_{j>0}2^{-\frac{j}{2}}\|H_j^1\|_{W^{1,\infty}(\Omega)}+\sup_{j>0}2^{j(k-1)}\|H_j^2\|_{H^1(\Omega)}.
\end{split}
\end{equation*}
Using \Cref{productest} and the $H^{k+\frac{1}{2}}$ estimate for $p$ above, we obtain
\begin{equation*}
\|\nabla p\cdot \nabla v\|_{H^{k-1}(\Omega)}\lesssim_A \|v\|_{C^{\frac{1}{2}+\epsilon}(\Omega)}\|p\|_{H^{k+\frac{1}{2}}(\Omega)}+\|p\|_{C^{1,\epsilon}(\Omega)}\|v\|_{H^{k}(\Omega)}\lesssim_A \|(v,\Gamma)\|_{\mathbf{H}^k}.
\end{equation*}
We also have from \Cref{Gilbarg} and the properties of $\Phi_{\leq j}$,
\begin{equation*}
\sup_{j>0}2^{-\frac{j}{2}}\|H_j^1\|_{W^{1,\infty}(\Omega)}\lesssim_A \|p\|_{C^{1,\epsilon}(\Omega)}\|v\|_{C^{\frac{1}{2}+\epsilon}(\Omega)}\lesssim_A 1,
\end{equation*}
and from the $H^{-1}\to H^1$ estimate for $\Delta^{-1}$ and \Cref{Linfest}, we have
\begin{equation*}
\sup_{j>0}2^{j(k-1)}\|H_j^2\|_{H^1(\Omega)}\lesssim_A \sup_{j>0}2^{j(k-1)}\|\nabla p\|_{L^{\infty}(\Omega)}\|\nabla\Phi_{>j}v\|_{L^2(\Omega)}\lesssim_A \|v\|_{H^k(\Omega)}.
\end{equation*}
Hence,
\begin{equation}\label{F1}
\|F_1\|_{H^{k}(\Omega)}\lesssim_A \|(v,\Gamma)\|_{\mathbf{H}^k}.
\end{equation}
By a very similar analysis, we obtain the same bound (\ref{F1}) for $F_2$. To estimate $F_3$, one uses the decomposition of $\text{tr}(\nabla v)^3$ from (\ref{cubicdecomp1}) and (\ref{cubicdecomp2}) and then partitions one of the factors $\nabla v^{\leq l}=\nabla \Phi_{< j}v^{\leq l}+\nabla \Phi_{\geq j}v^{\leq l}$. After that, an estimate similar to $F_1$ yields the bound (\ref{F1}) for the term $F_3$. Therefore,
\begin{equation*}
\|D_tp\|_{H^k(\Omega)}\lesssim_A \|(v,\Gamma)\|_{\mathbf{H}^k},
\end{equation*}
as desired. 
\end{proof}
Now, returning to the proof of (\ref{reversecoercive}), for the term $\|a^{\frac{1}{2}}\mathcal{N}^{k-1}a\|_{L^2(\Gamma)}$, we have from \Cref{baselineDN2}  and   \Cref{higherpowers},
\begin{equation*}
\|a^{\frac{1}{2}}\mathcal{N}^{k-1}a\|_{L^2(\Gamma)}\lesssim_A \|a\|_{H^{k-1}(\Gamma)}+\|a\|_{L^{\infty}(\Gamma)}\|\Gamma\|_{H^{k}}\lesssim_A \|a\|_{H^{k-1}(\Gamma)}+\|\Gamma\|_{H^{k}}.   
\end{equation*}
Then from \Cref{boundaryest}, \Cref{baltrace} and \Cref{Hestimates}, we have
\begin{equation*}
\|a\|_{H^{k-1}(\Gamma)}\lesssim_A \|p\|_{H^{k+\frac{1}{2}}(\Omega)}+\|\Gamma\|_{H^{k}}\lesssim_A \|(v,\Gamma)\|_{\mathbf{H}^k}.   
\end{equation*}
To control the other part of the energy, we  first note that by (\ref{harmonicbase}) we have 
\[
\|\nabla\mathcal{H}\mathcal{N}^{k-2}D_ta\|_{L^2(\Omega)}\lesssim_A \|\mathcal{N}^{k-2}D_ta\|_{H^{\frac{1}{2}}(\Gamma)}.
\]
Then we apply \Cref{higherpowers}, \Cref{baltrace} and \Cref{boundaryest}, in that order, to obtain
\begin{equation*}
\begin{split}
\|\mathcal{N}^{k-2}D_ta\|_{H^{\frac{1}{2}}(\Gamma)}&\lesssim_A \|D_t\nabla p\|_{H^{k-1}(\Omega)}+\|\Gamma\|_{H^{k}}\sup_{j>0}2^{-\frac{j}{2}}\|G_j^1\|_{L^{\infty}(\Omega)}+\sup_{j>0}2^{j(k-\frac{3}{2}-2\epsilon)}\|G_j^2\|_{H^{\frac{1}{2}+\epsilon}(\Omega)}
\\
&\lesssim_A \|D_t\nabla p\|_{H^{k-1}(\Omega)}+\|\Gamma\|_{H^{k}},
\end{split}
\end{equation*}
where $D_t\nabla p=G_j^1+G_j^2$ is the partition from \Cref{otherdecomp}. We then write $D_t\nabla p=-\nabla v\cdot\nabla p+\nabla D_tp$ and use \Cref{productest} and \Cref{Hestimates} to obtain
\begin{equation*}
\|D_t\nabla p\|_{H^{k-1}(\Omega)}\lesssim_A \|(v,\Gamma)\|_{\mathbf{H}^k}.    
\end{equation*}
This completes the proof of (\ref{reversecoercive}) and thus the proof of part (i) of \Cref{Energy est. thm}. Next, we turn to part (ii), which is the energy propagation bound.
\subsection{\texorpdfstring{$L^{\infty}$}{} estimates for propagation}
Now, we turn to the energy propagation bounds. As in the coercivity estimate, we will need certain $L^{\infty}$ based estimates for $p$ and $D_tp$, but in norms that have essentially $\frac{1}{2}$ more degrees of regularity compared to \Cref{Linfest}.
\begin{lemma} \label{Linfest2}
Given the assumptions of \Cref{Energy est. thm}, the following pointwise estimates for  $p$ and $D_tp$ hold.
\begin{enumerate}
\item ($C^{1,\frac{1}{2}}$ estimate for $p$).
\begin{equation*}
\|p\|_{C^{1,\frac{1}{2}}(\Omega)}\lesssim_A B.
\end{equation*}
\item ($W^{1,\infty}$ estimate for $D_tp$). Let $s\in\mathbb{R}$ with $s>\frac{d}{2}+1$. Then
\begin{equation*}
\|D_tp\|_{W^{1,\infty}(\Omega)}\lesssim_A \log(1+\|(v,\Gamma)\|_{\mathbf{H}^s})B.
\end{equation*}
\end{enumerate}
\end{lemma}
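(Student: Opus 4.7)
The plan for both parts is to build on the divergence-form identities from the proof of \Cref{Linfest}, but to track more carefully how the enhanced control parameter $B$ enters each estimate, using $A$-controlled H\"older norms only where they bring in small factors $2^{-l\epsilon}$ that allow summation in the bilinear/trilinear paraproducts.

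\textbf{Part (1).} I would start from the identity
\[
p=-2\Delta^{-1}\partial_j\bigl(v_i^l\,\partial_i v_j^{\leq l}\bigr)
\]
(summed in $l$) already obtained in \Cref{Linfest} by exploiting the divergence-free property of $v^{\leq l}$ to leave the highest-frequency factor undifferentiated on the outside. Applying the $C^{1,\alpha}$ Dirichlet estimate of \Cref{Gilbarg} with $\alpha=\tfrac12$, $\psi=g_2=0$ and $g_1=-2\sum_l v_i^l\partial_i v_j^{\leq l}e_j$ reduces the problem to estimating $\|g_1\|_{L^\infty}$ and $\|g_1\|_{C^{1/2}}$ by $B$; indeed the first bracket of \Cref{Gilbarg} is then at most $\|\Gamma\|_{C^{1,1/2}}(\|p\|_{W^{1,\infty}}+\|g_1\|_{L^\infty})\lesssim_A B$ thanks to the bounds $\|p\|_{W^{1,\infty}}\lesssim_A 1$ and $\|g_1\|_{L^\infty}\lesssim_A 1$ already known from \Cref{Linfest}. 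For the H\"older norm I would estimate each summand as
\[
\|v^l\,\partial v^{\leq l}\|_{C^{1/2}(\Omega)}\lesssim \|v^l\|_{C^{1/2}}\|\partial v^{\leq l}\|_{L^\infty}+\|v^l\|_{L^\infty}\|\partial v^{\leq l}\|_{C^{1/2}},
\]
and in each product place one factor in an $A$-controlled norm (using $v\in C^{1/2+\epsilon}$, which yields $\|v^l\|_{C^{1/2}}\lesssim_A 2^{-l\epsilon}$ and $\|v^l\|_{L^\infty}\lesssim_A 2^{-l(1/2+\epsilon)}$) and the other in a $B$-controlled norm (via $\|\partial v^{\leq l}\|_{L^\infty}\lesssim B$ and the Bernstein bound $\|\partial v^{\leq l}\|_{C^{1/2}}\lesssim 2^{l/2}B$). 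Each product then becomes $2^{-l\epsilon}B$, summing to $\lesssim_A B$.

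\textbf{Part (2).} For the $W^{1,\infty}$ bound on $D_tp$ I would begin from \eqref{Dtpid},
\[
D_tp=\Delta^{-1}\partial_i(\partial_i v_j\,\partial_j p)+3\Delta^{-1}\partial_i(\partial_j p\,\partial_j v_i)+2\Delta^{-1}\mathrm{tr}(\nabla v)^3,
\]
and further rewrite $\mathrm{tr}(\nabla v)^3$ in divergence form using the symmetry/divergence-free identities \eqref{cubicdecomp1}--\eqref{cubicdecomp2}, so that altogether $D_tp=\Delta^{-1}\nabla\cdot G$ for an explicit $G$. The key observation is that $\|G\|_{L^\infty(\Omega)}\lesssim_A B$: the quadratic terms are bounded by $\|\nabla p\|_{L^\infty}\cdot\|\nabla v\|_{L^\infty}\lesssim_A 1\cdot B=B$ using \Cref{Linfest}, while the cubic term in the form $v^l\cdot\nabla v^{\leq l}\cdot\nabla v^{\leq l}$ admits, after placing two factors in $A$-controlled norms and one in $B$, the bound $2^{-l(1/2+\epsilon)}\cdot 2^{l(1/2-\epsilon)}\cdot B=2^{-2l\epsilon}B$, summable to $\lesssim_A B$. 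Since the $W^{1,\infty}$ estimate for the Dirichlet problem is endpoint, I cannot directly use \Cref{Gilbarg}; taking inspiration from the proof of Beale-Kato-Majda I would instead establish the log-endpoint variant
\[
\|\nabla u\|_{L^\infty(\Omega)}\lesssim_A \|G\|_{L^\infty(\Omega)}\bigl(1+\log\bigl(1+\|G\|_{H^{s-1}(\Omega)}/\|G\|_{L^\infty(\Omega)}\bigr)\bigr)
\]
for $\Delta u=\nabla\cdot G$ in $\Omega$ with $u=0$ on $\Gamma$. This is proved by splitting $G=G^{<J}+G^{\geq J}$ at a dyadic frequency scale $J$, applying the $C^{1,\epsilon}$ estimate of \Cref{Gilbarg} to the low-frequency part (whose $C^\epsilon$ norm is at most $2^{J\epsilon}\|G\|_{L^\infty}$) and the $H^{-1}\to H^1_0$ estimate of \Cref{desiredelliptic} together with Sobolev embedding to the high-frequency part, then optimizing $J$. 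Combining this with the maximum principle for $\|D_tp\|_{L^\infty}$ and the product estimates of \Cref{productest} to bound $\|G\|_{H^{s-1}}\lesssim (1+\|(v,\Gamma)\|_{\mathbf{H}^s})^{C}$ yields the claimed logarithmic estimate.

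\textbf{Main obstacle.} The principal technical difficulty is the log-endpoint estimate for $D_tp$: it is essential that the logarithm involve only the $\mathbf{H}^s$ norm and not a stronger quantity such as $\mathbf{H}^k$, since otherwise the Gr\"onwall argument underlying \eqref{doubleexpbound1} would give a growth rate worse than double-exponential in $B$. Tracking this sharp logarithmic dependence requires a careful, uniform-in-$\Lambda_*$ execution of the frequency-splitting argument and a precise choice of the bilinear/trilinear paraproduct partition. A secondary but still delicate point is the book-keeping that produces $B$ rather than $B^2$ in the cubic contribution to $D_tp$: only by exploiting the divergence-free structure of $v$ to rewrite $\mathrm{tr}(\nabla v)^3$ in divergence form, and then distributing the three factors so that two of them sit in the $A$-controlled $C^{1/2+\epsilon}$ norm while only the third is absorbed into the Lipschitz norm in $B$, does the naive quadratic-in-$B$ behaviour collapse to the desired linear dependence.
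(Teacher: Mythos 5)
Your treatment of part (1) is essentially the paper's proof: same identity $p=-2\Delta^{-1}\partial_j(v_i^l\partial_iv_j^{\leq l})$, same application of \Cref{Gilbarg} with $\alpha=\tfrac12$, same rebalancing of the bilinear factor using $A$-controlled H\"older norms to sum the paraproduct. This part is correct.

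For part (2) there is a genuine gap, and it sits exactly at the point you flag as the main obstacle. You reduce everything to the claim that the Dirichlet problem $\Delta u=\nabla\cdot G$, $u|_\Gamma=0$, obeys
\[
\|\nabla u\|_{L^\infty(\Omega)}\lesssim_A \|G\|_{L^\infty}\bigl(1+\log(1+\|G\|_{H^{s-1}}/\|G\|_{L^\infty})\bigr),
\]
and you propose to prove it by splitting the \emph{source} $G=G^{<J}+G^{\geq J}$, bounding $\|\nabla u^{<J}\|_{L^\infty}$ by $\|G^{<J}\|_{C^\epsilon}\lesssim 2^{J\epsilon}\|G\|_{L^\infty}$ via \Cref{Gilbarg}, bounding the high part via Sobolev embedding, and then optimizing in $J$. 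But optimizing $2^{J\epsilon}\|G\|_{L^\infty}+2^{-J\delta}\|G\|_{H^{s-1}}$ yields a \emph{power-law} interpolation $\|G\|_{L^\infty}^{\theta}\|G\|_{H^{s-1}}^{1-\theta}$, not a logarithm. The logarithmic factor requires a uniform-in-$m$ dyadic band bound of the form $\|\Phi_m\nabla u\|_{L^\infty}\lesssim_A\|G\|_{L^\infty}$, which is a boundary-adapted Calder\'on--Zygmund statement for $\nabla\Delta^{-1}\nabla\cdot$ that your argument does not supply. A secondary problem is that your high-frequency estimate needs $H^{s'-1}\to H^{s'+1}$ regularity at $s'\approx d/2+\epsilon$, which is outside the range of \Cref{desiredelliptic}; using \Cref{direst} instead would drag in domain-regularity terms that must be controlled separately.

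The paper avoids both difficulties by being surgical rather than general. It does not write $D_tp=\Delta^{-1}\nabla\cdot G$ with $G\in L^\infty$ and then invoke a log-endpoint elliptic estimate. Instead it performs the finer split \eqref{decomp22}--\eqref{2easy1hard}, estimates every term except $\Delta^{-1}\Delta(v^l_j\partial_j p^{\leq l})$ directly by \Cref{Gilbarg} without any log, and for that one term writes $\Delta^{-1}\Delta=I-\mathcal{H}$ so the logarithmic loss is isolated in $\nabla\mathcal{H}(v^l_j\partial_jp^{\leq l})$. The crucial step \eqref{truncated estimate} then proves $\|\Phi_m\nabla\mathcal{H}(v^l_j\partial_jp^{\leq l})\|_{L^\infty}\lesssim_A B$ uniformly in $m$: the low-frequency band is controlled because the outer $\Phi_m$ converts the $C^{1,\epsilon}$ elliptic estimate into an $\mathcal{O}(1)$ bound via $2^{-m\epsilon}\cdot 2^{m\epsilon}$, and the high-frequency band is controlled by the \emph{maximum principle} for $\mathcal H$ applied to a function whose $W^{1,\infty}$ norm you already bounded by $B$, giving $2^m\cdot 2^{-m}B=B$. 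This uniform band estimate is exactly what turns the final optimization into a logarithm, and it rests on the two facts the generic $G\in L^\infty$ formulation cannot access: the maximum principle (available for $\mathcal H$ but not for $\nabla\Delta^{-1}\nabla\cdot$) and the observation that the relevant input $v^l_j\partial_jp^{\leq l}$ is a full derivative better than $L^\infty$. Without recreating this structure your proposal cannot close.
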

\begin{proof}
We begin with the $C^{1,\frac{1}{2}}$ estimate. We have from \Cref{Gilbarg}, using the decomposition from (\ref{decomp}) and a similar analysis to the $C^{1,\epsilon}$ estimate for $p$,
\begin{equation*}
\begin{split}
\|p\|_{C^{1,\frac{1}{2}}(\Omega)}&\lesssim_A \|\Gamma\|_{C^{1,\frac{1}{2}}}(\|p\|_{C^{1,\epsilon}(\Omega)}+\|v^l_i\partial_iv^{\leq l}_j\|_{C^{\epsilon}(\Omega)})+\|v^l_i\partial_iv^{\leq l}_j\|_{C^{\frac{1}{2}}(\Omega)}\lesssim _A \|\Gamma\|_{C^{1,\frac{1}{2}}}+\|v\|_{W^{1,\infty}(\Omega)}\lesssim_A B.
\end{split}
\end{equation*}
Now, we turn to the more difficult $W^{1,\infty}$ estimate for $D_tp$. Again, we first recall from (\ref{threeterms}) that we have
\begin{equation}\label{threeterms2}
D_tp=4\Delta^{-1}(\partial_i\partial_jp\partial_iv_j)+2\Delta^{-1}(\partial_jv_k\partial_kv_i\partial_iv_j)+\Delta^{-1}(\partial_i\partial_iv_j\partial_jp).
\end{equation}
Using a very similar analysis to \Cref{Linfest} (except without the partition of $D_tp$), we can estimate the second term in \eqref{threeterms2} in $W^{1,\infty}$ by 
\begin{equation*}
\|\Delta^{-1}(\partial_jv_k\partial_kv_i\partial_iv_j)\|_{W^{1,\infty}(\Omega)}\lesssim_A B.
\end{equation*}
For the first term in \eqref{threeterms2} we have the decomposition
\begin{equation}\label{twotermsintermediate}
\Delta^{-1}(\partial_i\partial_jp\partial_iv_j)=\Delta^{-1}(\partial_i\partial_jp^l\partial_iv_j^{\leq l})+\Delta^{-1}(\partial_i\partial_jp^{\leq l}\partial_iv_j^{l}).    
\end{equation}
The first term in (\ref{twotermsintermediate}) can be estimated similarly using \Cref{Gilbarg} by
\begin{equation}
\begin{split}
\|\Delta^{-1}(\partial_i\partial_jp^l\partial_iv_j^{\leq l})\|_{W^{1,\infty}(\Omega)}&= \|\Delta^{-1}\partial_j(\partial_ip^l\partial_iv_j^{\leq l})\|_{W^{1,\infty}(\Omega)}\lesssim_A \|p\|_{C^{1,\epsilon}(\Omega)}\|v\|_{W^{1,\infty}(\Omega)}\lesssim_A B.
\end{split}
\end{equation}
For the latter term in (\ref{twotermsintermediate}), we write
\begin{equation}
\Delta^{-1}(\partial_i\partial_jp^{\leq l}\partial_iv_j^{l})=\Delta^{-1}\partial_i(\partial_i\partial_jp^{\leq l}v_j^{l})-\Delta^{-1}\partial_j(\partial_i\partial_ip^{\leq l}v_j^{l})    
\end{equation}
and use the fact that the pressure term is at low frequency compared to $v$ and a similar analysis to the above to estimate
\begin{equation}
\|\Delta^{-1}(\partial_i\partial_jp^{\leq l}\partial_iv_j^{l})\|_{W^{1,\infty}(\Omega)}\lesssim_A B.    
\end{equation}
We now focus on the last term in (\ref{threeterms2}) which will be responsible for the logarithmic loss in the estimate. We begin by writing
\begin{equation}\label{decomp22}
\partial_i\partial_iv_j\partial_jp=\partial_i\partial_iv^l_j\partial_jp^{\leq l}+\partial_i\partial_iv^{\leq l}_j\partial_jp^{l}.
\end{equation}
For the second term on the right-hand side of (\ref{decomp22}), we write
\begin{equation*}
\partial_i\partial_iv^{\leq l}_j\partial_jp^{l}=\partial_j(\partial_i\partial_iv^{\leq l}_jp^{l}).
\end{equation*}
Again, similarly to the above, we have
\begin{equation}\label{easyest}
\|\Delta^{-1}\partial_j(\partial_i\partial_iv^{\leq l}_jp^{l})\|_{W^{1,\infty}(\Omega)}\lesssim_A B.
\end{equation}
Now, for the first term on the right of (\ref{decomp22}) we have,
\begin{equation}\label{2easy1hard}
\begin{split}
\partial_i\partial_iv^l_j\partial_jp^{\leq l}=\Delta(v^l_j\partial_jp^{\leq l})+\partial_j(v^l_j\partial_i\partial_ip^{\leq l})-2\partial_i(v^l_j\partial_j\partial_ip^{\leq l}).
\end{split}
\end{equation}
The latter two terms in \eqref{2easy1hard} are estimated similarly to (\ref{easyest}). We focus our attention on the first term, which corresponds to estimating $\Delta^{-1}\Delta (v_j^l\partial_jp^{\leq l})$ in $W^{1,\infty}$. We begin by writing
\begin{equation}\label{D-1D}
\Delta^{-1}\Delta(v^l_j\partial_jp^{\leq l})=v^l_j\partial_jp^{\leq l}-\mathcal{H}(v^l_j\partial_jp^{\leq l}).
\end{equation}
For the first term in \eqref{D-1D} we note that
\begin{equation*}
\nabla (v_j^l\partial_jp^{\leq l})=v_j^l\partial_j\nabla p^{\leq l}+\nabla v_j^l\partial_jp^{\leq l}.
\end{equation*}
From the $C^{1,\epsilon}$ bound for $p$ from \Cref{Linfest}, we  clearly have $\|v_j^l\partial_j\nabla p^{\leq l}\|_{L^{\infty}(\Omega)}\lesssim_A B.$ On the other hand, we have the same estimate for $\nabla v_j^l\partial_jp^{\leq l}$ because 
\begin{equation*}
\nabla v_j^l\partial_jp^{\leq l}=\nabla v_j\partial_j p-\nabla v_j^{\leq l}\partial_jp^{l}.
\end{equation*}
This yields the estimate $\|v_j^l\partial_j p^{\leq l}\|_{W^{1,\infty}(\Omega)}\lesssim_A B$. It remains to estimate $\mathcal{H}(v_j^l\partial_j p^{\leq l})$, which is where we incur the logarithmic loss. By the maximum principle, it suffices to estimate $\|\nabla \mathcal{H}(v_j^l\partial_j p^{\leq l})\|_{L^{\infty}(\Omega)}.$ We begin by showing that for each $m\geq 0$
\begin{equation}\label{truncated estimate}
\|\Phi_m \nabla \mathcal{H}(v_j^l\partial_j p^{\leq l})\|_{L^{\infty}(\Omega)}\lesssim_A B,
\end{equation}
 with implicit constant independent of $m$. Indeed, we have
\begin{equation*}
\|\Phi_m \nabla \mathcal{H}(v_j^l\partial_j p^{\leq l})\|_{L^{\infty}(\Omega)}\lesssim \|\Phi_m \nabla \mathcal{H}\Phi_{\leq m}(v_j^l\partial_j p^{\leq l})\|_{L^{\infty}(\Omega)}+\|\Phi_m \nabla \mathcal{H}\Phi_{> m}(v_j^l\partial_j p^{\leq l})\|_{L^{\infty}(\Omega)}.
\end{equation*}
For the first term, we have from the regularization properties of $\Phi_m$ and the $C^{1,\epsilon}$ estimate from \Cref{Gilbarg},
\begin{equation*}
\begin{split}
\|\Phi_m \nabla \mathcal{H}\Phi_{\leq m}(v_j^l\partial_j p^{\leq l})\|_{L^{\infty}(\Omega)}&\lesssim 2^{-\epsilon m}\|\mathcal{H}\Phi_{\leq m}(v_j^l\partial_j p^{\leq l})\|_{C^{1,\epsilon}(\Omega)}\lesssim_A 2^{-\epsilon m}\|\Phi_{\leq m}(v_j^l\partial_j p^{\leq l})\|_{C^{1,\epsilon}(\Omega)}
\\
&\lesssim_A \|v_j^l\partial_j p^{\leq l}\|_{W^{1,\infty}(\Omega)}.
\end{split}
\end{equation*}
Therefore, similarly to the estimate for $\nabla (v_j^l\partial_j p^{\leq l})$, we have
\begin{equation*}
\|\Phi_m \nabla \mathcal{H}\Phi_{\leq m}(v_j^l\partial_j p^{\leq l})\|_{L^{\infty}(\Omega)}\lesssim_A B.
\end{equation*}
For the other term, we have from the regularization properties of $\Phi_{\leq m}$ and $\Phi_{\geq m}$ and the maximum principle,
\begin{equation*}
\begin{split}
\|\Phi_m \nabla \mathcal{H}\Phi_{> m}(v_j^l\partial_j p^{\leq l})\|_{L^{\infty}(\Omega)}&\lesssim_A 2^{m}\|\mathcal{H}\Phi_{> m}(v_j^l\partial_j p^{\leq l})\|_{L^{\infty}(\Omega)}\leq 2^m\|\Phi_{> m}(v_j^l\partial_j p^{\leq l})\|_{L^{\infty}(\Omega)}
\\
&\lesssim_A \|v_j^l\partial_j p^{\leq l}\|_{W^{1,\infty}(\Omega)}.
\end{split}
\end{equation*}
Combining everything gives (\ref{truncated estimate}). Now, to prove the full estimate, we fix an integer $m_0>0$ to be chosen later and estimate using (\ref{truncated estimate}),
\begin{equation}\label{splitting}
\|\nabla \mathcal{H}(v_j^l\partial_j p^{\leq l})\|_{L^{\infty}(\Omega)}\lesssim_A m_0B+\|\Phi_{\geq m_0}\nabla \mathcal{H}(v_j^l\partial_j p^{\leq l})\|_{L^{\infty}(\Omega)}.
\end{equation}
For the latter term, since $s>\frac{d}{2}+1$, we obtain by Sobolev embedding, the regularization properties of $\Phi_{\geq m_0}$ and the elliptic estimate for $\mathcal{H}$, the estimate
\begin{equation*}
\|\Phi_{\geq m_0}\nabla \mathcal{H}(v_j^l\partial_j p^{\leq l})\|_{L^{\infty}(\Omega)}\lesssim_{A} 2^{-m_0\delta_0}\|\mathcal{H}(v_j^l\partial_j p^{\leq l})\|_{H^{s-\epsilon}(\Omega)}\lesssim_A 2^{-m_0\delta_0}\|(v,\Gamma)\|_{\mathbf{H}^s}^r,
\end{equation*}
where $r\geq 1$ is some integer and $\delta_0>0$ is a constant depending on $k$. Taking $m_0\approx r\delta_0^{-1}\log(1+\|(v,\Gamma)\|_{\mathbf{H}^s})$ and combining everything above with (\ref{splitting}) then yields
\begin{equation*}
\|\nabla \mathcal{H}(v_j^l\partial_j p^{\leq l})\|_{L^{\infty}(\Omega)}\lesssim_A B\log(1+\|(v,\Gamma)\|_{\mathbf{H}^s}).
\end{equation*}
This completes the proof of the lemma.
\end{proof}
\begin{remark}\label{logremovablremark}
It is perhaps worth remarking that by using \Cref{Gilbarg} and the maximum principle to estimate $\|\nabla \mathcal{H}(v_j^l\partial_j p^{\leq l})\|_{L^{\infty}(\Omega)}$ in the above proof in $C^{\epsilon}$, we can also easily obtain the bound
\begin{equation*}
\|D_tp\|_{W^{1,\infty}(\Omega)}\lesssim_A \|v\|_{C^{1,\epsilon}(\Omega)}.    
\end{equation*}
Of course, we do not want this in our energy estimates as it would force us to forfeit the scale invariant control parameter $B$. 
\end{remark}
\subsection{Proof of energy propagation}
Now, we turn to the second part of \Cref{Energy est. thm}. Using (\ref{vorteq}) and the coercivity bound \eqref{Coercivity bound on integers} it is straightforward to verify the following energy estimate for the rotational component of the energy:
\begin{equation*}\label{vortprop}
\frac{d}{dt}E^k_r(v(t),\Gamma_t)\lesssim_A BE^k(v(t),\Gamma_t).
\end{equation*}
The main bulk of the work will be in establishing a propagation bound for the irrotational part of the energy. Namely, we want to show that
\begin{equation*}\label{irrEE}
\frac{d}{dt}E_{i}^k(v(t),\Gamma_t)\lesssim_A B\log(1+\|(v,\Gamma)\|_{\mathbf{H}^s})E^k(v(t),\Gamma_t).
\end{equation*}
To do this, we start by deriving a wave-type equation for $a$.  The general procedure for deriving this equation is similar to \cite{MR3925531}. However, we  need to more precisely identify the source terms in order to obtain  estimates with the required pointwise control parameters $A$ and $B$.
\\

We begin our derivation with the simple commutator identity
\begin{equation*}
D_t\nabla p=-\nabla v\cdot\nabla p+\nabla D_tp.
\end{equation*}
Applying $D_t$ and performing some elementary algebraic manipulations gives
\begin{equation*}
\begin{split}
D_t^2\nabla p&=-\nabla D_t v\cdot\nabla p+D_t\nabla D_tp+\nabla v\cdot (\nabla v\cdot \nabla p)-\nabla v\cdot D_t\nabla p
\\
&=\frac{1}{2}\nabla|\nabla p|^2+\nabla D_t^2p+2\nabla v\cdot (\nabla v\cdot \nabla p)-2\nabla v\cdot \nabla D_tp,
\end{split}
\end{equation*}
where in the last line, we used the Euler equations to write $-\nabla D_tv\cdot\nabla p=\frac{1}{2}\nabla |\nabla p|^2$. As $\Delta p=-\text{tr}(\nabla v)^2$ is lower order, it is natural to further split $\nabla |\nabla p|^2$  as
\begin{equation*}
\frac{1}{2}\nabla |\nabla p|^2=\frac{1}{2}\nabla \mathcal{H}|\nabla p|^2+\frac{1}{2}\nabla \Delta^{-1}\Delta |\nabla p|^2.
\end{equation*}
From this, we obtain the equation
\begin{equation}\label{aeqndom}
D_t^2\nabla p-\frac{1}{2}\nabla \mathcal{H}|\nabla p|^2=\frac{1}{2}\nabla \Delta^{-1}\Delta |\nabla p|^2+\nabla D_t^2p+2\nabla v\cdot (\nabla v\cdot \nabla p)-2\nabla v\cdot \nabla D_tp=:g.
\end{equation}
It will be seen later that $g$ can be thought of as a perturbative source term. In an effort to convert  \eqref{aeqndom} into an equation for $D_ta$, we take the normal component of the trace on $\Gamma_t$ to obtain
\begin{equation}\label{qeqdom2}
D_t^2\nabla p\cdot n_{\Gamma_t}-\frac{1}{2}\mathcal{N}(a^2)=g\cdot n_{\Gamma_t},
\end{equation}
where we used the dynamic boundary condition $p|_{\Gamma_t}=0$ to write $|\nabla p_{|\Gamma_t}|^2=a^2$. Since $D_t$ is tangent to $\Gamma_t$, we have
\begin{equation}\label{qeqdom3}
\begin{split}
D_t^2a&=-D_t^2\nabla p\cdot n_{\Gamma_t}-D_t\nabla p\cdot D_t n_{\Gamma_t}=-D_t^2\nabla p\cdot n_{\Gamma_t}+a|D_tn_{\Gamma_t}|^2 .
\end{split}
\end{equation}
Note that for the latter equality in \eqref{qeqdom3}, we wrote $D_t\nabla p=-D_t(an_{\Gamma_t})$ and used that $D_tn_{\Gamma_t}$ is tangent to $\Gamma_t$. Combining \eqref{qeqdom2} and \eqref{qeqdom3}, we obtain the equation
\begin{equation*}\label{adynamics1}
D_t^2a+\frac{1}{2}\mathcal{N}(a^2)=-g\cdot n_{\Gamma_t}+a|D_tn_{\Gamma_t}|^2,
\end{equation*}
which can be further reduced using the Leibniz type formula for $\mathcal{N}$ from (\ref{DNLeibniz}) to the equation
\begin{equation}\label{adynamics}
D_t^2a+a\mathcal{N}a=f,
\end{equation}
where
\begin{equation*}
f:=-g\cdot n_{\Gamma_t}+a|D_tn_{\Gamma_t}|^2+n_{\Gamma_t}\cdot\nabla\Delta^{-1}(|\nabla\mathcal{H}a|^2).
\end{equation*}
To propagate $(a,D_ta)$ in $H^{k-1}(\Gamma_t)\times   H^{k-\frac{3}{2}}(\Gamma_t)$, one natural idea, in view of the ellipticity of $\mathcal{N}$, would be to use the spectral theorem to apply $\mathcal{N}^{k-\frac{3}{2}}$ to the above equation, and then read off the associated energy for the leading order wave-like equation. This is essentially the approach used in \cite{MR3925531}. However, there is a much better choice for our purposes, which comes from instead applying $\nabla\mathcal{H}\mathcal{N}^{k-2}$ to the above equation. The benefit to this is twofold. The most important advantage is that we only have to work with integer powers of $\mathcal{N}$, which will allow us to make use of the balanced elliptic estimates from the previous sections. Secondly, this choice allows us to reinterpret the desired estimate for $(a,D_ta)$ in $H^{k-1}(\Gamma_t)\times   H^{k-\frac{3}{2}}(\Gamma_t)$ as an $L^2$ type estimate for the linearized equation (\ref{DM lin}) with perturbative source terms. Indeed, by defining the variables
\begin{equation*}
\begin{split}
&w:=\nabla\mathcal{H} \mathcal{N}^{k-2}D_ta,
\\
&s:=\mathcal{N}^{k-1}a,
\\
&q:=\mathcal{H} (  a\mathcal{N}^{k-1}a),
\end{split}    
\end{equation*}
we may interpret $(w,s,q)$ to leading order as a solution to the linearized system (\ref{DM lin}). To verify this, note that we clearly have $\nabla\cdot w=0$. Moreover, we observe that $q|_{\Gamma_t}=as$
 and that $w|_{\Gamma_t}\cdot n_{\Gamma_t}=\mathcal{N}^{k-1}D_ta.$ Hence, 
\begin{equation*}
\begin{split}
D_ts-w|_{\Gamma_t}\cdot n_{\Gamma_t}=[D_t,\mathcal{N}^{k-1}]a=:\mathcal{R}.
\end{split}
\end{equation*}
We also note that in $\Omega_t$, by using the equation (\ref{adynamics}) for $a$  and the Leibniz formula for $\mathcal{N}$, 
\begin{equation*}\label{Qdef}
\begin{split}
D_tw +\nabla q&=\mathcal{Q},
\end{split}
\end{equation*}
where
\begin{equation}\label{def of Q}
\mathcal{Q}:=-\nabla v\cdot w+\nabla [D_t,\mathcal{H}](\mathcal{N}^{k-2}D_ta)+\nabla\mathcal{H}[D_t,\mathcal{N}^{k-2}]D_ta+\nabla\mathcal{H}\mathcal{N}^{k-2}f-\nabla\mathcal{H}[\mathcal{N}^{k-2},a]\mathcal{N}a.    
\end{equation}
To summarize the above in a compact form, we can write 
\begin{equation*}
\begin{cases}
&D_tw+\nabla q=\mathcal{Q} \ \text{in} \ \Omega_t,
\\
&\nabla\cdot w=0 \ \text{in} \ \Omega_t,
\\
&D_ts-w\cdot n_{\Gamma_t}=\mathcal{R} \ \text{on} \ \Gamma_t,
\\
&q=as \ \text{on}\ \Gamma_t.
\end{cases}    
\end{equation*}
The linearized energy estimate from \Cref{EE} combined with Cauchy-Schwarz and \Cref{Linfest2} immediately gives the preliminary bound
\begin{equation*}
\frac{d}{dt}E_{i}^k\lesssim_A B\log (1+\|(v,\Gamma)\|_{\mathbf{H}^s})E^k+(\|\mathcal{R}\|_{L^2(\Gamma_t)}+\|\mathcal{Q}\|_{L^2(\Omega_t)})(E^k)^{\frac{1}{2}}. 
\end{equation*}
It remains to control the source terms $\mathcal{Q}$ and $\mathcal{R}$. This will be where the bulk of the work is situated. Our goal is to show that
\begin{equation*}\label{QRest}
\|\mathcal{Q}\|_{L^2(\Omega_t)}+\|\mathcal{R}\|_{L^2(\Gamma_t)}\lesssim_A B\log(1+\|(v,\Gamma)\|_{\mathbf{H}^s})(E^k)^{\frac{1}{2}}.
\end{equation*}
We begin with the estimate for $\mathcal{Q}$. We proceed term by term. Clearly, we have
\begin{equation*}
\|\nabla v\cdot w\|_{L^2(\Omega_t)}\lesssim B(E^k)^{\frac{1}{2}}.
\end{equation*}
To handle the second term in the definition of $\mathcal{Q}$, we begin by recalling the simple commutator identity from (\ref{Sid}),
\begin{equation*}\label{Hcomm}
[D_t,\mathcal{H}]\psi=\Delta^{-1}\nabla\cdot\mathcal{B}(\nabla v,\nabla \mathcal{H}\psi),
\end{equation*}
where $\mathcal{B}$ is an $\mathbb{R}^d$-valued bilinear form.
We then estimate using the $H^{-1}\to H^1$ bound for $\Delta^{-1}$ to obtain 
\begin{equation*}
\|\nabla [D_t,\mathcal{H}](\mathcal{N}^{k-2}D_ta)\|_{L^2(\Omega_t)}\lesssim_A B\|\nabla\mathcal{H}\mathcal{N}^{k-2}D_ta\|_{L^2(\Omega_t)}\lesssim_A B(E^k)^{\frac{1}{2}}. 
\end{equation*}
For the third term in \eqref{def of Q}, we use the $H^{\frac{1}{2}}(\Gamma_t)\to H^1(\Omega_t)$ bound for $\mathcal{H}$ to obtain
\begin{equation*}
\|\nabla\mathcal{H}([D_t,\mathcal{N}^{k-2}]D_ta)\|_{L^2(\Omega_t)}\lesssim_A \|[D_t,\mathcal{N}^{k-2}]D_ta\|_{H^{\frac{1}{2}}(\Gamma_t)}.
\end{equation*}
Then, from the commutator estimate \Cref{materialcom} we obtain 
\begin{equation*}
\begin{split}
\|[D_t,\mathcal{N}^{k-2}]D_ta\|_{H^{\frac{1}{2}}(\Gamma_t)}\lesssim_A & \ \|v\|_{H^k(\Omega_t)}\|D_ta\|_{L^{\infty}(\Gamma_t)}+\|v\|_{W^{1,\infty}(\Omega_t)}\|D_ta\|_{H^{k-\frac{3}{2}}(\Gamma_t)}+\|D_ta\|_{L^{\infty}(\Gamma_t)}\|\Gamma\|_{H^k}
\\
&+\|v\|_{W^{1,\infty}(\Omega_t)}\|\Gamma\|_{H^k(\Omega_t)}\sup_{j>0}2^{-\frac{j}{2}}\|G_j^1\cdot n_{\Gamma_t}\|_{L^{\infty}(\Gamma_t)}
\\
& +\|v\|_{W^{1,\infty}(\Omega_t)}\sup_{j>0}2^{j(k-\frac{3}{2}-2\epsilon)}\|G_j^2\cdot n_{\Gamma_t}\|_{H^{\epsilon}(\Gamma_t)},
\end{split}
\end{equation*}
where $G_j^1$ and $G_j^2$ are as in \Cref{otherdecomp}. Using \Cref{Linfest2}, the energy coercivity, \Cref{aest} and (\ref{strongercoercivitybound}), we have
\begin{equation*}
\|[D_t,\mathcal{N}^{k-2}]D_ta\|_{H^{\frac{1}{2}}(\Gamma_t)}\lesssim_A B\log(1+\|(v,\Gamma)\|_{\mathbf{H}^s})(E^k)^{\frac{1}{2}}.    
\end{equation*}
Next, we turn to the estimate for $\nabla\mathcal{H}\mathcal{N}^{k-2}f$, which involves the most work. We recall that
\begin{equation*}
f:=-g\cdot n_{\Gamma_t}+a|D_tn_{\Gamma_t}|^2+\nabla_n\Delta^{-1}(|\nabla\mathcal{H}a|^2),
\end{equation*}
where $g$ is defined as in (\ref{aeqndom}). Using the identities $D_tn_{\Gamma_t}=-((Dv)^*n_{\Gamma_t})^{\top}=-(Dv)^*n_{\Gamma_t}+n_{\Gamma_t}(n_{\Gamma_t}\cdot (Dv)^*n_{\Gamma_t}) $ and $|\nabla\mathcal{H}a|^2=\frac{1}{2}\Delta |\mathcal{H}a|^2$, we may reorganize $f$ into the expression
\begin{equation}\label{fsimp}
f=\frac{1}{2}\nabla_n \Delta^{-1}\Delta (\mathcal{H}a)^2-\frac{1}{2}\nabla_n \Delta^{-1}\Delta |\nabla p|^2-\nabla_n D_t^2p+M_1+M_2,
\end{equation}
where $M_1$ is a multilinear expression in $n_{\Gamma_t}$, $\nabla p$, $\nabla v$ with exactly two factors of $\nabla v$ (e.g., from (\ref{Moving normal}), the term $a|D_tn_{\Gamma_t}|^2$), and $M_2$ is a multilinear expression in $\nabla p$, $\nabla v$, $\nabla D_tp$ and $n_{\Gamma_t}$ with a single factor of each of $\nabla D_tp$ and $\nabla v$ (e.g.,~the term $n_{\Gamma_t}\cdot \nabla D_tp\cdot \nabla v$). We will abuse notation slightly and refer to terms of the first type as $M_1(\nabla v,\nabla v)$ and terms of the second type as $M_2(\nabla D_tp,\nabla v)$. Next, we estimate each term in $\nabla\mathcal{H}\mathcal{N}^{k-2}f$, with the  expression \eqref{fsimp} for $f$ substituted in.
\\

From \Cref{EEcorollary}, we have
\begin{equation*}
\begin{split}
\|\nabla\mathcal{H}\mathcal{N}^{k-2}\nabla_n\Delta^{-1}\Delta(\mathcal{H}a)^2\|_{L^2(\Omega_t)}&\lesssim_A \|\mathcal{N}^{k-2}\nabla_n\Delta^{-1}\Delta(\mathcal{H}a)^2\|_{H^{\frac{1}{2}}(\Gamma_t)}
\\
&\lesssim_A \|\Gamma_t\|_{H^k}\|\Delta^{-1}\Delta(\mathcal{H}a)^2\|_{C^{\frac{1}{2}}(\Omega_t)}+\|\Delta^{-1}\Delta(\mathcal{H}a)^2\|_{H^{k}(\Omega_t)}.
\end{split}
\end{equation*}
By writing $\Delta^{-1}\Delta(\mathcal{H}a)^2=(\mathcal{H}a)^2-\mathcal{H}(\mathcal{H}a)^2$ and using the $C^{\frac{1}{2}}$ estimate for $\mathcal{H}$ from \Cref{Hboundlow} twice together with the maximum principle, we have
\begin{equation*}
\|\Delta^{-1}\Delta(\mathcal{H}a)^2\|_{C^{\frac{1}{2}}(\Omega_t)}\lesssim_A \|\mathcal{H}a\|_{L^{\infty}(\Omega_t)}\|\mathcal{H}a\|_{C^{\frac{1}{2}}(\Omega_t)}\lesssim_A \|a\|_{C^{\frac{1}{2}}(\Gamma_t)}\lesssim_A B.
\end{equation*}
 From \Cref{direst}, we obtain also
\begin{equation*}
\|\Delta^{-1}\Delta(\mathcal{H}a)^2\|_{H^{k}(\Omega_t)}\lesssim_A B\|\Gamma_t\|_{H^k}+\|\Delta(\mathcal{H}a)^2\|_{H^{k-2}(\Omega_t)}.
\end{equation*}
Then using that $\Delta(\mathcal{H}a)^2=2|\nabla\mathcal{H}a|^2$, we obtain from \Cref{productest},
\begin{equation*}
\begin{split}
\|\Delta(\mathcal{H}a)^2\|_{H^{k-2}(\Omega_t)}&\lesssim \|\mathcal{H}a\|_{C^{\frac{1}{2}}(\Omega_t)}\|\mathcal{H}a\|_{H^{k-\frac{1}{2}}(\Omega_t)}\lesssim_A B\|\mathcal{H}a\|_{H^{k-\frac{1}{2}}(\Omega_t)}.
\end{split}
\end{equation*}
Then from \Cref{Hbounds}, \Cref{aest} and the energy coercivity bound (\ref{strongercoercivitybound}), we obtain
\begin{equation*}
\begin{split}
\|\mathcal{H}a\|_{H^{k-\frac{1}{2}}(\Omega_t)}\lesssim_A \|a\|_{H^{k-1}(\Gamma_t)}+\|\Gamma\|_{H^k}\|a\|_{L^{\infty}(\Omega_t)}\lesssim_A (E^k)^{\frac{1}{2}}.
\end{split}
\end{equation*}
Therefore, 
\begin{equation*}
\|\Delta(\mathcal{H}a)^2\|_{H^{k-2}(\Omega_t)}\lesssim_A B(E^k)^{\frac{1}{2}}.
\end{equation*}
Next, we turn to the term $\nabla_n\Delta^{-1}\Delta |\nabla p|^2$ in \eqref{fsimp}. The procedure here is similar. Like with the previous estimate, we obtain
\begin{equation}\label{EstimateDp2}
\|\nabla\mathcal{H}\mathcal{N}^{k-2}\nabla_n\Delta^{-1}\Delta |\nabla p|^2\|_{L^2(\Omega_t)}\lesssim_A \|\Gamma_t\|_{H^k}\|\Delta^{-1}\Delta(|\nabla p|^2)\|_{C^{\frac{1}{2}}(\Omega_t)}+\|\Delta(|\nabla p|^2)\|_{H^{k-2}(\Omega_t)}
\end{equation}
and also
\begin{equation*}
\|\Delta^{-1}\Delta(|\nabla p|^2)\|_{C^{\frac{1}{2}}(\Omega_t)}\lesssim_A B.
\end{equation*}
Moreover, by expanding $\Delta|\nabla p|^2$ (and some simple manipulations), we have 
\begin{equation*}
\begin{split}
\|\Delta(|\nabla p|^2)\|_{H^{k-2}(\Omega_t)}&\lesssim \||\nabla^2p|^2\|_{H^{k-2}(\Omega_t)}+\||\Delta p|^2\|_{H^{k-2}(\Omega_t)}+\|\nabla p\Delta p\|_{H^{k-1}(\Omega_t)}.
\end{split}
\end{equation*}
Using \Cref{productest} and \Cref{Linfest2}, we have for the first two terms
\begin{equation*}
\||\nabla^2p|^2\|_{H^{k-2}(\Omega_t)}+\||\Delta p|^2\|_{H^{k-2}(\Omega_t)}\lesssim_A \|\nabla p\|_{C^{\frac{1}{2}}(\Omega_t)}\|p\|_{H^{k+\frac{1}{2}}(\Omega_t)}\lesssim_A B \|p\|_{H^{k+\frac{1}{2}}(\Omega_t)}.    
\end{equation*}
To handle the other term, we use the Laplace equation for $p$ to write
\begin{equation}
\|\nabla p\Delta p\|_{H^{k-1}(\Omega_t)}=\|\nabla p\partial_iv_j\partial_jv_i\|_{H^{k-1}(\Omega_t)}.    
\end{equation}
Then from \eqref{algebraprop}, \Cref{productest}, \Cref{Linfest} and \Cref{Hestimates}, we have
\begin{equation}
\begin{split}
\|\nabla p\partial_iv_j\partial_jv_i\|_{H^{k-1}(\Omega_t)}&\lesssim_A \|v\|_{W^{1,\infty}(\Omega_t)}\|\nabla p\partial_iv_j\|_{H^{k-1}(\Omega_t)}+\|\nabla p\partial_iv_j\|_{L^{\infty}(\Omega_t)}\|v\|_{H^k(\Omega_t)}
\\
&\lesssim_A \|v\|_{W^{1,\infty}(\Omega_t)}(\|v\|_{C^{\frac{1}{2}}(\Omega_t)}\|p\|_{H^{k+\frac{1}{2}}(\Omega_t)}+\|p\|_{W^{1,\infty}(\Omega_t)}\|v\|_{H^k(\Omega_t)})
\\
&\lesssim_A B\|(v,\Gamma)\|_{\mathbf{H}^k}.
\end{split}
\end{equation}
Combining the above with the energy coercivity (\ref{strongercoercivitybound}), we obtain
\begin{equation*}
\begin{split}
\|\Delta(|\nabla p|^2)\|_{H^{k-2}(\Omega_t)}\lesssim_A B(E^k)^{\frac{1}{2}}.
\end{split}
\end{equation*}
Next, we turn to the estimate for $M_1$. We first write $M_1=M_1'\mathcal{B}$ where $M_1'$ is an $\mathbb{R}$-valued multilinear expression in $n_{\Gamma_t}$ and $\nabla p$ and $\mathcal{B}$ is an $\mathbb{R}$-valued bilinear expression in $\nabla v$. We use the bilinear frequency decomposition $\mathcal{B}(\nabla v,\nabla v)=\mathcal{B}(\nabla v^l,\nabla v^{\leq l})+\mathcal{B}(\nabla v^{\leq l},\nabla v^l)$ and consider the partition $\mathcal{B}=\mathcal{B}_j^1+\mathcal{B}_j^2$ where $\mathcal{B}_j^1:=\mathcal{B}(\nabla \Phi_{<j}v^l,\nabla v^{\leq l})+\mathcal{B}(\nabla v^{\leq l},\nabla \Phi_{<j}v^l)$. Then using this partition, the trace inequality, energy coercivity and \Cref{higherpowers}, we have
\begin{equation}\label{firstM1}
\begin{split}
\|\nabla\mathcal{H}\mathcal{N}^{k-2}M_1\|_{L^2(\Omega_t)}&\lesssim_A \|M_1\|_{H^{k-\frac{3}{2}}(\Gamma_t)}+\|\Gamma_t\|_{H^k}\sup_{j>0}2^{-\frac{j}{2}}\|\mathcal{B}_j^1\|_{L^{\infty}(\Omega_t)}+\sup_{j>0}2^{j(k-\frac{3}{2}-2\epsilon)}\|\mathcal{B}_j^2\|_{H^{\frac{1}{2}+\epsilon}(\Omega_t)}
\\
&\lesssim_A \|M_1\|_{H^{k-\frac{3}{2}}(\Gamma_t)}+\|v\|_{W^{1,\infty}(\Omega_t)}\|v\|_{C^{\frac{1}{2}+\epsilon}(\Omega_t)}\|\Gamma_t\|_{H^k}+\|v\|_{W^{1,\infty}(\Omega_t)}\|v\|_{H^{k}(\Omega_t)}
\\
&\lesssim_A \|M_1\|_{H^{k-\frac{3}{2}}(\Gamma_t)}+B(E^k)^{\frac{1}{2}}.
\end{split}
\end{equation}
Using the same partition as above and \Cref{boundaryest}, \Cref{baltrace} and \Cref{Linfest}, we have 
\begin{equation*}
\begin{split}
\|M_1\|_{H^{k-\frac{3}{2}}(\Gamma_t)}&\lesssim_A \|\nabla v\|_{L^{\infty}(\Omega_t)}\|v\|_{H^k(\Omega_t)}+(\|\Gamma_t\|_{H^k}+\|M_1'(\nabla p,n_{\Gamma_t})\|_{H^{k-1}(\Gamma_t)})\sup_{j>0}2^{-\frac{j}{2}}\|\mathcal{B}_j^1\|_{L^{\infty}(\Omega_t)}
\\
&+\sup_{j>0}2^{j(k-\frac{3}{2}-2\epsilon)}\|\mathcal{B}_j^2\|_{\mathcal{H}^{\frac{1}{2}+\epsilon}(\Omega_t)}.
\end{split}
\end{equation*}
Estimating as in (\ref{firstM1}), this simplifies to 
\begin{equation*}
\|M_1\|_{H^{k-\frac{3}{2}}(\Gamma_t)}\lesssim_A B(E^k)^{\frac{1}{2}}+B\|M_1'(\nabla p,n_{\Gamma_t})\|_{H^{k-1}(\Gamma_t)}.    
\end{equation*}
By \Cref{boundaryest}, \Cref{baltrace}, \Cref{Hestimates} and the energy coercivity, we have also
\begin{equation*}
\|M_1'(\nabla p,n_{\Gamma_t})\|_{H^{k-1}(\Gamma_t)}\lesssim_A (E^k)^{\frac{1}{2}},    
\end{equation*}
from which we deduce
\begin{equation*}
\|\nabla\mathcal{H}\mathcal{N}^{k-2}M_1\|_{L^2(\Omega_t)}\lesssim_A B(E^k)^{\frac{1}{2}}.
\end{equation*}
Next, we estimate $M_2$. This estimate is similar to $M_1$. One starts by writing $M_2=M_2'\mathcal{B}$ where $M_2'$ is multilinear in $\nabla p$ and $n_{\Gamma_t}$ while $\mathcal{B}$ is bilinear in $\nabla v$ and $\nabla D_tp$. Using the partition $\mathcal{B}=\mathcal{B}_j^1+\mathcal{B}_j^2$ with $\mathcal{B}_j^1:=\mathcal{B}(\nabla \Phi_{<j}v^l,\nabla (D_tp)^{\leq l})+\mathcal{B}(\nabla v^{\leq l},\nabla\Phi_{<j} (D_tp)^{l})$ and a similar analysis to $M_1$, we have
\begin{equation*}
\begin{split}
\|\nabla\mathcal{H}\mathcal{N}^{k-2}M_2\|_{L^2(\Omega_t)}&\lesssim_A \| v\|_{W^{1,\infty}(\Omega_t)}\|D_tp\|_{H^{k}(\Omega_t)}+\|D_tp\|_{W^{1,\infty}(\Omega_t)}(\|\Gamma_t\|_{H^k}+\|M_2'(\nabla p,n_{\Gamma_t})\|_{H^{k-1}(\Gamma_t)})
\\
&+\|D_tp\|_{W^{1,\infty}(\Omega_t)}\|v\|_{H^k(\Omega_t)}.
\end{split}
\end{equation*}
Then using the $W^{1,\infty}$ bound for $D_tp$ from \Cref{Linfest2} and the $H^{k}$ bound for $D_tp$ from \Cref{Hestimates}, we have
\begin{equation*}
\|\nabla\mathcal{H}\mathcal{N}^{k-2}M_2\|_{L^2(\Omega_t)}\lesssim_A B\log(1+\|(v,\Gamma)\|_{\mathbf{H}^s})(E^k)^{\frac{1}{2}}.
\end{equation*}
Now we turn to the estimate for the term involving $D_t^2p$. As usual, we first aim to write it in the form $\Delta^{-1}\nabla\cdot f$ but in such a way that $f$ involves favorable frequency interactions. This presents some mild technical challenges as $D_t^2p$ will have terms which are up to quadrilinear in $\nabla v$. To deal with this, we have the following lemma. 
\begin{lemma}\label{goodform}
There exist bilinear, trilinear and quadrilinear expressions $\mathcal{B}$, $\mathcal{T}$ and $\mathcal{M}$ taking values in $\mathbb{R}^d$ such that
\begin{equation*}
\Delta D_t^2p=-2\Delta|\nabla p|^2+\nabla\cdot \mathcal{B}(\nabla D_tp,\nabla v)+\nabla\cdot \mathcal{T}(\nabla p,\nabla v,\nabla v)+\nabla\cdot \mathcal{M}(v^{m},\nabla v^{\leq m},\nabla v^{\leq m},\nabla v^{\leq m}).
\end{equation*}
\end{lemma}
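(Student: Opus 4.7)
The strategy is to iterate the derivation of \eqref{threeterms}, applying $D_t$ a second time and carefully organizing the resulting terms. First, I would take the identity $\Delta D_t p = 4\partial_i v_j \partial_i \partial_j p + 2\,\text{tr}(\nabla v)^3 + \Delta v_j \partial_j p$ (which is \eqref{threeterms} after composing with $\Delta$) and apply $D_t$ to both sides, combined with the commutator $[D_t,\Delta]h = 2\partial_k v_j \partial_j \partial_k h + \Delta v_j \partial_j h$, to express
\[
\Delta D_t^2 p \;=\; D_t\bigl(\Delta D_t p\bigr) + 2\partial_k v_j \partial_j \partial_k D_t p + \Delta v_j \partial_j D_t p.
\]
I would then fully expand $D_t(\Delta D_t p)$ using the Euler identities $D_t v = -\nabla p$, $[D_t,\partial_j] = -\partial_j v_k\partial_k$, and the Laplace equation $\Delta p = -\text{tr}(\nabla v)^2$, to replace every appearance of $D_t v_i$ or $D_t \partial_j v_i$ in terms of derivatives of $p$ and quadratic expressions in $\nabla v$. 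This produces a polynomial expression in $\nabla D_t p$, $\nabla p$, $\nabla^2 p$, $\nabla v$, $\nabla^2 v$, and $\Delta v$.

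\textbf{Identification of the $-2\Delta|\nabla p|^2$ term.} The terms in which both $D_t$'s have fallen on the pressure contribute $|\nabla^2 p|^2$ type expressions. The algebraic identity
\[
\Delta |\nabla p|^2 \;=\; 2|\nabla^2 p|^2 + 2 \,\nabla p\cdot\nabla\Delta p \;=\; 2|\nabla^2 p|^2 - 2\nabla p\cdot\nabla \text{tr}(\nabla v)^2
\]
allows the $|\nabla^2 p|^2$ contribution to be traded for $\tfrac12\Delta|\nabla p|^2$ up to trilinear corrections of the form $\nabla p\cdot\nabla v\cdot\nabla^2 v$, which after integration by parts become divergence forms of the trilinear type $\mathcal T(\nabla p,\nabla v,\nabla v)$. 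Tracking the coefficient through the computation (there are multiple matching contributions from $D_t(4\partial_i v_j \partial_i\partial_j p)$) yields exactly $-2\Delta|\nabla p|^2$.

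\textbf{Rearrangement into divergence form.} Every remaining term must be placed in divergence form with an undifferentiated high-frequency factor in the quartic piece. The bilinear residue involves $\partial_i v_j \partial_i\partial_j D_t p$ and $\Delta v_j \partial_j D_t p$, both of which become $\nabla\cdot\mathcal B(\nabla D_t p,\nabla v)$ using $\nabla\cdot v = 0$ (e.g., $\partial_i v_j\,\partial_i\partial_j D_t p = \partial_j(\partial_i v_j \,\partial_i D_t p)$). The trilinear pieces of the form $\partial_k v_j\, \partial^2 p \cdot \nabla v$ and $\nabla p \cdot \nabla v\cdot\nabla^2 v$ are written as $\nabla\cdot\mathcal T(\nabla p,\nabla v,\nabla v)$ by permuting indices, exploiting $\partial_i\partial_j = \partial_j\partial_i$, and again using $\nabla\cdot v = 0$, as in the treatment of $\text{tr}(\nabla v)^3$ in \eqref{cubicdecomp1}--\eqref{cubicdecomp2}.

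\textbf{The main obstacle.} The hard part is the quadrilinear residue, which after all commutators sits schematically as $(\nabla v)^4$ with various index contractions. To obtain the required form $\nabla\cdot\mathcal M(v^m,\nabla v^{\le m},\nabla v^{\le m},\nabla v^{\le m})$, I would first perform a frequency decomposition as in \eqref{cubicdecomp1}, selecting the highest-frequency factor (by symmetry one may freely choose which of the four $\nabla v$ is at scale $m$). Then, mimicking the chain of identities \eqref{cubicdecomp2}, I would use the divergence-free property $\partial_k v_k^m = 0$ together with $\partial_a\partial_b = \partial_b\partial_a$ to move a derivative off the highest-frequency factor onto a neighboring lower-frequency factor, and then recognize the result as a divergence of an expression of the desired form. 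The index bookkeeping here is more intricate than in the cubic case because the additional factor widens the combinatorial tree, but in each term the same symmetry-plus-incompressibility argument applies; since we do not require an explicit formula for $\mathcal M$, only its multilinear structure, it suffices to carry this out term by term without optimizing constants.
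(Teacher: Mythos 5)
Your overall roadmap matches the paper's: commute $\Delta$ past $D_t^2 p$ to peel off a bilinear divergence, then apply $D_t$ to the Laplace equation $\Delta D_tp = 3\partial_j(\partial_ip\partial_iv_j)+\partial_i(\partial_iv_j\partial_jp)+2\,\mathrm{tr}(\nabla v)^3$, expand with the Euler equations and the commutator $[\partial_i,D_t]f=\partial_j(\partial_iv_jf)$, and organize. The bilinear piece and the identification of $-2\Delta|\nabla p|^2$ from $D_t$ hitting $\nabla v$ in the pressure terms are also handled correctly.

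However, there is a genuine gap in the final step. You assert that the remaining trilinear piece (of schematic type $\nabla^2 p\cdot\nabla v\cdot\nabla v$, coming from $D_t$ hitting $\nabla v$ in $2\,\mathrm{tr}(\nabla v)^3$) and the quartic piece (schematically $(\nabla v)^4$) can \emph{each separately} be placed in the required divergence form by symmetry and incompressibility, ``term by term without optimizing constants.'' This is false. If you integrate by parts on the trilinear pressure piece $-6\,\partial_j\partial_kp\,\partial_kv_i\,\partial_iv_j$, you do get a $\nabla\cdot\mathcal T$ piece, but you also produce a residual $-3\Delta p\,\mathrm{tr}(\nabla v)^2 = +3|\Delta p|^2$ (using $\Delta p=-\mathrm{tr}(\nabla v)^2$), which is \emph{not} of the form $\nabla\cdot\mathcal T(\nabla p,\nabla v,\nabla v)$, since it contains no $\nabla p$ factor. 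Likewise, when you run the frequency decomposition plus integration-by-parts scheme on the quartic $-6\,\partial_jv_l\partial_lv_k\partial_kv_i\partial_iv_j$ to extract an undifferentiated $v^m$, the combinatorics leave a residual $-3|\Delta p|^2$, which is \emph{not} of the form $\nabla\cdot\mathcal M(v^m,\nabla v^{\le m},\nabla v^{\le m},\nabla v^{\le m})$ because all four factors are differentiated. The lemma only holds because these two residuals cancel identically. Your claim that the multilinear structure alone suffices and that constants need not be tracked therefore breaks the proof: without exact coefficient bookkeeping, the $|\Delta p|^2$ terms survive and destroy the divergence structure. You should explicitly isolate these residuals, verify their coefficients, and exhibit the cancellation.
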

\begin{proof}
First, using that $v$ is divergence free, it is straightforward to verify
\begin{equation*}
\Delta D_t^2p=\partial_i(\partial_jD_tp\partial_jv_i)+\partial_i(\partial_iv_j\partial_jD_tp)+D_t\Delta D_tp=\nabla\cdot \mathcal{B}+D_t\Delta D_tp.
\end{equation*}
Next, we expand $D_t\Delta D_tp$. We start with the Laplace equation for $D_tp$ from (\ref{Dtpid}),
\begin{equation*}
\begin{split}
\Delta D_tp=3\partial_j(\partial_ip\partial_iv_j)+\partial_i(\partial_iv_j\partial_jp)+2\partial_jv_k\partial_kv_i\partial_iv_j.
\end{split}
\end{equation*}
Using that $v$ is divergence free, we have the commutator identity $[\partial_i,D_t]f=\partial_j(\partial_iv_jf)$. Combining this with the Euler equations, we obtain
\begin{equation*}
\begin{split}
D_t(3\partial_j(\partial_ip\partial_iv_j)+\partial_i(\partial_iv_j\partial_jp))&=\nabla\cdot \mathcal{B}+\nabla\cdot \mathcal{T}-4\partial_j(\partial_ip\partial_i\partial_jp)
\\
&=\nabla\cdot \mathcal{B}+\nabla\cdot \mathcal{T}-2\Delta |\nabla p|^2.
\end{split}
\end{equation*}
It remains to expand $2D_t(\partial_j v_k\partial_k v_i\partial_i v_j)$. From the Euler equation and symmetry, we have
\begin{equation*}\label{quad}
\begin{split}
2D_t(\partial_j v_k\partial_k v_i\partial_i v_j)&=6D_t(\partial_j v_k)\partial_k v_i\partial_i v_j=-6\partial_j\partial_kp\partial_kv_i\partial_iv_j-6\partial_j v_l\partial_lv_k\partial_kv_i\partial_iv_j.
\end{split}
\end{equation*}
We rearrange the first term as
\begin{equation}\label{identity1}
\begin{split}
-6\partial_j\partial_kp\partial_kv_i\partial_iv_j&=-6\partial_j(\partial_kp\partial_kv_i\partial_iv_j)+6\partial_kp\partial_j\partial_kv_i\partial_i v_j=-6\partial_j(\partial_kp\partial_kv_i\partial_iv_j)+3\partial_kp\partial_k(\partial_jv_i\partial_i v_j)
\\
&=-6\partial_j(\partial_kp\partial_kv_i\partial_iv_j)+3\partial_k(\partial_kp\partial_jv_i\partial_i v_j)-3\partial_k\partial_kp\partial_jv_i\partial_i v_j
\\
&=\nabla\cdot \mathcal{T}+3|\Delta p|^2,
\end{split}
\end{equation}
where in the last line we used the Laplace equation for $p$. On the other hand, for the second term, by symmetry of the indices, we have the quadrilinear frequency decomposition, 
\begin{equation*}
\begin{split}
-6\partial_j v_l\partial_lv_k\partial_kv_i\partial_iv_j&=-24\partial_j v_l^{m}\partial_lv_k^{\leq m}\partial_kv_i^{\leq m}\partial_iv_j^{\leq m}
\\
&=\nabla\cdot \mathcal{M}+24v_l^{m}\partial_l\partial_jv_k^{\leq m}\partial_kv_i^{\leq m}\partial_iv_j^{\leq m}+24v_l^{m}\partial_lv_k^{\leq m}\partial_j\partial_kv_i^{\leq m}\partial_iv_j^{\leq m}.
\end{split}
\end{equation*}
By symmetry and the fact that $v$ is divergence free, the second term on the right-hand side can be rearranged as
\begin{equation*}
24v_l^{m}\partial_l\partial_jv_k^{\leq m}\partial_kv_i^{\leq m}\partial_iv_j^{\leq m}=8v_{l}^{m}\partial_l(\partial_jv_k^{\leq m}\partial_kv_i^{\leq m}\partial_iv_j^{\leq m})=\nabla\cdot \mathcal{M}.
\end{equation*}
For the third term on the right-hand side, we have
\begin{equation}\label{identity2}
\begin{split}
24v_l^{m}\partial_lv_k^{\leq m}\partial_j\partial_kv_i^{\leq m}\partial_iv_j^{\leq m}&=12v_l^{m}\partial_lv_k^{\leq m}\partial_k(\partial_jv_i^{\leq m}\partial_iv_j^{\leq m})=\nabla\cdot \mathcal{M}-12\partial_k v_l^{m}\partial_lv_k^{\leq m}\partial_jv_i^{\leq m}\partial_iv_j^{\leq m}
\\
&=\nabla\cdot \mathcal{M}-3\partial_k v_l\partial_lv_k\partial_jv_i\partial_iv_j
\\
&=\nabla\cdot \mathcal{M}-3|\Delta p|^2,
\end{split}
\end{equation}
where we used the Laplace equation for $p$ in the last line. Combining (\ref{identity1}) and (\ref{identity2}) to cancel the $3|\Delta p|^2$ terms then completes the proof of the lemma.
\end{proof}
Now, we return to the estimate for $\nabla\mathcal{H}\mathcal{N}^{k-2}\nabla_n D_t^2p$. We use \Cref{goodform} and estimate each term separately. The term $-2\nabla\mathcal{H}\mathcal{N}^{k-2}\nabla_n\Delta^{-1}\Delta|\nabla p|^2$ can be estimated identically to (\ref{EstimateDp2}). Let us then turn to the estimate for $\nabla\mathcal{H}\mathcal{N}^{k-2}\nabla_n\Delta^{-1}(\nabla\cdot \mathcal{B})$. We use a partition $\mathcal{B}=\mathcal{B}_j^1+\mathcal{B}_j^2$ where $\mathcal{B}_j^1$ is defined as follows: First, we perform the frequency decomposition,
\begin{equation*}
\mathcal{B}=\mathcal{B}(\nabla (D_tp)^{l},\nabla v^{\leq l})+\mathcal{B}(\nabla (D_tp)^{\leq l},\nabla v^{l})
\end{equation*}
and then define
\begin{equation*}
\mathcal{B}_1^j:=\mathcal{B}(\nabla \Phi_{\leq j}(D_tp)^{l},\nabla v^{\leq l})+\mathcal{B}(\nabla (D_tp)^{\leq l},\nabla \Phi_{\leq j} v^{l}).
\end{equation*}
Then \Cref{EEcorollary} and \Cref{direst} gives 
\begin{equation*}
\begin{split}
\|\nabla\mathcal{H}\mathcal{N}^{k-2}\nabla_n\Delta^{-1}(\nabla\cdot \mathcal{B})\|_{L^2(\Omega_t)}&\lesssim_A \|\mathcal{B}\|_{H^{k-1}(\Omega_t)}+\|\Gamma_t\|_{H^k}\sup_{j>0}2^{-\frac{j}{2}}\|\Delta^{-1}(\nabla\cdot\mathcal{B}_1^j)\|_{W^{1,\infty}(\Omega_t)}
\\
&+\sup_{j>0}2^{j(k-1-\epsilon)}\|\Delta^{-1}(\nabla\cdot\mathcal{B}_2^j)\|_{H^1(\Omega_t)}.
\end{split}
\end{equation*}
From Sobolev product estimates and the $H^k$ and $L^{\infty}$ estimates for $D_tp$,
\begin{equation*}
\|\mathcal{B}\|_{H^{k-1}(\Omega_t)}\lesssim_A \|v\|_{W^{1,\infty}(\Omega_t)}\|D_tp\|_{H^k(\Omega_t)}+\|D_tp\|_{W^{1,\infty}(\Omega_t)}\|v\|_{H^k(\Omega_t)}\lesssim_A B\log(1+\|(v,\Gamma)\|_{\mathbf{H}^s})(E^k)^{\frac{1}{2}}. 
\end{equation*}
Using \Cref{Gilbarg}, we also estimate
\begin{equation*}
2^{-\frac{j}{2}}\|\Delta^{-1}(\nabla\cdot\mathcal{B}_1^j)\|_{C^{1,\epsilon}(\Omega_t)}\lesssim_A \|D_tp\|_{W^{1,\infty}(\Omega_t)}\|v\|_{C^{\frac{1}{2}+\epsilon}(\Omega_t)}\lesssim_A B\log(1+\|(v,\Gamma)\|_{\mathbf{H}^s}).
\end{equation*}
Finally,  using the error bounds for $\Phi_{>j}$ and the $L^{\infty}$ and $H^k$ estimates for $D_tp$ from \Cref{Hestimates} we see that
\begin{equation*}
2^{j(k-1-\epsilon)}\|\Delta^{-1}(\nabla\cdot\mathcal{B}_2^j)\|_{H^1(\Omega_t)}\lesssim_A \|v\|_{W^{1,\infty}(\Omega_t)}\|D_tp\|_{H^k(\Omega_t)}+\|D_tp\|_{W^{1,\infty}(\Omega_t)}\|v\|_{H^k(\Omega_t)}\lesssim_A B\log(1+\|(v,\Gamma)\|_{\mathbf{H}^s})(E^k)^{\frac{1}{2}}.
\end{equation*}
Hence,
\begin{equation*}
\|\nabla\mathcal{H}\mathcal{N}^{k-2}\nabla_n\Delta^{-1}(\nabla\cdot \mathcal{B})\|_{L^2(\Omega_t)}\lesssim_A B\log(1+\|(v,\Gamma)\|_{\mathbf{H}^s})(E^k)^{\frac{1}{2}}.
\end{equation*}
The estimates for $\nabla\mathcal{H}\mathcal{N}^{k-2}\nabla_n\Delta^{-1}(\nabla\cdot \mathcal{T})$ and $\nabla\mathcal{H}\mathcal{N}^{k-2}\nabla_n\Delta^{-1}(\nabla\cdot \mathcal{M})$ are very similar. The main difference is that we use the partition $\mathcal{T}=\mathcal{T}_1^j+\mathcal{T}_2^j$ with
\begin{equation*}
\mathcal{T}_1^j=2\mathcal{T}(\nabla p,\nabla\Phi_{\leq j}v^{l},\nabla v^{\leq l})   
\end{equation*}
and the partition $\mathcal{M}=\mathcal{M}_1^j+\mathcal{M}_2^j$ with
\begin{equation*}
\mathcal{M}_1^j:=\mathcal{M}(v^{m},\nabla \Phi_{\leq j}v^{\leq m},\nabla v^{\leq m},\nabla v^{\leq m}).
\end{equation*}
Ultimately, we obtain
\begin{equation*}
\|\nabla\mathcal{H}\mathcal{N}^{k-2}\nabla_n D_t^2p\|_{L^2(\Omega_t)}\lesssim_A B\log(1+\|(v,\Gamma)\|_{\mathbf{H}^s})(E^k)^{\frac{1}{2}}
\end{equation*}
which when combined with the previous analysis gives
\begin{equation*}
\|\nabla\mathcal{H}\mathcal{N}^{k-2}f\|_{L^2(\Omega_t)}\lesssim_A B\log(1+\|(v,\Gamma)\|_{\mathbf{H}^s})(E^k)^{\frac{1}{2}}
\end{equation*}
as desired. The last term in the estimate for $\mathcal{Q}$ that we need to control is $\nabla\mathcal{H}[\mathcal{N}^{k-2},a]\mathcal{N}a$. For this, we have the following technical lemma.
\begin{lemma}\label{NLeibniz} We have the following estimate:
\begin{equation*}
\|\nabla\mathcal{H}[\mathcal{N}^{k-2},a]\mathcal{N}a\|_{L^2(\Omega_t)}\lesssim_A B\log(1+\|(v,\Gamma)\|_{\mathbf{H}^s})(E^k)^{\frac{1}{2}}.    
\end{equation*}
\end{lemma}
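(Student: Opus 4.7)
The first step is to pass the problem to the boundary by means of the $H^{\frac12}(\Gamma_t)\to H^1(\Omega_t)$ estimate for the harmonic extension (\ref{harmonicbase}) together with a Poincaré-type argument, which reduces matters to proving
\[
\|[\mathcal{N}^{k-2},a]\mathcal{N}a\|_{H^{\frac12}(\Gamma_t)}\lesssim_A B\log(1+\|(v,\Gamma)\|_{\mathbf{H}^s})(E^k)^{\frac12}.
\]
Next, I would expand the commutator telescopically as
\[
[\mathcal{N}^{k-2},a]\mathcal{N}a \;=\; \sum_{j=0}^{k-3} \mathcal{N}^j\,[\mathcal{N},a]\,\mathcal{N}^{k-2-j}a,
\]
and then apply the Leibniz-type identity \eqref{DNLeibniz} to rewrite $[\mathcal{N},a]f = f\mathcal{N}a - 2\nabla_n\Delta^{-1}(\nabla\mathcal{H}a\cdot\nabla\mathcal{H}f)$ for $f=\mathcal{N}^{k-2-j}a$. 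This splits the sum into \emph{product terms}
\[
I_j := \mathcal{N}^j(\mathcal{N}^{k-2-j}a\cdot \mathcal{N}a), \qquad 0\le j\le k-3,
\]
and \emph{harmonic-correction terms}
\[
II_j := \mathcal{N}^j \nabla_n \Delta^{-1}\bigl(\nabla\mathcal{H}a\cdot \nabla\mathcal{H}\mathcal{N}^{k-2-j}a\bigr),\qquad 0\le j\le k-3.
\]

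For each $I_j$, my plan is to apply the iterated Dirichlet-to-Neumann estimate \Cref{higherpowers} with a well-chosen low/high partition, then use the balanced product estimate \Cref{boundaryest} on the resulting bilinear expression $\mathcal{N}^{k-2-j}a\cdot\mathcal{N}a$. The idea is to place the higher-order factor in the $L^2$ scale (recalling the coercivity bound $\|a\|_{H^{k-1}(\Gamma_t)}\lesssim_A (E^k)^{\frac12}$ from \Cref{aest}) and the lower-order factor in a pointwise norm, which is controlled by $B$ via $\|a\|_{C^{\frac12}(\Gamma_t)}\lesssim_A \|p\|_{C^{1,\frac12}(\Omega_t)}\lesssim_A B$ from \Cref{Linfest2}. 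The partition is chosen exactly as for the terms $M_1$, $M_2$ treated earlier, so that the Sobolev-regularity cost of inserting powers of $\mathcal{N}$ is absorbed into $\|\Gamma_t\|_{H^k}\lesssim_A (E^k)^{\frac12}$ together with $L^\infty$-type bounds on the low-frequency piece. For the terms $II_j$, my plan is to use \Cref{EEcorollary} to move $\mathcal{N}^j\nabla_n$ onto an interior $H^{j+\frac12}(\Omega_t)$ bound for $\Delta^{-1}(\nabla\mathcal{H}a\cdot\nabla\mathcal{H}\mathcal{N}^{k-2-j}a)$, then to exploit the null-form identity $\nabla\mathcal{H}a\cdot\nabla\mathcal{H}g=\tfrac12\Delta(\mathcal{H}a\cdot\mathcal{H}g)$ to write this in divergence form $\Delta^{-1}\nabla\cdot(\cdot)$. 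This reduces matters, via \Cref{direst} and the maximum principle for $\mathcal{H}$, to balanced Sobolev bounds on $\mathcal{H}a$ and $\mathcal{H}\mathcal{N}^{k-2-j}a$, which are controlled by \Cref{Hbounds} and \Cref{higherpowers} respectively in terms of $(E^k)^{\frac12}$ and $B$.

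The main obstacle I expect is the appearance of the logarithmic loss. It will enter precisely when, in analyzing the highest-order correction term $II_0$ (where nearly all derivatives fall on the second factor), we need a pointwise bound on $\mathcal{H}a$ at the level of $W^{1,\infty}(\Omega_t)$ or, equivalently, on the low-frequency part $\mathcal{H}a^{\leq l}$ at the level of $C^{1,\epsilon}$ uniformly in $l$. Since $\|a\|_{C^{\frac12}(\Gamma_t)}\lesssim_A B$ is only half a derivative below Lipschitz, the same Bernstein-type dyadic splitting used in the proof of the $W^{1,\infty}$ bound for $D_tp$ in \Cref{Linfest2} applies: writing $\mathcal{H}a = \mathcal{H}a^{<m_0} + \mathcal{H}a^{\geq m_0}$, estimating the low-frequency piece uniformly by $B$ via the $C^{1,\epsilon}$ harmonic extension bound, and choosing $m_0 \approx \log(1+\|(v,\Gamma)\|_{\mathbf{H}^s})$ to absorb the high-frequency tail via Sobolev embedding at the $\mathbf{H}^s$ regularity. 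Summing over $j$ gives at most $k$ contributions and yields the stated bound. Apart from this log-removal step, everything is a careful but routine application of the balanced elliptic toolbox of \Cref{BEE}, organized in the same paraproduct framework already used for the terms $\nabla\mathcal{H}\mathcal{N}^{k-2}f$ and $\nabla\mathcal{H}[D_t,\mathcal{N}^{k-2}]D_ta$.
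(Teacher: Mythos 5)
Your proposal follows essentially the same roadmap as the paper: reduce to an $H^{\frac12}(\Gamma_t)$ estimate via the harmonic-extension bound, expand the commutator telescopically, apply the Leibniz identity \eqref{DNLeibniz} for $[\mathcal{N},a]$, and then split into product terms and harmonic-correction terms, each handled with the balanced elliptic toolbox of \Cref{BEE} and a dyadic log-removal step as in \Cref{Linfest2}. Your identification of where the logarithm enters — the borderline pointwise control of $\nabla\mathcal{H}a$ (equivalently, the low-frequency $W^{1,\infty}$ bound on $\mathcal{H}a$, given only $\|a\|_{C^{1/2}(\Gamma_t)}\lesssim_A B$) — is exactly the obstruction the paper confronts in its estimate of $\|F\|_{H^n(\Omega_t)}$. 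The one place your treatment deviates from the paper's is the harmonic-correction terms $II_j$: you invoke the global null-form identity $\nabla\mathcal{H}a\cdot\nabla\mathcal{H}g=\tfrac12\Delta(\mathcal{H}a\cdot\mathcal{H}g)$ (so that $\Delta^{-1}(\cdot)=\tfrac12(\mathcal{H}a\cdot\mathcal{H}g-\mathcal{H}(ag))$), whereas the paper works with a frequency-localized version, writing $F'=\nabla\cdot(a_0^l\nabla a_{m+1}^{\leq l})-a_0^l C_{\leq l}a_{m+1}$ with $C_{\leq l}=[\Delta,\Phi_{\leq l}]$ and exploiting harmonicity dyadically. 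Both routes are available; the dyadic version gives somewhat finer control of the frequency balance within each block, while the global version would fold $\mathcal{H}(a\mathcal{N}^{m+1}a)$ back into product-type terms. One small arithmetic slip: after applying \Cref{EEcorollary} with target $H^{1/2}(\Gamma_t)$ and outer power $\mathcal{N}^j\nabla_n$, the interior bound you need on $\Delta^{-1}(\nabla\mathcal{H}a\cdot\nabla\mathcal{H}\mathcal{N}^{k-2-j}a)$ is $H^{j+2}(\Omega_t)$, not $H^{j+\frac12}(\Omega_t)$; via \Cref{direst} this then lands on $\|\nabla\mathcal{H}a\cdot\nabla\mathcal{H}\mathcal{N}^{k-2-j}a\|_{H^{j}(\Omega_t)}$, matching the paper's $\|F\|_{H^n}$.
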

\begin{proof}
Thanks to the $H^{\frac{1}{2}}(\Gamma_t)\to H^1(\Omega_t)$ bound for $\mathcal{H}$, it suffices to estimate $\|[\mathcal{N}^{k-2},a]\mathcal{N}a\|_{H^{\frac{1}{2}}(\Gamma_t)}$. We begin by using the Leibniz formula (\ref{DNLeibniz}) to expand the commutator,
\begin{equation}\label{commutator2}
\begin{split}
[\mathcal{N}^{k-2},a]\mathcal{N}a=\sum_{n+m=k-3}\mathcal{N}^n(\mathcal{N}a\mathcal{N}^{m+1}a)-2\mathcal{N}^n\nabla_n \Delta^{-1}(\nabla\mathcal{H}a\cdot \nabla \mathcal{H}\mathcal{N}^{m+1}a).
\end{split}
\end{equation}
We focus on the latter term in (\ref{commutator2}) first as it is a bit more delicate to deal with. To simplify notation slightly, we write
\begin{equation*}
a_j:=\mathcal{H}\mathcal{N}^{j}a,\hspace{5mm} F:=\nabla a_0\cdot\nabla a_{m+1},\hspace{5mm} \mathcal{N}_{<j}:=n_{\Gamma_t}\cdot\nabla \Phi_{<j}\mathcal{H},\hspace{5mm} \mathcal{N}_{\geq j}:=n_{\Gamma_t}\cdot\nabla \Phi_{\geq j}\mathcal{H}.    
\end{equation*}
Using \Cref{EEcorollary} and then \Cref{direst}, we have
\begin{equation*}
\begin{split}
\|\mathcal{N}^n(\nabla_n\Delta^{-1}F)\|_{H^{\frac{1}{2}}(\Gamma_t)}&\lesssim_A \|F\|_{H^{n}(\Omega_t)}+\|\Gamma\|_{H^{k}}\sup_{j>0}2^{-j(m+\frac{3}{2})}\|\Delta^{-1} F_j^1\|_{W^{1,\infty}(\Omega_t)}
\\
&+\sup_{j>0}2^{j(n+1-\epsilon)}\|\Delta^{-1}F_j^2\|_{H^1(\Omega_t)},  
\end{split}
 \end{equation*} 
where $F=F_j^1+F_j^2$ is a suitable partition of $F$ to be chosen. To find a suitable partition, we start with a bilinear frequency decomposition similar to before. We define $a_j^l:=\Phi_{l}a_{j}$ and $a_j^{\leq l}=\Phi_{\leq 
 l}a_{j}$. 
\begin{remark}\label{phicommutator}
 We note that the regularization operator $\Phi_{\leq l}$ does not preserve the harmonic property of $a_j$. However, using the definition of $\Phi_{\leq l}$ (see \Cref{SSRO}), the operator defined by $C_{\leq l}:=[\Delta,\Phi_{\leq l}]$ is readily seen to satisfy the bounds,
\begin{equation}\label{phicombounds}
\begin{split}
\|C_{\leq l}\|_{C^{\alpha}\to L^{\infty}}\lesssim_A 2^{l(1-\alpha)}\hspace{5mm}\|C_{\leq l}\|_{H^{\alpha}\to L^2}&\lesssim_A 2^{l(1-\alpha)},\hspace{5mm} 0\leq \alpha\leq 1 
\end{split}
\end{equation}
for $\alpha,l\geq 0$.  That is, $C_{\leq l}$ behaves like a differential operator of order $1$ localized at dyadic scale $\lesssim 2^l$.
 \end{remark}
 Now, using the same convention as before in this section (where repeated indices are summed over) we have
\begin{equation*}\label{freqdecompGH}
\begin{split}
F&=\nabla a_0^{l}\cdot\nabla a_{m+1}^{\leq l}+\nabla a_0^{\leq l}\cdot\nabla a_{m+1}^{l}=:F'+F''.
\end{split}
\end{equation*}
We can write $F'$ and $F''$ to leading order as the divergence of some vector field. Using that $a_0$ and $a_{m+1}$ are harmonic, we have
\begin{equation}
\begin{split}
F'&=\nabla\cdot (a_0^l\nabla a_{m+1}^{\leq l})-a_0^lC_{\leq l}a_{m+1}=:G'+H',
\\
F''&=\nabla\cdot (a_{m+1}^{l}\nabla a_0^{\leq l})-a_{m+1}^lC_{\leq l}a_{0}=:G''+H''.
\end{split}
\end{equation}
We will focus on $F'$ first. To choose a partition of $F'$, we need to choose a suitable partition of $G'$ and $H'$. We show the details for $G'$ and remark later on the minor changes needed to deal with $H'$. We write $G'=(G')_j^1+(G')_j^2$ with
\begin{equation*}
(G')_j^1=\nabla\cdot (a_0^l\nabla \Phi_{\leq l}a_{m+1,\leq j}),\hspace{5mm} a_{m+1,\leq j}:=\Phi_{\leq j}(\mathcal{H}\mathcal{N}^{m+1}_{<j}a).    
\end{equation*}
From \Cref{Gilbarg},  iterating the maximum principal and using the $C^{\alpha}$ bounds for $\mathcal{H}$ and the properties of $\Phi_{<j}$, we have
 \begin{equation*}
 2^{-j(m+\frac{3}{2})}\|\Delta^{-1}(G')_j^1\|_{W^{1,\infty}(\Omega_t)}\lesssim_A \|a\|_{C^{\epsilon}(\Gamma_t)}\|a\|_{C^{\frac{1}{2}}(\Gamma_t)}\lesssim_A B,
 \end{equation*}
 where we used \Cref{Linfest} and \Cref{Linfest2} in the last inequality. For $(G')_j^2$, we can write
 \begin{equation*}
 (G')_j^2=\nabla\cdot (a_0^l\nabla b_{m+1,j}^{\leq l})+\sum_{0\leq i\leq m}\nabla\cdot (a_0^l\nabla b_{i,j}^{\leq l})
 \end{equation*}
 where
 \begin{equation*}
b_{m+1,j}^{\leq l}:=\Phi_{\leq l}\Phi_{\geq j}a_{m+1},\hspace{5mm}   b_{i,j}^{\leq l}:=\Phi_{\leq l}\Phi_{<j}\mathcal{H}\mathcal{N}^i_{<j}\mathcal{N}_{\geq j}\mathcal{N}^{m-i}a.  
 \end{equation*}
 Using \Cref{Hboundlow}, the properties of the kernel $\Phi$ and the $H^{-1}\to H^1$  bound for $\Delta^{-1}$, we obtain for each $0\leq i\leq m$,
\begin{equation}\label{commutatordetials}
\begin{split}
 2^{j(n+1-\epsilon)}\|\Delta^{-1}\nabla\cdot(a_0^l\nabla b_{i,j}^{\leq l}) \|_{H^1(\Omega_t)}&\lesssim_A 2^{j(n+1-\epsilon)}\|a_0^l\|_{L^{\infty}(\Omega_t)}\|b_{i,j}^{\leq l}\|_{H^1(\Omega_t)}
 \\
 &\lesssim_A 2^{j(n+1-\epsilon)}\|a\|_{C^{\frac{1}{2}}(\Gamma_t)}\|\mathcal{H}\mathcal{N}^i_{<j}\mathcal{N}_{\geq j}\mathcal{N}^{m-i}a\|_{H^{\frac{1}{2}+\epsilon}(\Omega_t)}.
\end{split}
\end{equation}
  Repeatedly using the $H^{\epsilon}\to H^{\frac{1}{2}+\epsilon}$ estimate (\ref{harmonicbase}), the properties of $\Phi$, the bound  $\|n_{\Gamma_t}\|_{C^{\epsilon}(\Gamma_t)}\lesssim_A 1$ and the trace inequality, we can estimate
  \begin{equation*}
  \begin{split}
  2^{j(n+1-\epsilon)}\|\mathcal{H}\mathcal{N}^i_{<j}\mathcal{N}_{\geq j}\mathcal{N}^{m-i}a\|_{H^{\frac{1}{2}+\epsilon}(\Omega_t)}&\lesssim_A 2^{j (n+1+i-\epsilon)}\|\nabla\Phi_{\geq j}\mathcal{H}\mathcal{N}^{m-i}a\|_{H^{\frac{1}{2}+\epsilon}(\Omega_t)} 
  \\
  &\lesssim_A \|\mathcal{H}\mathcal{N}^{m-i}a\|_{H^{n+i+\frac{5}{2}}(\Omega_t)} .
  \end{split}
  \end{equation*}
  Using \Cref{higherpowers}, \Cref{Linfest}, \Cref{aest} and (\ref{strongercoercivitybound}), we have
  \begin{equation*}
  \begin{split}
  \|\mathcal{H}\mathcal{N}^{m-i}a\|_{H^{n+i+\frac{5}{2}}(\Gamma_t)}\lesssim_A\|a\|_{H^{k-1}(\Gamma_t)}+\|\Gamma\|_{H^k}\|a\|_{C^{\epsilon}(\Gamma_t)}\lesssim_A (E^k)^{\frac{1}{2}}.   
  \end{split}
  \end{equation*} 
  If $n\geq 1$, then doing a similar analysis for the term $\nabla\cdot (a_0^l\nabla b_{m+1,j}^{\leq l})$ and combining this with \eqref{commutatordetials} and the bound $\|a\|_{C^{\frac{1}{2}}(\Gamma_t)}\lesssim_A B$, we obtain
  \begin{equation*}
  2^{j(n+1-\epsilon)}\|\Delta^{-1}(G')_j^2\|_{H^1(\Omega_t)}\lesssim_A B(E^k)^{\frac{1}{2}}.    
  \end{equation*}
If $n=0$, the term $\nabla\cdot (a_0^l\nabla b_{m+1,j}^{\leq l})$ is instead treated slightly differently. For this, we estimate similarly to before,
\begin{equation*}
2^{j(1-\epsilon)}\|\Delta^{-1}\nabla\cdot (a_0^l\nabla b_{m+1,j}^{\leq l})\|_{H^1(\Omega_t)}\lesssim_A \|a\|_{C^{\frac{1}{2}}(\Gamma_t)}\|\mathcal{H}\mathcal{N}^{m+1}a\|_{H^{\frac{3}{2}}(\Omega_t)}.   
\end{equation*}
Then we use \Cref{desiredelliptic} to estimate the last term as
\begin{equation*}
\|\mathcal{H}\mathcal{N}^{m+1}a\|_{H^{\frac{3}{2}}(\Omega_t)}\lesssim_A \|\mathcal{N}^{m+1}a\|_{H^1(\Gamma_t)}  ,  
\end{equation*}
and then estimate this term by $(E^k)^{\frac{1}{2}}$ similarly to the above. Next, one readily verifies analogous bounds for $H'$, $G''$ and $H''$ by using the similar decompositions,
\begin{equation}
\begin{split}
(H')_j^1=-a_0^lC_{\leq l}(a_{m+1,\leq j}),\hspace{5mm}(G'')_j^1=\nabla\cdot (\Phi_l(a_{m+1,\leq j})\nabla a_0^{\leq l}) ,\hspace{5mm}(H'')_j^1=-C_{\leq l}a_0\Phi_l(a_{m+1,\leq j}).
\end{split}
\end{equation}
From these bounds, ultimately, we obtain
 \begin{equation*}
 \|\mathcal{N}^n(\nabla_n\Delta^{-1}F)\|_{H^{\frac{1}{2}}(\Gamma_t)}\lesssim_A \|F\|_{H^{n}(\Omega_t)}+B(E^k)^{\frac{1}{2}}.    
 \end{equation*}
  It remains to estimate $F$ in $H^n$. We begin by looking at each summand in the bilinear frequency decomposition for $F$,
 \begin{equation*}\label{Fsummand}
F_l:=\nabla \Phi_{l} a_0\cdot\nabla \Phi_{\leq l} a_{m+1}+\nabla \Phi_{\leq l} a_0\cdot\nabla \Phi_{l} a_{m+1}.
 \end{equation*}
For the latter term, we have
\begin{equation*}
\|\nabla \Phi_{< l} a_0\cdot\nabla \Phi_{l} a_{m+1}\|_{H^n(\Omega_t)}\lesssim_A \|a\|_{C^{\frac{1}{2}}(\Gamma_t)}\|a_{m+1}\|_{H^{n+\frac{3}{2}}(\Omega_t)}, 
\end{equation*}
which when $n\geq 1$, we know from the above can be controlled by $B(E^k)^{\frac{1}{2}}$. For $n=0$, we have the same bound by simply using \Cref{desiredelliptic}. For the other term, we can further decompose 
\begin{equation}\label{lohi}
a_{m+1}=a_{m+1,l}^1+a_{m+1,l}^2  
\end{equation}
where $a_{m+1,l}^1=\mathcal{H}\mathcal{N}^{m+1}_{<l}a$. We then have from the properties of $\Phi_{\leq l}$ and the control of $\|\mathcal{H}a\|_{H^{n+m+\frac{5}{2}}(\Omega_t)}$ by the energy (as above),
\begin{equation*}
\|\nabla \Phi_{l} a_0\cdot\nabla \Phi_{\leq l} a_{m+1}\|_{H^{n}(\Omega_t)}\lesssim_A \|a\|_{C^{\frac{1}{2}}(\Gamma_t)}(E^k)^{\frac{1}{2}}.    
\end{equation*}
As $\nabla a_{m+1}$ is not at top order, we can easily verify using the decomposition above that we also have the following cruder bound for each $l$
\begin{equation}
\|F_l\|_{H^n(\Omega_t)}\lesssim_A 2^{-\delta l}\|(v,\Gamma)\|_{\mathbf{H}^s}^r(E^k)^{\frac{1}{2}},
\end{equation}
for some integer $r>1$ and small constant $\delta>0$.  Arguing as in \Cref{Linfest2}, we can combine the above two bounds to estimate 
\begin{equation*}
\|F\|_{H^n(\Omega_t)}\lesssim_A B\log(1+\|(v,\Gamma)\|_{\mathbf{H}^s})(E^k)^{\frac{1}{2}}.    
\end{equation*}
This handles the latter term in (\ref{commutator2}). Now, we turn to the first term. We  have to estimate $\|\mathcal{N}^n(\mathcal{N}a\mathcal{N}^{m+1}a)\|_{H^{\frac{1}{2}}(\Gamma_t)}$ where $n,m\geq 0$ and $n+m=k-3$. Here, we only sketch the details as the procedure for this estimate is relatively similar to the previous term. We start by writing
\begin{equation*}
\mathcal{N}a\mathcal{N}^{m+1}a=(\mathcal{H}n_{\Gamma_t}\cdot\nabla a_0)(\mathcal{H}n_{\Gamma_t}\cdot\nabla a_{m})=:K_{|\Gamma_t}.
\end{equation*}
Then we apply \Cref{higherpowers} and \Cref{baltrace} to estimate
\begin{equation*}
\|\mathcal{N}^nK_{|\Gamma_t}\|_{H^{\frac{1}{2}}(\Gamma_t)}\lesssim_A \|K\|_{H^{n+1}(\Omega_t)}+\|\Gamma\|_{H^k}\sup_{j>0}2^{-j(m+\frac{3}{2})}\|K_j^1\|_{L^{\infty}(\Omega_t)}+\sup_{j>0}2^{j(n+\frac{1}{2}-2\epsilon)}\|K_{j}^2\|_{H^{\frac{1}{2}+\epsilon}(\Omega_t)}
\end{equation*}
 where $K=K_j^1+K_j^2$ and
 \begin{equation}
 K_j^1:=\Phi_{<j}((\mathcal{H}n_{\Gamma_t}\cdot\nabla \Phi_{<j}a_0)(\mathcal{H}n_{\Gamma_t}\cdot\nabla\Phi_{<j}\mathcal{H}\mathcal{N}^{m}_{<j}a)).
 \end{equation}
 Similarly to the above, we can estimate
\begin{equation*}
\begin{split}
 2^{-j(m+\frac{3}{2})}\|K_j^1\|_{L^{\infty}(\Omega_t)}\lesssim_A B.
\end{split}
\end{equation*}
We also have an estimate of the form
\begin{equation*}
\begin{split}
 2^{j(n+\frac{1}{2}-2\epsilon)}\|K_j^2\|_{H^{\frac{1}{2}+\epsilon}(\Omega_t)}&\lesssim_A \|K\|_{H^{n+1}(\Omega_t)}+ B(E^k)^{\frac{1}{2}}+2^{j(n+1-\epsilon)}\|\mathcal{B}(\nabla\Phi_{\geq j}a_0,\nabla a_m)\|_{L^2(\Omega_t)}    
 \\
 &\lesssim_A \|K\|_{H^{n+1}(\Omega_t)}+ B(E^k)^{\frac{1}{2}}+\sup_{l>0}2^{l(n+1-\epsilon)}\|\mathcal{B}(\nabla\Phi_{l}a_0,\nabla a_m)\|_{L^2(\Omega_t)}
\end{split}
\end{equation*}
for some bilinear expression $\mathcal{B}$. Using a decomposition of $a_m$ similar to (\ref{lohi}), we have
\begin{equation*}
2^{l(n+1-\epsilon)}\|\mathcal{B}(\nabla\Phi_{l}a_0,\nabla a_m)\|_{L^2(\Omega_t)}\lesssim_A B\log(1+\|(v,\Gamma)\|_{\mathbf{H}^s})(E^k)^{\frac{1}{2}}.
\end{equation*}
Therefore, we have
\begin{equation*}
\|\mathcal{N}^nK_{|\Gamma_t}\|_{H^{\frac{1}{2}}(\Gamma_t)}\lesssim_A B\log(1+\|(v,\Gamma)\|_{\mathbf{H}^s})(E^k)^{\frac{1}{2}}+\|K\|_{H^{n+1}(\Omega_t)}.    
\end{equation*}
To estimate $K$ in $H^{n+1}(\Omega_t)$, the starting point is similar (but slightly more technical) than the estimate for $F$ in $H^n$ from above. The idea is to do a quadrilinear frequency decomposition for $K$ and study each summand individually. The relevant terms correspond to terms essentially of the form $(\Phi_l\mathcal{H}n_{\Gamma_t}\cdot\nabla\Phi_{\leq l} a_0)(\Phi_{\leq l}\mathcal{H}n_{\Gamma_t}\cdot\nabla \Phi_{\leq l}a_{m})$ and $(\Phi_{\leq l}\mathcal{H}n_{\Gamma_t}\cdot\nabla\Phi_{l} a_0)(\Phi_{\leq l}\mathcal{H}n_{\Gamma_t}\cdot\nabla \Phi_{\leq l}a_{m})$ and $(\Phi_{\leq l}\mathcal{H}n_{\Gamma_t}\cdot\nabla\Phi_{\leq l} a_0)(\Phi_{l}\mathcal{H}n_{\Gamma_t}\cdot\nabla \Phi_{\leq l}a_{m})$ and $(\Phi_{\leq l}\mathcal{H}n_{\Gamma_t}\cdot\nabla\Phi_{\leq l} a_0)(\Phi_{\leq l}\mathcal{H}n_{\Gamma_t}\cdot\nabla \Phi_{l}a_{m})$. The second and fourth terms can be handled almost identically to the estimate for $F$ in $H^n$ (as by the maximum principle, one can dispense with the factors of $\mathcal{H}n_{\Gamma_t}$). The first and third terms are handled similarly by decomposing $a_0$ and $a_m$ into low and high frequency parts as in (\ref{lohi}) and using \Cref{Hbounds} when $\Phi_l\mathcal{H}n_{\Gamma_t}$ is at high frequency compared to the other factors. One then obtains the desired estimate similarly to the estimate for $F$ in $H^n$ above. We omit the remaining details.
\end{proof}
We  now turn to the estimate for the final source term, $\mathcal{R}=[D_t,\mathcal{N}^{k-1}]a$ in $L^2(\Gamma_t)$. To control this term, we first write
\begin{equation*}
[D_t,\mathcal{N}^{k-1}]a=[D_t,\mathcal{N}]\mathcal{N}^{k-2}a+\mathcal{N}[D_t,\mathcal{N}^{k-2}]a.    
\end{equation*}
For the latter term, we have by \Cref{baselineDN2},
\begin{equation*}
\|\mathcal{N}[D_t,\mathcal{N}^{k-2}]a\|_{L^2(\Gamma_t)}\lesssim_A \|[D_t,\mathcal{N}^{k-2}]a\|_{H^1(\Gamma_t)}.     
\end{equation*}
Then using \Cref{materialcom} and the coercivity bound, we estimate
\begin{equation*}
\begin{split}
\|[D_t,\mathcal{N}^{k-2}]a\|_{H^1(\Gamma_t)}&\lesssim_A \|v\|_{W^{1,\infty}(\Omega_t)}\|a\|_{H^{k-1}(\Gamma_t)}+\|a\|_{C^{\frac{1}{2}}(\Gamma_t)}(\|\Gamma\|_{H^k}+\|v\|_{H^k(\Omega_t)})+\|a\|_{L^{\infty}(\Gamma_t)}\|v\|_{W^{1,\infty}(\Omega_t)}\|\Gamma\|_{H^k}
\\
&\lesssim_A B(E^k)^{\frac{1}{2}}.    
\end{split}
\end{equation*}
To conclude the proof of \Cref{Energy est. thm}, it remains to estimate $[D_t,\mathcal{N}]\mathcal{N}^{k-2}a$ in $L^2(\Gamma)$. This term is rather delicate due to the lack of a trace estimate in $L^2(\Gamma)$. To deal with this term, we have the following proposition. 
\begin{proposition}\label{comm prop}
Let $s\in\mathbb{R}$ with $s>\frac{d}{2}+1$. Then we have,
\begin{equation}\label{H1commutator}
\|[\mathcal{N},D_t]f\|_{L^2(\Gamma)}\lesssim_A B\log(1+\|(v,\Gamma)\|_{\mathbf{H}^s})\|f\|_{H^1(\Gamma)}.
\end{equation}
\end{proposition}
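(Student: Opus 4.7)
The starting point is the formula \eqref{N-commutator} for $S_1f=[D_t,\mathcal{N}]f$, together with the simplified identity \eqref{Sid} for $[D_t,\mathcal{H}]f$ which is available because $v$ is divergence free. This gives the decomposition
\[
[D_t,\mathcal{N}]f \;=\; D_tn_{\Gamma}\cdot\nabla\mathcal{H}f \;-\; n_{\Gamma}\cdot\bigl((\nabla v)^*\nabla\mathcal{H}f\bigr) \;+\; \nabla_n\bigl(\Delta^{-1}\nabla\cdot\mathcal{B}(\nabla v,\nabla\mathcal{H}f)\bigr),
\]
with $\mathcal{B}$ an $\mathbb{R}^d$-valued bilinear form. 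The strategy is to show that the two boundary terms on the right give a straight $B\,\|f\|_{H^1(\Gamma)}$ contribution with no logarithmic loss, while the third (interior-correction) term is where the logarithm enters, via a Beale--Kato--Majda-type dyadic splitting identical in spirit to Lemma~\ref{Linfest2}.

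For the first two terms, note that $D_tn_{\Gamma}$ and $(\nabla v)^* n_\Gamma$ are bounded on $\Gamma$ by $\|\nabla v\|_{L^\infty(\Omega)}\le B$. It then suffices to control the trace of $\nabla\mathcal{H}f$ on $\Gamma$ in $L^2$. Writing $\nabla\mathcal{H}f|_{\Gamma}=\nabla^{\top}f+n_{\Gamma}\mathcal{N}f$, the $L^2(\Gamma)$ bound follows from the tangential-gradient estimate \Cref{tangradientbound} applied at $s=0$ (or elementary computation in local coordinates) combined with the baseline $H^1\to L^2$ Dirichlet--to--Neumann bound \Cref{baselineDN2}. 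This yields $\|\nabla\mathcal{H}f|_\Gamma\|_{L^2(\Gamma)}\lesssim_A\|f\|_{H^1(\Gamma)}$ and hence the desired estimate for these two terms without any log loss.

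The delicate term is the third one. Set $u:=\Delta^{-1}\nabla\cdot\mathcal{B}(\nabla v,\nabla\mathcal{H}f)$, which satisfies $u|_\Gamma=0$, and decompose it dyadically using $f=P_{<m}f+P_{\ge m}f$ for a threshold $m\geq 1$ to be chosen. For the low-frequency part $u_{<m}$, I will first rewrite the source $\mathcal{B}(\nabla v,\nabla\mathcal{H}P_{<m}f)$ in divergence form with the high-frequency factor undifferentiated, exactly as was done for $D_tp$ in \Cref{Linfest2} (exploiting $\nabla\cdot v=0$ to move a derivative off $\nabla v$). Applying the $C^{1,\epsilon}$ Dirichlet estimate \Cref{Gilbarg} in its divergence-source formulation \eqref{C1eps} yields $\|\nabla_n u_{<m}\|_{L^\infty(\Gamma)}\lesssim_A B\,\|\nabla\mathcal{H}P_{<m}f\|_{L^\infty(\Omega)}$, which by the maximum principle and Bernstein on $\Gamma_*$ grows polynomially in $2^{m}$, but only \emph{after} extraction of a factor $B$; inserting a harmonic reflection as in the proof of Lemma~\ref{Linfest2} to absorb the $C^1$-endpoint logarithmic divergence, one gets a bound of the form $\lesssim_A m\,B\,\|f\|_{H^1(\Gamma)}$. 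For the high-frequency part $u_{\ge m}$, I use instead the balanced $H^s$ Dirichlet estimate from \Cref{direst} together with the trace theorem \Cref{L1bound}, producing $\|\nabla_n u_{\ge m}\|_{L^2(\Gamma)}\lesssim_A 2^{-\delta m}\,\|(v,\Gamma)\|_{\mathbf{H}^s}^{r}\,\|f\|_{H^1(\Gamma)}$ for some $\delta>0$ and an integer $r\ge 1$, exactly as in the tail estimate in \Cref{Linfest2}. Balancing by choosing $m\approx \delta^{-1}r\log(1+\|(v,\Gamma)\|_{\mathbf{H}^s})$ then produces the claimed logarithmic factor.

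The main obstacle is the low-frequency step: one needs to keep the sharp Lipschitz control parameter $B$ rather than using any $C^{1,\epsilon}$ norm of $v$. This forces us to exploit the divergence-free structure of $v$ to realize the bilinear expression $\mathcal B(\nabla v,\nabla\mathcal H P_{<m}f)$ as a divergence of an $L^\infty$ vector field (after writing $\partial_i v_j \partial_j(\mathcal H P_{<m}f)=\partial_j(v_j\partial_i\mathcal HP_{<m}f - v_i\partial_j\mathcal HP_{<m}f)/2$ style rearrangements) and then invoke a logarithmic closure akin to the one for $\mathcal H(v_j^l\partial_j p^{\le l})$ inside Lemma~\ref{Linfest2}. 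Once this rearrangement is in place, summation over dyadic blocks and balancing at $m\sim \log(1+\|(v,\Gamma)\|_{\mathbf H^s})$ close the estimate, completing the proof of Proposition~\ref{comm prop} and thereby the bound for the remaining source term $[D_t,\mathcal{N}]\mathcal{N}^{k-2}a$ in $L^2(\Gamma)$ needed to finish the energy propagation step of \Cref{Energy est. thm}.
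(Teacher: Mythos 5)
Your overall skeleton tracks the paper's proof: you use the same decomposition \eqref{N-commutator} of $[D_t,\mathcal N]$, dispatch the two boundary terms with \eqref{Moving normal} and \Cref{baselineDN2}, and isolate the interior-correction term for a Beale--Kato--Majda frequency split balanced at $m\sim\log(1+\|(v,\Gamma)\|_{\mathbf H^s})$, exactly as the paper does. The high-frequency estimate is also in the same spirit as the paper's tail bound.

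The gap is in the low-frequency estimate, and it is a genuine one. You propose to bound the full low-frequency block $u_{<m}$ in $L^\infty(\Gamma)$ via the $C^{1,\epsilon}$ Dirichlet estimate \Cref{Gilbarg}, arriving at $\|\nabla_n u_{<m}\|_{L^\infty(\Gamma)}\lesssim_A B\,\|\nabla\mathcal H P_{<m}f\|_{L^\infty(\Omega)}$. But $\|\nabla\mathcal H P_{<m}f\|_{L^\infty(\Omega)}$ is not controlled by $\|f\|_{H^1(\Gamma)}$ up to a factor of $m$: by Bernstein it costs a \emph{polynomial} factor $\approx 2^{m(d-1)/2}$ (the harmonic extension of an $H^1(\Gamma)$ function is only $H^{3/2}(\Omega)$, well below $L^\infty$ scaling for $d\geq 2$). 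The ``harmonic reflection'' in Lemma~\ref{Linfest2} does not rescue this: that device shows each \emph{single} dyadic block contributes $\mathcal O(B)$, so that summing $\log$-many blocks yields the linear-in-$m$ factor; it does not compress a genuine polynomial loss on the full low-frequency projection down to a linear one. The paper instead never leaves $L^2(\Gamma)$: it splits $u=\mathcal H f=\sum_{l\leq l_0}\Phi_l u+\Phi_{>l_0}u$ and proves the per-block $L^2(\Gamma)$ bound Lemma~\ref{lemma on com est}, namely
\begin{equation*}
\|\nabla_n\Delta^{-1}\Delta(v\cdot\nabla u_l)\|_{L^2(\Gamma)}\lesssim_A B\,\|f\|_{H^1(\Gamma)}
\end{equation*}
uniformly in $l$. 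This in turn rests on the $L^2$-based trace estimate \Cref{desiredellipticlemma} for $n_\Gamma\cdot(\nabla\Delta^{-1}\partial_j-e_j):H^{1/2}(\Omega)\to L^2(\Gamma)$, which is precisely the tool that makes the per-block estimate scale correctly without a Bernstein loss. Summing $l_0$ blocks then supplies the linear $l_0 B$ factor. To repair your plan you would need to replace the $L^\infty(\Gamma)$ estimate by such a uniform-in-$l$ per-dyadic-block $L^2(\Gamma)$ bound and the associated $L^2$ trace lemma; as written, the polynomial growth on $u_{<m}$ cannot be balanced against the high-frequency tail.
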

Our proof requires the following short lemma which is essentially a consequence of \Cref{desiredelliptic}. 
\begin{lemma}\label{desiredellipticlemma} For each $l=1,...,d$, we have
\begin{equation}\label{desiredelliptic2}
\|n_{\Gamma}\cdot(\nabla\Delta^{-1}\partial_l-e_l)\|_{H^{\frac{1}{2}}(\Omega)\to L^2(\Gamma)}\lesssim_A 1.   
\end{equation}
\end{lemma}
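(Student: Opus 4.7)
The plan is to exploit the fact that the operator $n_\Gamma \cdot (\nabla \Delta^{-1} \partial_l - e_l)$ applied to $u$ is the normal trace of a divergence-free vector field, which allows us to sidestep the lack of an $L^2(\Gamma)$ trace for $u \in H^{1/2}(\Omega)$. Specifically, setting $v := \Delta^{-1} \partial_l u$ and $V := \nabla v - u e_l$, we compute $\nabla \cdot V = \Delta v - \partial_l u = 0$ in $\Omega$, and on $\Gamma$ we have (at least formally) $V \cdot n_\Gamma = n_\Gamma \cdot (\nabla \Delta^{-1} \partial_l - e_l) u$.

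As a first step, I would apply \Cref{desiredelliptic} with $s = \tfrac{1}{2}$ (which lies in the admissible range $0 < s < \tfrac{1}{2} + \epsilon_0$) to obtain the regularity bound
$$
\|v\|_{H^{3/2}(\Omega)} \lesssim_A \|\partial_l u\|_{H^{-1/2}(\Omega)} \lesssim \|u\|_{H^{1/2}(\Omega)}.
$$
Combined with the trivial bound $\|u e_l\|_{H^{1/2}(\Omega)} \lesssim \|u\|_{H^{1/2}(\Omega)}$, this gives $V \in H^{1/2}(\Omega;\R^d)$ with norm controlled by $\|u\|_{H^{1/2}(\Omega)}$.

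To bound $\|V \cdot n_\Gamma\|_{L^2(\Gamma)}$ I would then use duality. For $g \in L^2(\Gamma)$, let $\mathcal{H}g \in H^{1/2}(\Omega)$ be the harmonic extension, which by the endpoint Jerison--Kenig result noted right before \Cref{desiredelliptic} satisfies $\|\mathcal{H}g\|_{H^{1/2}(\Omega)} \lesssim_A \|g\|_{L^2(\Gamma)}$. The divergence-free property of $V$ and integration by parts, performed for smooth data and then extended by density, yield the identity
$$
\int_\Gamma (V \cdot n_\Gamma)\, g\, dS \;=\; \int_\Omega V \cdot \nabla \mathcal{H} g\, dx \;=\; -\int_\Omega u\, \partial_l \mathcal{H} g\, dx,
$$
where the middle equality uses that $\int_\Omega \nabla v \cdot \nabla \mathcal{H}g\, dx = 0$ (from $v|_\Gamma = 0$ and $\Delta \mathcal{H}g = 0$). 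The crux of the argument is then to bound the last integral by $\|u\|_{H^{1/2}(\Omega)} \|g\|_{L^2(\Gamma)}$; I would do this by lifting both $u$ and $\mathcal{H}g$ to $H^{1/2}(\R^d)$ via Stein's extension operator (\Cref{Stein}) and interpreting the pairing as an $H^{1/2}(\R^d) \times H^{-1/2}(\R^d)$ duality pairing between the extension of $u$ and $\partial_l$ of the extension of $\mathcal{H}g$, whose norms are controlled respectively by $\|u\|_{H^{1/2}(\Omega)}$ and $\|g\|_{L^2(\Gamma)}$.

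The main obstacle is precisely the endpoint character of the argument: the Sobolev exponent $\tfrac{1}{2}$ is critical for boundary trace, so neither $n_\Gamma \cdot \nabla v$ nor $n_{\Gamma,l} u|_\Gamma$ makes sense individually in $L^2(\Gamma)$; it is only their particular combination $V \cdot n_\Gamma$, arising from the divergence-free cancellation, that does. Consequently, the identity above and the Stein-extension pairing must be established via a careful density argument from smooth $u$, with the constants tracked to ensure the bound depends only on $A$ (through the Jerison--Kenig constant, the regularity bound in \Cref{desiredelliptic}, and the Stein extension norm in $\Lambda_*$).
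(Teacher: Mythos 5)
Your setup is right — identifying the divergence-free vector field $V = \nabla \Delta^{-1}\partial_l u - u e_l$, invoking Proposition~\ref{desiredelliptic} for the $H^{3/2}$ bound on $v=\Delta^{-1}\partial_l u$, and using the divergence theorem against a harmonic test function $\mathcal{H}g$ are exactly the right ingredients, and you correctly flag that the combination $V\cdot n_\Gamma$ is what saves the day over the individual traces. But the final step, where the whole weight of the argument rests, does not go through. After reducing to bounding $\int_\Omega u\,\partial_l \mathcal{H}g\,dx$ with $u\in H^{1/2}(\Omega)$ and $\mathcal{H}g\in H^{1/2}(\Omega)$, you propose interpreting this as an $H^{1/2}(\mathbb{R}^d)\times H^{-1/2}(\mathbb{R}^d)$ pairing between Stein extensions. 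That is not a legitimate rewriting: $\int_\Omega \tilde u\,\partial_l\widetilde{\mathcal{H}g}\,dx$ is not the same as $\int_{\mathbb{R}^d}\tilde u\,\partial_l\widetilde{\mathcal{H}g}\,dx$, since the extension $\widetilde{\mathcal{H}g}$ is not constant (or harmonic, or anything nice) outside $\Omega$, and inserting the cutoff $\mathbbm{1}_\Omega$ to reconcile the two fails because $\mathbbm{1}_\Omega$ is not a bounded multiplier on $H^{1/2}(\mathbb{R}^d)$ — this is precisely the $s=\tfrac12$ endpoint obstruction. More basically, the integral $\int_\Omega u\,\partial_l w\,dx$ for two generic $H^{1/2}(\Omega)$ functions $u,w$ is not controlled by the product of their $H^{1/2}(\Omega)$ norms, so there is no density argument that can rescue the plan as written; the bound is true only because of the special structure you threw away when you split off the harmonic piece.

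The paper avoids this endpoint altogether by \emph{interpolating}. It proves two non-endpoint bounds: first $\|n_\Gamma\cdot(\nabla\Delta^{-1}\partial_l - e_l)\|_{L^2(\Omega)\to H^{-1/2}(\Gamma)}\lesssim_A 1$, where one runs exactly your duality calculation but now against $g\in H^{1/2}(\Gamma)$, so that $\nabla\mathcal{H}g\in L^2(\Omega)$ and $(\nabla\Delta^{-1}\partial_l-e_l)f\in L^2(\Omega)$ pair by Cauchy--Schwarz with no ambiguity; and second $\|n_\Gamma\cdot(\nabla\Delta^{-1}\partial_l - e_l)\|_{H^{1/2+\delta}(\Omega)\to H^{\delta}(\Gamma)}\lesssim_A 1$, which is above the trace threshold and follows from the trace inequality, $\|n_\Gamma\|_{C^\epsilon(\Gamma)}\lesssim_A 1$, and Proposition~\ref{desiredelliptic}. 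Interpolating these two then gives the $H^{1/2}(\Omega)\to L^2(\Gamma)$ bound you want without ever having to make sense of a pairing of two raw $H^{1/2}$ objects. You have all the pieces to write out both off-endpoint estimates — the missing idea is simply to not attack the endpoint head on.
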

\begin{proof}
This will follow by interpolation if we can prove
\begin{equation}\label{desiredeliptic3}
\|n_{\Gamma}\cdot(\nabla\Delta^{-1}\partial_l-e_l)\|_{L^2(\Omega)\to H^{-\frac{1}{2}}(\Gamma)}+\|n_{\Gamma}\cdot(\nabla\Delta^{-1}\partial_l-e_l)\|_{H^{\frac{1}{2}+\delta}(\Omega)\to H^{\delta}(\Gamma)}\lesssim_A 1,
\end{equation}
for some $0<\delta<\epsilon$. The $H^{\frac{1}{2}+\delta}\to H^{\delta}$ bound follows easily from the trace inequality, the bound $\|n_{\Gamma_t}\|_{C^{\epsilon}(\Gamma_t)}\lesssim_A 1$ and \Cref{desiredelliptic}. For the $L^2\to H^{-\frac{1}{2}}$ bound we use duality. Indeed, let $f\in L^2(\Omega)$. Since $(\nabla\Delta^{-1}\partial_l-e_l)f$ is divergence free, we have 
\begin{equation*}
\int_{\Gamma}gn_{\Gamma}\cdot(\nabla\Delta^{-1}\partial_l-e_l)f\, dS=\int_{\Omega}\nabla\mathcal{H}g\cdot (\nabla\Delta^{-1}\partial_l-e_l)f\,dx\lesssim_A \|g\|_{H^{\frac{1}{2}}(\Gamma)}\|f\|_{L^2(\Omega)},
\end{equation*}
for every $g\in H^{\frac{1}{2}}(\Gamma)$. Therefore, we obtain (\ref{desiredeliptic3}) and thus also (\ref{desiredelliptic2}).
\end{proof}
\begin{proof}[Proof of \Cref{comm prop}]
Now, returning to the proposition, we expand using (\ref{N-commutator}),
\begin{equation*}
[D_t,\mathcal{N}]f=D_tn_{\Gamma}\cdot\nabla\mathcal{H}f-n_{\Gamma}\cdot((\nabla v)^*(\nabla \mathcal{H}f))+n_{\Gamma}\cdot\nabla \Delta^{-1}\Delta(v\cdot\nabla \mathcal{H}f).
\end{equation*}
The first two terms on the right can easily be estimated in $L^2$ by the right-hand side of (\ref{H1commutator}) by using (\ref{Moving normal}) and \Cref{baselineDN2}. Now, we turn to the latter term. We write for simplicity $u:=\mathcal{H}f$. We then split $u$ as 
\begin{equation*}
u=\sum_{l\leq l_0}\Phi_lu+\Phi_{> l_0}u=:\sum_{l\leq l_0}u_l+u_{\geq l_0},
\end{equation*}
where $l_0$ is a parameter to be chosen. Note that $u_l$ is not harmonic anymore, but it is to leading order. As usual, we also write the corresponding divergence free regularizations for $v$ as $v_l:=\Psi_l v$, $v_{<l}:=\Psi_{<l}v$ and so forth. 
\\

The following lemma shows that we have a suitable estimate when $u$ is replaced by a single dyadic regularization $u_l$.
\begin{lemma}\label{lemma on com est}
For each $l\in\mathbb{N}_0$, we have
\begin{equation*}
\|\nabla_n\Delta^{-1}\Delta (v\cdot\nabla u_l)\|_{L^2(\Gamma)}\lesssim_A B\|f\|_{H^1(\Gamma)},
\end{equation*}
where the implicit constant does not depend on $l$.
\begin{proof}
We write
\begin{equation}\label{lowandhighfrequencyterms}
\nabla_n\Delta^{-1}\Delta (v\cdot\nabla u_l)=\nabla_n\Delta^{-1}\Delta (v_{<l}\cdot\nabla u_l)+\nabla_n\Delta^{-1}\Delta (v_{\geq l}\cdot\nabla u_l).
\end{equation}
For the second term, where $v$ is at high frequency, we use the identity $\Delta^{-1}\Delta=I-\mathcal{H} $ and the $H^1\to L^2$ bound for $\mathcal{N}$ to estimate
\begin{equation}\label{terms in the est}
\|\nabla_n\Delta^{-1}\Delta (v_{\geq l}\cdot\nabla u_l)\|_{L^2(\Gamma)}\lesssim_A \|\nabla(v_{\geq l}\cdot\nabla u_l)\|_{L^2(\Gamma)}+\|v_{\geq l}\cdot\nabla u_l\|_{H^1(\Gamma)}.    
\end{equation}
For the first term in \eqref{terms in the est}, we distribute the derivative to obtain
\begin{equation}\label{terms in the est2}
\|\nabla(v_{\geq l}\cdot\nabla u_l)\|_{L^2(\Gamma)}\lesssim B\|\nabla u_l\|_{L^2(\Gamma)}+\|v_{\geq l}\cdot\nabla^2 u_l\|_{L^2(\Gamma)}.
\end{equation}
For the first term in \eqref{terms in the est2}, we use the variant of the trace theorem leading to \eqref{trace1} and the fact that $u_l$ is frequency localized to obtain
\begin{equation*}\label{terms in the est3}
\|\nabla u_l\|_{L^2(\Gamma)}\lesssim \|\nabla u_l\|_{H^1(\Omega)}^{\frac{1}{2}}\|\nabla u_l\|_{L^2(\Omega)}^{\frac{1}{2}}\lesssim \|u\|_{H^{\frac{3}{2}}(\Omega)}\lesssim_A \|f\|_{H^1(\Gamma)}
\end{equation*}
where in the last estimate we used \Cref{desiredelliptic}. For the second term in \eqref{terms in the est2}, we again use the trace theorem and the fact that $v_{\geq l}$ is higher frequency to obtain
\begin{equation*}
\begin{split}
\|v_{\geq l}\cdot\nabla^2 u_l\|_{L^2(\Gamma)}&\lesssim \|v_{\geq l}\cdot\nabla^2 u_l\|_{L^2(\Omega)}^{\frac{1}{2}}\|v_{\geq l}\cdot\nabla^2 u_l\|_{H^1(\Omega)}^{\frac{1}{2}}\lesssim B \|u\|_{H^{\frac{3}{2}}(\Omega)}\lesssim B\|f\|_{H^1(\Gamma)}.
\end{split}
\end{equation*}
The term $\|v_{\geq l}\cdot\nabla u_l\|_{H^1(\Gamma)}$ in \eqref{terms in the est} is similarly estimated. For this, we only need to estimate $\|\nabla^{\top}(v_{\geq l}\cdot\nabla u_l)\|_{L^2(\Gamma)}$, and this is handled by an almost identical strategy to the above. 
\\

Now, to estimate the term in \eqref{lowandhighfrequencyterms} where $v$ is at low frequency, we distribute the Laplacian and use that $v_{<l}$ is divergence free to write $\nabla_n\Delta^{-1}\Delta (v_{<l}\cdot\nabla u_l)$ as a sum of terms of the form
\begin{equation*}
\begin{split}
\nabla_n\Delta^{-1}\partial_j(Dv_{<l}Du_l)+\nabla_n\Delta^{-1}\partial_j(v_{<l}C_lu),
\end{split}
\end{equation*}
where $C_lu:=[\Delta,\Phi_l]u$. Using \Cref{desiredellipticlemma} we can then estimate
\begin{equation*}
\|\nabla_n\Delta^{-1}\Delta (v_{<l}\cdot\nabla u_l)\|_{L^2(\Gamma)}\lesssim_A \|Dv_{<l}Du_l\|_{L^2(\Gamma)\cap H^{\frac{1}{2}}(\Omega)}+\|v_{<l}C_lu\|_{L^2(\Gamma)\cap H^{\frac{1}{2}}(\Omega)}=:J_1+J_2.
\end{equation*}
Using that $v$ is at low frequency, we can estimate similarly to the above,
\begin{equation*}
J_1\lesssim_A B\|f\|_{H^1(\Gamma)}.    
\end{equation*}
For $J_2$, we note that $C_l$ is an operator of order $1$ and still retains essentially the frequency localization scale of $2^l$. Therefore, we can estimate $J_2$ similarly.
This completes the proof of the lemma.
\end{proof}
\end{lemma}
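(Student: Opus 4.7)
The plan is to fix the dyadic scale $2^l$ at which $u_l = \Phi_l u$ is frequency localized and to split the interior velocity at the matching scale, $v = v_{<l} + v_{\geq l}$ with $v_{<l} := \Psi_{<l} v$ and $v_{\geq l} := v - v_{<l}$. Both pieces are divergence free, since $\Psi_{\leq j}$ is built to preserve that property. Substituting this splitting into the bound to be proved yields
\begin{equation*}
\nabla_n\Delta^{-1}\Delta (v\cdot\nabla u_l) = \nabla_n\Delta^{-1}\Delta (v_{<l}\cdot\nabla u_l) + \nabla_n\Delta^{-1}\Delta (v_{\geq l}\cdot\nabla u_l),
\end{equation*}
and I will show that each of the two summands is $\lesssim_A B\|f\|_{H^1(\Gamma)}$, uniformly in $l$. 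The guiding principle is that even a full extra derivative on $u_l$ can be paid for by the $1/2$ derivative gain of the harmonic extension, i.e.\ $\|u\|_{H^{3/2}(\Omega)}\lesssim_A\|u\|_{H^1(\Omega)} + \|f\|_{H^1(\Gamma)}\lesssim_A \|f\|_{H^1(\Gamma)}$ via \Cref{desiredelliptic}, while any derivative on $v$ costs at most one factor of $B$.

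For the high-frequency piece, where $v_{\geq l}$ is at frequency $\geq 2^l$, I would exploit the identity $\Delta^{-1}\Delta = I - \mathcal{H}$ to write $\nabla_n \Delta^{-1}\Delta(v_{\geq l}\cdot\nabla u_l)$ as $\nabla_n(v_{\geq l}\cdot\nabla u_l) - \mathcal{N}(v_{\geq l}\cdot\nabla u_l)$. The first term is immediately a boundary expression, and the second term is reduced via \Cref{DNpower1} to an $H^1(\Gamma)$ norm; in total it suffices to bound $v_{\geq l}\cdot\nabla u_l$ in $H^1(\Gamma)$ and its gradient in $L^2(\Gamma)$. Using the simple interpolated trace-type inequality employed already in \eqref{trace1} (which controls an $L^2(\Gamma)$ norm by the geometric mean of $L^2(\Omega)$ and $H^1(\Omega)$ norms), these bounds reduce to interior $H^{1/2}$ and $H^{3/2}$ estimates of $v_{\geq l}\cdot\nabla u_l$. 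Distributing the derivatives and using the dyadic localization of both factors at scale $\sim 2^l$, every derivative hitting $v_{\geq l}$ is paid by $B = \|v\|_{W^{1,\infty}}$, and every derivative hitting $\nabla u_l$ is absorbed by $\|u\|_{H^{3/2}(\Omega)}$.

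For the low-frequency piece, one cannot simply use $\Delta^{-1}\Delta = I - \mathcal{H}$ because the resulting boundary trace would then need $\nabla u_l$ at the boundary, which is at high frequency. Instead, since $v_{<l}$ is divergence free, I expand $\Delta(v_{<l}\cdot\nabla u_l)$ using the product rule and write the result, after absorbing the divergence-free condition, as $\partial_j F^j$, where each $F^j$ is a sum of expressions of the form $Dv_{<l}\, Du_l$ and $v_{<l}\, C_l u$, with $C_l := [\Delta,\Phi_l]$ a first-order commutator still localized at scale $2^l$. Then \Cref{desiredellipticlemma} shows that the composition $\nabla_n \circ \nabla\Delta^{-1}\partial_j$ maps $H^{1/2}(\Omega)$ continuously to $L^2(\Gamma)$ (up to the harmless pointwise term $n_\Gamma e_j$, which directly gives $L^2(\Gamma)$ from a trace), so the estimate reduces to bounding $Dv_{<l}\,Du_l$ and $v_{<l}\,C_l u$ in $L^2(\Gamma) \cap H^{1/2}(\Omega)$. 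Both of these are handled by the same trace/$H^{3/2}$ argument as above: factors of $v_{<l}$ and $Dv_{<l}$ contribute at most $B$, while $C_l u$ behaves like $Du_l$ at the same frequency scale.

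The main obstacle is keeping the implicit constant uniform in $l$, because a naive application of e.g.\ $H^{1/2+\epsilon}(\Omega)\to L^2(\Gamma)$ to an object carrying $\nabla u_l$ would bleed a positive power of $2^l$. The cure, which underlies every step of the argument, is the precise dyadic localization of $u_l$: any excess derivative on the high-frequency factor is exactly balanced by the smoothing of $\Phi_l$ (or of the commutator $C_l$), so $\|u_l\|_{H^{s}(\Omega)} \lesssim_A 2^{(s-3/2)^+ l}\|f\|_{H^1(\Gamma)}$ combines with $2^{-l}\|v-v_{<l}\|_{W^{1,\infty}}$ or the analogous low-frequency bound on $v_{<l}$ to cancel the offending power of $2^l$. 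This is precisely what prevents any logarithmic loss at this level, and explains why no extra $\log(1+\|(v,\Gamma)\|_{\mathbf H^s})$ factor appears in the statement, in contrast to \Cref{comm prop}, where it re-enters only through the summation over $l \leq l_0$.
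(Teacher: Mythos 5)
Your proposal is correct and follows essentially the same route as the paper's proof: the same splitting $v=v_{<l}+v_{\geq l}$ at the scale of $u_l$, the identity $\Delta^{-1}\Delta=I-\mathcal{H}$ together with the $H^1(\Gamma)\to L^2(\Gamma)$ bound for $\mathcal{N}$ and the interpolated trace estimate for the high-frequency piece, and the divergence-form rewriting $\partial_j(Dv_{<l}Du_l)+\partial_j(v_{<l}C_lu)$ with \Cref{desiredellipticlemma} for the low-frequency piece, with $\|u\|_{H^{3/2}(\Omega)}\lesssim_A\|f\|_{H^1(\Gamma)}$ from \Cref{desiredelliptic} closing the estimates. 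The balancing mechanism you identify, namely the $2^{-l}$ gain on the undifferentiated high-frequency factor of $v$ cancelling the $2^{l}$ cost of excess derivatives on $u_l$ (and the order-one, scale-$2^l$ behaviour of $C_l$), is exactly how the paper keeps the constant uniform in $l$.
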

Returning to the proof of \Cref{comm prop}, we now estimate using \Cref{lemma on com est},
\begin{equation*}\label{logarithmicerror1}
\|\nabla_n\Delta^{-1}\Delta(v\cdot\nabla u)\|_{L^2(\Gamma)}\lesssim_A l_0 B\|f\|_{H^1(\Gamma)}+\|\nabla_n\Delta^{-1}\Delta(v\cdot\nabla u_{\geq l_0})\|_{L^2(\Gamma)}.
\end{equation*}
Again, using that $v$ is divergence free, we can (as above) expand $\nabla_n\Delta^{-1}\Delta(v\cdot\nabla u_{\geq l_0})$ as a sum of terms of the form 
\begin{equation*}
\nabla_n\Delta^{-1}\partial_j(DvDu_{\geq l_0})+\nabla_n\Delta^{-1}\partial_j(vC_{\leq l_0}u),
\end{equation*}
where $C_{\leq l_0}u=[\Delta, \Phi_{\leq l_0}]u$. For the latter term, we can simply estimate as above (since $v$ is undifferentiated),
\begin{equation*}
\|\nabla_n\Delta^{-1}\partial_j(vC_{\leq l_0}u)\|_{L^2(\Gamma)}\leq\sum_{l\leq l_0}\|\nabla_n\Delta^{-1}\partial_j(vC_{l}u)\|_{L^2(\Gamma)}\lesssim_A l_0 B\|f\|_{H^1(\Gamma)}.    
\end{equation*}
For the other term, we use \Cref{desiredellipticlemma} to obtain
\begin{equation*}
\|\nabla_n\Delta^{-1}\partial_j(DvDu_{\geq l_0})\|_{L^2(\Gamma)}\lesssim_A B\|Du_{\geq l_0}\|_{L^2(\Gamma)}+\|DvDu_{\geq l_0}\|_{H^{\frac{1}{2}}(\Omega)}.  
\end{equation*}
Since $u$ is harmonic we have
\begin{equation*}
B\|Du_{\geq l_0}\|_{L^2(\Gamma)}\lesssim_A B\|f\|_{H^1(\Gamma)}+B\|Du_{<l_0}\|_{L^2(\Gamma)}    .
\end{equation*}
Then expanding $u_{<l_0}=\sum_{l< l_0}u_l$ and using the trace theorem leading to \eqref{trace1} for each term as above, we get
\begin{equation*}
B\|Du_{\geq l_0}\|_{L^2(\Gamma)}\lesssim_A B l_0\|f\|_{H^1(\Gamma)}.
\end{equation*}
Finally, by product estimates and Sobolev embedding, it is easy to bound
\begin{equation*}
\|DvDu_{\geq l_0}\|_{H^{\frac{1}{2}}(\Omega)}\lesssim_A B\|f\|_{H^1(\Gamma)}+\|Dv_{\geq l_0}\|_{H^{\frac{d}{2}+\epsilon}(\Omega)}\|f\|_{H^1(\Gamma)}\lesssim_A (B+2^{-l_0\delta}\|(v,\Gamma)\|_{\mathbf{H}^s})\|f\|_{H^1(\Gamma)}    
\end{equation*}
for some $\delta>0$. Then choosing $l_0\approx_{\delta} \log(1+\|(v,\Gamma)\|_{\mathbf{H}^s})$, we conclude the proof of the proposition.
\end{proof}
Finally, we conclude the proof of \Cref{Energy est. thm} by observing first from the above proposition that we have
\begin{equation*}
\|[D_t,\mathcal{N}]\mathcal{N}^{k-2}a\|_{L^2(\Gamma)}\lesssim_A B\log(1+\|(v,\Gamma)\|_{\mathbf{H}^s})\|\mathcal{N}^{k-2}a\|_{H^{1}(\Gamma)}.
\end{equation*}
Then, using \Cref{higherpowers}, \Cref{Linfest}, \Cref{aest} and (\ref{strongercoercivitybound}), we have
\begin{equation*}
\|\mathcal{N}^{k-2}a\|_{H^{1}(\Gamma)}\lesssim_A (E^k)^{\frac{1}{2}}.    
\end{equation*}
This finally concludes the proof of \Cref{Energy est. thm}.
\section{Construction of regular solutions}\label{Existence section} 
In this section, we give a new, direct method for constructing solutions to the free boundary Euler equations in the high regularity regime. Solutions at low regularity will be obtained in the next section as unique limits of these regular solutions. 
\\

Previous approaches to constructing solutions to free boundary fluid equations include using Lagrangian coordinates, Nash Moser iteration or taking the zero surface tension limit in the capillary problem. A more recent  approach in the case of a laterally infinite ocean with flat bottom can be found in \cite{MR4263411}. The article \cite{MR4263411} uses a paralinearization of the Dirichlet-to-Neumann operator and a complicated iteration scheme to construct solutions. In contrast, we propose a new, geometric approach, implemented fully within the Eulerian coordinates.   
\\

Our novel approach is roughly inspired by nonlinear semigroup theory, where one constructs an approximate solution by discretizing the problem in time. To execute this approach successfully, one needs to show that the energy bounds are  uniformly preserved throughout the time steps. In our setting, a classical semigroup approach would require one to solve an elliptic free boundary problem with very precise estimates. However, on the other end of the spectrum, one could try to view our equation as an ODE and use an Euler type iteration. Of course, a na\"ive Euler method cannot work because it loses derivatives. A partial fix to this would be to combine the Euler method with a transport part, which would reduce but not eliminate the loss of derivatives.
\\

Our goal is to retain the simplicity of the Euler plus transport method, while ameliorating the derivative loss by an initial regularization of each iterate in our discretization. In short, we will split the time step into two main pieces:
\begin{enumerate}
    \item Regularization.
    \item Euler plus transport.
\end{enumerate}
To ensure that the uniform energy bounds survive, the regularization  step needs to be done carefully. For this, we will take a modular approach and try to decouple this process into two steps, where we regularize individually the domain and the velocity. We believe that this modular approach will serve as a recipe for a new and relatively simple method for constructing solutions to various free boundary problems. 
\\

The overarching scheme we employ in this section was carried out in the case of a compressible gas in \cite{ifrim2020compressible}. While we follow the same rough roadmap here, we stress that the main difficulties in the incompressible liquid case are quite different than for the gas. One obvious reason for this is that the surface of a liquid carries a non-trivial energy. Also, we introduce another new idea here, which is to begin the iteration with a regularized version of the initial data, and then to partially propagate these regularized bounds through the iteration.

\subsection{Basic setup and simplifications}
We begin by fixing a smooth reference hypersurface $\Gamma_*$ and a collar neighborhood $\Lambda_*:=\Lambda(\Gamma_*,\epsilon_0,\delta).$ Here, as usual, $\epsilon_0$ and $\delta$ are some small but fixed positive constants.  Given $k>\frac{d}{2}+1$ sufficiently large and an initial state $(v_0,\Gamma_0)\in\mathbf{H}^k$, our aim is to construct a local solution $(v(t),\Gamma_t)\in\mathbf{H}^k$ whose lifespan depends only on the size of $\|(v_0,\Gamma_0)\|_{\mathbf{H}^k}$, the lower bound in the Taylor sign condition and the collar neighborhood $\Lambda_*$. We recall from \Cref{Energy est. thm} that we have the coercivity 
\begin{equation*}
1+\|(v,\Gamma)\|_{\mathbf{H}^k}^2\approx_A E^k(v,\Gamma)    
\end{equation*}
for any state $(v,\Gamma)\in\mathbf{H}^k$.
For technical convenience, we will work with the slightly modified energy, 
\begin{equation}\label{energy re weight}
\mathcal{E}^k(v,\Gamma):=\|\nabla\mathcal{H}\mathcal{N}^{k-2}(a^{-1}D_ta)\|_{L^2(\Omega)}^2+\|a^{-\frac{1}{2}}\mathcal{N}^{k-1}a\|_{L^2(\Gamma)}^2+\|\omega\|_{H^{k-1}(\Omega)}^2+\|v\|_{L^2(\Omega)}^2+1.
\end{equation}
This new energy is readily seen to be equivalent to the old one in the sense that 
\begin{equation}\label{modifiedenergy}
    \mathcal{E}^k(v,\Gamma)\approx_A E^k(v,\Gamma).
\end{equation}
The primary reason we modify the energy is that it will allow for cleaner cancellations in the energy when we later regularize the velocity. 
\\

Now, fix $M>0$. Given a small time step $\epsilon>0$ and a suitable pair of initial data $(v_0,\Gamma_0)\in\mathbf{H}^k$ with $\|(v_0,\Gamma_0)\|_{\mathbf{H}^k}\leq M$, we aim to construct a sequence $(v_{\epsilon}(j\epsilon),\Gamma_{\epsilon}(j\epsilon))\in\mathbf{H}^k$ satisfying the following properties:
\begin{enumerate}
\item (Norm bound). There is a uniform constant $c_0>0$ depending only on $\Lambda_*$, $M$ and the lower bound in the Taylor sign condition such that if $j$ is an integer with $0\leq j\leq c_0\epsilon^{-1}$, then 
\begin{equation*}
\|(v_{\epsilon}(j\epsilon),\Gamma_{\epsilon}(j\epsilon))\|_{\mathbf{H}^k}\leq C(M),
\end{equation*}
where $C(M)>0$ is some constant depending on $M$. 
\item (Approximate solution). 
\begin{equation*}
\begin{cases}
&v_{\epsilon}((j+1)\epsilon)=v_{\epsilon}(j\epsilon)-\epsilon( v_{\epsilon}(j\epsilon)\cdot\nabla v_{\epsilon}(j\epsilon)+\nabla p_{\epsilon}(j\epsilon)+ge_d)+\mathcal{O}_{C^{1}}(\epsilon^2) \hspace{5mm}\text{on}\hspace{2mm}\Omega_{\epsilon}((j+1)\epsilon)\cap\Omega_{\epsilon}(j\epsilon),
\\
&\nabla\cdot v_{\epsilon}((j+1)\epsilon)=0 \hspace{5mm}\text{on}\hspace{2mm}\Omega_{\epsilon}((j+1)\epsilon),
\\
&\Omega_{\epsilon}((j+1)\epsilon)=(I+\epsilon v_{\epsilon}(j\epsilon))(\Omega_{\epsilon}(j\epsilon))+\mathcal{O}_{C^1}(\epsilon^2).
\end{cases}
\end{equation*}
\end{enumerate}
 We will not have to concern ourselves too much with the Taylor sign condition in this section as we are working at  high regularity and this is a pointwise property. In particular, we will suppress the lower bound in the Taylor sign condition from our notation. A nice feature about the above iteration scheme is that it suffices to only carry out a single step. For this, we have the following theorem.
\begin{theorem}\label{onestepiteration}
 Let $k$ be a sufficiently large even integer and $M > 0$. Consider an initial data $(v_0,\Gamma_0)\in\mathbf{H}^k$
 so that $\|(v_0,\Gamma_0)\|_{\mathbf{H}^k}\leq M$ and $v_0$ and $\omega_0$ satisfy the initial regularization bounds
\begin{equation}\label{inductiveregbound}
\|v_0\|_{H^{k+1}(\Omega_0)}\leq K(M)\epsilon^{-1},\hspace{5mm}\|\omega_0\|_{H^{k+n}(\Omega_0)}\leq K'(M)\epsilon^{-1-n},
\end{equation}
for $n=0,1$,  where $K(M)$, $K'(M)>0$ are constants, possibly much larger than $M$, such that $K'(M)\ll K(M)$. Then there exists a one step iterate $(v_0,\Gamma_0)\mapsto (v_1,\Gamma_1)$ with the following properties:
\begin{enumerate}
\item (Energy monotonicity).
\begin{equation}\label{EMBITT}
\mathcal{E}^{k}(v_1,\Gamma_1)\leq (1+C(M)\epsilon)\mathcal{E}^{k}(v_0,\Gamma_0).
\end{equation}
\item (Good pointwise approximation). 
\begin{equation}
\label{approx-sln}
\begin{cases}
&v_1=v_0-\epsilon (v_0\cdot\nabla v_0+\nabla p_0+ge_d)+\mathcal{O}_{C^1}(\epsilon^2)\hspace{5mm}\text{on}\hspace{2mm}\Omega_1\cap\Omega_0,
\\
&\nabla\cdot v_1=0\hspace{5mm}\text{on}\hspace{2mm}\Omega_1,
\\
&\Omega_{1}=(I+\epsilon v_{0})(\Omega_{0})+\mathcal{O}_{C^1}(\epsilon^2).
\end{cases}
\end{equation}
\item (Persistence of the regularization bounds). $v_1$ satisfies the regularization bounds
\begin{equation}\label{regboundprop}
\|v_{1}\|_{H^{k+1}(\Omega_1)}\leq K(M)\epsilon^{-1},\hspace{5mm}\|\omega_1\|_{H^{k+n}(\Omega_1)}\leq (K'(M)+C(M)\epsilon)\epsilon^{-1-n},
\end{equation}
for $n=0,1$. 
\end{enumerate}
\end{theorem}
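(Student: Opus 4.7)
The plan is to construct the one-step map as the composition of two substeps: a regularization step $(v_0,\Gamma_0)\mapsto(\tilde v_0,\tilde\Gamma_0)$ followed by a combined Euler and transport step $(\tilde v_0,\tilde\Gamma_0)\mapsto(v_1,\Gamma_1)$. In the Euler-transport step, I would define the new domain exactly as $\Omega_1:=(I+\epsilon \tilde v_0)(\tilde\Omega_0)$, which is a valid $C^{1,\alpha}$ diffeomorphism for $\epsilon$ small enough since $\tilde v_0$ has uniform $C^1$ bounds, and I would define the new velocity by first setting
\begin{equation*}
v_1^{\mathrm{app}}\circ(I+\epsilon \tilde v_0)(x):=\tilde v_0(x)-\epsilon\bigl(\tilde v_0\cdot\nabla \tilde v_0+\nabla \tilde p_0+ge_d\bigr)(x)
\end{equation*}
on $\Omega_1$ and then subtracting a gradient potential $v_1:=v_1^{\mathrm{app}}-\nabla\Delta_{\Omega_1}^{-1}(\nabla\cdot v_1^{\mathrm{app}})$ using the zero-Dirichlet inverse Laplacian. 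Since $\tilde v_0$ is divergence free on $\tilde\Omega_0$, a short Taylor expansion shows $\nabla\cdot v_1^{\mathrm{app}}=O(\epsilon)$, so the correction is of size $O(\epsilon^2)$ in $C^1$, yielding the approximate Euler identity \eqref{approx-sln}.

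The role of the regularization step is to make this Euler step derivative-loss-free. A naive Euler step on $(v_0,\Gamma_0)$ would require bounds on $v_0\cdot\nabla v_0$ and $\nabla p_0$ at the $H^k$ level, which is precisely one derivative above what is available. I would implement the regularization in a modular way: first mollify the collar parameterization $\eta_{\Gamma_0}$ at dyadic scale $2^{-j}\sim\epsilon$ to obtain $\tilde\Gamma_0$, and then apply the divergence-preserving regularization operator $\Psi_{\leq j}$ from \Cref{c reg bounds} to $v_0$, viewed as extended through the $2^{-j}$ enlargement that covers both $\Omega_0$ and $\tilde\Omega_0$. Proposition~\ref{c reg bounds} directly yields the $H^{k+1}$ bound with constant $\epsilon^{-1}$ and, for the vorticity, the $H^{k+n}$ bound with constant $\epsilon^{-1-n}$ for $n=0,1$. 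The persistence of these bounds under the Euler-transport step then follows from the fact that the vorticity equation \eqref{vorteq} is a pure transport equation modulo a bounded right-hand side, so one gains only an $O(\epsilon)$ increase per step, which can be absorbed into $K'(M)+C(M)\epsilon$.

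The core technical work is the energy monotonicity bound \eqref{EMBITT}. The strategy is to mirror in the discrete setting the continuous energy propagation of \Cref{Energy est. thm}, applied to the slightly reweighted energy $\mathcal E^k$ in \eqref{energy re weight}. I would compute the increment $\mathcal E^k(v_1,\Gamma_1)-\mathcal E^k(\tilde v_0,\tilde\Gamma_0)$ by separately analyzing each piece: the rotational energy $\|\omega\|_{H^{k-1}}^2$ is propagated via the transport equation; the boundary term $\|a^{-\frac12}\mathcal N^{k-1}a\|_{L^2(\Gamma)}^2$ and the interior term $\|\nabla\mathcal H\mathcal N^{k-2}(a^{-1}D_ta)\|_{L^2(\Omega)}^2$ are propagated by viewing $(w_k,s_k)$ as satisfying a discrete version of the linearized system \eqref{DM lin} with perturbative source terms of size $O(\epsilon)$, and then applying the discrete analog of Proposition~\ref{EE}. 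The weight $a^{-1}$ in \eqref{energy re weight} rather than $a$ is what allows the boundary contributions from the time-stepped Taylor coefficient to telescope cleanly, avoiding spurious growth from $D_ta$-type terms on the boundary when performing the discrete integration by parts. The $O(\epsilon^2)$ corrections from the Hodge projection and the transport discretization are absorbed using the $\epsilon^{-1}$ regularization bound \eqref{inductiveregbound}, yielding a clean $C(M)\epsilon^2$ contribution from each.

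The main obstacle that I expect is controlling the energy increase contributed by the \emph{regularization} step itself, which must also be $O(\epsilon)$. Standard mollification of the velocity or the boundary will a priori only give $\mathcal E^k(\tilde v_0,\tilde\Gamma_0)-\mathcal E^k(v_0,\Gamma_0)=O(\epsilon^s)$ for $s<1$ unless the regularization is tuned carefully to the good-variable structure. To overcome this, I would choose the mollifier parameter $j$ as a function of $\epsilon$ and the higher-regularity bound $K(M)\epsilon^{-1}$ so that the regularization error is better than $O(\epsilon)$, using the balanced elliptic estimates from \Cref{BEE} to convert the crude regularization error into a refined one involving only the pointwise control parameter $A$ and one unit of extra regularity, which is exactly what \eqref{inductiveregbound} supplies. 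This is also where the Taylor sign condition must be tracked: since $a$ depends nonlinearly on $(v,\Gamma)$ through an elliptic problem, I would verify the uniform lower bound on $a$ at each step by the $C^{1,\epsilon}$-stability provided by \Cref{Linfest} together with the fact that  the regularization step moves $\Gamma_0$ and $v_0$ by at most $O(\epsilon)$ in the $C^{1,\epsilon}$ topology, so any fixed lower bound $a_0>c_0>0$ degrades by at most $C(M)\epsilon$ across one time step.
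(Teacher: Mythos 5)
Your overall architecture (regularize, then Euler plus Lagrangian transport, then Hodge-project to restore divergence-freeness) matches the paper's, and items (ii) and (iii) are essentially as in the paper. The genuine gap is in item (i), and you correctly anticipated it ("the main obstacle") but the fix you sketch does not actually work: a dyadic mollifier of $\eta_{\Gamma_0}$ and the divergence-free spatial mollifier $\Psi_{\leq j}$ applied to $v_0$ will in general increase the energy $\mathcal E^k$ by an $O(1)$ amount, not $O(\epsilon)$, because the good variables $a$ and $D_ta$ depend nonlocally on $(v,\Gamma)$ through the pressure Laplacian, so a Fourier-side cutoff of the raw variables does not align with a Fourier-side cutoff of the energy density. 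Appealing to the balanced elliptic estimates of Section 5 only gives bounds in which the regularization error is compared to the control parameter $A$ times the data size $M$; there is no mechanism by which it is forced below $C(M)\epsilon$.

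The paper resolves this with two specifically tuned mechanisms that your proposal omits. For the domain it uses the heat flow $\tilde\eta_\epsilon=e^{\epsilon^2\Delta_{\Gamma_*}}\eta_0$ (not a dyadic truncation), and Lemma~\ref{arepresentation} shows that differentiating the surface energy in the heat parameter yields a \emph{negative} definite term $-\epsilon\|\Gamma_\epsilon\|_{H^{k+1}}^2$, which then absorbs the half-derivative loss appearing in the variation of the $D_ta$ component of the energy (via the term $\|V_\epsilon\|_{H^{k-\frac12}}$). A dyadic mollifier has no analogous coercive gain. For the velocity it uses the rotational/irrotational decomposition and regularizes only the irrotational part by the \emph{spectral} projection $\mathcal P_{\leq\epsilon^{-1}}(\mathcal N_\epsilon)$ of the Dirichlet-to-Neumann operator (and only applied to the high-frequency residual $w=\tilde v_0-\Psi_{\leq\epsilon^{-1/2}}\tilde v_0$, to sidestep the half-derivative mismatch between $v|_\Gamma$ and $n_\Gamma$). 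The spectral cutoff is what makes the cross term $\int\mathcal N_\epsilon^{k-\frac12}\mathcal P_{\leq\epsilon^{-1}}(w\cdot n)\,\mathcal N_\epsilon^{k-\frac12}\mathcal P_{>\epsilon^{-1}}(w\cdot n)\,dS$ vanish \emph{exactly} by orthogonality, which is the whole reason the energy was reweighted to $\mathcal E^k$ with the $a^{-1}$ weight. Your stated rationale for the weight, that it lets the Taylor-coefficient boundary contributions telescope in the discrete Euler step, is not what the paper uses the reweighting for; the cancellation is in the velocity regularization step, not the Euler step. Finally, note that the paper regularizes the velocity only \emph{after} the domain, keeping $\tilde v_0=v_0|_{\Omega_\epsilon}$ unregularized in Step 1, which is what makes the bound $D_\epsilon\nabla\tilde v_0=O_{H^{k-1}}(1)$ in Lemma~\ref{MDBL} available; regularizing the velocity simultaneously (as your proposal suggests) would couple the two error analyses and break that bound.
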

\begin{remark}
Property \eqref{regboundprop} ensures that $v_1$ retains the $H^{k+1}$ regularization bound with the same constant compared to the first iterate, and $\omega_1$ has a regularization bound which can only grow by an amount comparable to $\epsilon$ times the initial regularization bound, which is acceptable over $\approx_M \epsilon^{-1}$ iterations.   The energy monotonicity property, along with the energy coercivity bound from \Cref{Energy est. thm} will ensure that the resulting sequence $(v_{\epsilon}(j\epsilon),\Gamma_{\epsilon}(j\epsilon))$ of approximate solutions we construct remains uniformly bounded in $\mathbf{H}^k$ for $j\ll_M\epsilon^{-1}$. The second property  in \Cref{onestepiteration} will ensure that $(v_{\epsilon}(j\epsilon),\Gamma_{\epsilon}(j\epsilon))$ converges in a weaker topology to a solution of the equation.   
\end{remark}

The assumption (\ref{inductiveregbound}) for $v_0$ is for technical convenience. In the regularization step of the argument, it will allow us to decouple the process of regularizing the domain and regularizing the velocity into separate arguments (see \Cref{MDBL} in the next section).  The condition \eqref{regboundprop} ensures that (\ref{inductiveregbound}) can be propagated from one iterate to the next. Assuming that the initial iterate satisfies (\ref{inductiveregbound})  is harmless in practice. Indeed, by the regularization properties of $\Psi_{\leq\epsilon^{-1}}$, we can replace the first iterate in the resulting sequence $(v_{\epsilon}(j\epsilon),\Gamma_{\epsilon}(j\epsilon))$ with a suitable $\epsilon^{-1}$ scale regularization so that the base case is satisfied. We note crucially that such a regularization is only done once - on the initial iterate - as we  only know that this regularization  is   bounded on $\mathbf H^k$ (it does not necessarily satisfy the more delicate energy monotonicity).  In contrast, we require the much stricter energy monotonicity bound \eqref{EMBITT} for all other iterations as in the above theorem.  The condition on the vorticity in (\ref{inductiveregbound}) can also be harmlessly assumed for the initial iterate. When we later regularize the velocity, we will not regularize the vorticity, but rather only the irrotational component. This is why, in contrast to the $H^{k+1}$ bound for $v_1$, the constant for $\omega_1$ in \eqref{regboundprop} gets slightly worse. Nonetheless, the careful tracking of its bound in \eqref{regboundprop}  ensures that it only grows by an acceptable amount in each iteration. The heuristic reason why the regularization bound on $\omega_1$ is expected is because the vorticity should be essentially transported by the flow, and therefore should not suffer the derivative loss of the full velocity in the iteration step.
\\

\textbf{Outline of the argument.} We now give a brief overview of the section. The first step is selecting a suitable regularization scale. To motivate this, we recall that the evolution of the domain and the irrotational component of the velocity is essentially governed by the following approximate equation for $a$:
\begin{equation}\label{Dt2a}
D_t^2a\approx -a\mathcal{N}a.
\end{equation}
Therefore, heuristically, $D_t$ behaves roughly as a ``spatial" derivative of order $\frac{1}{2}$. To control quadratic errors in the energy monotonicity bound in the Euler plus transport iteration later, it is therefore natural to attempt to regularize the domain and the irrotational part of the velocity on the $\epsilon^{-1}$ scale, as we do in \Cref{onestepiteration}. As the vorticity is essentially transported by the flow, we are able to  leave the rotational part of the velocity alone, and instead track its growth as in \eqref{regboundprop}.
\\

With the above discussion in mind, we begin our analysis in earnest in \Cref{DRS} by regularizing the domain on the $\epsilon^{-1}$ scale. More specifically, given $(v_0,\Gamma_0)\in\mathbf{H}^k$ with $v_0$ satisfying \eqref{inductiveregbound}, we construct for each $0<\epsilon\ll 1$ a domain $\Omega_\epsilon\subseteq \Omega_0$ whose boundary is within $\mathcal{O}_{C^1}(\epsilon^2)$ of $\Gamma_0$ and which satisfies the regularization bound $\|\Gamma_{\epsilon}\|_{H^{k+\alpha}}\lesssim_{M,\alpha} \epsilon^{-\alpha}$ for all $\alpha\geq 0$. This is achieved by performing a parabolic regularization of the graph parameterization $\eta_{0}$ on $\Gamma_*$, together with a slight contraction of the domain. We then define our new velocity $\tilde{v}_0=\tilde{v}_0(\epsilon)$ by restricting the old velocity $v_0$ to the new domain $\Omega_\epsilon$. As will be the case in every step of the argument, the main difficulty is to carefully track the effect of the regularization on the energy growth. The main point in this part of the argument is to show that the parabolic regularization of $\eta_0$ induces a corresponding parabolic gain in the surface component of the energy $\|a^{-\frac{1}{2}}\mathcal{N}^{k-1}a\|_{L^2(\Gamma)}^2$, allowing us to control all of the resulting errors. 
\\

With the domain now regularized, we  move on to regularizing the velocity in \Cref{VR}, which is step 2 of the argument. In this step, we leave the domain and rotational part of the velocity alone, and  regularize the irrotational part of the velocity on the $\epsilon^{-1}$ scale. The way we execute this is by using the functional calculus for the Dirichlet-to-Neumann operator. The main difficulty in this step of the argument is in tracking the effect of this regularization on the $\|\nabla\mathcal{H}\mathcal{N}^{k-2}(a^{-1}D_ta)\|_{L^2(\Omega)}^2$ portion of the energy, which at leading order controls the irrotational component of the velocity. An additional objective in this step of the argument is to  improve the constant in \eqref{inductiveregbound} so that we can ultimately close the bootstrap in the upcoming Euler plus transport phase of the argument.
\\

The final step in our construction is to use an Euler plus transport iteration to flow  the regularized variables $(v_\epsilon,\Gamma_\epsilon)$ along a discrete version of the Euler evolution. It is in this step of the argument that we expect to observe a $\frac{1}{2}$ derivative loss (see the equation \eqref{Dt2a} for $D_t^2a$, for instance), which is why the above regularization procedure is imperative. The Euler plus transport argument we employ is carried out in \Cref{EPTI}. Control of the resulting energy growth is shown by carefully relating the good variables $a$, $D_ta$ and $\omega$ for the new iterate to the corresponding good variables for the regularized data. Then, with the energy uniformly bounded and the variables appropriately iterated, in \Cref{COTS} we conclude  that our scheme converges in a weaker topology, completing the construction of solutions.

\subsection{Step 1: Domain regularization}\label{DRS}
We begin with the domain regularization step. For this, we have the following proposition. 
\begin{proposition} 
 Given $(v_0,\Gamma_0)\in\mathbf{H}^k$ with $v_0$ satisfying \eqref{inductiveregbound}, there exists a domain $\Omega_{\epsilon}$ contained in $\Omega_0$ with boundary $\Gamma_{\epsilon}\in\Lambda_*$ such that the pair $(v_{0_{|\Omega_{\epsilon}}},\Gamma_{\epsilon})$ satisfies 
 \begin{enumerate}
\item (Energy monotonicity).
\begin{equation}\label{Emonsurf}
\mathcal{E}^{k}(v_{0_{|\Omega_{\epsilon}}},\Gamma_{\epsilon})\leq (1+C(M)\epsilon)\mathcal{E}^{k}(v_0,\Gamma_0).
\end{equation}
\item (Good pointwise approximation).
\begin{equation}\label{approxdomain}
\eta_{\epsilon}=\eta_{0}+\mathcal{O}_{C^1}(\epsilon^2)\hspace{2mm}on\hspace{2mm}\Gamma_*.
\end{equation}
\item (Domain regularization bound). For every $\alpha\geq 0$, there holds,
\begin{equation}\label{surfbound}
\|\Gamma_{\epsilon}\|_{H^{k+\alpha}}\lesssim_{M,\alpha} \epsilon^{-\alpha}.
\end{equation}
 \end{enumerate}
\end{proposition}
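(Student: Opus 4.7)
The strategy is to regularize the graph parameterization $\eta_0 \in H^k(\Gamma_*)$ by applying a semigroup of the form $T_\epsilon := e^{-\epsilon P}$, where $P$ is a positive, self-adjoint first-order operator on $\Gamma_*$ (for instance $(I - \Delta_{\Gamma_*})^{1/2}$, or the Dirichlet-to-Neumann map associated to $\Omega_*$). I would then set
\[
\eta_\epsilon := T_\epsilon \eta_0 - c_M \epsilon^2,
\]
where $c_M > 0$ is chosen large enough (depending only on $M$ and $\Lambda_*$) that $\eta_\epsilon < \eta_0$ pointwise on $\Gamma_*$; this guarantees $\Omega_\epsilon \subseteq \Omega_0$, so that the restriction of $v_0$ to $\Omega_\epsilon$ is meaningful, and, combined with the smallness of $\epsilon$, that $\Gamma_\epsilon \in \Lambda_*$.

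The regularization bound \eqref{surfbound} is then immediate from the standard smoothing estimate $\|P^\alpha T_\epsilon\|_{L^2 \to L^2} \lesssim \epsilon^{-\alpha}$ combined with the equivalence of $\|\Gamma_\epsilon\|_{H^{k+\alpha}}$ with $\|\eta_\epsilon\|_{H^{k+\alpha}(\Gamma_*)}$. The pointwise approximation \eqref{approxdomain} exploits the excess regularity of $\eta_0$: since $k$ is assumed to be large, Sobolev embedding gives $\eta_0 \in C^{r}(\Gamma_*)$ for some $r \gg 1$, and the semigroup difference satisfies $\|T_\epsilon \eta_0 - \eta_0\|_{C^1} \lesssim_M \epsilon^{r-1}$, which is $o(\epsilon^2)$ for $k$ sufficiently large.

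The main obstacle is the energy monotonicity \eqref{Emonsurf}. The rotational component $\|\omega\|_{H^{k-1}(\Omega)}^2$ and the $L^2$ component $\|v\|_{L^2(\Omega)}^2$ of $\mathcal{E}^k$ can only decrease under restriction $\Omega_\epsilon \subseteq \Omega_0$ up to a harmless $O(\epsilon)$ error from the infimum definition of the $H^{k-1}$ norm, so the task reduces to controlling the surface and irrotational pieces of the energy. Both of these depend nonlocally on the boundary through the pressure equation and the Dirichlet-to-Neumann operator on $\Gamma_\epsilon$ (resp.\ $\Gamma_0$); the plan is to propagate the perturbation $\eta_\epsilon - \eta_0$ through each of these nonlocal objects using the balanced elliptic estimates of \Cref{BEE}.

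The decisive observation is that the parabolic regularization produces a quantitative high-frequency gain: using the curvature relation \eqref{curvaturerelation} together with the balanced trace and elliptic estimates, the surface component of the energy can be written at leading order as
\[
\|a_\epsilon^{-1/2} \mathcal{N}_\epsilon^{k-1} a_\epsilon\|_{L^2(\Gamma_\epsilon)}^2 \approx_A c_0 \|P^{k-1}\eta_\epsilon\|_{L^2(\Gamma_*)}^2 + \mathrm{l.o.t.},
\]
and analogously for $\Gamma_0$. The semigroup $T_\epsilon$ is a contraction in every $\|P^m\cdot\|_{L^2}$ norm, with the quantitative spectral gain
\[
\|P^{k-1} \eta_0\|_{L^2}^2 - \|P^{k-1} T_\epsilon \eta_0\|_{L^2}^2 \gtrsim \epsilon \|P^{k-1/2}\eta_0\|_{L^2}^2 - O(\epsilon^2 \|P^{k}\eta_0\|_{L^2}^2).
\]
The error terms arising when comparing the two energies—namely the discrepancies between $\mathcal{N}_\epsilon$ and $\mathcal{N}_0$, between $a_\epsilon$ and $a_0$, and between the corresponding harmonic extensions—are, by the balanced estimates of \Cref{BEE}, bounded either by $C(M)\epsilon \|\Gamma_0\|_{H^k}^2$ or by $C(M)\epsilon\|P^{k-1/2}\eta_0\|_{L^2}^2$, and are therefore absorbed either by the parabolic gain above or by the allowed factor $1 + C(M)\epsilon$. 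The same circle of ideas treats the irrotational component of the energy; here the additional ingredient is a comparison estimate for $D_t a_\epsilon - D_t a_0$ on $\Omega_\epsilon$, which is obtained by differencing the Laplace equations for $p_\epsilon$ and $p_0$ and again appealing to the balanced Dirichlet estimates.
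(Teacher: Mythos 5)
Your choice of regularizing semigroup $T_\epsilon = e^{-\epsilon P}$ with a first-order generator $P$ is precisely the choice the paper explicitly avoids, and it breaks the pointwise approximation \eqref{approxdomain}. The claim $\|T_\epsilon\eta_0 - \eta_0\|_{C^1} \lesssim_M \epsilon^{r-1}$ for $\eta_0\in C^r$ is incorrect: for a first-order generator, the Fourier multiplier $1 - e^{-\epsilon\lambda}$ behaves like $\epsilon\lambda$ for $\epsilon\lambda\ll 1$, so dividing by any power $\lambda^{m}$ still leaves the low-frequency contribution of size $\epsilon$, and the best one can obtain is $\|(I-T_\epsilon)\eta_0\|_{C^1}\lesssim \epsilon\|\eta_0\|_{C^2}$ \emph{irrespective of the regularity of $\eta_0$}. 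Thus you get only $\mathcal{O}_{C^1}(\epsilon)$, not $\mathcal{O}_{C^1}(\epsilon^2)$, and the subtraction $-c_M\epsilon^2$ no longer guarantees $\eta_\epsilon < \eta_0$. The paper instead uses the parabolic flow $\tilde\eta_\epsilon = e^{\epsilon^2\Delta_{\Gamma_*}}\eta_0$, whose multiplier $1 - e^{-\epsilon^2|\xi|^2}$ is $\mathcal{O}(\epsilon^2|\xi|^2)$ at low frequency, and then subtracts $C\epsilon^2$. This choice is essential and the paper says so: the key requirement is that $\|\partial_\epsilon\tilde\eta_\epsilon\|_{H^{k-2}(\Gamma_*)}\lesssim_M\epsilon$, which holds because $\partial_\epsilon\tilde\eta_\epsilon = 2\epsilon\Delta_{\Gamma_*}\tilde\eta_\epsilon$ carries a factor of $\epsilon$; a first-order semigroup gives only $\mathcal{O}_M(1)$.

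This affects the energy monotonicity argument downstream. The parabolic gain you write, $\gtrsim \epsilon\|P^{k-1/2}\eta_0\|_{L^2}^2$, is the first-order semigroup gain; with the heat flow at time $\epsilon^2$ the paper's gain is $-\epsilon\|\Gamma_\epsilon\|_{H^{k+1}}^2 + \mathcal{O}_M(1)$, i.e.\ a gain of \emph{two more} derivatives weighted by a single $\epsilon$ (Lemma on variation of the surface energy). This is needed to dominate the term $\delta_0\epsilon\|\Gamma_{\epsilon/2}\|_{H^{k+1}}^2$ arising from the error $\|V_\epsilon\|_{H^{k-1/2}(\Gamma_\epsilon)}\lesssim_M 1 + \epsilon^{1/2}\|\eta_{\epsilon/2}\|_{H^{k+1}(\Gamma_*)}$; a half-derivative gain does not match. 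Finally, your framing compares the two energies on $\Omega_\epsilon$ and $\Omega_0$ by propagating the perturbation $\eta_\epsilon-\eta_0$ through $\mathcal{N}$, $a$, and $\mathcal{H}$ as operators; the paper instead regards $\epsilon\mapsto\Omega_\epsilon$ as a continuous flow with material derivative $D_\epsilon$ and restricts the velocity to the shrinking domain, so that the rotational and $L^2$ parts of the energy are manifestly nonincreasing and the remaining terms are controlled by Leibniz-type commutator identities applied along the flow, combined with the representation $\mathcal{N}_\epsilon^{k-1}\tilde a_0 = (-1)^m\tilde a_0\Delta_{\Gamma_\epsilon}^m\kappa_\epsilon + R_\epsilon$ with well-controlled remainder. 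Your high-level observation that the surface energy is at leading order a curvature quantity against which the parabolic gain can be measured is correct, but the specific quantitative estimates as stated do not close.
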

\begin{proof}
In the sequel, we will use $\tilde{v}_0$ as a shorthand for $v_{0_{|\Omega_{\epsilon}}}$. To regularize $\Gamma_0$, we begin with the  preliminary parabolic regularization of $\eta_0$ given by
\begin{equation*}
\tilde{\eta}_{\epsilon}=e^{\epsilon^2\Delta_{\Gamma_*}}\eta_0,
\end{equation*}
where $\Delta_{\Gamma_*}$ is the Laplace-Beltrami operator for $\Gamma_*$. The rationale for using the operator $e^{\epsilon^2\Delta_{\Gamma_*}}$ instead of, for instance, the operator $e^{-\epsilon|D|}$ is to ensure that when $k$ is large enough, we have $\|\partial_{\epsilon}\tilde{\eta}_{\epsilon}\|_{H^{k-2}(\Gamma_*)}\lesssim_M \epsilon$. This ensures that the hypersurface parameterized by $\tilde{\eta}_{\epsilon}$ in collar coordinates is at a distance on the order of no more than $\mathcal{O}_M(\epsilon^2)$ from $\Gamma_0$ in the $H^{k-2}$ topology (and thus the $C^1$ topology if $k$ is large enough). We would also like to additionally guarantee that $\Omega_{\epsilon}$ is contained in $\Omega_0$, so that we can use the restriction of the velocity $v_0$ to $\Omega_{\epsilon}$ as the velocity on the new domain. Therefore, we  slightly correct the above parabolic regularization by  defining  our regularized hypersurface $\Gamma_{\epsilon}$ through the collar parameterization
\begin{equation*}
\eta_{\epsilon}=\tilde{\eta}_{\epsilon}-C\epsilon^2,
\end{equation*}
where $C$ is some positive constant depending on $M$ only, imposed to ensure that the domain  $\Omega_{\epsilon}$ associated to $\Gamma_{\epsilon}$ is contained in $\Omega_0$.  Clearly, $\eta_{\epsilon}$ satisfies (\ref{surfbound}) and the required pointwise approximation property in (\ref{approxdomain}).  The main bulk of the work in this step of the argument will therefore be in understanding how the above parabolic regularization of the surface (and also the restriction of the velocity to $\Omega_{\epsilon}$) affects the energy.
\\

 
 Given $(\tilde{v}_0,\Gamma_\epsilon)$ as above, we define the associated quantities $\tilde{\omega}_0:=\nabla\times \tilde{v}_0$ and $\tilde{p}_0$, $D_t\tilde{p}_0$, $\tilde{a}_0$ and $D_t\tilde{a}_0$ on $\Omega_{\epsilon}$ and $\Gamma_{\epsilon}$ by using the relevant Poisson equations, as in \Cref{CTEF}. We will use the notation $\mathcal{N}_{\epsilon}$ to refer to the Dirichlet-to-Neumann operator for $\Gamma_{\epsilon}$. Before proceeding to the proof of energy monotonicity, we note that the above construction gives rise to a flow velocity $V_{\epsilon}$ in the parameter $\epsilon$ for the family of hypersurfaces $\Gamma_{\epsilon}$ by composing $\partial_{\epsilon}\eta_{\epsilon}\nu$ with the inverse of the collar coordinate parameterization $x\mapsto x+\eta_{\epsilon}(x)\nu(x)$. We may harmlessly assume that $V_{\epsilon}$ is defined on $\Omega_{\epsilon}$ by harmonically extending it to $\Omega_{\epsilon}$. We use $D_{\epsilon}:=\partial_{\epsilon}+V_{\epsilon}\cdot\nabla$ to denote the associated material derivative, which will be tangent to the family of hypersurfaces $\Gamma_{\epsilon}$. 
\\

We also importantly make note of the fact that for every $s\in\mathbb{R}$, we have 
\begin{equation}\label{norm-nonincreasing}
\|\tilde{\omega}_0\|_{H^s(\Omega_{\epsilon})}\leq \|\omega_0\|_{H^s(\Omega)},\hspace{5mm} \|\tilde{v}_0\|_{H^s(\Omega_{\epsilon})}\leq \|v_0\|_{H^s(\Omega)}.  
\end{equation}
Therefore, the bounds in (\ref{inductiveregbound}) are retained from the initial data and, moreover, the rotational component of the energy does not increase.
\\

Now we turn to the energy monotonicity bound \eqref{Emonsurf}. We will need the following two lemmas.
\begin{lemma}[Material derivative bounds]\label{MDBL} 
The following bound holds uniformly in $\epsilon$:
\begin{equation}\label{Bound on super}
\|D_{\epsilon}\nabla \tilde{v}_0\|_{H^{k-1}(\Omega_{\epsilon})}\lesssim_M 1.
\end{equation}
\end{lemma}
\begin{lemma}[Variation of the surface energy]\label{arepresentation}
 Let $k$ be a sufficiently large even integer. Then we have the following estimate for the $\tilde{a}_{0}$ component  of the energy:
\begin{equation*}
\frac{d}{d\epsilon}\|\tilde{a}_0^{-\frac{1}{2}}\mathcal{N}_\epsilon^{k-1}\tilde{a}_0\|_{L^2(\Gamma_\epsilon)}^2\lesssim_M -\epsilon \|\Gamma_{\epsilon}\|_{H^{k+1}}^2+\mathcal{O}_M(1).
\end{equation*}
\end{lemma}
\Cref{MDBL} will allow us to essentially ignore any contributions to the  energy coming from the restriction $\tilde{v}_0$, while \Cref{arepresentation} will help in controlling the variation in $\epsilon$ of the irrotational components of the energy.
\\

Before proving the above lemmas, let us see how they imply the energy monotonicity bound \eqref{Emonsurf}. Thanks to  \Cref{arepresentation} and \eqref{norm-nonincreasing}, we only need to study the $D_ta$ component of the energy. For this, we recall from the Laplace equation \eqref{Dtpdef} that we have 
\begin{equation}\label{Dta_ep}
\tilde{a}_0^{-1}D_t\tilde{a}_0=\tilde{a}_0^{-1}n_{\Gamma_\epsilon}\cdot\nabla \tilde{v}_0\cdot\nabla \tilde{p}_0-\tilde{a}_0^{-1}n_{\Gamma_\epsilon}\cdot\nabla\Delta^{-1}_{\Omega_\epsilon}(\Delta \tilde{v}_0\cdot\nabla \tilde{p}_0+4\text{tr}(\nabla^2\tilde{p}_0\cdot\nabla \tilde{v}_0)+2\text{tr}(\nabla \tilde{v}_0)^3) \ \ \ \text{on} \ \Gamma_{\epsilon}.
\end{equation}
We  apply $D_{\epsilon}\nabla\mathcal{H}_{\epsilon}\mathcal{N}_\epsilon^{k-2}$ to \eqref{Dta_ep} and distribute derivatives. We first dispense with the commutator. Using the standard $H^{\frac{1}{2}}(\Gamma_\epsilon)\to H^1(\Omega_\epsilon)$ bound for $\mathcal{H}_\epsilon$, the  $H^{k-\frac{3}{2}}(\Gamma_\epsilon)$ to $H^{\frac{1}{2}}(\Gamma_\epsilon)$ bound for  $\mathcal{N}^{k-2}_\epsilon$ from \Cref{Iterated N} and the $H^{\frac{1}{2}}(\Gamma_{\epsilon})\to H^{1}(\Omega_{\epsilon})$ bound for $[D_{\epsilon},\mathcal{H}_{\epsilon}]$ from (\ref{H-commutator}), we have 
\begin{equation*}
\|[D_{\epsilon},\nabla\mathcal{H}_{\epsilon}\mathcal{N}_\epsilon^{k-2}](\tilde{a}_0^{-1}D_t\tilde{a}_0)\|_{L^2(\Omega_\epsilon)}\lesssim_M \|[D_{\epsilon},\mathcal{N}_{\epsilon}^{k-2}](\tilde{a}_0^{-1}D_t\tilde{a}_0)\|_{H^{\frac{1}{2}}(\Gamma_{\epsilon})}+\|\tilde{a}_0^{-1}D_t\tilde{a}_0\|_{H^{k-\frac{3}{2}}(\Gamma_{\epsilon})}.    
\end{equation*}
Then,  using the formula (\ref{commutatorexpansion}) and the elliptic estimates in \Cref{BEE} as well as the bound $\|V_{\epsilon}\|_{H^{k-1}(\Gamma_{\epsilon})}\lesssim_M 1$, it is straightforward to verify the commutator bound
\begin{equation*}
\|[D_{\epsilon},\mathcal{N}_{\epsilon}^{k-2}]\|_{H^{k-\frac{3}{2}}(\Gamma_\epsilon)\to H^{\frac{1}{2}}(\Gamma_\epsilon)}\lesssim_M 1.    
\end{equation*}
By  elliptic regularity, $\|\tilde{a}_0^{-1}D_t\tilde{a}_0\|_{H^{k-\frac{3}{2}}(\Gamma_\epsilon)}$ is $\mathcal{O}_M(1)$. Hence, we obtain
\begin{equation*}
\|[D_{\epsilon},\nabla\mathcal{H}_{\epsilon}\mathcal{N}_\epsilon^{k-2}](\tilde{a}_0^{-1}D_t\tilde{a}_0)\|_{L^2(\Omega_\epsilon)}\lesssim_M 1.    
\end{equation*}
Using that
\begin{equation*}
\|\nabla\mathcal{H}_{\epsilon}\mathcal{N}_{\epsilon}^{k-2}D_{\epsilon}(\tilde{a}_0^{-1}D_t\tilde{a}_0)\|_{L^2(\Omega_{\epsilon})}\lesssim_M \|D_{\epsilon}(\tilde{a}_0^{-1}D_t\tilde{a}_0)\|_{H^{k-\frac{3}{2}}(\Gamma_{\epsilon})},    
\end{equation*}
it remains now to estimate $\|D_{\epsilon}(\tilde{a}_0^{-1}D_t\tilde{a}_0)\|_{H^{k-\frac{3}{2}}(\Gamma_{\epsilon})}$. For this, we distribute the operator $D_{\epsilon}$  onto the various terms in (\ref{Dta_ep}). To expedite this process, we  collect a few useful bounds. First, using \Cref{MDBL}, the trace theorem ensures that we have the bound
\begin{equation*}
\|D_{\epsilon}\nabla \tilde{v}_0\|_{H^{k-\frac{3}{2}}(\Gamma_{\epsilon})}+\|D_{\epsilon}\nabla \tilde{v}_0\|_{H^{k-1}(\Omega_{\epsilon})}\lesssim_M 1.    
\end{equation*}
Using the identities for $[\Delta^{-1}_{\Omega_\epsilon},D_{\epsilon}]$ and $D_{\epsilon}n_{\Gamma_{\epsilon}}$ in \Cref{Movingsurfid}, the Laplace equation for $p_{\epsilon}$,  and the fact that $V_{\epsilon}$ is harmonic, we also readily verify the bounds
\begin{equation}\label{Depspressureboundlow}
\|D_{\epsilon}\tilde{p}_{0}\|_{H^{k+\frac{1}{2}}(\Omega_{\epsilon})}+\|D_{\epsilon}n_{\Gamma_{\epsilon}}\|_{H^{k-2}(\Gamma_{\epsilon})}+\|D_{\epsilon}\tilde{a}_0\|_{H^{k-2}(\Gamma_{\epsilon})}\lesssim_M 1
\end{equation}
and
\begin{equation*}
\|[D_{\epsilon},\nabla]\|_{H^{k}(\Omega_{\epsilon})\to H^{k-1}(\Omega_{\epsilon})}+\|D_{\epsilon}n_{\Gamma_{\epsilon}}\|_{H^{k-\frac{3}{2}}(\Gamma_{\epsilon})}+\|D_{\epsilon}\tilde{a}_0\|_{H^{k-\frac{3}{2}}(\Gamma_{\epsilon})}\lesssim_M 1+\|V_{\epsilon}\|_{H^{k-\frac{1}{2}}(\Gamma_{\epsilon})}.      
\end{equation*}
From the above bounds and (\ref{Dta_ep}), we  obtain the estimate
\begin{equation*}
\|D_{\epsilon}(\tilde{a}_0^{-1}D_t\tilde{a}_0)\|_{H^{k-\frac{3}{2}}(\Gamma_{\epsilon})}\lesssim_M 1+\|V_{\epsilon}\|_{H^{k-\frac{1}{2}}(\Gamma_{\epsilon})}.
\end{equation*}
The term $\|V_{\epsilon}\|_{H^{k-\frac{1}{2}}(\Gamma_\epsilon)}$ does not contribute an $\mathcal{O}_M(1)$ error, as it ``loses" half a derivative. However, from the definition and regularization properties of $V_{\epsilon}$, we  have
\begin{equation*}
\|V_{\epsilon}\|_{H^{k-\frac{1}{2}}(\Gamma_\epsilon)}\lesssim_M 1+\epsilon^{\frac{1}{2}}\|\eta_{\frac{\epsilon}{2}}\|_{H^{k+1}(\Gamma_*)}. 
\end{equation*}
Hence, using \Cref{Leibniz} and Cauchy-Schwarz, we obtain
\begin{equation*}
\frac{d}{d\epsilon}\|\nabla\mathcal{H}_{\epsilon}\mathcal{N}_{\epsilon}^{k-2}(\tilde{a}_0^{-1}D_t\tilde{a}_0)\|_{L^2(\Omega_\epsilon)}^2\lesssim_M 1+\delta_0\epsilon\|\Gamma_{\frac{\epsilon}{2}}\|_{H^{k+1}}^2,
\end{equation*}
where $\delta_0>0$ is some sufficiently small constant. Using the parabolic gain from \Cref{arepresentation} (and integrating in $\epsilon$), we notice that the latter term on the right-hand side is harmless as long as $\delta_0=\delta_0(M)$ is small enough. 
\\

It remains now to establish the two lemmas. We begin with \Cref{MDBL}, which is quite simple.
\begin{proof}
Since $\partial_{\epsilon}\tilde{v}_0=0$, we have
\begin{equation*}
D_{\epsilon}\nabla\tilde{v}_0=V_{\epsilon}\cdot\nabla \nabla\tilde{v}_0.
\end{equation*}
Then we use $\|V_{\epsilon}\|_{H^{k-\frac{3}{2}}(\Omega_{\epsilon})}\lesssim_M\epsilon$ and $\|V_{\epsilon}\|_{H^{k-\frac{1}{2}}(\Omega_{\epsilon})}\lesssim_M 1$ together with the inductive bound for $v_0$ from \eqref{inductiveregbound}; namely, $\|v_0\|_{H^{k+1}(\Omega_0)}\leq K(M)\epsilon^{-1}$, to estimate
\begin{equation*}
\|D_{\epsilon}\nabla\tilde{v}_0\|_{H^{k-1}(\Omega_{\epsilon})}\lesssim_M \|V_{\epsilon}\|_{H^{k-\frac{3}{2}}(\Omega_{\epsilon})}\|\tilde{v}_0\|_{H^{k+1}(\Omega_{\epsilon})}+\|V_{\epsilon}\|_{H^{k-1}(\Omega_{\epsilon})}\|\tilde{v}_0\|_{H^{k}(\Omega_{\epsilon})}\lesssim_M 1.    
\end{equation*}
This completes the proof of \Cref{MDBL}.
\end{proof}
Finally, we come to establishing \Cref{arepresentation}, which is where the bulk of the work will be. We begin by establishing the following representation formula for the good variable $\mathcal{N}_\epsilon^{k-1}\tilde{a}_0$:
\begin{equation}\label{arep}
\mathcal{N}_\epsilon^{k-1}\tilde{a}_0=(-1)^m\tilde{a}_0\Delta_{\Gamma_{\epsilon}}^m\kappa_\epsilon+R_\epsilon,
\end{equation}
where $\kappa_{\epsilon}$ is the mean curvature for $\Gamma_{\epsilon}$, $2m=k-2$ and $R_\epsilon$ is a remainder term satisfying the bounds
\begin{equation}\label{Rbound}
\|R_\epsilon\|_{H^{\frac{1}{2}}(\Gamma_\epsilon)}+\epsilon^{\frac{1}{2}}\|R_\epsilon\|_{H^1(\Gamma_\epsilon)}+\|D_{\epsilon}R_\epsilon\|_{L^2(\Gamma_\epsilon)}\lesssim_M 1.
\end{equation}
The importance of \eqref{arep} will be clear later. Roughly speaking, \eqref{arep} states that to leading order $\mathcal{N}_\epsilon^{k-1}\tilde{a}_0$ has a convenient local expression. Such an observation will facilitate the  use of local formulas later on, consistent with our choice of domain regularization.
Observe also that in \eqref{Rbound}, we have $D_{\epsilon}R_\epsilon=\mathcal{O}_{L^2(\Gamma_\epsilon)}(1)$. This is stronger than the expected bound $D_{\epsilon}R_\epsilon=\mathcal{O}_{H^{-\frac{1}{2}}(\Gamma_\epsilon)}(1)$. The reason for this improvement is the  bound \eqref{Bound on super} for $D_{\epsilon}\nabla\tilde{v}_0$; this term would have had to have been treated more carefully if we had attempted to regularize  the velocity in this step of the argument. 
\begin{proof}[Proof of \eqref{arep}]
In the following analysis, $R_\epsilon$ will  generically denote a remainder term satisfying (\ref{Rbound}) which is allowed to change from line to line. Likewise, $\tilde{R}_\epsilon$ will denote an analogous remainder term but with 
\begin{equation}
\tilde{R}_\epsilon=\mathcal{O}_{H^{k-\frac{3}{2}}(\Gamma_\epsilon)}(1), \qquad \epsilon^{\frac{1}{2}}\tilde{R}_\epsilon=\mathcal{O}_{H^{k-1}(\Gamma_\epsilon)}(1), \qquad D_{\epsilon}\tilde{R}_\epsilon=\mathcal{O}_{H^{k-2}(\Gamma_\epsilon)}(1).
\end{equation}
To establish (\ref{arep}), we begin by relating $\mathcal{N}_\epsilon \tilde{a}_0$ to the mean curvature. Indeed, from $\Delta_{\Gamma_\epsilon}\tilde{p}_0=0$ and the formula 
\begin{equation*}
\Delta \tilde{p}_0|_{\Gamma_\epsilon}=\Delta_{\Gamma_{\epsilon}}\tilde{p}_0-\kappa_\epsilon n_{\Gamma_\epsilon}\cdot\nabla \tilde{p}_0+D^2\tilde{p}_0(n_{\Gamma_\epsilon},n_{\Gamma_\epsilon}),
\end{equation*}
we have 
\begin{equation*}
\begin{split}
\tilde{a}_0\kappa_\epsilon &=-n_in_j\partial_i\partial_j\tilde{p}_0+\Delta \tilde{p}_0
\\
&=-n_in_j\partial_i\partial_j\tilde{p}_0-\text{tr}(\nabla \tilde{v}_0)^2
\\
&=-n_in_j\partial_i\partial_j\tilde{p}_0+\tilde{R}_\epsilon,
\end{split}
\end{equation*}
where in the last line, we used \Cref{MDBL} to check the remainder property for $D_{\epsilon}\tilde{R}_{\epsilon}$ and the inductive assumption (\ref{inductiveregbound}) and interpolation to control $\epsilon^{\frac{1}{2}}\tilde{R}_{\epsilon}$ in $H^{k-1}(\Gamma_{\epsilon})$. We now further expand using the Laplace equation for $\tilde{p}_0$,
\begin{equation*}
\begin{split}
-n_in_j\partial_i\partial_j\tilde{p}_0 &=n_j\mathcal{N}_\epsilon(n_j\tilde{a}_0)+n_jn_{\Gamma_\epsilon}\cdot\nabla\Delta^{-1}_{\Omega_\epsilon}\partial_j\text{tr}(\nabla \tilde{v}_0)^2
\\
&=n_j\mathcal{N}_\epsilon(n_j\tilde{a}_0)+\tilde{R}_\epsilon.
\end{split}
\end{equation*}
Next, we expand
\begin{equation*}
\begin{split}
n_j\mathcal{N}_\epsilon(n_j\tilde{a}_0)&=\mathcal{N}_\epsilon \tilde{a}_0+\tilde{a}_0 n_j\mathcal{N}_\epsilon n_j-2 n_jn_{\Gamma_\epsilon}\cdot\nabla \Delta^{-1}_{\Omega_\epsilon}(\nabla\mathcal{H}_\epsilon n_j\cdot\nabla\mathcal{H}_\epsilon \tilde{a}_0)
\\
&=\mathcal{N}_\epsilon \tilde{a}_0+\tilde{a}_0 n_j\mathcal{N}_\epsilon n_j+\tilde{R}_\epsilon
\\
&=\mathcal{N}_\epsilon \tilde{a}_0+\tilde{R}_\epsilon,
\end{split}
\end{equation*}
where in the first equality, we used the Leibniz rule \eqref{DNLeibniz} for $\mathcal{N}_\epsilon$. From the second to the third line, we used the Leibniz rule again, and that $\mathcal{N}_\epsilon(n_jn_j)=0$. In summary, what we have so far is the identity
\begin{equation}\label{preliminaryidentitycurvature}
\mathcal{N}_{\epsilon}\tilde{a}_0=\tilde{a}_0\kappa_{\epsilon}+\tilde{R}_{\epsilon}.    
\end{equation}
The next step is to obtain the leading order identity,
\begin{equation}\label{preliminarycommutatoridentity}
\mathcal{N}_{\epsilon}^{k-1}\tilde{a}_0=\tilde{a}_0\mathcal{N}_{\epsilon}^{k-2}(\tilde{a}_0^{-1}\mathcal{N}_{\epsilon}\tilde{a}_0)+R_{\epsilon}    
\end{equation}
by applying $\mathcal{N}_{\epsilon}^{k-2}$ to $\mathcal{N}_{\epsilon}\tilde{a}_0$ and then commuting $\tilde{a}_0^{-1}$ with $\mathcal{N}_{\epsilon}^{k-2}$. Here, $R_{\epsilon}$ can be seen to satisfy the required bounds through the use of the various commutator identities for $D_{\epsilon}$ listed in \Cref{Movingsurfid} as well as the Leibniz rule \eqref{DNLeibniz}, the elliptic estimates in \Cref{BEE} for $\mathcal{N}_{\epsilon}$ and  the estimates in (\ref{Depspressureboundlow}).
\\

Before proceeding further, we recall the formula
\begin{equation}\label{approximationformulaN}
-(\Delta_{\Gamma_{\epsilon}}+\mathcal{N}_{\epsilon}^2)f=\kappa_{\epsilon}\mathcal{N}_{\epsilon}f-2n_{\Gamma_{\epsilon}}\cdot\nabla (-\Delta_{\Omega_\epsilon})^{-1}(\nabla\mathcal{H}_{\epsilon}n_{\Gamma_{\epsilon}}\cdot \nabla^2\mathcal{H}_{\epsilon}f)-\mathcal{N}_{\epsilon}n_{\Gamma_{\epsilon}}\cdot (\mathcal{N}_{\epsilon}fn_{\Gamma_{\epsilon}}+\nabla^{\top}f)
\end{equation}
from \cite[Equation A.13]{sz}. Also, we recall from (4.23) of \cite{sz} the commutator estimate
\begin{equation}\label{com from shatah}
\|[\Delta_{\Gamma_{\epsilon}},D_{\epsilon}]\|_{H^{s}(\Gamma_{\epsilon})\to H^{s-2}(\Gamma_{\epsilon})}\lesssim_M \|V_{\epsilon}\|_{H^{k-\frac{1}{2}}(\Omega_{\epsilon})}\lesssim_M 1,\hspace{5mm}1\leq s\leq k-1  .
\end{equation}
Then, given that $k-2=2m$ is even,  applying (\ref{preliminaryidentitycurvature}), (\ref{preliminarycommutatoridentity}) and iterating (\ref{approximationformulaN}) $m$ times, we have
\begin{equation*}
\begin{split}
\mathcal{N}_{\epsilon}^{k-1}\tilde{a}_0=\tilde{a}_0\mathcal{N}_{\epsilon}^{k-2}(\tilde{a}_0^{-1}\mathcal{N}_{\epsilon}\tilde{a}_0)+R_\epsilon=(-1)^m\tilde{a}_0\Delta_{\Gamma_\epsilon}^m(\tilde{a}_0^{-1}\mathcal{N}_{\epsilon}\tilde{a}_0)+R_\epsilon=(-1)^m\tilde{a}_0\Delta_{\Gamma_\epsilon}^m\kappa_\epsilon+R_\epsilon,
\end{split}
\end{equation*}
where by straightforward (but slightly tedious) computation we verify that the remainder term $R_{\epsilon}$ has the needed bounds through the use of the various commutator identities for $D_{\epsilon}$ listed in \Cref{Movingsurfid} as well as the above estimates (\ref{preliminaryidentitycurvature})-(\ref{com from shatah}), the relevant elliptic estimates in \Cref{BEE} and (\ref{Depspressureboundlow}). 
\end{proof}
Now, we are ready to establish the differential inequality in \Cref{arepresentation}. For the sake of clarity, let us begin by assuming that the reference hypersurface is given by $\{x_d=0\}$ and that $\Gamma_{\epsilon}$ is literally given by $x_d=\eta_{\epsilon}(x_1,...,x_{d-1})$.  Then the mean curvature and Laplace-Beltrami operator take the form 
\begin{equation*}
\kappa_\epsilon=-\frac{\Delta\eta_{\epsilon}}{(1+|\nabla\eta_{\epsilon}|^2)^{\frac{1}{2}}}+\frac{\partial_i\eta_\epsilon\partial_j\eta_{\epsilon}\partial_{i}\partial_j\eta_{\epsilon}}{(1+|\nabla\eta_{\epsilon}|^2)^{\frac{3}{2}}},
\end{equation*}
and
\begin{equation}\label{laplacecoord}
\Delta_{\Gamma_{\epsilon}}f=\frac{1}{\sqrt{1+|\nabla\eta_{\epsilon}|^2}}\partial_i(g^{ij}_\epsilon\sqrt{1+|\nabla\eta_{\epsilon}|^2}\partial_jf),
\end{equation}
where $(g^{ij}_\epsilon)=(\delta_{ij}+\partial_i\eta_{\epsilon}\partial_j\eta_{\epsilon})^{-1}$. Observe that $g^{ij}_\epsilon$ and $\nabla\eta_{\epsilon}$ are one derivative more regular than $\kappa_\epsilon$. Therefore, by  making  use of the identity $\partial_\epsilon\eta_\epsilon =2\epsilon\Delta_{\Gamma_*}\eta_\epsilon$ and the regularization bound \eqref{surfbound}, we can differentiate in $\epsilon$ and commute $2\epsilon\Delta_{\Gamma_*}$ with these coefficients to obtain,
\begin{equation}\label{checked for graph}
\begin{split}
(D_{\epsilon}(\mathcal{N}_\epsilon^{k-1}\tilde{a}_0))_* &=2(-1)^{m}\epsilon\Delta_{\Gamma_*}(\tilde{a}_0\Delta^m_{\Gamma_\epsilon}\kappa_{\epsilon})_*+\mathcal{O}_{L^2(\Gamma_*)}(1),
\end{split}
\end{equation}
where we define $f_*(x):=f(x+\eta_{\epsilon}(x)\nu(x))$ for a function $f$ defined on $\Gamma_{\epsilon}$. Moreover, by an exercise in local coordinates, the reader may check that \eqref{checked for graph}, as written, is valid for general reference hypersurfaces $\Gamma_*$.  Now, using \eqref{Leibniz on moving hypersurfaces general}, the bounds for $R_\epsilon$, and Cauchy-Schwarz, it follows that  
\begin{equation*}
\begin{split}
\frac{d}{d\epsilon}\|\tilde{a}_0^{-\frac{1}{2}}\mathcal{N}_\epsilon^{k-1}\tilde{a}_0\|_{L^2(\Gamma_\epsilon)}^2\lesssim_M 1 -\epsilon\||D|_{\Gamma_*}(\Delta^m_{\Gamma_\epsilon}\kappa_{\epsilon})_*\|_{L^2(\Gamma_*)}^2,
\end{split}
\end{equation*}
where $|D|_{\Gamma_*}=(-\Delta_{\Gamma_*})^{\frac{1}{2}}$. To conclude, we now only need to show the coercivity type bound
\begin{equation*}
\|\eta_{\epsilon}\|_{H^{k+1}(\Gamma_*)}\lesssim_M 1+\||D|_{\Gamma_*}(\Delta^m_{\Gamma_\epsilon}\kappa_{\epsilon})_*\|_{L^2(\Gamma_*)}.
\end{equation*}
For this, we begin with  \Cref{curvaturebound} which yields
\begin{equation*}
\|\eta_{\epsilon}\|_{H^{k+1}(\Gamma_*)}\lesssim_M 1+\|\kappa_{\epsilon}\|_{H^{k-1}(\Gamma_\epsilon)}. 
\end{equation*}
Then, using (\ref{laplacecoord}) and the fact that $2m=k-2$ (this being relevant for ensuring domain dependent implicit constants are at most $\mathcal{O}_M(1)$ in size), one can easily verify the ellipticity bound
\begin{equation*}
\|\kappa_{\epsilon}\|_{H^{k-1}(\Gamma_{\epsilon})}\lesssim_M 1+\|\Delta_{\Gamma_{\epsilon}}^m\kappa_{\epsilon}\|_{H^1(\Gamma_{\epsilon})}\lesssim_M 1+\||D|_{\Gamma_*}(\Delta_{\Gamma_{\epsilon}}^m\kappa)_*\|_{L^2(\Gamma_*)}.    
\end{equation*}
This concludes the proof.
\end{proof}
\subsection{Step 2: Velocity regularization}\label{VR}
Now, we aim to regularize the velocity $\tilde{v}_0$ on the $\epsilon^{-1}$ scale, which will help us to improve the regularization constant in (\ref{inductiveregbound}). This will be needed to compensate for the losses in this constant in the upcoming transport step of the argument. Thanks to the previous step, we are reduced to the situation of regularizing on a fixed domain which has boundary regularized at the $\epsilon^{-1}$ scale. To perform this step of the regularization,  we decompose the velocity $\tilde{v}_0$  into a rotational component which is tangent to the boundary and an irrotational component. Roughly speaking, we will then regularize the irrotational component of $\tilde{v}_0$ and leave the rotational component alone. We will then reconstruct the regularized velocity using  the regularized irrotational part and the original (not regularized) rotational part of $\tilde{v}_0$. The precise procedure for doing this will come with some slight technical subtleties due to the fact that the normal to the surface is half a derivative less regular than the trace of the velocity on the boundary. We will outline these nuances  in more detail shortly. Heuristically, the reason  it is unnecessary to regularize the rotational part of $\tilde{v}_0$ in this construction is because the vorticity will not lose derivatives in the transport step of our argument later. In other words, the vorticity bound in (\ref{inductiveregbound}) is expected  to only worsen by an $\mathcal{O}_M(1)$ error when measured in $H^k$ and an $\mathcal{O}_M(\epsilon^{-1})$ error when measured in $H^{k+1}$, which is acceptable. 
\begin{proposition}\label{Prop velo reg} Given the pair $(\tilde{v}_0,\Gamma_{\epsilon})$ from the previous step, there exists a regularization $\tilde{v}_0\mapsto v_{\epsilon}$ defined on $\Omega_{\epsilon}$ which satisfies:
\begin{enumerate}
\item (Energy monotonicity).
\begin{equation*}\label{Emonsurf2}
\mathcal{E}^{k}(v_{\epsilon},\Gamma_{\epsilon})\leq (1+C(M)\epsilon)\mathcal{E}^{k}(\tilde{v}_0,\Gamma_{\epsilon}).
\end{equation*}
\item (Good pointwise approximation).
\begin{equation}\label{approxdomain2}
\begin{cases}
&v_{\epsilon}=\tilde{v}_0+\mathcal{O}_{C^1}(\epsilon^2),
\\
&\nabla\cdot v_{\epsilon}=0.
\end{cases}
\end{equation}
\item (Regularization bounds). For each $n=1,2$ and $K(M)$ large enough, there holds
\begin{equation}\label{8.44}
\|v_{\epsilon}\|_{H^{k+n}(\Omega_\epsilon)}\leq \frac{1}{4}K(M)\epsilon^{-n}.
\end{equation} 
\end{enumerate}
\end{proposition}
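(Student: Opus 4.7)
The plan is to regularize only the irrotational part of $\tilde{v}_0$, leaving the rotational part (and hence the vorticity) untouched, using the functional calculus of the Dirichlet-to-Neumann operator $\mathcal{N}_\epsilon$ on $\Gamma_\epsilon$. To implement this, I would first perform a Hodge-type decomposition $\tilde{v}_0 = v_{rot} + \nabla\phi$ on $\Omega_\epsilon$, where $v_{rot}$ is the unique divergence-free vector field satisfying $\nabla\times v_{rot} = \tilde{\omega}_0$ and $v_{rot}\cdot n_{\Gamma_\epsilon} = 0$, and $\phi$ is harmonic on $\Omega_\epsilon$ with Neumann data $n_{\Gamma_\epsilon}\cdot\tilde{v}_0$. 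The div-curl estimates of \Cref{BEE} combined with the inductive bound $\|\tilde{\omega}_0\|_{H^{k+n}(\Omega_\epsilon)}\leq K'(M)\epsilon^{-1-n}$ give the needed control of $v_{rot}$; in particular, since $K'(M) \ll K(M)$, the contribution of $v_{rot}$ to the high-regularity bounds is negligible and the full burden of property (iii) falls on the regularized irrotational piece.

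Next, I would regularize $\phi$ by replacing its Dirichlet trace $\psi := \phi|_{\Gamma_\epsilon}$ with $\psi_\epsilon := m(\epsilon\mathcal{N}_\epsilon)\psi$, where $m : [0,\infty) \to [0,1]$ is a fixed smooth cutoff equal to $1$ near $0$ and vanishing outside a bounded interval. Setting $\phi_\epsilon := \mathcal{H}_\epsilon \psi_\epsilon$ and $v_\epsilon := v_{rot} + \nabla \phi_\epsilon$, the divergence-free property in (ii) is automatic, while the pointwise approximation $v_\epsilon = \tilde{v}_0 + \mathcal{O}_{C^1}(\epsilon^2)$ follows from the fact that $(1-m(\epsilon\mathcal{N}_\epsilon))\psi$ sees only the frequencies $\gtrsim \epsilon^{-1}$ of $\psi$, together with the inductive interior bound $\|\tilde{v}_0\|_{H^{k+1}(\Omega_\epsilon)}\leq K(M)\epsilon^{-1}$, Sobolev embedding, and the harmonic-extension bounds of \Cref{BEE}. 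The regularization bounds (iii) reduce, via the iterated elliptic estimates of \Cref{higherpowers} and \Cref{EEcorollary}, to the operator-norm bounds $\|\mathcal{N}_\epsilon^\ell m(\epsilon\mathcal{N}_\epsilon)\|_{L^2(\Gamma_\epsilon)\to L^2(\Gamma_\epsilon)}\lesssim_\ell \epsilon^{-\ell}$ provided by the spectral theorem; here the choice of $K(M)$ large relative to $K'(M)$ and to the implicit elliptic constants is what yields the factor $\frac14$.

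The main obstacle is the energy monotonicity (i). Since $\Gamma_\epsilon$ is unchanged, $v_{rot}$ is unchanged, and $\|v\|_{L^2(\Omega_\epsilon)}^2$ varies only harmlessly (changes of order $\|\nabla(\phi_\epsilon-\phi)\|_{L^2}^2$), the delicate terms are the two boundary-based pieces $\|a^{-\frac12}\mathcal{N}_\epsilon^{k-1}a\|_{L^2(\Gamma_\epsilon)}^2$ and $\|\nabla\mathcal{H}_\epsilon\mathcal{N}_\epsilon^{k-2}(a^{-1}D_t a)\|_{L^2(\Omega_\epsilon)}^2$, both of which depend on the velocity through $p$ and $D_t p$; the second of these, governing the irrotational part of the energy, carries the essential difficulty. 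To handle it, I would introduce a one-parameter family $v_h := v_{rot} + \nabla\mathcal{H}_\epsilon m_h(\mathcal{N}_\epsilon)\psi$ with $m_h$ interpolating between $1$ (at $h=0$, giving $\tilde{v}_0$) and $m(\epsilon\cdot)$ (at $h=1$, giving $v_\epsilon$), compute the associated $a_h$ and $D_t^h a_h$ via the Laplace equations defining pressure and its material derivative, and differentiate the energy in $h$. The Alinhac-style good-variable structure identified in \Cref{linearized section} should then recast $\partial_h\mathcal{E}^k$ as a linearized energy flux driven by the spectrally localized perturbation $\partial_h v_h$, whose high-frequency decay from $m$ is strong enough to trade against the $\epsilon^{-1}$ losses incurred by each application of $\mathcal{N}_\epsilon$ inside the energy. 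Integrating in $h$ and using the uniform energy coercivity then yields the desired $(1+C(M)\epsilon)$ bound; the combinatorial bookkeeping required to expose the cancellation at the top-order $\mathcal{N}_\epsilon^{k-2}D_t a$ term is the main technical hurdle.
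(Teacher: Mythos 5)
Your overall plan — leave the rotational part untouched, regularize the irrotational part with a spectral multiplier in $\mathcal{N}_\epsilon$, and verify the regularization bounds by iterated elliptic estimates — captures the skeleton of the construction, and you correctly identify that $K'(M)\ll K(M)$ is what lets the rotational piece be absorbed. But there are two genuine gaps, and the first is the central technical point of this step.

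\textbf{You cannot regularize the full irrotational part.} Writing $\psi=\phi|_{\Gamma_\epsilon}=\mathcal{N}_\epsilon^{-1}(\tilde v_0\cdot n_{\Gamma_\epsilon})$ and setting $v_\epsilon^{ir}=\nabla\mathcal{H}_\epsilon m(\epsilon\mathcal{N}_\epsilon)\psi$ means, after commuting the multiplier past $\mathcal{N}_\epsilon^{-1}$, that you are truncating $\tilde v_0\cdot n_{\Gamma_\epsilon}$ directly. But while $\tilde v_0|_{\Gamma_\epsilon}\in H^{k-\frac12}(\Gamma_\epsilon)$ at unit size, the normal is half a derivative worse: $n_{\Gamma_\epsilon}\in H^{k-1}(\Gamma_\epsilon)$ at unit size and $\in H^{k-\frac12}(\Gamma_\epsilon)$ only at cost $\epsilon^{-\frac12}$, because $\Gamma_\epsilon$ has been regularized only at the $\epsilon^{-1}$ scale. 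The product estimate therefore gives $\|\tilde v_0\cdot n_{\Gamma_\epsilon}\|_{H^{k-\frac12}(\Gamma_\epsilon)}\lesssim_M\epsilon^{-\frac12}$ and no better, since the term $\|\tilde v_0\|_{L^\infty}\|n_{\Gamma_\epsilon}\|_{H^{k-\frac12}}$ has nothing small to trade against the $\epsilon^{-\frac12}$. Tracing this through \Cref{Hbounds}, \Cref{ellipticity} and the spectral cutoff you get $\|v_\epsilon^{ir}\|_{H^{k+n}(\Omega_\epsilon)}\lesssim_M\epsilon^{-n-\frac12}$, which misses \eqref{8.44} by exactly $\epsilon^{-\frac12}$. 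The paper's fix is to first subtract a coarser subregularization $v_-:=\Psi_{\leq\epsilon^{-1/2}}\tilde v_0$, set $w:=\tilde v_0-v_-$, and regularize only the irrotational part of $w$, taking $v_\epsilon^{ir}=v_-^{ir}+\nabla\mathcal{H}_\epsilon\mathcal{N}_\epsilon^{-1}\mathcal{P}_{\leq\epsilon^{-1}}(w\cdot n_{\Gamma_\epsilon})$. Because $w$ lives at frequencies $\gtrsim\epsilon^{-1/2}$, one has $\|w\|_{L^\infty}\lesssim\epsilon^{1/2}$, which precisely cancels the $\epsilon^{-1/2}$ from $\|n_{\Gamma_\epsilon}\|_{H^{k-1/2}}$ in the high--low paraproduct $w\cdot n_{\Gamma_\epsilon}$. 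Your construction has no substitute for this and property (iii) fails.

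\textbf{The smooth multiplier $m$ destroys the algebra needed for (i).} The paper uses the sharp spectral projection $\mathcal{P}_{\leq\epsilon^{-1}}=\mathbf 1_{[-\epsilon^{-1},\epsilon^{-1}]}(\mathcal{N}_\epsilon)$ for two specific structural reasons a smooth $m$ does not provide. Idempotence $\mathcal{P}_{>\epsilon^{-1}}^2=\mathcal{P}_{>\epsilon^{-1}}$ is used to rewrite $v_\epsilon-\tilde v_0=\nabla\mathcal{H}_\epsilon\mathcal{N}_\epsilon^{-1}\mathcal{P}_{>\epsilon^{-1}}((v_\epsilon-\tilde v_0)\cdot n_{\Gamma_\epsilon})$; more importantly, the $L^2(\Gamma_\epsilon)$ orthogonality of the complementary projections annihilates the top-order cross term $\int_{\Gamma_\epsilon}\mathcal{N}_\epsilon^{k-1/2}\mathcal{P}_{\leq\epsilon^{-1}}(w\cdot n_{\Gamma_\epsilon})\,\mathcal{N}_\epsilon^{k-1/2}\mathcal{P}_{>\epsilon^{-1}}(w\cdot n_{\Gamma_\epsilon})\,dS$ in the $D_ta$ part of the energy, and this exact cancellation is the reason the energy was reweighted to $\mathcal{E}^k$. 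With a smooth cutoff the corresponding expression becomes $\int m(\epsilon\lambda)(1-m(\epsilon\lambda))\,d\mu(\lambda)$, which is not zero and sits at the critical $O(1)$ level. Your proposed $h$-parameter linearized-flux argument is too vague to tell whether it manufactures a compensating sign, but it is replacing the paper's short algebraic cancellation with something unverified, and it inherits the same obstruction unless the multiplier is sharp.
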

\begin{remark}
 The bound in \eqref{8.44} with $n=1$ ensures  that the constant in (\ref{inductiveregbound}) is improved at this stage. The $H^{k+2}$ bound will be needed to close the bootstrap in the final Euler plus transport step of the iteration in the next section because this step loses derivatives for the velocity. 
\end{remark}
\begin{proof}
We begin by recalling the rotational/irrotational decomposition of $\tilde{v}_0$ from Appendix A of \cite{sz}:
\begin{equation*}
\tilde{v}_0:=\tilde{v}_0^{rot}+\tilde{v}_0^{ir},   
\end{equation*}
where for a divergence free function $v$, we have $v^{ir}:=\nabla\mathcal{H}_{\epsilon}\mathcal{N}_\epsilon^{-1}(v\cdot n_{\Gamma_{\epsilon}})$. Na\"ively, we would like to directly regularize  the irrotational part of $\tilde{v}_0$. However, this does not quite work because the normal $n_{\Gamma_\epsilon}$ is half a derivative less regular than the trace of $\tilde{v}_0$ on $\Gamma_{\epsilon}$. To get around this, we will regularize the irrotational part of a suitable high frequency component of $\tilde{v}_0$. More precisely, let us consider a subregularization $v_-$ of $\tilde{v}_0$, defined by $v_{-}:=\Psi_{\leq\epsilon^{-\frac{1}{2}}}\tilde{v}_0$, which lives on an $\epsilon^{\frac{1}{2}}$ enlargement of $\Omega_{\epsilon}$. We then define $w:=\tilde{v}_0-v_{-}$. Loosely speaking, we think of $w$ as the portion of $\tilde{v}_0$ with frequency greater than $\epsilon^{-\frac{1}{2}}$. In contrast to the full irrotational part of $\tilde{v}_0$, it is safe to regularize the irrotational part of $w$. The heuristic reason for this is that at leading order the term $w\cdot n_{\Gamma_\epsilon}$ can be interpreted  as a high-low paraproduct. That is, the contribution of the portion where $n_{\Gamma_\epsilon}$ is at comparable or higher frequency compared to $w$ is lower order as there is still a nontrivial high frequency component of $w$ to compensate for the $\frac{1}{2}$ derivative discrepancy between the trace of $w$ and $n_{\Gamma_\epsilon}$. 
 \\
 
 For the irrotational part of $w$, the regularization we choose has to respect the energy monotonicity bound. We will see below that the spectral multiplier $\mathcal{P}_{\leq\epsilon^{-1}}(\mathcal{N}_\epsilon):=1_{[-\epsilon^{-1},\epsilon^{-1}]}(\mathcal{N}_\epsilon)$ is very convenient for this purpose. We therefore define the irrotational component of our regularization $v_{\epsilon}$ of $\tilde{v}_0$ by removing the high frequency part of $w\cdot n_{\Gamma_\epsilon}$ as follows:
\begin{equation*}
\begin{split}
v_{\epsilon}^{ir}&:=\tilde{v}_0^{ir}-\nabla\mathcal{H}_\epsilon\mathcal{N}_\epsilon^{-1}\mathcal{P}_{>\epsilon^{-1}}(w\cdot n_{\Gamma_\epsilon})
\\
&=v_{-}^{ir}+\nabla\mathcal{H}_\epsilon\mathcal{N}_\epsilon^{-1}\mathcal{P}_{\leq\epsilon^{-1}}(w\cdot n_{\Gamma_\epsilon}).
\end{split}
\end{equation*}
For simplicity, let us write
\begin{equation*}
w_{\epsilon}^{ir}:=\nabla\mathcal{H}_\epsilon\mathcal{N}_\epsilon^{-1}\mathcal{P}_{\leq\epsilon^{-1}}(w\cdot n_{\Gamma_\epsilon}).    
\end{equation*}
We  define the full regularization $v_{\epsilon}$ of $\tilde{v}_0$ by
\begin{equation*}
v_{\epsilon}:=\tilde{v}_0^{rot}+v_{\epsilon}^{ir}    .
\end{equation*}
If $k$ is large enough, the combination of  Sobolev embedding, ellipticity of $\mathcal{N}$ and spectral calculus allows us to easily establish the pointwise approximation property \eqref{approxdomain2}. Next, we establish the regularization bound \eqref{8.44} for $v_{\epsilon}$. We begin by writing
\begin{equation*}
v_{\epsilon}=v_{-}+w_{\epsilon}^{ir}+w^{rot},
\end{equation*}
where $w^{rot}$ is the rotational part of $w$. We then estimate piece by piece. It is first of all clear that the corresponding bound holds for $v_{-}$. So, we turn to estimating $w^{ir}_{\epsilon}$. For this, we note the following preliminary bound for $\mathcal{N}_\epsilon^{-1}$ on the space $\dot{H}^{s}(\Gamma_{\epsilon}):=\{f\in H^{s}(\Gamma_{\epsilon}): \int_{\Gamma_\epsilon}f=0\}$  from Proposition A.5 in \cite{sz}:
\begin{equation}\label{negativepowerbound}
\|\mathcal{N}_\epsilon^{-1}f\|_{\dot{H}^s(\Gamma_\epsilon)}\lesssim_M \|f\|_{H^{s-1}(\Gamma_{\epsilon})},\hspace{5mm}0\leq s\leq 1.
\end{equation}
From this and the functional calculus for $\mathcal{N}_\epsilon$, we deduce in particular the low regularity bound
\begin{equation}\label{lowregw}
\|\mathcal{P}_{\leq \epsilon^{-1}}\mathcal{N}_\epsilon^{-1}(w\cdot n_{\Gamma_\epsilon})\|_{L^2(\Gamma_{\epsilon})}\lesssim_M \|w\cdot n_{\Gamma_\epsilon}\|_{H^{-1}(\Gamma_{\epsilon})}.    
\end{equation}
This will be useful for handling the low frequency errors in the estimate for $w^{ir}_\epsilon$. Next we check that  \eqref{negativepowerbound} and \eqref{lowregw}, in conjunction with \Cref{boundaryest}, \Cref{Hbounds}, \Cref{ellipticity} and   the regularization bounds for $n_{\Gamma_{\epsilon}}$ and $w$, yield
\begin{equation*}
\begin{split}
\|w_{\epsilon}^{ir}\|_{H^{k+n}(\Omega_\epsilon)}&\lesssim_{M,n} \|\Gamma_{\epsilon}\|_{H^{k+\frac{1}{2}+n}}\|w\cdot n_{\Gamma_\epsilon}\|_{H^{k-2}(\Gamma_\epsilon)}+\|\mathcal{P}_{\leq\epsilon^{-1}}(w\cdot n_{\Gamma_\epsilon})\|_{H^{k-\frac{1}{2}+n}(\Gamma_{\epsilon})}\lesssim_M\epsilon^{-n},
\end{split}
\end{equation*}
where the implicit constant can be taken to be much smaller than $K(M)$ since $K(M)\gg M$. Note that in the above estimate, we used the paraproduct structure of $w\cdot n_{\Gamma_\epsilon}$. More specifically, in the case when  $k-\frac{1}{2}$ derivatives fall on  $n_{\Gamma_\epsilon}$, we compensated the half derivative loss by an $\epsilon^{\frac{1}{2}}$ gain from  $w$.
\\

Finally, we move on to showing the regularization bound for $w^{rot}$. Here, we use \Cref{Balanced div-curl} to obtain
\begin{equation*}
\begin{split}
\|w^{rot}\|_{H^{k+n}(\Omega_{\epsilon})}&\lesssim_M \|w^{rot}\|_{L^2(\Omega_{\epsilon})}+\|\nabla\times 
 w\|_{H^{k+n-1}(\Omega_{\epsilon})}+\|\Gamma_{\epsilon}\|_{H^{k+n-\frac{1}{2}}}+\|\nabla^{\top}w^{rot}\cdot n_{\Gamma_\epsilon}\|_{H^{k+n-\frac{3}{2}}(\Gamma_{\epsilon})}
\\
&\lesssim_{M,K'(M)} \epsilon^{-n}+\|w^{rot}\|_{L^2(\Omega_{\epsilon})}+\|\nabla^{\top}w^{rot}\cdot n_{\Gamma_\epsilon}\|_{H^{k+n-\frac{3}{2}}(\Gamma_{\epsilon})} ,
\end{split}
\end{equation*}
where we used (\ref{inductiveregbound}) for $\tilde{\omega}_0$. Again, the implicit constant can be taken to be much smaller than $K(M)$ if $K'(M)$ in (\ref{inductiveregbound}) is small enough compared to $K(M)$. To estimate $\|w^{rot}\|_{L^2(\Omega_{\epsilon})}$, we simply use (\ref{negativepowerbound}), the identity $w^{rot}=w-w^{ir}$ and the $H^{\frac{1}{2}}(\Gamma_\epsilon)\to H^1(\Omega_\epsilon)$ bound for $\mathcal{H}_{\epsilon}$ to crudely estimate 
\begin{equation}\label{rotL2}
\|w^{rot}\|_{L^2(\Omega_{\epsilon})}\lesssim_M \|w\|_{H^1(\Omega_{\epsilon})}.    
\end{equation}
Then, using 
\begin{equation*}
\nabla^{\top}w^{rot}\cdot n_{\Gamma_\epsilon}=-w^{rot}\cdot\nabla^{\top}n_{\Gamma_\epsilon},
\end{equation*}
  \Cref{boundaryest}, \Cref{baltrace} and the regularization bounds for $\Gamma_{\epsilon}$, we have (if $k$ is large enough)
\begin{equation*}
\|w^{rot}\|_{H^{k+n}(\Omega_{\epsilon})}\lesssim_M \epsilon^{-n}+\|w^{rot}\|_{H^{k-1+n}(\Omega_{\epsilon})}+\epsilon^{-\frac{1}{2}-n}\|w^{rot}\|_{H^{k-2}(\Omega_{\epsilon})},
\end{equation*}
which implies by interpolation and (\ref{rotL2}) that 
\begin{equation*}
\|w^{rot}\|_{H^{k+n}(\Omega_{\epsilon})}\lesssim_{M,n} \epsilon^{-n}+\epsilon^{-\frac{1}{2}-n}\|w^{rot}\|_{H^{k-2}(\Omega_{\epsilon})}.
\end{equation*}
From \Cref{Balanced div-curl}, the inequality (\ref{rotL2}) and the fact that $w$ is localized to frequency $\geq \epsilon^{-\frac{1}{2}}$, we easily obtain
\begin{equation*}
\|w^{rot}\|_{H^{k-2}(\Omega_{\epsilon})}\lesssim_M \|w\|_{H^{k-2}(\Omega_{\epsilon})}\lesssim_M \epsilon. 
\end{equation*}
Therefore, we have
\begin{equation*}
\|w^{rot}\|_{H^{k+n}(\Omega_{\epsilon})}\lesssim_{M,n}\epsilon^{-n},
\end{equation*}
with implicit constant much smaller than $K(M)$.
 This yields the desired regularization bounds for $v_{\epsilon}$. 
 \\
 
 Next, we turn to the energy monotonicity. The domain is fixed in this step, so it is advantageous to compare  the difference between $\mathcal{E}^k(v_{\epsilon},\Gamma_{\epsilon})$ and $\mathcal{E}^k(\tilde{v}_0,\Gamma_{\epsilon})$ directly. It will also be convenient to write the first term  in $\mathcal{E}^k(v,\Gamma)$ as a surface integral:
\begin{equation*}
\|\nabla\mathcal{H}\mathcal{N}^{k-2}(a^{-1}D_ta)\|_{L^2(\Omega)}^2=\|\mathcal{N}^{k-\frac{3}{2}}(a^{-1}D_ta)\|_{L^2(\Gamma)}^2   , 
\end{equation*}
using integration by parts and the functional calculus for $\mathcal{N}$.  Moreover, since the vorticity $\omega_{\epsilon}$ is the same as $\tilde{\omega}_0$, we may restrict our attention to the two surface components of the energy in this step of the argument. 
 \\
 
We begin with a simple algebraic identity for the $a_{\epsilon}$ component of the surface energy:
 \begin{equation*}
\begin{split}
\int_{\Gamma_{\epsilon}}a_{\epsilon}^{-1}|\mathcal{N}_\epsilon^{k-1}a_{\epsilon}|^2\,dS&=\int_{\Gamma_{\epsilon}}\tilde{a}_0^{-1}|\mathcal{N}_\epsilon^{k-1}\tilde{a}_0|^2\,dS+2\int_{\Gamma_{\epsilon}}a_{\epsilon}^{-1}\mathcal{N}_\epsilon^{k-1}a_{\epsilon}\mathcal{N}_\epsilon^{k-1}(a_{\epsilon}-\tilde{a}_0)\,dS
\\
&-\|a_\epsilon^{-\frac{1}{2}}\mathcal{N}_\epsilon^{k-1}(a_{\epsilon}-\tilde{a}_0)\|_{L^2(\Gamma_\epsilon)}^2+\mathcal{O}_M(\epsilon).
\end{split}
\end{equation*}
To derive an analogous relation for the other portion of the surface energy, we note that from the integer bounds for $\mathcal{N}$ in \Cref{BEE} and the identity $\|\mathcal{N}^{k-\frac{3}{2}}f\|_{L^2(\Gamma)}=\|\nabla\mathcal{H}\mathcal{N}^{k-2}f\|_{L^2(\Omega)}$, we have the estimate $\|\mathcal{N}_{\epsilon}^{k-\frac{3}{2}}\|_{H^{k-\frac{3}{2}}(\Gamma_\epsilon)\to L^2(\Gamma_\epsilon)}\lesssim_M 1$. On the other hand, we have the elliptic regularity estimate
\begin{equation*}
\|\tilde{a}_0-a_{\epsilon}\|_{H^{k-\frac{3}{2}}(\Gamma_{\epsilon})}\lesssim_M\|\tilde{p}_0-p_{\epsilon}\|_{H^{k}(\Omega_{\epsilon})}\lesssim_M \|\tilde{v}_0-v_{\epsilon}\|_{H^{k-1}(\Omega_{\epsilon})}\lesssim_M \epsilon.
\end{equation*}
Together, these imply that
\begin{equation*}\label{Emonid}
\begin{split}
\int_{\Gamma_{\epsilon}}|\mathcal{N}_\epsilon^{k-\frac{3}{2}}(a_{\epsilon}^{-1}D_ta_{\epsilon})|^2\,dS&=\int_{\Gamma_{\epsilon}}|\mathcal{N}_\epsilon^{k-\frac{3}{2}}(\tilde{a}_0^{-1}D_t\tilde{a}_0)|^2\,dS+2\int_{\Gamma_{\epsilon}}\mathcal{N}_\epsilon^{k-\frac{3}{2}}(a_{\epsilon}^{-1}D_ta_{\epsilon})\mathcal{N}_\epsilon^{k-\frac{3}{2}}(a_{\epsilon}^{-1}(D_ta_{\epsilon}-D_t\tilde{a}_0))\,dS
\\
&-\|\mathcal{N}_\epsilon^{k-\frac{3}{2}}(a_{\epsilon}^{-1}(D_ta_{\epsilon}-D_t\tilde{a}_0))\|_{L^2(\Gamma_\epsilon)}^2+\mathcal{O}_M(\epsilon).
\end{split}
\end{equation*}
Motivated by the identities above, let us define the ``energy" corresponding to $\tilde{v}_0-v_{\epsilon}$ by 
\begin{equation*}
\begin{split}
\mathcal{E}^{k}(\tilde{v}_0-v_{\epsilon})&:=\|\mathcal{N}_\epsilon^{k-\frac{3}{2}}(a_{\epsilon}^{-1}(D_t\tilde{a}_0-D_ta_{\epsilon}))\|_{L^2(\Gamma_\epsilon)}^2+\|a_{\epsilon}^{-\frac{1}{2}}\mathcal{N}_\epsilon^{k-1}(\tilde{a}_0-a_{\epsilon})\|_{L^2(\Gamma_\epsilon)}^2.
\end{split}
\end{equation*}
In light of the above identities, it suffices to show that
\begin{equation*}
\begin{split}
2\int_{\Gamma_{\epsilon}}\mathcal{N}_\epsilon^{k-\frac{3}{2}}(a_{\epsilon}^{-1}D_ta_{\epsilon})\mathcal{N}_\epsilon^{k-\frac{3}{2}}&(a_{\epsilon}^{-1}(D_ta_{\epsilon}-D_t\tilde{a}_0))\,dS+2\int_{\Gamma_{\epsilon}}a_{\epsilon}^{-1}\mathcal{N}_\epsilon^{k-1}a_{\epsilon}\mathcal{N}_\epsilon^{k-1}(a_{\epsilon}-\tilde{a}_0)\,dS
\\
&\leq C(M)\epsilon+\mathcal{E}^{k}(\tilde{v}_0-v_\epsilon).
\end{split}
\end{equation*}
Our starting point is to observe the leading order relation given in the following lemma.
\begin{lemma}\label{Dtarel}
We have the following relation between $D_ta_{\epsilon}-D_t\tilde{a}_0$ and $(v_{\epsilon}-\tilde{v}_0)\cdot n_{\Gamma_\epsilon}$\emph{:}
\begin{equation}
a_{\epsilon}^{-1}(D_ta_{\epsilon}-D_t\tilde{a}_0)=-\mathcal{N}_\epsilon((v_{\epsilon}-\tilde{v}_0)\cdot n_{\Gamma_\epsilon})+\mathcal{O}_{H^{k-\frac{3}{2}}(\Gamma_\epsilon)}(\epsilon).
\end{equation}
\end{lemma}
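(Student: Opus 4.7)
Starting from the pointwise identity
\begin{equation*}
D_t a = -a\,(n_\Gamma \cdot \nabla v \cdot n_\Gamma) - \nabla_n D_t p\big|_\Gamma,
\end{equation*}
which follows by expanding $-n_\Gamma \cdot (D_t \nabla p)|_\Gamma$ using $D_t \nabla p = -\nabla v \cdot \nabla p + \nabla D_t p$ and $\nabla p|_\Gamma = -a n_\Gamma$, I would subtract the corresponding expressions for $(v_\epsilon, p_\epsilon)$ and $(\tilde v_0, \tilde p_0)$ on the common domain $\Omega_\epsilon$ and write $U := v_\epsilon - \tilde v_0$, $Q := p_\epsilon - \tilde p_0$ to obtain
\begin{equation*}
D_t a_\epsilon - D_t \tilde a_0 = -(a_\epsilon - \tilde a_0)(n \cdot \nabla v_\epsilon \cdot n) - \tilde a_0\, n \cdot \nabla U \cdot n - \nabla_n(D_t p_\epsilon - D_t \tilde p_0)\big|_\Gamma.
\end{equation*}
The task is to convert the middle term into $-a_\epsilon \mathcal{N}_\epsilon(U \cdot n)$ modulo an $\mathcal{O}(\epsilon)$ error in $H^{k-3/2}(\Gamma_\epsilon)$.

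The essential structural input is that the explicit formula from \Cref{Prop velo reg} gives $U = \nabla \phi$ with $\phi := -\mathcal{H}_\epsilon \mathcal{N}_\epsilon^{-1} \mathcal{P}_{>\epsilon^{-1}}(w \cdot n_{\Gamma_\epsilon})$ harmonic in $\Omega_\epsilon$. Because $\partial_j \phi$ is then also harmonic, $\mathcal{H}_\epsilon(\partial_j \phi|_\Gamma) = \partial_j\phi$ and $\mathcal{N}_\epsilon(\partial_j\phi|_\Gamma) = n_i \partial_i \partial_j \phi|_\Gamma$. Applying the Leibniz rule \eqref{DNLeibniz} to $\mathcal{N}_\epsilon(U \cdot n) = \mathcal{N}_\epsilon(n_j \partial_j \phi)$ and summing in $j$ yields the identity
\begin{equation*}
n \cdot \nabla U \cdot n = \mathcal{N}_\epsilon(U \cdot n) - U \cdot \mathcal{N}_\epsilon(n) + 2\, \nabla_n \Delta_{\Omega_\epsilon}^{-1}\bigl(\nabla \mathcal{H}_\epsilon n_j \cdot \partial_j U\bigr).
\end{equation*}
Substituting into the expression above, dividing by $a_\epsilon$, and writing $\tilde a_0/a_\epsilon = 1 - (a_\epsilon - \tilde a_0)/a_\epsilon$, I would obtain
\begin{equation*}
a_\epsilon^{-1}(D_t a_\epsilon - D_t \tilde a_0) = -\mathcal{N}_\epsilon(U \cdot n) + R_\epsilon,
\end{equation*}
where $R_\epsilon$ is the sum of five explicit contributions: the pressure-difference term $-a_\epsilon^{-1}\nabla_n(D_t p_\epsilon - D_t \tilde p_0)$, two terms involving the prefactor $a_\epsilon - \tilde a_0$, and the two Leibniz corrections $a_\epsilon^{-1}\tilde a_0\, U\cdot\mathcal{N}_\epsilon(n)$ and $-2a_\epsilon^{-1}\tilde a_0\,\nabla_n\Delta^{-1}(\nabla\mathcal{H}_\epsilon n_j \cdot \partial_j U)$.

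To close the argument, it remains to show $\|R_\epsilon\|_{H^{k-3/2}(\Gamma_\epsilon)} \lesssim_M \epsilon$. The crucial quantitative ingredient is dyadic control of the high-frequency object $\mathcal{P}_{>\epsilon^{-1}}(w \cdot n_{\Gamma_\epsilon})$: combining the boundary regularization $\|\Gamma_\epsilon\|_{H^{k+\alpha}} \lesssim \epsilon^{-\alpha}$ from Step 1 with the inductive bound \eqref{inductiveregbound} on $\tilde v_0$ yields $\|\phi|_\Gamma\|_{H^{t}(\Gamma_\epsilon)} \lesssim_M \epsilon^{k+1/2-t}$ in the relevant range $t \leq k-1/2$, and hence $\|U\|_{H^s(\Omega_\epsilon)} \lesssim_M \epsilon^{k-s}$ for $s \leq k-1$. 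A parallel analysis of the Poisson problem for $Q$, whose source is bilinear in $\nabla U$ and $\nabla(v_\epsilon + \tilde v_0)$, gives $\|Q\|_{H^s(\Omega_\epsilon)} \lesssim_M \epsilon^{k+1-s}$; and expanding $F$ as in \eqref{Dtpdef} as a sum of terms linear in $U$ or $Q$ yields $\|F_\epsilon - F_{\tilde 0}\|_{H^{k-2}(\Omega_\epsilon)} \lesssim_M \epsilon$. Feeding these into the $H^{-1}\to H^{1}$ bound for $\Delta_{\Omega_\epsilon}^{-1}$, the balanced trace and product estimates from Section \ref{BEE}, and the uniform $C^{1,\epsilon_0}$ control of $\Gamma_\epsilon$ then controls each of the five contributions to $R_\epsilon$ by $\mathcal{O}_M(\epsilon)$ in $H^{k-3/2}(\Gamma_\epsilon)$.

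The main technical obstacle sits in the last term of $R_\epsilon$, where $\nabla \mathcal{H}_\epsilon n_j$ lies at the top of the domain-regularization scale while $\partial_j U$ approaches the top of the available Sobolev range; one must carefully match the $L^\infty$ control of $\nabla \mathcal{H}_\epsilon n_j$ against the $H^{k-2}$ control of $\partial_j U$ (and vice versa) inside a product estimate, using the gain of $\Delta_{\Omega_\epsilon}^{-1}$ to compensate for the domain-dependent constants. Beyond this calculation, the conceptual point is that the argument hinges on $U$ being the gradient of a harmonic function — this is the structural feature specifically engineered into the velocity regularization in \Cref{Prop velo reg}. Without it, the Leibniz-rule step that converts $n \cdot \nabla U \cdot n$ into $\mathcal{N}_\epsilon(U \cdot n)$ would pick up an uncontrolled rotational contribution and the lemma as stated would fail.
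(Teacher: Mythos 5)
Your argument is a legitimate alternative route to the lemma, but it differs from the paper's proof in a way worth spelling out, and it contains one misconception at the end.

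The paper never invokes the gradient-of-harmonic structure of $U := v_\epsilon - \tilde v_0$. It works directly from the pressure equation, arriving at
\begin{equation*}
D_t a_\epsilon - D_t\tilde a_0 = n_{\Gamma_\epsilon}\cdot\nabla U\cdot\nabla p_\epsilon - n_{\Gamma_\epsilon}\cdot\nabla\Delta^{-1}\bigl(\Delta U\cdot\nabla p_\epsilon\bigr) + \mathcal{O}_{H^{k-3/2}(\Gamma_\epsilon)}(\epsilon),
\end{equation*}
and then reassembles the two surviving terms into $\mathcal{N}_\epsilon\bigl((U\cdot\nabla p_\epsilon)|_{\Gamma_\epsilon}\bigr)$ via the identity $\mathcal{N}f|_\Gamma = n\cdot\nabla f - n\cdot\nabla\Delta^{-1}\Delta f$ applied to $f = U\cdot\nabla p_\epsilon$, commuting to absorb the extra $\nabla^2 p_\epsilon$ and $\nabla\Delta p_\epsilon$ terms into the $\mathcal{O}(\epsilon)$ error. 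Finally it commutes $\mathcal{N}_\epsilon$ past $a_\epsilon$ via \eqref{DNLeibniz}. All that is needed of $U$ is $\|U\|_{H^{k-1}(\Omega_\epsilon)} \lesssim_M \epsilon$ (plus the elliptic bound on $Q := p_\epsilon - \tilde p_0$) and the fact that the term with two derivatives on $U$, namely $\Delta U\cdot\nabla p_\epsilon$, is carried along \emph{inside} the $\mathcal{N}$ identity rather than being estimated by itself. This makes the paper's proof insensitive to whether $\Delta U = 0$.

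Your proof instead extracts the $a_\epsilon$ factor at the start (writing $D_ta = -a(n\cdot\nabla v\cdot n) - \nabla_n D_t p$, which is correct) and then applies \eqref{DNLeibniz} to the pair $(n_j, U_j)$. This is where harmonicity genuinely enters for you: the substitutions $\mathcal{H}_\epsilon(U_j|_{\Gamma_\epsilon}) = U_j$, $\mathcal{N}_\epsilon(U_j|_{\Gamma_\epsilon}) = n\cdot\nabla U_j$, and $\nabla\mathcal{H}_\epsilon U_j = \partial_j U$ rely on $U_j = \partial_j\phi$ being harmonic, and the vanishing of $\Delta U\cdot\nabla p_\epsilon$ in your treatment of term 5 relies on $\Delta U = 0$. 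All of this is legitimate for the $U$ produced by \Cref{Prop velo reg}, and your resulting five-term decomposition of $R_\epsilon$ together with the claimed scaling $\|U\|_{H^s(\Omega_\epsilon)}\lesssim_M\epsilon^{k-s}$ is plausible and, for $k$ sufficiently large, closes by the balanced elliptic and product estimates of \Cref{BEE}. So as a route, this works: what you trade is generality for a cleaner algebraic structure — you avoid the somewhat fiddly commuting step $\mathcal{N}f|_\Gamma = n\cdot\nabla f - n\cdot\nabla\Delta^{-1}\Delta f$, but you pay by needing the very specific form of the velocity regularization.

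Where you go wrong is the closing remark. You assert that without the gradient-of-harmonic structure ``the Leibniz-rule step \ldots would pick up an uncontrolled rotational contribution and the lemma as stated would fail.'' The first half is true of \emph{your} argument; the second half is not true of the lemma. The paper's proof establishes the lemma using only $\|U\|_{H^{k-1}(\Omega_\epsilon)}\lesssim_M\epsilon$, with the potentially dangerous $\Delta U\cdot\nabla p_\epsilon$ term handled by folding it into the $\mathcal{N}$ identity rather than by assuming it vanishes. So the harmonicity of $U$ is a convenience that your argument exploits, not a structural prerequisite for the conclusion.
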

\begin{proof}
We begin by noting the bound 
\begin{equation}\label{v0diffbound}
\|\tilde{v}_0-v_{\epsilon}\|_{H^{k-1}(\Omega_{\epsilon})}\lesssim_M \epsilon  
\end{equation}
and the elliptic regularity estimate
\begin{equation*}
\|\tilde{p}_0-p_{\epsilon}\|_{H^k(\Omega_{\epsilon})}\lesssim_M \|\tilde{v}_0-v_{\epsilon}\|_{H^{k-1}(\Omega_{\epsilon})}\lesssim_M\epsilon. 
\end{equation*}
Using the equation for $D_tp$ from (\ref{Dtpdef}) we may therefore  write
\begin{equation*}
D_ta_{\epsilon}-D_t\tilde{a}_0=n_{\Gamma_{\epsilon}}\cdot \nabla (v_{\epsilon}-\tilde{v}_0)\cdot\nabla p_{\epsilon}-n_{\Gamma_{\epsilon}}\cdot\nabla\Delta^{-1}(\Delta (v_{\epsilon}-\tilde{v_0})\cdot\nabla p_{\epsilon})+\mathcal{O}_{H^{k-\frac{3}{2}}(\Gamma_{\epsilon})}(\epsilon).    
\end{equation*}
Then, using the standard identity $\mathcal{N}f_{|\Gamma}=n\cdot\nabla f-n\cdot\nabla\Delta^{-1}\Delta f$ and commuting $n_{\Gamma_\epsilon}\cdot\nabla $ in the first term and $\Delta$ in the second term above, we can verify, from (\ref{v0diffbound}),
\begin{equation*}
D_ta_{\epsilon}-D_t\tilde{a}_0=-\mathcal{N}_\epsilon(a_{\epsilon}(v_{\epsilon}-\tilde{v}_0)\cdot n_{\Gamma_\epsilon})+\mathcal{O}_{H^{k-\frac{3}{2}}(\Gamma_{\epsilon})}(\epsilon)    .
\end{equation*}
The conclusion then follows by commuting $\mathcal{N}_\epsilon$ with $a_{\epsilon}$ using the Leibniz rule for $\mathcal{N}_\epsilon$ and (\ref{v0diffbound}). In the case when everything falls on $a_\epsilon$, we also compensate with the surface regularization bound \eqref{surfbound} and the associated improvement in the bound for $\tilde{v}_0-v_{\epsilon}$ when measured in lower regularity Sobolev norms.
\end{proof}
We now turn to the $a_{\epsilon}$ component of the energy, which is straightforward. Indeed, by elliptic regularity,
\begin{equation*}
\begin{split}
2\int_{\Gamma_{\epsilon}}a_{\epsilon}^{-1}\mathcal{N}_\epsilon^{k-1}a_{\epsilon}\mathcal{N}_\epsilon^{k-1}(a_{\epsilon}-\tilde{a}_0)\,dS\lesssim_M \|a_{\epsilon}-\tilde{a}_0\|_{H^{k-1}(\Gamma_{\epsilon})}&\lesssim_M \|v_{\epsilon}-\tilde{v}_0\|_{H^{k-\frac{1}{2}}(\Omega_\epsilon)}.
\end{split}
\end{equation*}
To estimate $v_{\epsilon}-\tilde{v}_0$, we observe the identity $v_{\epsilon}-\tilde{v}_0=\nabla\mathcal{H}_\epsilon\mathcal{N}_\epsilon^{-1}\mathcal{P}_{>\epsilon^{-1}}((v_{\epsilon}-\tilde{v}_0)\cdot n_{\Gamma_\epsilon})$, which follows from the idempotence $\mathcal{P}_{>\epsilon^{-1}}=\mathcal{P}_{>\epsilon^{-1}}^2$. Using this, \Cref{Dtarel} and ellipticity of $\mathcal{N}_\epsilon$, we have
\begin{equation*}
\|v_{\epsilon}-\tilde{v}_0\|_{H^{k-\frac{1}{2}}(\Omega_\epsilon)}\lesssim_M \epsilon^{\frac{1}{2}}\|(v_{\epsilon}-\tilde{v}_0)\cdot n_{\Gamma_\epsilon}\|_{H^{k-\frac{1}{2}}(\Gamma_{\epsilon})}\lesssim_M \epsilon^{\frac{1}{2}}(\mathcal{E}^{k}(\tilde{v}_0-v_\epsilon))^{\frac{1}{2}}+C(M)\epsilon,
\end{equation*}
which suffices by Cauchy-Schwarz. 
\\

Next, we move to the more difficult portion of the energy which involves $D_ta_{\epsilon}$. We start by combining \Cref{Dtarel} with $(v_{\epsilon}-\tilde{v}_0)\cdot n_{\Gamma_\epsilon}=-\mathcal{P}_{>\epsilon^{-1}}(w\cdot n_{\Gamma_\epsilon})$  to obtain the relation
\begin{equation*}
\begin{split}
\int_{\Gamma_{\epsilon}}\mathcal{N}_{\epsilon}^{k-\frac{3}{2}}(a_{\epsilon}^{-1}D_ta_{\epsilon})\mathcal{N}_{\epsilon}^{k-\frac{3}{2}}(a_{\epsilon}^{-1}(D_ta_{\epsilon}-D_t\tilde{a}_0))\,dS&=\int_{\Gamma_{\epsilon}}\mathcal{N}_{\epsilon}^{k-\frac{3}{2}}(a_{\epsilon}^{-1}D_ta_{\epsilon})\mathcal{N}_{\epsilon}^{k-\frac{1}{2}}\mathcal{P}_{>\epsilon^{-1}}(w\cdot n_{\Gamma_\epsilon})\,dS+\mathcal{O}_M(\epsilon).
\end{split}
\end{equation*}
Define $p_{-}$ and $D_ta_{-}$ in the usual way using the relevant Laplace equations. We  split the above integral into the two components,
\begin{equation}\label{split}
\begin{split}
\int_{\Gamma_{\epsilon}}\mathcal{N}_{\epsilon}^{k-\frac{3}{2}}(a_{\epsilon}^{-1}D_ta_{\epsilon})\mathcal{N}_{\epsilon}^{k-\frac{1}{2}}\mathcal{P}_{>\epsilon^{-1}}(w\cdot n_{\Gamma_\epsilon})\,dS&=\int_{\Gamma_{\epsilon}}\mathcal{N}_{\epsilon}^{k-\frac{3}{2}}(a_{\epsilon}^{-1}D_ta_{-})\mathcal{N}_{\epsilon}^{k-\frac{1}{2}}\mathcal{P}_{>\epsilon^{-1}}(w\cdot n_{\Gamma_\epsilon})\,dS
\\
&+\int_{\Gamma_{\epsilon}}\mathcal{N}_{\epsilon}^{k-\frac{3}{2}}(a_{\epsilon}^{-1}(D_ta_{\epsilon}-D_ta_{-}))\mathcal{N}_{\epsilon}^{k-\frac{1}{2}}\mathcal{P}_{>\epsilon^{-1}}(w\cdot n_{\Gamma_\epsilon})\,dS.
\end{split}
\end{equation}
We begin by studying the first term in \eqref{split}. By self-adjointness of $\mathcal{N}_{\epsilon}$, we have
\begin{equation*}
\begin{split}
\int_{\Gamma_{\epsilon}}\mathcal{N}_{\epsilon}^{k-\frac{3}{2}}(a_{\epsilon}^{-1}D_ta_{-})\mathcal{N}_{\epsilon}^{k-\frac{1}{2}}\mathcal{P}_{>\epsilon^{-1}}(w\cdot n_{\Gamma_\epsilon})\,dS&=\int_{\Gamma_{\epsilon}}\mathcal{N}_{\epsilon}^{k-\frac{1}{2}}(a_{\epsilon}^{-1}D_ta_{-})\mathcal{N}_{\epsilon}^{k-\frac{3}{2}}\mathcal{P}_{>\epsilon^{-1}}(w\cdot n_{\Gamma_\epsilon})\,dS
\\
&\lesssim_M \epsilon\|a_{\epsilon}^{-1}D_ta_{-}\|_{H^{k-\frac{1}{2}}(\Gamma_{\epsilon})}\|\mathcal{P}_{>\epsilon^{-1}}\mathcal{N}_{\epsilon}^{k-\frac{1}{2}}(w\cdot n_{\Gamma_\epsilon})\|_{L^2(\Gamma_{\epsilon})}+\mathcal{O}_M(\epsilon),
\end{split}
\end{equation*}
where we used the multiplier $\mathcal{P}_{> \epsilon^{-1}}$ to recover a power of $\mathcal{N}_{\epsilon}$ in the high frequency term.  Next, we show that $\|a_{\epsilon}^{-1}D_ta_{-}\|_{H^{k-\frac{1}{2}}(\Gamma_{\epsilon})}\lesssim_M 
\epsilon^{-\frac{1}{2}}$. By Sobolev product estimates and the fact that $\|a_{\epsilon}^{-1}\|_{H^{k-\frac{1}{2}}(\Gamma_{\epsilon})}\lesssim_M\epsilon^{-\frac{1}{2}}$, it suffices to show the same estimate for $\|D_ta_{-}\|_{H^{k-\frac{1}{2}}(\Gamma_{\epsilon})}$. To see this, recall that, by definition,
\begin{equation*}
D_ta_{-}=n_{\Gamma_\epsilon}\cdot\nabla v_{-}\cdot\nabla p_{-}-n_{\Gamma_\epsilon}\cdot\nabla D_tp_{-}.
\end{equation*}
Note then that by \Cref{baltrace}  we have the estimate $\|\nabla v_{-}\|_{H^{k-\frac{1}{2}}(\Gamma_{\epsilon})}\lesssim_M\epsilon^{-\frac{1}{2}}$, since $v_{-}$ is regularized at the $\epsilon^{-\frac{1}{2}}$ scale. Moreover, as $n_{\Gamma_\epsilon}=\mathcal{O}_{H^{k-1}}(1)$ and $\Gamma_{\epsilon}$ is regularized at the $\epsilon^{-1}$ scale, we have $\|n_{\Gamma_\epsilon}\|_{H^{k-\frac{1}{2}}(\Gamma_{\epsilon})}\lesssim_M \epsilon^{-\frac{1}{2}}$. By \Cref{baltrace} and \Cref{direst}, we also have $\|\nabla p_{-}\|_{H^{k-\frac{1}{2}}(\Gamma_{\epsilon})}\lesssim_M \epsilon^{-\frac{1}{2}}$. Therefore, by \Cref{boundaryest}, we have $\|n_{\Gamma_\epsilon}\cdot\nabla v_{-}\cdot\nabla p_{-}\|_{H^{k-\frac{1}{2}}(\Gamma_{\epsilon})}\lesssim_M\epsilon^{-\frac{1}{2}}$. 
\\
\\
Using \Cref{direst} and the fact that the pressure terms in the Laplace equation for $D_tp_{-}$ always appear to one half derivative lower than top order,  a similar analysis yields $\|n_{\Gamma_\epsilon}\cdot\nabla D_tp_{-}\|_{H^{k-\frac{1}{2}}(\Gamma_{\epsilon})}\lesssim_M \epsilon^{-\frac{1}{2}}$. Therefore, we obtain from \Cref{Dtarel} the bound,
\begin{equation*}
\begin{split}
\int_{\Gamma_{\epsilon}}\mathcal{N}_{\epsilon}^{k-\frac{3}{2}}(a_{\epsilon}^{-1}D_ta_{-})\mathcal{N}_{\epsilon}^{k-\frac{1}{2}}\mathcal{P}_{>\epsilon^{-1}}(w\cdot n_{\Gamma_\epsilon})\,dS&\lesssim_M \epsilon^{\frac{1}{2}}\|\mathcal{P}_{> \epsilon^{-1}}\mathcal{N}_{\epsilon}^{k-\frac{1}{2}}(w\cdot n_{\Gamma_\epsilon})\|_{{L^2}(\Gamma_{\epsilon})}+\mathcal{O}_M(\epsilon)
\\
&\lesssim_M \epsilon^{\frac{1}{2}}(\mathcal{E}^{k}(\tilde{v}_0-v_{\epsilon}))^{\frac{1}{2}}+\mathcal{O}_M(\epsilon),
\end{split}
\end{equation*}
as desired. It remains to deal with the other term in (\ref{split}). For this, we need to expand $D_ta_{\epsilon}-D_ta_{-}$. As a first reduction, we note that we can replace every appearance of $p_{-}$ with $p_{\epsilon}$ in the definition of $D_ta_{-}$ if we allow for $\mathcal{O}_M(\epsilon^{\frac{1}{2}})$ errors. This is because $\|p_{\epsilon}-p_{-}\|_{H^{k}(\Omega_{\epsilon})}\lesssim_M \|v_{\epsilon}-v_{-}\|_{H^{k-1}(\Omega_{\epsilon})}\lesssim_M \epsilon^{\frac{1}{2}}$. Hence, we have
\begin{equation*}
\begin{split}
D_ta_{-}&=n_{\Gamma_\epsilon}\cdot\nabla v_{-}\cdot\nabla p_{\epsilon}-n_{\Gamma_\epsilon}\cdot \nabla \Delta^{-1}_{\Omega_\epsilon}(4\text{tr}(\nabla^2p_{\epsilon}\cdot \nabla v_{-})+2\text{tr}(\nabla v_{-})^3+\Delta v_{-}\cdot \nabla p_{\epsilon})+\mathcal{O}_{H^{k-\frac{3}{2}}(\Gamma_{\epsilon})}(\epsilon^{\frac{1}{2}}).
\end{split}
\end{equation*}
We may also replace the lower order terms involving $v_{-}$ by $v_{\epsilon}$. Arguing similarly to \Cref{Dtarel}, we then obtain the key identity 
\begin{equation*}
\begin{split}
D_ta_{-}-D_ta_{\epsilon}&=a_{\epsilon}\mathcal{N}_{\epsilon}((v_{\epsilon}-v_{-})\cdot n_{\Gamma_\epsilon})+\mathcal{O}_{H^{k-\frac{3}{2}}(\Gamma_{\epsilon})}(\epsilon^{\frac{1}{2}})
\\
&=a_{\epsilon}\mathcal{P}_{\leq\epsilon^{-1}}\mathcal{N}_{\epsilon}(w\cdot n_{\Gamma_\epsilon})+\mathcal{O}_{H^{k-\frac{3}{2}}(\Gamma_{\epsilon})}(\epsilon^{\frac{1}{2}}).
\end{split}
\end{equation*}
Hence, we have
\begin{equation*}
\begin{split}
\int_{\Gamma_{\epsilon}}\mathcal{N}_{\epsilon}^{k-\frac{3}{2}}(a_{\epsilon}^{-1}(D_ta_{\epsilon}-D_ta_{-}))\mathcal{N}_{\epsilon}^{k-\frac{1}{2}}\mathcal{P}_{>\epsilon^{-1}}(w\cdot n_{\Gamma_\epsilon})\,dS\lesssim_M &-\int_{\Gamma_{\epsilon}}\mathcal{N}_{\epsilon}^{k-\frac{1}{2}}\mathcal{P}_{\leq\epsilon^{-1}}(w\cdot n_{\Gamma_\epsilon})\mathcal{N}_{\epsilon}^{k-\frac{1}{2}}\mathcal{P}_{>\epsilon^{-1}}(w\cdot n_{\Gamma_\epsilon})\,dS
\\
&+\epsilon^{\frac{1}{2}}\|\mathcal{N}_{\epsilon}^{k-\frac{1}{2}}\mathcal{P}_{>\epsilon^{-1}}(w\cdot n_{\Gamma_\epsilon})\|_{L^2(\Gamma_\epsilon)}.
\end{split}
\end{equation*}
The first term on the right-hand side above vanishes by orthogonality (this term is the reason we reweighted the energy in the first place) and the latter term is controlled by $\epsilon^{\frac{1}{2}}(\mathcal{E}^{k}(\tilde{v}_0-v_{\epsilon}))^{\frac{1}{2}}$. Therefore, we obtain the desired bound for the $D_ta_{\epsilon}$ portion of the energy. This completes the proof of \Cref{Prop velo reg}.
\end{proof}
\subsection{Step 3: Euler plus transport iteration}\label{EPTI} In this subsection, we construct the iterate $(v_1,\Gamma_1)$ from the regularized data $(v_{\epsilon},\Gamma_{\epsilon})$. Intuitively, what remains to be done is to carry out something akin to the Euler iteration 
\begin{equation*}
v_{1}:=v_{\epsilon}-\epsilon (v_{\epsilon}\cdot\nabla v_{\epsilon}+\nabla p_{\epsilon}+ge_d)    
\end{equation*}
and then the domain transport
\begin{equation*}
x_1(x):=x+\epsilon v_{\epsilon}(x).    
\end{equation*}
Unfortunately, performed individually, these steps lose a full derivative in each iteration. Therefore, it is important that these two steps be carried out together. This will reduce the derivative loss and allow us to exploit a discrete version of the energy cancellation seen in the energy estimates. We will then use the regularization bounds from the previous subsections to control any remaining errors in the iteration. To carry out this process, we have the following proposition.
\begin{proposition}\label{Final transport +Euler}
Given $(v_{\epsilon},\Gamma_{\epsilon})$ as in  the previous step, there exists an iteration $(v_{\epsilon},\Gamma_{\epsilon})\mapsto (v_1,\Gamma_1)$ 
such that the following properties hold:
\begin{enumerate}
    \item (Approximate solution). 
\begin{equation*}
\begin{cases}
&v_1 = v_{\epsilon}-\epsilon (v_{\epsilon}\cdot\nabla v_{\epsilon}+\nabla p_\epsilon +ge_d)+ \mathcal{O}_{C^1}(\epsilon^2)\hspace{5mm}\text{on}\hspace{2mm}\Omega_1\cap\Omega_{\epsilon},
\\
&\nabla\cdot v_1=0 \hspace{5mm}\text{on}\hspace{2mm}\Omega_1,
\\
& \Omega_1 = (I+\epsilon v_{\epsilon})\Omega_{\epsilon}.
\end{cases} 
\end{equation*}
\item (Energy monotonicity bound).
\begin{equation*}
\mathcal{E}^{k}(v_1,\Gamma_1)\leq (1+C(M)\epsilon)\mathcal{E}^{k}(v_{\epsilon},\Gamma_{\epsilon}).
\end{equation*}
\end{enumerate}
Moreover, $v_1$ and $\omega_1$ satisfy the inductive bounds \eqref{regboundprop}.
\end{proposition}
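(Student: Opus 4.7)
The plan is to define $\Omega_1$ by flowing $\Omega_\epsilon$ along one discrete Euler time-step, and to define $v_1$ by simultaneously correcting the Lagrangian velocity update so that it remains exactly divergence free. Concretely, set $\Phi_\epsilon(x) := x + \epsilon v_\epsilon(x)$ and $\Omega_1 := \Phi_\epsilon(\Omega_\epsilon)$; by the $H^{k+2}$ bound in \eqref{8.44}, $\Phi_\epsilon$ is a $C^2$-close-to-identity diffeomorphism for $\epsilon$ small depending on $M$. On $\Omega_\epsilon$ define
$w_1 := v_\epsilon - \epsilon(v_\epsilon \cdot \nabla v_\epsilon + \nabla p_\epsilon + g e_d)$.
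Since $\nabla \cdot v_\epsilon = 0$ and $\Delta p_\epsilon = -\tr(\nabla v_\epsilon)^2$, a direct computation gives $\nabla \cdot w_1 = 0$ on $\Omega_\epsilon$, so $w_1$ is exactly divergence free there. Using the divergence-free extension from \Cref{c reg bounds} past $\Gamma_\epsilon$ (applicable since $\Gamma_1$ lies within an $\mathcal{O}(\epsilon)$ neighborhood of $\Omega_\epsilon$), one then defines $v_1$ as the restriction of this extension to $\Omega_1$. The approximate-solution clause (i) is then immediate: on $\Omega_\epsilon \cap \Omega_1$ we have $v_1 = w_1$, exactly matching the stated Euler update, and $\Omega_1 = (I + \epsilon v_\epsilon)\Omega_\epsilon$ by definition.

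For the propagation of the regularization bounds \eqref{regboundprop}, writing $v_1$ explicitly in terms of $v_\epsilon$ and $p_\epsilon$ and using the elliptic bounds in \Cref{BEE} gives an estimate of the form $\|v_1\|_{H^{k+1}(\Omega_1)} \leq \|v_\epsilon\|_{H^{k+1}(\Omega_\epsilon)} + C(M)\epsilon \|v_\epsilon\|_{H^{k+2}(\Omega_\epsilon)} + \mathrm{l.o.t.}$; the second term is at most $C(M) \cdot \tfrac{1}{4}K(M)\epsilon^{-1}$, so taking $\epsilon$ small (depending on $M$) keeps the total below $K(M)\epsilon^{-1}$, which is the key point of improving the constant to $\tfrac{1}{4}K(M)$ in \Cref{Prop velo reg}. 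For $\omega_1$, one verifies that $\omega_1$ satisfies a discrete analogue of the transport equation \eqref{vorteq}, so that applying derivatives of order $k+n$ produces only transport-type source terms of size $C(M)\epsilon$ times the existing $H^{k+n}$ norm of $\omega_\epsilon$; this yields precisely the additive growth $(K'(M)+C(M)\epsilon)\epsilon^{-1-n}$ claimed in \eqref{regboundprop}.

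The hard part will be the energy monotonicity bound (ii). Following the strategy of \Cref{HEB}, I would introduce the good variables $a_1$, $D_t^{(1)}a_1$, $\omega_1$ associated to $(v_1,\Gamma_1)$, where $D_t^{(1)}$ denotes the material derivative along $v_1$, and compare them discretely against the corresponding good variables for $(v_\epsilon,\Gamma_\epsilon)$ transported by $\Phi_\epsilon$. Using the identities from \Cref{Movingsurfid}, the elliptic estimates \Cref{direst} and \Cref{balneum} for the pressure, and the good pointwise approximation, each of the rescaled differences is shown to solve the linearized system \eqref{DM lin} modulo source terms that are $\mathcal{O}(\epsilon)$ in $L^2$ and are controlled by the regularized energy together with the regularization bounds from \Cref{Prop velo reg}. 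Applying the linearized energy estimate of \Cref{EE} on the adjacent domains, and combining the resulting bounds through a Reynolds-type comparison of $\mathcal{E}^k(v_1,\Gamma_1)$ with $\mathcal{E}^k(v_\epsilon,\Gamma_\epsilon)$ as in \Cref{Reynolds}, will yield the discrete Gr\"onwall factor $1 + C(M)\epsilon$. The most delicate piece is the $\nabla\mathcal{H}\mathcal{N}^{k-2}(a^{-1}D_t a)$ component of the energy, whose discrete evolution appears to lose half a derivative on $v_\epsilon$; this loss is exactly absorbed by the $H^{k+2}$ regularization bound in \eqref{8.44} together with the $L^2$-smallness of $p_1 - p_\epsilon \circ \Phi_\epsilon^{-1}$, the latter obtained by writing this difference as the solution of an inhomogeneous Dirichlet problem with explicit source in $H^{-1}$ and boundary data controlled by the difference of the two free surfaces.
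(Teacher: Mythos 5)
Your observation that
\[
w_1 := v_\epsilon - \epsilon\bigl(v_\epsilon\cdot\nabla v_\epsilon + \nabla p_\epsilon + g e_d\bigr)
\]
is exactly divergence free on $\Omega_\epsilon$ (since $\nabla\cdot(v_\epsilon\cdot\nabla v_\epsilon)=\tr(\nabla v_\epsilon)^2=-\Delta p_\epsilon$) is correct and appealing, but this formulation has a fatal flaw in the propagation of the regularization bound \eqref{regboundprop}, which is the technical heart of the one-step iteration. Writing the transport term $v_\epsilon\cdot\nabla v_\epsilon$ explicitly costs a full derivative: by the product estimates, $\|v_\epsilon\cdot\nabla v_\epsilon\|_{H^{k+1}}\lesssim_M\|v_\epsilon\|_{H^{k+2}}$, so $\epsilon\|v_\epsilon\cdot\nabla v_\epsilon\|_{H^{k+1}}\leq C(M)\cdot\tfrac14 K(M)\epsilon^{-1}$, exactly as you write. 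But this term does \emph{not} become small relative to the target $K(M)\epsilon^{-1}$ as $\epsilon\to 0$; it is simply $C(M)/4$ times the target. Your conclusion that ``taking $\epsilon$ small keeps the total below $K(M)\epsilon^{-1}$'' is therefore false unless $C(M)\leq 3$, which there is no reason to expect (these are Moser-type constants that grow with $M$). Since $K(M)$ appears on both sides of the estimate, making $K(M)$ large does not help either. This breaks clause (iii) of the proposition, which in turn is what lets one repeat the iteration step $\sim\epsilon^{-1}$ times in \Cref{t:existence}.

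The paper circumvents precisely this problem by \emph{not} writing the transport term explicitly: it defines $\tilde v_1$ via the pushforward
\[
\tilde v_1(x_1) := v_\epsilon - \epsilon(\nabla p_\epsilon + g e_d), \qquad x_1 = I+\epsilon v_\epsilon,
\]
so that the transport is encoded in the change of variables. The right-hand side has no derivative loss, because $\nabla p_\epsilon$ sits at the same Sobolev level as $v_\epsilon$ (by elliptic regularity, $\|\nabla p_\epsilon\|_{H^{k+1}}\lesssim_M\|v_\epsilon\|_{H^{k+1}}+\|\Gamma_\epsilon\|_{H^{k+3/2}}$), and the composition with $x_1$ only multiplies the top-order term by $(1+C(M)\epsilon)$ and adds terms of size $O_M(K(M))$ — a factor $\epsilon$ smaller than the target — via the balanced Moser estimates. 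The modest price is that $\tilde v_1$ is no longer exactly divergence free; its divergence is $O(\epsilon^2)$ by cancellation between the $v_\epsilon$ and $\nabla p_\epsilon$ contributions, and the paper removes it by a small gradient correction. In short, the derivative-saving structure is ``transport-as-composition,'' not ``transport-as-product,'' and your formulation discards exactly the feature that makes the inductive bound close.

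Two smaller points: \Cref{c reg bounds} provides a \emph{regularization} $\Psi_{\leq j}$, not an extension operator, so with your construction $v_1\neq w_1$ on $\Omega_\epsilon\cap\Omega_1$ — the discrepancy is $O_{C^1}(\epsilon^2)$ (acceptable for clause (i), but your claim of exact equality is wrong). And for the energy monotonicity, the paper does not invoke the Reynolds transport formula \Cref{Reynolds} or cast the comparison as a linearized system; it instead proves explicit leading-order expansions of $(\omega_1,a_1,D_ta_1)$ pulled back to $\Omega_\epsilon$ (Lemma~\ref{goodvarrel}) and a change-of-variables comparison of the pulled-back energy with $\mathcal E^k(v_1,\Gamma_1)$ (Lemma~\ref{changeofvar}, Proposition~\ref{changeofvar2}). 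Your sketch is plausible in spirit but would require that machinery to be carried out.
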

We define the change of coordinates $x_1(x) := x + \epsilon v_{\epsilon}(x)$
and the iterated domain $\Omega_1$ by
\begin{equation*}
\Omega_1:=(I+\epsilon v_{\epsilon})\Omega_{\epsilon}.    
\end{equation*}
To define $v_1$, we proceed in two steps. First, we define 
\begin{equation}\label{Iterated v_1}
\tilde{v}_1(x_1):=v_{\epsilon}-\epsilon(\nabla p_{\epsilon}+ge_d).
\end{equation}
We note that $\tilde{v}_1$ is not divergence free, so we define the full iterate $v_1$ by correcting the divergence of $\tilde{v}_1$ by a gradient potential:
\begin{equation*}
v_1:=\tilde{v}_1-\nabla\Delta^{-1}_{\Omega_1}(\nabla\cdot \tilde{v}_1).
\end{equation*}
At this point, we can verify the inductive bound (\ref{regboundprop}) for $v_1$ and $\omega_1$. We start with $v_1$. We recall that we have to show that
\begin{equation*}\label{v1indbound}
\|v_1\|_{H^{k+1}(\Omega_1)}\leq K(M)\epsilon^{-1}.    
\end{equation*}
As a first step, using the regularization bound \eqref{8.44} for $v_{\epsilon}$ from the previous section, we have from the definition of $\tilde{v}_1$, the regularization bounds \eqref{surfbound} for $\Gamma_{\epsilon}$ and the balanced elliptic estimate \Cref{direst}, 
\begin{equation}\label{tildev1bound}
\|\tilde{v}_1\|_{H^{k+n}(\Omega_1)}\leq \frac{1}{3}K(M)\epsilon^{-n},
\end{equation}
for $n=0,1,2$. Next, we aim to control the error between $v_1$ and $\tilde{v}_1$ in $H^{k}(\Omega_1)$ and $H^{k+1}(\Omega_1)$ (but not $H^{k+2}(\Omega_1)$).  We have for $n=0,1$ from the balanced elliptic estimate \Cref{direst},
\begin{equation*}
\begin{split}
\|v_1-\tilde{v}_1\|_{H^{k+n}(\Omega_1)}&\lesssim_M \|\Gamma_1\|_{H^{k+\frac{1}{2}+n}}\|\nabla\cdot\tilde{v}_1\|_{H^{k-2}(\Omega_1)}+\|\nabla\cdot\tilde{v}_1\|_{H^{k-1+n}(\Omega_1)}
\\
&\lesssim_M\epsilon^{-\frac{1}{2}-n}\|\nabla\cdot\tilde{v}_1\|_{H^{k-2}(\Omega_1)}+\|\nabla\cdot\tilde{v}_1\|_{H^{k-1+n}(\Omega_1)}.
\end{split}
\end{equation*}
Above, we used the $H^{k+1}$ and $H^{k+2}$ (depending on if $n$ is $0$ or $1$) regularization bounds for $v_{\epsilon}$, Moser estimates, the bounds for $\Gamma_{\epsilon}$ and the relation $\Gamma_1=(I+\epsilon v_{\epsilon})(\Gamma_{\epsilon})$ to control $\|\Gamma_1\|_{H^{k+\frac{1}{2}+n}}\lesssim_M \epsilon^{-\frac{1}{2}-n}$. By using the definition of $\tilde{v}_1$ and the regularization bounds for $v_{\epsilon}$, it is straightforward to see that the divergence, $\nabla\cdot \tilde{v}_1$, contributes an error of size $\mathcal{O}_{H^{k-1+n}(\Omega_1)}(\epsilon^{\frac{3}{2}-n})$ and also $\mathcal{O}_{H^{k-2}(\Omega_1)}(\epsilon^{2})$. Note that for this computation, one must use the cancellation between the velocity and the pressure in \eqref{Iterated v_1} in order to see the desired gain. Therefore, we have
\begin{equation*}
\|\nabla\Delta^{-1}_{\Omega_1}(\nabla\cdot\tilde{v}_1)\|_{H^{k+n}(\Omega_1)}=\|v_1-\tilde{v}_1\|_{H^{k+n}(\Omega_1)}\lesssim_M \epsilon^{\frac{3}{2}-n}.
\end{equation*}
From this and (\ref{tildev1bound}), we conclude the inductive bound
\begin{equation*}
\|v_1\|_{H^{k+1}(\Omega_1)}\leq K(M)\epsilon^{-1} ,   
\end{equation*}
and the leading order expansion for $v_1(x_1)$ in $H^k(\Omega_{\epsilon})$,
\begin{equation*}\label{v1ident}
v_1(x_1)=v_{\epsilon}-\epsilon(\nabla p_{\epsilon}+ge_d)+\mathcal{O}_{H^{k}(\Omega_\epsilon)}(\epsilon^\frac{3}{2}).    
\end{equation*}
If $k$ is large enough, then the  leading order expansion (\ref{v1ident}) with $\mathcal{O}_{C^1}(\epsilon^2)$ error can be seen by slightly modifying the above argument. Now, we verify the inductive bound $\|\omega_1\|_{H^{k+n}(\Omega_1)}\leq \epsilon^{-1-n}(K'(M)+\epsilon C(M))$ for $n=0,1$. It  suffices to establish this for $\tilde{\omega}_1$ since $v_1$ and $\tilde{v}_1$ agree up to a gradient. Taking curl in the definition of $\tilde{v}_1$ and using that $\omega_{\epsilon}=\tilde{\omega}_0$, we have
\begin{equation}\label{curltildev_1}
\|\nabla\times (\tilde{v}_1(x_1))\|_{H^{k+n}(\Omega_{\epsilon})}\leq \|\tilde{\omega}_0\|_{H^{k+n}(\Omega_\epsilon)}\leq K'(M)\epsilon^{-1-n}.
\end{equation}
By chain rule, using (\ref{tildev1bound}) and the regularization bounds for $v_{\epsilon}$, we have
\begin{equation*}
\|\tilde{\omega}_1(x_1)\|_{H^{k+n}(\Omega_{\epsilon})}\leq \|\nabla\times (\tilde{v}_1(x_1))\|_{H^{k+n}(\Omega_{\epsilon})}+C(M)\epsilon^{-n},    
\end{equation*}
which by a change of variables and (\ref{curltildev_1}) yields
\begin{equation*}\label{desiredcurlbound}
\|\tilde{\omega}_1\|_{H^{k+n}(\Omega_1)}\leq \epsilon^{-1-n}(K'(M)+\epsilon C(M))    ,
\end{equation*}
as desired. Note that in the above two lines, we treated $C(M)$ as an arbitrary constant, and relabelled it from line to line. Importantly, we did not do this for $K(M)$ and $K'(M)$.
\\

Next, we work towards establishing the energy monotonicity bound for the transport part of the argument. As a first step, we aim to relate the good variables associated to the iterate $v_1$ to the good variables associated to $v_{\epsilon}$ at the regularity level of the energy. We have the following lemma.
\begin{lemma}[Relations between the good variables]\label{goodvarrel} The following relations hold:
\begin{enumerate}
\item (Relation for $\omega_1$).
\begin{equation*}
\omega_1(x_1)=\omega_{\epsilon}+\mathcal{O}_{H^{k-1}(\Omega_\epsilon)}(\epsilon).
\end{equation*}
\item (Relation for $p_1$).
\begin{equation}\label{prelation}
p_{1}(x_1)-p_{\epsilon}-\epsilon D_tp_{\epsilon}=\mathcal{O}_{H^{k+\frac{1}{2}}(\Omega_\epsilon)}(\epsilon).    
\end{equation}
\item (Relation for $a_1$).
\begin{equation}\label{arelation}
a_1(x_1)=a_{\epsilon}+\epsilon D_ta_{\epsilon}+\mathcal{O}_{H^{k-1}(\Gamma_\epsilon)}(\epsilon).
\end{equation}
\item (Relation for $D_ta_1$). 
\begin{equation*}
D_ta_1(x_1)=D_ta_{\epsilon}-\epsilon a_{\epsilon}\mathcal{N}_{\epsilon}a_{\epsilon}+\mathcal{O}_{H^{k-\frac{3}{2}}(\Gamma_\epsilon)}(\epsilon).
\end{equation*}
\end{enumerate}
\end{lemma}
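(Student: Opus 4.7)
The overall plan is to pull each quantity living on $\Omega_1$ or $\Gamma_1$ back to $\Omega_\epsilon$ or $\Gamma_\epsilon$ via the near-identity diffeomorphism $F:=x_1=I+\epsilon v_\epsilon$. Thanks to the regularization bound \eqref{8.44}, we have $\|F-I\|_{H^{k+1}(\Omega_\epsilon)}\lesssim_M 1$ together with $\|F-I\|_{C^1(\Omega_\epsilon)}\lesssim_M\epsilon$, so Moser-type estimates show that pullback is a bounded near-identity operation on all the relevant function spaces, and the Jacobian $DF=I+\epsilon\nabla v_\epsilon$ satisfies $\det DF=1+\mathcal{O}(\epsilon^2)$ by incompressibility of $v_\epsilon$. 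The common starting point is the leading order expansion $v_1(x_1)=v_\epsilon-\epsilon(\nabla p_\epsilon+ge_d)+\mathcal{O}_{H^k(\Omega_\epsilon)}(\epsilon^{3/2})$ already established earlier in the section.

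For (1), since $v_1-\tilde v_1$ is a gradient, $\omega_1=\nabla_x\times\tilde v_1$. The expression $\tilde v_1\circ F=v_\epsilon-\epsilon(\nabla p_\epsilon+ge_d)$ has the same curl in $y$-coordinates as $v_\epsilon$ (the pressure and gravity contributions are pure gradients in $y$), while the chain-rule commutators between $\nabla_x$ and the pullback cost one derivative and are controlled by $\mathcal{O}_{H^{k-1}(\Omega_\epsilon)}(\epsilon)$ using the regularization bound for $v_\epsilon$ and standard product estimates. This yields (1).

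The crux is (2). Set $P:=p_1\circ F-p_\epsilon-\epsilon D_tp_\epsilon$; since $F(\Gamma_\epsilon)=\Gamma_1$ and $p_1$, $p_\epsilon$, $D_tp_\epsilon$ all vanish on the appropriate free surface, $P|_{\Gamma_\epsilon}=0$. Expanding $(DF)^{-1}(DF)^{-*}=I-\epsilon(\nabla v_\epsilon+(\nabla v_\epsilon)^*)+\mathcal{O}(\epsilon^2)$ and using $\det DF=1+\mathcal{O}(\epsilon^2)$, the pullback of the Laplacian on $\Omega_1$ becomes $\Delta_y-\epsilon\nabla_y\cdot((\nabla v_\epsilon+(\nabla v_\epsilon)^*)\nabla_y)+\mathcal{O}(\epsilon^2)$. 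Similarly, using the chain rule and the leading order expansion for $v_1$, one finds $\text{tr}(\nabla_x v_1)^2|_{F(y)}=\text{tr}(\nabla v_\epsilon)^2-2\epsilon\,\text{tr}((\nabla v_\epsilon)^3)-2\epsilon\,\text{tr}(\nabla^2 p_\epsilon\cdot\nabla v_\epsilon)+\mathcal{O}_{H^{k-3/2}}(\epsilon)$. Matching the first-order-in-$\epsilon$ terms on the two sides of the pullback Laplace equation for $p_1\circ F$ is precisely the algebraic identity that defines $D_tp_\epsilon$ through \eqref{Dtpdef} (indeed, this is just a rephrasing of the standard derivation of \eqref{Dtpdef} via commuting $D_t$ with $\Delta$ on the equation $\Delta p_\epsilon=-\text{tr}(\nabla v_\epsilon)^2$). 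Consequently $P$ solves a Dirichlet problem on $\Omega_\epsilon$ with zero boundary data and source $\mathcal{O}_{H^{k-3/2}(\Omega_\epsilon)}(\epsilon)$, and the balanced Dirichlet estimate \Cref{direst}, combined with the regularization bound $\|\Gamma_\epsilon\|_{H^{k+\alpha}}\lesssim_{M,\alpha}\epsilon^{-\alpha}$, gives $\|P\|_{H^{k+1/2}(\Omega_\epsilon)}\lesssim_M\epsilon$.

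Statements (3) and (4) are corollaries of (2). For (3), write $a_1=-\nabla p_1\cdot n_{\Gamma_1}$ and pull back via $F$: the identity $\nabla p_1\circ F=(DF)^{-*}\nabla(p_1\circ F)=\nabla p_\epsilon+\epsilon(\nabla D_tp_\epsilon-(\nabla v_\epsilon)^*\nabla p_\epsilon)+\mathcal{O}_{H^{k-1/2}}(\epsilon)=\nabla p_\epsilon+\epsilon D_t\nabla p_\epsilon+\mathcal{O}_{H^{k-1/2}}(\epsilon)$ combined with the expansion $n_{\Gamma_1}\circ F=n_{\Gamma_\epsilon}+\epsilon D_t n_{\Gamma_\epsilon}+\mathcal{O}_{H^{k-1}}(\epsilon)$ coming from \eqref{Moving normal} yields (3). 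For (4), repeating the same pullback-and-expand argument one order further gives $D_ta_1(x_1)=D_ta_\epsilon+\epsilon D_t^2 a_\epsilon+\mathcal{O}_{H^{k-3/2}(\Gamma_\epsilon)}(\epsilon)$, and the wave-like equation \eqref{adynamics} derived in \Cref{CTEF} gives $D_t^2 a_\epsilon=-a_\epsilon\mathcal{N}_\epsilon a_\epsilon+f_\epsilon$ with $f_\epsilon=\mathcal{O}_{H^{k-3/2}(\Gamma_\epsilon)}(1)$ by the same source term estimates used in the coercivity analysis of \Cref{HEB}. The main obstacle is clearly step (2): one must carefully pull the Laplace-Dirichlet problem back through $F$, simultaneously expand the right-hand side $-\text{tr}(\nabla v_1)^2$ to first order in $\epsilon$, and recognize that the first-order terms recombine into exactly the equation \eqref{Dtpdef} for $D_tp_\epsilon$. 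All remaining estimates reduce to this one via standard commutator identities and the wave-like equation for $a$.
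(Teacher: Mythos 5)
Your overall strategy---pull back by $F=I+\epsilon v_\epsilon$, match first-order terms against the definitions of $D_tp_\epsilon$, $D_ta_\epsilon$---mirrors the paper, but there is a genuine gap in step (2) and a conceptual issue in step (4) that must be addressed.

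\textbf{Gap in (2).} You assert that after subtracting the identities defining $p_\epsilon$ and $D_tp_\epsilon$, the function $P=p_1\circ F-p_\epsilon-\epsilon D_tp_\epsilon$ solves a Dirichlet problem with zero boundary data and source $\mathcal{O}_{H^{k-3/2}(\Omega_\epsilon)}(\epsilon)$. But the source contains the term $\epsilon\Delta v_\epsilon\cdot\bigl((\nabla p_1)(x_1)-\nabla p_\epsilon\bigr)$ (or the equivalent pullback-Laplacian perturbation $\epsilon\nabla\cdot((\nabla v_\epsilon+(\nabla v_\epsilon)^*)\nabla(p_1\circ F))$), which involves the unknown $p_1$. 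This term cannot be estimated directly in $H^{k-3/2}(\Omega_\epsilon)$; the paper explicitly notes that the source terms ``can be estimated directly in $H^{k-2}(\Omega_\epsilon)$ (but not in $H^{k-\frac{3}{2}}(\Omega_\epsilon)$).'' One must first run a coarser $H^k(\Omega_\epsilon)$ elliptic estimate for $p_1(x_1)-p_\epsilon$ (whose source \emph{is} in $H^{k-2}$) to obtain $(\nabla p_1)(x_1)-\nabla p_\epsilon=\mathcal{O}_{L^\infty(\Omega_\epsilon)}(\epsilon)$, then feed that $L^\infty$ smallness back into the product with $\Delta v_\epsilon$ (using $\|\Delta v_\epsilon\|_{H^{k-3/2}}\lesssim_M\epsilon^{-1/2}$ from the regularization) to upgrade the source to $H^{k-3/2}$ and only then conclude $\|P\|_{H^{k+1/2}(\Omega_\epsilon)}\lesssim_M\epsilon$. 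Without this two-step bootstrap, the claimed one-shot estimate does not close.

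\textbf{Different route in (3) and conceptual issue in (4).} For (3), you linearize $n_{\Gamma_1}\circ F=n_{\Gamma_\epsilon}+\epsilon D_tn_{\Gamma_\epsilon}+\text{err}$; the paper avoids this linearization entirely by exploiting the unit-vector identity $(n_{\Gamma_1}(x_1)-n_{\Gamma_\epsilon})\cdot n_{\Gamma_1}(x_1)=\tfrac12|n_{\Gamma_1}(x_1)-n_{\Gamma_\epsilon}|^2$, which renders $(n_{\Gamma_1}(x_1)-n_{\Gamma_\epsilon})\cdot(\nabla p_1)(x_1)$ manifestly quadratic in $\epsilon$. Your route can be made to work, but $\epsilon D_tn_{\Gamma_\epsilon}$ is only $\mathcal{O}_{H^{k-1}(\Gamma_\epsilon)}(\epsilon^{1/2})$ (not $\mathcal{O}(\epsilon)$), so the cross-terms in the product expansion require the regularization bounds and balanced product estimates to close; the paper's algebraic shortcut sidesteps all of this. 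For (4), your plan is to Taylor-expand $D_ta_1(x_1)=D_ta_\epsilon+\epsilon D_t^2a_\epsilon+\mathcal{O}(\epsilon)$ and plug in the wave-like equation \eqref{adynamics}. But $(v_\epsilon,\Gamma_\epsilon)$ is a \emph{static state}, not the time-$t$ slice of an Euler solution: $D_t^2a_\epsilon$ is not a priori defined, and $(v_1,\Gamma_1)$ is an Euler-plus-transport iterate, not an exact Euler flow at time $\epsilon$, so there is no Taylor expansion in time to invoke. Making this rigorous requires defining $D_t^2a_\epsilon$ via the right-hand side of \eqref{adynamics} with all dynamics replaced by the elliptic solution operators, and then verifying that the iterate satisfies the claimed expansion---which is in effect the direct computation the paper carries out (expanding the Laplace equation for $D_tp_1(x_1)$, relating the normal-derivative trace to $\mathcal{N}_\epsilon$, and applying the Leibniz rule for $\mathcal{N}_\epsilon$). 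Your estimate $f_\epsilon=\mathcal{O}_{H^{k-3/2}(\Gamma_\epsilon)}(1)$ is also not one of the estimates proved in \Cref{HEB}; the source $f$ there is only bounded after applying $\nabla\mathcal{H}\mathcal{N}^{k-2}$.
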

\begin{proof} 
The relation for $\omega_1$  is immediate. Next, we move to the relations for $p_1$ and $a_1$. By the chain rule and the Laplace equation (\ref{Dtpdef}) for $D_tp_{\epsilon}$, we have 
\begin{equation*}
\begin{split}
\Delta (p_1(x_1))&=(\Delta p_1)(x_1)+\epsilon\Delta v_{\epsilon}\cdot (\nabla p_1)(x_1)+2\epsilon \nabla v_{\epsilon}\cdot (\nabla^2p_1)(x_1)+\mathcal{O}_{H^{k-\frac{3}{2}}(\Omega_\epsilon)}(\epsilon)
\\
&=\Delta p_{\epsilon}+\epsilon \Delta D_tp_{\epsilon}+\epsilon\Delta v_{\epsilon}\cdot ((\nabla p_1)(x_1)-\nabla p_{\epsilon})+\mathcal{O}_{H^{k-\frac{3}{2}}(\Omega_\epsilon)}(\epsilon)
\\
&=\Delta p_{\epsilon}+\epsilon \Delta D_tp_{\epsilon}+\mathcal{O}_{H^{k-\frac{3}{2}}(\Omega_\epsilon)}(\epsilon),
\end{split}
\end{equation*}
where in the last line, we controlled $\epsilon\Delta v_{\epsilon}\cdot ((\nabla p_1)(x_1)-\nabla p_{\epsilon})=\mathcal{O}_{H^{k-\frac{3}{2}}(\Omega_{\epsilon})}(\epsilon)$ by using the regularization bounds for $v_{\epsilon}$ as well as the error bound $(\nabla p_1)(x_1)-\nabla p_{\epsilon}=\mathcal{O}_{L^{\infty}(\Omega_\epsilon)}(\epsilon)$, which is gotten by performing an $H^{k}(\Omega_\epsilon)$ elliptic estimate  in the second line, using the fact that $p_1(x_1)-p_{\epsilon}$ vanishes on $\Gamma_{\epsilon}$ and that each of the source terms can be estimated directly in $H^{k-2}(\Omega_\epsilon)$ (but not in $H^{k-\frac{3}{2}}(\Omega_\epsilon)$). Therefore, since $p_1(x_1)-p_{\epsilon}-\epsilon D_tp_{\epsilon}$ vanishes on $\Gamma_{\epsilon}$, we may now do a $H^{k+\frac{1}{2}}(\Omega_\epsilon)$  elliptic estimate to obtain the finer bound,
\begin{equation}\label{ellipticregpdiff}
p_{1}(x_1)-p_{\epsilon}-\epsilon D_tp_{\epsilon}=\mathcal{O}_{H^{k+\frac{1}{2}}(\Omega_\epsilon)}(\epsilon),
\end{equation}
which gives (\ref{prelation}). We also deduce from this that
\begin{equation*}
\begin{split}
(\nabla p_{1})(x_1)&=\nabla p_{\epsilon}+\epsilon\nabla D_t p_{\epsilon}-\epsilon\nabla v_{\epsilon}\cdot(\nabla p_{1})(x_1)+\mathcal{O}_{H^{k-\frac{1}{2}}(\Omega_\epsilon)}(\epsilon)
\\
&=\nabla p_{\epsilon}+\epsilon D_t\nabla p_{\epsilon}+\mathcal{O}_{H^{k-\frac{1}{2}}(\Omega_\epsilon)}(\epsilon).
\end{split}
\end{equation*}
From this we see that
\begin{equation*}
\begin{split}
a_1(x_1)&=a_{\epsilon}+\epsilon D_ta_{\epsilon}-(n_{\Gamma_1}(x_1)-n_{\Gamma_\epsilon})\cdot(\nabla p_{1})(x_1)+\mathcal{O}_{H^{k-1}(\Gamma_\epsilon)}(\epsilon)
\\
&=a_{\epsilon}+\epsilon D_ta_{\epsilon}+\mathcal{O}_{H^{k-1}(\Gamma_\epsilon)}(\epsilon),
\end{split}
\end{equation*}
where in the last line we used $$(n_{\Gamma_1}(x_1)-n_{\Gamma_\epsilon})\cdot (\nabla p_1)(x_1)=-a_1(x_1)(n_{\Gamma_1}(x_1)-n_{\Gamma_\epsilon})\cdot n_{\Gamma_1}(x_1)=-a_1(x_1)\frac{1}{2}|n_{\Gamma_1}(x_1)-n_{\Gamma_\epsilon}|^2=\mathcal{O}_{H^{k-1}(\Gamma_\epsilon)}(\epsilon).$$ This gives the relation (\ref{arelation}).
\\
\\
Next, we prove the relation for $D_ta_1$. First, we see that
\begin{equation}\label{Dtaeqndiff}
\begin{split}
-(D_t\nabla p_{1})(x_1)+D_t\nabla p_{\epsilon}&=((\nabla v_1\cdot\nabla p_{1})(x_1)-\nabla v_{\epsilon}\cdot\nabla p_{\epsilon})-((\nabla D_tp_{1})(x_1)-\nabla D_tp_{\epsilon})
\\
&=((\nabla v_1)(x_1)-\nabla v_{\epsilon})\cdot\nabla p_{\epsilon}-((\nabla D_t p_{1})(x_1)-\nabla D_t p_{\epsilon})+\mathcal{O}_{H^{k-1}(\Omega_\epsilon)}(\epsilon).
\end{split}
\end{equation}
To control the second term on the right-hand side above, we write out the Laplace equation for $D_tp_1(x_1)$:
\begin{equation*}
\begin{split}
\Delta (D_tp_1(x_1))&=(\Delta D_tp_1)(x_1)+\mathcal{O}_{H^{k-2}(\Omega_\epsilon)}(\epsilon).
\end{split}
\end{equation*}
By a similar analysis to the proof of \eqref{arelation} and the relation
\begin{equation*}
\begin{split}
(\Delta v_1)(x_1)&=\Delta (v_1(x_1))+\mathcal{O}_{H^{k-2}(\Omega_\epsilon)}(\epsilon)=\Delta v_{\epsilon}-\epsilon\nabla\Delta p_{\epsilon}+\mathcal{O}_{H^{k-2}(\Omega_\epsilon)}(\epsilon)=\Delta v_{\epsilon}+\mathcal{O}_{H^{k-2}(\Omega_\epsilon)}(\epsilon),
\end{split}
\end{equation*}
we obtain
\begin{equation*}
\begin{split}
(\Delta D_tp_1)(x_1)&=\Delta D_tp_{\epsilon}+(\Delta v_1\cdot\nabla p_1)(x_1)-\Delta v_{\epsilon}\cdot\nabla p_{\epsilon}+4\text{tr}(\nabla v_1\cdot\nabla ^2 p_1)(x_1)-4tr\nabla v_{\epsilon}\cdot\nabla ^2 p_{\epsilon}+\mathcal{O}_{H^{k-2}(\Omega_\epsilon)}(\epsilon)
\\
&=\Delta D_tp_{\epsilon}+4tr\left(\nabla v_{\epsilon}\cdot ((\nabla^2 p_1)(x_1)-\nabla^2p_{\epsilon})\right)+\mathcal{O}_{H^{k-2}(\Omega_\epsilon)}(\epsilon)
\\
&=\Delta D_tp_{\epsilon}+\mathcal{O}_{H^{k-2}(\Omega_\epsilon)}(\epsilon),
\end{split}    
\end{equation*}
where in the last line, we used (\ref{ellipticregpdiff}) and that $\epsilon D_tp_{\epsilon}=\mathcal{O}_{H^{k}(\Omega_\epsilon)}(\epsilon)$. Combining the above with (\ref{Dtaeqndiff}), one obtains by elliptic regularity,
\begin{equation*}
-D_t\nabla p_1(x_1)+D_t\nabla p_{\epsilon}=((\nabla v_1)(x_1)-\nabla v_{\epsilon})\cdot\nabla p_{\epsilon}+\mathcal{O}_{H^{k-1}(\Omega_\epsilon)}(\epsilon).
\end{equation*}
Then, noting from (\ref{arelation}) that
\begin{equation*}
(D_t\nabla p_1)(x_1)\cdot (n_{\Gamma_1}(x_1)-n_{\Gamma_\epsilon})=(D_t\nabla p_1)(x_1)\cdot (a_\epsilon^{-1}\nabla p_\epsilon-(a_1^{-1}\nabla p_1)(x_1))=\mathcal{O}_{H^{k-\frac{3}{2}}(\Gamma_\epsilon)}(\epsilon)    
\end{equation*}
and using the fact that $\Delta p_{\epsilon}$ is lower order, we obtain
\begin{equation*}
\begin{split}
D_ta_{1}(x_1)-D_ta_{\epsilon}&=-a_{\epsilon}n_{\Gamma_\epsilon}\cdot\nabla (v_1(x_1)-v_{\epsilon})\cdot n_{\Gamma_\epsilon}-(D_t\nabla p_1)(x_1)\cdot (n_{\Gamma_1}(x_1)-n_{\Gamma_\epsilon})+ \mathcal{O}_{H^{k-\frac{3}{2}}(\Gamma_\epsilon)}(\epsilon)
\\
&=\epsilon a_{\epsilon}n_{\Gamma_\epsilon}\cdot\nabla \nabla p_{\epsilon}\cdot n_{\Gamma_\epsilon}+\mathcal{O}_{H^{k-\frac{3}{2}}(\Gamma_\epsilon)}(\epsilon)
\\
&=\epsilon a_{\epsilon}\mathcal{N}_{\epsilon}\nabla p_{\epsilon}\cdot n_{\Gamma_\epsilon}+\mathcal{O}_{H^{k-\frac{3}{2}}(\Gamma_\epsilon)}(\epsilon).
\end{split}
\end{equation*}
Finally, noting that $\mathcal{N}_{\epsilon}n_{\Gamma_\epsilon}\cdot n_{\Gamma_\epsilon}$ is lower order, we have, thanks to the Leibniz rule for $\mathcal{N}_{\epsilon}$,
\begin{equation*}
\begin{split}
\epsilon a_{\epsilon}\mathcal{N}_{\epsilon}\nabla p_{\epsilon}\cdot n_{\Gamma_\epsilon} &=-\epsilon a_{\epsilon}\mathcal{N}_{\epsilon}(n_{\Gamma_\epsilon} a_{\epsilon})\cdot n_{\Gamma_\epsilon}=-\epsilon a_{\epsilon}\mathcal{N}_{\epsilon}a_{\epsilon}+\mathcal{O}_{H^{k-\frac{3}{2}}(\Gamma_\epsilon)}(\epsilon).
\end{split}
\end{equation*}
Therefore, we have the desired relation for $D_ta_1$. This completes the proof of the lemma.
\end{proof}
\textbf{Energy monotonicity}. To finish the proof of \Cref{Final transport +Euler}, it remains to establish energy monotonicity. The following lemma will allow us to more easily work with the relations in \Cref{goodvarrel}.
\begin{lemma}\label{changeofvar}
Define the ``pulled-back" energy $\mathcal{E}^k_*(v_1,\Gamma_{1})$ by 
\begin{equation*}
\begin{split}
\mathcal{E}^k_*(v_1,\Gamma_{1}):=1&+\|\mathcal{N}_{\epsilon}^{k-\frac{3}{2}}(a_{1}^{-1}(x_1)D_ta_1(x_1))\|_{L^2(\Gamma_{\epsilon})}^2+\|a_1^{-\frac{1}{2}}(x_1)\mathcal{N}_{\epsilon}^{k-1}(a_1(x_1))\|_{L^2(\Gamma_{\epsilon})}^2
\\
&+\|\omega_1(x_1)\|_{H^{k-1}(\Omega_{\epsilon})}^2+\|v_1(x_1)\|_{L^2(\Omega_{\epsilon})}^2.
\end{split}
\end{equation*}
Then we have the relation
\begin{equation*}
\mathcal{E}^{k}(v_1,\Gamma_1)\leq \mathcal{E}_*^{k}(v_1,\Gamma_{1})+\mathcal{O}_M(\epsilon).
\end{equation*}
\end{lemma}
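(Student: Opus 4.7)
The plan is to view the change of variables $\phi_1(x) := x + \epsilon v_\epsilon(x)$, which maps $\Omega_\epsilon$ diffeomorphically onto $\Omega_1$, as the primary tool for comparing the two energies. The key structural feature is that $v_\epsilon$ is divergence free, so
\[
\det J\phi_1 = 1 + \epsilon \,\mathrm{tr}(\nabla v_\epsilon) + O(\epsilon^2 \|\nabla v_\epsilon\|_{L^\infty}^2) = 1 + O_M(\epsilon^2),
\]
since the linear term vanishes identically. For the two volume pieces of the energy this is immediate: a bare change of variables converts $\|v_1\|_{L^2(\Omega_1)}^2$ to $\|v_1(x_1)\|_{L^2(\Omega_\epsilon)}^2$ modulo $O_M(\epsilon^2)$, and a Moser-style estimate together with the chain rule $\nabla(u \circ \phi_1) = (\nabla u)(\phi_1) + \epsilon(\nabla u)(\phi_1)\nabla v_\epsilon$ converts the $H^{k-1}$ norm of $\omega_1$ on $\Omega_1$ to the $H^{k-1}$ norm of $\omega_1(x_1)$ on $\Omega_\epsilon$ with an error of size $\epsilon\|v_\epsilon\|_{H^k}\|\omega_1\|_{H^{k-1}} = O_M(\epsilon)$, thanks to the regularization bounds established in Steps~1--2.

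The boundary pieces require more care. Given $f\in H^s(\Gamma_\epsilon)$ set $f_1 := f\circ \phi_1^{-1} \in H^s(\Gamma_1)$ and define the pulled-back Dirichlet-to-Neumann operator $\tilde{\mathcal{N}}_1 f := (\mathcal{N}_1 f_1)\circ \phi_1$. Flattening the harmonic extension via $\phi_1$ replaces $\Delta$ by a variable coefficient elliptic operator of the form $\mathrm{div}(A\nabla \cdot)$ with $A = (J\phi_1)^{-1}(J\phi_1)^{-T}\det J\phi_1 = I + O_M(\epsilon)$, so the same paradifferential analysis used for the balanced elliptic estimates in Section~\ref{BEE} yields the first-order perturbation bound
\[
\|(\tilde{\mathcal{N}}_1 - \mathcal{N}_\epsilon)f\|_{H^s(\Gamma_\epsilon)} \lesssim_M \epsilon \|f\|_{H^{s+1}(\Gamma_\epsilon)}
\]
for a suitable range of $s$. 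Iterating this telescoping identity, together with the higher order bounds $\|\mathcal{N}_\epsilon^j\|_{H^{s+j}\to H^s}\lesssim_M 1$ and $\|\tilde{\mathcal{N}}_1^j\|_{H^{s+j}\to H^s}\lesssim_M 1$ from Proposition~\ref{higherpowers}, one obtains
\[
\|(\tilde{\mathcal{N}}_1^j - \mathcal{N}_\epsilon^j)f\|_{L^2(\Gamma_\epsilon)} \lesssim_M \epsilon \|f\|_{H^j(\Gamma_\epsilon)}, \qquad j = k-1, \ k - \tfrac{3}{2}.
\]
Applied to $f = a_1(x_1)$ and $f = a_1^{-1}(x_1)D_t a_1(x_1)$, which are uniformly bounded in $H^{k-1}(\Gamma_\epsilon)$ and $H^{k-3/2}(\Gamma_\epsilon)$ respectively by the coercivity of the energy, this produces an $O_M(\epsilon^{1/2})$ error at the $L^2$ level, hence $O_M(\epsilon)$ after squaring.

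The remaining ingredient is the surface area element: the pullback $\phi_1^* dS_{\Gamma_1} = (1 + O_M(\epsilon))\,dS_{\Gamma_\epsilon}$, with the $O_M(\epsilon)$ term computable from the first variation of area (involving $\epsilon \,\mathrm{div}_{\Gamma_\epsilon} v_\epsilon^\top - \epsilon \kappa_\epsilon\, v_\epsilon \cdot n_{\Gamma_\epsilon}$). Multiplying by the uniformly bounded energy densities and integrating gives another harmless $O_M(\epsilon)$ contribution. Collecting the four pieces establishes the claim.

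The main obstacle is the operator-level bound on $\tilde{\mathcal{N}}_1 - \mathcal{N}_\epsilon$ at the sharp regularity, since the perturbation analysis for the variable coefficient Dirichlet problem has to be carried out in Sobolev spaces that match the good variables exactly; this is where one must invoke the balanced elliptic estimates of Section~\ref{BEE} in their full generality, and where the choice of regularization scale $\epsilon^{-1}$ for the boundary becomes essential, as it ensures the perturbation coefficients $A-I$ are controlled in the correct pointwise and Sobolev norms simultaneously.
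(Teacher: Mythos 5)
Your route differs from the paper's. You propose conjugating the Dirichlet-to-Neumann operator through $\phi_1$ to a variable-coefficient operator and running a first-order perturbation argument, while the paper proves the key comparison bound $\|(\mathcal{N}_1 f)(x_1)-\mathcal{N}_\epsilon(f(x_1))\|_{H^s(\Gamma_\epsilon)}\lesssim_M\epsilon\|f\|_{H^{s+1}(\Gamma_1)}$ (Proposition~\ref{changeofvar2}) directly, via a duality computation at the endpoint $s=-\frac{1}{2}$ (rewriting the pairing against a test function as a bulk integral and changing variables) and a chain-rule/trace argument for $s\geq\frac12$, then interpolating. The paper's endpoint duality step sidesteps the need to control the conjugated coefficient matrix $A-I$ at top Sobolev regularity, where $\|A-I\|_{H^{k}}\sim\epsilon\|v_\epsilon\|_{H^{k+1}}\|v_\epsilon\|_{H^k}\sim 1$ is \emph{not} small; your sketch acknowledges that the regularization scale is essential here but does not close this loop.

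The more decisive issue is the fractional power $j=k-\frac32$. Your telescoping identity $\tilde{\mathcal{N}}_1^j-\mathcal{N}_\epsilon^j=\sum_{i}\tilde{\mathcal{N}}_1^{j-1-i}(\tilde{\mathcal{N}}_1-\mathcal{N}_\epsilon)\mathcal{N}_\epsilon^i$ is an algebraic identity valid only for integer $j$; there is no direct analogue for $j=k-\frac32$, and $\tilde{\mathcal{N}}_1$ is not even self-adjoint on $L^2(\Gamma_\epsilon)$ (the conjugation picks up a Jacobian weight), so defining $\tilde{\mathcal{N}}_1^{k-3/2}$ via spectral calculus is itself problematic. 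The paper avoids this entirely: it never manipulates $\mathcal{N}^{k-3/2}$ as an operator, but instead uses the identity $\|\mathcal{N}^{k-3/2}f\|_{L^2(\Gamma)}=\|\nabla\mathcal{H}\mathcal{N}^{k-2}f\|_{L^2(\Omega)}$ to reduce the $D_t a$ term to a \emph{bulk} $L^2$ norm, changes variables in that volume integral, and then compares $\mathcal{N}_1^{k-2}$ with $\mathcal{N}_\epsilon^{k-2}$ in $H^{1/2}(\Gamma_\epsilon)$ via the same Proposition~\ref{changeofvar2}. You need some such mechanism to handle the half-integer exponent. Finally, a minor internal inconsistency: your stated perturbation bound $\lesssim_M\epsilon\|f\|_{H^j}$, applied to $f$ with $\|f\|_{H^j}\lesssim_M 1$, yields an $O_M(\epsilon)$ error in $L^2$, not $O_M(\epsilon^{1/2})$; since the difference of squared norms has a cross term of size $\|B\|_{L^2}\|E\|_{L^2}$, you in fact \emph{need} the $L^2$ error to be $O_M(\epsilon)$, not just $O_M(\epsilon^{1/2})$, to conclude.
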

Before proving the above lemma, we show how it easily implies the desired energy monotonicity bound. In light of \Cref{changeofvar}, it suffices to establish the bound
\begin{equation*}
\mathcal{E}^{k}_*(v_1,\Gamma_1)\leq (1+C(M)\epsilon)\mathcal{E}^{k}(v_{\epsilon},\Gamma_{\epsilon}).
\end{equation*}
The monotonicity bound for the vorticity is immediate from \Cref{goodvarrel}. For the surface components of the energy, we first use \Cref{goodvarrel}, the fact that $\|\mathcal{N}_{\epsilon}^{k-\frac{3}{2}}\|_{H^{k-\frac{3}{2}}(\Gamma_{\epsilon})\to L^2(\Gamma_{\epsilon})}\lesssim_M 1$ and the regularization bounds for $\Gamma_{\epsilon}$ and $v_{\epsilon}$ to obtain  
\begin{equation}\label{iterateDtaenergy}
\begin{split}
&\int_{\Gamma_{\epsilon}}|\mathcal{N}_{\epsilon}^{k-\frac{3}{2}}(a_{1}^{-1}(x_1)D_ta_1(x_1))|^2\, dS-\int_{\Gamma_{\epsilon}}|\mathcal{N}_{\epsilon}^{k-\frac{3}{2}}(a_{\epsilon}^{-1}D_ta_{\epsilon})|^2\,dS
\\
&=2\int_{\Gamma_{\epsilon}}\mathcal{N}_{\epsilon}^{k-\frac{3}{2}}(a_{\epsilon}^{-1}D_ta_{\epsilon})\mathcal{N}_{\epsilon}^{k-\frac{3}{2}}(a_{\epsilon}^{-1}((D_ta_1)(x_1)-D_ta_{\epsilon}))\,dS+\mathcal{O}_M(\epsilon)
\\
&=-2\epsilon \int_{\Gamma_{\epsilon}}a_{\epsilon}^{-1}\mathcal{N}_{\epsilon}^{k-1}D_ta_{\epsilon}\mathcal{N}_{\epsilon}^{k-1}a_{\epsilon}\,dS+\mathcal{O}_M(\epsilon),
\end{split}
\end{equation}
where in the last line, we used the commutator estimate $\|[\mathcal{N}_\epsilon^{k-1},a_{\epsilon}^{-1}]D_ta_{\epsilon}\|_{L^{2}(\Gamma_\epsilon)}\lesssim_M 1$ to shift a factor of $\mathcal{N}_\epsilon^{\frac{1}{2}}$ onto $\mathcal{N}_{\epsilon}^{k-\frac{3}{2}}D_ta_{\epsilon}$. We similarly observe the leading order relation for the other component of the energy by using (\ref{arelation}) to obtain,
\begin{equation*}
\int_{\Gamma_{\epsilon}}a_1^{-1}(x_1)|\mathcal{N}_{\epsilon}^{k-1}(a_1(x_1))|^2\,dS-\int_{\Gamma_{\epsilon}}a_{\epsilon}^{-1}|\mathcal{N}_{\epsilon}^{k-1}a_{\epsilon}|^2\, dS=2\epsilon\int_{\Gamma_{\epsilon}}a_{\epsilon}^{-1}\mathcal{N}_{\epsilon}^{k-1}D_ta_{\epsilon}\mathcal{N}_{\epsilon}^{k-1}a_{\epsilon}\,dS+\mathcal{O}_M(\epsilon).
\end{equation*}
The first term on the right-hand side of the above relation cancels the main term on the right-hand side of (\ref{iterateDtaenergy}). Combining everything together then gives
\begin{equation*}
\begin{split}
\mathcal{E}^k(v_1,\Gamma_1)\leq (1+C(M)\epsilon)\mathcal{E}^k(v_{\epsilon},\Gamma_{\epsilon}),
\end{split}
\end{equation*}
as desired. It remains now to establish \Cref{changeofvar}. 
\begin{proof}[Proof of \Cref{changeofvar}]
By a simple change of variables, it is clear that the difference between $\|\omega_1(x_1)\|_{H^{k-1}(\Omega_{\epsilon})}^2$ and $\|\omega_{1}\|_{H^{k-1}(\Omega_1)}^2$ contributes only $\mathcal{O}_M(\epsilon)$ errors. This is likewise true for the $L^2$ component of the velocity. The main difficulty is in dealing with the surface components of the energy. For this, we need the following proposition.
\begin{proposition}\label{changeofvar2}
Let $-\frac{1}{2}\leq s\leq k-2$ and let $f\in H^{s+1}(\Gamma_1)$. Then we have the following bound on $\Gamma_{\epsilon}$\emph{:}
\begin{equation*}
\|(\mathcal{N}_{1}f)(x_1)-\mathcal{N}_{\epsilon}(f(x_1))\|_{H^s(\Gamma_{\epsilon})}\lesssim_M \epsilon\|f\|_{H^{s+1}(\Gamma_1)}.   
\end{equation*}
\end{proposition}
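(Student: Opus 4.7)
The plan is to compare the harmonic extensions on the two domains after pulling back by the diffeomorphism $\phi := I + \epsilon v_\epsilon : \Omega_\epsilon \to \Omega_1$. Let $u_1 := \mathcal{H}_1 f$ denote the harmonic extension of $f$ to $\Omega_1$, set $\tilde u_1 := u_1 \circ \phi$ on $\Omega_\epsilon$, and let $u_\epsilon := \mathcal{H}_\epsilon(f \circ x_1)$. Then $w := \tilde u_1 - u_\epsilon$ vanishes on $\Gamma_\epsilon$, and the quantity to estimate will be decomposed into the normal trace of $\nabla w$ on $\Gamma_\epsilon$ (controlled by elliptic regularity for $w$) together with explicit $\mathcal{O}(\epsilon)$ corrections coming from $D\phi - I = \epsilon\nabla v_\epsilon$ and from the difference $n_{\Gamma_1}(x_1) - n_{\Gamma_\epsilon}$.

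Since $v_\epsilon$ is divergence free one checks that $\det D\phi = 1 + \mathcal{O}_M(\epsilon^2)$, and consequently the pullback of $\Delta u_1 = 0$ has the divergence form
\begin{equation*}
\partial_i\bigl(a^{ij}\partial_j \tilde u_1\bigr) = 0 \ \ \text{in } \Omega_\epsilon, \qquad a^{ij} = \delta^{ij} + \epsilon G^{ij},
\end{equation*}
where $G^{ij}$ is an explicit expression linear in $\nabla v_\epsilon$ plus a $\mathcal{O}_M(\epsilon)$ remainder. Equivalently, $\Delta \tilde u_1 = -\epsilon\,\partial_i(G^{ij}\partial_j \tilde u_1)$, so $w$ solves a Dirichlet problem on $\Omega_\epsilon$ with zero boundary data and this source. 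The next step is to apply the balanced Dirichlet estimates of \Cref{direst} (in the range $s + \frac{3}{2}\geq 2$) and \Cref{desiredelliptic} together with basic $H^1$ estimates (for the low end of the range), combined with the bilinear product bound \Cref{productest} and the balanced Moser estimate \Cref{Moservariant} to handle $G^{ij}\partial_j \tilde u_1$. Together with the standard identity $\|u_1\|_{H^{s+3/2}(\Omega_1)} \lesssim_M \|f\|_{H^{s+1}(\Gamma_1)}$ and its change-of-variables counterpart $\|\tilde u_1\|_{H^{s+3/2}(\Omega_\epsilon)} \lesssim_M \|u_1\|_{H^{s+3/2}(\Omega_1)}$, this will yield
\begin{equation*}
\|w\|_{H^{s+3/2}(\Omega_\epsilon)} \lesssim_M \epsilon\, \|f\|_{H^{s+1}(\Gamma_1)}.
\end{equation*}

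The final step is to unravel the target difference on $\Gamma_\epsilon$. Using the identities $(\nabla u_1)(x_1) = (D\phi)^{-T} \nabla \tilde u_1 = \nabla \tilde u_1 - \epsilon(\nabla v_\epsilon)^T\nabla \tilde u_1 + \mathcal{O}_M(\epsilon^2)$ and $n_{\Gamma_1}(x_1) = n_{\Gamma_\epsilon} + \epsilon N_\epsilon + \mathcal{O}_M(\epsilon^2)$, with $N_\epsilon$ computable explicitly from $\nabla v_\epsilon$ and $n_{\Gamma_\epsilon}$, a direct computation gives on $\Gamma_\epsilon$
\begin{equation*}
(\mathcal{N}_1 f)(x_1) - \mathcal{N}_\epsilon(f \circ x_1) = n_{\Gamma_\epsilon}\cdot (\nabla w)\bigl|_{\Gamma_\epsilon} + \epsilon\, R_\epsilon,
\end{equation*}
where $R_\epsilon$ is a bilinear expression in $\nabla \tilde u_1|_{\Gamma_\epsilon}$, $\nabla v_\epsilon|_{\Gamma_\epsilon}$, and $n_{\Gamma_\epsilon}$. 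The leading term will be estimated in $H^s(\Gamma_\epsilon)$ by the balanced trace inequality \Cref{L1bound} combined with the above bound on $w$, yielding the desired $\epsilon\,\|f\|_{H^{s+1}(\Gamma_1)}$ control. The remainder $\epsilon R_\epsilon$ will be bounded in $H^s(\Gamma_\epsilon)$ by another trace step and the boundary product estimate \Cref{boundaryest}, using the uniform regularization bounds on $v_\epsilon$ established in the previous step (see \Cref{Prop velo reg}).

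The hard part is bookkeeping at the top endpoint $s = k-2$, where the source $\epsilon\, G^{ij}\partial_j \tilde u_1$ must be controlled in $H^{k-3/2}(\Omega_\epsilon)$. At that regularity level $v_\epsilon$ must be placed near its full $H^{k+1}$ budget, whose norm carries an $\epsilon^{-1}$ from the regularization step, so the explicit prefactor $\epsilon$ must exactly defeat that $\epsilon^{-1}$ with no net loss. To prevent any spurious derivative loss, one must split $G^{ij}\partial_j\tilde u_1$ using the balanced paraproduct bound \Cref{productestref}: whenever $v_\epsilon$ is placed at top regularity its companion factor will be measured in $L^\infty$ (so the $\epsilon^{-1}$ loss is never triggered), and vice versa. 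Once this frequency-balanced bookkeeping is executed -- and the analogous split is made when controlling $R_\epsilon$ on the boundary -- the factor $\epsilon$ survives uniformly throughout $s \in [-\tfrac12, k-2]$ and the stated estimate follows.
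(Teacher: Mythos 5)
Your setup (pullback by $\phi = I + \epsilon v_\epsilon$, elliptic problem for $w = \tilde u_1 - u_\epsilon$, trace) is essentially the same strategy the paper uses for the range $\tfrac12 \leq s \leq k-2$, with two cosmetic differences: you write $\Delta \tilde u_1$ in divergence form via $\det D\phi = 1 + \mathcal{O}_M(\epsilon^2)$, whereas the paper estimates $\Delta((\mathcal{H}_1 f)\circ x_1)$ directly by chain rule; and you expand $n_{\Gamma_1}(x_1) - n_{\Gamma_\epsilon}$ geometrically, whereas the paper rewrites it as $a_\epsilon^{-1}\nabla p_\epsilon - a_1^{-1}(x_1)(\nabla p_1)(x_1)$ and invokes \Cref{goodvarrel}. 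Both work for $s \geq \tfrac12$.

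The genuine gap is the bottom endpoint $s = -\tfrac12$, and more generally the range $-\tfrac12 \leq s < \tfrac12$. Your final step relies on \Cref{L1bound} to take the normal trace of $\nabla w$ in $H^s(\Gamma_\epsilon)$; that proposition is stated only for $s > 0$. At $s = -\tfrac12$ you only control $w \in H^1(\Omega_\epsilon)$ with $\Delta w = -\epsilon\,\partial_i(G^{ij}\partial_j\tilde u_1)$ and $G^{ij}\partial_j\tilde u_1 \in L^2$, which is not enough to give the normal derivative $n_{\Gamma_\epsilon}\cdot\nabla w$ a classical $H^{-1/2}(\Gamma_\epsilon)$ trace (the conormal derivative $n_{\Gamma_\epsilon}\cdot(\nabla w + \epsilon G\nabla\tilde u_1)$ is defined weakly, but $G^{ij}\partial_j\tilde u_1$ itself has no boundary trace at $L^2$ regularity, so the two cannot be separated). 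Likewise, the remainder $R_\epsilon$ involves $\nabla\tilde u_1\big|_{\Gamma_\epsilon}$, which is also undefined for $f \in H^{1/2}(\Gamma_1)$. The paper resolves this by treating $s = -\tfrac12$ with a separate duality argument — pairing $(\mathcal{N}_1 f)(x_1) - \mathcal{N}_\epsilon(f\circ x_1)$ against a test function $g$, converting to volume integrals via the divergence theorem, and comparing $\mathcal{H}_1 h$ with $\mathcal{H}_\epsilon g$ where $h = g(x_1^{-1})\mathcal{H}_1 J$ — and then interpolating with the $s = \tfrac12$ case to fill the gap. Your proof needs this extra ingredient.

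A smaller remark: your worry about the top endpoint $s = k-2$ "triggering the $\epsilon^{-1}$" from the $H^{k+1}$ budget of $v_\epsilon$ is not the real issue. At $s = k-2$ the source $\epsilon G^{ij}\partial_j\tilde u_1$ is estimated in $H^{k-3/2}(\Omega_\epsilon)$ using $G^{ij}$ (hence $\nabla v_\epsilon$) at $H^{k-1}$ — i.e., $v_\epsilon$ at $H^k$ with its $\mathcal{O}_M(1)$ norm — paired via the balanced paraproduct \Cref{productest} with $\alpha_1 = 1$, $\alpha_2 = \tfrac12$ against $\tilde u_1 \in H^{k-1/2} \cap C^{1/2}$. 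The $H^{k+1}$ budget is never spent; the balanced product estimate is indeed needed (as you say), but for the usual reason of avoiding the naive algebra bound, not to cancel an $\epsilon^{-1}$.
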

\begin{proof}
First, we handle the case $s=-\frac{1}{2}$. If $g\in C^{\infty}(\Gamma_{\epsilon})$, we write $h=g(x_1^{-1})\mathcal{H}_1J$ where $J$ is the Jacobian corresponding to the change of variables $y=x_1(x)$. Then we have by the divergence theorem,
\begin{equation*}
\begin{split}
\int_{\Gamma_{\epsilon}}g((\mathcal{N}_1f)(x_1)-\mathcal{N}_{\epsilon}(f(x_1)))\,dS&=\int_{\Gamma_{1}}h\mathcal{N}_1f\,dS-\int_{\Gamma_{\epsilon}}g\mathcal{N}_{\epsilon}(f(x_1))\,dS
\\
&=\int_{\Omega_1}\nabla\mathcal{H}_1h\cdot\nabla \mathcal{H}_1f\,dx-\int_{\Omega_{\epsilon}}\nabla\mathcal{H}_{\epsilon}g\cdot\nabla \mathcal{H}_{\epsilon} (f(x_1))\,dx.
\end{split}
\end{equation*}
Using again the change of variables $x\mapsto x_1$ for the first term in the second line above, together with the estimates $$\|\mathcal{H}_1h\|_{H^{1}(\Omega_1)}\lesssim_M \|g\|_{H^{\frac{1}{2}}(\Gamma_{\epsilon})}\ \ \text{and}\  \ \|(\nabla\mathcal{H}_1 f)(x_1)\|_{L^2(\Omega_{\epsilon})}\lesssim_M \|f\|_{H^{\frac{1}{2}}(\Gamma_1)},$$ it is easy to verify
\begin{equation}\label{twoH-12terms}
\begin{split}
\int_{\Gamma_{\epsilon}}g((\mathcal{N}_1f)(x_1)-\mathcal{N}_{\epsilon}(f(x_1)))\,dS&\lesssim_M \int_{\Omega_{\epsilon}}\nabla((\mathcal{H}_1h)(x_1)-\mathcal{H}_{\epsilon}g)\cdot (\nabla\mathcal{H}_1f)(x_1)\,dx
\\
&+\int_{\Omega_{\epsilon}}\nabla H_{\epsilon}g\cdot \nabla ((\mathcal{H}_1f)(x_1)-\mathcal{H}_{\epsilon}(f(x_1)))\,dx+\epsilon\|g\|_{H^{\frac{1}{2}}(\Gamma_\epsilon)}\|f\|_{H^{\frac{1}{2}}(\Gamma_1)}   .
\end{split}
\end{equation}
We label the first and second terms on the right-hand side above by $I_1$ and $I_2$. For $I_1$, we use the fact that on $\Gamma_{\epsilon}$ we have
\begin{equation*}
(\mathcal{H}_1h)(x_1)-\mathcal{H}_{\epsilon}g=(J(x_1)-I)g   
\end{equation*}
to obtain the following simple elliptic estimate
\begin{equation*}
I_1\lesssim_M \epsilon\|f\|_{H^{\frac{1}{2}}(\Gamma_1)}\|g\|_{H^{\frac{1}{2}}(\Gamma_\epsilon)}+\|f\|_{H^{\frac{1}{2}}(\Gamma_1)}\|\Delta ((\mathcal{H}_1h)(x_1))\|_{H^{-1}(\Omega_\epsilon)}\lesssim_M \epsilon\|f\|_{H^{\frac{1}{2}}(\Gamma_1)}\|g\|_{H^{\frac{1}{2}}(\Gamma_\epsilon)},   
\end{equation*}
where we used the chain rule and that $\mathcal{H}_1h$ is harmonic to estimate $\Delta ((\mathcal{H}_1h)(x_1))$. A similar elliptic estimate yields the same bound for $I_2$. This establishes the case $s=-\frac{1}{2}$. By interpolation, we only need to handle the remaining cases when $\frac{1}{2}\leq s\leq k-2$. As a starting point, we have from some simple manipulations with the chain rule and the trace inequality,
\begin{equation*}
\begin{split}
\|(\mathcal{N}_1f)(x_1)-\mathcal{N}_{\epsilon}(f(x_1))\|_{H^s(\Gamma_\epsilon)}&\lesssim \epsilon\|f\|_{H^{s+1}(\Gamma_1)}+ \|(n_{\Gamma_1}(x_1)-n_{\Gamma_{\epsilon}})\cdot(\nabla\mathcal{H}_1f)(x_1)\|_{H^s(\Gamma_\epsilon)}
\\
&+\|(\mathcal{H}_1f)(x_1)-\mathcal{H}_{\epsilon}(f(x_1))\|_{H^{s+\frac{3}{2}}(\Omega_{\epsilon})}.
\end{split}
\end{equation*}
By writing $n_{\Gamma_1}(x_1)-n_{\Gamma_\epsilon}=a_{\epsilon}^{-1}\nabla p_{\epsilon}-a_1^{-1}(x_1)(\nabla p_1)(x_1)$ and using the relations in \Cref{goodvarrel} and that $s\leq k-2$, the second term on the right is straightforward to control by $\epsilon\|f\|_{H^{s+1}(\Gamma_{\epsilon})}$. For the third term, we do an elliptic estimate analogous to the $s=-\frac{1}{2}$ case (using that $(\mathcal{H}_1f)(x_1)-\mathcal{H}_{\epsilon}(f(x_1))=0$ on $\Gamma_{\epsilon}$) to obtain
\begin{equation*}
\|(\mathcal{H}_1f)(x_1)-\mathcal{H}_{\epsilon}(f(x_1))\|_{H^{s+\frac{3}{2}}(\Omega_{\epsilon})}\lesssim_M \|\Delta ((\mathcal{H}_1 f)(x_1))\|_{H^{s-\frac{1}{2}}(\Omega_{\epsilon})}\lesssim_M \epsilon\|f\|_{H^{s+1}(\Gamma_1)}.    
\end{equation*}
This completes the proof.
\end{proof}
Now we return to the proof of \Cref{changeofvar}. We note first that
\begin{equation*}
\begin{split}
\|(\mathcal{N}^{k-1}_1a_1)(x_1)-\mathcal{N}^{k-1}_{\epsilon}(a_1(x_1))\|_{L^2(\Gamma_{\epsilon})}&\lesssim \|\mathcal{N}_{\epsilon}(\mathcal{N}^{k-2}_1a_1)(x_1)-\mathcal{N}^{k-1}_{\epsilon}(a_1(x_1))\|_{L^2(\Gamma_{\epsilon})} 
\\
&+\|\mathcal{N}_{\epsilon}(\mathcal{N}^{k-2}_1a_1)(x_1)-(\mathcal{N}^{k-1}_1a_1)(x_1)\|_{L^2(\Gamma_{\epsilon})}.
\end{split}
\end{equation*}
Applying \Cref{changeofvar2} to the term in the second line and using the $H^1\to L^2$ bound for $\mathcal{N}$, we have
\begin{equation*}
\|(\mathcal{N}^{k-1}_1a_1)(x_1)-\mathcal{N}^{k-1}_{\epsilon}(a_1(x_1))\|_{L^2(\Gamma_{\epsilon})}\lesssim_M \|(\mathcal{N}^{k-2}_1a_1)(x_1)-\mathcal{N}^{k-2}_{\epsilon}(a_1(x_1))\|_{H^1(\Gamma_{\epsilon})}+\mathcal{O}_M(\epsilon).     
\end{equation*}
Iterating this procedure and applying \Cref{changeofvar2} $k-2$ times, we see that we have 
\begin{equation*}
\|(\mathcal{N}^{k-1}_1a_1)(x_1)-\mathcal{N}^{k-1}_{\epsilon}(a_1(x_1))\|_{L^2(\Gamma_{\epsilon})}\lesssim_M\epsilon   . 
\end{equation*}
It follows from the above and a change of variables that we have
\begin{equation*}
\|a_1^{-\frac{1}{2}}\mathcal{N}_1^{k-1}a_1\|_{L^2(\Gamma_1)}^2\leq \|a_1^{-\frac{1}{2}}(x_1)\mathcal{N}_{\epsilon}^{k-1}(a_1(x_1))\|_{L^2(\Gamma_{\epsilon})}^2+\mathcal{O}_M(\epsilon).
\end{equation*}
To conclude the proof of \Cref{changeofvar}, we need to show that
\begin{equation*}\label{Dtachangebound}
\|\nabla\mathcal{H}_1(\mathcal{N}_1^{k-2}(a_1^{-1}D_ta_1))\|_{L^2(\Omega_{1})}^2\leq \|\nabla\mathcal{H}_{\epsilon}\mathcal{N}_{\epsilon}^{k-2}(a_1^{-1}(x_1)D_ta_1(x_1))\|_{L^2(\Omega_{\epsilon})}^2+\mathcal{O}_M(\epsilon).
\end{equation*}
From a change of variables, we see that
\begin{equation*}
\|\nabla\mathcal{H}_1(\mathcal{N}_1^{k-2}(a_1^{-1}D_ta_1))\|_{L^2(\Omega_{1})}^2- \|\nabla\mathcal{H}_{\epsilon}\mathcal{N}_{\epsilon}^{k-2}(a_1^{-1}(x_1)D_ta_1(x_1))\|_{L^2(\Omega_{\epsilon})}^2\lesssim_M \mathcal{J}+\mathcal{O}_M(\epsilon),
\end{equation*}
where
\begin{equation*}
\mathcal{J}:=\|(\nabla\mathcal{H}_1\mathcal{N}_1^{k-2}(a_1^{-1}D_ta_1))(x_1)-\nabla\mathcal{H}_{\epsilon}\mathcal{N}_{\epsilon}^{k-2}(a^{-1}(x_1)D_ta_1(x_1))\|_{L^2(\Omega_{\epsilon})}.  
\end{equation*}
By elliptic regularity, it is easy to verify the bound
\begin{equation*}
\mathcal{J}\lesssim_M \|(\mathcal{N}_1^{k-2}(a_1^{-1}D_ta_1))(x_1)-\mathcal{N}_{\epsilon}^{k-2}(a_1^{-1}(x_1)D_ta_1(x_1))\|_{H^{\frac{1}{2}}(\Gamma_{\epsilon})}+\mathcal{O}_M(\epsilon).    
\end{equation*}
From here, we use \Cref{changeofvar2} similarly to the other surface term in the energy to estimate
\begin{equation*}
\|(\mathcal{N}_1^{k-2}(a_1^{-1}D_ta_1))(x_1)-\mathcal{N}_{\epsilon}^{k-2}(a_1^{-1}(x_1)D_ta_1(x_1))\|_{H^{\frac{1}{2}}(\Gamma_{\epsilon})}\lesssim_M\epsilon.    
\end{equation*}
This completes the proof.
\end{proof}

\subsection{Convergence of the iteration scheme}\label{COTS} 

We have now arrived at the final step of the existence proof, where we use our one step 
iteration result in Theorem~\ref{onestepiteration} 
in order to prove the existence of regular solutions.
Precisely, we aim to establish the following theorem.

\begin{theorem}\label{t:existence}
Let $k$ be a sufficiently large even integer 
and $M > 0$.
Let $(v_0,\Gamma_0)\in\mathbf{H}^k$ be an initial data set so that $\|(v_0,\Gamma_0)\|_{\mathbf{H}^k}\leq M$. Then there exists 
$T = T(M)$ and a solution $(v,\Gamma)$ to the free boundary incompressible Euler equations on $[0,T]$ with this initial data and the following regularity properties:
\begin{equation*}
(v,\Gamma) \in L^\infty([0,T]; \mathbf H^k) \cap  C([0,T]; \mathbf H^{k-1})   
\end{equation*}
with the uniform bound
\begin{equation*}
\|(v,\Gamma)(t)\|_{ \mathbf H^k} \lesssim_M 1, \qquad t \in [0,T].  
\end{equation*}
\end{theorem}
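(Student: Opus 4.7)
The plan is to use Theorem~\ref{onestepiteration} as the engine for a discrete-in-time Euler scheme with step size $\epsilon>0$, construct a continuous-in-time approximate solution $(v_\epsilon, \Gamma_\epsilon)$ by interpolation, and then pass to the limit $\epsilon\to 0$ using the distance functional $D$ from Section~\ref{DEAU} to identify a unique limiting solution. The existence time $T=T(M)$ will emerge naturally from the $(1+C(M)\epsilon)^{j}$ growth of the energy over $j\lesssim_M \epsilon^{-1}$ iterations.

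\textbf{Step 1: Preparing an admissible initial iterate.} Given $(v_0,\Gamma_0)\in\mathbf{H}^k$ with $\|(v_0,\Gamma_0)\|_{\mathbf H^k}\le M$, the hypothesis \eqref{inductiveregbound} of Theorem~\ref{onestepiteration} does not hold as stated, so I first replace $(v_0,\Gamma_0)$ by a single regularization at scale $\epsilon^{-1}$ using the operators $\Psi_{\leq \epsilon^{-1}}$ from Section~\ref{SSRO} applied to the velocity (together with a parabolic surface regularization as in Section~\ref{DRS}). By the regularization bounds of Proposition~\ref{c reg bounds}, the initial constants in \eqref{inductiveregbound} can be chosen as $K(M)\gg K'(M)\gg M$ depending only on $M$, while the $\mathbf{H}^k$ norm grows by at most a factor of $C(M)$ and the pointwise discrepancy is $o(1)$ as $\epsilon\to 0$. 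This gives an admissible base iterate $(v_\epsilon(0),\Gamma_\epsilon(0))$.

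\textbf{Step 2: Iterated discrete scheme.} Set $(v_\epsilon(j\epsilon),\Gamma_\epsilon(j\epsilon)) := (v_j,\Gamma_j)$ where $(v_j,\Gamma_j)$ is obtained from $(v_{j-1},\Gamma_{j-1})$ by applying Theorem~\ref{onestepiteration}. The theorem's three conclusions give exactly what I need to iterate: the energy inequality $\mathcal{E}^k(v_{j+1},\Gamma_{j+1})\le (1+C(M)\epsilon)\mathcal{E}^k(v_j,\Gamma_j)$ yields, after $N\le c_0\epsilon^{-1}$ steps,
\[
\mathcal{E}^k(v_N,\Gamma_N)\le e^{C(M)c_0}\mathcal{E}^k(v_0,\Gamma_0)\lesssim_M 1,
\]
and the persistence bound \eqref{regboundprop} on $\|v_j\|_{H^{k+1}}$ and $\|\omega_j\|_{H^{k+n}}$ allows the inductive hypothesis to be reused provided $c_0=c_0(M)$ is chosen so that $K'(M)+c_0C(M)<\tfrac12 K(M)$. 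Coercivity \eqref{modifiedenergy} then gives a uniform bound $\|(v_j,\Gamma_j)\|_{\mathbf H^k}\le C(M)$. Propagation of the Taylor sign condition follows from the $C^1$ pointwise approximation \eqref{approx-sln}, since $a$ is controlled in $C^{1/2}$ by $B$ and changes by at most $O_M(\epsilon)$ per step, hence by $O_M(1)$ over $T=c_0(M)$, so the lower bound can be maintained by shrinking $T$ if necessary.

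\textbf{Step 3: Continuous-in-time interpolant.} On each interval $[j\epsilon,(j+1)\epsilon]$, define $(v_\epsilon(t),\Gamma_\epsilon(t))$ by flowing the domain along the constant velocity $v_\epsilon(j\epsilon)$ (giving the exact map $\Gamma_\epsilon((j+1)\epsilon)=(I+\epsilon v_\epsilon(j\epsilon))\Gamma_\epsilon(j\epsilon)$ up to the $O_{C^1}(\epsilon^2)$ error from \eqref{approx-sln}) and linearly interpolating the velocity along particle trajectories. The bound \eqref{approx-sln} ensures that $(v_\epsilon,\Gamma_\epsilon)$ is an approximate solution of \eqref{Euler} in the sense that the residual in the momentum equation is $O_{C^1}(\epsilon)$ uniformly on $[0,T]$, while the uniform $\mathbf{H}^k$ bound persists to all $t\in[0,T]$ (up to a constant) by interpolation with the $j\epsilon$ snapshot bounds.

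\textbf{Step 4: Convergence via the distance functional.} This is the main technical step. The idea is to compare two approximations $(v_\epsilon,\Gamma_\epsilon)$ and $(v_{\epsilon'},\Gamma_{\epsilon'})$ using the functional $D$ from \eqref{diff functional candidate}. Revisiting the proof of Theorem~\ref{Difference} in Section~\ref{DEAU}, the $O_{C^1}(\epsilon)$ residual in the momentum equation contributes additive source terms to the derivative of $D$ that are controlled by $C_M(\epsilon+\epsilon')D^{1/2}$ (using the uniform $\mathbf{H}^k$ bound to absorb the control parameters $A$ and $B$). Gronwall then yields
\[
\sup_{t\in[0,T]}D\bigl((v_\epsilon,\Gamma_\epsilon)(t),(v_{\epsilon'},\Gamma_{\epsilon'})(t)\bigr)\lesssim_M (\epsilon+\epsilon')^2,
\]
showing that $(v_\epsilon,\Gamma_\epsilon)$ is Cauchy in the weak distance $D$. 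The limit $(v,\Gamma)$ inherits the uniform $L^\infty_t\mathbf{H}^k$ bound by weak-$*$ compactness, and convergence in $D$ together with this bound is more than enough to pass to the limit in the Euler equations \eqref{Euler}--\eqref{BC1} in the sense of distributions, identifying $(v,\Gamma)$ as a genuine solution on $[0,T]$.

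\textbf{Step 5: Time regularity.} The $L^\infty_t\mathbf{H}^k$ bound is immediate from the uniform estimate; the continuity $(v,\Gamma)\in C([0,T];\mathbf H^{k-1})$ follows by interpolating the $\mathbf{H}^k$ bound against the $D$-Lipschitz-in-$t$ control that the Euler equations themselves provide (distance grows linearly in $t$ at rate $O_M(1)$, by a direct computation of $\tfrac{d}{dt}D$ with source terms $\equiv 0$).

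\textbf{Main obstacle.} The most delicate point is Step~4: one must extend the difference estimate of Theorem~\ref{Difference} to approximate solutions whose domains are flowing along slightly perturbed (piecewise constant in time) velocities, and whose pressure gradients satisfy the Euler equations only up to an $O(\epsilon)$ error. The boundary integrals in \eqref{first term} and \eqref{interior1} must be re-examined because the boundary flow velocity of $\tilde\Omega_t=\Omega_{\epsilon,t}\cap\Omega_{\epsilon',t}$ is no longer exactly the trace of $v_\epsilon$ or $v_{\epsilon'}$; the discrepancy is of order $\epsilon+\epsilon'$ in $C^1$, which must be shown to contribute only admissible $O((\epsilon+\epsilon')^2)$ source terms after using Cauchy--Schwarz against $D^{1/2}$. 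A subordinate but nontrivial point is that one must use the stronger pointwise approximation of $(v_{j+1},\Gamma_{j+1})$ in \eqref{approx-sln} (not just the $\mathbf H^k$ bound) to control the $D_t p$ contribution in the control parameter $B_{\text{diff}}$ of \eqref{BCONT}, so that the hypotheses of Theorem~\ref{Difference} are uniformly met along the approximating sequence.
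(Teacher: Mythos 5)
Your Steps 1 and 2 match the paper's strategy almost exactly: regularize the initial data once at scale $\epsilon^{-1}$ to enter the inductive hypothesis of Theorem~\ref{onestepiteration}, then iterate and use energy monotonicity plus coercivity to close a bootstrap over $\lesssim_M \epsilon^{-1}$ steps. From Step 3 onward, however, your route diverges from the paper's, and I believe the divergence introduces a genuine gap.

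The paper does not build a continuous-in-time interpolant or invoke the distance functional of Section~\ref{DEAU} at this stage. Instead, it works entirely with the discrete sequence: from the pointwise approximation property \eqref{approx-sln}, it iterates the one-step relations to obtain uniform Lipschitz-in-time (and Lipschitz-in-space) bounds on $v_\epsilon$, $\nabla v_\epsilon$, $\eta_\epsilon$, and $\nabla p_\epsilon$ at the discrete times; then it extracts a uniformly convergent subsequence via Arzel\`a--Ascoli (taking $\epsilon = 2^{-m}$ so the discrete time grids are nested); and finally it passes to the limit directly in the iterated one-step formulas to identify the limit as an exact solution. This is elementary and completely avoids the stability estimate. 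Uniqueness of the limit is supplied separately by Theorem~\ref{t:unique}, and the paper explicitly defers strong $\mathbf H^k$-continuity to the rough-solution section.

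The gap in your approach is in Step~4. Theorem~\ref{Difference} is stated and proved for \emph{exact} solutions of the free boundary Euler equations, and its proof leans heavily on exact structure in at least three ways that do not survive replacing ``solution'' by ``approximate solution with $O_{C^1}(\epsilon)$ residual'': (i) the equation \eqref{veqn} for $v-v_h$ in $\tilde\Omega_t$ is used literally, and the entire boundary-term bookkeeping in \eqref{first term}--\eqref{interior1} requires that the normal velocity of $\tilde\Gamma_t$ agrees a.e.\ with the trace of $v$ or $v_h$; (ii) the dynamic boundary condition $p=0$ and the consequence $D_tp=0$ on $\Gamma_t$ are used to kill terms (e.g.\ in the passage following \eqref{Int on s_h>0}); and (iii) the delicate cubic-error estimate \eqref{last-cubic}, involving the Vitali covering, the $W^{1,6}$ interpolation Lemma~\ref{l:dg-6}, and the nontangential maximal function bound, was derived under the assumption that $p-p_h$ satisfies a specific homogeneous/inhomogeneous Dirichlet decomposition in $\tilde\Omega$ driven by exact source terms. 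For your continuous interpolant, none of these hold exactly, and simply asserting that the residual ``contributes only admissible $O((\epsilon+\epsilon')^2)$ source terms'' is not a proof. In particular, your piecewise-in-time interpolated velocity is generally not divergence-free, so the very pressure $p_\epsilon$ entering $D$ is ill-defined unless you further project; and even after projection, the pressure that makes the discrete momentum identity hold is not the Poisson pressure of the interpolant, which is exactly the object $D$ is built from. Tracking all the induced error terms through the Vitali/$W^{1,6}$/nontangential-maximal-function machinery would be a substantial undertaking that is not addressed in your sketch.

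To summarize: your approach is an honest alternative (and, if completed, would give uniqueness of the limit as a byproduct), but it requires a nontrivial extension of Theorem~\ref{Difference} to approximate solutions on time-dependent domains, and you should either prove that extension or switch to the paper's more elementary compactness argument based on iterated Lipschitz-in-time bounds and Arzel\`a--Ascoli. Your Step~5 is also backwards relative to the paper: the $C([0,T];\mathbf H^{k-1})$ continuity there is a direct consequence of the equations (since $\partial_t v = -v\cdot\nabla v-\nabla p$ is bounded in $H^{k-1}$) rather than a distance-functional interpolation, which is both simpler and avoids circularity.
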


We remark that the solution we construct is unique 
by the result in Theorem~\ref{t:unique}.  One missing piece 
here is the lack of continuity in $\mathbf H^k$,
which does not follow from the proof below. However,
this will be rectified in the next section.
We now turn to  the proof of the theorem.

\begin{proof}

Starting from the initial data $(v_0,\Gamma_0) \in \mathbf H^k$ with 
$\Gamma_0 \in \Lambda_*:= \Lambda(\Gamma_*,\epsilon_0,\delta)$,
for each small time scale $\epsilon$ we construct a discrete
approximate solution $(v_\epsilon,\Gamma_\epsilon)$
which is defined at discrete times $t = 0, \epsilon, 2\epsilon, \dots$, as follows:

\begin{enumerate}
\item We define $(v_\epsilon(0), \Gamma_\epsilon(0))$
by directly regularizing $(v_0,\Gamma_0)$ at scale $\epsilon$. Such a regularization is provided by Proposition~\ref{c reg bounds} with $\epsilon = 2^{-j}$. In view of the higher regularity bound there, these regularized data will satisfy the 
hypothesis of our one step Theorem~\ref{onestepiteration}, with $M$ 
replaced by $\tilde M = C(A)M$.

\item We inductively define the approximate solutions $(v_\epsilon(j \epsilon), \Gamma_\epsilon(j\epsilon))$
by repeatedly applying the iteration step in Theorem~\ref{onestepiteration}.
\end{enumerate}

 To control the growth of the $\mathbf H^k$ norms
 of $(v_\epsilon,\Gamma_\epsilon)$
we rely on the energy monotonicity relation, together with the coercivity property in Theorem~\ref{Energy est. thm} (and also the relation (\ref{modifiedenergy})). We use the energy coercivity in both ways.
At time $t = 0$ we have 
\[
\mathcal{E}^k(v_\epsilon(0),\Gamma_\epsilon(0)) \leq C_1(A) M. 
\]
We let our iteration continue for as long as 
\begin{equation}\label{stop-iteration}
\begin{aligned}
& \mathcal{E}^k(v_\epsilon(j\epsilon),\Gamma_\epsilon(j\epsilon)) \leq 2 C_1(A) M,
\\ 
& \Gamma_\epsilon(j\epsilon) \in 2\Lambda_*:= \Lambda(\Gamma_*,\epsilon_0,2\delta).
\end{aligned}
\end{equation}
As long as this happens, using the coercivity in the other direction we get
\[
\| (v_\epsilon(j\epsilon),\Gamma_\epsilon(j\epsilon))\|_{\mathbf H^k} \leq C_2(A) M. 
\]
Now by the energy monotonicity bound \eqref{EMBITT} we conclude that
\[
\mathcal{E}^k(v_\epsilon(j\epsilon),\Gamma_\epsilon(j\epsilon))
\leq (1+ C( C_2(A) M)\epsilon )^j \mathcal{E}^k(v_\epsilon(0),\Gamma_\epsilon(0))
\leq e^{ C(C_2(A) M)\epsilon j}  \mathcal{E}^k(v_\epsilon(0),\Gamma_\epsilon(0)).
\]
Hence we can reach the cutoff given by the first inequality in \eqref{stop-iteration} no earlier than at time
\[
t = \epsilon j<T(M): = C(C_2(A) M)^{-1},
\]
which is a bound that does not depend on $\epsilon$. Similarly, for the second requirement in \eqref{stop-iteration}, the relations \eqref{approx-sln} ensure that at each step the boundary only moves by $\mathcal{O}(\epsilon)$, so by step $j$ it moves at most by $\mathcal{O}(j \epsilon)$. This leads to a similar constraint as above on the number of steps. Analogous reasoning shows that the vorticity growth in \eqref{regboundprop} is also harmless on this time scale.
\\

To summarize, we have proved that the discrete approximate solutions $(v_\epsilon,\Gamma_\epsilon)$
are all defined up to the above time $T(M)$, and satisfy the uniform bound
\begin{equation*}
\|   (v_\epsilon,\Gamma_\epsilon)\|_{\mathbf H^k}
\lesssim_M 1 \qquad \text{in } [0,T],
\end{equation*}
with $\Gamma_\epsilon \in 2\Lambda_*$.
Since $k$ is large enough, by Sobolev embeddings, this yields uniform bounds, say, in $C^3$,
\begin{equation}\label{c3}
\| v_\epsilon\|_{C^3} + \|\eta_\epsilon\|_{C^3} \lesssim_M 1 \qquad \text{in } [0,T],
\end{equation}
where $\eta_\epsilon:=\eta_{\Gamma_\epsilon}$ is the defining function for $\Gamma_\epsilon \in 2\Lambda_*$.
\\


The other piece of information we have about $v_\epsilon$ comes from \eqref{approx-sln}. However, this only tells us what happens over a single time step of size $\epsilon$, so we need to iterate it over multiple steps. 
We begin with the first relation for the velocity in \eqref{approx-sln}, which implies that
\[
|v_\epsilon(t,x) - v_\epsilon(s,y)|+
|\nabla v_\epsilon(t,x) - \nabla v_\epsilon(s,y)| \lesssim_M |t-s| + |x-y|, \qquad t-s = \epsilon.
\]
Iterating this we arrive at
\begin{equation}\label{Lip-ve}
|v_\epsilon(t,x) - v_\epsilon(s,y)|+
|\nabla v_\epsilon(t,x) - \nabla v_\epsilon(s,y)| \lesssim_M |t-s| + |x-y|, \qquad t,s \in \epsilon \N \cap [0,T].
\end{equation}
A similar reasoning based on the last part of \eqref{approx-sln} yields 
\begin{equation}\label{Lip-etae}
   \|\eta_\epsilon(t) - \eta_\epsilon(s)\|_{C^1} \lesssim_M |t-s|, \qquad t,s \in \epsilon \N \cap [0,T].
\end{equation}
Similarly, from (\ref{prelation}) in \Cref{goodvarrel} and the elliptic estimate $\|D_tp_{\epsilon}\|_{H^k}\lesssim_M 1$ for each time, we also get
a difference bound for the pressure; namely,
\begin{equation}\label{Lip-pe}
|\nabla p_\epsilon(t,x) - \nabla p_\epsilon(s,y)| \lesssim_M |t-s|+|x-y|, \qquad t,s \in \epsilon \N \cap [0,T].
\end{equation}
Equipped with the last three Lipschitz bounds 
in time, we are now able to return to 
\eqref{approx-sln} and reiterate in order to 
obtain second order information.
As above, we begin with the first relation in 
\eqref{approx-sln}. Here we reiterate directly,
using the bounds \eqref{Lip-ve} and \eqref{Lip-pe} in order to compare the expressions on the right at different times in the uniform norm. This yields 
\begin{equation}\label{Euler-app}
v_\epsilon(t) = v_\epsilon(s) -(t-s) (v_\epsilon(s)\cdot\nabla v_\epsilon(s)+\nabla p_\epsilon(s)+ge_d)+\mathcal{O}((t-s)^2),
\qquad t,s \in \epsilon \N \cap [0,T].
\end{equation}
The same procedure applied to the last component of \eqref{approx-sln} yields
\begin{equation}\label{kinenatic-app}
    \Omega_\epsilon(t) = (I + (t-s)v_\epsilon(s))\Omega_\epsilon(s) + \mathcal{O}((t-s)^2),  \qquad t,s \in \epsilon \N \cap [0,T].
\end{equation}

\medskip

We now have enough information about our approximate solutions $(v_\epsilon,\Gamma_\epsilon)$, and we seek to obtain the desired solution $(v,\Gamma)$ by taking the limit of 
$(v_\epsilon,\Gamma_\epsilon)$ on a subsequence as $\epsilon \to 0$. For this 
it is convenient to take $\epsilon$ of the form $\epsilon = 2^{-m}$, where we let $m \to \infty$.
Then the time domains of the corresponding approximate solutions  $v_m$ are nested.
\\

Starting from the Lipschitz bounds \eqref{Lip-ve}, \eqref{Lip-etae} and \eqref{Lip-pe},
a careful application of the Arzela-Ascoli theorem yields uniformly  convergent subsequences 
\begin{equation}\label{AA}
\eta_m \to \eta, \qquad v_m \to v, \qquad \nabla v_m \to \nabla v,   \qquad \nabla p_m \to \nabla p,
\end{equation}
whose limits still satisfies the bounds \eqref{Lip-ve}, \eqref{Lip-etae} and \eqref{Lip-pe}. It remains to show that $(v,\Gamma)$ is the desired solution
to the free boundary incompressible Euler equations, with 
$\Gamma$ defined by $\eta$ and $p$, where $p$  is the associated pressure.
\\

We begin by upgrading the spatial regularity 
of $v$ and $\eta$. For this we observe that for $t \in 2^{-j} \N \cap [0,T]$ we can pass to the limit  as 
$m \to \infty$ in \eqref{c3} to obtain the uniform bound 
\begin{equation*}\label{c3-lim}
\| v\|_{C^3} + \|\eta\|_{C^3} \lesssim_M 1 .
\end{equation*}
Since both $v$ and $\eta$ are Lipschitz continuous in $t$, this extends easily to all $t \in [0,T]$. A similar argument applies to the $\mathbf H^k$ norm of $(v,\Gamma)$. 
\\

Next we show that $(v,\Gamma)$ solves the free boundary incompressible Euler equations, which we 
do in several steps:

\medskip

\emph{i) The initial data.} The fact that at the initial time we have $(v(0),\Gamma(0)) = (v_0,\Gamma_0)$ follows directly from the construction of $(v_\epsilon(0), \Gamma_\epsilon(0))$; namely, by Proposition~\ref{c reg bounds}.

\medskip

\emph{ii) The pressure equation.} To verify that 
$p$ is the pressure associated to $v$ and $\Gamma$ 
we simply use the uniform convergence of $\nabla v_m$,
$\eta_m$ and $\nabla p_m$ in order to pass to the limit in the pressure equation \eqref{Euler-pressure}.

\medskip
\emph{iii) The incompressible Euler equations.} 
Here we directly use the uniform convergence \eqref{AA} in order to pass to the limit in \eqref{Euler-app}. This implies that $v$ is differentiable in time, and that the incompressible Euler equations are verified.

\medskip
\emph{iv) The kinematic boundary condition.}
Arguing as above, this time  we directly use the uniform convergence \eqref{AA} in order to pass to the limit in \eqref{kinenatic-app}.
\\

Finally, the $C(\mathbf H^{k-1})$ regularity of $(v,\Gamma)$ follows directly from the incompressible Euler equations and the kinematic boundary condition.
\end{proof}

\section{Rough solutions} 
In this section, we aim to construct solutions in the state space $\mathbf{H}^s$ as limits of regular solutions for $s>\frac{d}{2}+1$. The general procedure for executing this construction will be as follows.
\begin{enumerate}
    \item We regularize the initial data.
    \item\label{STEP2} We prove uniform bounds for the corresponding regularized solutions.
    \item We show convergence of the regularized solutions in a weaker topology.
    \item We combine the difference estimates and the uniform $\mathbf{H}^s$ bounds from step (\ref{STEP2}) to obtain convergence in the $\mathbf{H}^s$ topology.
\end{enumerate}
As will be seen below, this procedure carries with it various subtleties since it involves comparing functions defined on different domains. In addition, we must carefully address the fact that our control parameters in the difference and energy estimates are not entirely consistent.
\subsection{Initial data regularization} 
Let $(v_0,\Gamma_0)\in \mathbf{H}^s$ be an initial data. The first step is to place $\Gamma_0$ within a suitable collar
$\Lambda_*=\Lambda(\Gamma_*,\epsilon,\delta)$ with $\delta \ll 1$.
Since $\Gamma_0 \in H^s \subseteq C^{1,\epsilon+}$,
$\Gamma_*$ is easily obtained by regularizing $\Gamma_0$ on a 
small enough spatial scale. We remark that the price to pay for a small enough regularization scale is that the higher Sobolev norms $H^k$ of $\Gamma_*$ will be large; but this is acceptable, as explained in Remark~\ref{r:delta}.
\\

Let $M:=\|(v_0,\Gamma_0)\|_{\mathbf{H}^s}$ denote the data size 
measured relative to the collar $\Lambda_*$, and write $c_0$ for the lower bound on the Taylor term. We begin by constructing regularized data at each dyadic scale $2^j$. For this, we define $\Gamma_{0,j}$ (along with $\Omega_{0,j}$) by regularizing the collar parameterization $\eta_0$. More specifically, we define  $\eta_{0,j}:=P_{\leq j}\eta_{0}$, where  the meaning of $P_{\leq j}$ is as in \Cref{SSRO}. Then, we define the regularized velocity $v_{0,j}:=\Psi_{\leq j}v_0$. Here, we recall that, as long as $j$ is much larger than $M$, $v_{0,j}$ is defined on some $2^{-j}$ enlargement of both $\Omega_{0,j}$ and $\Omega_0$. Indeed, by Sobolev embeddings, we have the distance bound 
\begin{equation*}
|\eta_{0,j}-\eta_0|\lesssim_{M} 2^{-\frac{3}{2}j}.    
\end{equation*}
Moreover, for such $j$, we stay in the collar and have a uniform lower bound on the Taylor term.

\subsection{Uniform bounds and lifespan of regular solutions} By \Cref{t:existence}, the regularized data $(v_{0,j},\Gamma_{0,j})$ from the previous step generate corresponding smooth solutions $(v_j,\Gamma_j)$. Our goal now is to establish uniform bounds for these regular solutions and, in particular, show that they have a lifespan which depends only on the size of the initial data $(v_0,\Gamma_0)$ in $\mathbf{H}^s$, Taylor sign and the collar.  To do this, we carry out a bootstrap argument with the $\mathbf{H}^s$ norm of $(v_j,\Gamma_j)$. 
\\

In the argument below, we will be working with the enlarged control parameter $\tilde{B}_j(t):=\|v_j\|_{W^{1,\infty}(\Omega_j)}+\|\Gamma_j\|_{C^{1,\frac{1}{2}}}+\|D_tp_j\|_{W^{1,\infty}(\Omega_j)}$ for the corresponding solution $(v_j,\Gamma_j)$. Note that the reason we work for now with  $\tilde{B}_j$ instead of just $B_j(t):=\|v_j\|_{W^{1,\infty}(\Omega_j)}+\|\Gamma_j\|_{C^{1,\frac{1}{2}}}$ is because we will make use of the difference estimates which require control of $D_tp_j$. By elliptic regularity and Sobolev embeddings, it is easy to see that $\tilde{B}_j$ is controlled by some polynomial in $\|(v_j,\Gamma_j)\|_{\mathbf{H}^s}$. 
\\

Fix some large  parameters $A_0$ and $B_0$ depending only on the numerical constants for the data ($M$, $c_0$ and so forth) such that $A_0\ll B_0$. As alluded to above, we 
make the bootstrap assumption
\begin{equation*}\label{boostrap}
\|(v_j,\Gamma_j)(t)\|_{\mathbf{H}^s}\leq 2B_0,\hspace{5mm}A_j(t)\leq 2A_0,\hspace{5mm} a_j(t)\geq \frac{c_0}{2},\hspace{5mm}\Gamma_j(t)\in 2\Lambda_*,\hspace{5mm}t\in [0,T],\hspace{5mm} j(M)=: j_0\leq j\leq j_1,
\end{equation*}
 with $j(M)$ sufficiently large depending on $M$, in a time interval $[0,T]$ where all the $(v_j,\Gamma_j)$ are defined as smooth solutions with boundaries in the collar.
Above, $j_1$ is some finite but arbitrarily large parameter, introduced for technical convenience to ensure  that we run the bootstrap on only finitely many solutions at a time.
Our aim will be to show that we can improve this bootstrap assumption as long as $T\leq T_0$ for some time $T_0>0$ which is independent of $j_1$.  
 \\
 
 For any large integer $k > s > \frac{d}{2}+1$ 
 as in \Cref{t:existence}, we may  consider
 the solutions $(v_j,\Gamma_j)$ as solutions in $\mathbf H^k$. In light of Theorems~\ref{Energy est. thm} and \ref{t:existence}, for each $j\geq j_0$, the solution $(v_j,\Gamma_j)$ can be continued past time $T$ in $\mathbf{H}^k$ (and therefore $\mathbf{H}^s$)  as long as the bootstrap is satisfied.  Morally speaking, our choice for $T_0$ will be
\begin{equation*}
T_0\ll\frac{1}{P(B_0)},    
\end{equation*}
for some fixed polynomial $P$, though this is not entirely accurate, as $T_0$ will also depend on the collar and $c_0$. Thanks to the energy bound in \Cref{Energy est. thm}, if the bootstrap could be extended to such a $T_0$, it would guarantee uniform $\mathbf{H}^k$ bounds for $(v_j,\Gamma_j)$ for any integer $k>\frac{d}{2}+1$ in terms of its initial data in $\mathbf{H}^k$. 
The main difficulty we face is that, a priori, the $\mathbf{H}^s$ bounds for $(v_j,\Gamma_j)$ do not necessarily propagate for noninteger $s$. The goal, therefore, is to establish $\mathbf{H}^s$ bounds for noninteger $s$. We will do this by working solely with the energy estimates for integer indices and the difference estimates.
\\

We begin by letting $c_j$ be the $\mathbf{H}^s$ admissible frequency envelope for the initial data $(v_{0},\Gamma_{0})$ given by (\ref{admissable}). We let $\alpha\geq 1$ be such that $k = s+\alpha$ is an integer. From \Cref{envbounds}
we know that the regularized data $(v_{0,j},\Gamma_{0,j})$ satisfy the bounds
\begin{equation}\label{hiregbound0}
\|(v_{0,j},\Gamma_{0,j})\|_{\mathbf{H}^{s+\alpha}}\lesssim_{A_0} 2^{\alpha j}c_j\|(v_{0},\Gamma_0)\|_{\mathbf{H}^s}.
\end{equation}
From the energy bounds in \Cref{Energy est. thm} and the bootstrap hypothesis, we deduce from \eqref{hiregbound0} and the definition of $c_j$ that 
\begin{equation}\label{hiregbound}
\|(v_j,\Gamma_j)(t)\|_{\mathbf{H}^{s+\alpha}}\lesssim_{A_0} 2^{\alpha j}c_j(1+\|(v_{0},\Gamma_0)\|_{\mathbf{H}^s}), \qquad t \in [0,T],
\end{equation}
as long as $T\leq T_0\ll\frac{1}{P(B_0)}$. One may think of this as a high frequency bound, which roughly speaking allows us 
to control frequencies $\gtrsim 2^j$ in $(v_j,\Gamma_j)$. Note that in \eqref{hiregbound} we suppressed the implicit dependence on the Taylor term and the collar. We will do  this throughout the subsection except when these terms are of primary importance, as it will be clear that our argument can handle these minor technicalities. 
\\

To estimate low frequencies we use the difference estimates. Precisely, at the initial time  we claim that we have the difference bound
\begin{equation}\label{regularizeddiffbound}
D((v_{0,j},\Gamma_{0,j}),(v_{0,j+1}, \Gamma_{0,j+1}))\lesssim_{A_0} 2^{-2js}c_j^2\|(v_{0},\Gamma_0)\|_{\mathbf{H}^s}^2    .
\end{equation}
This bound is clear by \Cref{envbounds}  for the first term in (\ref{diff functional candidate}). To see this for the surface integral, we use that on $\tilde{\Gamma}_{0,j}:=\partial (\Omega_{0,j}\cap\Omega_{0,j+1})$, the pressure difference $p_{0,j}-p_{0,j+1}$ is proportional (with implicit constant depending on $A_0$) to the distance between $\Gamma_{0,j}$ and $\Gamma_{0,j+1}$, measured using the displacement function (\ref{distancefunction}). Combining this with a change of variables, we have  
\begin{equation*}
\int_{\tilde{\Gamma}_{0,j}}|p_{0,j}-p_{0,j+1}|^2\,dS\approx_{A_0} \|\eta_{0,j+1}-\eta_{0,j}\|_{L^2(\Gamma_*)}^2\lesssim_{A_0} 2^{-2js}c_j^2\|(v_0,\Gamma_0)\|_{\mathbf{H}^s}^2,   
\end{equation*}
from which \eqref{regularizeddiffbound} follows. By \Cref{Difference}, we can propagate the difference bound (\ref{regularizeddiffbound}) to obtain 
\begin{equation}\label{propagateddbound}
D((v_j,\Gamma_j)(t),(v_{j+1},\Gamma_{j+1})(t))\lesssim_{A_0} 2^{-2js}c_j^2\|(v_{0},\Gamma_0)\|_{\mathbf{H}^s}^2,   \qquad t \in [0,T],
\end{equation}
as long as $T\leq T_0\ll\frac{1}{P(B_0)}$. In particular, this gives by a similar argument to the above, 
\begin{equation}\label{diffbounds}
\|v_{j+1}-v_j\|_{L^2(\Omega_j\cap\Omega_{j+1})},\hspace{3mm}\|\eta_{j+1}-\eta_j\|_{L^2(\Gamma_*)}\lesssim_{A_0} 2^{-js}c_j\|(v_{0},\Gamma_0)\|_{\mathbf{H}^s}.    
\end{equation}
Now, the goal is to combine the high frequency bound (\ref{hiregbound}) and the 
$L^2$ difference bound (\ref{diffbounds}) in order to obtain a uniform $\mathbf{H}^s$ bound of the form 
\begin{equation*}
\|(v_j,\Gamma_j)\|_{\mathbf{H}^s}\lesssim_{A_0}1+ \|(v_0,\Gamma_0)\|_{\mathbf{H}^s},
\end{equation*}
for $T\leq T_0$. To establish such a bound for $\Gamma_j$, we consider the telescoping series on $\Gamma_*$ given by
\begin{equation}\label{etadecomp}
\eta_j=\eta_{j_0}+\sum_{j_0\leq l\leq j-1}(\eta_{l+1}-\eta_l)   . 
\end{equation}
From the higher energy bound  (\ref{hiregbound}), we have for each $j_0\leq l\leq j-1$,
\begin{equation}\label{etaboundreg}
\|\eta_{l+1}-\eta_{l}\|_{H^{s+\alpha}(\Gamma_*)}\lesssim_{A_0} 2^{l\alpha}c_l(1+\|(v_0,\Gamma_0)\|_{\mathbf{H}^s}).    
\end{equation}
Using the telescoping sum and interpolation, it is straightforward to verify from (\ref{diffbounds}), (\ref{etaboundreg}) and an argument similar to \Cref{envbounds} (see also \cite{IT-primer}) that for each $k\geq 0$,
\begin{equation}\label{P-keta}
\|P_k\eta_j\|_{H^s(\Gamma_*)}\lesssim_{A_0} c_k(1+\|(v_0,\Gamma_0)\|_{\mathbf{H}^s}).    
\end{equation}
As a consequence, by almost orthogonality, we obtain the uniform bound
\begin{equation}\label{surfenvbound}
\|\Gamma_j\|_{H^s}\lesssim_{A_0}1+ \|(v_0,\Gamma_0)\|_{\mathbf{H}^s}.    
\end{equation}
 Next, we turn to the bound for $v_j$. We first note that the analogous decomposition to (\ref{etadecomp}) for $v_j$ does not work because for each $l\leq j-1$, $v_l$ and $v_{l+1}$ are defined on different domains. However, we can compare $v_l$ and $v_{l+1}$ by first regularizing each function $v_l\mapsto \Psi_{\leq l}v_l$ which is defined on a $2^{-l}$ enlargement of $\Omega_l$. For this comparison to work, we need to know that $\Gamma_j$ and $\Gamma_{j+1}$ are sufficiently close. By interpolating using (\ref{diffbounds}) and (\ref{surfenvbound}) we have
 \begin{equation}\label{surfdistance}
 \|\eta_{j+1}-\eta_j\|_{L^{\infty}(\Gamma_*)}\lesssim_{A_0} 2^{-\frac{3}{2}j}, \hspace{5mm} \|\eta_{j+1}-\eta_j\|_{C^{1,\frac{1}{2}}(\Gamma_*)}\lesssim_{A_0} 2^{-\delta j},
 \end{equation}
 for some $\delta>0$. Now, we return to the uniform bound for $v_j$. Thanks to (\ref{surfdistance}), we can safely consider the decomposition on $\Omega_j$, 
\begin{equation}\label{vtelescope}
v_j=\Psi_{\leq j_0}v_{j_0}+\sum_{j_0\leq l\leq j-1}\Psi_{\leq l+1}v_{l+1}-\Psi_{\leq l}v_l+(I-\Psi_{\leq j})v_j. 
\end{equation}
 The first term in the telescoping decomposition is trivial to bound. We therefore focus our attention on the remaining terms. First, define for $l\geq j_0$
\begin{equation*}
\tilde{\Omega}_{l}=\bigcap_{k=l}^{j}\Omega_k.   
\end{equation*}
Thanks again to (\ref{surfdistance}), for $j_0$ large enough (independent of $j$ and only depending on the data parameters), we can arrange for the regularization operator $\Psi_{\leq l}$ to be bounded from $H^s(\tilde{\Omega}_l)$ to $H^s(\tilde{\Omega}_l')$ where $\tilde{\Omega}_l'$ is some $2^{-l}$ enlargement of the union of all of the $\Omega_k$ for $k\geq l$. We will use this fact to establish the following lemma which will help us to estimate the intermediate terms in (\ref{vtelescope}).
\begin{lemma}\label{L^2regbounds}Let $j_0\leq l\leq j-1$, where $j_0$ is some universal parameter depending only on the numerical constants for the data. Then given the above decomposition for $v_j$, we have
\begin{equation}\label{formerbound}
\|\Psi_{\leq l+1}v_{l+1}-\Psi_{\leq l}v_l\|_{L^2(\Omega_j)}\lesssim_{A_0} 2^{-ls}c_l(1+\|(v_0,\Gamma_0)\|_{\mathbf{H}^s}),   
\end{equation}
\begin{equation}\label{latterbound}
\|\Psi_{\leq l+1}v_{l+1}-\Psi_{\leq l}v_l\|_{H^{s+\alpha}(\Omega_j)}\lesssim_{A_0} 2^{l\alpha}c_l(1+\|(v_0,\Gamma_0)\|_{\mathbf{H}^s}).    
\end{equation}
\end{lemma}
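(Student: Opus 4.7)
The plan is to decompose the quantity on $\Omega_j$ as
\begin{equation*}
\Psi_{\leq l+1}v_{l+1} - \Psi_{\leq l}v_l = \Psi_{\leq l+1}(v_{l+1} - v_l) + (\Psi_{\leq l+1} - \Psi_{\leq l})v_l,
\end{equation*}
viewing $v_{l+1}-v_l$ as defined on $\tilde\Omega_l\subseteq \Omega_l\cap\Omega_{l+1}$, and viewing $v_l$ as defined on $\tilde\Omega_l$. With both representations in place, each summand is well-defined on $\Omega_j$ because, by the second estimate in \eqref{surfdistance} (summed geometrically from $k=l$ to $k=j-1$), the domain $\Omega_j$ lies within a $\delta_0 2^{-l}$-neighborhood of $\tilde\Omega_l$, so Proposition 6.2 applies uniformly in $l$, with constants depending only on $A_0$. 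This is where $j_0$ needs to be chosen large enough depending on the data, and is the main geometric subtlety.

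For the $L^2$ bound \eqref{formerbound}, the first term is handled by the $L^2$-boundedness of $\Psi_{\leq l+1}$ together with \eqref{diffbounds}, yielding a $2^{-ls}c_l$ contribution. For the second term, we use the difference bound of Proposition 6.2 at regularity $s+\alpha$ (fixing the parameter $\alpha_0\geq s+\alpha$ in that proposition),
\begin{equation*}
\|(\Psi_{\leq l+1} - \Psi_{\leq l})v_l\|_{L^2(\Omega_j)} \lesssim 2^{-l(s+\alpha)}\|v_l\|_{H^{s+\alpha}(\Omega_l)},
\end{equation*}
and invoke the higher-regularity bound \eqref{hiregbound} to gain a factor $2^{l\alpha}c_l(1+\|(v_0,\Gamma_0)\|_{\mathbf{H}^s})$, producing $2^{-ls}c_l$ after cancellation.

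For the $H^{s+\alpha}$ bound \eqref{latterbound} we dualize the trade-off. The first term is estimated via the regularization property $\|\Psi_{\leq l+1}w\|_{H^{s+\alpha}(\Omega_j)}\lesssim 2^{l(s+\alpha)}\|w\|_{L^2(\tilde\Omega_l)}$ from Proposition 6.2, applied with $w=v_{l+1}-v_l$, and then \eqref{diffbounds} collapses the $2^{ls}$ factor and leaves $2^{l\alpha}c_l$. For the second term we simply use that $\Psi_{\leq l+1}$ and $\Psi_{\leq l}$ are each bounded on $H^{s+\alpha}$ (a consequence of the $\alpha=0$ case of the regularization bound), so $\|(\Psi_{\leq l+1}-\Psi_{\leq l})v_l\|_{H^{s+\alpha}(\Omega_j)}\lesssim \|v_l\|_{H^{s+\alpha}(\Omega_l)}$, and \eqref{hiregbound} supplies the needed $2^{l\alpha}c_l$ factor. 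I do not expect any serious technical obstacle beyond the geometric compatibility issue mentioned in the first paragraph; everything else is a bookkeeping exercise matching the regularization/difference bounds of Proposition 6.2 against the high-regularity and difference estimates \eqref{hiregbound} and \eqref{diffbounds}.
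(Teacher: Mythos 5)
Your proposal is correct and uses essentially the same ingredients as the paper: the paper's own proof uses the symmetric decomposition $(\Psi_{\leq l+1}-\Psi_{\leq l})v_{l+1}+\Psi_{\leq l}(v_{l+1}-v_l)$ rather than your $\Psi_{\leq l+1}(v_{l+1}-v_l)+(\Psi_{\leq l+1}-\Psi_{\leq l})v_l$, but the two are interchangeable and each term is estimated via the same pairing of Proposition~\ref{c reg bounds} with \eqref{hiregbound} or \eqref{diffbounds}. The only stylistic difference worth noting is in \eqref{latterbound}: the paper proves it in one line by the triangle inequality, applying $H^{s+\alpha}$-boundedness of $\Psi_{\leq l+1}$ and $\Psi_{\leq l}$ separately to $v_{l+1}$ and $v_l$ and invoking \eqref{hiregbound} — no decomposition and no appeal to \eqref{diffbounds} are needed there; your version, which estimates $\Psi_{\leq l+1}(v_{l+1}-v_l)$ by trading $L^2$ against $s+\alpha$ derivatives via the regularization bound and then using \eqref{diffbounds}, is also valid but uses more structure than required.
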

By Sobolev embedding, a corollary of this lemma is the following pointwise bound at the $C^1$ regularity.
\begin{corollary}\label{pointwiseclose} We have the estimate
\begin{equation*}
\|\Psi_{\leq l+1}v_{l+1}-\Psi_{\leq l}v_l\|_{C^1(\Omega_j)}\lesssim_{A_0} 2^{-l\delta}(1+\|(v_0,\Gamma_0)\|_{\mathbf{H}^s}),\hspace{5mm}\delta>0.    
\end{equation*}    
\end{corollary}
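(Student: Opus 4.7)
The plan is to deduce this pointwise bound by interpolating the two Sobolev estimates in \Cref{L^2regbounds} and then invoking the Sobolev embedding $H^{\frac{d}{2}+1+\epsilon_1}(\Omega_j) \hookrightarrow C^1(\Omega_j)$. First I would apply complex (or real) interpolation to \eqref{formerbound} and \eqref{latterbound}, which gives, for every $\theta \in [0,1]$,
\[
\|\Psi_{\leq l+1}v_{l+1}-\Psi_{\leq l}v_l\|_{H^{\theta(s+\alpha)}(\Omega_j)} \lesssim_{A_0} 2^{-ls(1-\theta)+l\alpha\theta}\,c_l\,(1+\|(v_0,\Gamma_0)\|_{\mathbf H^s}).
\]
Since $s > \frac{d}{2}+1$, I may choose $\epsilon_1>0$ small so that $s - \frac{d}{2}-1 - \epsilon_1 > 0$ and then pick $\theta \in (0,1)$ with $\theta(s+\alpha) = \frac{d}{2}+1+\epsilon_1$; note $\theta < \frac{s}{s+\alpha} \le 1$, so this is admissible. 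The resulting exponent in $l$ is $-s(1-\theta) + \alpha\theta = -s + \theta(s+\alpha) = -(s - \tfrac{d}{2}-1) + \epsilon_1 < 0$.

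Setting $\delta := s - \frac{d}{2}-1 - \epsilon_1 > 0$, and using that $c_l \lesssim 1$ (it is an admissible envelope, hence bounded), the interpolated estimate becomes
\[
\|\Psi_{\leq l+1}v_{l+1}-\Psi_{\leq l}v_l\|_{H^{\frac{d}{2}+1+\epsilon_1}(\Omega_j)} \lesssim_{A_0} 2^{-l\delta}\,(1+\|(v_0,\Gamma_0)\|_{\mathbf H^s}).
\]
Applying the Sobolev embedding $H^{\frac{d}{2}+1+\epsilon_1}(\Omega_j) \hookrightarrow C^1(\Omega_j)$ then yields the claim.

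The only point that requires care is that the interpolation identification and the Sobolev embedding must hold with constants that are uniform over the family $\{\Omega_j\}$. Since all $\Omega_j$ have boundary in $2\Lambda_*$ (hence with a uniformly controlled Lipschitz character), both facts follow from Stein's extension operator together with the interpolation identification $(H^{s_0}(\Omega),H^{s_1}(\Omega))_{\theta,2}=H^s(\Omega)$ recorded in Section~\ref{BEE}. No step here is genuinely difficult; the main thing to verify is the admissibility of the exponent $\theta$ and the sign of the resulting power of $2^{-l}$, both of which use only $s > \frac{d}{2}+1$.
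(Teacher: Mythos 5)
Your proof is correct and is exactly what the paper intends: it labels the corollary as following ``by Sobolev embedding'' from the two bounds of Lemma~\ref{L^2regbounds}, which implicitly means interpolating the $L^2$ bound \eqref{formerbound} with the $H^{s+\alpha}$ bound \eqref{latterbound} to reach an intermediate Sobolev norm above $\frac{d}{2}+1$ and then embedding into $C^1$. You have simply made the exponent bookkeeping explicit, correctly identified $\delta = s-\tfrac{d}{2}-1-\epsilon_1>0$, and verified the domain-uniformity of the interpolation and embedding constants via the Stein extension discussion in Section~\ref{BEE}; no changes are needed.
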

\begin{proof}
The latter bound (\ref{latterbound}) is clear from the $H^{s+\alpha}$ boundedness of $\Psi_{\leq l}$ and (\ref{hiregbound}). For the first bound, we split
\begin{equation*}
\Psi_{\leq l+1}v_{l+1}-\Psi_{\leq l}v_l=(\Psi_{\leq l+1}-\Psi_{\leq l})v_{l+1}+\Psi_{\leq l}(v_{l+1}-v_l).    
\end{equation*}
Using \Cref{c reg bounds} and (\ref{hiregbound}), we have
\begin{equation*}
\|(\Psi_{\leq l+1}-\Psi_{\leq l})v_{l+1}\|_{L^2(\Omega_j)}\lesssim_{A_0} 2^{-ls}c_l(1+\|(v_0,\Gamma_0)\|_{\mathbf{H}^s}).    
\end{equation*}
For the remaining term, we use the difference bound and the $L^2$ boundedness of $\Psi_{\leq l}$ to obtain
\begin{equation*}
\|\Psi_{\leq l}(v_{l+1}-v_l)\|_{L^2(\Omega_j)}\lesssim_{A_0} D((v_{l},\Gamma_l),(v_{l+1},\Gamma_{l+1}))^{\frac{1}{2}}\lesssim_{A_0} 2^{-ls}c_l\|(v_0,\Gamma_0)\|_{\mathbf{H}^s}.    
\end{equation*}
\end{proof}
We also observe that the same bounds in \Cref{L^2regbounds}  hold for the third term in (\ref{vtelescope}) but with the parameter $l$ replaced by $j$ in the corresponding estimates. This is immediate for \eqref{latterbound}
and follows by telescopic summation from 
Proposition~\ref{envbounds} in the case of \eqref{formerbound}.
\\

We can use the above lemma (and the corresponding bounds for $(I-\Psi_{\leq j})v_j$) to estimate similarly to \eqref{P-keta} that for each $k\geq 0$,
\begin{equation*}
\|P_k\mathcal{E}_{\Omega_j}v_j\|_{H^s(\mathbb{R}^{d})}\lesssim_{A_0} c_k(1+\|(v_0,\Gamma_0)\|_{\mathbf{H}^s}).  
\end{equation*}
From this observation and almost orthogonality, we obtain the desired uniform bound, 
\begin{equation*}
\|(v_j,\Gamma_j)(t)\|_{\mathbf{H}^s}\lesssim_{A_0} 1+\|(v_0,\Gamma_0)\|_{\mathbf{H}^s},  
\end{equation*}
for $t \in [0,T_0]$. In particular, if the constant $B_0$ is chosen to be sufficiently large relative to $A_0$ and the data size, this improves the bootstrap assumption for $\|(v_j,\Gamma_j)\|_{\mathbf{H}^s}$.  It remains to improve the bootstrap assumption for $A_j$ and at the same time the Taylor term and the collar neighborhood size. For this we rely on a computation similar to \cite{MR3925531,sz} for the Lagrangian flow map $u_j(t,\cdot):\Omega_{0,j}\to \Omega_j(t)$, defined as the solution to the ODE 
\begin{equation*}
\partial_tu_j(t,y)=v_j(t,u_j(t,y)),\hspace{5mm} y\in \Omega_{0,j},\hspace{5mm}u_j(0)=I.   
\end{equation*}
Since $s>\frac{d}{2}+1$, if $T_0$ is small enough, then for any $0\leq t\leq T\leq T_0$ we have the bound
\begin{equation*}
\begin{split}
\|u_j(t,\cdot)-I\|_{H^{s}(\Omega_{0,j})}&\lesssim\int_{0}^{t}\|v_j(t',\cdot)\|_{H^{s}(\Omega_j(t'))}\|u_j(t',\cdot)\|_{H^s(\Omega_{0,j})}^s\,dt'    
\\
&\lesssim_{A_0} t\|(v_0,\Gamma_0)\|_{\mathbf{H}^s}.
\end{split}
\end{equation*}
If $A_0$ is large enough relative to the data size, this  easily implies simultaneously
\begin{equation*}
\Gamma_j(t)\in \frac{3}{2}\Lambda_*,\hspace{5mm}\|\Gamma_{j}(t)\|_{C^{1,\epsilon}}\ll A_0  ,
\end{equation*}
 as long as $T_0$ is small enough. Doing a similar computation with $u_t$ in place of $u$ and using the equation 
\begin{equation*}
\partial_t^2u_j(t,y)=\partial_t(v_j(t,u_j(t,y)))=-(\nabla p_j+ge_d)(t,u_j(t,y))    
\end{equation*}
together with the elliptic estimates for the pressure, we obtain also
\begin{equation*}
\|v_j(t)\|_{C^{\frac{1}{2}+\epsilon}(\Omega_j)}\ll A_0.    
\end{equation*}
This improves the bootstrap assumption for $A_j$. Finally, a similar argument but instead with the pressure gradient and the $H^s$ bound for $D_tp$ allows one to close the bootstrap for $a_j$ as long as $T_0$ is sufficiently small depending on $M$ and $c_0$.
\subsection{The limiting solution} Here we show that for $T\leq T_0$,
\begin{equation*}
(v,\Gamma)=\lim_{j\to\infty}(v_j,\Gamma_j)    \ \ \text{in}\ \ C([0,T];\mathbf{H}^s).
\end{equation*}
First, we show domain convergence in $H^s$, which is more straightforward. Indeed, from \eqref{surfdistance} we see that the limiting domain $\Omega$ exists and has Lipschitz boundary $\Gamma$. Next, we let $j\geq j_0$ and consider the telescoping sum
\begin{equation*}
\eta-\eta_j=\sum_{l=j}^{\infty}\eta_{l+1}-\eta_l. 
\end{equation*}
An analysis similar to the previous subsection, using the difference bounds and the higher energy bounds, yields
\begin{equation}\label{limitdistance}
\|\eta-\eta_j\|_{L^{\infty}(\Gamma_*)}\lesssim_{A_0} 2^{-\frac{3}{2}j}    
\end{equation}
and
\begin{equation*}
\|\eta-\eta_j\|_{C([0,T];H^s(\Gamma_*))}\lesssim_{A_0} \|c_{\geq j}\|_{l^2}(1+\|(v_0,\Gamma_0)\|_{\mathbf{H}^s}),
\end{equation*}
which in particular shows convergence of $\Gamma_j\to\Gamma$ in $C([0,T];H^s(\Gamma_*))$. Next, we turn to showing the convergence $v_j\to v$ in $C([0,T];\mathbf{H}^s)$. We, formally, define $v$ through the  telescoping sum
\begin{equation*}
v=\Psi_{\leq j_0}v_{j_0}+\sum_{l\geq j_0}\Psi_{\leq {l+1}}v_{l+1}-\Psi_{\leq l}v_l  ,  
\end{equation*}
where, as usual,  $j_0$ ensures that all the terms in the sum are defined on $\Omega$. Thanks to (\ref{limitdistance}), this is possible.  We begin by showing that $\Psi_{\leq j}v_j\to v$ in $H^s(\Omega_t)$ uniformly in $t$ (which is again unambiguous thanks to (\ref{limitdistance})). We have
\begin{equation*}
v-\Psi_{\leq j}v_j=\sum_{l\geq j}\Psi_{\leq {l+1}}v_{l+1}-\Psi_{\leq l}v_l.    
\end{equation*}
From this we see that
\begin{equation*}
\|v-\Psi_{\leq j}v_j\|_{H^s(\Omega_t)}\lesssim_{A_0} \|c_{\geq j}\|_{l^2}(1+\|(v_0,\Gamma_0)\|_{\mathbf{H}^s}),
\end{equation*}
which establishes the desired uniform convergence in $H^s(\Omega_t)$. To show convergence of $v_j$ in the sense of \Cref{Def of convergence}, we consider the regularization $\tilde{v}=\Psi_{\leq m}v_m$. We then have as above, 
\begin{equation*}
\|v-\Psi_{\leq m}v_m\|_{H^s(\Omega)}\lesssim_{A_0} \|c_{\geq m}\|_{l^2}(1+\|(v_0,\Gamma_0)\|_{\mathbf{H}^s})  ,
\end{equation*}
which goes to $0$ as $m\to\infty$. On the other hand, for $j>m$, we have
\begin{equation*}
\begin{split}
\|v_j-\Psi_{\leq m}v_m\|_{H^s(\Omega_j)}\lesssim_{A_0} &\|(1-\Psi_{\leq j})v_j\|_{H^s(\Omega_j)}+\|\Psi_{\leq j}(v_j-v)\|_{H^s(\Omega_j)}+\|\Psi_{\leq m}(v_m-v)\|_{H^s(\Omega_j)}
\\
&+\|\Psi_{\leq j}v-\Psi_{\leq m}v\|_{H^s(\Omega_j)}.
\end{split}
\end{equation*}
Using (\ref{hiregbound}) for the first term and the difference bounds for $D((v_j,\Gamma_j),(v,\Gamma)),$ $D((v_m,\Gamma_m),(v,\Gamma))$ for the second and third terms, respectively, we obtain 
\begin{equation*}
\|v_j-\Psi_{\leq m}v_m\|_{H^s(\Omega_j)}\lesssim_{A_0} \|c_{\geq m}\|_{l^2}(1+\|(v_0,\Gamma_0)\|_{\mathbf{H}^s})+\|\Psi_{\leq j}v-\Psi_{\leq m}v\|_{H^s(\Omega_j)}.    
\end{equation*}
To estimate the last term above, we have
\begin{equation*}
\begin{split}
\|\Psi_{\leq j}v-\Psi_{\leq m}v\|_{H^s(\Omega_j)}&\lesssim_{A_0} \|(\Psi_{\leq j}-\Psi_{\leq m})(v-\Psi_{\leq m}v_m)\|_{H^s(\Omega_j)}+\|(\Psi_{\leq j}-\Psi_{\leq m})\Psi_{\leq m}v_m\|_{H^s(\Omega_j)}
\\
&\lesssim_{A_0} \|v-\Psi_{\leq m}v_m\|_{H^{s}(\Omega)}+2^{-m\alpha}\|v_m\|_{H^{s+\alpha}(\Omega_m)}
\\
&\lesssim_{A_0} \|c_{\geq m}\|_{l^2}(1+\|(v_0,\Gamma_0)\|_{\mathbf{H}^s}),   
\end{split}
\end{equation*}
where we used (\ref{hiregbound}) to estimate the second term in the last inequality. The combination of  the above estimates establishes strong convergence in $\mathbf{H}^s$. A similar argument shows continuity of $v$ with values in $\mathbf{H}^s$. Finally, one may also check that the limiting solution solves the free boundary Euler equations.
\subsection{Continuous dependence} Given a sequence of initial data $(v_0^n,\Gamma_0^n)\in\mathbf{H}^s$ such that $(v_0^n,\Gamma_0^n)\to (v_0,\Gamma_0)$, we aim to show that we have the corresponding convergence of the solutions $(v^n,\Gamma^n)\to (v,\Gamma)$ in $C([0,T];\mathbf{H}^s)$. First, we note that thanks to the data convergence, the corresponding solutions have a uniform in $n$ lifespan in $\mathbf{H}^s$, and so, on some compact time interval $[0,T]$, we have $\|(v^n,\Gamma^n)\|_{\mathbf{H}^s}+\|(v,\Gamma)\|_{\mathbf{H}^s}\lesssim_M 1$. Let us denote by $c_j^n$ and $c_j$ the admissible frequency envelopes for the data $(v_0^n,\Gamma_0^n)$ and $(v_0,\Gamma_0)$, respectively. Now, let $\epsilon>0$ and let $\delta=\delta(\epsilon)>0$ be a small positive constant to be chosen. Moreover, let $n_0=n_0(\epsilon)$ be some large integer to be chosen.
\\

 By definition of convergence in $\mathbf{H}^s$, there is a divergence free function $v_0^{\delta}\in H^s(\Omega_0^{\delta})$ defined on some enlarged domain $\Omega_0^{\delta}$ such that 
\begin{equation*}\label{v_0approx}
\|v_0-v_0^{\delta}\|_{H^s(\Omega_0)}+\limsup_{n\to\infty}\|v_0^n-v_0^{\delta}\|_{H^s(\Omega_0^n)}<\delta.
\end{equation*}
Moreover, for $n$ large enough, depending only on $\delta$, $v_0^{\delta}$ is defined on a neighborhood of $\Omega_0$ and $\Omega_{0}^n$.  Moreover, we may also assume that $v_0^{\delta}$ belongs to $H^s(\mathbb{R}^d)$. Indeed, for some $\delta'\ll\delta$, $v_0^{\delta}$ is defined on the domain $\Omega_0'$ defined by taking $\eta_0'=\eta_0+\delta'$. Then we can extend $v_0^{\delta}$ to $\mathbb{R}^d$ using \Cref{continuosext}. We note that $v_0^{\delta}$ is not necessarily divergence free on $\mathbb{R}^d$ but is on an enlargement of $\Omega_0$ and $\Omega_0^n$ for $n$ large enough. Now, let $c_j^{\delta}$  denote the admissible frequency envelope for $(v_0^{\delta},\Gamma_0)$ (note that we are using the same domain $\Omega_0$ as $v_0$ for the frequency envelope here; if $\delta$ is small enough, Taylor sign holds for this state) and denote by $(v^{\delta},\Gamma^{\delta})$ the corresponding $\mathbf{H}^s$ solution (which we note has lifespan comparable to $v$ and $v^n$ for $n$ large enough). We begin by choosing $j=j(\epsilon)$ large enough so that
\begin{equation}\label{dataenvsize}
\|c_{\geq j}\|_{l^2}<\epsilon.   
\end{equation}
We next observe that we can choose $\delta(\epsilon)$ and then $n_0(\delta)$ so that
\begin{equation}\label{envcontrol}
\|c_{\geq j}^n\|_{l^2}\lesssim_M \epsilon +\|c_{\geq j}\|_{l^2}\lesssim_M \epsilon,   
\end{equation}
for $n\geq n_0$. One can establish this by estimating the error when comparing terms in $c^{\delta}_j$ and $c_j^n$ and then the error when comparing terms in $c^{\delta}_j$ and $c_j$ by using  (\ref{admissable}) and square summing. The main error in the first comparison is essentially comprised of two terms. The first term to control involves the error between $\eta_0^n$ and $\eta_0$. If $\delta$ is small enough and $n$ is large enough, we have
\begin{equation*}
\|\eta^n_0-\eta_0\|_{H^s(\Gamma_*)}<\delta<\epsilon.    
\end{equation*}
The second source of error  comes from the extensions of the velocity functions,
\begin{equation*}
\|E_{\Omega_0^n}v_0^n-E_{\Omega_0}v_0^{\delta}\|_{H^s(\mathbb{R}^d)}\leq \|E_{\Omega_0^n}v_0^{\delta}-E_{\Omega_0}v_0^{\delta}\|_{H^s(\mathbb{R}^d)}+\|E_{\Omega_0^n}(v_0^n-v_0^{\delta})\|_{H^s(\mathbb{R}^d)}.    
\end{equation*}
If $\delta\ll_M\epsilon$, then the latter term is $\mathcal{O}(\epsilon)$ by (uniform in $n$) boundedness of $E_{\Omega_0^n}$ and the definition of $v_0^{\delta}$. The first term is $\mathcal{O}(\epsilon)$ if $n$ is large enough (relative to $\delta$) thanks to the continuity property of the family $E_{\Omega_0^n}$ in \Cref{continuosext}. Then one establishes (\ref{envcontrol}) by comparing $c_j$ and $c_j^{\delta}$ which just involves controlling essentially the error term $\|E_{\Omega_0}(v_0^{\delta}-v_0)\|_{H^s(\mathbb{R}^d)}$.
\\

Now that we have uniform smallness of the initial data frequency envelopes, the next step is to compare the corresponding solutions. First, thanks to the difference estimates, we observe that for large enough $n$, $\Gamma^n$ and $\Gamma^{\delta}$ are within distance $\ll 2^{-j}$ as long as $\delta$ is chosen small enough relative to $j$ (recall that $j$ was chosen to ensure (\ref{dataenvsize})). Indeed, by interpolating and using the uniform $\mathbf{H}^s$ bound, we have
\begin{equation*}
\|\eta^n-\eta^{\delta}\|_{L^{\infty}(\Gamma_*)}\lesssim_M D((v^n,\Gamma^n),(v^{\delta},\Gamma^\delta))^{\frac{3}{4s}}\lesssim_{M} \delta^{\frac{3}{2s}}.    
\end{equation*}
This ensures that we may compare $\Psi_{\leq j}v^{\delta}$ to $v^n$. Denoting by $(v^n_j,\Gamma^n_j)$ the regular solution corresponding to the regularized data $(v^n_{0,j},\Gamma_{0,j}^n)$ (from the previous section), we have 
\begin{equation*}
\begin{split}
\|\Psi_{\leq j}v^{\delta}-v^n\|_{H^s(\Omega^n)}&\lesssim \|\Psi_{\leq j}(v^{\delta}-v^n)\|_{H^s(\Omega^n)}+\|\Psi_{\leq j}(v^n-v^n_j)\|_{H^s(\Omega^n)}+\|v^n-\Psi_{\leq j}v^n_j\|_{H^s(\Omega^n)}
\\
&\lesssim_M \|c_{\geq  j}^n\|_{l^2}+2^{js}D((v^n,\Gamma^n),(v^n_j,\Gamma^n_j))^{\frac{1}{2}}+2^{js}D((v^n,\Gamma^n),(v^{\delta},\Gamma^\delta))^{\frac{1}{2}}
\\
&\lesssim_M \|c_{\geq j}^n\|_{l^2}+2^{js}D((v^n,\Gamma^n),(v^{\delta},\Gamma^{\delta}))^{\frac{1}{2}},
\end{split}
\end{equation*}
which if $\delta$ is small enough gives
\begin{equation*}
\|\Psi_{\leq j}v^{\delta}-v^n\|_{H^s(\Omega^n)}\lesssim_M \epsilon.    
\end{equation*}
Similarly, we may obtain
\begin{equation*}
\|\eta^n-\eta\|_{H^s(\Gamma_*)}\lesssim_M \epsilon    
\end{equation*}
and
\begin{equation*}
\|\Psi_{\leq j}v^{\delta}-v\|_{H^s(\Omega)}\lesssim_M \epsilon.   
\end{equation*}
This establishes continuous dependence.

\subsection{Lifespan of rough solutions} 
Here, we finally establish the continuation criterion from \Cref{cont crit intro} for $\mathbf{H}^s$ solutions. 
We consider initial data $(v_0,\Gamma_0)\in\mathbf{H}^s$ and the corresponding solution $(v,\Gamma)$ in a time interval $[0,T)$ which has  the property that 
\begin{equation*}
\mathcal{C}:=\sup_{0\leq t<T}A(t)+\int_{0}^{T}B(t)\,dt<\infty,\hspace{5mm}a(t)\geq c_0>0,\hspace{5mm}t\in [0,T), 
\end{equation*}
and whose domains $\Omega_t$  maintain a uniform thickness. Unlike with the construction of rough solutions, we now work with the weaker control parameter 
\[
B(t)=\|v\|_{W^{1,\infty}(\Omega_t)}+\|\Gamma_t\|_{C^{1,\frac{1}{2}}}. 
\]

One starting difficulty we face in this proof is that we do not a priori have a fixed reference collar neighborhood. However, the uniform bound on $A(t)$
guarantees that the free boundaries $\Gamma_t$ are uniformly of class $C^{1,\epsilon}$, and the uniform bound on $v$ guarantees
that they move at most with velocity $\mathcal{O}(1)$. This implies that 
the limiting boundary $\Gamma_T = \lim_{t \to T} \Gamma_t$ exists in the uniform topology, and also belongs to $C^{1,\epsilon}$, with the corresponding domain $\Omega_T$ having positive thickness. Furthermore, by interpolation, it follows that 
\[
\lim_{t \to T} \Gamma_t = \Gamma_T \qquad \text{in } C^{1,\epsilon_1}, \qquad 0 < \epsilon_1 < \epsilon.
\]
This allows us define the reference boundary $\Gamma_*$ as a regularization of $\Gamma_T$, so that $\Gamma_T \in  \Lambda(\Gamma_*, \epsilon/2,\delta/4)$ for an acceptable choice of $\delta$  ensuring that $\Lambda(\Gamma_*,\epsilon/2,\delta/2)$ is also a well-defined collar (cf.~Remark~\ref{r:delta}). Then the above convergence 
implies that $\Gamma_t \in \Lambda_*:= \Lambda(\Gamma_*,\epsilon/2,\delta/2)$ for $t$ close to $T$.
\\


Reinitializing the starting time close to $T$, we arrive at the case where we have the initial data $(v_0,\Gamma_0)\in\mathbf{H}^s$ and the corresponding solution $(v,\Gamma)$ in a time interval $[0,T)$ with the property that 
\begin{equation*}
\Gamma_t\in \Lambda_*,\hspace{5mm}t\in [0,T). 
\end{equation*}
 From the local well-posedness theorem, it suffices to show that 
\begin{equation}\label{blowuptimeunifbound}
\|(v,\Gamma)\|_{L^{\infty}([0,T);\mathbf{H}^s)}<\infty.    
\end{equation}
Similarly to the previous subsections, the strategy we would like to employ will involve showing that the control parameters for a suitable family of regularized solutions $(v_j,\Gamma_j)$ can be controlled to leading order by the control parameters for $(v,\Gamma)$. The main difficulty is that $v_j$ and $v$ are defined on different domains. As in the previous sections, as long as we can ensure that $\Gamma_j$ and $\Gamma$ are within distance $2^{-j(1+\delta)}$ of each other, we can compare $v$ with $\Psi_{\leq j}v_j$. However, there is one added difficulty now. The difference bound, which ensured the closeness of domains in the previous sections, has a stronger control parameter involving the term $\|D_tp\|_{W^{1,\infty}(\Omega_t)}$  in addition to $B(t)$, which from \Cref{Linfest2} has size controlled by $B(t)$ and an additional logarithmic factor.
\\

To overcome this, we will divide $[0,T)$ into two disjoint intervals $[0,\tilde{T}]$ and $[\tilde{T},T)$ where $0<\tilde{T}<T$ and $\tilde{T}$ has the property that
\begin{equation*}
\int_{\tilde{T}}^TB(t)\,dt<\delta_0,   
\end{equation*}
where $\delta_0$ is some parameter to be chosen depending only on $\mathcal{C}$, $c_0,$ the collar and the $\mathbf{H}^s$ norm of $(v_0,\Gamma_0)$. Given such a $\tilde{T}$, we consider the regularized data $(v_{\tilde{T},j},\Gamma_{\tilde{T},j})$ of $(v(\tilde{T}),\Gamma_{\tilde{T}})$ and the corresponding solutions $(v_j,\Gamma_j)$. We remark that  $\tilde{T}$ and $\delta_0$ need to be chosen carefully to not depend on $j$,  but we postpone this choice for now. Their purpose  is to guarantee that the stronger control parameter $D_tp$ in the difference bounds as well as the logarithmic factor in the energy bounds does not cause the distance between $\Gamma_j$ and $\Gamma$ to grow larger than $2^{-j(1+\delta)}$ for times $t<T$ where $(v_j,\Gamma_j)$ is defined. 
\\

From the continuous dependence result, the above regularized solutions converge to $(v,\Gamma)$ in $[\tilde{T},T)$ and their lifespans $T_j$ satisfy 
\begin{equation*}
\liminf_{j\to\infty}T_j\geq T-\tilde{T}.    
\end{equation*}
However, a priori, we do not have a uniform  $L^1_T$ bound on their corresponding control parameters $B_j$, nor a uniform $L_T^{\infty}$ bound on $A_j$, nor a uniform lower bound on the corresponding Taylor terms $a_j$. Arguing similarly to the previous subsections, if such bounds could be established, one could hope to use them to establish a uniform $\mathbf{H}^s$ bound on the regularized solutions $(v_{j},\Gamma_j)$ and hence extend their time of existence by an amount uniform in $j$. To establish such uniform control on these pointwise parameters, we will run a relatively simple bootstrap argument. From here on, we write $M:=\|(v_0,\Gamma_0)\|_{\mathbf{H}^s}$ and $M_{\tilde{T}}:=\|(v(\tilde{T}),\Gamma_{\tilde{T}})\|_{\mathbf{H}^s}$. To set up the bootstrap, we begin by noting that at time $\tilde{T}$, we have by Sobolev embedding and interpolation, the bound
\begin{equation}
\|\eta_j(\tilde{T})-\eta(\tilde{T})\|_{C^{1,\epsilon}(\Gamma_*)}\lesssim 2^{-\frac{j}{2}}M_{\tilde{T}}.     
\end{equation}
Moreover, by the properties of $\Psi_{\leq j}$, we have $\|v_j(\tilde{T})\|_{C^{\frac{1}{2}+\epsilon}}\lesssim_{\mathcal{C}} 1$. Hence, initially we have
\begin{equation}
A_j(\tilde{T})\leq P(\mathcal{C})+2^{-\frac{j}{2}}M_{\tilde{T}}   
\end{equation}
where $P>1$ is some sufficiently large positive polynomial. As long  as the choice of $\tilde{T}$ we make later on depends only on $\mathcal{C}$ and $c_0$ (but not on $j$), we can arrange by taking $j$ large enough, the initial bound
\begin{equation}
A_j(\tilde{T})\leq 2P(\mathcal{C}).    
\end{equation}
Finally, if $j$ is large enough, and $\tilde{T}$ is as above, we also initially have (for instance),
\begin{equation*}
a_j(\Tilde{T})\geq \frac{2}{3}c_0.    
\end{equation*}
Now, we make the bootstrap assumption that on a time interval $[\tilde{T},T_0]$ with $\tilde{T}<T_0<T$ we have the bounds
\begin{equation}\label{ctrlbootstrap}
\int_{\tilde{T}}^{T_0}B_j(t)\,dt< 4C_1(A)\delta_0,\hspace{5mm}A_j(t)\leq 4P(\mathcal{C}),\hspace{5mm} a_j(t)\geq \frac{1}{2}c_0,\hspace{5mm}\Gamma_j
(t)\in 2\Lambda_*  
\end{equation} 
for $j\geq j_0(M,T_0)$ and some large universal constant $C_1\gg 1$ depending only on $A:=\sup_{t\in [0,T)}A(t)$.  Our goal will be to show that the constant $4C_1\delta_0$ can be improved to $2C_1\delta_0$ and the constant $4P(\mathcal{C})$ can be improved to $2P(\mathcal{C})$, with similar improvements on the Taylor term and the collar. After we close this boostrap, we will give a separate argument which uses the uniform bounds on the control parameters to establish a uniform bound for $(v_j,\Gamma_j)$ in $\mathbf{H}^s$, and hence  permit us to continue the solution. To close the above bootstrap, we aim to establish the bounds 
\begin{equation}\label{bootstrap2}
B_j\leq C_1(A)B+C_22^{-\delta j},\hspace{5mm} A_j\leq P(\mathcal{C})+C_22^{-\delta j}  ,\hspace{5mm}a_j\geq \frac{2}{3}c_0,\hspace{5mm}\Gamma_j
(t)\in \frac{3}{2}\Lambda_*  ,
\end{equation}
where $\delta>0$ is some small positive constant and $C_2$ depends on the size of $M_{\tilde{T}}$ as well as the constant $\mathcal{C}$ above. The bootstrap can then be closed by choosing $j_0$ large enough to absorb the contribution of $C_2$.
\\

 As mentioned above, the main difficulty in comparing $B_j$ with $B$ and $A_j$ with $A$ is, as usual, the fact that the corresponding domains $\Omega_j$ and $\Omega$ are different. Our starting point is to select the parameter $\delta_0$ and the time $\tilde{T}(\delta_0)$ to ensure that $\Omega_j$ and $\Omega$ are close enough. As mentioned above, in order for our argument not to be circular, we need to ensure that the choice of $\delta_0$ depends only on $c_0$ and $\mathcal{C}$. Our first aim is to obtain some preliminary bounds for $\eta_j-\eta_{j+1}$ in $L^{\infty}$ and $C^{1,\frac{1}{2}}$. We let $k$ be the smallest integer larger than $s$. First, by the double exponential bound in \Cref{Energy est. thm} and the bootstrap hypothesis, we have for each $j$,
 \begin{equation*}
\|(v_j,\Gamma_j)\|_{\mathbf{H}^k}^2\lesssim_{A} \exp\left(\exp(K\delta_0)\log(K(1+2^{2j(k-s)}\|(v(\tilde{T}),\Gamma_{\tilde{T}})\|_{\mathbf{H}^s}^2)\right)   .
 \end{equation*}
 Above, $K$ is some (possibly large) constant depending on $\mathcal{C}$ and $c_0$ which we will let change from line to line. In the above estimate, if we take $K\delta_0\ll 1$ (in particular, $\delta_0$ does not depend on $j$), then we can arrange for 
 \begin{equation}
 \|(v_j,\Gamma_j)\|_{\mathbf{H}^k}^2\lesssim K2^{2j(k-s)}M_{\tilde{T}}^2(M_{\tilde{T}}2^{j})^{\delta}    
 \end{equation}
 for some small constant $\delta>0$, where we assumed without loss of generality that $M_{\tilde{T}}\geq 1$ to simplify notation. Note here that there is a slight loss compared to \eqref{hiregbound}  coming from the double exponential bound in the energy estimate. On the other hand, the difference estimates, \Cref{Linfest2} and the energy coercivity ensures that by Gr\"onwall and the bootstrap assumption, we have 
\begin{equation*}
D((v_{j},\Gamma_j),(v_{j+1},\Gamma_{j+1}))\lesssim 2^{-2js}KM_{\tilde{T}}^2\exp\left(K\delta_0\mathcal{I}_j\right),
\end{equation*}
where $\mathcal{I}_j=\sup_{\tilde{T}<t\leq T_0}(\log(K+KE^k(v_j,\Gamma_j))+\log(K+KE^k(v_{j+1},\Gamma_{j+1})))$ and $k$ is, again, the smallest integer larger than $s$. By the higher energy bound and the bootstrap assumption, we have 
\begin{equation*}
\mathcal{I}_j\lesssim K\log(1+2^{2jk}\|(v(\tilde{T}),\Gamma_{\tilde{T}})\|_{L^2}^2)\lesssim_{k} Kj,
\end{equation*}
where we used the higher energy bound for the regularized solution to propagate $\log(1+E^k(v_j,\Gamma_j))$ and control $\log(1+E^k(v_j,\Gamma_j))$ by $\log(1+2^{2kj}\|(v(\tilde{T}),\Gamma_{\tilde{T}})\|_{L^2}^2
)$ as well as the fact that the volume of $\Omega_t$ is conserved and H\"older's inequality to estimate $\|(v(\tilde{T}),\Gamma_{\tilde{T}})\|_{L^2}\lesssim_A 1$. Again, we choose $\delta_0$ small enough (and therefore $\tilde{T}$) depending only on $\mathcal{C}$ and $c_0$ so that
\begin{equation*}
\exp(K\delta_0\mathcal{I}_j)\leq 2^{j\delta}  ,  
\end{equation*}
for some sufficiently small $\delta>0$ (depending only on $s$). Next, we pick $j_0$ depending on $M_{\tilde{T}}$, $\mathcal{C}$ and $c_0$  so that if $j\geq j_0$ (after possibly relabelling $\delta$), we have
\begin{equation*}\label{refineddiff1}
D((v_{j},\Gamma_j),(v_{j+1},\Gamma_{j+1}))\lesssim 2^{-2j(s-\delta)},\hspace{5mm}\|(v_j,\Gamma_j)\|_{\mathbf{H}^k}^2\lesssim 2^{2j(k-s)}2^{j\delta}
\end{equation*}
with universal implicit constant. The key point to observe here is that there is now a slight loss in the difference estimates and energy estimates compared to the previous subsections because of the stronger control parameter in the difference bounds and the logarithmic factor in the energy estimates. However, by using these estimates, we still obtain by Sobolev embedding and interpolating, the bounds (after possibly relabelling $\delta$) 
\begin{equation}\label{refineddiff}
\|\eta_j-\eta_{j+1}\|_{C^{1,\frac{1}{2}}(\Gamma_*)}\lesssim 2^{-\delta j},\hspace{5mm}\|\eta_j-\eta_{j+1}\|_{C^{1,\epsilon}(\Gamma_*)}\lesssim 2^{-\frac{1}{2}j} ,\hspace{5mm} \|\eta_j-\eta_{j+1}\|_{L^{\infty}(\Gamma_*)}\lesssim 2^{-\frac{3}{2}j}    ,
\end{equation}
all with universal implicit constant if $j_0$ is large enough. The first bound will give us control of $\|\Gamma_j\|_{C^{1,\frac{1}{2}}}$ in the first estimate in (\ref{bootstrap2}). The second bound above gives us control over $\|\Gamma_j\|_{C^{1,\epsilon}}$ for the second estimate in (\ref{bootstrap2}) and also shows that $\Gamma_j\in \frac{3}{2}\Lambda_*$. The third bound ensures that $\Gamma_j$ and $\Gamma_{j+1}$ are sufficiently close. With this closeness established, we now work towards closing the bootstrap (\ref{bootstrap2}) for the $\|v_j\|_{W^{1,\infty}(\Omega_j)}$ component of $B_j$ and the $\|v_j\|_{C^{\frac{1}{2}+\epsilon}(\Omega_j)}$ component of $A_j$. We show the details for $\|v_j\|_{W^{1,\infty}(\Omega_j)}$ as the other component is very similar. We estimate in three steps. First, we observe that from the bounds for $\Psi_{\leq j}$, we have
\begin{equation}\label{step1}
\|\Psi_{\leq j}v\|_{W^{1,\infty}}\lesssim_A B.    
\end{equation}
We can ensure that the implicit constant in this estimate is less than $C_1(A)$ if $C_1(A)$ is initially chosen large enough. 
Then we compare $\Psi_{\leq j}v$ and $\Psi_{\leq j}v_j$ which is justified thanks to (\ref{refineddiff}). We have
\begin{equation*}
\Psi_{\leq j}v-\Psi_{\leq j}v_j=\sum_{l\geq j}\Psi_{\leq j}v_{l+1}-\Psi_{\leq j}v_l.    
\end{equation*}
By Sobolev embedding and a similar argument to the $C^{1,\frac{1}{2}}$ bound for $\eta_{j+1}-\eta_j$, we see that
\begin{equation*}
\|\Psi_{\leq j}v_{l+1}-\Psi_{\leq j}v_l\|_{W^{1,\infty}}\leq C_22^{-l\delta} ,   
\end{equation*}
which gives by summation
\begin{equation}\label{step2}
\|\Psi_{\leq j}v-\Psi_{\leq j}v_j\|_{W^{1,\infty}}\leq C_22^{-j\delta}.     
\end{equation}
Using the error bound for $I-\Psi_{\leq j}$, Sobolev embedding and the higher energy bounds, we also have
\begin{equation}\label{step3}
\|\Psi_{\leq j}v_j-v_j\|_{W^{1,\infty}}\leq C_22^{-j\delta}. 
\end{equation}
Combining (\ref{step1}), (\ref{step2}) and (\ref{step3}) shows that
\begin{equation*}
\|v_j\|_{W^{1,\infty}(\Omega_j)}\leq C_1(A)B+C_22^{-j\delta}. 
\end{equation*}
Doing a similar estimate for $\|v_j\|_{C^{\frac{1}{2}+\epsilon}(\Omega_j)}$ and taking $j$ large enough allows us to close the bootstrap for $A_j$. 
\\

It remains now to improve the bootstrap assumption for the Taylor term $a_j$. To do this, we need a suitable way of comparing the $C^1$ norms of the pressures $p_j$ and $p$. We begin by defining the shrunken domain $\Omega'$ via $\eta':=\eta-2^{-j_0}$. As $\Omega_j$ is within distance $\mathcal{O}(2^{-\frac{3}{2}j})$ of $\Omega$ for $j\geq j_0$, it follows that
\begin{equation*}
\Omega'\subset \Omega\cap \bigcap_{j\geq j_0}\Omega_j.    
\end{equation*}
We next note the following bound which holds on $\Omega'$ for any $0<\delta<\frac{\epsilon}{2}$,
\begin{equation}\label{pointwisediffbound12}
\|v_j-v\|_{C^{\frac{1}{2}+\delta}(\Omega')}\leq C_2 2^{-j_0\delta}  .
\end{equation}
This follows by similar reasoning to the above. 
Now, we establish the following $C^1$ estimate for $p-p_j$:
\begin{equation}\label{pointwisediffboundpressure12}
\|p-p_j\|_{C^1(\Omega')}\leq C_22^{-j_0\delta}    .
\end{equation}
We begin by splitting $p-p_j$ into an inhomogeneous part plus a harmonic part on $\Omega'$, $$p-p_j=\Delta^{-1}\Delta(p-p_j)+\mathcal{H}(p-p_j).$$
Using \Cref{Gilbarg}, the dynamic boundary condition and the fact that the boundary of $\Omega'$ is within distance $2^{-j_0}$ of the boundaries of $\Omega$ and $\Omega_j$, we have
\begin{equation*}
\|\mathcal{H}(p-p_j)\|_{C^{1,\delta}(\Omega')}\lesssim_{\mathcal{C}} 2^{-j_0\delta}(\|p\|_{C^{1,\epsilon}(\Omega)}+\|p_j\|_{C^{1,\epsilon}(\Omega_j)})    .
\end{equation*}
By \Cref{Linfest} and the bootstrap assumption on $A_j$, this gives
\begin{equation*}
\|\mathcal{H}(p-p_j)\|_{C^{1,\delta}(\Omega')}\lesssim_{\mathcal{C}}2^{-j_0\delta}.    
\end{equation*}
To estimate the inhomogeneous part, we can argue similarly to the proof of \Cref{Linfest} using a bilinear frequency decomposition for $\Delta (p_j-p)$, to obtain
\begin{equation*}
\|\Delta^{-1}\Delta (p-p_j)\|_{C^1(\Omega')}\lesssim_{\mathcal{C}} \|v-v_j\|_{C^{\frac{1}{2}+\delta}(\Omega')}\leq C_2 2^{-j_0\delta}  , 
\end{equation*}
where in the second inequality we used (\ref{pointwisediffbound12}). Finally, to close the bootstrap on the Taylor term $a_j$, we can work in  collar coordinates on $\Gamma_*$ to estimate
\begin{equation*}
\inf_{x\in\Gamma_j}|\nabla p_j(x)|\geq \inf_{x\in\Gamma} |\nabla p(x)|-\|p_j-p\|_{C^1(\Omega')}-2^{-j_0\delta}(\|p_j\|_{C^{1,\epsilon}(\Omega_j)}+\|p\|_{C^{1,\epsilon}(\Omega)})   .
\end{equation*}
In the above, we first estimate the error between $\nabla p_j(x+\eta_j(x)\nu(x))$ and $\nabla p_j(x+\eta'(x)\nu(x))$ (and also $\nabla p(x+\eta'(x)\nu(x))$ and $\nabla p(x+\eta(x)\nu(x))$) using the $C^{1,\epsilon}$ H\"older regularity of $p_j$ and $p$. Then, we estimate the difference between $\nabla p_j(x+\eta'(x)\nu(x))$ and $\nabla p(x+\eta'(x)\nu(x))$ on the common domain using our bounds for $\|p_j-p\|_{C^1(\Omega')}$. 
\\

Taking $j_0$ large enough and using (\ref{pointwisediffboundpressure12}) and \Cref{Linfest}, this gives
\begin{equation*}
a_j\geq \frac{2}{3}c_0   , 
\end{equation*}
which closes the bootstrap for $a_j$.
\\

From the above argument, we see that for $j\geq j_0$,  the regular solutions $(v_j,\Gamma_j)$ are defined on the interval $[\tilde{T},T]$ and satisfy the assumptions (\ref{ctrlbootstrap}). What we do not yet know  is whether we have a uniform in $j$ bound for the $\mathbf{H}^s$ norm of $(v_j,\Gamma_j)$. Once we have this,  (\ref{blowuptimeunifbound}) will follow from our continuous dependence result. From here on, we assume without loss of generality that $M_{\tilde{T}}\gg C(A)$. We let $c_j$ denote the frequency envelope for the data at time $\tilde{T}$. Similarly to the above, on a time interval $[\tilde{T},T_0]$, we make the bootstrap assumption that for finitely many $j\geq j_0$,
\begin{equation}\label{finalbootstrap}
\|(v_j,\Gamma_j)\|_{\mathbf{H}^s}\leq M_{\tilde{T}}^2. 
\end{equation}
As in the previous subsection, we let $\alpha\geq 1$ be such that $s+\alpha$ is an integer. Then the higher energy bounds, (\ref{finalbootstrap}) and (\ref{ctrlbootstrap}) yield
\begin{equation*}
\|(v_j,\Gamma_j)\|_{\mathbf{H}^{s+\alpha}}\lesssim 2^{j\alpha}c_j\exp(K\delta_0\log(M_{\tilde{T}}^2))M_{\tilde{T}}    
\end{equation*}
where $K$ is some constant depending on $\mathcal{C}$. As long as $\delta_0$ is such that $K\delta_0\ll 1$, we obtain
\begin{equation}\label{newhigherenergyboundenv}
\|(v_j,\Gamma_j)\|_{\mathbf{H}^{s+\alpha}}\lesssim 2^{j\alpha}c_jM_{\tilde{T}}^{1+\delta}    
\end{equation}
for some positive constant $\delta\ll 1$. A similar argument with the difference bounds yields
\begin{equation*}
D((v_{j},\Gamma_{j}),(v_{j+1},\Gamma_{j+1}))^{\frac{1}{2}}\lesssim 2^{-js}c_jM_{\tilde{T}}^{1+\delta}.    
\end{equation*}
Arguing as in the local well-posedness result, we can use the above two bounds to estimate
\begin{equation*}
\|(v_j,\Gamma_j)\|_{\mathbf{H}^{s}}\lesssim M_{\tilde{T}}^{1+\delta}  ,  
\end{equation*}
which improves the bootstrap. We are then able to finally conclude the bound \eqref{blowuptimeunifbound} and thus the proof of \Cref{cont crit intro}.
\bibliographystyle{plain}
\bibliography{refs.bib}

\end{document}